\documentclass[a4paper,11pt]{article}
\usepackage[OT1,T1]{fontenc}
\usepackage{a4wide}
\usepackage{color}
\usepackage{bm}
\usepackage{amsmath,amssymb,amsthm}
\usepackage{mathrsfs}
\usepackage{textcomp}
\usepackage{multirow}
\usepackage[all]{xy}
\usepackage{tikz}
\usepgflibrary{shapes.geometric}
\usepackage[pdftex,pdfborder={0 0 0}]{hyperref}
    \usepackage{graphicx}

\setlength\parindent{0pt}
\setlength\parskip{1.6ex plus .5ex minus 0ex}
\setcounter{secnumdepth}2
\makeatletter
\newcounter{statement}


\DeclareMathOperator\End{End}
\DeclareMathOperator\im{im}
\DeclareMathOperator\coker{coker}
\DeclareMathOperator\Hom{Hom}

\DeclareMathOperator\Ext{Ext}
\newcommand\SVec{{\mathop{\mathrm{SVect}}}}
\newcommand\Comod{{\mathop{\mathrm{Comod}}}}
\newcommand\Perv{{\mathord{\mathrm P_{\mathscr S}(\Gr_G,\bk)}}}
\newcommand\Gr{\mathord{\mathrm{Gr}}}
\newcommand\Fl{\mathord{\mathrm{Fl}}}
\newcommand\GO{{G_{\mathcal O}}}
\newcommand\GK{{G_{\mathcal K}}}
\newcommand\NK{{N_{\mathcal K}}}
\newcommand\IC{{\mathbf{IC}}}
\newcommand\supp{\mathop{\mathrm{supp}}}
\newcommand\aff{{\mathrm{aff}}}
\newcommand\fin{{\mathord{\mathrm{fin}}}}
\newcommand\Homint{{\mathop{\underline{\mathrm{Hom}}}}}
\newcommand\forget{{\mathord{\mathrm{forget}}}}
\newcommand\SL{\mathord{\mathbf{SL}}}

\newcommand{\Per}{\mathrm{P}}
\newcommand{\id}{\mathrm{id}}
\newcommand{\Spec}{\mathrm{Spec}}
\newcommand\coim{\mathop{\mathrm{coim}}}
\newcommand\Conv{\mathop{\mathrm{Conv}}}

\newcommand\Vect{{\mathord{\mathrm{Vect}}}}
\newcommand\Rep{{\mathord{\mathrm{Rep}}}}
\newcommand\Mod{{\mathord{\mathrm{Mod}}}}

\newcommand{\Sat}{\mathsf{S}}
\newcommand{\F}{\mathsf{F}}
\newcommand{\bk}{\mathbf{k}}
\newcommand{\Z}{\mathbf{Z}}
\newcommand{\Q}{\mathbf{Q}}
\newcommand{\C}{\mathbf{C}}
\newcommand{\FF}{\mathbf{F}}

\newcommand{\cJ}{\mathcal{J}}
\newcommand{\pH}{{}^p \hspace{-1pt} \mathscr{H}}

\newcommand{\ad}{\mathrm{ad}}
\newcommand{\coH}{\mathsf{H}}

\newcommand{\Db}{D^{\mathrm{b}}}

\makeatletter
\def\lotimes{\@ifnextchar_{\@lotimessub}{\@lotimesnosub}}
\def\@lotimessub_#1{\mathchoice{\mathbin{\mathop{\otimes}^L}_{#1}}%
  {\otimes^L_{#1}}{\otimes^L_{#1}}{\otimes^L_{#1}}}
\def\@lotimesnosub{\mathbin{\mathop{\otimes}^L}}
\makeatother

\makeatletter
\def\Tenint{\@ifnextchar_{\@Tenintsub}{\@Tenintnosub}}
\def\@Tenintsub_#1{\mathchoice{\mathbin{\underline{\mathop{\otimes}}}_{#1}}%
  {\underline{\otimes}_{#1}}{\underline{\otimes}_{#1}}{\underline{\otimes}^L_{#1}}}
\def\@Tenintnosub{\mathbin{\underline{\mathop{\otimes}}}}
\makeatother

\makeatletter
\def\lboxtimes{\@ifnextchar_{\@lboxtimessub}{\@lboxtimesnosub}}
\def\@lboxtimessub_#1{\mathchoice{\mathbin{\mathop{\boxtimes}^L}_{#1}}%
  {\boxtimes^L_{#1}}{\boxtimes^L_{#1}}{\boxtimes^L_{#1}}}
\def\@lboxtimesnosub{\mathbin{\mathop{\boxtimes}^L}}
\makeatother

\numberwithin{equation}{section}
\newtheorem{thm}{Theorem}[section]
\newtheorem{lem}[thm]{Lemma}
\newtheorem{prop}[thm]{Proposition}
\newtheorem{cor}[thm]{Corollary}

\theoremstyle{definition}
\newtheorem{defn}[thm]{Definition}

\theoremstyle{remark}
\newtheorem{rmk}[thm]{Remark}
\newtheorem{ex}[thm]{Example}
\newtheorem{exo}[thm]{Exercise}

\begin{document}

\title{Notes on the geometric Satake equivalence}

\author{Pierre Baumann \thanks{Institut de Recherche Math\'ematique Avanc\'ee, CNRS UMR 7501, Universit\'e de Strasbourg, F-67084 Strasbourg Cedex, France, p.baumann@unistra.fr} \and Simon Riche \thanks{Universit\'e Clermont Auvergne, CNRS, LMBP, F-63000 Clermont-Ferrand, France, simon.riche@uca.fr}}

\date{}
\maketitle

\setcounter{tocdepth}{1}
\tableofcontents

\newpage

\section{Introduction}

\subsection{Description}

These notes are devoted to a detailed exposition of the proof of the Geometric Satake Equivalence
 by Mirkovi\'c--Vilonen~\cite{mv}. This celebrated result provides, for $G$ a complex connected reductive group and $\bk$ a Noetherian commutative ring of finite global dimension, an equivalence of categories between the category $\Per_{\GO}(\Gr_G,\bk)$ of $\GO$-equivariant perverse sheaves on the affine Grassmannian $\Gr_G$ of $G$ (where $\GO$ is the loop group of $G$) and the category $\Rep_\bk(G^\vee_\bk)$ of representations of the Langlands dual split reductive group over $\bk$ on finitely generated $\bk$-modules. Under this equivalence, the tensor product of $G^\vee_\bk$-modules corresponds to a geometric construction on perverse sheaves called \emph{convolution}.

This result can be considered on the one hand as giving a geometric description of the category of representations of $G^\vee_\bk$, and on the other hand as giving a ``concrete'' construction of the dual reductive group $G^\vee_\bk$ out of the original (complex) reductive group $G$.

\subsection{History and idea of proof}

The first evidence of a close relationship between perverse sheaves on $\Gr_G$ and representations of $G^\vee_\bk$ was found in work of Lusztig~\cite{lusztig}, where a ``combinatorial shadow'' of the equivalence was proved (for $\bk$ a field of characteristic $0$). The equivalence itself was first proved, in the case when $\bk$ is a field of characteristic $0$, by Ginzburg~\cite{ginzburg}. (In this case, the existence of an equivalence of abelian categories $\Per_{\GO}(\Gr_G,\bk) \cong \Rep_\bk(G^\vee_\bk)$ is obvious, since both categories are semisimple with isomorphism classes of simple objects parametrized by the same set. The content of the theorem is thus only the description of the tensor product in geometric terms.) A new proof, valid for general coefficients, was later given by Mirkovi\'c--Vilonen \cite{mv}. This is the proof that we consider here; the main new ingredient of their approach is the definition of the \emph{weight functors}, which give a geometric construction of the decomposition of $G^\vee_\bk$-representations into weight spaces for a maximal torus. A later proof in the case of characteristic-$0$ fields (which applies for $\ell$-adic sheaves, when $G$ is defined over a more general field) was given by Richarz~\cite{richarz}. (The main difference with the approaches of~\cite{ginzburg, mv} lies in the identification of the group scheme, which relies on work of Kazhdan--Larsen--Varshavsky~\cite{klv}.)

All proofs 
are based on ideas from \emph{Tannakian formalism}. The strategy is to construct enough structure on the category $\Per_{\GO}(\Gr_G,\bk)$ so as to guarantee that this category is equivalent to the category of representations of a $\bk$-group scheme. In the case when $\bk$ is a field, one can apply general results due to Saavedra Rivano~\cite{sr} and Deligne--Milne~\cite{dm} to prove this; for general coefficients no such theory is available, and Mirkovi\'c--Vilonen construct the group scheme ``by hand'' using their weight functors.
The next step is to identify this group scheme with $G^\vee_\bk$. The case of fields of characteristic~$0$ is relatively easy. Then, in~\cite{mv}, the general case is deduced from this one using a detailed analysis of the group scheme in the case $\bk$ is an algebraic closure of a finite field, and a general result on reductive group schemes due to Prasad--Yu~\cite{py}.

\subsection{Applications}

The geometric Satake equivalence has found numerous applications in Representation Theory, Algebraic Geometry and Number Theory. For the latter applications (see in particular~\cite{lafforgue}; see also~\cite[\S 5.5]{zhu} for other examples and references), it is important to have a version of this equivalence where the affine Grassmannian is defined not over $\C$ (as we do here) but rather over an algebraically closed field of positive characteristic (and where the sheaves for the classical topology are replaced by \'etale sheaves). We will not consider this variant, but will only mention that the analogues in this setting of all results that we use on the geometry of the affine Grassmannian are known; see~\cite{zhu} for details and references.\footnote{There is an additional subtlety in this setting if the characteristic of the base field is ``small,'' namely that the neutral connected component of the Grassmannian might not be isomorphic to the affine Grassmannian of the simply-connected cover of the derived subgroup; see~\cite[Remark~6.4]{pr} for an example. However, as was explained to us by X.~Zhu, in any case the natural morphism from the latter to the former is a universal homeomorphism (again, see~\cite[Remark~6.4]{pr} for a special case) and hence is as good as an isomorphism, as far as \'etale sheaves are concerned.} With these results at hand, our considerations adapt in a straightforward way to this setting to prove the desired equivalence of categories. (Here of course the coefficients of sheaves cannot be arbitrary, and the role played by $\Z$ in Section~\ref{sec:identification} should be played by $\Z_\ell$, where $\ell$ is a prime number different from the characteristic of the field of definition of the affine Grassmannian.)

The applications to Number Theory have also motivated a number of generalizations of the geometric Satake equivalence (so far mainly in the case of characteristic-$0$ coefficients) which will not be reviewed here; see in particular~\cite{richarz, zhu-ram, rz, zhu-mixed}.

\subsection{Contents}

The notes consist of two parts with different purposes. The first one is a gentle introduction to the proof of Mirkovi\'c--Vilonen in the special case where $\bk$ is a field of characteristic $0$. This case allows for important simplifications, but at the same time plays a crucial role in the proof for general coefficients. It is well understood, but (to our knowledge) has not been treated in detail in the literature from the point of view of Mirkovi\'c--Vilonen (except of course in their paper).
We follow their arguments closely, adding only a few details where their proofs might be considered a little bit sketchy. We also treat certain prerequisites (e.g.~Tannakian formalism) in detail. On the other hand, most ``standard'' results on the affine Grassmannian are stated without proof; for details and references we refer e.g.~to~\cite{zhu}.

Part~\ref{pt:arbitrary} is devoted to the proof for general coefficients. Some people have expressed doubts about the proof in this generality, so we have tried to make all the arguments explicit, and to clarify the proofs as much as possible. In this process, Geordie Williamson suggested a direct proof of the fact that the group scheme constructed by Mirkovi\'c--Vilonen is of finite type in the case of field coefficients. This proof is reproduced in Lemma~\ref{lem:G-algebraic-connected}, and allows to simplify the arguments a little bit.

Finally, Appendix~\ref{sec:appendix} provides proofs of some ``well-known'' results on equivariant perverse sheaves.

\subsection{Acknowledgements}

These notes grew out of a $2\frac{1}{2}$-days mini-course given during the workshop ``Geometric methods and Langlands functoriality in positive characteristic'' held in Luminy in January 2016. This mini-course (which only covered the contents of Part~\ref{pt:char-0}) also comprised reminders on constructible sheaves and equivariant derived categories (by D. Fratila), and on perverse sheaves (by V. Heiermann), which are not reproduced in the notes.

We thank Dragos Fratila for many discussions which helped clarify various constructions and proofs in Part~\ref{pt:char-0}. We thank Volker Heiermann, Vincent Lafforgue and an anymous referee for insisting that we should treat the case of general coefficients, which led to the work in Part~\ref{pt:arbitrary}.
We also thank Geordie Williamson for very helpful discussions on Part~\ref{pt:arbitrary}, and for allowing us to reproduce his proof of Lemma~\ref{lem:G-algebraic-connected}. Finally, we thank Brian Conrad and Gopal Prasad for kindly answering some questions related to the application of the results of~\cite{py}, Julien Bichon for helpful discussions and providing some references, Xinwen Zhu for answering various questions, and Vincent Lafforgue and an (other?) anonymous referee for their comments on a previous version of these notes.

Both authors were partially supported by the ANR Grant No.~ANR-13-BS01-0001-01. This project has received
funding from the European Research Council (ERC) under the European Union's Horizon 2020
research and innovation programme (grant agreement No. 677147).

\newpage

\part{The case of characteristic-\texorpdfstring{$0$}{0} coefficients}
\label{pt:char-0}

\section{Tannakian reconstruction}
\label{sec:Tannakian}

In this section (where we follow closely \cite[\S II]{dm}), $\bk$ is an arbitrary field, and we denote by $\Vect_\bk$ the category of finite-dimensional $\bk$-vector spaces. All categories are tacitly assumed to be essentially small. By a commutative diagram of functors we will mean a diagram commutative \emph{up to isomorphism}.

Some important ideas of Tannakian reconstruction are already contained in the following easy exercise.

\begin{exo}
\label{exo:reconstruction}
Let $A$ be a $\bk$-algebra, $X$ be an $A$-module which is finite-dimensional over $\bk$, and
$\alpha\in\End_\bk(X)$. Show that
$$\alpha\in\im\bigl(A\to\End_\bk(X)\bigr)\ \Longleftrightarrow\
\forall n\geq0,\ \forall Y\subset X^{\oplus n}\ \text{$A$-submodule},\
\alpha^{\oplus n}(Y)\subset Y.$$
\end{exo}

(\emph{Hint} : Of course, the implication $\Rightarrow$ is obvious. To prove the reverse direction,
assume the condition in the
right-hand side holds. Pick a $\bk$-basis $(e_1,\cdots,e_n)$ of $X$, take
for $Y$ the $A$-submodule generated by $(e_1,\cdots,e_n) \in X^{\oplus n}$, and write
that $Y$ contains $\alpha^{\oplus n}(e_1,\cdots,e_n)=(\alpha(e_1),
\cdots,\alpha(e_n))$.)

Tannakian reconstruction actually amounts to veneer this exercise first
with the language of categories and then with the language of Hopf
algebras (i.e.~affine group schemes).

\subsection{A first reconstruction theorem}
\label{ss:reconstruction-1}

Let us denote the category of finite-dimensional $\bk$-vector spaces by
$\Vect_\bk$. Given a $\bk$-algebra $A$, we denote the category of
finite-dimensional left $A$-modules by $\Mod_A$.

Recall that a category $\mathscr C$ is called \emph{additive} if
\begin{itemize}
\item
each set $\Hom_{\mathscr C}(X,Y)$ is an abelian group;
\item
the composition of morphisms is a bilinear operation;
\item
$\mathscr{C}$ has a zero object;
\item
finite products and coproducts exist in
$\mathscr C$. 
\end{itemize}
Such a category is called $\bk$-linear if each $\Hom_{\mathscr C}(X,Y)$ is a $\bk$-vector space, and if the composition is $\bk$-bilinear.
An additive category $\mathscr C$ is called \emph{abelian} if
\begin{itemize}
\item
each
morphism has a kernel and a cokernel;
\item
for any morphism $f$, the natural morphism from the cokernel of the kernel (a.k.a.~the coimage) of $f$
to the kernel of the
cokernel (a.k.a.~the image) of $f$ is an isomorphism.
\end{itemize}

Given an object $X$ in an abelian category $\mathscr C$, we will denote
by $\langle X\rangle$ the full subcategory of $\mathscr C$ formed by
all objects that are isomorphic to a subquotient of a direct sum
$X^{\oplus n}$ for some $n \in \Z_{\geq 0}$.

\begin{prop}
\label{prop:reconstruction}
Let $\mathscr C$ be an abelian $\bk$-linear category and let
$\omega:\mathscr C\to\Vect_\bk$ be a $\bk$-linear exact faithful functor.
Fix an object $X$ in $\mathscr C$ and introduce the finite-dimensional
$\bk$-algebra
$$A_X:=\bigl\{\alpha\in\End_\bk(\omega(X))\bigm|\forall n\geq0,\
\forall Y\subset X^{\oplus n}\ \text{subobject},\
\alpha^{\oplus n}(\omega(Y))\subset\omega(Y)\bigr\}.$$
Then $\omega\bigl|_{\langle X\rangle}$ admits a canonical factorization
$$\xymatrix@=40pt@!0{\langle X\rangle\ar[rr]^-{\overline\omega_X}
\ar[dr]_-{\omega}&&\Mod_{A_X}\ar[dl]^-{\forget}\\&\Vect_\bk,&}$$
and $\overline\omega_X$ is an equivalence of categories.
In addition $A_X$ is the endomorphism algebra of the functor
$\omega\bigl|_{\langle X\rangle}$.
\end{prop}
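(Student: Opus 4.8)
The plan is to mimic the structure of Exercise~\ref{exo:reconstruction}, promoting the linear-algebra argument there to a statement about the functor $\omega|_{\langle X\rangle}$. First I would construct the factorization $\overline\omega_X$. Given $Z \in \langle X\rangle$, write $Z$ as a subquotient $Z \cong Y_2/Y_1$ with $Y_1 \subset Y_2 \subset X^{\oplus n}$; since $\omega$ is exact, $\omega(Z) \cong \omega(Y_2)/\omega(Y_1)$, and the defining condition on elements of $A_X$ ensures that any $\alpha \in A_X$ preserves $\omega(Y_1)$ and $\omega(Y_2)$, hence acts on the quotient. One must check this action is well-defined (independent of the presentation $Y_2/Y_1$) and functorial in $Z$; the cleanest way is to observe that a morphism $f : Z \to Z'$ in $\langle X\rangle$ can, after passing to larger direct sums $X^{\oplus n} \oplus X^{\oplus m}$ and using that $\omega$ is faithful and exact, be realized compatibly with the subquotient presentations, so that $\omega(f)$ commutes with the $A_X$-action by construction. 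This gives the $\bk$-linear functor $\overline\omega_X : \langle X\rangle \to \Mod_{A_X}$ lifting $\omega$, and the triangle commutes strictly.

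Next I would identify $A_X$ with the endomorphism algebra $\End(\omega|_{\langle X\rangle})$. There is an obvious map $\End(\omega|_{\langle X\rangle}) \to \End_\bk(\omega(X))$ given by evaluation at $X$; its image lands in $A_X$ because a natural transformation is compatible with the inclusions $\omega(Y) \hookrightarrow \omega(X^{\oplus n}) = \omega(X)^{\oplus n}$ coming from subobjects $Y \subset X^{\oplus n}$ (naturality applied to the inclusion morphism, together with $\omega$ exact so that $\omega$ of a monomorphism is injective). Conversely, an element $\alpha \in A_X$ determines, for each $Z \in \langle X\rangle$, the endomorphism $\overline\omega_X(\alpha)$ of $\omega(Z)$ constructed above; the compatibility with morphisms established in the previous step says precisely that this family is a natural transformation of $\omega|_{\langle X\rangle}$. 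These two constructions are mutually inverse, and since $\omega$ is faithful the evaluation-at-$X$ map is injective, so $A_X \cong \End(\omega|_{\langle X\rangle})$ as $\bk$-algebras.

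Finally I would prove $\overline\omega_X$ is an equivalence, checking fully faithful and essentially surjective. For \emph{full faithfulness}: given $Z, Z' \in \langle X\rangle$, a morphism in $\Mod_{A_X}$ from $\overline\omega_X(Z)$ to $\overline\omega_X(Z')$ is an $A_X$-equivariant map $\omega(Z) \to \omega(Z')$; I want to show every such map comes from a (unique, by faithfulness of $\omega$) morphism $Z \to Z'$ in $\langle X\rangle$. The key reduction: $\Hom_{\langle X\rangle}(Z, Z')$ is itself an object computable from subquotients of powers of $X$ — indeed the graph construction realizes a morphism $Z \to Z'$ as a suitable subobject of $Z \oplus Z'$, hence of a subquotient of $X^{\oplus N}$ — and then the ``preserves all $\omega(Y)$'' condition defining $A_X$ is exactly what forces an $A_X$-equivariant map to respect this subobject, i.e.\ to be genuine. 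This is where one invokes the linear-algebra heart of Exercise~\ref{exo:reconstruction}, applied inside $\omega$ of the relevant power of $X$. For \emph{essential surjectivity}: every finite-dimensional $A_X$-module is a subquotient of some $A_X^{\oplus m}$ as an $A_X$-module; so it suffices to show $A_X$ itself, viewed as a left $A_X$-module, lies in the essential image, i.e.\ that $\omega(Z) \cong A_X$ as $A_X$-modules for some $Z \in \langle X\rangle$ — and then subquotients of $\omega(Z)^{\oplus m}$, which are exactly $\omega$ of subquotients of $Z^{\oplus m}$ by exactness and full faithfulness, cover all of $\Mod_{A_X}$. Producing this $Z$ is the subtle point: one takes $Z$ to be the smallest subobject $Y \subset X^{\oplus n}$ (for $n = \dim_\bk \omega(X)$) such that $\omega(Y)$ contains a chosen element corresponding to a generator, and argues via the defining property of $A_X$ that $\omega(Y)$ is the $A_X$-submodule generated by that element, which is all of a free rank-one module after the right choice.

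\textbf{Main obstacle.} The hardest step is the full faithfulness of $\overline\omega_X$, specifically making precise the ``graph of a morphism is a subquotient of a power of $X$'' argument and checking that the defining condition of $A_X$ — which quantifies only over subobjects of powers of $X$, not over arbitrary objects of $\langle X\rangle$ — is strong enough to conclude. Everything else (the factorization, functoriality, the identification with $\End(\omega|_{\langle X\rangle})$) is a careful but essentially formal diagram chase using only exactness and faithfulness of $\omega$; essential surjectivity reduces, once full faithfulness is in hand, to the combinatorics of finite-dimensional modules over a finite-dimensional algebra.
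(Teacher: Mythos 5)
Your first two steps --- the construction of the factorization $\overline\omega_X$ and the identification $A_X \cong \End(\omega|_{\langle X\rangle})$, including the use of the graph of a morphism to check functoriality --- match the paper's argument closely. The divergence, and the gap, are both in the third step. The paper proves that $\overline\omega_X$ is an equivalence by constructing an explicit quasi-inverse $P_X \Tenint_{A_X} -$, where $P_X$ is the intersection of transporters $\bigcap_{n,\, Y \subset X^{\oplus n}} \bigl( \Homint(\omega(X),X) \cap (Y : \omega(Y)) \bigr)$. You instead propose a direct verification of full faithfulness and essential surjectivity, and this route hits a real obstruction that your sketch does not clear.

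The definition of $A_X$ gives a one-way implication only: if $Y \subset X^{\oplus n}$ is a subobject of $\mathscr C$, then $\omega(Y) \subset \omega(X)^{\oplus n}$ is $A_X$-stable. Full faithfulness requires the converse. Given an $A_X$-module map $g : \omega(Z) \to \omega(Z')$ with $Z, Z' \in \langle X\rangle$, the natural move is to consider the graph $\mathrm{gr}(g) \subset \omega(Z \oplus Z')$, which is an $A_X$-submodule, and to show it is $\omega(W)$ for some subobject $W \subset Z \oplus Z'$; once one has $W$, the first projection $W \to Z$ has bijective image under $\omega$, hence is an isomorphism, and $g$ lifts to the composite $f : Z \to Z'$. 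But ``$\mathrm{gr}(g)$ is $A_X$-stable, hence is $\omega$ of a subobject'' is precisely the converse to the defining property of $A_X$ and does not follow from it. Your phrasing --- that the condition defining $A_X$ ``is exactly what forces an $A_X$-equivariant map to respect this subobject'' --- conflates two different things: that condition constrains individual elements $\alpha \in A_X$, and by itself says nothing about which $A_X$-stable subspaces of $\omega(X)^{\oplus n}$ arise from subobjects of $\mathscr C$. The essential-surjectivity sketch suffers from the same circularity: taking $\xi = (e_1,\ldots,e_n) \in \omega(X^{\oplus n})$ and $Y$ the smallest subobject with $\xi \in \omega(Y)$, the inclusion $A_X \cdot \xi \subset \omega(Y)$ is automatic, but the reverse inclusion is equivalent to producing a subobject $Y'$ with $\omega(Y') = A_X \cdot \xi$, which is again the unproved converse. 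The paper avoids all of this: the transporter $P_X$ is built concretely enough that $\omega(P_X) = A_X$ can be \emph{computed} (using the explicit description of $\omega$ applied to a transporter and Lemma~\ref{lem:cat}\eqref{it:lem-cat-4}), after which the natural map $P_X \Tenint_{A_X} \omega(Y) \to Y$ is shown to be an isomorphism by applying $\omega$ and invoking that $\omega$ reflects isomorphisms --- and this single argument delivers full faithfulness and essential surjectivity simultaneously, with no appeal to a converse of the defining property of $A_X$. To salvage your approach you would need to prove that every $A_X$-submodule of $\omega(X^{\oplus n})$ is $\omega$ of a subobject of $X^{\oplus n}$; that claim is true but its proof is essentially the $P_X$ construction in disguise.
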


If $A$ is a $\bk$-algebra, and if we apply this proposition to the
category $\mathscr C=\Mod_A$ of finite-dimensional $A$-modules with $\omega$ the forgetful functor (which
keeps the $\bk$-vector space structure but forgets the structure of
$A$-module), then Exercise~\ref{exo:reconstruction} shows that the algebra $A_X$
is precisely the image of $A$ in $\End_\bk(X)$. The proposition is thus
mainly saying that the exercise can be stated within the language
of abelian categories.

For the proof of Proposition~\ref{prop:reconstruction} we will need the following standard facts from Category Theory.

\begin{lem}\phantomsection
\label{lem:cat}
\begin{enumerate}
\item
\label{it:lem-cat-1}
An exact additive functor $F:\mathscr A\to\mathscr B$ between two abelian
categories preserves kernels and cokernels. It thus preserves
finite intersections and finite sums (in an ambient object).
\item
\label{it:lem-cat-2}
A faithful functor $F:\mathscr A\to\mathscr B$ between two abelian
categories does not kill any nonzero object.
\item
\label{it:lem-cat-3}
Let $F:\mathscr A\to\mathscr B$ be an exact faithful additive functor between
two abelian categories and let $u:X\to Y$ be a morphism in $\mathscr A$.
Then $u$ is an monomorphism (respectively, epimorphism) if and only
if $F(u)$ is so.
\item
\label{it:lem-cat-4}
Let $F:\mathscr A\to\mathscr B$ be an exact faithful additive functor
between two abelian categories. Assume that $\mathscr B$ is Artinian
and Noetherian: any monotone sequence of subobjects becomes
eventually constant. Then arbitrary intersections and arbitrary
sums (in an ambient object) exist in both $\mathscr A$ and $\mathscr B$, and $F$ preserves
intersections and sums.
\end{enumerate}
\end{lem}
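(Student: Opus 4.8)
The plan is to establish the four assertions in order, each being a short diagram-chasing argument that uses only the definitions of kernel, cokernel, mono, epi, and exactness, plus the fact that an additive functor preserves the zero object and biproducts.

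\emph{Part (1).} By definition an exact additive functor $F$ sends a short exact sequence to a short exact sequence. Given a morphism $u\colon X\to Y$ in $\mathscr A$, factor it through its image: $0\to\ker u\to X\to\im u\to 0$ and $0\to\im u\to Y\to\coker u\to 0$ are exact. Applying $F$ (exact) yields $0\to F(\ker u)\to F(X)\to F(\im u)\to 0$ and $0\to F(\im u)\to F(Y)\to F(\coker u)\to 0$ exact, which identifies $F(\ker u)$ with $\ker F(u)$ and $F(\coker u)$ with $\coker F(u)$ compatibly. For the ``intersections and sums'' clause: if $A, B$ are subobjects of an ambient object $Z$, then $A\cap B$ is the kernel of $Z\to Z/A\oplus Z/B$ (or of the composite $A\to Z\to Z/B$) and $A+B$ is the image of $A\oplus B\to Z$; since $F$ preserves kernels, cokernels, images and biproducts, it preserves both constructions. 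I will state the identification as ``$F$ takes the subobject $A\cap B$ of $F(Z)$ to $F(A)\cap F(B)$,'' which is the sense in which a functor can preserve such a thing.

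\emph{Part (2).} Suppose $X\neq 0$ but $F(X)=0$. Then $\id_X\neq 0_X$ in $\Hom_{\mathscr A}(X,X)$ (in an abelian, hence additive, category a zero object has $\id=0$ and conversely $\id_X=0_X$ forces $X\cong 0$), yet $F(\id_X)=\id_{F(X)}=0=F(0_X)$, contradicting faithfulness. Hence $F$ kills no nonzero object.

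\emph{Part (3).} The morphism $u\colon X\to Y$ is a monomorphism iff $\ker u=0$. By Part (1), $F(\ker u)=\ker F(u)$; by Part (2), $F(\ker u)=0$ iff $\ker u=0$. Hence $u$ is mono iff $F(u)$ is mono. The epimorphism case is dual, using $\coker u$ in place of $\ker u$. (Here one needs that $F$ being exact and additive makes both the ``only if'' and ``if'' directions work; the ``if'' direction is exactly where faithfulness, via Part (2), enters.)

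\emph{Part (4).} Assume $\mathscr B$ is Artinian and Noetherian. Let $(X_i)_{i\in I}$ be a family of subobjects of an ambient object $Z$ in $\mathscr A$. Consider the family $(F(X_i))$ of subobjects of $F(Z)$. Because $\mathscr B$ satisfies both chain conditions, the (possibly infinite) intersection $\bigcap_i F(X_i)$ and sum $\sum_i F(X_i)$ in $\mathscr B$ are each realized by a \emph{finite} subfamily: there is a finite $J\subseteq I$ with $\bigcap_{i\in J}F(X_i)=\bigcap_{i\in I}F(X_i)$ (choose $J$ so that $\bigcap_{i\in J}F(X_i)$ is minimal in the Artinian poset of subobjects; minimality forces it to be contained in every $F(X_i)$), and dually a finite $J'$ with $\sum_{i\in J'}F(X_i)=\sum_{i\in I}F(X_i)$ by the Noetherian condition. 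Now argue that the corresponding finite intersection $\bigcap_{i\in J}X_i$ in $\mathscr A$ (which exists by Part (1)) is in fact the intersection of the whole family: for each $k\in I$, the subobject $\bigl(\bigcap_{i\in J}X_i\bigr)\cap X_k$ of $\mathscr A$ maps under $F$ to $\bigl(\bigcap_{i\in J}F(X_i)\bigr)\cap F(X_k)=\bigcap_{i\in I}F(X_i)=\bigcap_{i\in J}F(X_i)$, so the inclusion $\bigl(\bigcap_{i\in J}X_i\bigr)\cap X_k\hookrightarrow\bigcap_{i\in J}X_i$ becomes an isomorphism after $F$, hence is an isomorphism by Part (3); thus $\bigcap_{i\in J}X_i\subseteq X_k$ for all $k$, proving it is $\bigcap_{i\in I}X_i$. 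The same object therefore exists in $\mathscr A$, and $F$ sends it to $\bigcap_{i\in I}F(X_i)$. The sum case is dual, replacing subobjects by quotients (equivalently, passing to $Z$ modulo the subobjects) and using the Noetherian hypothesis. Hence arbitrary intersections and sums exist in $\mathscr A$ and also in $\mathscr B$ (where they are the eventually-constant values of the chains), and $F$ preserves them.

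The main obstacle is Part (4): one must be careful to phrase ``arbitrary intersection'' in a category without assumed completeness, reducing it to a finite one via the chain conditions in $\mathscr B$ and then transporting the conclusion back along $F$ using faithfulness (Part (3)). Parts (1)–(3) are routine once (1) is in hand, and (1) is immediate from exactness.
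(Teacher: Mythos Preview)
Your proof is correct and, for parts (1)--(3), essentially identical to the paper's argument (factor $u$ through its image, use faithfulness on $\id_X$, and combine the two via $\ker F(u)\cong F(\ker u)$).

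For part (4) you take a slightly different route. The paper first shows that $\mathscr A$ itself is Artinian and Noetherian: any monotone chain of subobjects in $\mathscr A$ maps under $F$ to a monotone chain in $\mathscr B$, which stabilizes; by part (3) the original chain must already have stabilized. Once $\mathscr A$ has both chain conditions, arbitrary intersections and sums in $\mathscr A$ are automatically finite ones, and part (1) finishes the job. You instead work directly with a given family: pick a finite subfamily $J$ whose intersection in $\mathscr B$ is minimal, then use part (3) pointwise to show $\bigcap_{i\in J}X_i\subseteq X_k$ for every $k$. Both arguments are valid and rest on the same mechanism (reflecting stabilization from $\mathscr B$ to $\mathscr A$ via part (3)); the paper's version is a bit cleaner because it yields the stronger intermediate statement that $\mathscr A$ is Artinian and Noetherian, which is of independent interest and makes the conclusion immediate, whereas your version avoids that detour but has to repeat the reflection argument inside the specific intersection computation.
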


\begin{proof}
\eqref{it:lem-cat-1}
Any morphism $u:X\to Y$ in $\mathscr A$ gives rise to two short
exact sequences
$$\xymatrix@!0{&&X\ar[dr]\ar[rr]^u&&Y\ar[dr]&&\\&\ker u\ar[ur]&&
\im u\ar[ur]\ar[dr]&&\coker u\ar[dr]&\\0\ar[ur]&&0\ar[ur]&&0&&0.}$$
Applying $F$ to this diagram and using the exactness assumption,
we see that this functor preserves kernels and cokernels. The last assertion
comes from the fact that the intersection (respectively, sum) of
two subobjects can be expressed by a pull-back (respectively,
push-forward) diagram, that is, as a kernel (respectively, cokernel).

\eqref{it:lem-cat-2}
Assume that $X$ is a nonzero object in $\mathscr A$. Then
$\id_X\neq0$ in $\End_{\mathscr A}(X)$. The faithfulness assumption
then implies that $\id_{F(X)}=F(\id_X)\neq0$ in
$\End_{\mathscr B}(F(X))$, whence $F(X)\neq0$.

\eqref{it:lem-cat-3}
It suffices to note that
$$\ker u=0\ \Longleftrightarrow\ F(\ker u)=0\
\Longleftrightarrow\ \ker F(u)=0$$
and
$$\coker u=0\ \Longleftrightarrow\ F(\coker u)=0\
\Longleftrightarrow\ \coker F(u)=0.$$

\eqref{it:lem-cat-4}
We first claim that $\mathscr A$ is Artinian and Noetherian.
Indeed given a monotone sequence of subobjects in $\mathscr A$, its
image by $F$ is a monotone sequence of subobjects in $\mathscr B$,
so becomes eventually constant; \eqref{it:lem-cat-3} then implies that the
sequence in $\mathscr A$ also becomes eventually constant. Thus
arbitrary intersections and sums exist in $\mathscr A$ as well as
in $\mathscr B$ and are in fact finite intersections or sums (by the Artinian or Noetherian property, respectively). We
conclude with the help of \eqref{it:lem-cat-1}.
\end{proof}

We can now give the proof of Proposition~{\rm \ref{prop:reconstruction}}.

\begin{proof}
By definition, for any $\alpha\in A_X$, the endomorphism $\alpha^{\oplus n}$
of $\omega(X)^{\oplus n}$ leaves stable $\omega(Y)$ for all subobjects
$Y\subset X^{\oplus n}$, and thus induces an endomorphism of $\omega(Z)$
for all subquotients $Z$ of $X^{\oplus n}$. In this way, for each
object $Z$ in $\langle X\rangle$, the $\bk$-vector space $\omega(Z)$
becomes an $A_X$-module. If $Z$ is a subquotient of $X^{\oplus n}$ and $Z'$ is a subquotient of $X^{\oplus m}$, and if $f : Z \to Z'$ is a morphism in $\mathscr{C}$, then $Z \oplus Z'$ is a subquotient of $X^{\oplus (n+m)}$, and the image $\mathrm{gr}(f)$ of the morphism $(\id,f) : Z \to Z \oplus Z'$ (in other words the graph of $f$) is a subobject of $Z \oplus Z'$, hence also a subquotient of $X^{\oplus (n+m)}$. The fact that $A_X$ stabilizes $\omega(\mathrm{gr}(f))$  means that $\omega(f)$ is a morphism of $A_X$-modules.
In summary, we have proved that $\omega\bigl|_{\langle X\rangle}$
factorizes through the category of finite-dimensional $A_X$-modules,
as stated.


By definition, an endomorphism $\alpha$ of the functor
$\omega\bigl|_{\langle X\rangle}$ is the datum of an endomorphism
$\alpha_Z\in\End_\bk(\omega(Z))$ for each $Z\in\langle X\rangle$, such
that the diagram
\[
\xymatrix@C=2cm{\omega(Z)\ar[r]^{\alpha_Z}\ar[d]_{\omega(f)}&\omega(Z)
\ar[d]^{\omega(f)}\\\omega(Z')\ar[r]^{\alpha_{Z'}}&\omega(Z')}
\]
commutes for any morphism $f:Z\to Z'$ in $\langle X\rangle$.
This compatibility condition and the definition of $\langle X\rangle$
forces $\alpha$ to be determined by $\alpha_X\in\End_\bk(\omega(X))$,
and forces $\alpha_X$ to belong to $A_X$. Conversely, any element
in $A_X$ gives rise to an endomorphism of
$\omega\bigl|_{\langle X\rangle}$. This discussion shows the last
assertion in the proposition.

It remains to show that the functor $\overline\omega_X$ is an
equivalence of categories. We already know that it is faithful,
so we must show that it is full and essentially surjective.
We will do that by
constructing an inverse functor.


We will denote by $\mathscr C^\fin$ the category opposite to the category
of $\bk$-linear functors from $\mathscr C$ to $\Vect_\bk$. Yoneda's lemma says that
the functor $Z\mapsto\Hom_{\mathscr C}(Z,-)$ from $\mathscr C$ to
$\mathscr C^\fin$ is fully faithful, so $\mathscr C$ is a full
subcategory of $\mathscr C^\fin$. Given an object $Y\in\mathscr C$ and
a finite-dimensional $\bk$-vector space $V$, we define two objects
in $\mathscr C^\fin$ by
$$\Homint(V,Y):=\Hom_{\mathscr C}(Y,-)\otimes_\bk V\quad\text{and}
\quad Y\Tenint V:=\Hom_\bk(V,\Hom_{\mathscr C}(Y,-)).$$
These functors are representable: if $V=\bk^n$, then both functors are
represented by $Y^{\oplus n}$. So we will regard $\Homint(V,Y)$ and
$Y\Tenint V$ as being objects in $\mathscr C$ and forget everything
about $\mathscr C^\fin$.
Note however that we gained functoriality in $V$ in the process:
given two $\bk$-vector spaces $V$ and $W$ and an object $Y\in\mathscr C$,
there is a linear map
\begin{equation}
\label{eqn:uHom-functor}
\Hom_\bk(W,V)\to\Hom_{\mathscr C}(\Homint(V,Y),\Homint(W,Y))
\end{equation}
that sends an element $f\in\Hom_\bk(W,V)$ to the image of the identity
by the map
$$\xymatrix@C=7pt{\End_{\mathscr C}(\Homint(W,Y))\ar@{=}[r]&
\Hom_{\mathscr C}(Y,\Homint(W,Y))\otimes_\bk W\ar[d]^{\id\otimes f}&\\
&\Hom_{\mathscr C}(Y,\Homint(W,Y))\otimes_\bk V\ar@{=}[r]&
\Hom_{\mathscr C}(\Homint(V,Y),\Homint(W,Y)).}$$

For two $\bk$-vector spaces $W\subset V$ and two objects $Z\subset Y$
in $\mathscr C$, we define the transporter of $W$ into $Z$ as the
subobject
$$(Z:W):=\ker\bigl(\Homint(V,Y)\to\Homint(W,Y/Z)\bigr)$$
of $\Homint(V,Y)$, where the morphism $\Homint(V,Y)\to\Homint(W,Y/Z)$ is the obvious one.

Now we define
$$P_X=\bigcap_{\substack{n\geq0\\Y\subset X^{\oplus n}}}
\Bigl(\Homint(\omega(X),X)\cap(Y:\omega(Y))\Bigr).$$
Here the small intersection is computed in the ambient object
$\Homint\left(\omega(X)^{\oplus n},X^{\oplus n}\right)$,
the space $\Homint(\omega(X),X)$ being embedded diagonally,
and the large intersection, taken over all $n \geq 0$ and all subobjects
$Y\subset X^{\oplus n}$, is computed in the ambient object
$\Homint(\omega(X),X)$. The existence of this intersection is guaranteed by Lemma~\ref{lem:cat}\eqref{it:lem-cat-4}; moreover, as a subobject of $\Homint(\omega(X),X)
\cong X^{\oplus\dim(\omega(X))}$, the object $P_X$ belongs to
$\langle X\rangle$.

Equation~\eqref{eqn:uHom-functor} provides us with an algebra map
$$\End_\bk(\omega(X))\to\End_{\mathscr C}(\Homint(\omega(X),X)),$$
which induces an algebra map $A_X\to\End_{\mathscr C}(P_X)$.
This map can be seen as a morphism $P_X\Tenint A_X\to P_X$
in $\mathscr C$, and we can thus define the coequalizer
$$P_X\Tenint_{A_X}V:=\mathrm{coeq}\,\bigl(P_X\Tenint (A_X\otimes_\bk V)
\rightrightarrows P_X\Tenint V\bigr)$$
for each $A_X$-module $V$. (Here, one of the maps is induced by the $A_X$-action on $V$ via~\eqref{eqn:uHom-functor}, and the other one by the map $P_X\Tenint A_X\to P_X$ we have just constructed.) We will prove that the functor
$$P_X\Tenint_{A_X}-:\Mod_{A_X}\to\langle X\rangle$$
is an inverse to $\overline\omega_X$.

First, we remark that for any $\bk$-vector space $V$ and any object $Y\in\mathscr C$ there exists a canonical identification
\[
\omega(Y\Tenint V)=\omega(Y)
\otimes_\bk V.
\]
Indeed $\id_{Y\Tenint V}$ defines an element in
\[
\Hom_{\mathscr{C}}(Y\Tenint V, Y \Tenint V) = \Hom_\bk(V, \Hom_{\mathscr{C}}(Y,Y\Tenint V)).
\]
The image of this element under the map
\[
\Hom_\bk(V, \Hom_{\mathscr{C}}(Y,Y\Tenint V)) \to \Hom_\bk(V, \Hom_{\bk}(\omega(Y),\omega(Y\Tenint V)))
\]
induced by $\omega$ provides a canonical element in
\[
\Hom_\bk(V, \Hom_\bk(\omega(Y), \omega(Y\Tenint V))) \cong \Hom_\bk(\omega(Y) \otimes_\bk V, \omega(Y \Tenint V)),
\]
or in other words a canonical morphism $\omega(Y) \otimes_\bk V \to \omega(Y \Tenint V)$. To check that this morphism is invertible one can assume that $V=\bk^n$, in which case the claim is obvious.
%
Likewise, we have an identification
$\omega(\Homint(V,Y))=\Hom_\bk(V,\omega(Y))$.

Using these identifications, the exactness
of $\omega$ implies that given two $\bk$-vector spaces $W\subset V$
and two objects $Z\subset Y$ in $\mathscr C$, 
$$\omega \bigl( (Z:W) \bigr)=\bigl\{\alpha\in\Hom_\bk(V,\omega(Y))\bigm|\alpha(W)
\subset\omega(Z)\bigr\}.$$
From Lemma~\ref{lem:cat}\eqref{it:lem-cat-4}, it then follows that $\omega(P_X)=A_X$ (as a right
$A_X$-module), and therefore, that for each $V\in\Mod_{A_X}$ we have
$$\overline\omega_X(P_X\Tenint_{A_X}V)=\overline\omega_X(P_X)\otimes_{A_X}V
\cong V.$$
Hence $\overline\omega_X\bigl(P_X\Tenint_{A_X}-\bigr)$ is
naturally isomorphic to the identity functor of $\Mod_{A_X}$.

For the other direction, we start by checking that
$$\Hom_\bk(V,\End_{\mathscr C}(Y)\otimes_\bk V)=\Hom_\bk(V,\Hom_{\mathscr C}
(\Homint(V,Y),Y))=\Hom_{\mathscr C}(\Homint(V,Y)\Tenint V,Y).$$
To the canonical element in the left-hand side (defined by $v \mapsto \id_Y \otimes v$)
corresponds a canonical morphism $\Homint(V,Y)\Tenint V\to Y$ in
$\mathscr C$. Considering the latter for $V=\omega(X)$ and $Y=X^{\oplus n}$,
we obtain a canonical map
$$\Homint(\omega(X),X)\Tenint\omega(X^{\oplus n})\to X^{\oplus n},$$
whence by restriction
$$P_X\Tenint_{A_X}\omega(Y)\to Y$$
for any subobject $Y\subset X^{\oplus n}$.
The latter map is an isomorphism because, as we saw above, its image
by $\omega$ is an isomorphism (see Lemma~\ref{lem:cat}\eqref{it:lem-cat-3}). The right exactness
of $\omega$ and of $P_X\Tenint_{A_X}-$ then imply that
$P_X\Tenint_{A_X}\omega(Z)\xrightarrow\sim Z$ for each
subquotient $Z$ of $X^{\oplus n}$, and we conclude that
$P_X\Tenint_{A_X}\overline\omega_X(-)$ is naturally isomorphic to
the identity functor of $\langle X\rangle$.
\end{proof}


In the setup of the proposition, if $X$ and $X'$ are two objects
of $\mathscr C$ such that $\langle X\rangle\subset\langle X'\rangle$
(for instance if $X'$ is of the form $X\oplus Y$), then we have a
restriction morphism
$$A_{X'}\cong\End\bigl(\omega\bigl|_{\langle X'\rangle}\bigr)\to
\End\bigl(\omega\bigl|_{\langle X\rangle}\bigr)\cong A_X.$$
One would like to embrace the whole category $\mathscr C$ by taking
larger and larger subcategories $\langle X\rangle$ and going to the
limit, but the category of finite-dimensional modules over the
inverse limit of a system of algebras is not the union of the
categories of finite-dimensional modules over the algebras. Things work much better if one looks at
comodules over coalgebras, mainly because tensor products commute
with direct limits.

\subsection{Algebras and coalgebras}
\label{ss:alg-coalg}
Let us recall how the dictionary between finite-dimensional algebras
and finite-dimensional coalgebras works (see for instance~\cite[Chap.~III]{kassel}):
\begin{center}
\begin{tabular}{lcl}
$A$ finite-dimensional $\bk$-algebra&
$\longleftrightarrow$& its $\bk$-dual
$B=A^\vee$, a finite-dim. $\bk$-coalgebra;\\[8pt]
$m:A\otimes_\bk A\to A$ multiplication&
$\longleftrightarrow$&
$\Delta:B\to B\otimes_\bk B$ comultiplication (coassociative)\\
(associative)&&
$\varepsilon:B\to \bk$ counit\\
$\eta:\bk\to A$ unit&&(obtained by transposing $m$ and $\eta$);\\[8pt]
left $A$-module structure&
$\longleftrightarrow$&
right $B$-comodule structure on $M$ with\\
on a space $M$ with action  &&
coaction map $\delta:M\to M\otimes_\bk B$\\
map $\mu:A\otimes_\bk M\to M$ &&defined by $\mu(a\otimes m)=(\id_M\otimes\mathrm{ev}_a)\circ\delta(m)$,
where\\
&&$\mathrm{ev}_a:B\to\bk$ is the evaluation at $a$.
\end{tabular}
\end{center}
In the context of this dictionary, one can identify the category
$\Mod_A$ of finite-dimensional left $A$-modules with the category
$\Comod_B$ of finite-dimensional right $B$-comodules.

\subsection{A second reconstruction theorem}
\label{ss:reconstruction-2}

Going back to the setting of~\S\ref{ss:reconstruction-1}, we see that whenever $\langle X\rangle
\subset\langle X'\rangle$, we get a morphism of coalgebras
$$B_X:=A_X^\vee\to A_{X'}^\vee=:B_{X'}.$$
Now we can choose $X$ with more and more direct summands, so that
$\langle X\rangle$ grows larger and larger. Our running assumption that
all categories are essentially small allows us to take the direct limit of the coalgebras $B_X$ over the set of isomorphism classes of objects of $\mathscr{C}$, for the order determined by the inclusions $\langle X\rangle
\subset\langle X'\rangle$. We then obtain the following statement.

\begin{thm}
\label{thm:reconstruction-coalgebras}
Let $\mathscr C$ and $\omega$ be as in Proposition~{\rm \ref{prop:reconstruction}}. Set
\[
B:=\varinjlim_X B_X.
\]
Then $\omega$ admits a canonical factorization
$$\xymatrix@=40pt@!0{\mathscr C\ar[rr]^(.46){\overline\omega}
\ar[dr]_(.4)\omega&&\Comod_B\ar[dl]^(.4){\forget}\\&\Vect_\bk&}$$
where $\overline\omega$ is an equivalence of categories.
\end{thm}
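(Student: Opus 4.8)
The plan is to build the equivalence by taking a colimit of the equivalences $\overline\omega_X$ furnished by Proposition~\ref{prop:reconstruction}, after translating everything into the language of comodules. First I would record the basic compatibility: whenever $\langle X\rangle\subset\langle X'\rangle$, the restriction map $A_{X'}\to A_X$ of Proposition~\ref{prop:reconstruction} identifies with a map of $\bk$-algebras, hence dualizes to an \emph{injective} map of coalgebras $B_X\hookrightarrow B_{X'}$ (the dual of a surjection of finite-dimensional algebras is an injection of coalgebras, and $A_{X'}\to A_X$ is surjective since both sides are subalgebras of endomorphism algebras and $A_X$ is the endomorphism algebra of a restriction of the same functor). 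These maps satisfy the obvious cocycle conditions, so $B:=\varinjlim_X B_X$ is a well-defined $\bk$-coalgebra, and each $B_X$ is a subcoalgebra of $B$. Here it is crucial that the transition maps be monomorphisms so that the colimit is a genuine union; this is where the remark preceding the theorem ("things work much better with comodules, because tensor products commute with direct limits") gets used.

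Next I would define the functor $\overline\omega:\mathscr C\to\Comod_B$. Given an object $Z\in\mathscr C$, choose any $X$ with $Z\in\langle X\rangle$ (e.g.\ $X=Z$); then $\overline\omega_X(Z)$ is a finite-dimensional $A_X$-module, equivalently a finite-dimensional $B_X$-comodule via the dictionary of~\S\ref{ss:alg-coalg}, and pushing the coaction forward along $B_X\hookrightarrow B$ makes $\omega(Z)$ a $B$-comodule. I would check this is independent of the choice of $X$ up to canonical isomorphism: if $Z\in\langle X\rangle\cap\langle X'\rangle$, both constructions factor through $\langle X\oplus X'\rangle$ and agree there because the transition maps on coalgebras are compatible with the forgetful functors; on morphisms the same argument applies, using that $\overline\omega_X$ is functorial. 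By construction $\forget\circ\overline\omega=\omega$, giving the claimed factorization.

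Then I would prove $\overline\omega$ is an equivalence. Faithfulness is immediate since $\omega=\forget\circ\overline\omega$ is faithful. For fullness and essential surjectivity, the key point is that every finite-dimensional $B$-comodule $V$ has image under the coaction $V\to V\otimes_\bk B$ landing in $V\otimes_\bk B_X$ for some $X$ (finite-dimensionality of $V$ plus $B=\varinjlim B_X$ being a union of subcoalgebras), hence $V$ is a $B_X$-comodule, i.e.\ an $A_X$-module; since $\overline\omega_X:\langle X\rangle\to\Mod_{A_X}$ is essentially surjective, $V\cong\overline\omega(Z)$ for some $Z$. Similarly, a morphism $\overline\omega(Z)\to\overline\omega(Z')$ of $B$-comodules is, after choosing $X$ with $Z,Z'\in\langle X\rangle$, a morphism of $B_X$-comodules (the comodule structures both come from $B_X$, and any $B$-comodule map between $B_X$-comodules is a $B_X$-comodule map since the coactions already factor through $B_X$), so it comes from a morphism in $\langle X\rangle$ by fullness of $\overline\omega_X$.

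The main obstacle is the bookkeeping around compatibility of the $\overline\omega_X$ and the well-definedness of $\overline\omega$ on all of $\mathscr C$: one must be careful that the colimit is filtered (any two objects $X,X'$ embed into $X\oplus X'$, so the index set of isomorphism classes ordered by $\langle\,\cdot\,\rangle$-inclusion is directed), that the transition coalgebra maps are injective so that "a finite-dimensional $B$-comodule factors through some $B_X$" is legitimate, and that the identifications $\Comod_{B_X}\cong\Mod_{A_X}$ are natural in $X$. None of these steps is hard individually, but assembling them cleanly — rather than any single calculation — is the real content; the substantive input, the equivalence $\langle X\rangle\cong\Mod_{A_X}$, is already in hand from Proposition~\ref{prop:reconstruction}.
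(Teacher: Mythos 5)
Your proof is correct and follows the same route the paper takes implicitly: form the filtered colimit $B=\varinjlim_X B_X$ of the coalgebras from Proposition~\ref{prop:reconstruction}, push the local equivalences $\langle X\rangle\cong\Mod_{A_X}\cong\Comod_{B_X}$ forward along the transition maps to define $\overline\omega$, and deduce fullness and essential surjectivity from the fact that every finite-dimensional $B$-comodule already lives over some $B_X$. The one point you state rather than prove --- surjectivity of the restriction map $A_{X'}\to A_X$, hence injectivity of $B_X\to B_{X'}$, which is what makes ``factors through some $B_X$'' legitimate --- does hold, but deserves a word: under the equivalence $\overline\omega_{X'}\colon\langle X'\rangle\xrightarrow{\sim}\Mod_{A_{X'}}$ the subcategory $\langle X\rangle$ corresponds to $\langle \overline\omega_{X'}(X)\rangle$, and by the characterization of $A_{(-)}$ in Proposition~\ref{prop:reconstruction} (or Exercise~\ref{exo:reconstruction}) the evaluation-at-$X$ map $A_{X'}\to\End_\bk(\omega(X))$ has image exactly $A_X$.
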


Here, the fact that $\omega(X)$ admits a structure of $B$-comodule (for $X$ in $\mathscr C$) means that there exists a canonical morphism $\omega(X) \to \omega(X) \otimes_\bk B$ satisfying the appropriate axioms. In other words, we have obtained a canonical morphism of functors $\omega \to \omega \otimes_\bk B$, where the right-hand side means the functor $X \mapsto \omega(X) \otimes_\bk B$ (and where we omit the natural functor from $\Vect_\bk$ to the category of \emph{all} $\bk$-vector spaces).

\begin{ex}\phantomsection
\label{ex:reconstruction}
\begin{enumerate}
\item
Let $V$ be a finite-dimensional $\bk$-vector space, and take
$\mathscr C=\Vect_\bk$ and $\omega=V\otimes_\bk-$. Then $B=\End_\bk(V)^\vee$.
Indeed, the category of finite-dimensional left $\End_\bk(V)$-modules
is semisimple, with just one simple object up to isomorphism, namely~$V$.
\item
\label{it:ex-recontruction-2}
Let $M$ be a set, let $\mathscr C = \Vect_\bk(M)$ be the category of finite-dimensional
$M$-graded $\bk$-vector spaces, and $\omega:\mathscr C\to\Vect_\bk$ be the
functor that forgets the $M$-grading. Then $B=\bk M$, the $\bk$-vector
space with basis $M$, with the coalgebra structure given by
$$\Delta(m)=m\otimes m,\quad\varepsilon(m)=1$$
for all $m\in M$. For each $X\in\mathscr C$, the coaction of $B$ on
$\omega(X)=X$ is the map
$$X\to X\otimes_\bk B,\quad x\mapsto\sum_{m\in M}x_m\otimes m,$$
where $x=\sum_{m\in M}x_m$ is the decomposition of $x$ into its
homogeneous components.
\item
\label{it:ex-recontruction-3}
Let $C$ be a coalgebra, and take $\mathscr{C}=\Comod_C$, with $\omega$ the forgetful functor. Then there exists a canonical isomorphism $C \cong B$. Indeed, if $X \in \Comod_C$, there exists a finite-dimensional
subcoalgebra $C'\subset C$ such that the $C$-comodule $X$ is actually
a $C'$-comodule.\footnote{In view of its importance, let us briefly recall the proof of this classical fact. Let $\delta:X\to X\otimes_\bk C$ be
the structure map of the $C$-comodule $X$ and let $(e_1,\ldots,e_n)$ be
a $\bk$-basis of $X$. Write $\delta(e_j)=\sum_ie_i\otimes c_{i,j}$. Then
$\Delta(c_{i,j})=\sum_kc_{i,k}\otimes c_{k,j}$ and $\varepsilon(c_{i,j})=
\delta_{i,j}$ (Kronecker's symbol), so $C'$ can be chosen as the $\bk$-span in $C$ of the
elements $c_{i,j}$.} Then the coaction morphism $X \to X \otimes_\bk C'$ defines an algebra morphism $(C')^\vee \to A_X$, hence a coalgebra morphism $B_X \to C'$. Composing with the embedding $C' \hookrightarrow C$ and passing to the limit we deduce a coalgebra morphism $B \to C$. In the reverse direction, if $C' \subset C$ is a finite-dimensional subcoalgebra, then $C'$ is an object in $\Comod_C$, hence it acquires a canonical $B$-comodule structure, i.e.~a coalgebra map $C' \to C' \otimes_\bk B$. Composing with the map induced by $\varepsilon$ we deduce a coalgebra morphism $C' \to B$. Since $C$ is the direct limit of its finite-dimensional subcoalgebras, we deduce a coalgebra morphism $C \to B$. It is easily seen that the morphisms we constructed are inverse to each other, proving our claim.
\end{enumerate}
\end{ex}

An homomorphism of $\bk$-coalgebras $f:B\to C$ induces a functor
$f_*:\Comod_B\to\Comod_C$. Specifically, given a $B$-comodule
$M$ with structure map $\delta:M\to M\otimes_\bk B$, the $C$-comodule
$f_*M$ has the same underlying $\bk$-vector space as $M$ and has
structure map $(\id_M\otimes f)\circ\delta:M\to M\otimes_\bk C$.

\begin{prop}\phantomsection
\label{prop:reconstruction-morph}
\begin{enumerate}
\item
\label{it:reconstruction-morph-1}
Let $\mathscr C$ be an abelian $\bk$-linear category, let $C$ be a
$\bk$-coalgebra, and let $F:\mathscr C\to\Comod_C$ be a $\bk$-linear
exact faithful functor. 
If $B$ is the coalgebra provided by Theorem~{\rm \ref{thm:reconstruction-coalgebras}} (for the functor given by the composition of $F$ with the forgetful functor $\Comod_C \to \Vect_\bk$) and $\overline{F} : \mathscr{C} \xrightarrow{\sim} \Comod_B$ the corresponding equivalence, then
there exists 
a unique morphism of $\bk$-coalgebras $f:B\to C$ such that 
the following diagram commutes:
$$\xymatrix@=40pt@!0{\mathscr C\ar[rr]^(.46){\overline F}
\ar[dr]_(.4)F&&\Comod_B\ar[dl]^(.4){f_*}\\&\Comod_C.&}$$
\item
\label{it:reconstruction-morph-2}
Let $B$ and $C$ be two $\bk$-coalgebras. Any $\bk$-linear functor
$F:\Comod_B\to\Comod_C$ such that
$$\xymatrix@=40pt@!0{\Comod_B\ar[rr]^F
\ar[dr]_(.4)\forget&&\Comod_C\ar[dl]^(.4)\forget\\&\Vect_\bk&}$$
commutes is of the form $F=f_*\,$ for a unique morphism of
coalgebras $f:B\to C$.
\end{enumerate}
\end{prop}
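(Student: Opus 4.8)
The plan is to reduce both parts to a single computation: that for a coalgebra $C$ and the forgetful functor $\Comod_C \to \Vect_\bk$, the coalgebra produced by Theorem~\ref{thm:reconstruction-coalgebras} is canonically $C$ itself, which is exactly Example~\ref{ex:reconstruction}\eqref{it:ex-recontruction-3}. With that identification in hand, both statements become natural-transformation bookkeeping.

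For part~\eqref{it:reconstruction-morph-2}, I would argue as follows. The functor $F : \Comod_B \to \Comod_C$ commuting with the forgetful functors means that, for every $B$-comodule $M$, the $C$-comodule $F(M)$ has underlying vector space $\omega(M)$; write its coaction as $\delta^F_M : \omega(M) \to \omega(M) \otimes_\bk C$. Functoriality of $F$ says precisely that the collection $(\delta^F_M)_M$ is a morphism of functors $\omega \to \omega \otimes_\bk C$ (both viewed as functors $\Comod_B \to \Vect_\bk$). On the other hand, applying Theorem~\ref{thm:reconstruction-coalgebras} to $\mathscr C = \Comod_B$ with $\omega$ the forget functor, Example~\ref{ex:reconstruction}\eqref{it:ex-recontruction-3} identifies the reconstructed coalgebra with $B$ and the universal comodule structure with the tautological coaction $\omega \to \omega \otimes_\bk B$. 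By the universal property packaged in that theorem (concretely: a natural transformation $\omega \to \omega \otimes_\bk C$ factors uniquely through $\omega \to \omega \otimes_\bk B$ via a coalgebra map $B \to C$ — this is the coalgebra reformulation of the fact that $B_X$ is the endomorphism algebra of $\omega|_{\langle X\rangle}$, dualized and passed to the limit), there is a unique coalgebra morphism $f : B \to C$ with $\delta^F_M = (\id \otimes f)\circ \delta_M$ for all $M$. That identity says exactly $F = f_*$. Uniqueness of $f$ is uniqueness in the universal property, but can also be seen directly by testing on a finite-dimensional subcoalgebra $C' \subset C$ regarded via $F$, or by evaluating at $M = B$ itself.

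For part~\eqref{it:reconstruction-morph-1}, compose $F$ with $\forget : \Comod_C \to \Vect_\bk$ to get an exact faithful $\bk$-linear functor $\mathscr C \to \Vect_\bk$; Theorem~\ref{thm:reconstruction-coalgebras} gives the coalgebra $B$ and the equivalence $\overline F : \mathscr C \xrightarrow{\sim} \Comod_B$ lifting it. Now the original $F$ already lifts the same underlying functor to $\Comod_C$, so $F \circ \overline F^{-1} : \Comod_B \to \Comod_C$ is a $\bk$-linear functor commuting with the forgetful functors. Apply part~\eqref{it:reconstruction-morph-2} to this composite: it equals $f_*$ for a unique coalgebra morphism $f : B \to C$, and then $f_* \circ \overline F = F$, which is the asserted commuting triangle. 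Uniqueness of $f$ follows from the uniqueness in part~\eqref{it:reconstruction-morph-2} together with essential surjectivity of $\overline F$ (any two $f$'s making the triangle commute induce the same functor $\Comod_B \to \Comod_C$ after precomposition with the equivalence $\overline F$, hence the same functor, hence are equal).

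The step I expect to be the real content — everything else is formal — is making precise and invoking the universal property of $B$ used in part~\eqref{it:reconstruction-morph-2}: that natural transformations $\omega \to \omega \otimes_\bk C$ are in bijection with coalgebra maps $B \to C$, compatibly with the comonoidal/comodule structure. This is implicit in Theorem~\ref{thm:reconstruction-coalgebras} and Example~\ref{ex:reconstruction}\eqref{it:ex-recontruction-3} but deserves to be spelled out: at finite level it is the statement, dual to Proposition~\ref{prop:reconstruction}, that $B_X = A_X^\vee$ corepresents the functor of natural endomorphisms of $\omega|_{\langle X\rangle}$, hence (upgrading endomorphisms to $C$-valued "coactions") corepresents natural transformations into $\omega \otimes_\bk C$; passing to the direct limit over $X$ and using that $\otimes_\bk C$ and $\Hom$ out of finitely generated data commute with this limit yields the claim for all of $\mathscr C$. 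Once that is in place, both parts are a diagram chase.
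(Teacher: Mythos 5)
Your proof is correct, but it takes a genuinely different route from the paper's. The paper proves part~\eqref{it:reconstruction-morph-1} directly: for each $X$ it uses Example~\ref{ex:reconstruction}\eqref{it:ex-recontruction-3} to find a finite-dimensional $C'\subset C$ through which $F|_{\langle X\rangle}$ factors, observes that restriction along $F$ gives an algebra map $(C')^\vee\to A_X$, dualizes to a coalgebra map $B_X\to C'\hookrightarrow C$, and passes to the limit over $X$. It then treats part~\eqref{it:reconstruction-morph-2} as the special case $\mathscr C=\Comod_B$, identifying the reconstructed coalgebra with $B$ via Example~\ref{ex:reconstruction}\eqref{it:ex-recontruction-3} and writing down the concrete formula $f=(\varepsilon\otimes\id_C)\circ\delta$. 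You reverse this order: you establish part~\eqref{it:reconstruction-morph-2} first, by packaging a compatible family of $C$-coactions on $\omega(M)$ as a natural transformation $\omega\to\omega\otimes_\bk C$ and invoking the universal property that such transformations factor uniquely through the tautological $B$-coaction via a coalgebra map, and then deduce part~\eqref{it:reconstruction-morph-1} as a purely formal corollary by factoring $F=(F\circ\overline F^{-1})\circ\overline F$.

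Both orderings work, and the core content is the same: whichever part one proves first, the real work is the identification $\mathrm{Nat}(\omega|_{\langle X\rangle},\omega|_{\langle X\rangle}\otimes_\bk V)\cong\Hom_\bk(B_X,V)$ (equivalently $A_X\otimes_\bk V$) at finite level, plus the check that compatibility with the comodule axioms cuts out exactly the coalgebra maps. You correctly flag this as "the real content" and sketch it; the paper is comparably terse on the same point ("Abstract nonsense arguments \dots"). Your ordering has a mild advantage: the reduction of \eqref{it:reconstruction-morph-1} to \eqref{it:reconstruction-morph-2} is pure formalism and yields the uniqueness statement in \eqref{it:reconstruction-morph-1} essentially for free, whereas the paper does not explicitly address uniqueness in \eqref{it:reconstruction-morph-1}. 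The paper's ordering has the advantage of being more hands-on at the level of $\langle X\rangle$ (just restriction of endomorphism algebras, no appeal to a corepresentability statement for $\omega\otimes_\bk(\bm-)$). No gap; just a different but equivalent organization.
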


\begin{proof}
\eqref{it:reconstruction-morph-1}
Let $X$ be an object in $\mathscr C$. As seen in Example~\ref{ex:reconstruction}\eqref{it:ex-recontruction-3}, there exists a finite-dimensional
subcoalgebra $C'\subset C$ such that the $C$-comodule $F(X)$ is actually
a $C'$-comodule.
The restriction to the
category $\langle X\rangle$ of the functor $F$ then factorizes through
$\Comod_{C'}=\Mod_{(C')^\vee}$. Let $\omega:\Mod_{(C')^\vee}\to\Vect_\bk$ be
the forgetful functor and let $A_X$ be the endomorphism algebra of the
functor $\omega\circ F\bigl|_{\langle X\rangle}$.

Consider the diagram
$$\xymatrix@C=64pt@R=34pt@!0{\langle X\rangle\ar[rr]^{\overline F_X}
\ar[dr]_(.4)F&&\Mod_{A_X}\ar@{-->}[dl]\ar[ddl]\\&\Mod_{(C')^\vee}
\ar[d]_(.46)\omega&\\&\Vect_\bk.&}$$
Any $\alpha\in (C')^\vee$ can be seen as an endomorphism of the
functor $\omega$, so induces by restriction an endomorphism of
$\omega\circ F\bigl|_{\langle X\rangle}$, or in other words an element
of $A_X$. Our situation thus gives us a morphism of algebras
$(C')^\vee\to A_X$, that is, a morphism of coalgebras $A_X^\vee\to C'$.
Further, $\overline F_X$ is an equivalence of categories, because the
$\bk$-linear functor $\omega\circ F\bigl|_{\langle X\rangle}$ is exact
and faithful. 
Taking as before the limit over $\langle X\rangle$ yields
the desired coalgebra $B=\varinjlim A_X^\vee$, the morphism of
coalgebras $f:B\to C$, and the equivalence of categories $\overline F$.

\eqref{it:reconstruction-morph-2}
Statement~\eqref{it:reconstruction-morph-2} is essentially the special case of~\eqref{it:reconstruction-morph-1} in the context of Example~\ref{ex:reconstruction}\eqref{it:ex-recontruction-3}. More concretely,
the coalgebra $B$ is a right comodule over itself. The functor $F$ maps
it to a $C$-comodule with the same underlying vector space. We thus get
a structure map $\delta:B\to B\otimes C$. Composing with the augmentation
$\varepsilon:B\to\bk$, we get a map $f=(\varepsilon\otimes\id_C)\circ\delta$
from $B$ to $C$. Abstract nonsense arguments (the functoriality of $F$
and the axioms of comodules) imply that $f:B\to C$ is a coalgebra map
and that $F=f_*$.
\end{proof}

\subsection{Tannakian reconstruction}

As we saw in~\S\ref{ss:reconstruction-2}, a $\bk$-linear abelian category equipped with
an exact faithful $\bk$-linear functor to $\Vect_\bk$ is equivalent to the category of
right comodules over a $\bk$-coalgebra $B$ equipped with the forgetful
functor. On the other hand, an affine group scheme $G$ over $\bk$ is a scheme
represented by a commutative $\bk$-Hopf algebra $H=\bk[G]$, and representations of
$G$ are the same as right $H$-comodules (see e.g.~\cite[\S 1.4 and \S 3.2]{waterhouse} or~\cite[\S I.2]{jantzen}).
A commutative Hopf algebra is a coalgebra with the extra datum of an
associative and commutative multiplication with unit, plus the existence
of the antipode. Striving to translate this setup into the language of
categories, we look for the extra structures on an abelian $\bk$-linear
category that characterize categories of representations of affine
group schemes.

The adequate notion is called rigid abelian tensor categories.
Rather than studying this notion in the greatest possible
generality, which would take too much space for the expected benefit,
we will state a theorem tailored to our goal of understanding the
geometric Satake equivalence. For a more thorough (and formal)
treatment, the reader is referred to~\cite{sr} and~\cite{deligne}, or to~\cite{kassel} for a more leisurely walk.

A last word before stating the main theorem of this subsection: a
multiplication map
\[
 \mathrm{mult} : B\otimes_\bk B\to B
\]
on a coalgebra $B$ which is a coalgebra morphism allows
to define a structure of $B$-comodule on the tensor product over
$\bk$ of two $B$-comodules. Specifically, if $M$ and $M'$ are
$B$-comodules with structure maps $\delta_M:M\to M\otimes_\bk B$ and
$\delta_{M'}:M'\to M'\otimes_\bk B$, then the structure map on
$M\otimes_\bk M'$ is defined by the composition depicted on the diagram
$$\xymatrix{M\otimes_\bk M'\ar[r]^{\delta_{M\otimes M'}}
\ar[d]_{\delta_M\otimes\delta_{M'}}&M\otimes_\bk M'\otimes_\bk B\\
M\otimes_\bk B\otimes_\bk M'\otimes_\bk B\ar[r]&M\otimes_\bk M'\otimes_\bk
B\otimes_\bk B\ar[u]_{\id_{M\otimes M'}\otimes\mathrm{mult}}}$$
where the bottom arrow is the usual commutativity constraint for
tensor products of $\bk$-vector spaces that swaps the second and third
factors.

Given an affine group scheme $G$ over $\bk$, we denote the category of
finite-dimensional representations of $G$ (or equivalently finite dimensional right $\bk[G]$-comodules) by $\Rep_\bk(G)$.

\begin{thm}
\label{thm:tannakina-reconstruction}

Let $\mathscr C$ be an abelian $\bk$-linear category equipped with the
following data:
\begin{itemize}
\item
an exact $\bk$-linear faithful functor $\omega:\mathscr C\to\Vect_\bk$
(called the fiber functor);
\item
a $\bk$-bilinear functor $\otimes:\mathscr C\times\mathscr C\to\mathscr C$
(the tensor product);
\item
an object $U\in\mathscr C$ (the tensor unit);
\item
an isomorphism $\phi_{X,Y,Z}:X\otimes(Y\otimes Z)\xrightarrow{\sim}(X\otimes Y)\otimes
Z$, natural in $X$, $Y$ and $Z$ (the associativity constraint);
\item
isomorphisms $U\otimes X\xrightarrow[\sim]{\lambda_X}X\xleftarrow[\sim]{\rho_X}
X\otimes U$, both natural in $X$ (the unit constraints);
\item
an isomorphism $\psi_{X,Y}:X\otimes Y\xrightarrow{\sim} Y\otimes X$ natural in $X$
and $Y$ (the commutativity constraint).
\end{itemize}
We also assume we are given isomorphisms $\upsilon:\bk\xrightarrow{\sim}\omega(U)$ and
\begin{equation}
\label{eqn:tauXY}
 \tau_{X,Y}:\omega(X)\otimes_\bk \omega(Y)\xrightarrow{\sim}
\omega(X\otimes Y)
\end{equation}
in $\Vect_\bk$, with $\tau_{X,Y}$ natural in $X,Y \in \mathscr C$.
Finally, we assume the following conditions hold:
\begin{enumerate}
\item
\label{it:compatibility-2}
Taking into account the identifications provided by $\tau$ and $\upsilon$,
the isomorphisms $\omega(\phi_{X,Y,Z})$, $\omega(\lambda_X)$,
$\omega(\rho_X)$ and $\omega(\psi_{X,Y})$ are the usual associativity,
unit and commutativity constraints in $\Vect_\bk$.
\item
\label{it:inverses}
If $\dim_\bk \bigl( \omega(X) \bigr)=1$, then there exists $X^*\in\mathscr C$ such that
$X\otimes X^*\cong U$.
\end{enumerate}
Under these assumptions, there exists an affine group scheme $G$ such
that $\omega$ admits a canonical factorization
$$\xymatrix@=40pt@!0{\mathscr C\ar[rr]^(.46){\overline\omega}
\ar[dr]_(.4)\omega&&\Rep_\bk(G)\ar[dl]^(.4){\forget}\\&\Vect_\bk&}$$
where $\overline\omega$ is an equivalence of categories that respects
the tensor product and the unit in the sense of the compatibility
condition~\eqref{it:compatibility-2}.
\end{thm}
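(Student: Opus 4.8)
The plan is to apply Theorem~\ref{thm:reconstruction-coalgebras} to get a coalgebra $B$ with $\mathscr C \simeq \Comod_B$, and then to transport the extra structure (tensor product, unit, constraints) across this equivalence to endow $B$ with the structure of a commutative Hopf algebra, so that $\Comod_B = \Rep_\bk(G)$ for $G = \Spec B$. First I would invoke Theorem~\ref{thm:reconstruction-coalgebras} to obtain $B = \varinjlim_X B_X$ and the equivalence $\overline\omega : \mathscr C \xrightarrow{\sim} \Comod_B$ over $\Vect_\bk$. The key point is then to produce a multiplication $\mathrm{mult} : B \otimes_\bk B \to B$: the bifunctor $\otimes$ on $\mathscr C$, together with the isomorphisms $\tau_{X,Y}$, gives a $\bk$-linear functor $\Comod_B \times \Comod_B \to \Comod_B$ lifting the tensor product on $\Vect_\bk$; equivalently (using that $\Comod_B \times \Comod_B \simeq \Comod_{B \otimes_\bk B}$, which follows from the finite-dimensional coalgebra picture and commutation of $\otimes_\bk$ with direct limits) a $\bk$-linear functor $\Comod_{B \otimes_\bk B} \to \Comod_B$ over $\Vect_\bk$, to which Proposition~\ref{prop:reconstruction-morph}\eqref{it:reconstruction-morph-2} associates a unique coalgebra morphism $\mathrm{mult} : B \otimes_\bk B \to B$. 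Similarly $U$ together with $\upsilon$ gives a functor $\Vect_\bk = \Comod_\bk \to \Comod_B$ over $\Vect_\bk$, hence (again by Proposition~\ref{prop:reconstruction-morph}\eqref{it:reconstruction-morph-2}) a unit map $\eta : \bk \to B$.

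Next I would check that $(B, \mathrm{mult}, \eta)$ is a commutative, associative, unital $\bk$-algebra. This is where the hypotheses come in: associativity of $\mathrm{mult}$ follows from the associativity constraint $\phi$ together with the uniqueness clause in Proposition~\ref{prop:reconstruction-morph}\eqref{it:reconstruction-morph-2} (both $\mathrm{mult} \circ (\mathrm{mult} \otimes \id)$ and $\mathrm{mult} \circ (\id \otimes \mathrm{mult})$ are the coalgebra maps attached to functorially identified functors $\Comod_{B^{\otimes 3}} \to \Comod_B$), commutativity follows the same way from the commutativity constraint $\psi$, and the unit axioms from $\lambda, \rho$. Condition~\eqref{it:compatibility-2} is exactly what is needed to make these functorial identifications hold \emph{on underlying vector spaces}, which by faithfulness of the forgetful functor is enough. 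Together with the coalgebra structure already on $B$, one concludes that $B$ is a commutative bialgebra, so $G' := \Spec B$ is an affine monoid scheme and $\Comod_B = \Rep_\bk(G')$ compatibly with tensor products.

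The main obstacle will be the last step: upgrading the bialgebra $B$ to a Hopf algebra, i.e.~producing an antipode, equivalently showing $G'$ is a group scheme. This is precisely where condition~\eqref{it:inverses} (existence of tensor inverses for objects with one-dimensional fiber) is used; the standard argument (as in \cite[\S II]{dm}) is to show that every object of $\mathscr C$ admits a dual. One first treats an object $X$ whose fiber is one-dimensional: $X \otimes X^* \cong U$ forces $\omega(X^*)$ to be one-dimensional and makes $X$ invertible, giving a dual. For general $X$ one passes to a high exterior power $\wedge^{\mathrm{top}}$ (constructed inside $\mathscr C$ using the constraints, since $\langle X \rangle$ is stable under the relevant subquotient operations), which has one-dimensional fiber, and builds the dual of $X$ out of $X^{\otimes(n-1)}$ twisted by the inverse of this top power; the constraints~\eqref{it:compatibility-2} guarantee that the resulting evaluation and coevaluation morphisms satisfy the triangle identities after applying $\omega$, hence (by faithfulness) in $\mathscr C$. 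Once $\mathscr C$ is rigid, the standard formula $S = $ (transpose of identity under the duality, read off on $B$) defines the antipode and the axioms again follow by uniqueness-type arguments; I would set $G := G' = \Spec B$ and note that $\overline\omega$ then lands in $\Rep_\bk(G)$ with all claimed compatibilities. I expect the bookkeeping in this rigidity step — writing down $X^*$ explicitly and verifying the triangle identities with all the constraints inserted — to be the genuinely delicate part; everything else is a formal transfer of structure along Proposition~\ref{prop:reconstruction-morph}.
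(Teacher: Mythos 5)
Your proposal is correct in outline, but it diverges from the paper's argument at precisely the step the authors flag as their main simplification, and it contains one technical misstatement.

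For the multiplication, the paper does not pass through any equivalence of the form $\Comod_B\times\Comod_B\simeq\Comod_{B\otimes_\bk B}$ (which is false as stated: the left side is a product category, the right side is not; what you want is a Deligne-type tensor product of categories, not the plain product). Instead the paper works entirely at the level of the algebras $A_X$: the isomorphism $\tau_{X,X'}$ gives $\End_\bk(\omega(X\otimes X'))\cong\End_\bk(\omega(X))\otimes_\bk\End_\bk(\omega(X'))$, and exactness of $\otimes$ in each variable shows that for subobjects $Y\subset X^{\oplus n}$ and $Y'\subset (X')^{\oplus n'}$ the objects $Y\otimes X'$ and $X\otimes Y'$ are subobjects of powers of $X\otimes X'$, so this isomorphism carries $A_{X\otimes X'}$ into $A_X\otimes_\bk A_{X'}$; dualizing and passing to the limit gives $m:B\otimes_\bk B\to B$. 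This sidesteps the tensor-product-of-categories issue entirely, and it is worth being precise here since your stated identification does not hold.

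For the invertibility step, the paper deliberately does \emph{not} establish rigidity of $\mathscr C$ and does \emph{not} construct an antipode from evaluation/coevaluation pairs — see Remark~\ref{rmk:Tannaka}, which states explicitly that the rigidity hypothesis has been replaced by condition~\eqref{it:inverses} because the latter is easier to check, and refers to \cite[Proposition~1.20]{dm} for the nontrivial comparison between the two. Instead, after noting that $G=\Spec(B)$ is an affine monoid scheme, the paper shows directly that every $\alpha\in G(R)$ (a compatible family $\alpha_X\in\End_R(\omega(X)\otimes_\bk R)$) is invertible: if $\dim\omega(X)=1$ this follows from condition~\eqref{it:inverses} since $\alpha_X\otimes\alpha_{X^*}$ is conjugate to $\alpha_U=\id$; for general $X$ one forms $\bigwedge^d X$ (a quotient of $X^{\otimes d}$ defined via the commutativity constraint, with $d=\dim\omega(X)$), whose fiber is one-dimensional, and observes that $\alpha_{\bigwedge^d X}=\det(\alpha_X)$, whence $\alpha_X$ is invertible. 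This is genuinely shorter than what you propose: you never have to produce an object $X^*$ dual to a general $X$, nor verify triangle identities inside $\mathscr C$. Your remark that the rigidity verification is ``the genuinely delicate part'' is accurate — and the paper's point is that it can be avoided altogether, which is what makes condition~\eqref{it:inverses} a useful replacement for rigidity in the first place.
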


\begin{rmk}\phantomsection
\label{rmk:Tannaka}
\begin{enumerate}
\item
\label{it:G-end-fiber-functor}
It will be clear from the proof below that
the group scheme $G$ is the ``automorphism group of the fiber functor.''
This sentence means that for any commutative $\bk$-algebra $R$, an element
$\alpha\in G(R)$ is a collection of elements
$\alpha_X\in\End_R(\omega(X)\otimes_\bk R)$, natural in $X \in \mathscr{C}$, and compatible
with $\otimes$ and $U$ via the isomorphisms $\tau$ and $\upsilon$. There
is no need to specifically ask for invertibility: this will automatically
follow from the compatibility condition~\eqref{it:inverses}.
\item
\label{it:fiber}
The datum of isomorphisms~\eqref{eqn:tauXY} satisfying condition~~\eqref{it:compatibility-2}
are usually worded as: ``the functor $\omega$ is a tensor functor.''
\item
The faithfulness of $\omega$ and the compatibility condition~\eqref{it:compatibility-2} imply
that the associativity constraint $\phi$ (respectively, the unit
constraints $\lambda$ and $\rho$, the commutativity constraint $\psi$)
of $\mathscr C$ satisfies MacLane's pentagon axiom (respectively, the
triangle axiom, the hexagon axiom). Together, these coherence axioms
imply that any diagram built from the constraints commutes. This makes multiple
tensor products in $\mathscr C$ non-ambiguous, see~\cite[\S VII.2]{maclane}.
\item
Our formulation dropped completely the ``rigidity condition'' in the
usual formulation of the Tannakian reconstruction theorem. This condition
demands that each object $X$ has a dual $X^\vee$ characterized by an
evaluation map $X^\vee\otimes X\to U$ and a coevaluation map
$U\to X\otimes X^\vee$. Its purpose is to guarantee the existence
of inverses in $G$---without it, $G$ would only be an affine monoid
scheme. In Theorem~\ref{thm:tannakina-reconstruction}, it has been replaced by condition~\eqref{it:inverses}, which
is easier to check in the case we have in mind. See~\cite[Proposition~1.20 and Remark~2.18]{dm} for a more precise study of the relationship between these conditions.
\item
As in Example~\ref{ex:reconstruction}\eqref{it:ex-recontruction-3}, if we start with the category $\mathscr{C}=\Rep_\bk(G)$ for some $\bk$-group scheme $G$, with $\omega$ being the natural forgetful functor, then the group scheme reconstructed in Theorem~\ref{thm:tannakina-reconstruction} identifies canonically with $G$.
\end{enumerate}
\end{rmk}

\begin{proof}
We first remark that the bifunctor $\otimes:\mathscr C\times\mathscr
C\to\mathscr C$ is exact in each variable: this follows from the
analogous fact in the category $\Vect_\bk$ together with Lemma~\ref{lem:cat}\eqref{it:lem-cat-3}.

We reuse the notation $\langle X\rangle$, $A_X$ and $B_X$ from
\S\S\ref{ss:reconstruction-1}--\ref{ss:reconstruction-2}. The direct limit of the coalgebras $B_X$ is the coalgebra
$B$, with comultiplication $\Delta$ and counit $\varepsilon$.

Let $X$ and $X'$ two objects in $\mathscr C$. The isomorphism
$\tau_{X,X'}:\omega(X)\otimes_\bk \omega(X')\xrightarrow{\sim}\omega(X\otimes X')$
induces an isomorphism of algebras
\begin{equation}
\label{eqn:End-tensor}
\End_\bk(\omega(X\otimes X'))\cong\End_\bk(\omega(X))\otimes_\bk
\End_\bk(\omega(X')).
\end{equation}
The opening remark in this proof implies that given subobjects
$Y\subset X^{\oplus n}$ and $Y'\subset(X')^{\oplus n'}$, the tensor
products $Y\otimes X'$ and $X\otimes Y'$ are subobjects of
respectively $(X\otimes X')^{\oplus n}$ and $(X\otimes X')^{\oplus n'}$.
It follows that the isomorphism~\eqref{eqn:End-tensor} takes $A_{X\otimes X'}$ into
$A_X\otimes_\bk A_{X'}$. Taking the duals, we get a morphism of coalgebras
$B_{X'}\otimes_\bk B_X\to B_{X\otimes X'}$, and taking the direct limit
over $\langle X\rangle$ and $\langle X'\rangle$, we obtain a morphism
of coalgebras $m:B\otimes_\bk B\to B$.

On the other hand, the $\bk$-vector space $\omega(U)$ has dimension~$1$,
so the algebra $A_U$ is reduced to $\End_\bk(\omega(U))=\bk$. Thus
$B_U$ is the trivial one-dimensional $\bk$-coalgebra, and the definition
of $B$ as a direct limit of the $B_X$ (including $B_U$) leads to a
morphism of coalgebras $\eta:\bk\to B$.

Our coalgebra $B$ is thus equipped with a multiplication
$m:B\otimes_\bk B\to B$ and a unit $\eta:\bk\to B$. The compatibility
condition~\eqref{it:compatibility-2} implies that $(B,m,\eta)$ is an associative and
commutative $\bk$-algebra with unit.

Let us call $G$ the spectrum of the commutative $\bk$-algebra $(B,m,\eta)$;
this is an affine scheme over $\bk$. The commutative diagrams that express
the fact that $m$ and $\eta$ are morphisms of coalgebras also say that
$\Delta$ and $\varepsilon$ are morphisms of algebras. The latter thus
define morphisms of schemes
$$\Delta^*:G\times_{\Spec(\bk)} G\to G\quad\text{and}\quad\varepsilon^*:\Spec
(\bk)\to G.$$
The coassociativity of $\Delta$ and the counit property then imply that
$(G,\Delta^*,\varepsilon^*)$ is an affine
monoid scheme. It thus only remains to show that the elements of $G$ are
invertible.

Unwinding the construction that led to the definition of $G$, we see
that for any commutative $\bk$-algebra $R$, an element $\alpha\in G(R)$
is a collection of elements $\alpha_X\in\End_R(\omega(X)\otimes_\bk R)$,
natural in $X$, and compatible with $\otimes$ and $U$. We want to show
that $\alpha_X$ is invertible for all objects $X$.

First, if $X$ is such that $\dim\omega(X)=1$, then
by condition~\eqref{it:inverses} there exists $X^*$ such that $X\otimes X^*\cong U$,
and therefore $\alpha_X\otimes\alpha_{X'}$ is conjugate to $\alpha_U
=\id_{\omega(U)\otimes_\bk R}$, so $\alpha_X$ (an endomorphism of a free
$R$-module of rank~$1$) is invertible.

To deal with the general case, one constructs the exterior power
$\bigwedge^dX$ as the quotient of the $d$-th tensor power of $X$ by the appropriate relations (defined with the help of the commutativity constraint of
$\mathscr C$), for $d=\dim\omega(X)$. Since $\omega$ is compatible
with the commutativity constraint, $\omega\bigl(\bigwedge^dX\bigr)
\cong\bigwedge^d\omega(X)$ is $1$-dimensional. As we saw in our
particular case, this implies that $\alpha_{\bigwedge^dX}$ is
invertible. But this is $\bigwedge^d\alpha_X$ (in other words the determinant of $\alpha_X$), so we eventually
obtain that~$\alpha_X$ (an endomorphism of a free $R$-module of
rank $d$) is invertible.
\end{proof}

\begin{ex}\phantomsection
\label{ex:tannakian-reconstruction}
\begin{enumerate}
\item
\label{it:graded-Vect}
Continue with Example~{\rm \ref{ex:reconstruction}\eqref{it:ex-recontruction-2}}, and
suppose now that our category $\mathscr C$ of finite-dimensional
$M$-graded $\bk$-vector spaces is endowed with a tensor product
$\otimes$. There is then a law $*$ on $M$ such that
$$\bk[m] \otimes \bk[n]=\bk[m*n]$$
for all $m,n\in M$ (where $\bk[p]$ means $\bk$ placed in
degree $p$). The constraints~\eqref{it:compatibility-2} in the theorem impose
that $M$ is a commutative monoid, and then $B=\bk M$ is the associated monoid
algebra. The condition~\eqref{it:inverses}, if verified, implies that $M$ is
indeed a group. The affine group scheme $G=\Spec (B)$ given by the
theorem is then the Cartier dual of $M$ (see~\cite[\S2.4]{waterhouse}).
\item
Let $X$ be a connected topological manifold, let $\mathscr C$ be the
category of local systems on $X$ with coefficients in $\bk$, let $x\in X$,
and let $\omega$ be the functor $\mathcal L\mapsto\mathcal L_x$, the
fiber at point $x$. Then $G$ is the constant group scheme equal to
the fundamental group $\pi_1(X,x)$. On this example, we see how the
choice of a fiber functor subtly changes the group.
\item
\label{it:SVec}
We define the category $\SVec_\bk$ of supersymmetric $\bk$-vector spaces
as the category of $\mathbf Z/2\mathbf Z$-graded vector spaces, equipped
with the usual tensor product, with the usual associativity and unit
constraints, but with the supersymmetric commutativity constraint:
for $V=V_{\bar0}\oplus V_{\bar1}$ and $W=W_{\bar0}\oplus W_{\bar1}$,
the isomorphism $\psi_{V,W}:V\otimes_\bk W\to W\otimes_\bk V$ is defined as
$$\psi_{V,W}(v\otimes w)=\begin{cases}
w\otimes v&\text{if $v\in V_{\bar0}$ or $w\in W_{\bar0}$,}\\
-(w\otimes v)&\text{if $v\in V_{\bar1}$ and $w\in W_{\bar1}$.}
\end{cases}$$
Then the forgetful functor from $\SVec_\bk$ to $\Vect_\bk$ does not
respect the commutativity constraints, so one cannot apply the
theorem to this situation.
\end{enumerate}
\end{ex}

Proposition~\ref{prop:reconstruction-morph} also has a tensor analog. We state
without proof the assertion that is needed in the proof of the
geometric Satake equivalence. Observe that a homomorphism of
$\bk$-group schemes $f:H\to G$ induces a restriction functor
$f^*:\Rep_\bk(G)\to\Rep_\bk(H)$.

\begin{prop}\phantomsection
\label{prop:tannakian-morph}
\begin{enumerate}
\item
Let $\mathscr C$ be an abelian $\bk$-linear category with tensor
product, tensor unit, and associativity, commutativity and unit constraints.
Let $H$ be an
affine group scheme over $\bk$. Let $F:\mathscr C\to\Rep_\bk(H)$ be a
$\bk$-linear exact faithful functor, compatible with the monoidal structure and the
various constraints (in the same sense as in Theorem~{\rm \ref{thm:tannakina-reconstruction}}). Let $G$ be the affine $\bk$-group scheme provided by Theorem~{\rm \ref{thm:tannakina-reconstruction}} (for the composition of $F$ with the forgetful functor $\Rep_\bk(H) \to \Vect_\bk$) and $\overline{F} : \mathscr{C} \xrightarrow{\sim} \Rep_\bk(G)$ be the corresponding equivalence. Then there exists
a unique morphism of group schemes $f:H\to G$ such that the following diagram commutes:
$$\xymatrix@=40pt@!0{\mathscr C\ar[rr]^(.46){\overline F}
\ar[dr]_(.4)F&&\Rep_\bk(G)\ar[dl]^(.4){f^*}\\&\Rep_\bk(H).&}$$
 \item 
 If $G$ and $H$ are affine $\bk$-group schemes, any $\bk$-linear functor $F : \Rep_\bk(G) \to \Rep_\bk(H)$ compatible with tensor products, the forgetful functors, and the various constraints is of the form $f^*$ for a unique $\bk$-group scheme morphism $f : H \to G$.
\end{enumerate}
\end{prop}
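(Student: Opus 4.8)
The plan is to treat this as the tensor (Hopf-algebra) refinement of Proposition~\ref{prop:reconstruction-morph}: one first extracts a coalgebra morphism from that proposition, and then uses that $F$ is a tensor functor to upgrade it to a morphism of Hopf algebras. For part~(1), write $B := \bk[G]$, and recall that, \emph{as a coalgebra}, $B = \varinjlim_X B_X$ is exactly the object furnished by Theorem~\ref{thm:reconstruction-coalgebras} applied to $\omega := \forget \circ F : \mathscr C \to \Comod_{\bk[H]} \to \Vect_\bk$, and that $\overline F$ is the resulting equivalence $\mathscr C \xrightarrow{\sim} \Comod_B$ (equipped with additional tensor structure). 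Proposition~\ref{prop:reconstruction-morph}\eqref{it:reconstruction-morph-1}, applied to this $\omega$ and to the coalgebra $C := \bk[H]$, then produces a \emph{unique} morphism of coalgebras $\varphi : B \to \bk[H]$ with $\varphi_* \circ \overline F \cong F$ as functors into $\Comod_{\bk[H]} = \Rep_\bk(H)$. It remains to check that $\varphi$ is also compatible with the multiplications and units of $B$ and $\bk[H]$; both being commutative Hopf algebras, the antipode is then respected automatically, so $\varphi$ is a morphism of Hopf algebras, $f := \Spec(\varphi) : H \to G$ is a morphism of group schemes with $f^* = \varphi_*$, and the commuting triangle together with the uniqueness of $f$ follow from those of $\varphi$.

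Compatibility with units is immediate: by the construction in the proof of Theorem~\ref{thm:tannakina-reconstruction}, the unit $\bk \to B$ is the map coming from the one-dimensional coalgebra $B_U$ attached to the tensor unit $U$, and since $F$ sends $U$ (together with the identifications $\tau$ and $\upsilon$) to the tensor unit of $\Rep_\bk(H)$, the morphism $\varphi$ sends it to the unit of $\bk[H]$. Compatibility with multiplications is the crux, and the step I expect to be the main obstacle. One recalls from the proof of Theorem~\ref{thm:tannakina-reconstruction} that $m : B \otimes_\bk B \to B$ arises, in the colimit over $\langle X\rangle$ and $\langle X'\rangle$, from the coalgebra maps $B_{X'} \otimes_\bk B_X \to B_{X \otimes X'}$ dual to the inclusions $A_{X \otimes X'} \hookrightarrow A_X \otimes_\bk A_{X'}$ induced by $\tau_{X,X'}$, and that the multiplication on $\bk[H]$ admits the very same description in terms of the forgetful functor $\Rep_\bk(H) \to \Vect_\bk$ and its tautological tensor structure (cf.\ the last item of Remark~\ref{rmk:Tannaka}). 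Because $F$ is a tensor functor, the isomorphisms $\tau$ for $\omega = \forget \circ F$ are induced from the tensor data of $F$ and of $\forget$; propagating this compatibility through the subobjects $Y \subset X^{\oplus n}$, the algebras $A_X$, their $\bk$-duals, and the colimit yields precisely the identity $\varphi \circ m = m_{\bk[H]} \circ (\varphi \otimes \varphi)$. This is a routine but genuinely lengthy diagram chase, and it carries the real content of the argument.

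Part~(2) is the special case $\mathscr C = \Rep_\bk(G)$ of part~(1): by the last item of Remark~\ref{rmk:Tannaka}, the group scheme reconstructed from $(\Rep_\bk(G), \forget)$ is canonically $G$, and under this identification $\overline F$ becomes the identity functor, so part~(1) directly yields the unique $f : H \to G$ with $f^* \cong F$. Alternatively one may argue concretely, in parallel with the proof of Proposition~\ref{prop:reconstruction-morph}\eqref{it:reconstruction-morph-2}: viewing $\bk[G]$ as the filtered colimit of its finite-dimensional $G$-subcomodules, $F$ equips it with an $H$-comodule structure $\delta : \bk[G] \to \bk[G] \otimes_\bk \bk[H]$ on the same underlying vector space; setting $\varphi := (\varepsilon_G \otimes \id) \circ \delta$, the functoriality of $F$ together with its compatibility with $\otimes$ and the constraints shows that $\varphi$ is a morphism of Hopf algebras and that $F = \varphi_* = (\Spec \varphi)^*$, while any competing $f$ would induce the same $\delta$ and hence the same $\varphi$.
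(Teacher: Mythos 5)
The paper states this proposition explicitly \emph{without proof} (``We state without proof the assertion that is needed\dots''), merely pointing to it as the tensor analog of Proposition~\ref{prop:reconstruction-morph} and referring to \cite{dm}; so there is no in-text argument to compare against. Your route is exactly the one the paper's remark suggests and is correct: run Proposition~\ref{prop:reconstruction-morph}\eqref{it:reconstruction-morph-1} on $\omega = \forget\circ F$ and $C = \bk[H]$ to get the unique coalgebra map $\varphi : B \to \bk[H]$ with $\varphi_*\circ\overline F \cong F$, then upgrade $\varphi$ to a bialgebra morphism using the tensor-compatibility of $F$, and observe that a bialgebra morphism between Hopf algebras automatically commutes with the antipodes, so $f := \Spec(\varphi)$ works; part~(2) is the case $\mathscr C = \Rep_\bk(G)$ via Remark~\ref{rmk:Tannaka}.

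The only soft spot is that you leave the multiplicativity of $\varphi$ as an unsketched ``routine but genuinely lengthy diagram chase.'' It does go through, and the mechanism is worth recording since it is the real content: by Proposition~\ref{prop:reconstruction-morph}\eqref{it:reconstruction-morph-1} one has $(\id\otimes\varphi)\circ\delta^B_X = \delta_{F(X)}$ for every $X$, where $\delta^B_X$, $\delta_{F(X)}$ are the $B$- and $\bk[H]$-coactions on $\omega(X)$; the tensor-compatibility of $F$ identifies $\delta^B_{X\otimes X'}$ (respectively $\delta_{F(X\otimes X')}$) with the ``boxtimes'' of $\delta^B_X,\delta^B_{X'}$ via $m_B$ (respectively of $\delta_{F(X)},\delta_{F(X')}$ via $m_{\bk[H]}$); substituting the first identity into both sides gives
\[
\bigl(\id\otimes\id\otimes\varphi m_B\bigr)\circ(\id\otimes\mathrm{swap}\otimes\id)\circ(\delta^B_X\otimes\delta^B_{X'})
= \bigl(\id\otimes\id\otimes m_{\bk[H]}(\varphi\otimes\varphi)\bigr)\circ(\id\otimes\mathrm{swap}\otimes\id)\circ(\delta^B_X\otimes\delta^B_{X'}),
\]
and since, in a basis, $\delta^B_X(e_j)=\sum_i e_i\otimes c_{ij}$ with the matrix coefficients $c_{ij}$ spanning $B_X$, this forces $\varphi\circ m_B = m_{\bk[H]}\circ(\varphi\otimes\varphi)$ on $B_X\otimes B_{X'}$, hence on $B\otimes B$ after passing to the colimit.
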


\subsection{Properties of $G$ visible on $\Rep_\bk(G)$}

Recall that an affine $\bk$-group scheme $G$ is called \emph{algebraic} if the $\bk$-algebra of regular functions on $G$ is finitely generated. In this case, the construction of the group of connected components $\pi_0(G)$ of $G$ is recalled in~\cite[\S XIII.3]{milne} or~\cite[\S 6.7]{waterhouse}; this is an affine \'etale $\bk$-group scheme endowed with a canonical morphism $G \to \pi_0(G)$, and $G$ is connected iff $\pi_0(G)$ is trivial.

Recall also that an affine algebraic group scheme $G$ over $\bk$ is called \emph{reductive}\footnote{Sometimes, the definition of reductive groups allows disconnected groups. All the reductive groups we will consider in these notes will be (sometimes tacitly) assumed to be connected. Note that connectedness can be checked after base change to an algebraic closure of the base field; see~\cite[Proposition~XIII.3.8]{milne}.\label{fn:connectedness}} if it is smooth\footnote{Recall that this condition is automatic if $G$ is algebraic and $\mathrm{char}(\bk)=0$; see~\cite[Theorem~VI.9.3]{milne}.\label{fn:smooth-char-0}} (hence in particular algebraic) and connected and, for an algebraic closure $\overline{\bk}$ of $\bk$, the group $\Spec(\overline{\bk}) \times_{\Spec(\bk)} G$ is reductive in the usual sense, i.e.~does not contain any nontrivial smooth connected normal unipotent subgroup; see~\cite[Definition~XVII.2.1]{milne}.

\begin{prop}\phantomsection
\label{prop:properties-G}
\begin{enumerate}
\item
\label{it:properties-G-1}
Let $G$ be an affine group scheme over $\bk$. Then $G$ is algebraic if and
only if there exists $X\in\Rep_\bk(G)$ such that $X$ generates $\Rep_\bk(G)$
by taking direct sums, tensor products, duals, and subquotients.
\item
\label{it:properties-G-2}
Let $G$ be an algebraic affine group scheme over $\bk$. If $G$ is not connected,
then there exists a nontrivial representation $X\in\Rep_\bk(G)$ such that
the subcategory $\langle X\rangle$ (with the notation of~\S{\rm \ref{ss:reconstruction-1}}) is stable under $\otimes$.
\item
\label{it:properties-G-3}
Let $G$ be a connected algebraic affine group scheme over $\bk$. Assume that $\bk$ has
characteristic $0$, and that $\Rep_{\overline{\bk}}(G_{\overline{\bk}})$ is semisimple, where $\overline{\bk}$ is an algebraic closure of $\bk$ and $G_{\overline{\bk}} = \Spec(\overline{\bk}) \times_{\Spec(\bk)} G$. Then $G$ is reductive.
\end{enumerate}
\end{prop}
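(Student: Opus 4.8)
I would treat the three statements separately.

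\textbf{For part \eqref{it:properties-G-1}.} If $G$ is algebraic it admits a faithful finite-dimensional representation $V$ (a closed embedding $G \hookrightarrow \mathbf{GL}(V)$), and then $X := V \oplus V^\vee$ works: it is standard that every object of $\Rep_\bk(G)$ is a subquotient of a finite direct sum of representations $V^{\otimes a} \otimes_\bk (V^\vee)^{\otimes b}$ (see e.g.~\cite[\S 3.5]{waterhouse}), each of which is built from $X$ using $\oplus$ and $\otimes$. Conversely, suppose $X$ generates $\Rep_\bk(G)$. Moving dualizations inward (using $(Y \otimes Z)^\vee \cong Y^\vee \otimes Z^\vee$, $(Y \oplus Z)^\vee \cong Y^\vee \oplus Z^\vee$, and that a dual of a subquotient is a subquotient of the dual), every object of $\Rep_\bk(G)$ is a subquotient of a finite direct sum of representations $X^{\otimes a} \otimes_\bk (X^\vee)^{\otimes b}$; hence every $B_Y$ lies in the $\bk$-subalgebra of $\bk[G]$ generated by (finite bases of) $B_X$ and $B_{X^\vee}$, because the matrix coefficients of a subquotient, of a direct sum, and of a tensor product lie respectively inside, inside a sum of, and inside a product of the matrix-coefficient spaces of the constituents. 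Since $\bk[G] = \varinjlim_Y B_Y$, that subalgebra is all of $\bk[G]$, so $\bk[G]$ is finitely generated and $G$ is algebraic.

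\textbf{For part \eqref{it:properties-G-2}.} Write $G^\circ := \ker(G \to \pi_0(G))$ and let $\mathscr D \subseteq \Rep_\bk(G)$ be the full subcategory of representations on which $G^\circ$ acts trivially; inflation along $G \to \pi_0(G)$ identifies $\mathscr D$ with $\Rep_\bk(\pi_0(G))$, so $\mathscr D$ is closed under subquotients, finite direct sums, duals, and --- being the essential image of a tensor functor --- tensor products. Take $X \in \mathscr D$ corresponding to the regular representation $\bk[\pi_0(G)]$. Then $X$ is nontrivial: $\pi_0(G) \neq 1$ since $G$ is disconnected, and $\pi_0(G)$ acts faithfully on its regular representation, so $G$ acts nontrivially on $X$ and $X$ is not a direct sum of copies of the tensor unit. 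It remains to see $\langle X \rangle = \mathscr D$. Since $\mathscr D$ is closed under $\oplus$ and subquotients and contains $X$, we get $\langle X \rangle \subseteq \mathscr D$; conversely, by the standard fact that every finite-dimensional comodule over a finite-dimensional coalgebra embeds into a finite direct sum of copies of that coalgebra (regarded as a comodule over itself), every object of $\Rep_\bk(\pi_0(G)) = \mathscr D$ is a subobject of some $X^{\oplus n}$, hence lies in $\langle X \rangle$. Thus $\langle X \rangle = \mathscr D$ is stable under $\otimes$.

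\textbf{For part \eqref{it:properties-G-3}.} Since $\mathrm{char}(\bk) = 0$ and $G$ is algebraic, $G$ is smooth; since moreover $G$ is connected, $G_{\overline{\bk}}$ is connected (connectedness of group schemes of finite type over a field is insensitive to field extension; cf.~\cite[Proposition~XIII.3.8]{milne}), so $G_{\overline{\bk}}$ is smooth, connected and algebraic, and it suffices to show it has no nontrivial smooth connected normal unipotent subgroup. Pick a faithful finite-dimensional representation $V$ of $G_{\overline{\bk}}$, and suppose for contradiction that $U \trianglelefteq G_{\overline{\bk}}$ is a nontrivial smooth connected unipotent subgroup. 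Then $V^U$ is a $G_{\overline{\bk}}$-subrepresentation of $V$ (by normality of $U$), and it is proper, since $V^U = V$ would make $U$ act trivially on the faithful $V$ and force $U = 1$. By semisimplicity of $\Rep_{\overline{\bk}}(G_{\overline{\bk}})$, choose a $G_{\overline{\bk}}$-stable complement $W \neq 0$ with $W \cap V^U = 0$. Since $U$ is unipotent, Kolchin's theorem gives $W^U \neq 0$; but $W^U \subseteq W \cap V^U = 0$, a contradiction. Hence $G_{\overline{\bk}}$ is reductive in the usual sense, and $G$ is reductive.

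The inputs used are all standard facts on affine group schemes (a faithful representation of an algebraic group is a tensor generator; the structure of $\pi_0(G)$ and of representations of finite group schemes, especially the embedding of any representation into copies of the regular representation; Cartier's smoothness theorem in characteristic zero; Kolchin's fixed-point theorem for unipotent groups). The step needing the most care is the identification $\langle X \rangle = \mathscr D$ in part \eqref{it:properties-G-2}, in particular confirming that $X$ is genuinely nontrivial and that the embedding of an arbitrary representation into copies of the regular representation transfers through the equivalence $\Rep_\bk(\pi_0(G)) \simeq \mathscr D$.
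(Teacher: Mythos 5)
Your proofs of all three parts are correct. Parts \eqref{it:properties-G-2} and \eqref{it:properties-G-3} follow essentially the same route as the paper: for \eqref{it:properties-G-2}, taking for $X$ the regular representation of $\pi_0(G)$ inflated along $G \to \pi_0(G)$ and identifying $\langle X\rangle$ with the essential image of $\Rep_\bk(\pi_0(G))$; for \eqref{it:properties-G-3}, using Kolchin's fixed-point theorem together with normality of the unipotent radical to show it acts trivially on a semisimple faithful representation (the paper phrases this directly on each simple constituent, you phrase it as a contradiction via a complement --- same content).

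The one place your argument genuinely departs from the paper is the converse implication in part \eqref{it:properties-G-1}. You argue at the level of the Hopf algebra: after normalizing to subquotients of sums of $X^{\otimes a} \otimes (X^\vee)^{\otimes b}$, the matrix-coefficient subspaces $B_Y$ all land in the $\bk$-subalgebra generated by $B_X \cup B_{X^\vee}$, so $\bk[G] = \varinjlim B_Y$ is finitely generated. The paper instead observes that a tensor generator $X$ must be a \emph{faithful} representation (an element acting trivially on $X$ acts trivially on everything built from $X$, hence on $\bk[G]$) and then quotes the fact that a faithful finite-dimensional representation yields a closed embedding $G \hookrightarrow \mathbf{GL}(X)$, so $G$ is algebraic. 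Your approach is more explicit and self-contained at the coalgebra level; the paper's is shorter and offloads the finite-generation to the cited closed-embedding result. Both are valid.
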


\begin{proof}
\eqref{it:properties-G-1} Suppose $G$ is algebraic. Then $G$ admits a faithful representation
(see \cite[\S3.4]{waterhouse}), i.e.~$G$ can be viewed as a closed subgroup of
some $\mathbf{GL}_n$. It is then a classical result that any finite dimensional
representation of $G$ can be obtained from the representation on $\bk^n$
by the processes of forming tensor products, direct sums,
subrepresentations, quotients and duals (see~\cite[\S3.5]{waterhouse}).

Conversely, suppose the existence of a representation $X$ that
generates $\Rep_\bk(G)$ in the sense explained in the statement of
the proposition. Then $X$ is necessarily a faithful representation
of $G$, so $G$ embeds as a closed subgroup in $\mathbf{GL}(X)$ (see \cite[\S15.3]{waterhouse}), and
is therefore algebraic.

\eqref{it:properties-G-2} 
The quotient $G\to\pi_0(G)$ induces a fully faithful functor
$\Rep_\bk(\pi_0(G))\to\Rep_\bk(G)$. Taking for $X$ the image of the regular
representation of $\pi_0(G)$, we see that
$\langle X\rangle$ (which coincides with the essential image of $\Rep_\bk(\pi_0(G))$) is stable under tensor products.
If $G$ is not connected, then $X$ is not trivial.

\eqref{it:properties-G-3} 
As explained in Footnote~\ref{fn:smooth-char-0}, $G$ is automatically smooth since $\mathrm{char}(\bk)=0$. Hence the only thing we have to check is that the unipotent radical $R_u(G_{\overline{\bk}})$ is trivial.
In a simple representation $X$ of $G_{\overline{\bk}}$,
$R_u(G_{\overline{\bk}})$ acts trivially; indeed the subspace of points 
fixed by $R_u(G_{\overline{\bk}})$ is nontrivial by Kolchin's fixed point theorem
(see \cite[\S8.2]{waterhouse}) and is $G_{\overline{\bk}}$-stable because $R_u(G_{\overline{\bk}})$ is a normal
subgroup. This result immediately extends to semisimple representations
of $G_{\overline{\bk}}$. Now if $\Rep_{\overline{\bk}}(G_{\overline{\bk}})$ is semisimple, then $G_{\overline{\bk}}$ admits
a semisimple faithful representation. On this representation, $R_u(G_{\overline{\bk}})$
acts trivially and faithfully. Therefore $R_u(G_{\overline{\bk}})$ is trivial.
\end{proof}

\begin{rmk}
\begin{enumerate}
\item
An object which satisfies the conditions in Proposition~\ref{prop:properties-G}\eqref{it:properties-G-1} will be called a \emph{tensor generator} of the category $\Rep_\bk(G)$.
\item
An algebraic affine group scheme is called \emph{strongly connected} if it admits no nontrivial finite quotient. (This property is in general stronger than being connected---which is equivalent to having no nontrivial finite \emph{\'etale} quotient---but these notions are equivalent if $\mathrm{char}(\bk)=0$.)
If $G$ is an algebraic affine group scheme over $\bk$, the condition appearing in
Proposition~\ref{prop:properties-G}\eqref{it:properties-G-2} is equivalent to $G$ being strongly connected, see~\cite[\S XVII.7]{milne}.
\item
A more precise version of Proposition~\ref{prop:properties-G}\eqref{it:properties-G-3} is proved in~\cite[Proposition~2.23]{dm}. (But the simpler version we stated will be sufficient for our purposes.)
\end{enumerate}
\end{rmk}

\section{The affine Grassmannian}
\label{sec:Gr}


%
%

In this section we provide a brief introduction to the affine Grassmannian of a complex connected reductive algebraic group. For more details, examples and references, the reader can e.g.~consult~\cite[\S 2]{goertz} or~\cite{zhu}. (All of these properties are often considered as ``well known,'' and we have not tried to give the original references for them, but rather the most convenient one.)

\subsection{Definition}
\label{ss:def-Gr}

We set $\mathcal{O}:=\mathbf{C}[ \hspace{-1pt} [t] \hspace{-1pt}]$ and $\mathcal{K} := \mathbf{C}(\hspace{-1pt}(t)\hspace{-1pt})$, where $t$ is an indeterminate.
If $H$ is a linear complex algebraic group, we denote by $H_\mathcal{O}$, resp.~$H_\mathcal{K}$, the functor from $\mathbf{C}$-algebras to groups defined by
\[
R \mapsto H \bigl( R[ \hspace{-1pt} [t] \hspace{-1pt}] \bigr), \quad \text{resp.} \quad R \mapsto H \bigl( R( \hspace{-1pt} (t) \hspace{-1pt}) \bigr).
\]
It is not difficult to check that $H_{\mathcal{O}}$ is represented by a
$\mathbf{C}$-group scheme (not of finite type in most cases), and that $H_{\mathcal{K}}$ is represented by an ind-group scheme (i.e.~an inductive limit of schemes parametrized by $\mathbf{Z}_{\geq 0}$, with closed embeddings as transition maps). We will still denote these (ind-)group schemes by $H_{\mathcal{O}}$ and $H_\mathcal{K}$.

We now fix a standard triple $G\supset B\supset T$ of a connected
complex reductive algebraic group, a Borel subgroup, and a maximal torus. We will denote by $N$
the unipotent radical of $B$. We will denote by $\Delta(G,T)$ the root system of $(G,T)$, by $\Delta_+(G,B,T) \subset \Delta(G,T)$ the subset of positive roots (consisting of the $T$-weights in the Lie algebra of $B$), and by $\Delta_{\mathrm{s}}(G,B,T)$ the corresponding subset of simple roots. For $\alpha$ a root, we will denote the corresponding coroot by $\alpha^\vee$.

Let $X_*(T)$ be the lattice of
cocharacters of $T$; it contains the coroot lattice $Q^\vee$
and is endowed with the standard order $\leq$ (such that nonnegative elements are
nonnegative integral combinations of positive coroots). We will denote by $X_*(T)^+\subset X_*(T)$ the cone of dominant
cocharacters. We define $\rho$ as the halfsum of the positive roots and regard it as a linear form $X_*(T)\to\frac12\mathbf Z$.


If $L^{\leq 0} G$ denotes the ind-group scheme which represents the functor
\[
R \mapsto G \bigl( R[t^{-1}] \bigr)
\]
and if $L^{<0}G$ is the kernel of the natural morphism $L^{\leq 0} G \to G$ (sending $t^{-1}$ to $0$), then $L^{<0} G$ is a subgroup of $G_{\mathcal{K}}$ in a natural way, and the multiplication morphism
\[
L^{<0} G \times G_{\mathcal{O}} \to G_{\mathcal{K}}
\]
is an open embedding by~\cite[Lemma~3]{faltings} (see also~\cite[Lemme~2.1]{ngo-polo} or~\cite[Lemma~2.3.5]{zhu}). In view of this property, the quotient
\[
\widetilde{\Gr}_G:=G_{\mathcal{K}}/G_{\mathcal{O}}
\]
has a natural structure of ind-scheme. 
In fact, one can check that this ind-scheme is ind-proper, and of ind-finite type.

\begin{rmk}\phantomsection
\label{rmk:def-Gr}
\begin{enumerate}
 \item 
 In many references (but not~\cite{faltings}), $\widetilde{\Gr}_G$ is rather defined as the object representing a certain presheaf on the category of affine $\C$-schemes (see e.g.~\cite[Theorem~1.2.2]{zhu}) and then identified with a fpqc quotient $G_{\mathcal{K}}/G_{\mathcal{O}}$, see~\cite[Proposition~1.3.6]{zhu}. Finally, it is realized that the quotient map $G_{\mathcal{K}} \to \widetilde{\Gr}_G$ is Zariski locally trivial. In these notes the ``moduli interpretation'' of $\widetilde{\Gr}_G$ will be introduced in Section~\ref{sec:convolution-BD} below.
 \item
 \label{it:def-Fl}
 Consider the group scheme $\mathfrak{G} := G \times_{\Spec(\C)} \Spec(\mathcal{O})$ over $\Spec(\mathcal{O})$.
 Then $G_{\mathcal{O}}$ is the \emph{arc space} of $\mathfrak{G}$, and
 $G_{\mathcal{K}}$ is the \emph{loop space} of $\mathfrak{G} \times_{\Spec(\mathcal{O})} \Spec(\mathcal{K})$ in the sense of~\cite[Definition~1.3.1]{zhu}. From this point of view one can consider the ``affine Grassmannian'' of more general (smooth, affine) group schemes over $\Spec(\mathcal{O})$. In particular, 
 replacing $\mathfrak{G}$ by the Iwahori group scheme constructed in Bruhat--Tits theory,
 then we obtain an ind-scheme $\Fl_G$ which is often called the \emph{affine flag variety} of $G$.
 \item
 See also~\cite[\S 1.6]{zhu} for a description of $\widetilde{\Gr}_G$ in terms of the loop group of a maximal compact subgroup (which only makes sense for the case of \emph{complex} reductive groups, unlike the other descriptions considered above). This approach is crucial in the proof of Ginzburg~\cite{ginzburg}; it also shows that the torsor $G_\mathcal{K} \to \widetilde{\Gr}_G$ is topologically trivial.
\end{enumerate}
\end{rmk}

In general, the quotient $G_{\mathcal{K}}/G_{\mathcal{O}}$ is not reduced. (This can already be seen when $G$ is the multiplicative group $\mathbb{G}_{\mathrm{m}}$.) Since we will only consider constructible sheaves on this quotient, this non-reduced structure can be forgotten, and we will denote by $\Gr_G$ the (reduced) ind-variety associated with the ind-scheme $\widetilde{\Gr}_G$.

Any cocharacter $\nu\in X_*(T)$ defines a morphism $\mathcal{K}^\times \to T_{\mathcal{K}}$. The image
of $t$ under this morphism will be denoted by $t^\nu$.
The coset $t^\nu\GO$ is a point in $\Gr_G$, which will be denoted by $L_\nu$.

The Cartan decomposition describes the $\GO$-orbits in $\Gr_G$, in the following way (see~\cite[\S 2.1]{zhu} for more details and references).

\begin{prop}
\label{prop:Cartan-dec}
We have a decomposition
\begin{equation}
\label{eqn:Cartan-dec}
\Gr_G=\bigsqcup_{\lambda\in X_*(T)^+}
\Gr_G^\lambda,\qquad\text{where}\quad\Gr_G^\lambda:=\GO\cdot L_\lambda.
\end{equation}
Moreover, this decomposition is a stratification of $\Gr_G$ and, for any $\lambda \in X_*(T)$, $\Gr_G^\lambda$ is an affine
bundle over the partial flag variety $G/P_\lambda$ where $P_\lambda$ is the parabolic subgroup of $G$ containing $B$ and associated with the subset of simple roots $\{\alpha \in \Delta_{\mathrm{s}}(G,B,T) \mid \langle \lambda, \alpha \rangle = 0\}$. We also have
\[
\dim(\Gr^\lambda_G) = \langle 2\rho, \lambda \rangle
\]
and
\begin{equation}
\label{eqn:closure-orbit}
\overline{\Gr_G^\lambda}=\bigsqcup_{\substack{\eta\in X_*(T)^+ \\ \eta\leq\lambda}}\Gr_G^\eta.
\end{equation}
\end{prop}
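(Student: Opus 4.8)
The plan is to establish the Cartan decomposition \eqref{eqn:Cartan-dec} and the properties of the strata by the standard reduction to the torus case, exactly as in the references (e.g.\ \cite[\S 2.1]{zhu}). First I would recall that cocharacters of $T$ give well-defined points $L_\nu \in \Gr_G$, and that two points $L_\nu$, $L_{\nu'}$ lie in the same $\GO$-orbit if and only if $\nu$ and $\nu'$ are conjugate under the Weyl group $W = N_G(T)/T$; since every $W$-orbit on $X_*(T)$ contains a unique dominant element, this gives at least an injection $X_*(T)^+ \hookrightarrow \GO\backslash\Gr_G$. The surjectivity---that every $\GO$-orbit meets $\{L_\lambda : \lambda \in X_*(T)^+\}$---is the Cartan decomposition for $G_\mathcal{K}$: writing $\Gr_G$ set-theoretically as $G_\mathcal{K}/G_\mathcal{O}$, one invokes the Cartan decomposition $G_\mathcal{K} = \bigsqcup_{\lambda \in X_*(T)^+} G_\mathcal{O}\, t^\lambda\, G_\mathcal{O}$, which is proved via the theory of elementary divisors (the Smith normal form over the principal ideal domain $\mathcal{O}$), reducing $\mathbf{GL}_n$ first and then deducing the general reductive case from a faithful representation together with the structure theory of $G$.

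Next I would analyze a single stratum $\Gr_G^\lambda = \GO \cdot L_\lambda$. The key computation is the stabilizer of $L_\lambda$ in $\GO$: conjugation by $t^\lambda$ contracts the root subgroups $U_{-\alpha}$ for $\langle \lambda, \alpha\rangle > 0$, so $\Stab_{\GO}(L_\lambda) = \GO \cap t^\lambda \GO t^{-\lambda}$ is a group whose reductive quotient is $P_\lambda$, the parabolic containing $B$ with Levi generated by the coroots orthogonal to $\lambda$; more precisely the quotient map $\GO \to G$ (evaluation at $t=0$) sends this stabilizer onto $P_\lambda$, and the kernel direction contributes an affine space. This identifies $\Gr_G^\lambda \cong \GO/\Stab_{\GO}(L_\lambda)$ as an affine bundle over $G/P_\lambda$, whence $\dim \Gr_G^\lambda = \dim G/P_\lambda + (\text{affine fiber dimension}) = \langle 2\rho, \lambda\rangle$: indeed the fiber dimension is $\sum_{\alpha \in \Delta_+ : \langle \lambda,\alpha\rangle > 0} \langle \lambda, \alpha\rangle$ minus the count already in $\dim G/P_\lambda$, and a short bookkeeping with $2\rho = \sum_{\alpha \in \Delta_+}\alpha$ gives the stated formula. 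This also shows each $\Gr_G^\lambda$ is irreducible and locally closed of finite type, and that the orbits are precisely the (finitely many, for each bound) pieces needed.

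For the closure relation \eqref{eqn:closure-orbit} I would argue that $\overline{\Gr_G^\lambda}$ is $\GO$-stable (closure of a $\GO$-stable set, $\GO$ being connected as an ind-scheme, or more elementarily because the action extends), hence a union of strata $\Gr_G^\eta$, and then pin down which $\eta$ occur. The containment $\overline{\Gr_G^\lambda} \supseteq \Gr_G^\eta$ for $\eta \leq \lambda$ dominant is shown by an explicit $\mathbf{SL}_2$ (or $\mathbf{GL}_2$) degeneration inside the root subgroup corresponding to a simple coroot $\alpha^\vee$ with $\lambda - \alpha^\vee \geq \eta$: the $1$-parameter family $u_\alpha(z t^{-\langle\lambda,\alpha\rangle})\cdot L_\lambda$ limits, as parameters vary, to $L_{s_\alpha \cdot \lambda}$-type points, and iterating brings $\lambda$ down to any smaller dominant $\eta$; one reduces to semisimple rank $1$ and chains these moves. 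The reverse inclusion---that no $\eta \not\leq \lambda$ appears---follows from a dimension/weight estimate or, cleanly, from the fact (provable by the same elementary-divisor analysis) that $\GO t^\lambda \GO \subseteq \bigsqcup_{\eta \leq \lambda} \GO t^\eta \GO$ as a closed condition.

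The main obstacle, and the step I would be least cavalier about, is the closure relation \eqref{eqn:closure-orbit}---specifically proving both inclusions rigorously rather than just the easy containment. The ``$\supseteq$'' direction requires producing explicit degenerating families and checking they stay inside the orbit closure (an ind-scheme statement, so one must be a little careful about what ``closure'' means and work inside a finite-type truncation $\overline{\Gr_G^\lambda}$, which is genuinely a projective variety by ind-properness); the ``$\subseteq$'' direction requires the combinatorial input that $\GO t^\lambda \GO$ only involves $\eta \leq \lambda$, which is where the theory of the $G_\mathcal{K}$-valued Cartan decomposition and its closure behaviour is really used. Since the excerpt explicitly flags all such geometric facts as ``standard'' and refers to \cite{zhu} for proofs, in these notes I would state the proposition and cite \cite[\S 2.1]{zhu} (and \cite[\S 2]{goertz}) for the full argument, sketching only the stabilizer computation that yields the affine-bundle description and the dimension formula, since those are the pieces used most directly in the sequel.
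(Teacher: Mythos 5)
The paper does not prove this proposition: as the introductory remark of Section~3 makes explicit, the standard geometric facts about $\Gr_G$ are stated without proof and referenced to~\cite[\S 2.1]{zhu} and~\cite[\S 2]{goertz}, which is exactly what you conclude you would do. Your supplementary sketch is consistent with those references, and you rightly identify the closure relation~\eqref{eqn:closure-orbit} as the only genuinely delicate step (in the $\supseteq$ direction one should also use that $\overline{\Gr_G^\lambda}$ is $\GO$-stable, so as to pass to the dominant $W$-conjugate of $\lambda-\alpha^\vee$ after each $\mathbf{SL}_2$-degeneration before iterating).
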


The stratification
of $\Gr_G$ by $\GO$-orbits will be denoted by $\mathscr S$.

%

Finally, we will need a description of the connected components of $\Gr_G$. For any $c \in X_*(T)/Q^\vee$, let us set
\[
\Gr_G^c := \bigsqcup_{\substack{\lambda \in X_*(T)^+ \\ \lambda + Q^\vee = c}} \Gr_G^\lambda.
\]
Then the connected components of $\Gr_G$ are exactly the subvarieties $\Gr_G^c$ for $c \in X_*(T)/Q^\vee$ (see~\cite[Comments after Theorem 1.3.11]{zhu} for references). In particular, since $\langle \rho,\lambda \rangle \in \Z$ for any $\lambda \in Q^\vee$, the parity of the dimensions of the
Schubert varieties $\Gr_G^\lambda$ is constant on each connected
component. A connected component will be called even, resp.~odd, if these dimensions are even, resp.~odd.

\subsection{Semi-infinite orbits}
\label{ss:semi-infinite-orbits}

The Iwasawa decomposition describes (set-theoretically) the $\NK$-orbits in $\Gr_G$
as follows:
\begin{equation}
\label{eqn:Iwasawa-dec}
\Gr_G=\bigsqcup_{\mu\in X_*(T)}S_\mu,\qquad\text{where}\quad S_\mu:=\NK
\cdot L_\mu.
\end{equation}
Each orbit $S_\mu$ is ``infinite dimensional,'' 
i.e.~not contained in any finite type subscheme
of $\Gr_G$.

The closure of these orbits for the inductive limit topology on $\Gr_G$
can be described in the following way:
\begin{equation*}
\overline{S_\mu}=\bigsqcup_{\substack{\nu\in X_*(T)\\\nu\leq\mu}}S_\nu.
\end{equation*}

We will soon sketch a formal proof of this equality (see Proposition~\ref{prop:closure-semiinfinite} below), but
let us first try to make this result intuitive, at least in the case
$G=\SL_2$. For that, we denote by $\alpha$ the unique positive root, and consider the standard Iwahori subgroup
$$I=\left\{\left.\left(\begin{smallmatrix}a&b\\c&d\end{smallmatrix}\right)
\in\SL_{2,\mathcal K}\right|a,b,t^{-1}c,d\in\mathcal O\right\}$$
and the two maximal parahoric subgroups
$$P_0=\SL_{2,\mathcal O},\quad P_1=\left\{\left.\left(\begin{smallmatrix}
a&b\\c&d\end{smallmatrix}\right)\in\SL_{2,\mathcal K}\right|a,tb,t^{-1}c,d
\in\mathcal O\right\}$$
that contain $I$.

A parahoric subgroup of $\SL_{2,\mathcal K}$ is a subgroup conjugated to
one of these three standard subgroups $I$, $P_0$ or $P_1$. Parahoric subgroups form a poset
for the inclusion. The Serre tree is the simplicial realization of the
opposite poset. Figure~\ref{fig:Serre-tree} shows a small part of this tree; namely
we just pictured the parahoric sugroups that are conjugated to the standard
ones by elements of the affine Weyl group (see~\S\ref{ss:parity}). Note here that the inclusions
$P_1\supset I\subset P_0$ translate to the fact that the vertices
corresponding to $P_1$ and $P_0$ are incident to the edge corresponding
to $I$.

\begin{figure}
\begin{center}
\begin{tikzpicture}
  [bnode/.style={circle,draw=black,fill=black,thick,
                 inner sep=0pt,minimum size=1mm},
   rnode/.style={diamond,draw=black,fill=white,thick,
                 inner sep=0pt,minimum size=2mm},
   gnode/.style={circle,draw=black,fill=white,thin,
                 inner sep=0pt,minimum size=1mm}]
  \path (0,0) node (t0) [bnode] {};
  \path (t0) ++(0:1) node (t1) [gnode] {};
  \path (t1) ++(0:1) node (t2) [bnode] {};
  \path (t2) ++(0:1) node (t3) [gnode] {};
  \path (t3) ++(0:1) node (t4) [bnode] {};
  \path (t4) ++(0:1) node (t5) [gnode] {};
  \path (t0) ++(180:1) node (t6) [gnode] {};
  \path (t6) ++(180:1) node (t7) [bnode] {};
  \path (t7) ++(180:1) node (t8) [gnode] {};
  \path (t8) ++(180:1) node (t9) [bnode] {};
  \path (t9) ++(180:1) node (tZ) [gnode] {};
  \node [below] at (t0) {$P_0$};
  \node [below] at (t2) {$P'_0$};
  \node [below] at (t4) {$P''_0$};
  \node [below] at (t6) {$P_1$};
  \node [above] at (-.5,0) {$I$};
  \node [above] at (1.5,0) {$I'$};
  \draw (t0) edge (t1) edge (t6);
  \draw (t2) edge (t1) edge (t3);
  \draw (t4) edge (t3) edge (t5);
  \draw (t7) edge (t6) edge (t8);
  \draw (t9) edge (t8) edge (tZ);
  \draw (5.5,-2) node {$\begin{aligned}
    I'&=t^{\alpha^{\!\vee}}I\,t^{-\alpha^{\!\vee}}\\
    P'_0&=t^{\alpha^{\!\vee}}P_0\,t^{-\alpha^{\!\vee}}\\
    P''_0&=t^{2\alpha^{\!\vee}}P_0\,t^{-2\alpha^{\!\vee}}
    \end{aligned}$};
  \draw (-7,-2) node {};
\end{tikzpicture}
\end{center}
\caption{A small part of the Serre tree}\label{fig:Serre-tree}
\end{figure}
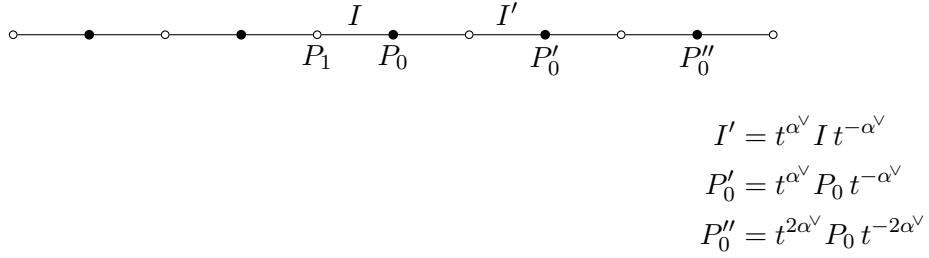

Since Iwahori subgroups are conjugated in $\SL_{2,\mathcal K}$ and since
$I$ is its own normalizer, the set of edges in the tree incident to a given edge is in bijection
with the so-called affine flag variety $\SL_{2,\mathcal K}/I \cong \mathbb{P}^1_{\C}$. Likewise,
the set of parahoric subgroups conjugated to $P_0$, depicted as black
dots on the tree, is in bijection with the affine Grassmannian
$\Gr_{\SL_2}=\SL_{2,\mathcal K}/P_0$. One can rephrase this by saying that the
group $\SL_{2,\mathcal K}$ acts on the tree (transitively on the edges,
on the black vertices, and on the white vertices) and that the stabilizer
of the simplex associated to a parahoric subgroup is the subgroup itself. (The white dots form the second connected component in the affine Grassmannian $\Gr_{\mathbf{PGL}_2}$.) A slightly more accurate picture of the Serre tree is given in Figure~\ref{fig:GrSL2}. (But here again we only draw a finite number of edges incident to each vertex, while as explained above such edges are in fact in bijection with $\mathbb{P}^1_{\C}$.)

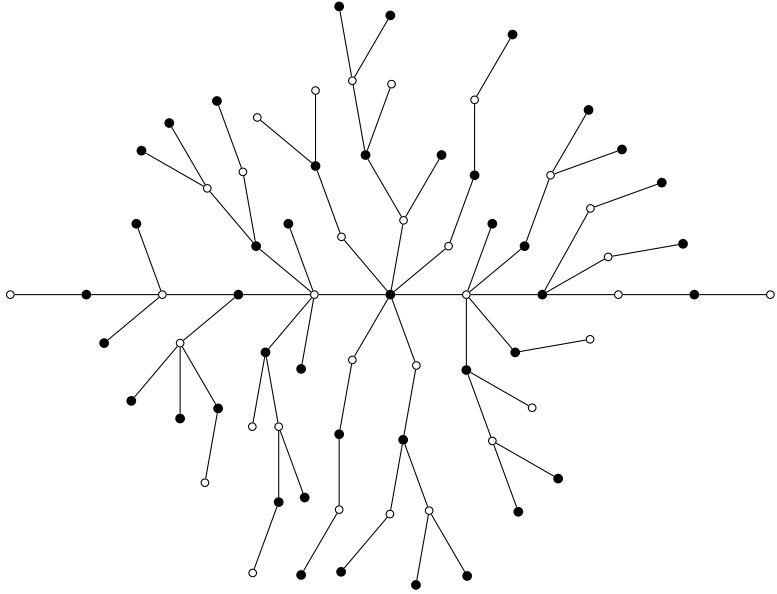
\begin{figure}
\begin{center}
\begin{tikzpicture}
  [bnode/.style={circle,draw=black,fill=black,thick,
                 inner sep=0pt,minimum size=1mm},
   rnode/.style={diamond,draw=black,fill=white,thick,
                 inner sep=0pt,minimum size=2mm},
   gnode/.style={circle,draw=black,fill=white,thin,
                 inner sep=0pt,minimum size=1mm}]
  \path (0,0) node (t0) [bnode] {};
  \path (t0) ++(0:1) node (t1) [gnode] {};
  \path (t1) ++(0:1) node (t2) [bnode] {};
  \path (t2) ++(0:1) node (t3) [gnode] {};
  \path (t3) ++(0:1) node (t4) [bnode] {};
  \path (t4) ++(0:1) node (t5) [gnode] {};
  \path (t0) ++(180:1) node (t6) [gnode] {};
  \path (t6) ++(180:1) node (t7) [bnode] {};
  \path (t7) ++(180:1) node (t8) [gnode] {};
  \path (t8) ++(180:1) node (t9) [bnode] {};
  \path (t0) ++(40:1) node (ta) [gnode] {};
  \path (ta) ++(70:1) node (tb) [bnode] {};
  \path (tb) ++(90:1) node (tc) [gnode] {};
  \path (tc) ++(60:1) node (td) [bnode] {};
  \path (t0) ++(80:1) node (te) [gnode] {};
  \path (te) ++(60:1) node (tf) [bnode] {};
  \path (te) ++(120:1) node (tg) [bnode] {};
  \path (tg) ++(70:1) node (th) [gnode] {};
  \path (tg) ++(100:1) node (ti) [gnode] {};
  \path (ti) ++(60:1) node (tj) [bnode] {};
  \path (ti) ++(100:1) node (tk) [bnode] {};
  \path (t0) ++(130:1) node (tl) [gnode] {};
  \path (tl) ++(110:1) node (tm) [bnode] {};
  \path (tm) ++(90:1) node (tn) [gnode] {};
  \path (tm) ++(140:1) node (to) [gnode] {};
  \path (t6) ++(110:1) node (tp) [bnode] {};
  \path (t6) ++(140:1) node (tq) [bnode] {};
  \path (t8) ++(110:1) node (tr) [bnode] {};
  \path (t1) ++(70:1) node (ts) [bnode] {};
  \path (t1) ++(40:1) node (tt) [bnode] {};
  \path (tt) ++(30:1) node (tu) [gnode] {};
  \path (tu) ++(20:1) node (tv) [bnode] {};
  \path (tt) ++(70:1) node (tw) [gnode] {};
  \path (tw) ++(20:1) node (tx) [bnode] {};
  \path (tw) ++(60:1) node (ty) [bnode] {};
  \path (t2) ++(30:1) node (tz) [gnode] {};
  \path (tz) ++(10:1) node (tA) [bnode] {};
  \path (t8) ++(-140:1) node (tB) [bnode] {};
  \path (t7) ++(-140:1) node (tC) [gnode] {};
  \path (tC) ++(-130:1) node (tD) [bnode] {};
  \path (tC) ++(-90:1) node (tE) [bnode] {};
  \path (tC) ++(-60:1) node (tF) [bnode] {};
  \path (t6) ++(-130:1) node (tG) [bnode] {};
  \path (tG) ++(-80:1) node (tH) [gnode] {};
  \path (tG) ++(-100:1) node (tI) [gnode] {};
  \path (t0) ++(-120:1) node (tJ) [gnode] {};
  \path (tJ) ++(-100:1) node (tK) [bnode] {};
  \path (t0) ++(-70:1) node (tL) [gnode] {};
  \path (tL) ++(-100:1) node (tM) [bnode] {};
  \path (tM) ++(-100:1) node (tN) [gnode] {};
  \path (tN) ++(-130:1) node (tO) [bnode] {};
  \path (tM) ++(-70:1) node (tP) [gnode] {};
  \path (tP) ++(-100:1) node (tQ) [bnode] {};
  \path (tP) ++(-60:1) node (tR) [bnode] {};
  \path (t1) ++(-90:1) node (tS) [bnode] {};
  \path (tS) ++(-70:1) node (tT) [gnode] {};
  \path (tT) ++(-70:1) node (tU) [bnode] {};
  \path (tT) ++(-30:1) node (tV) [bnode] {};
  \path (tS) ++(-30:1) node (tW) [gnode] {};
  \path (t1) ++(-50:1) node (tX) [bnode] {};
  \path (tX) ++(10:1) node (tY) [gnode] {};
  \path (t9) ++(180:1) node (tZ) [gnode] {};
  \path (tq) ++(100:1) node (taa) [gnode] {};
  \path (tq) ++(130:1) node (tbb) [gnode] {};
  \path (taa) ++(110:1) node (tcc) [bnode] {};
  \path (tbb) ++(120:1) node (tdd) [bnode] {};
  \path (tbb) ++(150:1) node (tee) [bnode] {};
  \path (tK) ++(-90:1) node (tff) [gnode] {};
  \path (tH) ++(-70:1) node (tgg) [bnode] {};
  \path (tH) ++(-90:1) node (thh) [bnode] {};
  \path (thh) ++(-110:1) node (tii) [gnode] {};
  \path (tff) ++(-120:1) node (tjj) [bnode] {};
  \path (tF) ++(-100:1) node (tkk) [gnode] {};
  \path (t6) ++(-100:1) node (tll) [bnode] {};
  \draw (t0) edge (t1) edge (t6) edge (ta) edge (te)
             edge (tl) edge (tJ) edge (tL);
  \draw (tf) edge (te);
  \draw (tg) edge (te) edge (th) edge (ti);
  \draw (tm) edge (tl) edge (tn) edge (to);
  \draw (tb) edge (ta) edge (tc);
  \draw (td) edge (tc);
  \draw (tt) edge (t1) edge (tw);
  \draw (ts) edge (t1);
  \draw (tx) edge (tw);
  \draw (ty) edge (tw);
  \draw (tr) edge (t8);
  \draw (tB) edge (t8);
  \draw (t9) edge (t8) edge (tZ);
  \draw (t7) edge (t6) edge (t8) edge (tC);
  \draw (tD) edge (tC);
  \draw (tE) edge (tC);
  \draw (tF) edge (tC);
  \draw (tG) edge (t6) edge (tH) edge (tI);
  \draw (tK) edge (tJ);
  \draw (tM) edge (tL) edge (tN) edge (tP);
  \draw (tO) edge (tN);
  \draw (tQ) edge (tP);
  \draw (tR) edge (tP);
  \draw (tS) edge (t1) edge (tT) edge (tW);
  \draw (tU) edge (tT);
  \draw (tV) edge (tT);
  \draw (tX) edge (t1) edge (tY);
  \draw (t2) edge (t1) edge (t3) edge (tu) edge (tz);
  \draw (t4) edge (t3) edge (t5);
  \draw (t6) edge (tp) edge (tq) edge (tll);
  \draw (tq) edge (taa) edge (tbb);
  \draw (taa) edge (tcc);
  \draw (tbb) edge (tdd) edge (tee);
  \draw (tj) edge (ti);
  \draw (tk) edge (ti);
  \draw (tv) edge (tu);
  \draw (tA) edge (tz);
  \draw (tK) edge (tff);
  \draw (tH) edge (tgg) edge (thh);
  \draw (thh) edge (tii);
  \draw (tff) edge (tjj);
  \draw (tF) edge (tkk);
\end{tikzpicture}
\end{center}
\caption{Intuitive picture for $\Gr_{\SL_2}$}\label{fig:GrSL2}
\end{figure}

Likewise, the Iwahori subgroups contained in $P_0$ can be obtained by
letting the normalizer of $P_0$ act on $I$; in other words, the set of
edges incident to the vertex $P_0$ is in bijection with $P_0/I$. This
is a complex projective line. So the set of edges incident to a given
black vertex is a complex projective line. (The same thing holds also
for white vertices.) Our drawings are thus quite incomplete, because
a lot of edges were omitted.

Again, our affine Grassmannian is the set of all black vertices. Here
it is worth noting that the tree metric is related to the description
of the ind-structure of $\Gr_{\SL_2}$: one can take for the $n$-th
finite-dimensional piece of $\Gr_{\SL_2}$ the set $\Gr_n$ of all vertices at distance $\leq2n$
from $P_0$. Further, the analytic (respectively, Zariski) topology of the
variety $\Gr_n$ can also be seen on the tree: it comes from the analytic
(respectively, Zariski) topology on all the projective lines mentioned
in the previous paragraph. Thus, we can for instance see that $\Gr_n$ is
dominated by a tower of $2n$ projective lines, because each point
at distance $\leq2n$ from the origin can be reached by choosing first
an edge around the origin, then another edge around the white vertex
at the end of this edge, and so on $2n$ times.\footnote{From the algebro-geometric point of view,
this process is a particular case of a Bott--Samelson resolution, as
explained in~\cite{gaussent-littelmann}.}

Now let us see how our orbits $\Gr^\lambda_G$ and $S_\mu$ are depicted
in this model. The point $L_\nu$ corresponds to the Iwahori subgroup
$t^\nu P_0\,t^{-\nu}$, see Figure~\ref{fig:Serre-tree-points}.

\begin{figure}
\begin{center}
\begin{tikzpicture}
  [bnode/.style={circle,draw=black,fill=black,thick,
                 inner sep=0pt,minimum size=1mm},
   rnode/.style={diamond,draw=black,fill=white,thick,
                 inner sep=0pt,minimum size=2mm},
   gnode/.style={circle,draw=black,fill=white,thin,
                 inner sep=0pt,minimum size=1mm}]
  \path (0,0) node (t0) [bnode] {};
  \path (t0) ++(0:1) node (t1) [gnode] {};
  \path (t1) ++(0:1) node (t2) [bnode] {};
  \path (t2) ++(0:1) node (t3) [gnode] {};
  \path (t3) ++(0:1) node (t4) [bnode] {};
  \path (t4) ++(0:1) node (t5) [gnode] {};
  \path (t0) ++(180:1) node (t6) [gnode] {};
  \path (t6) ++(180:1) node (t7) [bnode] {};
  \path (t7) ++(180:1) node (t8) [gnode] {};
  \path (t8) ++(180:1) node (t9) [bnode] {};
  \draw (t0) edge (t1) edge (t6);
  \draw (t2) edge (t1) edge (t3);
  \draw (t4) edge (t3) edge (t5);
  \draw (t7) edge (t6) edge (t8);
  \draw (t9) edge (t8) edge (tZ);
  \path (t9) ++(180:1) node (tZ) [gnode] {};
  \node [below] at (t0) {$L_0$};
  \node [below] at (t2) {$L_{\alpha^{\!\vee}}$};
  \node [below] at (t4) {$L_{2\alpha^{\!\vee}}$};
  \node [below] at (t7) {$L_{-{\alpha^{\!\vee}}}$};
  \node [below] at (t9) {$L_{-2\alpha^{\!\vee}}$};
\end{tikzpicture}
\end{center}
\caption{More points in $\Gr_{\SL_2}$}\label{fig:Serre-tree-points}
\end{figure}

The Schubert cell $\Gr_G^\lambda$ is the orbit of $L_\lambda$ under the
stabilizer of the base point $L_0$; it therefore looks like the sphere
with center $L_0$ going through $L_\lambda$. On Figure~\ref{fig:GrSL2-Schubert}, the
diamonds are points in $\Gr_G^{{\alpha^{\!\vee}}}$.

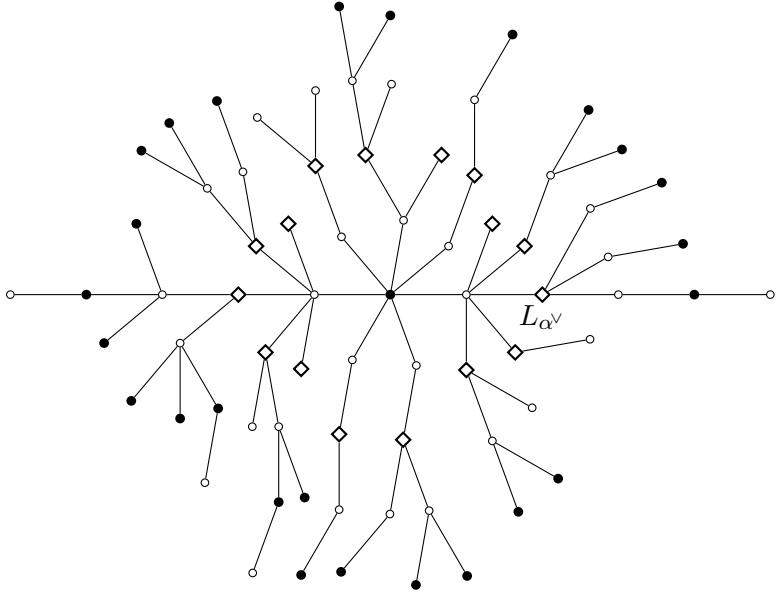
\begin{figure}[h!]
\begin{center}
\begin{tikzpicture}
  [bnode/.style={circle,draw=black,fill=black,thick,
                 inner sep=0pt,minimum size=1mm},
   rnode/.style={diamond,draw=black,fill=white,thick,
                 inner sep=0pt,minimum size=2mm},
   gnode/.style={circle,draw=black,fill=white,thin,
                 inner sep=0pt,minimum size=1mm}]
  \path (0,0) node (t0) [bnode] {};
  \path (t0) ++(0:1) node (t1) [gnode] {};
  \path (t1) ++(0:1) node (t2) [rnode] {};
  \path (t2) ++(0:1) node (t3) [gnode] {};
  \path (t3) ++(0:1) node (t4) [bnode] {};
  \path (t4) ++(0:1) node (t5) [gnode] {};
  \path (t0) ++(180:1) node (t6) [gnode] {};
  \path (t6) ++(180:1) node (t7) [rnode] {};
  \path (t7) ++(180:1) node (t8) [gnode] {};
  \path (t8) ++(180:1) node (t9) [bnode] {};
  \path (t0) ++(40:1) node (ta) [gnode] {};
  \path (ta) ++(70:1) node (tb) [rnode] {};
  \path (tb) ++(90:1) node (tc) [gnode] {};
  \path (tc) ++(60:1) node (td) [bnode] {};
  \path (t0) ++(80:1) node (te) [gnode] {};
  \path (te) ++(60:1) node (tf) [rnode] {};
  \path (te) ++(120:1) node (tg) [rnode] {};
  \path (tg) ++(70:1) node (th) [gnode] {};
  \path (tg) ++(100:1) node (ti) [gnode] {};
  \path (ti) ++(60:1) node (tj) [bnode] {};
  \path (ti) ++(100:1) node (tk) [bnode] {};
  \path (t0) ++(130:1) node (tl) [gnode] {};
  \path (tl) ++(110:1) node (tm) [rnode] {};
  \path (tm) ++(90:1) node (tn) [gnode] {};
  \path (tm) ++(140:1) node (to) [gnode] {};
  \path (t6) ++(110:1) node (tp) [rnode] {};
  \path (t6) ++(140:1) node (tq) [rnode] {};
  \path (t8) ++(110:1) node (tr) [bnode] {};
  \path (t1) ++(70:1) node (ts) [rnode] {};
  \path (t1) ++(40:1) node (tt) [rnode] {};
  \path (tt) ++(30:1) node (tu) [gnode] {};
  \path (tu) ++(20:1) node (tv) [bnode] {};
  \path (tt) ++(70:1) node (tw) [gnode] {};
  \path (tw) ++(20:1) node (tx) [bnode] {};
  \path (tw) ++(60:1) node (ty) [bnode] {};
  \path (t2) ++(30:1) node (tz) [gnode] {};
  \path (tz) ++(10:1) node (tA) [bnode] {};
  \path (t8) ++(-140:1) node (tB) [bnode] {};
  \path (t7) ++(-140:1) node (tC) [gnode] {};
  \path (tC) ++(-130:1) node (tD) [bnode] {};
  \path (tC) ++(-90:1) node (tE) [bnode] {};
  \path (tC) ++(-60:1) node (tF) [bnode] {};
  \path (t6) ++(-130:1) node (tG) [rnode] {};
  \path (tG) ++(-80:1) node (tH) [gnode] {};
  \path (tG) ++(-100:1) node (tI) [gnode] {};
  \path (t0) ++(-120:1) node (tJ) [gnode] {};
  \path (tJ) ++(-100:1) node (tK) [rnode] {};
  \path (t0) ++(-70:1) node (tL) [gnode] {};
  \path (tL) ++(-100:1) node (tM) [rnode] {};
  \path (tM) ++(-100:1) node (tN) [gnode] {};
  \path (tN) ++(-130:1) node (tO) [bnode] {};
  \path (tM) ++(-70:1) node (tP) [gnode] {};
  \path (tP) ++(-100:1) node (tQ) [bnode] {};
  \path (tP) ++(-60:1) node (tR) [bnode] {};
  \path (t1) ++(-90:1) node (tS) [rnode] {};
  \path (tS) ++(-70:1) node (tT) [gnode] {};
  \path (tT) ++(-70:1) node (tU) [bnode] {};
  \path (tT) ++(-30:1) node (tV) [bnode] {};
  \path (tS) ++(-30:1) node (tW) [gnode] {};
  \path (t1) ++(-50:1) node (tX) [rnode] {};
  \path (tX) ++(10:1) node (tY) [gnode] {};
  \path (t9) ++(180:1) node (tZ) [gnode] {};
  \path (tq) ++(100:1) node (taa) [gnode] {};
  \path (tq) ++(130:1) node (tbb) [gnode] {};
  \path (taa) ++(110:1) node (tcc) [bnode] {};
  \path (tbb) ++(120:1) node (tdd) [bnode] {};
  \path (tbb) ++(150:1) node (tee) [bnode] {};
  \path (tK) ++(-90:1) node (tff) [gnode] {};
  \path (tH) ++(-70:1) node (tgg) [bnode] {};
  \path (tH) ++(-90:1) node (thh) [bnode] {};
  \path (thh) ++(-110:1) node (tii) [gnode] {};
  \path (tff) ++(-120:1) node (tjj) [bnode] {};
  \path (tF) ++(-100:1) node (tkk) [gnode] {};
  \path (t6) ++(-100:1) node (tll) [rnode] {};
  \draw (t0) edge (t1) edge (t6) edge (ta) edge (te)
             edge (tl) edge (tJ) edge (tL);
  \draw (tf) edge (te);
  \draw (tg) edge (te) edge (th) edge (ti);
  \draw (tm) edge (tl) edge (tn) edge (to);
  \draw (tb) edge (ta) edge (tc);
  \draw (td) edge (tc);
  \draw (tt) edge (t1) edge (tw);
  \draw (ts) edge (t1);
  \draw (tx) edge (tw);
  \draw (ty) edge (tw);
  \draw (tr) edge (t8);
  \draw (tB) edge (t8);
  \draw (t9) edge (t8) edge (tZ);
  \draw (t7) edge (t6) edge (t8) edge (tC);
  \draw (tD) edge (tC);
  \draw (tE) edge (tC);
  \draw (tF) edge (tC);
  \draw (tG) edge (t6) edge (tH) edge (tI);
  \draw (tK) edge (tJ);
  \draw (tM) edge (tL) edge (tN) edge (tP);
  \draw (tO) edge (tN);
  \draw (tQ) edge (tP);
  \draw (tR) edge (tP);
  \draw (tS) edge (t1) edge (tT) edge (tW);
  \draw (tU) edge (tT);
  \draw (tV) edge (tT);
  \draw (tX) edge (t1) edge (tY);
  \draw (t2) edge (t1) edge (t3) edge (tu) edge (tz);
  \draw (t4) edge (t3) edge (t5);
  \draw (t6) edge (tp) edge (tq) edge (tll);
  \draw (tq) edge (taa) edge (tbb);
  \draw (taa) edge (tcc);
  \draw (tbb) edge (tdd) edge (tee);
  \draw (tj) edge (ti);
  \draw (tk) edge (ti);
  \draw (tv) edge (tu);
  \draw (tA) edge (tz);
  \draw (tK) edge (tff);
  \draw (tH) edge (tgg) edge (thh);
  \draw (thh) edge (tii);
  \draw (tff) edge (tjj);
  \draw (tF) edge (tkk);
  \node [below] at (t2) {$L_{\alpha^{\!\vee}}$};
\end{tikzpicture}
\end{center}
\caption{Illustration of a Schubert cell in $\Gr_{\SL_2}$}\label{fig:GrSL2-Schubert}
\end{figure}

Now look at the white vertex between the origin and $L_{\alpha^\vee}$:
edges starting from this vertex form a projective line, and the other
vertices of these edges belong to $\Gr_G^{\alpha^\vee}$, with one exception,
namely $L_0$. This point $L_0$ appears thus as a limit (on the projective
line) of points in $\Gr_G^{\alpha^\vee}$, that is, belongs to the closure
of $\Gr_G^{\alpha^\vee}$. This provides an intuitive interpretation of the
inclusion $\Gr_G^0\subset\overline{\Gr_G^{\alpha^\vee}}$.

In the same line of ideas, the semi-infinite orbit $S_\mu$ can be depicted
as the sphere centered at~$-\infty$ (also called ``horosphere'') and going
through $L_\mu$. In Figure~\ref{fig:GrSL2-semiinfinite}, the diamonds are points in
$S_{\alpha^\vee}$.

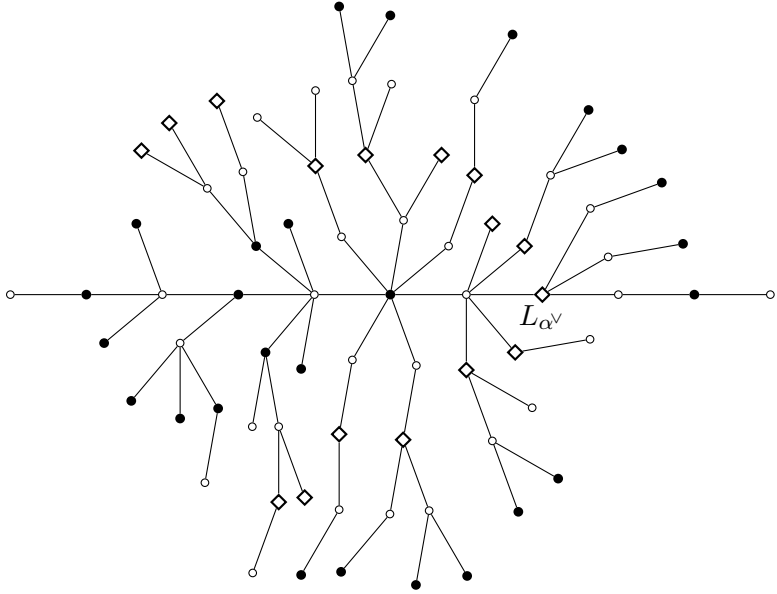
\begin{figure}
\begin{center}
\begin{tikzpicture}
  [bnode/.style={circle,draw=black,fill=black,thick,
                 inner sep=0pt,minimum size=1mm},
   rnode/.style={diamond,draw=black,fill=white,thick,
                 inner sep=0pt,minimum size=2mm},
   gnode/.style={circle,draw=black,fill=white,thin,
                 inner sep=0pt,minimum size=1mm}]
  \path (0,0) node (t0) [bnode] {};
  \path (t0) ++(0:1) node (t1) [gnode] {};
  \path (t1) ++(0:1) node (t2) [rnode] {};
  \path (t2) ++(0:1) node (t3) [gnode] {};
  \path (t3) ++(0:1) node (t4) [bnode] {};
  \path (t4) ++(0:1) node (t5) [gnode] {};
  \path (t0) ++(180:1) node (t6) [gnode] {};
  \path (t6) ++(180:1) node (t7) [bnode] {};
  \path (t7) ++(180:1) node (t8) [gnode] {};
  \path (t8) ++(180:1) node (t9) [bnode] {};
  \path (t0) ++(40:1) node (ta) [gnode] {};
  \path (ta) ++(70:1) node (tb) [rnode] {};
  \path (tb) ++(90:1) node (tc) [gnode] {};
  \path (tc) ++(60:1) node (td) [bnode] {};
  \path (t0) ++(80:1) node (te) [gnode] {};
  \path (te) ++(60:1) node (tf) [rnode] {};
  \path (te) ++(120:1) node (tg) [rnode] {};
  \path (tg) ++(70:1) node (th) [gnode] {};
  \path (tg) ++(100:1) node (ti) [gnode] {};
  \path (ti) ++(60:1) node (tj) [bnode] {};
  \path (ti) ++(100:1) node (tk) [bnode] {};
  \path (t0) ++(130:1) node (tl) [gnode] {};
  \path (tl) ++(110:1) node (tm) [rnode] {};
  \path (tm) ++(90:1) node (tn) [gnode] {};
  \path (tm) ++(140:1) node (to) [gnode] {};
  \path (t6) ++(110:1) node (tp) [bnode] {};
  \path (t6) ++(140:1) node (tq) [bnode] {};
  \path (t8) ++(110:1) node (tr) [bnode] {};
  \path (t1) ++(70:1) node (ts) [rnode] {};
  \path (t1) ++(40:1) node (tt) [rnode] {};
  \path (tt) ++(30:1) node (tu) [gnode] {};
  \path (tu) ++(20:1) node (tv) [bnode] {};
  \path (tt) ++(70:1) node (tw) [gnode] {};
  \path (tw) ++(20:1) node (tx) [bnode] {};
  \path (tw) ++(60:1) node (ty) [bnode] {};
  \path (t2) ++(30:1) node (tz) [gnode] {};
  \path (tz) ++(10:1) node (tA) [bnode] {};
  \path (t8) ++(-140:1) node (tB) [bnode] {};
  \path (t7) ++(-140:1) node (tC) [gnode] {};
  \path (tC) ++(-130:1) node (tD) [bnode] {};
  \path (tC) ++(-90:1) node (tE) [bnode] {};
  \path (tC) ++(-60:1) node (tF) [bnode] {};
  \path (t6) ++(-130:1) node (tG) [bnode] {};
  \path (tG) ++(-80:1) node (tH) [gnode] {};
  \path (tG) ++(-100:1) node (tI) [gnode] {};
  \path (t0) ++(-120:1) node (tJ) [gnode] {};
  \path (tJ) ++(-100:1) node (tK) [rnode] {};
  \path (t0) ++(-70:1) node (tL) [gnode] {};
  \path (tL) ++(-100:1) node (tM) [rnode] {};
  \path (tM) ++(-100:1) node (tN) [gnode] {};
  \path (tN) ++(-130:1) node (tO) [bnode] {};
  \path (tM) ++(-70:1) node (tP) [gnode] {};
  \path (tP) ++(-100:1) node (tQ) [bnode] {};
  \path (tP) ++(-60:1) node (tR) [bnode] {};
  \path (t1) ++(-90:1) node (tS) [rnode] {};
  \path (tS) ++(-70:1) node (tT) [gnode] {};
  \path (tT) ++(-70:1) node (tU) [bnode] {};
  \path (tT) ++(-30:1) node (tV) [bnode] {};
  \path (tS) ++(-30:1) node (tW) [gnode] {};
  \path (t1) ++(-50:1) node (tX) [rnode] {};
  \path (tX) ++(10:1) node (tY) [gnode] {};
  \path (t9) ++(180:1) node (tZ) [gnode] {};
  \path (tq) ++(100:1) node (taa) [gnode] {};
  \path (tq) ++(130:1) node (tbb) [gnode] {};
  \path (taa) ++(110:1) node (tcc) [rnode] {};
  \path (tbb) ++(120:1) node (tdd) [rnode] {};
  \path (tbb) ++(150:1) node (tee) [rnode] {};
  \path (tK) ++(-90:1) node (tff) [gnode] {};
  \path (tH) ++(-70:1) node (tgg) [rnode] {};
  \path (tH) ++(-90:1) node (thh) [rnode] {};
  \path (thh) ++(-110:1) node (tii) [gnode] {};
  \path (tff) ++(-120:1) node (tjj) [bnode] {};
  \path (tF) ++(-100:1) node (tkk) [gnode] {};
  \path (t6) ++(-100:1) node (tll) [bnode] {};
  \draw (t0) edge (t1) edge (t6) edge (ta) edge (te)
             edge (tl) edge (tJ) edge (tL);
  \draw (tf) edge (te);
  \draw (tg) edge (te) edge (th) edge (ti);
  \draw (tm) edge (tl) edge (tn) edge (to);
  \draw (tb) edge (ta) edge (tc);
  \draw (td) edge (tc);
  \draw (tt) edge (t1) edge (tw);
  \draw (ts) edge (t1);
  \draw (tx) edge (tw);
  \draw (ty) edge (tw);
  \draw (tr) edge (t8);
  \draw (tB) edge (t8);
  \draw (t9) edge (t8) edge (tZ);
  \draw (t7) edge (t6) edge (t8) edge (tC);
  \draw (tD) edge (tC);
  \draw (tE) edge (tC);
  \draw (tF) edge (tC);
  \draw (tG) edge (t6) edge (tH) edge (tI);
  \draw (tK) edge (tJ);
  \draw (tM) edge (tL) edge (tN) edge (tP);
  \draw (tO) edge (tN);
  \draw (tQ) edge (tP);
  \draw (tR) edge (tP);
  \draw (tS) edge (t1) edge (tT) edge (tW);
  \draw (tU) edge (tT);
  \draw (tV) edge (tT);
  \draw (tX) edge (t1) edge (tY);
  \draw (t2) edge (t1) edge (t3) edge (tu) edge (tz);
  \draw (t4) edge (t3) edge (t5);
  \draw (t6) edge (tp) edge (tq) edge (tll);
  \draw (tq) edge (taa) edge (tbb);
  \draw (taa) edge (tcc);
  \draw (tbb) edge (tdd) edge (tee);
  \draw (tj) edge (ti);
  \draw (tk) edge (ti);
  \draw (tv) edge (tu);
  \draw (tA) edge (tz);
  \draw (tK) edge (tff);
  \draw (tH) edge (tgg) edge (thh);
  \draw (thh) edge (tii);
  \draw (tff) edge (tjj);
  \draw (tF) edge (tkk);
  \node [below] at (t2) {$L_{\alpha^{\!\vee}}$};
\end{tikzpicture}
\end{center}
\caption{Illustration of a semi-infinite orbit in $\Gr_{\SL_2}$}\label{fig:GrSL2-semiinfinite}
\end{figure}

For the same reason as before, we see that $L_0$ belongs to the
closure of $S_{\alpha^\vee}$. The reader can however feel cheated
here, since we relied on geometrical intuition. For a more formal
proof, one computes\footnote{In fact, this computation is precisely
our observation on the tree.}
$$\begin{pmatrix}1&at^{-1}\\0&1\end{pmatrix}=\begin{pmatrix}t^{-1}&0\\
a^{-1}&t\end{pmatrix}\begin{pmatrix}t&a\\-a^{-1}&0\end{pmatrix}
\quad\text{for $a\in\C^\times$},$$
that is 
$$\begin{pmatrix}1&at^{-1}\\0&1\end{pmatrix}\GO=\begin{pmatrix}t^{-1}&0\\
a^{-1}&t\end{pmatrix}\GO,\hspace{83pt}$$
and therefore 
$$\underbrace{\begin{pmatrix}1&at^{-1}\\0&1\end{pmatrix}\GO}_{\in S_0}
\to\underbrace{\begin{pmatrix}t^{-1}&0\\0&t\end{pmatrix}\GO}_{=L_{
-\alpha^\vee}}\quad\text{when $a\to\infty$.\;\,}$$

Multiplying on the left by $t^\mu$ leads to $L_{\mu-\alpha^\vee}\in
\overline{S_\mu}$, whence $S_{\mu-\alpha^\vee}\subset\overline{S_\mu}$.
This justifies (at least in the case of $\SL_2$, but the general case can be deduced from this special case) the inclusion
\begin{equation}
\label{eqn:closure-S-inclusion}
\overline{S_\mu}\supset\bigsqcup_{\substack{\nu\in X_*(T)\\
\nu\leq\mu}}S_\nu.
\end{equation}

The proof of the reverse inclusion requires another tool, which is the
subject of the next section.

\subsection{Projective embeddings}
\label{ss:proj-embeddings}

We now want to embed the affine Grassmannian $\Gr_G$ in an (infinite dimensional)
projective space $\mathbf P(V)$ in order to get more control over its geometry. Replacing
$G$ by a simply connected cover of its derived subgroup may kill connected components, but has
the advantage that the resulting group is a product of simple groups.
Therefore in this subsection we assume that $G$ is quasi-simple (i.e.~that it is semisimple and that the quotient by its center is simple) and simply connected. (For the applications we consider, the general case will be reduced to this one.)

The character lattice $X^*(T)$ of $T$ is the $\mathbf{Z}$-dual of $X_*(T)$. Let $W$
be the Weyl group of $(G,T)$, and let $\tau:X_*(T)\to\mathbf Z$ be the $W$-invariant
quadratic form that takes the value $1$ at each short coroot. The polar
form of $\tau$ defines a map $\iota:X_*(T)\to X^*(T)$; from the
$W$-invariance of $\tau$, one deduces that
\begin{equation}
\label{eqn:iota-coroot}
\iota(\alpha^\vee)=\tau(\alpha^\vee)\alpha \quad \text{for each coroot $\alpha^\vee$.}
\end{equation}

Let $\mathfrak g$ be the Lie algebra of $G$. The Lie algebra of $T$ is a
Cartan subalgebra $\mathfrak h$ of $\mathfrak g$. Then $\tau$ can be seen
as the restriction to $\mathfrak h$ of the Killing form of $\mathfrak g$
(suitably rescaled), and $X^*(T)$ is a lattice in the dual space
$\mathfrak h^\vee$.

With the help of the Killing form of $\mathfrak g$, one defines a
$2$-cocycle of the Lie algebra $\mathfrak g
\otimes_{\mathbf C}\mathbf C[t,t^{-1}]$, and thus a central extension
$$0\to\mathbf CK\to\tilde{\mathfrak g}\xrightarrow p\mathfrak
g\otimes_{\mathbf C}\mathbf C[t,t^{-1}]\to0$$
of this algebra by a one-dimensional Lie algebra $\mathbf CK$
(see \cite[pp.~97--98]{kac}).
The affine Kac--Moody Lie algebra is then obtained by taking a
semidirect product
$$\hat{\mathfrak g}=\tilde{\mathfrak g}\rtimes\mathbf Cd$$
with a one-dimensional Lie algebra $\mathbf Cd$, where $d$ acts as
$t\frac d{dt}$ on $\mathfrak g\otimes_{\mathbf C}\mathbf C[t,t^{-1}]$.

Further, $\mathfrak h\subset\mathfrak g\otimes_{\mathbf C}\mathbf C[t,t^{-1}]$ can be canonically
lifted in $\tilde{\mathfrak g}$. Then $\hat{\mathfrak h}=\mathfrak
h\oplus\mathbf CK\oplus\mathbf Cd$ is a Cartan subalgebra of
$\hat{\mathfrak g}$. Let $\Lambda_0\in(\hat{\mathfrak h})^\vee$ be the
linear form that vanishes on $\mathfrak h\oplus\mathbf Cd$ and that
maps $K$ to $1$. Let $V(\Lambda_0)$ be the irreducible integrable
highest weight representation of $\hat{\mathfrak g}$ with highest
weight $\Lambda_0$. It is generated by a highest weight vector
$v_{\Lambda_0}$, and the stabilizer of the line $[v_{\Lambda_0}]$
in $\mathbf P(V(\Lambda_0))$ is the parabolic subalgebra
$p^{-1}(\mathfrak g[t])\rtimes\mathbf Cd$.

Thanks to Garland's work \cite{garland}, we know that the representation
$V(\Lambda_0)$ can be integrated to the Kac--Moody group $\hat G$
that corresponds to the Lie algebra $\hat{\mathfrak g}$. This group
is the semi-direct product of a central extension
$$1\to\mathbf C^\times\to\tilde G\to G(\mathbf C[t,t^{-1}])\to1$$
by another copy of $\mathbf C^\times$, acting by loop rotations. The
central $\mathbf C^\times$ in $\tilde G$ acts by 
scalar multiplication on $V(\Lambda_0)$, so
$G(\mathbf C[t,t^{-1}])$ acts on $\mathbf P(V(\Lambda_0))$. Since
the stabilizer of the line $[v_{\Lambda_0}]$ for this action is the
subgroup $G(\mathbf C[t])$, the map $g\mapsto g \cdot [v_{\Lambda_0}]$
defines an embedding
$$\Psi:G(\mathbf C[t,t^{-1}])/G(\mathbf C[t])\hookrightarrow
\mathbf P(V(\Lambda_0)).$$
Further, using for instance the Iwasawa decomposition, one can show
that on the level of $\mathbf C$-points, the obvious map
$$G(\mathbf C[t,t^{-1}])/G(\mathbf C[t])\to
\GK/\GO=\Gr_G$$
is bijective. We eventually obtain a closed embedding
$$\Psi:\Gr_G\to\mathbf P(V),$$
where (here and below) we write $V$ instead of $V(\Lambda_0)$ to shorten the notation.

Certainly, $\mathbf P(V)$ has the structure of an ind-variety: the
finite-dimensional pieces are all the finite-dimensional projective
subspaces $\mathbf P(W)$ inside $\mathbf P(V)$. Then $\Psi$ is a
morphism of ind-varieties. Even better: thanks to the work of Kumar
(see~\cite[Chap.~7]{kumar}), we know that the ind-variety structure of $\Gr_G$
is induced via $\Psi$ by that of $\mathbf P(V)$.

Lastly, \cite[(6.5.4)]{kac}
implies that
\begin{equation}
\label{eqn:Psi-weights}
\Psi(L_\nu)\in\mathbf P(V_{-\iota(\nu)}),
\end{equation}
where $V_{-\iota(\nu)}$ is the subspace of $V$ of weight
$-\iota(\nu)$ for the action of $\mathfrak h\subset\hat{\mathfrak g}$.

\begin{rmk}
 See~\cite[Remark~10.2(ii)]{pr} for a comparison between the group $\tilde{G}$ considered above and a central extension of $\GK$ considered by Faltings in~\cite{faltings}.
\end{rmk}

\subsection{Consequences}

After these lengthy preliminaries, we can go back to our problem. We drop our assumption that $G$ is quasi-simple and simply connected.

\begin{prop}
\label{prop:closure-semiinfinite}
Let $\mu\in X_*(T)$. Then
\[
\overline{S_\mu}=\bigsqcup_{\nu\leq\mu}S_\nu.
\]
Moreover, there exists a $\C$-vector space $V$ and a closed embedding $\Psi : \Gr_G^{\mu + Q^\vee} \to \mathbf{P}(V)$ such that the boundary of $S_\mu$ is the set-theoretic intersection of $\overline{S_\mu}$
with a hyperplane $H_\mu$ of $\mathbf P(V)$:
$$\partial S_\mu=\overline{S_\mu}\cap\Psi^{-1}(H_\mu).$$
\end{prop}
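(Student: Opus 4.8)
The strategy is to reduce the general case to the quasi-simple, simply connected case treated in Section~\ref{ss:proj-embeddings}, and then to extract the hyperplane from the projective embedding $\Psi$.

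First I would carry out the reduction. Since $\overline{S_\mu}$ is contained in the connected component $\Gr_G^{\mu+Q^\vee}$, and the formation of $S_\mu$, its closure, and its boundary only involves the action of $\NK$ and the point $L_\mu$, one can choose a central isogeny $\pi : G' \times T_0 \to G$ where $G'$ is semisimple simply connected (a product of quasi-simple simply connected groups) and $T_0$ is a torus, or more simply use the simply connected cover $G_{\mathrm{sc}} \to G_{\mathrm{der}}$ of the derived subgroup together with the fact that $\Gr_{G_{\mathrm{sc}}} \to \Gr_G$ induces an isomorphism onto each connected component of $\Gr_G$ up to the contribution of the torus part; the semi-infinite orbits and their closures match up under this map because $\NK$ is unchanged. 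Thus it suffices to prove the statement when $G$ is quasi-simple and simply connected, and in that case $\Gr_G$ itself is connected so $\Gr_G^{\mu+Q^\vee} = \Gr_G$. The inclusion $\overline{S_\mu} \supseteq \bigsqcup_{\nu \le \mu} S_\nu$ is~\eqref{eqn:closure-S-inclusion}, established by the $\SL_2$ computation, so the content is the reverse inclusion, which I expect to obtain simultaneously with the hyperplane statement.

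Next, with $\Psi : \Gr_G \hookrightarrow \mathbf{P}(V)$ the closed embedding of Section~\ref{ss:proj-embeddings}, where $V = V(\Lambda_0)$, I would use the weight decomposition $V = \bigoplus_\chi V_\chi$ under $\mathfrak h$ and the fact~\eqref{eqn:Psi-weights} that $\Psi(L_\nu) \in \mathbf{P}(V_{-\iota(\nu)})$. The key geometric input is that $S_\mu$ is the \emph{attracting set} for a suitable cocharacter: choosing $\xi \in X_*(T)$ dominant regular (so that $\langle \iota(\mu) - \iota(\nu), \xi\rangle > 0$ whenever $\nu < \mu$, using~\eqref{eqn:iota-coroot} and dominance regularity), the torus $\mathbf{G}_{\mathrm m}$ acts on $\Gr_G$ via $\xi$ composed with the loop rotation/Kac--Moody torus, and on $V$ this action has weights that, on the line $[v]$ with $\Psi(L_\nu) = [v]$, are governed by $\langle -\iota(\nu), \xi\rangle$ plus a correction from the $\Lambda_0$-grading. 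One shows that $S_\mu$ coincides with $\{x \in \Gr_G : \lim_{s\to 0} \xi(s)\cdot x = L_\mu\}$, so that a point of $\overline{S_\mu}$ flows under $\xi(s)$ as $s \to 0$ to some $L_\nu$ with $\nu \le \mu$, forcing $\overline{S_\mu} \subseteq \bigsqcup_{\nu \le \mu} S_\nu$. Concretely, let $c$ be the $\mathfrak h$-weight $-\iota(\mu)$ of the highest line in $\overline{S_\mu}$, let $H_\mu \subset \mathbf{P}(V)$ be the projectivization of the sum of all weight spaces $V_\chi$ with $\chi \ne -\iota(\mu)$ (a hyperplane in the sense of ind-projective space, i.e.\ the zero locus of the linear functional "component along $V_{-\iota(\mu)}$", which one checks is a single line since $V_{\Lambda_0}$ has one-dimensional extremal weight spaces). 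Then $\Psi(x) \notin H_\mu$ iff the $V_{-\iota(\mu)}$-component of $\Psi(x)$ is nonzero; I claim this happens exactly when $x \in S_\mu$, because $x \in S_\mu$ iff $\lim_{s \to 0}\xi(s) x = L_\mu$ iff the leading term of $\Psi(x)$ under the $\xi$-grading is the extremal line $\mathbf{P}(V_{-\iota(\mu)})$. This gives $\partial S_\mu = \overline{S_\mu} \setminus S_\mu = \overline{S_\mu} \cap \Psi^{-1}(H_\mu)$.

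The main obstacle I anticipate is making the "attracting set = $S_\mu$" assertion rigorous in the ind-scheme setting: one must justify that the $\mathbf{G}_{\mathrm m}$-action via a regular dominant cocharacter has well-defined limits $s \to 0$ on all of $\Gr_G$ (which follows from ind-properness of $\Gr_G$, stated in Section~\ref{ss:def-Gr}), that the fixed points are exactly the $L_\nu$, $\nu \in X_*(T)$, and that the attracting set of $L_\mu$ is precisely $\NK \cdot L_\mu$ and not something larger — this last point uses the Iwahori/Bruhat-type decomposition or, equivalently, the identification of $\NK$ with the appropriate unipotent subgroup whose Lie algebra has positive $\xi$-weights. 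Identifying the linear functional cutting out $H_\mu$ with the $V_{-\iota(\mu)}$-projection, and checking it is genuinely linear (one functional, not a higher-codimension condition), requires knowing $\dim V_{-\iota(\mu)} = 1$; this is the statement that $-\iota(\mu)$ is an extremal weight of $V(\Lambda_0)$, which follows from~\eqref{eqn:Psi-weights} together with the fact that $L_\mu$ is a $T$-fixed point lying in a single $\NK$-orbit, i.e.\ from the Kac--Moody representation theory recalled via~\cite{kac}. Once these ingredients are in place, the proposition follows formally.
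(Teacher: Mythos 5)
Your dynamical framing of the first assertion (via a regular dominant cocharacter $\xi$ and attracting sets) is a legitimate reformulation, but to establish that a point of $\overline{S_\mu}$ flows to $L_\nu$ with $\nu \leq \mu$ you still have to prove that $\bigsqcup_{\nu \leq \mu} S_\nu = \Psi^{-1}\bigl(\mathbf{P}\bigl(\sum_{\chi \geq -\iota(\mu)} V_\chi\bigr)\bigr)$, which is exactly the weight-filtration computation the paper performs (using that $\NK$ can only increase $\mathfrak{h}$-weights in $V$). The dynamical language does not shorten the argument; it repackages it.

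The real gap is in the hyperplane construction. You take $H_\mu = \mathbf{P}\bigl(\sum_{\chi \neq -\iota(\mu)} V_\chi\bigr)$ and defend its being a hyperplane by asserting $\dim V_{-\iota(\mu)} = 1$. That assertion is false. The subspace $V_{-\iota(\mu)}$ is the weight space of $V(\Lambda_0)$ for the \emph{finite} Cartan $\mathfrak{h} \subset \hat{\mathfrak{h}}$, not for the full affine Cartan $\hat{\mathfrak{h}}$; two $\hat{\mathfrak{h}}$-weights differing by a multiple of the imaginary root $\delta$ restrict to the same $\mathfrak{h}$-weight (since $\delta|_{\mathfrak{h}} = 0$), so any given $\mathfrak{h}$-weight space collects contributions from infinitely many $d$-grades and is typically infinite-dimensional. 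Extremality only gives one-dimensionality of the single $\hat{\mathfrak{h}}$-weight space containing $\Psi(L_\mu)$, which is just one graded slice of the much larger $V_{-\iota(\mu)}$. Hence your $H_\mu$ has infinite codimension and is not a hyperplane; the set-theoretic identity $\partial S_\mu = \overline{S_\mu}\cap\Psi^{-1}(H_\mu)$ would still hold for your locus, but you would not have exhibited the hyperplane demanded by the proposition.

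The fix uses the sharper information that for $u \in \NK$ the $V_{-\iota(\mu)}$-component of $\Psi(u \cdot L_\mu)$ is \emph{equal} to that of $\Psi(L_\mu)$, because $uv - v$ lies entirely in $\sum_{\chi > -\iota(\mu)} V_\chi$. So $\Psi(S_\mu)$ all share the same nonzero $V_{-\iota(\mu)}$-component, lying on the specific line $\Psi(L_\mu)$. One therefore picks a single linear form $h \in V^\vee$ vanishing on $\sum_{\chi > -\iota(\mu)} V_\chi$ and not vanishing on $\Psi(L_\mu)$, and takes $H_\mu = \mathbf{P}(\ker h)$. Then $h$ is nonzero on $\Psi(S_\mu)$ and zero on $\Psi(S_\nu)$ for every $\nu < \mu$, giving the desired hyperplane.
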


\begin{proof}
First we assume that $G$ is quasi-simple and simply-connected, and choose $V$ and $\Psi$ as in~\S\ref{ss:proj-embeddings}.

Let $\lambda\in X_*(T)$. Writing $\Psi(L_\lambda)=\mathbf Cv$,
the vector $v$ belongs to the weight subspace $V_{-\iota(\lambda)}$
of $V$ by~\eqref{eqn:Psi-weights}. The action on $v$ of an element $u\in\NK$ can only increase
weights,\footnote{Certainly, above we have described $V$ only as a projective representation
of $G(\C[t,t^{-1}])$, so it seems hazardous to let $\NK$ act on $V$. To be more precise, we observe that the action of $\hat{\mathfrak{g}}$ on $V$ can be extended to its completion considered e.g.~in~\cite[\S 13.1]{kumar} (because the part one needs to complete acts in a locally nilpotent way). Then, using~\cite[Theorem~6.2.3 \& Theorem~13.2.8]{kumar} one sees that this action integrates to an action of a central extension of $\GK$. Finally, one observes
that the cocycle that defines the central extension is
trivial on $\NK$, so that this subgroup can be lifted to the central extension.}
hence $uv-v\in\sum_{\chi>-\iota(\lambda)}V_\chi$. (The
order $\geq$ on $\mathfrak h^\vee$ used here is the dominance order:
nonnegative elements in $\mathfrak h^\vee$ are nonnegative integral
combinations of positive roots.) It follows that
$$\Psi(u\cdot L_\lambda)\in\mathbf P\left(\sum_{\chi\geq-\iota(\lambda)}V_\chi
\right)\smallsetminus\mathbf P\left(\sum_{\chi>-\iota(\lambda)}V_\chi\right).$$
Letting $u$ run over $\NK$, we deduce
$$\Psi(S_\lambda)\subset\mathbf P\left(\sum_{\chi\geq-\iota(\lambda)}V_\chi\right)
\smallsetminus\mathbf P\left(\sum_{\chi>-\iota(\lambda)}V_\chi\right).$$
Writing these inclusions for all possible $\lambda$, we conclude
that\footnote{Specifically, one must here observe that $\nu\leq\mu$
in $X_*(T)$ implies $-\iota(\nu)\geq-\iota(\mu)$ in $\mathfrak h^\vee$.
This follows from the equality~\eqref{eqn:iota-coroot}.}
$$\bigsqcup_{\nu\leq\mu}S_\nu=\Psi^{-1}\left(\mathbf P\left(\sum_{\chi\geq-\iota(\mu)}
V_\chi\right)\right).$$
This implies that $\bigsqcup_{\nu\leq\mu}S_\nu$ is closed in $\Gr_G$,
whence (in view of~\eqref{eqn:closure-S-inclusion}) the first equality in the statement. For the second one, one
chooses a linear form $h\in V^\vee$ that vanishes on
$\sum_{\chi>-\iota(\mu)}V_\chi$ but does not vanish on $\Psi(L_\mu)$ and
takes $H_\mu=\mathbf{P}(\ker h)$.

The case of simply connected semisimple (but not necessarily quasi-simple) groups reduces to the preceding case since such a group is a product of simply connected quasi-simple groups. Finally, for $G$ general, the action of $t^{-\mu}$ identifies $\Gr_G^{\mu + Q^\vee}$ with $\Gr_G^{Q^\vee}$ (which itself identifies with the affine Grassmannian of a simply connected cover of the derived subgroup of $G$) and sends $S_\mu$ to $S_0$; this reduces the proof to the case of simply connected semisimple groups, and allows to conclude.
\end{proof}

\begin{rmk}
 See~\cite[Corollary~5.3.8]{zhu} for a different proof of Proposition~\ref{prop:closure-semiinfinite}, which avoids the use of Kac--Moody groups.
\end{rmk}

For symmetry reasons, one should also consider the Borel subgroup
$B^-$ opposite to $B$ with respect to $T$ and its unipotent radical $N^-$.
One then has an Iwasawa decomposition
$$\Gr_G=\bigsqcup_{\mu\in X_*(T)}T_\mu,\qquad\text{where}\quad T_\mu=
N^-_{\mathcal K}\cdot L_\mu,$$
and the closure of these orbits is given by
\begin{equation}
\label{eqn:closure-Tmu}
\overline{T_\mu}=\bigsqcup_{\substack{\nu\in X_*(T)\\\nu\geq\mu}}T_\nu.
\end{equation}

On the Serre tree (see~\S\ref{ss:semi-infinite-orbits}), $T_\mu$ is seen as the horosphere centered at
$+\infty$ going through $L_\mu$. This makes the following lemma
quite intuitive.

\begin{lem}
\label{lem:semiinfinite-intersection}
Let $\mu,\nu\in X_*(T)$. Then $\overline{S_\mu}\cap\overline{T_\nu}=
\varnothing$ except if $\nu\leq\mu$, and
$\overline{S_\mu}\cap\overline{T_\mu}=\{L_\mu\}$.
\end{lem}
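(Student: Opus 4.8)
The plan is to follow the strategy of the proof of Proposition~\ref{prop:closure-semiinfinite}: first reduce to the case where $G$ is quasi-simple and simply connected (so that $\Gr_G$ is connected, $X_*(T)=Q^\vee$, and the projective embedding $\Psi:\Gr_G\hookrightarrow\mathbf{P}(V)$ of \S\ref{ss:proj-embeddings} is available), and then argue with $\mathfrak{h}$-weights in $V$. For general $G$: if $\mu-\nu\notin Q^\vee$ then $\overline{S_\mu}$ and $\overline{T_\nu}$ lie in distinct connected components of $\Gr_G$, so the intersection is empty and both statements are vacuous; otherwise translation by $t^{-\mu}$ moves everything into the neutral component, which is the affine Grassmannian of a product of quasi-simple simply connected groups, and the $S$-orbits, the $T$-orbits and the points $L_\bullet$ correspond under this identification. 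So assume from now on that $G$ is quasi-simple and simply connected.

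For the first assertion, suppose $x\in\overline{S_\mu}\cap\overline{T_\nu}$ and write $\Psi(x)=\mathbf{C}w$ with $w\in V\smallsetminus\{0\}$. The computation in the proof of Proposition~\ref{prop:closure-semiinfinite} gives $\Psi(\overline{S_\mu})\subseteq\mathbf{P}\bigl(\sum_{\chi\geq-\iota(\mu)}V_\chi\bigr)$, and the symmetric computation with $N^-_{\mathcal K}$ (which decreases $\mathfrak{h}$-weights) gives $\Psi(\overline{T_\nu})\subseteq\mathbf{P}\bigl(\sum_{\chi\leq-\iota(\nu)}V_\chi\bigr)$. Hence $w$ has a nonzero component in some weight $\chi_0$ with $-\iota(\mu)\leq\chi_0\leq-\iota(\nu)$, so $\iota(\mu-\nu)\geq0$ in $\mathfrak{h}^\vee$; writing $\mu-\nu=\sum_i n_i\alpha_i^\vee$ with $n_i\in\mathbf{Z}$ and using $\iota(\alpha_i^\vee)=\tau(\alpha_i^\vee)\alpha_i$ with $\tau(\alpha_i^\vee)>0$ (see~\eqref{eqn:iota-coroot}), together with the fact that a nonnegative integral combination of positive roots has nonnegative coefficients on the simple roots, one gets $n_i\geq0$ for all $i$, i.e.\ $\nu\leq\mu$.

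For the second assertion, $\{L_\mu\}\subseteq\overline{S_\mu}\cap\overline{T_\mu}$ is clear since $L_\mu\in S_\mu\cap T_\mu$. Conversely let $x\in\overline{S_\mu}\cap\overline{T_\mu}$, with $\Psi(x)=\mathbf{C}w$ as above; the two inclusions now force $w\in V_{-\iota(\mu)}$. On the other hand, by the Iwasawa decomposition~\eqref{eqn:Iwasawa-dec} and Proposition~\ref{prop:closure-semiinfinite} we may write $x=n\cdot L_{\mu'}$ with $\mu'\leq\mu$ and $n\in N_{\mathcal K}$. If $v_{\mu'}$ spans $\Psi(L_{\mu'})\subseteq\mathbf{P}(V_{-\iota(\mu')})$, then, as in the proof of Proposition~\ref{prop:closure-semiinfinite}, $n\cdot v_{\mu'}-v_{\mu'}\in\sum_{\chi>-\iota(\mu')}V_\chi$, so a representative of $\Psi(x)$ has a nonzero weight-$(-\iota(\mu'))$ component proportional to $v_{\mu'}$. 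Comparing with $w\in V_{-\iota(\mu)}$ gives $-\iota(\mu')=-\iota(\mu)$, hence $\mu'=\mu$ (the form $\tau$ is positive definite, so $\iota$ is injective), and then the strictly-higher-weight part of $n\cdot v_{\mu'}$ must vanish, so $\Psi(x)=\mathbf{C}v_\mu=\Psi(L_\mu)$; since $\Psi$ is a closed embedding, $x=L_\mu$.

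The routine parts are the reduction step and the $N^-$-version of the weight estimate already established for $N$. The step I expect to be the genuine obstacle is the second assertion: the crude weight estimate only locates $\Psi(x)$ in $\mathbf{P}(V_{-\iota(\mu)})$, a subspace which is in general not a single point, so one must additionally exploit the finer fact that the leading-weight component of $n\cdot v_\mu$ is $v_\mu$ itself (not merely nonzero), together with the injectivity of $\Psi$, to pin $x$ down to $L_\mu$.
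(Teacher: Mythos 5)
Your proof is correct and follows precisely the strategy the paper sketches (the paper only offers a parenthetical one-liner pointing to the projective embedding and weight arguments of Proposition~\ref{prop:closure-semiinfinite}, so there is no detailed proof in the text to compare against). The reduction to the quasi-simple simply connected case is handled properly, the first assertion is the direct weight-cone intersection argument one would expect, and you have correctly identified and resolved the real content of the second assertion: the coarse weight estimate only confines $\Psi(x)$ to $\mathbf P(V_{-\iota(\mu)})$, and pinning $x$ down to the single point $L_\mu$ requires the finer fact that the lowest-weight component of $n\cdot v_{\mu'}$ is exactly $v_{\mu'}$, plus injectivity of $\iota$ (from positive definiteness of $\tau$) and injectivity of $\Psi$.
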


(For a formal proof in the general case, one uses the projective embedding and weights
arguments, as in the proof of Proposition~\ref{prop:closure-semiinfinite}.)

\section{Semisimplicity of the Satake category}
\label{sec:semisimplicity}

From now on in this part, we fix a field $\bk$
of characteristic~$0$.
Our goal in this section is to show that the category $\Perv$ of perverse sheaves
on $\Gr_G$ with coefficients in $\bk$ and  with $\mathscr S$-constructible
cohomology is semisimple. Since every object of this abelian category has
finite length, this result means that there are no non-trivial extensions between
simple objects.

\subsection{The Satake category}
\label{ss:satake-category}

Recall the notion of t-structure introduced in~\cite{bbd}.

\begin{defn}
\label{defn:$t$-structure}
Let $\mathcal{D}$ be a triangulated category. A \emph{$t$-structure} on $\mathcal{D}$ is a pair $(\mathcal{D}^{\leq 0}, \mathcal{D}^{\geq 0})$ of strictly full subcategories of $\mathcal{D}$ which satisfy the following properties:
\begin{enumerate}
\item
If $X \in \mathcal{D}^{\leq 0}$ and $Y \in \mathcal{D}^{\geq 0}$, then $\Hom_{\mathcal{D}}(X,Y[-1])=0$.
\item
\label{it:cond-2-$t$-structure}
We have $\mathcal{D}^{\leq 0} \subset \mathcal{D}^{\leq 0}[-1]$ and $\mathcal{D}^{\geq 0} \supset \mathcal{D}^{\geq 0}[-1]$.
\item
For all $X \in \mathcal{D}$, there exists a distinguished triangle
\[
A \to X \to B \xrightarrow{[1]}
\]
in $\mathcal{D}$ with $A \in \mathcal{D}^{\leq 0}$ and $B \in \mathcal{D}^{\geq 0}[-1]$.
\end{enumerate}
\end{defn}

We will say that
an object in $\mathcal{D}^{\leq0}$ (respectively, $\mathcal{D}^{\geq0}$) is
concentrated in nonpositive (respectively, nonnegative) degrees with
respect to the t-structure. By axiom~\eqref{it:cond-2-$t$-structure} in Definition~\ref{defn:$t$-structure}, these notions are compatible with
the cohomological shift, so we may as well consider for instance the
subcategory $\mathcal{D}^{\geq1}=(\mathcal{D}^{\geq0})[-1]$ of objects concentrated in
positive degrees. We also recall that the \emph{heart} of the t-structure is
the full subcategory $\mathcal{A} := \mathcal{D}^{\leq0}\cap \mathcal{D}^{\geq0}$ of $\mathcal D$;
this is an abelian category, whose exact sequences are the distinguished triangles
\[
X \to Y \to Z \xrightarrow{[1]}
\]
in $\mathcal{D}$ where $X$, $Y$ and $Z$ belong to $\mathcal{A}$. In particular, this means that for any $X,Y$ in $\mathcal{A}$ we have a canonical identification
\begin{equation}
\label{eqn:Ext-heart}
\Ext^1_{\mathcal{A}}(X,Y) \cong \Hom_{\mathcal{D}}(X,Y[1]).
\end{equation}
For instance, the bounded derived category
$\Db(\mathcal A)$ of an abelian category $\mathcal A$ has a natural
t-structure, called the ordinary t-structure, whose heart is $\mathcal A$.

Let now $X$ be a topological space and $\mathscr S$ be a stratification which satisfies certain technical conditions; see~\cite[\S 2.1.3]{bbd}. (These conditions will always tacitly be assumed to be satisfied when we consider perverse sheaves. They are obvious in the concrete cases we study.)
Given $S\in\mathscr S$, we denote
by $i_S:S\hookrightarrow X$ the inclusion map. We denote by
$\Db_{\mathscr S}(X, \bk)$ the bounded derived category of
sheaves of $\bk$-vector spaces on $X$
which are constructible with respect to $\mathscr S$. Thus, a complex
$\mathscr F$ of $\bk$-sheaves on $X$ belongs to
$\Db_{\mathscr S}(X, \bk)$ if the cohomology sheaves
$\mathscr H^n\,\mathscr F$ vanish for $|n|\gg0$ and if each
restriction $i_S^*\,\mathscr H^n\,\mathscr F$ is a local system (i.e.~a locally free sheaf of finite rank).

In this setting, we define
\begin{align*}
{}^p \hspace{-1pt} D^{\leq0}&=\bigl\{\mathscr F\in \Db_{\mathscr S}(X,\bk)
\mid \forall S\in\mathscr S,\ \forall n>-\dim S,\
\mathscr H^n \bigl( (i_S)^*\mathscr F\bigr)=0 \bigr\},\\[2pt]
{}^p \hspace{-1pt} D^{\geq0}&=\bigl\{\mathscr F\in \Db_{\mathscr S}(X,\bk)
\mid \forall S\in\mathscr S,\ \forall n<-\dim S,\
\mathscr H^n \bigl( (i_S)^!\mathscr F \bigr) =0\bigr\}.
\end{align*}
It is known (see~\cite[\S 2.1.13]{bbd}) that $({}^p \hspace{-1pt} D^{\leq0},{}^p \hspace{-1pt} D^{\geq0})$ is
a t-structure on $\Db_{\mathscr S}(X,\bk)$, called the
\emph{perverse $t$-structure}. The simplest example is the case where
$\mathscr S$ contains only one stratum (which requires that $X$
is smooth); then the perverse t-structure is just the ordinary
t-structure (restricted to $\Db_{\mathscr S}(X,\bk)$), shifted to the left by $\dim X$. Objects in the heart
$\Per_{\mathscr{S}}(X,\bk) := {}^p \hspace{-1pt} D^{\leq0}\cap {}^p \hspace{-1pt} D^{\geq0}$ of this t-structure are called
\emph{perverse sheaves}. The truncation functors for this t-structure will we denoted ${}^p  \tau_{\leq i}$ and ${}^p \tau_{\geq i}$, and the corresponding cohomology functors will be denoted ${}^p \hspace{-1pt} \mathscr{H}^i = {}^p \tau_{\leq i} \circ {}^p \tau_{\geq i} = {}^p \tau_{\geq i} \circ {}^p \tau_{\leq i}$.

It is known that every object in $\Per_{\mathscr{S}}(X,\bk)$ has finite length, see~\cite[Th\'eor\`eme~4.3.1]{bbd}. Moreover, 
the simple objects in this category are
classified by pairs $(S,\mathscr L)$, with $S\in\mathscr S$ and
$\mathscr L$ a simple local system on $S$. Specifically, to
$(S,\mathscr L)$ corresponds a unique object $\mathscr F\in
\Db_{\mathscr S}(X,\bk)$ characterized by the conditions
\begin{equation}
\label{eqn:characterization-IC}
\mathscr F\bigl|_{X\smallsetminus\overline S}=0,\quad
\mathscr F\bigl|_S=\mathscr L[\dim S],\quad
i^*\mathscr F\in{}^p \hspace{-1pt} D^{\leq-1}(\overline S\smallsetminus S, \bk),\quad
i^!\mathscr F\in{}^p \hspace{-1pt} D^{\geq1}(\overline S\smallsetminus S, \bk),
\end{equation}
where $i:\overline S\smallsetminus S\hookrightarrow X$ is the inclusion
map. This $\mathscr F$ is a simple perverse sheaf and is usually called the
intersection cohomology sheaf on $\overline S$ with coefficients in
$\mathscr L$, and denoted $\IC(S,\mathscr{L})$. Then the assignment $(S,\mathscr{L}) \mapsto \IC(S,\mathscr{L})$ induces a bijection between equivalence classes of pairs $(S,\mathscr{L})$ as above (where $(S,\mathscr{L}) \sim (S,\mathscr{L}')$ if $\mathscr{L} \cong \mathscr{L}'$) and isomorphism classes of simple objects in $\Per_{\mathscr{S}}(X,\bk)$.

We can finally define the main object of study of these notes. Consider the affine Grassmannian $\Gr_G$, and its 
stratification $\mathscr{S}$ by $\GO$-orbits, see~\S\ref{ss:def-Gr}. Then we can consider the constructible derived category $\Db_{\mathscr S}(\Gr_G,\bk)$, and its full subcategory
$\Perv$ of perverse sheaves. The main result of this section is the following.

\begin{thm}
\label{thm:semisimplicity}
The category $\Perv$ is semisimple.
\end{thm}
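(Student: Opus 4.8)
The plan is to show that every short exact sequence in $\Perv$ splits, or equivalently (since every object has finite length) that $\Ext^1_{\Perv}(\IC_1, \IC_2) = 0$ for any pair of simple objects. Using the identification \eqref{eqn:Ext-heart}, this amounts to showing $\Hom_{\Db_{\mathscr S}(\Gr_G, \bk)}(\IC_1, \IC_2[1]) = 0$. The simple objects of $\Perv$ are of the form $\IC(\Gr_G^\lambda, \bk)$ for $\lambda \in X_*(T)^+$: indeed, by Proposition~\ref{prop:Cartan-dec} each stratum $\Gr_G^\lambda$ is an affine bundle over a flag variety $G/P_\lambda$, hence simply connected, so the only simple local system on it (with $\bk$ a field) is the constant one. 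Write $\IC_\lambda := \IC(\Gr_G^\lambda, \bk)$, and set $d_\lambda := \dim \Gr_G^\lambda = \langle 2\rho, \lambda \rangle$.

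The key input is a \emph{parity vanishing} statement: I claim that $\mathscr H^n(i_\mu^* \IC_\lambda) = 0$ unless $n \equiv d_\lambda \pmod 2$, and likewise $\mathscr H^n(i_\mu^! \IC_\lambda) = 0$ unless $n \equiv d_\lambda \pmod 2$ (here $i_\mu : \Gr_G^\mu \hookrightarrow \Gr_G$). The geometric reason is that the $\GO$-orbits $\Gr_G^\mu$ all lie in a single connected component of $\Gr_G$ on which the dimensions of strata have constant parity, so $d_\mu \equiv d_\lambda \pmod 2$ whenever $\Gr_G^\mu \subset \overline{\Gr_G^\lambda}$; combined with the support/cosupport conditions in \eqref{eqn:characterization-IC}, a dimension-counting induction on the closure order forces the stalk and costalk cohomologies of $\IC_\lambda$ to be concentrated in degrees of a single parity. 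This is where the MV weight-functor philosophy really enters, but at the level needed here it can also be extracted from the known description of the stalks of $\IC$ sheaves on $\Gr_G$ (e.g.\ via the semi-infinite orbits $S_\mu$ of \S\ref{ss:semi-infinite-orbits}, whose intersections with Schubert varieties are paved by affine spaces of the expected parity, cf.\ Lemma~\ref{lem:semiinfinite-intersection}); I would prove it by induction on $\dim \overline{\Gr_G^\lambda}$, using the recollement triangles for the open-closed decomposition of $\overline{\Gr_G^\lambda}$ and the characterizing conditions \eqref{eqn:characterization-IC}.

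Granting parity vanishing, the vanishing of $\Ext^1$ is a standard dévissage. To compute $\Hom_{\Db_{\mathscr S}(\Gr_G,\bk)}(\IC_\lambda, \IC_\mu[1])$, one reduces to a computation on strata: filtering $\IC_\mu$ by the recollement triangles associated to the stratification, it suffices to bound $\Hom(\IC_\lambda, (i_S)_* \mathscr L_S[n])$ for strata $S = \Gr_G^\nu$ and the appropriate shifts $n$; by adjunction this is governed by $\mathscr H^\bullet(i_\nu^* \IC_\lambda)$, and by the dual argument (filtering $\IC_\lambda$ instead) by $\mathscr H^\bullet(i_\nu^! \IC_\mu)$. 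The perversity conditions \eqref{eqn:characterization-IC} squeeze the relevant degrees so that a nonzero contribution to $\Hom(\IC_\lambda, \IC_\mu[1])$ would force a stalk or costalk of one of the $\IC$'s to be nonzero in a degree of the \emph{wrong} parity relative to $d_\lambda \equiv d_\mu$, contradicting parity vanishing. (Concretely: on the open stratum where both are nonzero the shift by $1$ changes the parity, and on smaller strata the strict inequalities in \eqref{eqn:characterization-IC} together with the parity constraint leave no room.) Hence $\Ext^1_{\Perv}(\IC_\lambda, \IC_\mu) = 0$ for all $\lambda, \mu$, and since $\Perv$ is a finite-length abelian category with no nontrivial extensions between simples, it is semisimple.

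The main obstacle is establishing the parity vanishing for the stalks and costalks of the $\IC$ sheaves $\IC_\lambda$; everything else is formal homological dévissage. This hinges on the geometry of the affine Grassmannian — the fact that each Schubert cell is an affine bundle over a flag variety, that dimensions have constant parity on connected components, and that the relevant transversal slices (or the $S_\mu \cap \overline{\Gr_G^\lambda}$) admit pavings by affine spaces — all of which are the ``standard facts'' recalled in Section~\ref{sec:Gr}.
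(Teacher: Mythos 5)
Your overall plan — reduce semisimplicity to $\Ext^1(\IC_\lambda,\IC_\mu)=0$, and deduce that from a parity vanishing statement for the (co)stalks of the $\IC$ sheaves combined with the perversity support/cosupport conditions and the evenness of codimensions — is exactly the plan in the paper. However, your proposed proof of the parity vanishing lemma does not go through, and this is not a minor omission: it is the genuinely hard input in the argument.

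You write that the parity vanishing is ``forced by a dimension-counting induction on the closure order'' from the support/cosupport conditions in~\eqref{eqn:characterization-IC} together with the fact that dimensions of strata in a connected component have constant parity. This is not so. The conditions in~\eqref{eqn:characterization-IC} impose only one-sided strict inequalities on the degrees of the stalk and costalk cohomologies; they say nothing about which degrees are \emph{actually occupied}, and in particular they are perfectly consistent with having nonzero cohomology in degrees of both parities. (This is exactly what happens for $\IC$ sheaves on other stratified spaces whose strata also have constant parity of codimension, e.g.\ nilpotent orbit closures for certain classical groups.) Something genuinely geometric has to go in. Your alternative suggestion — to extract parity from affine pavings of the intersections $S_\mu \cap \overline{\Gr_G^\lambda}$ — also does not work as stated: these intersections are known to have pure dimension (Theorem~\ref{thm:orbits}) but are \emph{not} known to be paved by affine spaces, and Lemma~\ref{lem:semiinfinite-intersection} says nothing in that direction. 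Moreover, even if such a paving existed, the weight functors compute $\coH^\bullet_c(S_\mu, \IC_\lambda)$, which is not the stalk of $\IC_\lambda$ at $L_\mu$; relating the two would already use a good part of the geometric Satake machinery you are trying to set up, so this route risks being circular.

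The paper obtains parity vanishing (Lemma~\ref{lem:parity-IC}) by a different argument: it pulls back along the smooth projection $\pi : \Fl_G \to \Gr_G$ to the affine flag variety, constructs for each Iwahori orbit closure a Bott--Samelson resolution $\pi_{\underline w}$ whose fibers \emph{are} paved by affine spaces (a genuinely nontrivial geometric fact), and then invokes the Decomposition Theorem to realize $\IC(IwI/I,\underline\bk)$ as a direct summand of $(\pi_{\underline w})_!\,\underline\bk[\ell(w)]$, whence the parity constraint on its cohomology sheaves. Note that the Decomposition Theorem is precisely where the hypothesis $\mathrm{char}(\bk)=0$ enters; any proof of Lemma~\ref{lem:parity-IC} that did not use this hypothesis would be suspect, since the conclusion of Theorem~\ref{thm:semisimplicity} fails in positive characteristic (as the paper remarks). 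Your proposal never isolates where characteristic zero is used, which is a symptom of the gap.

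Finally, a smaller point on the d\'evissage: once parity vanishing is available, your argument is in the right spirit, but be aware that the paper treats three cases, and parity vanishing is only needed for one of them (the ``nested support'' case). In the case $\lambda=\mu$ one also needs $\coH^1(\Gr_G^\lambda;\bk)=0$, which follows from $\Gr_G^\lambda$ being an affine bundle over a partial flag variety; and in the case of unrelated supports, one argues purely by adjunction. Your sketch collapses these cases; when written out, the bookkeeping of perverse degrees is exactly where the even-codimension hypothesis and parity vanishing combine to give the required gap of $\geq 2$.
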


\begin{rmk}
Note that the assumption that $\mathrm{char}(\bk)=0$ is crucial here. The category $\Perv$ with $\bk$ a field of positive characteristic is \emph{not} semisimple.
\end{rmk}

The strata $\Gr_G^\lambda$ of $\mathscr{S}$ are simply connected, because they are affine bundles over partial flag varieties (see Proposition~\ref{prop:Cartan-dec}). Thus, the only simple
local system on $\Gr_G^\lambda$ is the trivial local system $\underline{\bk}$. We denote by
$\IC_\lambda$ the corresponding intersection cohomology sheaf. Then,
the simple objects in $\Perv$ are (up to isomorphism) these complexes $\IC_\lambda$,
for $\lambda\in X_*(T)^+$. Since every object in $\Perv$ has finite length, and in view of~\eqref{eqn:Ext-heart}, Theorem~\ref{thm:semisimplicity} follows from the following claim.

\begin{prop}
\label{prop:Ext-vanishing}
For any $\lambda, \mu \in X_*(T)^+$, we have
\[
\Hom_{\Db_{\mathscr S}(\Gr_G,\bk)}(\IC_\lambda, \IC_\mu[1])=0.
\]
\end{prop}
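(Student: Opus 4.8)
The plan is to prove the vanishing of $\Hom(\IC_\lambda,\IC_\mu[1])$ by a parity argument, exploiting the semi-infinite orbits $S_\mu$ and $T_\mu$ together with the fact that all strata $\Gr_G^\lambda$ have even-dimensional (indeed affine-bundle) structure over partial flag varieties. First I would reduce to the case where $\lambda$ and $\mu$ lie in the same connected component $\Gr_G^c$ of $\Gr_G$: if they lie in different components the $\Hom$-group vanishes trivially since the $\IC$-sheaves are supported on disjoint closed subsets. So assume $\lambda,\mu\in X_*(T)^+$ with $\lambda+Q^\vee=\mu+Q^\vee$.

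The key geometric input is the following "stalk/costalk parity" statement: for each $\nu\in X_*(T)$, the restriction $(i_{S_\nu})^*\IC_\lambda$ and the corestriction $(i_{S_\nu})^!\IC_\lambda$ are concentrated in degrees of a single parity, namely the parity of $\langle 2\rho,\lambda\rangle$ (equivalently, of $\dim\Gr_G^c$), which is constant on the connected component by Proposition~\ref{prop:Cartan-dec} and the discussion of connected components. The mechanism is that $S_\nu\cap\overline{\Gr_G^\lambda}$ has a cell-like structure: using the projective embedding $\Psi:\Gr_G^c\hookrightarrow\mathbf P(V)$ of Proposition~\ref{prop:closure-semiinfinite}, the intersection $\overline{S_\nu}\cap\overline{\Gr_G^\lambda}$ is cut out by a hyperplane section, and iterating (via the action of $t^{-\nu}$ to reduce to $\nu=0$, and Proposition~\ref{prop:closure-semiinfinite}) one gets a filtration of $S_\nu\cap\Gr_G^\lambda$ by affine spaces; this forces the hyperbolic-localization / weight functor $\mathrm{H}^\bullet_c(S_\nu,-)$ and $\mathrm{H}^\bullet(T_\nu,-)$ to be concentrated in one parity on the $\IC$-sheaves. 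Concretely, I would compute the stalk of $\IC_\lambda$ along $S_\nu$ via $\mathrm{H}^\bullet_c(S_\nu\cap\overline{\Gr_G^\lambda},\IC_\lambda|_{S_\nu\cap\overline{\Gr_G^\lambda}})$ and show it vanishes outside the fixed parity, using Lemma~\ref{lem:semiinfinite-intersection} to control supports and induction on $\mu\leq\lambda$.

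Granting this parity statement, I would conclude as follows. A nonzero morphism $\IC_\lambda\to\IC_\mu[1]$ in $\Db_{\mathscr S}(\Gr_G,\bk)$ would give, after restricting along the locally closed inclusions $S_\nu\hookrightarrow\Gr_G$ (or using the recollement / the $\NK$-equivariance and the decomposition $\Gr_G=\bigsqcup_\nu S_\nu$), a nonzero map between objects whose cohomologies are concentrated in degrees of a fixed parity $p$ and $p+1$ respectively; since the relevant $\Hom$ in the derived category of the point (or of $S_\nu$, via an adjunction $(i_{S_\nu})^*\dashv (i_{S_\nu})_*$ and the concentration of $(i_{S_\nu})^!\IC_\mu$) between a complex in parity $p$ and one in parity $p+1$ vanishes, we get a contradiction. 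More precisely: by adjunction and a standard spectral sequence argument over the stratification by the $S_\nu$, $\Hom(\IC_\lambda,\IC_\mu[1])$ is built from groups of the form $\Hom_{\Db(S_\nu,\bk)}\bigl((i_{S_\nu})^*\IC_\lambda,(i_{S_\nu})^!\IC_\mu[1]\bigr)$; both $(i_{S_\nu})^*\IC_\lambda$ and $(i_{S_\nu})^!\IC_\mu$ have cohomology in degrees $\equiv p\pmod 2$ (same component, same parity), hence $(i_{S_\nu})^!\IC_\mu[1]$ has cohomology in degrees $\equiv p+1$, and a degree count (together with the fact that on each $S_\nu$ one is computing $\Ext$'s between shifted local systems, which live only in even degrees relative to the ambient dimension) kills the group.

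The main obstacle, and the step I would spend the most care on, is establishing the parity of the stalks and costalks of $\IC_\lambda$ along the semi-infinite orbits $S_\nu$ (and $T_\nu$). This is where the geometry of the affine Grassmannian really enters: one needs that the intersections $S_\nu\cap\Gr_G^\lambda$ admit paved-by-affines structures (a form of the Mirković--Vilonen dimension estimates), so that the corresponding cohomology is pure of a predictable parity. Without the full MV machinery one can still get the parity by an induction on the closure order $\leq$ on $X_*(T)^+$: using the defining properties~\eqref{eqn:characterization-IC} of $\IC_\lambda$, the distinguished triangles relating $\IC_\lambda$ to its restriction to the open stratum and to the boundary, and the inductive parity of $\IC_\eta$ for $\eta<\lambda$, combined with the hyperplane-section description of $\partial S_\nu$ from Proposition~\ref{prop:closure-semiinfinite} which makes the connecting maps shift cohomological degree by exactly $1$ and thus respects the bookkeeping. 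Everything else — the reduction to a single connected component, the adjunctions, the spectral sequence over the stratification — is routine.
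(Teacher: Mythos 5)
Your proposal correctly identifies the parity vanishing phenomenon as the crucial ingredient, which does match the spirit of the paper's proof of Lemma~\ref{lem:parity-IC} and its use in the third case of the argument. However, there are two genuine gaps.

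First, the ``stalk/costalk parity along $S_\nu$'' you invoke is, once unwound, exactly the statement that the cohomology sheaves $\mathscr H^k(\IC_\lambda)$ vanish unless $k\equiv\dim\Gr_G^\lambda\ (\mathrm{mod}\ 2)$ (Lemma~\ref{lem:parity-IC}), since $(i_{S_\nu})^*$ commutes with taking cohomology sheaves. But your proposed route to it --- paving-by-affines of $S_\nu\cap\Gr_G^\lambda$ plus Proposition~\ref{prop:closure-semiinfinite} --- proves a different statement. Even if those intersections were paved by affines (which is far from clear in general; the paper proves only dimension bounds in Theorem~\ref{thm:orbits}), that would give parity of the cohomology \emph{with compact supports} of $S_\nu\cap\Gr_G^\lambda$, i.e.\ control on the weight functors, not parity of the \emph{stalks} of $\IC_\lambda$. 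The paper's actual mechanism is different: one passes to the affine flag variety, uses Bott--Samelson resolutions $\pi_{\underline w}$ whose \emph{fibers} are paved by affines, and then applies the Decomposition Theorem to conclude that $\IC$ inherits the parity of $(\pi_{\underline w})_!\underline\bk$. That detour through a resolution and the Decomposition Theorem is the part your argument is missing, and it is not replaceable by a direct analysis of the semi-infinite intersections.

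Second, even granting the parity of cohomology sheaves, the degree-count you use to conclude does not hold. On a variety, $\Hom$-groups between complexes with cohomology in one parity and complexes with cohomology in the opposite parity do \emph{not} vanish in general; one needs a concrete cohomological bound to make the count bite. Your fallback claim --- that on each $S_\nu$ one is computing Ext's between shifted local systems living in even degrees --- is false: $\IC_\lambda$ is constructible for the stratification by $\GO$-orbits, not by semi-infinite orbits, so $(i_{S_\nu})^*\IC_\lambda$ has no reason to be a shifted local system. What the paper does instead (in the case $\Gr_G^\mu\subset\overline{\Gr_G^\lambda}$, $\mu\neq\lambda$, after a Verdier-duality reduction) is combine the parity of Lemma~\ref{lem:parity-IC} with the strict \emph{perversity} bound from~\eqref{eqn:characterization-IC}: $(j_\mu)^*\IC_\lambda$ sits in degrees $<-\dim\Gr_G^\mu$, and the parity constraint upgrades this to $\leq-\dim\Gr_G^\mu-2$, after which the relevant $\Hom$ into $(j_\mu)_*\underline\bk[\dim\Gr_G^\mu+1]$ vanishes because $\Gr_G^\mu$ is simply connected with cohomology in even degrees only. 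This interplay of perversity estimate plus parity improvement plus cohomology of the open $\GO$-orbit is the core of the argument, and it is absent from your sketch (as are the two easier cases, treated by pure perversity considerations without parity at all). Your strategy of filtering by semi-infinite orbits is not inherently wrong, but the pieces of the filtration are harder to control than the $\GO$-orbit strata precisely because $\IC_\lambda$ is not $\mathscr S$-constructible in the direction transverse to the $S_\nu$.
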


The main ingredients in the proof of Proposition~\ref{prop:Ext-vanishing} are the following facts:
\begin{itemize}
\item
the cohomology
sheaves $\mathscr H^k(\IC_\lambda)$ vanish unless $k$ and
$\dim(\Gr_G^\lambda)$ have the same parity (see Lemma~\ref{lem:parity-IC} below);
\item
if $\Gr_G^\mu \subset \overline{\Gr_G^\lambda}$, then $\mathrm{codim}_{\overline{\Gr_G^\lambda}}(\Gr^\mu)$ is even (see~\S\ref{ss:def-Gr}).
\end{itemize}

\subsection{Parity vanishing}
\label{ss:parity}

As explained above, a key point in the proof of Proposition~\ref{prop:Ext-vanishing} is the following result.

\begin{lem}
\label{lem:parity-IC}
For any $\lambda \in X_*(T)^+$, we have
\[
\mathscr H^n(\IC_\lambda)=0 \quad \text{unless $n \equiv \dim(\Gr_G^\lambda) \pmod 2$.}
\]
\end{lem}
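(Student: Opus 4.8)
The plan is to establish the parity vanishing by induction on $\lambda$ with respect to the standard order, using the recursive characterization of $\IC_\lambda$ via stalks and costalks together with the known geometry of Schubert varieties.

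First I would record the base case: when $\Gr_G^\lambda$ is closed (i.e.\ minimal among the relevant orbits), $\IC_\lambda = \underline{\bk}_{\Gr_G^\lambda}[\dim \Gr_G^\lambda]$, so $\mathscr H^n(\IC_\lambda)$ is concentrated in degree $n = -\dim(\Gr_G^\lambda)$, which clearly has the right parity. For the inductive step, suppose the claim holds for all $\eta < \lambda$. By~\eqref{eqn:characterization-IC}, $\IC_\lambda$ restricted to $\Gr_G^\lambda$ is $\underline{\bk}[\dim \Gr_G^\lambda]$, so on that stratum the vanishing is immediate. On the boundary $\partial \Gr_G^\lambda = \overline{\Gr_G^\lambda} \smallsetminus \Gr_G^\lambda$, which is a union of strata $\Gr_G^\eta$ with $\eta < \lambda$, I would use the defining condition $i^* \IC_\lambda \in {}^p \hspace{-1pt} D^{\leq -1}(\partial \Gr_G^\lambda, \bk)$. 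Concretely, for each such $\eta$, writing $i_\eta : \Gr_G^\eta \hookrightarrow \Gr_G$, the perverse costalk bound gives $\mathscr H^n((i_\eta)^* \IC_\lambda) = 0$ for $n \geq -\dim \Gr_G^\eta$.

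The key is then to get enough vanishing from the other direction to force the surviving cohomology into the correct parity. Here the natural tool is Verdier duality: since $G$ is reductive and the $\GO$-orbits are affine bundles over partial flag varieties (hence in particular smooth and simply connected with trivial local systems), $\IC_\lambda$ is Verdier self-dual (up to the trivial twist, as $\bk$ is a field). Thus the costalk condition $i^! \IC_\lambda \in {}^p \hspace{-1pt} D^{\geq 1}(\partial \Gr_G^\lambda, \bk)$ is equivalent to the stalk condition by duality, and combining the stalk and costalk estimates along each boundary stratum, together with the parity of $\mathrm{codim}_{\overline{\Gr_G^\lambda}}(\Gr_G^\eta) = \langle 2\rho, \lambda \rangle - \langle 2\rho, \eta\rangle$ recorded in Proposition~\ref{prop:Cartan-dec} (which is even since $\lambda$ and $\eta$ lie in the same connected component $\Gr_G^{\lambda + Q^\vee}$, and $\langle \rho, Q^\vee \rangle \subset \Z$), one confines $\mathscr H^n((i_\eta)^* \IC_\lambda)$ to degrees $n$ in a range whose endpoints have the parity of $-\dim \Gr_G^\lambda$. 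More precisely, I would set up a distinguished triangle relating $\IC_\lambda$ on $\overline{\Gr_G^\lambda}$ to its restriction to the open stratum and its corestriction to the boundary, and compute $\mathscr H^n(\IC_\lambda)$ via the associated long exact sequence stratum by stratum, invoking the inductive hypothesis for $\IC_\eta$-type contributions.

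I expect the main obstacle to be the bookkeeping of the degree ranges: one must check that the stalk bound ($n < -\dim \Gr_G^\eta$, i.e.\ $\leq -\dim\Gr_G^\eta - 1$) and the dual costalk bound interact with the parity of the codimension in exactly the way needed so that no cohomology can appear in a degree of the wrong parity. A clean way to organize this is to prove by downward induction on the strata (from the open one inward) that $i_\eta^* \IC_\lambda$ is a complex of local systems in degrees $n$ satisfying $-\dim \Gr_G^\lambda \leq n \leq -\dim\Gr_G^\eta - 1 \cdot [\eta \neq \lambda]$ and $n \equiv \dim \Gr_G^\lambda \pmod 2$; the parity constraint propagates because each step changes the dimension bound by an even amount (the codimension of the stratum). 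The self-duality of $\IC_\lambda$ is what upgrades the one-sided perverse bound into the two-sided bound that pins down the parity. An alternative, which I would mention as a fallback, is to use that $\overline{\Gr_G^\lambda}$ admits a resolution by a Bott--Samelson-type (or more precisely, a convolution) variety with a partition into affine spaces of the correct parity, so that its cohomology — and by the decomposition theorem (valid here since $\bk$ has characteristic $0$) that of all $\IC_\eta$ occurring as summands — is concentrated in the appropriate parity; this reduces the lemma to a statement about the cell structure of Schubert varieties in the affine Grassmannian.
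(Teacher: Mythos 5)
Your main inductive argument has a genuine gap. The stalk condition $i_\eta^* \IC_\lambda \in {}^p \hspace{-1pt} D^{\leq -1}$ together with the lower bound $\IC_\lambda \in {}^p \hspace{-1pt} D^{\geq 0}(\overline{\Gr_G^\lambda},\bk)$ only confines $\mathscr{H}^n(i_\eta^* \IC_\lambda)$ to the window $-\dim\Gr_G^\lambda \leq n \leq -\dim\Gr_G^\eta - 1$, and when the codimension is even the two endpoints of this window actually have \emph{opposite} parities. Verdier self-duality of $\IC_\lambda$ identifies $i_\eta^!\IC_\lambda$ with the dual of $i_\eta^*\IC_\lambda$ on the smooth stratum $\Gr_G^\eta$, which merely shows that the stalk and costalk conditions are equivalent to each other; it does not produce a second, independent bound that pins the stalks to a single parity. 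Nothing in the formal BBD axioms forbids $i_\eta^*\IC_\lambda$ from having cohomology in consecutive degrees. A sanity check: take the affine cone $C$ over a smooth projective curve $Y$ of genus $g\geq 1$. This has an even-dimensional open stratum, a point stratum of even codimension, and the intersection cohomology complex is Verdier self-dual; yet the stalk of $\IC_C$ at the cone point has $\mathscr{H}^{-2}\cong\coH^0(U)$ and $\mathscr{H}^{-1}\supset\coH^1(Y)\neq 0$ (where $U=C\smallsetminus\{0\}$), so both parities occur. Parity vanishing on $\Gr_G$ is therefore a geometric theorem about Schubert varieties, not a formal consequence of stalk/costalk bounds, self-duality, and even codimensions. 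Moreover, the recursive characterization~\eqref{eqn:characterization-IC} together with the inductive hypothesis on $\IC_\eta$ for $\eta<\lambda$ does not help either, since $i_\eta^*\IC_\lambda$ bears no a priori relation to $\IC_\eta$.

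Your fallback via a resolution and the Decomposition Theorem is essentially what the paper does, but you need to be more careful about \emph{what} must be paved by affine spaces. Knowing that the total space of the resolution (or $\overline{\Gr_G^\lambda}$ itself) is paved by affine cells only yields parity of the \emph{global} cohomology of the pushforward, which says nothing about the stalks of the $\IC$-summands: a summand $\IC_\eta[m]$ with $m$ even and nonzero is consistent with global parity but carries no stalk information. What is actually needed is that the \emph{fibers} $\pi_{\underline w}^{-1}(x)$ of the Bott--Samelson resolution are paved by affine spaces (Gaussent, Haines); then by proper base change the stalks of $(\pi_{\underline w})_!\,\underline\bk[\ell(w)]$ have a single parity, and the Decomposition Theorem transfers this to each $\IC$-summand. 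The paper also passes through the affine flag variety $\Fl_G$ rather than working directly on $\Gr_G$, since the Bott--Samelson resolutions are naturally attached to Iwahori orbit closures, and then descends the statement along the smooth fibration $\pi:\Fl_G\to\Gr_G$. Both of these refinements are needed to turn your fallback sketch into a proof.
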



A similar property in fact holds for Iwahori-constructible perverse sheaves on the affine flag variety.
In this section, we argue that
this property can be deduced from the existence of resolutions of
closures of Iwahori orbits whose fibers are paved by affine spaces. (These arguments are sketched in~\cite[\S A.7]{gaitsgory}. A different proof of this property can be given by imitating the case of the finite flag variety treated in~\cite{springer}.)

As in Section~\ref{sec:Gr},
let $W$ be the Weyl group of $(G,T)$ and let $Q^\vee\subset X_*(T)$ be
the coroot lattice. The affine Weyl group and the extended affine Weyl
group are defined as
\[
W_\aff=W\ltimes Q^\vee \quad \text{and} \quad
\widetilde W_\aff=W\ltimes X_*(T)
\]
respectively.
As is well known, $W_\aff$ is
generated by a set $S_\aff$ of simple reflections, and $(W_\aff,S_\aff)$
is a Coxeter system with length function $\ell$ which satisfies
\[
\ell(w \cdot \lambda)=\sum_{\substack{\alpha \in \Delta_+(G,B,T) \\ w(\alpha) \in \Delta_+(G,B,T)}} |\langle \lambda,
\alpha \rangle | + \sum_{\substack{\alpha \in \Delta_+(G,B,T) \\ w(-\alpha) \in \Delta_+(G,B,T)}}
  |1 + \langle \lambda, \alpha \rangle |
\]
for $w \in W$ and $\lambda \in Q^\vee$.
This formula makes sense more generally  for $\lambda \in X_*(T)$, which allows to extend $\ell$ to
$\widetilde
W_\aff$. Then if $\Omega=\{w \in \widetilde{W}_\aff \mid \ell(w)=0\}$, the conjugation action of the subgroup $\Omega$ on $\widetilde{W}_\aff$ preserves $S_\aff$, hence also $W_\aff$, and we have
$\widetilde W_\aff=W_\aff\rtimes\Omega$.

As in~\S\ref{ss:proj-embeddings},
let $B^-\subset G$ be the Borel subgroup opposite to $B$ with respect to $T$,
and let $I\subset\GO$ be the corresponding Iwahori subgroup, defined as the preimage
of $B^-$ under the evaluation map $\GO\to G$ given by $t\mapsto 0$ (i.e.~the arc space of the group scheme considered in Remark~\ref{rmk:def-Gr}\eqref{it:def-Fl}, for the Borel subgroup $B^-$ instead of $B$).
The Bruhat decomposition then yields
$$\Gr_G=\bigsqcup_{w\in\widetilde W_\aff/W}Iw\GO/\GO,$$
and $Iw\GO/\GO$ is an affine space of dimension $\ell(w)$ if
$w$ is of minimal length in the coset $wW$. (Here, if $w=v \cdot \lambda$ with $v \in W$ and $\lambda \in X_*(T)$, by $Iw\GO/\GO$ we mean the $I$-orbit of $\dot v t^{\lambda} \GO/\GO$, where $\dot{v}$ is any lift of $v$ in $N_G(T) \subset G$.)

Let $\lambda\in X_*(T)^+$.
Then
$$\Gr_G^\lambda=\bigsqcup_{w\in Wt_\lambda W/W}Iw\GO/\GO$$
is a union of Schubert cells. One of these cells is open dense in
$\Gr_G^\lambda$; we denote by $w_\lambda$ the unique element in $W t_\lambda W$ which is minimal in $w_\lambda W$ and such that $Iw_\lambda\GO/\GO$ is open in $\Gr_G^\lambda$.
Certainly then we have
\[
\IC_\lambda = \IC(Iw_\lambda\GO/\GO, \underline{\bk}).
\]
Hence Lemma~\ref{lem:parity-IC} follows from the claim that for any $w \in \widetilde W_\aff$ which is minimal in $wW$ we have
\begin{equation}
\label{eqn:parity-IC}
\mathscr H^n \bigl( \IC(Iw\GO/\GO,\underline \bk) \bigr)\neq0
\quad\Longrightarrow\quad n\equiv\ell(w)\pmod2.
\end{equation}

To prove~\eqref{eqn:parity-IC} we introduce the affine flag variety
\[
\Fl_G := \GK/I
\]
(see also Remark~\ref{rmk:def-Gr}\eqref{it:def-Fl}).
As for $\Gr_G$, this variety has a natural complex ind-variety structure, and a Bruhat decomposition
\[
\Fl_G = \bigsqcup_{w \in \widetilde{W}_\aff} I w I/I,
\]
see~\cite{goertz} for details and references. This decomposition provides a 
stratification of $\Fl_G$, which we denote by $\mathscr{T}$. Then we can consider the constructible derived category $\Db_{\mathscr{T}}(\Fl_G, \bk)$ and the corresponding category $\Per_{\mathscr{T}}(\Fl_G,\bk)$ of perverse sheaves.

Let $\pi : \Fl_G \to \Gr_G$ be the natural projection. This morphism is smooth; in fact it is a locally trivial fibration\footnote{This fibration is in fact topologically trivial, as follows from the realization of $\Gr_G$ as a topological group, see~\cite[\S 1.2]{ginzburg}.} with fiber $G/B^-$. From this property and the characterization of the intersection cohomology complex given in~\eqref{eqn:characterization-IC}, it is not difficult to check that for any $w \in \widetilde{W}_\aff$ which is minimal in $wW$, we have
\[
\pi^* \IC (Iw\GO/\GO,\underline \bk) [\ell(w_0)] \cong \IC(Iww_0 I/I,\underline \bk),
\]
where $w_0 \in W$ is the longest element (so that $\ell(w_0)=\dim(G/B^-)$). This shows that~\eqref{eqn:parity-IC} (hence also Lemma~\ref{lem:parity-IC}) follows from the following claim.

\begin{lem}
\label{lem:IC-parity-Fl}
For any $w \in \widetilde{W}_\aff$ we have
\[
\mathscr H^n \bigl( \IC(IwI/I,\underline \bk) \bigr)\neq0
\quad\Longrightarrow\quad n\equiv\ell(w)\pmod2.
\]
\end{lem}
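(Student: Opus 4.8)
The plan is to prove Lemma~\ref{lem:IC-parity-Fl} by exhibiting, for each $w \in \widetilde{W}_\aff$, a \emph{Bott--Samelson resolution} of the Schubert variety $\overline{IwI/I}$ whose fibers are paved by affine spaces, and then deducing the parity vanishing from the decomposition theorem together with a dimension/parity bookkeeping. First I would reduce to the case $w \in W_\aff$: writing $w = v\omega$ with $v \in W_\aff$ and $\omega \in \Omega$, right multiplication by $\omega$ is an automorphism of $\Fl_G$ preserving the stratification $\mathscr{T}$ and sending $IvI/I$ to $IwI/I$, and $\ell(w)=\ell(v)$, so it suffices to treat $v \in W_\aff$. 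Then, choosing a reduced expression $v = s_{i_1}\cdots s_{i_r}$ with $r = \ell(v)$, I would form the Bott--Samelson variety
\[
Z(s_{i_1},\ldots,s_{i_r}) = P_{i_1} \times^I P_{i_2} \times^I \cdots \times^I P_{i_r}/I,
\]
where $P_{i_j} \subset \GK$ is the minimal parahoric subgroup containing $I$ associated with $s_{i_j}$. This is a smooth projective variety of dimension $r$, equipped with a proper morphism $\pi_v : Z(s_{i_1},\ldots,s_{i_r}) \to \overline{IvI/I}$ which is birational onto its image (this is standard Kac--Moody flag variety geometry; see~\cite[Chap.~7]{kumar}).

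The key geometric input is that the fibers of $\pi_v$, and more precisely the preimages of the strata $IuI/I$ under $\pi_v$, are paved by affine spaces. This follows from the ``bimodule'' structure of $Z$: using the sequence of $\mathbb{P}^1$-fibrations $Z(s_{i_1},\ldots,s_{i_k}) \to Z(s_{i_1},\ldots,s_{i_{k-1}})$ and the fact that each $P_{i_j}/I \cong \mathbb{P}^1_\C$ decomposes into a point and an affine line according to the Bruhat stratification, one obtains a decomposition of $Z$ into locally closed pieces, each isomorphic to an affine space, indexed by subwords of $(s_{i_1},\ldots,s_{i_r})$; and $\pi_v$ maps each such piece isomorphically onto (an affine-space bundle over, hence something with affine-paved intersections with) a single Schubert cell. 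In particular every fiber of $\pi_v$ over a point of $IuI/I$ is paved by affine spaces, so its cohomology is concentrated in even degrees and $\mathbf{R}(\pi_v)_* \underline{\bk}$ has stalks concentrated in degrees of a single parity on each stratum.

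With this in hand I would argue as follows. The complex $\mathbf{R}(\pi_v)_* \underline{\bk}[r]$ is self-dual (since $Z$ is smooth projective of dimension $r$ and $\pi_v$ is proper), so by the decomposition theorem it is a direct sum of shifts of intersection cohomology complexes $\IC(\overline{IuI/I}, \underline{\bk})$ for various $u \leq v$ (with no nontrivial local systems, since the strata are simply connected affine spaces). Now I invoke the parity of the stalks: on each stratum $IuI/I$ of dimension $\ell(u)$, the stalk of $\mathbf{R}(\pi_v)_*\underline{\bk}$ is concentrated in even cohomological degrees; hence every summand $\IC(\overline{IuI/I},\underline{\bk})[d]$ appearing must have stalks on $IuI/I$ concentrated in degrees $\equiv 0 \pmod 2$, which (since $\IC$ restricted to the open stratum is $\underline{\bk}[\ell(u)]$) forces $d \equiv \ell(u) \pmod 2$, and more importantly forces the parity statement for $\IC(\overline{IuI/I},\underline{\bk})$ itself on all strata in its support. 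Running this for all $v$ (so that every $w \in \widetilde{W}_\aff$ arises as some $u$ in the support) yields the claim: $\mathscr{H}^n(\IC(IwI/I,\underline{\bk})) \neq 0 \Rightarrow n \equiv \ell(w) \pmod 2$. The main obstacle is making the affine-paving of the fibers of $\pi_v$ precise and checking that it interacts well with the $\mathscr{T}$-stratification of $\Fl_G$ so that the stalk-parity statement for $\mathbf{R}(\pi_v)_*\underline{\bk}$ is clean on \emph{every} stratum, not just generically; this is exactly the point sketched in~\cite[\S A.7]{gaitsgory}, and one has to be a little careful that the decomposition theorem applies over $\bk$ a field of characteristic $0$ (which it does, the relevant statement being the one for $\overline{\Q}_\ell$ or $\C$-sheaves, valid here since $\mathrm{char}(\bk)=0$).
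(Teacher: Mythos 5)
Your proposal is correct and takes essentially the same approach as the paper: a Bott--Samelson resolution with affine-paved fibers, followed by the decomposition theorem. The only cosmetic differences are that you split off the length-zero element $\omega$ by an automorphism of $\Fl_G$ rather than incorporating $I\omega I/I$ as the final factor of the Bott--Samelson variety as the paper does, and that you deduce the parity of each summand by bookkeeping its shift $d$, whereas the paper simply observes that a direct summand of a complex whose stalk cohomology lives in degrees of a single parity must itself have that property.
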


\begin{proof}
For any $s \in S_\aff$,
denote by $J_s=IsI\cup I$ the minimal parahoric subgroup of
$\GK$ associated with $s$. Fix $w \in \widetilde{W}_\aff$, and choose
a reduced expression $\underline{w}=(s_1, \cdots, s_r, \omega)$ for $w$ (with $s_j\in S_\aff$
and $\ell(\omega)=0$). We can then consider the Bott-Samelson resolution
$$\pi_{\underline{w}}:J_{s_1}\times^I\cdots\times^IJ_{s_r}\times^I\bigl(\underbrace{
I\omega I/I\strut}_{\text{(a point)}}\bigr)\to\overline{IwI/I}$$
induced by multiplication in $\GK$. It is known that $\pi_{\underline{w}}$
is proper and is an isomorphism over $IwI/I$.
It is known also that each fiber $\pi_{\underline{w}}^{-1}(x)$ is paved by affine spaces. (For this claim in the case of finite flag varieties, see~\cite{gaussent}. See also~\cite{haines} for a different proof, which works 
mutatis mutandis in the affine setting.)
Therefore
$$\coH_c^{n+\ell(w)}\bigl(\pi_{\underline{w}}^{-1}(x);\bk\bigr)$$
is nonzero only if $n+\ell(w)$ is even. By proper base change, this
cohomology group is the stalk at $x$ of the cohomology sheaf $\mathscr H^n \bigl( (\pi_{\underline{w}})_!\,\underline \bk[\ell(w)] \bigr)$,
so that
$$\mathscr H^n \bigl( (\pi_{\underline{w}})_!\,\underline \bk[\ell(w)] \bigr)\neq0
\quad\Rightarrow\quad n\equiv\ell(w)\pmod2.$$
Our desired parity vanishing property
then follows from the celebrated Decomposition Theorem (see~\cite[Theorem~6.2.5]{bbd}), which here implies that
$\IC(IwI/I;\underline \bk)$ is a direct summand of
the complex $(\pi_{\underline{w}})_!\,\underline \bk[\ell(w)]$.
\end{proof}

\subsection{Proof of Proposition~\ref{prop:Ext-vanishing}}


We follow the arguments in~\cite[Proof of Proposition~1]{gaitsgory} (but adding more details). We distinguish 3 cases (of which only the third one will use Lemma~\ref{lem:parity-IC}).

\subsubsection{First case: $\lambda=\mu$.}

Consider the diagram
$$\xymatrix{\Gr_G^\lambda\ar[r]^j\ar[dr]_(.44){j_\lambda}&\overline{\Gr_G^\lambda}
\ar[d]^(.46){i_\lambda}&\overline{\Gr_G^\lambda}\smallsetminus\Gr_G^\lambda,
\ar[l]_(.58)i\\&\Gr_G&}$$
where all maps are the obvious embeddings.
Set $\mathscr F=(i_\lambda i)^*\IC_\lambda$; by~\eqref{eqn:characterization-IC},
this complex of sheaves is concentrated in negative perverse degrees.
Likewise, the complex of sheaves $(i_\lambda i)^!\IC_\lambda$ is
concentrated in positive perverse degrees. It follows that
\begin{equation}
\label{eqn:ext1-vanishing}
\Hom_{\Db_{\mathscr S}(\overline{\Gr_G^\lambda}\smallsetminus\Gr_G^\lambda,\bk)}\bigl(\mathscr F,(i_\lambda i)^!\IC_\lambda[1]\bigr)=0.
\end{equation}

Applying the cohomological functor $\Hom_{\Db_{\mathscr S}(\Gr_G,\bk)}^\bullet\bigl((i_\lambda)_!\,\bm?,
\IC_\lambda[1]\bigr)$ to the distinguished triangle
$$j_!j^*\Bigl(\IC_\lambda\bigl|_{\overline{\Gr_G^\lambda}}\Bigr)
\to\Bigl(\IC_\lambda\bigl|_{\overline{\Gr_G^\lambda}}\Bigr)
\to i_!i^*\Bigl(\IC_\lambda\bigl|_{\overline{\Gr_G^\lambda}}\Bigr)
\xrightarrow{[1]},$$
we get an exact sequence
\begin{multline}
\label{eqn:exact-sequence-case1}
\Hom_{\Db_{\mathscr S}(\Gr_G,\bk)}\bigl((i_\lambda i)_!\;\mathscr F,\IC_\lambda[1]\bigr)\to
\Hom_{\Db_{\mathscr S}(\Gr_G,\bk)}\bigl(\IC_\lambda,\IC_\lambda[1]\bigr)\\
\to\Hom_{\Db_{\mathscr S}(\Gr_G,\bk)}\bigl((j_\lambda)_!\;
\underline \bk_{\Gr_G^\lambda}[\dim\Gr_G^\lambda],\IC_\lambda[1]\bigr).
\end{multline}
The first space in~\eqref{eqn:exact-sequence-case1} is zero, thanks to~\eqref{eqn:ext1-vanishing} and because by adjunction we have
$$\Hom_{\Db_{\mathscr S}(\Gr_G,\bk)}\bigl((i_\lambda i)_!\,\mathscr F,\IC_\lambda[1]\bigr)\cong
\Hom_{\Db_{\mathscr S}(\overline{\Gr_G^\lambda}\smallsetminus\Gr_G^\lambda,\bk)}\bigl(\mathscr F,(i_\lambda i)^!\,\IC_\lambda[1]\bigr).$$
By adjunction again, the third space in~\eqref{eqn:exact-sequence-case1} is
\begin{align*}
\Hom_{\Db_{\mathscr S}(\Gr_G,\bk)}\bigl((j_\lambda)_!\,\underline \bk_{\Gr_G^\lambda}[\dim\Gr_G^\lambda],
\IC_\lambda[1]\bigr)&\cong\Hom_{\Db_{\mathscr S}(\Gr^\lambda_G,\bk)}\bigl(\underline \bk_{\Gr_G^\lambda}
[\dim\Gr_G^\lambda],(j_\lambda)^!\,\IC_\lambda[1]\bigr)\\[2pt]
&\cong\Hom_{\Db_{\mathscr S}(\Gr^\lambda_G,\bk)}\bigl(\underline \bk_{\Gr_G^\lambda},
\underline \bk_{\Gr_G^\lambda}[1]\bigr)\\[2pt]
&=\coH^1(\Gr_G^\lambda;\bk).
\end{align*}
This last space is again zero since $\Gr_G^\lambda$ is an affine bundle
over a partial flag variety (see Proposition~\ref{prop:Cartan-dec}), so has only cohomology in even degrees.

We conclude that
$\Hom_{\Db_{\mathscr S}(\Gr_G,\bk)}(\IC_\lambda,\IC_\lambda[1])=0.$

\subsubsection{Second case: Neither $\Gr_G^\lambda\subset\overline{\Gr_G^\mu}$
nor $\Gr_G^\mu\subset\overline{\Gr_G^\lambda}$.}

Consider the inclusion $i_\mu:\overline{\Gr_G^\mu}\hookrightarrow\Gr_G$.
Since $\IC_\mu$ is supported on $\overline{\Gr_G^\mu}$, we have
$\IC_\mu=(i_\mu)_*(i_\mu)^*\IC_\mu$ and therefore by adjunction
$$\Hom_{\Db_{\mathscr S}(\Gr_G,\bk)}(\IC_\lambda,\IC_\mu[1])\cong
\Hom_{\Db_{\mathscr{S}}(\overline{\Gr_G^\mu},\bk)}((i_\mu)^*\IC_\lambda,(i_\mu)^*\IC_\mu[1]).$$

Now set $Z=\overline{\Gr_G^\lambda}\cap\overline{\strut\Gr_G^\mu}$ and consider
the inclusion $f:Z\hookrightarrow\overline{\Gr_G^\mu}$. Since $(i_\mu)^*
\IC_\lambda$ is supported on $Z$, it is of the form $f_!\mathscr F$
for some complex of sheaves $\mathscr F\in \Db_{\mathscr{S}}(Z,\bk)$. Arguing as in the first case, we see that
$\mathscr F$ is concentrated in negative perverse degrees and that
$f^!(i_\mu)^*\IC_\mu \cong (i_\mu f)^!\,\IC_\mu$ is concentrated in positive perverse degrees.
Therefore
\begin{align*}
\Hom_{\Db_{\mathscr S}(\Gr_G,\bk)}\bigl(\IC_\lambda,\IC_\mu[1]\bigr) &\cong \Hom_{\Db_{\mathscr{S}}(\overline{\Gr_G^\mu},\bk)}
\bigl(f_!\,\mathscr F,(i_\mu)^*\IC_\mu[1]\bigr)\\
&\cong\Hom_{\Db_{\mathscr{S}}(Z,\bk)}\bigl(
\mathscr F,f^!(i_\mu)^*\IC_\mu[1]\bigr)=0,
\end{align*}
as desired.

\subsubsection{Third case: $\lambda\neq\mu$ and either $\Gr_G^\lambda\subset\overline{\Gr_G^\mu}$
or $\Gr_G^\mu\subset\overline{\Gr_G^\lambda}$.}

Since Verdier duality is an anti-autoequivalence of $\Db_{\mathscr S}(\Gr_G,\bk)$ which fixes $\IC_\lambda$ and $\IC_\mu$, we can assume that
$\Gr_G^\mu\subset\overline{\Gr_G^\lambda}$. Let
$j_\mu:\Gr_G^\mu\hookrightarrow\Gr_G$ be the inclusion, and let
$\mathscr G\in \Db_{\mathscr{S}}(\Gr_G,\bk)$ be the cone of the adjunction map
$\IC_\mu\to (j_\mu)_*(j_\mu{})^*\IC_\mu \cong (j_\mu)_*\underline \bk_{\Gr_G^\mu}[\dim\Gr_G^\mu]$. It follows from the definition of the perverse t-structure that $(j_\mu)_*\underline \bk_{\Gr_G^\mu}[\dim\Gr_G^\mu]$ is concentrated in nonnegative perverse degrees, and it is a classical fact that the morphism $\IC_\mu\to {}^p \hspace{-1pt} \mathscr H^0 \bigl( (j_\mu)_*\underline \bk_{\Gr_G^\mu}[\dim\Gr_G^\mu] \bigr)$ induced by the adjunction map considered above (where ${}^p \hspace{-1pt} \mathscr H^0 ( \bm ?)$ means the degree-$0$ perverse cohomology) is injective, see e.g.~\cite[(1.4.22.1)]{bbd}. Therefore, $\mathscr{G}$ is concentrated in nonnegative perverse degrees.

From the triangle
$$\IC_\mu\to (j_\mu)_*\underline \bk_{\Gr_G^\mu}[\dim\Gr_G^\mu]\to
\mathscr G\xrightarrow{[1]}$$
we get an exact sequence
\begin{multline}
\label{eqn:exact-sequence-Ext-vanishing}
\Hom_{\Db_{\mathscr S}(\Gr_G,\bk)}\bigl(\IC_\lambda,\mathscr G\bigr)\to\Hom_{\Db_{\mathscr S}(\Gr_G,\bk)}\bigl(\IC_\lambda,
\IC_\mu[1]\bigr)\\
\to\Hom_{\Db_{\mathscr S}(\Gr_G,\bk)}\bigl(\IC_\lambda,(j_\mu)_*\;\underline
\bk_{\Gr_G^\mu}[\dim\Gr_G^\mu+1]\bigr).
\end{multline}
As in the second case (but now using the $((-)^*,(-)_*)$ adjunction), using the fact that $\mathscr G$ is concentrated in
nonnegative perverse degrees and supported on $\overline{\Gr_G^\mu}$, which is included in $\overline{\Gr_G^\lambda} \smallsetminus \Gr_G^\lambda$, one checks that the left $\Hom$ space is zero.

By~\eqref{eqn:characterization-IC}, $(j_\mu)^*\IC_\lambda$ is concentrated in
degrees $<-\dim\Gr_G^\mu$. On the other hand, by Lemma~\ref{lem:parity-IC}, this complex has cohomology only in degrees of the same parity as $\dim(\Gr_G^\lambda)$. Noting that $\dim(\Gr_G^\lambda) \equiv \dim(\Gr_G^\mu) \pmod 2$ (because these orbits belong to the same connected component of $\Gr_G$), this implies that in fact $(j_\mu)^*\IC_\lambda$ is concentrated in
degrees $\leq -\dim\Gr_G^\mu-2$.
It follows that
$$\Hom_{\Db_{\mathscr S}(\Gr_G,\bk)}\bigl(\IC_\lambda,(j_\mu)_*\underline \bk_{\Gr_G^\mu}[\dim\Gr_G^\mu+1]
\bigr)=\Hom_{\Db_{\mathscr{S}}(\Gr_G^\mu,\bk)}\bigl((j_\mu)^*\IC_\lambda,\underline \bk_{\Gr_G^\mu}
[\dim\Gr_G^\mu+1]\bigr)$$
vanishes.

Our exact sequence~\eqref{eqn:exact-sequence-Ext-vanishing} then yields the desired equality
$\Hom_{\Db_{\mathscr S}(\Gr_G,\bk)}(\IC_\lambda,\IC_\mu[1])=0$.

\begin{rmk}
 One can give a slightly shorter proof of Proposition~\ref{prop:Ext-vanishing} as follows. Lemma~\ref{lem:parity-IC} and the Verdier self-duality of the objects $\IC_\lambda$ show that these objects are \emph{parity complexes} in the sense of~\cite[Definition~2.4]{jmw} (for the constant pariversity). More precisely, $\IC_\lambda$ is even if $\dim(\Gr_G^\lambda)$ is even, and odd if $\dim(\Gr_G^\lambda)$ is odd. Now Proposition~\ref{prop:Ext-vanishing} is obvious if $\dim(\Gr_G^\lambda)$ and $\dim(\Gr_G^\mu)$ do not have the same parity (because then $\IC_\lambda$ and $\IC_\mu$ live on different connected components of $\Gr_G$) and follows from~\cite[Corollary~2.8]{jmw} if they do have the same parity.
\end{rmk}

\subsection{Consequence on equivariance}
\label{ss:equivariance}

Consider the category
\[
\Per_{\GO}(\Gr_G,\bk)
\]
of 
$\GO$-equivariant perverse sheaves on $\Gr_G$; see~\S\ref{ss:equiv-perv}. (Here the stratification we consider is $\mathscr{S}$.) We have a forgetful functor
\[
\Per_{\GO}(\Gr_G,\bk) \to \Perv,
\]
which is fully faithful by construction.
As a consequence of Theorem~\ref{thm:semisimplicity}, each object in
$\Perv$ is isomorphic to a direct sum of the simple objects $\IC_\lambda$, hence belongs to the essential image of this functor. We deduce the following.

\begin{cor}
\label{cor:equivariance}
The forgetful functor
\[
\Per_{\GO}(\Gr_G,\bk) \to \Perv
\]
is an equivalence of categories.
\end{cor}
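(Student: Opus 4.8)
The plan is to combine Theorem~\ref{thm:semisimplicity} with the fact that the forgetful functor $\Per_{\GO}(\Gr_G,\bk)\to\Perv$ is already known to be fully faithful (this holds by construction; see~\S\ref{ss:equiv-perv}). Since a fully faithful functor is an equivalence onto its essential image, the only thing left to prove is that it is essentially surjective.

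For this, I would first apply Theorem~\ref{thm:semisimplicity}: as $\Perv$ is semisimple and each of its objects has finite length (being a perverse sheaf), every object of $\Perv$ is isomorphic to a finite direct sum of the simple objects $\IC_\lambda$ with $\lambda\in X_*(T)^+$. The forgetful functor being additive, its essential image is stable under finite direct sums, so it suffices to show that each $\IC_\lambda$ lies in it. Here I would use that $\Gr_G^\lambda=\GO\cdot L_\lambda$ is a single $\GO$-orbit: hence the shifted constant sheaf $\underline{\bk}_{\Gr_G^\lambda}[\dim\Gr_G^\lambda]$ underlies a canonical object of the $\GO$-equivariant constructible derived category of $\Gr_G^\lambda$, and taking its intermediate extension inside the $\GO$-equivariant constructible derived category of $\Gr_G$ (which carries a perverse t-structure with heart $\Per_{\GO}(\Gr_G,\bk)$, for which the forgetful functor to $\Db_{\mathscr S}(\Gr_G,\bk)$ is t-exact; see~\S\ref{ss:equiv-perv}) produces an object of $\Per_{\GO}(\Gr_G,\bk)$. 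Since intermediate extension is assembled from the perverse truncation functors, which commute with forgetting the equivariance, the image of this object under the forgetful functor is canonically $\IC_\lambda$; hence $\IC_\lambda$ is in the essential image, and essential surjectivity follows.

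The step that needs care is purely a matter of the equivariant formalism: one must know that the $\GO$-equivariant constructible derived category carries a perverse t-structure with heart $\Per_{\GO}(\Gr_G,\bk)$, that the forgetful functor is t-exact, and that intermediate extension is compatible with it. All of this is provided by~\S\ref{ss:equiv-perv}, so once these foundations are in place the argument is immediate --- there is no genuine difficulty beyond Theorem~\ref{thm:semisimplicity} itself. Alternatively, one may avoid constructing the equivariant intermediate extension explicitly by quoting from~\S\ref{ss:equiv-perv} the description of the simple objects of $\Per_{\GO}(\Gr_G,\bk)$ and the fact that the forgetful functor identifies them with the $\IC_\lambda$.
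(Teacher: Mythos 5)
Your proposal is correct and takes essentially the same route as the paper: observe full faithfulness of the forgetful functor, invoke Theorem~\ref{thm:semisimplicity} to reduce essential surjectivity to producing equivariant lifts of the $\IC_\lambda$, and note that these exist because $\Gr_G^\lambda$ is a single $\GO$-orbit. The only difference is that you spell out the equivariant intermediate-extension step that the paper treats as implicit in the sentence ``hence belongs to the essential image of this functor,'' which is a helpful clarification but not a different argument.
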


\begin{rmk}
 See~\S\ref{ss:equiv-const} below for a different proof of Corollary~\ref{cor:equivariance} which does not use the semisimplicity of $\Perv$ (but requires much more sophisticated tools).
\end{rmk}


\section{Dimension estimates and the weight functors}
\label{sec:weight-functors}

\subsection{Overview}
\label{ss:overview}

Recall that if $\FF$ is a field, the split\footnote{A reductive group is called \emph{split} if it admits a maximal torus which is split, i.e.~isomorphic to a product of copies of the multiplicative group over $\FF$. Here a maximal torus of a reductive group $H$ is a closed subgroup which is a torus and whose base change to an algebraic closure $\overline{\FF}$ of $\FF$ is a maximal torus of $\Spec(\overline{\FF}) \times_{\Spec(\FF)} H$ in the ``traditional'' sense, see e.g.~\cite{humphreys-gps}.} reductive groups over $\FF$ are classified, up to isomorphism, by their root datum\footnote{If $H$ is a split reductive group and $K \subset H$ is a maximal torus, then the root datum of $H$ with respect to $K$ is the quadruple $(X^*(K_{\overline{\FF}}), X_*(K_{\overline{\FF}}), \Delta(H_{\overline{\FF}},K_{\overline{\FF}}), \Delta^\vee(H_{\overline{\FF}},K_{\overline{\FF}}))$ where $\overline{\FF}$ is an algebraic closure of $\FF$, $H_{\overline{\FF}} := \Spec(\overline{\FF}) \times_{\Spec(\FF)} H$, $K_{\overline{\FF}} := \Spec(\overline{\FF}) \times_{\Spec(\FF)} K$, $\Delta(H_{\overline{\FF}},K_{\overline{\FF}})$, resp.~$\Delta^\vee(H_{\overline{\FF}},K_{\overline{\FF}})$, is the root system, resp.~coroot system, of $H_{\overline{\FF}}$ with respect to $K_{\overline{\FF}}$, together with the bijection $\Delta(H_{\overline{\FF}},K_{\overline{\FF}}) \xrightarrow{\sim} \Delta^\vee(H_{\overline{\FF}},K_{\overline{\FF}})$ given by $\alpha \mapsto \alpha^\vee$.} (see e.g.~\cite[Expos\'e XXIII, Corollaire 5.4 and Expos\'e XXII, Proposition~2.2]{sga3}). In particular, we can consider the reductive $\bk$-group $G^\vee_\bk$ which is Langlands dual to $G$, i.e.~whose root datum is dual to that of $G$ (which means that it is obtained from that of $G$ by exchanging weights and coweights and roots and coroots); this group is defined up to isomorphism.

The \emph{geometric Satake equivalence} is the statement that the category $\Per_{\GO}(\Gr_G,\bk)$ is 
equivalent to the category of finite-dimensional representations of $G^\vee_\bk$, in such a way that the tensor product of representations corresponds to a natural operation in $\Per_{\GO}(\Gr_G,\bk)$ called \emph{convolution}.\footnote{Note that if we drop this requirement, the statement becomes vacuous, because the categories $\Per_{\GO}(\Gr_G,\bk)$ and $\Rep_\bk(G^\vee_\bk)$ are both semisimple with simple objects parametrized by $X_*(T)^+$. This weaker statement might be already nontrivial, however, for more general coefficients (see Part~\ref{pt:arbitrary}).} In fact, we will even explain how to construct a \emph{canonical} group scheme $G^\vee_\bk$ which is split reductive (with a canonical maximal torus) and whose root datum is dual to that of $G$, and a canonical equivalence of monoidal categories $\Per_{\GO}(\Gr_G,\bk) \xrightarrow{\sim} \Rep_\bk(G^\vee_\bk)$.

To achieve this goal, the method is to define a convolution product on the abelian category $\Per_{\GO}(\Gr_G,\bk)$
so that this category satisfies the conditions of Theorem~\ref{thm:tannakina-reconstruction} with respect to the
functor
\[
\coH^\bullet(\Gr_G,\bm?) : \Per_{\GO}(\Gr_G,\bk) \to \Vect_\bk.
\]
We will then need to identify the
affine group scheme provided by Theorem~\ref{thm:tannakina-reconstruction}. The construction of a (split) maximal torus in this group scheme, which is the first step in this direction, is based on Mirkovi\'c
and Vilonen's \emph{weight functors}, which we introduce in this section.

Recall that
we have chosen a maximal torus and a Borel subgroup $T\subset B\subset G$.
Then $T\subset\GK$ acts on $\Gr_G=\GK/\GO$ with fixed points
$$(\Gr_G)^T=\bigl\{L_\mu : \mu\in X_*(T)\bigr\}.$$
The choice of a dominant regular cocharacter $\eta\in X_*(T)$ provides
a one-parameter subgroup $\mathbb G_{\mathbf{m}} \subset T$, whence a
$\mathbf C^\times$-action on $\Gr_G$ with fixed points $(\Gr_G)^T$.
The attractive and repulsive varieties relative to the fixed point
$L_\mu$ coincide with the semi-infinite orbits $S_\mu$ and $T_\mu$
defined in Section~\ref{sec:Gr}:
\[
S_\mu=\bigl\{x\in\Gr_G\bigm| \eta(a) \cdot x\to L_\mu\text{ when }a\to0\bigr\}
\]
and
\[
T_\mu=\bigl\{x\in\Gr_G\bigm| \eta(a) \cdot x \to L_\mu\text{ when }a\to\infty\bigr\}
\]
(see~\S\ref{ss:dim-estimates} for details).
With these notations, the weight functor $\F_\mu$ is defined either as
the cohomology with compact support of the restriction to $S_\mu$, or as
the cohomology with support in $T_\mu$. These two definitions are
equivalent, thanks to Braden's theorem on hyperbolic localization.

\begin{rmk}
 In Ginzburg's approach to the geometric Satake equivalence~\cite{ginzburg}, the maximal torus in $G^\vee_\bk$ is instead constructed using equivariant cohomology and the functors of co-restriction to points $L_\lambda$. For a comparison between these points of view, the reader may consult~\cite{gr}.
\end{rmk}

\subsection{Dimension estimates}
\label{ss:dim-estimates}

Recall from~\S\ref{ss:def-Gr} that $\rho$ denotes
the halfsum of the positive roots, considered as a linear form
$X_*(T)\to\frac12\mathbf Z$, and that $Q^\vee \subset X_*(T)$ denotes the coroot lattice.

\begin{thm}
\label{thm:orbits}
Let $\lambda,\mu\in X_*(T)$ with $\lambda$ dominant.
\begin{enumerate}
\item
\label{it:thm-orbits-1}
We have
\[
\overline{\Gr_G^\lambda}\cap S_\mu\neq\varnothing\ \Longleftrightarrow\
L_\mu\in\overline{\Gr_G^\lambda}\ \Longleftrightarrow\
\mu\in\Conv(W\lambda) \cap (\lambda + Q^\vee),
\]
where $\Conv$ denotes the convex hull.
\item
\label{it:thm-orbits-2}
If $\mu$ satisfies the condition in~\eqref{it:thm-orbits-1},
the intersection $\overline{\Gr_G^\lambda}\cap S_\mu$ has pure
dimension\footnote{By this, we mean that all the irreducible components of this variety have dimension $\langle \rho, \lambda+\mu\rangle$.} $\langle \rho, \lambda+\mu\rangle$.
\item
\label{it:thm-orbits-3}
If $\mu$ satisfies the condition in~\eqref{it:thm-orbits-1},
then $\Gr_G^\lambda \cap S_\mu$ is open dense in $\overline{\Gr_G^\lambda}\cap S_\mu$; in particular, the irreducible components of $\overline{\Gr_G^\lambda}\cap S_\mu$ and $\Gr_G^\lambda \cap S_\mu$ are in a canonical bijection.
\end{enumerate}
\end{thm}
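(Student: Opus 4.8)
The statement to prove is Theorem~\ref{thm:orbits}, describing the nonemptiness criterion and the dimension of the intersections $\overline{\Gr_G^\lambda} \cap S_\mu$. I would organize the proof around three inputs: (a) the closure relations and dimension formulas for Schubert varieties (Proposition~\ref{prop:Cartan-dec}); (b) the closure relations for semi-infinite orbits (Proposition~\ref{prop:closure-semiinfinite}) and the ``opposite'' orbits $T_\mu$ together with Lemma~\ref{lem:semiinfinite-intersection}; and (c) a dimension estimate proved by an explicit geometric/combinatorial argument, which is where the real work lies.

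\textbf{Part \eqref{it:thm-orbits-1}.} For the equivalences, I would first note that $\overline{\Gr_G^\lambda}$ is $T$-stable and closed, so if it meets $S_\mu = \NK \cdot L_\mu$ then (letting the one-parameter subgroup $\eta$ act and taking the limit $a \to 0$, using that $S_\mu$ is the attractor of $L_\mu$) the fixed point $L_\mu$ lies in $\overline{\Gr_G^\lambda}$; the converse is trivial since $L_\mu \in S_\mu$. For the identification of $\{\mu : L_\mu \in \overline{\Gr_G^\lambda}\}$ with $\Conv(W\lambda) \cap (\lambda + Q^\vee)$: the containment in $\lambda + Q^\vee$ is the statement that $L_\mu$ lies in the connected component $\Gr_G^{\lambda + Q^\vee}$, which is immediate. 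For the convex-hull condition, one inclusion follows from~\eqref{eqn:closure-orbit}: $L_\mu \in \overline{\Gr_G^\lambda}$ forces $\mu^+ \le \lambda$ (where $\mu^+$ is the dominant $W$-conjugate), and it is a standard fact about root systems that $\mu^+ \le \lambda$ together with $\mu \in \lambda + Q^\vee$ is equivalent to $\mu \in \Conv(W\lambda)\cap(\lambda+Q^\vee)$. The reverse inclusion ($\mu^+ \le \lambda \Rightarrow L_\mu \in \overline{\Gr_G^\lambda}$) is again~\eqref{eqn:closure-orbit} applied to $L_{\mu^+} \in \Gr_G^{\mu^+}$ combined with the $W$-orbit of $L_{\mu^+}$ lying in $\overline{\Gr_G^{\mu^+}}$ (since $\overline{\Gr_G^{\mu^+}}$ is $\GO$-stable, hence $N_G(T)$-stable).

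\textbf{Parts \eqref{it:thm-orbits-2} and \eqref{it:thm-orbits-3}.} Here is the heart. I would first establish the \emph{upper bound} $\dim(\overline{\Gr_G^\lambda} \cap S_\mu) \le \langle \rho, \lambda + \mu \rangle$. The standard approach (following Mirkovi\'c--Vilonen) uses the \emph{Iwahori/MV-cycle} stratification or an intersection-with-opposite-orbit argument: one intersects $\Gr_G^\lambda \cap S_\mu$ with the orbits $T_\nu$ of $N^-_\mathcal{K}$, uses Lemma~\ref{lem:semiinfinite-intersection} to see $\Gr_G^\lambda \cap S_\mu \cap \overline{T_\nu} \ne \varnothing$ only for $\nu \ge \mu$ (and within $\overline{\Gr_G^\lambda}$), and bounds $\dim(\Gr_G^\lambda \cap S_\mu \cap T_\nu)$ by estimating the intersection of the semi-infinite orbit with the affine bundle structure of $\Gr_G^\lambda$ over $G/P_\lambda$, or alternatively by a direct computation using the $\mathbb{G}_\mathrm{m}$-action (Bialynicki-Birula): since $S_\mu$ is the attractor and $T_\mu$ the repeller of $L_\mu$ under $\eta$, a dimension-counting argument relating $\dim \Gr_G^\lambda = \langle 2\rho, \lambda\rangle$ to the weights of $\eta$ on the tangent space at $L_\mu$ gives the bound $\langle \rho, \lambda + \mu\rangle$. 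For the \emph{lower bound} and the density claim \eqref{it:thm-orbits-3}, I would exhibit, near $L_\mu$ or near a generic point, an explicit affine subvariety of $\Gr_G^\lambda \cap S_\mu$ of the right dimension — concretely via root subgroups in $\NK$ acting on $L_\mu$ while staying in $\Gr_G^\lambda$ — and then argue that $\Gr_G^\lambda \cap S_\mu$ is open in $\overline{\Gr_G^\lambda} \cap S_\mu$ simply because $\Gr_G^\lambda$ is open in $\overline{\Gr_G^\lambda}$, reducing \eqref{it:thm-orbits-3} to the statement that the closure of $\Gr_G^\lambda \cap S_\mu$ exhausts all of $\overline{\Gr_G^\lambda} \cap S_\mu$ (no component of the latter is contained in the boundary $(\overline{\Gr_G^\lambda} \smallsetminus \Gr_G^\lambda) \cap S_\mu$), which follows from the purity/equidimensionality once the upper bound on boundary pieces $\overline{\Gr_G^\eta} \cap S_\mu$ for $\eta < \lambda$ is shown to be strictly smaller — i.e.\ $\langle \rho, \eta + \mu\rangle < \langle \rho, \lambda + \mu\rangle$, which holds since $\eta < \lambda \Rightarrow \langle \rho, \lambda - \eta\rangle > 0$.

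\textbf{Main obstacle.} The genuinely hard part is the dimension estimate in \eqref{it:thm-orbits-2} — establishing both that every component has dimension \emph{exactly} $\langle \rho, \lambda + \mu\rangle$ (equidimensionality, not just an upper bound). The clean way is to combine the upper bound for $\overline{\Gr_G^\lambda} \cap S_\mu$ with the \emph{opposite} upper bound for $\overline{\Gr_G^\lambda} \cap T_\mu$ (of dimension $\langle \rho, \lambda - \mu \rangle$, by symmetry), and then invoke the general principle that if a variety $Z$ of pure dimension $d$ has an attracting/repelling decomposition under a $\mathbb{G}_\mathrm{m}$-action with $Z^+ $ and $Z^-$ the attractor and repeller of a point, then $\dim Z^+ + \dim Z^- \ge d$ forces both to be as large as claimed; applying this to $Z = \overline{\Gr_G^\lambda}$ (pure of dimension $\langle 2\rho, \lambda\rangle$) with the $\eta$-action gives $\dim(\overline{\Gr_G^\lambda} \cap S_\mu) + \dim(\overline{\Gr_G^\lambda} \cap T_\mu) \ge \langle 2\rho, \lambda\rangle$, and combined with the two upper bounds $\langle\rho,\lambda+\mu\rangle$ and $\langle\rho,\lambda-\mu\rangle$ (which sum to exactly $\langle 2\rho,\lambda\rangle$) this pins down both dimensions and forces equidimensionality. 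The delicate technical point is justifying the Bialynicki-Birula-type decomposition and the inequality $\dim Z^+ + \dim Z^- \ge \dim Z$ in this ind-variety setting; one works inside a finite-dimensional $\mathbb{G}_\mathrm{m}$-stable projective piece containing $\overline{\Gr_G^\lambda}$ (available by the projective embedding of~\S\ref{ss:proj-embeddings}), where the classical theory applies directly.
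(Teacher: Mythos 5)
Your treatment of part~\eqref{it:thm-orbits-1} is fine and coincides with the paper's. The difficulty is entirely in parts~\eqref{it:thm-orbits-2} and~\eqref{it:thm-orbits-3}, and there your plan has a genuine gap. For the lower bound you invoke ``the general principle that if a variety $Z$ of pure dimension $d$ has an attracting/repelling decomposition under a $\mathbb{G}_\mathrm{m}$-action with $Z^+$ and $Z^-$ the attractor and repeller of a point, then $\dim Z^+ + \dim Z^- \geq d$.'' This equality is standard when $Z$ is smooth and the fixed point is isolated (it follows from the weight decomposition of the tangent space), but you are applying it to the \emph{singular} variety $\overline{\Gr_G^\lambda}$ at the fixed point $L_\mu$, which typically lies in the singular locus when $\mu \notin W\lambda$. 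In that setting the inequality is not part of classical Bialynicki--Birula theory and is not something one can just cite; you yourself flag it as ``the delicate technical point'' but leave it unproved, so the argument does not close. Moreover, even if this inequality were available, it would only pin down $\dim(\overline{\Gr_G^\lambda} \cap S_\mu)$ as a maximum over components; it does not by itself show that \emph{every} irreducible component has dimension $\langle \rho, \lambda+\mu\rangle$, and that equidimensionality is exactly what part~\eqref{it:thm-orbits-2} asserts and what your reduction of part~\eqref{it:thm-orbits-3} requires. Finally, your upper bound is only gestured at (you offer three alternative sketches without carrying any of them out), so both directions of the estimate remain open in the proposal.

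For contrast, the paper proves both directions by a concrete hyperplane-section argument using the projective embedding $\Psi : \Gr_G \hookrightarrow \mathbf{P}(V)$ of \S\ref{ss:proj-embeddings} and the hyperplanes $H_\mu$ of Proposition~\ref{prop:closure-semiinfinite} cutting out $\partial S_\mu$. The upper bound $\dim(\overline{\Gr_G^\lambda} \cap \overline{S_\mu}) \leq \langle \rho, \lambda+\mu\rangle$ goes by descending induction on $\langle \rho, \mu - w_0\lambda\rangle$, with base case $\mu = w_0\lambda$ (where Lemma~\ref{lem:semiinfinite-intersection} gives a single point) and inductive step the fact that intersecting an irreducible variety with a hyperplane drops dimension by at most one. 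For the lower bound, one fixes an arbitrary irreducible component $C$ and, starting from the irreducible variety $\overline{\Gr_G^\lambda}$ of dimension $\langle 2\rho,\lambda\rangle$, repeatedly takes irreducible components of hyperplane sections to produce a strictly decreasing chain $\lambda > \nu_1 > \cdots > \nu_d \geq \mu$ with $\overline{C} \subset \overline{C_i}$ and $\dim C_i = \langle 2\rho,\lambda\rangle - i$; the strict decrease of $\langle\rho,\nu_i\rangle$ along the chain yields $d \leq \langle\rho,\lambda - \mu\rangle$, hence $\dim C \geq \langle\rho,\lambda+\mu\rangle$ \emph{for every component} $C$. This is what delivers purity, and then part~\eqref{it:thm-orbits-3} falls out exactly as you anticipate (a component missing $\Gr_G^\lambda$ would be forced into some $\overline{\Gr_G^\eta}$ with $\eta < \lambda$, whose dimension bound is strictly smaller). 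If you want to salvage your attractor-plus-repeller idea, you would need to both prove the singular-variety inequality and supplement it with a componentwise argument; the paper's hyperplane chain handles both at once.
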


\begin{proof}
\eqref{it:thm-orbits-1}
Let $\eta\in X_*(T)$ be regular dominant. If $g\in\NK$, then
$\eta(a) g \eta(a)^{-1} \to1$ when $a\to0$. Therefore, looking at the induced action of $\mathbf C^\times$
on $\Gr_G$, we obtain that for any $\mu\in X_*(T)$,
$$S_\mu\subset\{x\in\Gr_G\mid \eta(a) \cdot x\to L_\mu\text{ when $a\to0$}\}.$$
In view of the Iwasawa decomposition~\eqref{eqn:Iwasawa-dec}, this inclusion
is in fact an equality. Then the stability of $\overline{\Gr_G^\lambda}$
by the action of $T$ implies the first equivalence.

On the other hand, we have
$$W\lambda\subset\{\mu\in X_*(T)\mid L_\mu\in\Gr_G^\lambda\},$$
and using the Cartan decomposition~\eqref{eqn:Cartan-dec}, we see that this inclusion is
in fact an equality. The description of $\overline{\Gr_G^\lambda}$
recalled in~\eqref{eqn:closure-orbit} then implies that $L_\mu \in \overline{\Gr_G^\lambda}$ if and only if the dominant $W$-conjugate $\mu^+$ of $\mu$ satisfies $\mu^+ \leq \lambda$, that is, if and only if
$$W\mu\subset\{\nu\in X_*(T)\mid\nu\leq\lambda\}.$$
Using \cite[chap.~VIII, \S7, exerc.~1]{bourbaki-lie}, we see that this
condition is equivalent to
$$\mu\in\Conv(W\lambda)\cap(\lambda+Q^\vee).$$

\eqref{it:thm-orbits-2}
We start with the following remarks.
From~\eqref{it:thm-orbits-1}, we deduce that $\overline{\Gr_G^\lambda}$ meets only
those $S_\mu$ such that $\mu\leq\lambda$, therefore
$\overline{\Gr_G^\lambda}\subset\overline{\strut S_\lambda}$.\footnote{To prove the inclusion $\overline{\Gr_G^\lambda}
\subset\overline{\strut S_\lambda}$, one can also argue as follows.
The open cell $N_{\mathcal O}B^-_{\mathcal O}$ is dense in $\GO$ and
$B^-_{\mathcal O}$ stabilizes $L_\lambda$, therefore $\Gr_G^\lambda=
\GO\cdot L_\lambda$ contains $N_{\mathcal O}\cdot L_\lambda$ as a
dense subset, whence $\Gr_G^\lambda\subset\overline{N_{\mathcal O}\cdot
L_\lambda}\subset\overline{S_\lambda}$.} If $w_0 \in W$ is the longest element (so that $w_0\lambda$ is the unique antidominant element in $W\lambda$), then conjugating by a lift of $w_0$ we deduce that $\overline{\Gr_G^\lambda}
\subset\overline{\strut T_{w_0\lambda}}$.

Note that if $\mu$ satisfies the condition in~\eqref{it:thm-orbits-1}, then $w_0 \lambda \leq \mu \leq \lambda$. We will now prove, by induction on $\langle \rho, \mu-w_0 \lambda \rangle$, that 
\begin{equation}
\label{eqn:dim-estimates-ineq}
\dim \left( \overline{\Gr_G^\lambda} \cap \overline{\strut S_\mu} \right) \leq \langle \rho, \lambda+\mu \rangle.
\end{equation}
If $\mu=w_0 \lambda$, then from the remarks above we have $\overline{\Gr_G^\lambda}\cap\overline{\strut S_{w_0\lambda}} \subset \overline{S_{w_0\lambda}} \cap \overline{T_{w_0\lambda}} = \{L_{w_0\lambda}\}$ (see Lemma~\ref{lem:semiinfinite-intersection}), so that the claim holds in this case.

Assume now that $\mu > w_0 \lambda$, and 
choose a hyperplane $H_\mu$ as in Proposition~\ref{prop:closure-semiinfinite}.
Let $C$ be an irreducible component of $\overline{\Gr_G^\lambda} \cap \overline{\strut S_\mu}$ and let $D$ be an irreducible component of $C\cap\Psi^{-1}(H_\mu)$. Then $\dim(D) \geq \dim(C)-1$, and $D$ contained in
\[
\Psi^{-1}(H_\mu) \cap \overline{\Gr_G^\lambda} \cap \overline{\strut S_\mu} = \partial S_\mu \cap \overline{\Gr_G^\lambda} = \bigcup_{\nu < \mu} \overline{\strut S_\nu} \cap \overline{\Gr_G^\lambda},
\]
so by induction $\dim D\leq\max_{\nu < \mu} \langle \rho, \lambda + \nu \rangle = \langle \rho, \lambda+\mu \rangle - 1$. We deduce that $\dim C\leq \dim D+1\leq\langle \rho, \lambda+\mu \rangle$, which finishes the proof of~\eqref{eqn:dim-estimates-ineq}.

The inequality~\eqref{eqn:dim-estimates-ineq} implies that each irreducible component $C$ of $\overline{\Gr_G^\lambda} \cap S_\mu$ has dimension at most $\langle \rho, \lambda+\mu \rangle$.
We will now prove that this dimension is always exactly $\langle \rho, \lambda+\mu \rangle$. First, if $\mu=\lambda$ then as observed above we have $\overline{\Gr_G^\lambda} \cap \overline{\strut S_\lambda} = \overline{\Gr_G^\lambda}$. Since this variety is irreducible of dimension $\langle 2\rho, \lambda \rangle = \langle \rho, \lambda + \lambda \rangle$ by Proposition~\ref{prop:Cartan-dec}, this implies that its (nonempty) open subset $\overline{\Gr_G^\lambda} \cap S_\lambda$ has the same properties. Now, assume that $\mu < \lambda$, and fix an irreducible component $C$ of $\overline{\Gr_G^\lambda} \cap S_\mu$. Set $d:=\langle \rho, 2\lambda \rangle - \dim(C)$, and let $H_\lambda$ be as in Proposition~\ref{prop:closure-semiinfinite}. Then we have
\[
\overline{\strut C} \subset \overline{\Gr_G^\lambda} \cap \partial S_\lambda = \overline{\Gr_G^\lambda} 
\cap \Psi^{-1}(H_\lambda).
\]
Hence there exists an irreducible component $D_1$ of the right-hand side containing $\overline{C}$. Then $\dim(D_1)=\langle \rho, 2\lambda \rangle-1$, and $D_1$ is the disjoint union of its locally closed intersections with the orbits $S_\nu$ with $w_0 \lambda \leq \nu < \lambda$; hence there exists such a $\nu_1$ such that $C_1:= D_1 \cap S_{\nu_1}$ is open dense in $D_1$. We necessarily have $\nu_1 \geq \mu$ since otherwise $\overline{C}$ would be contained in $\overline{\Gr_G^\lambda} \cap \partial S_\mu$, which is not the case. Now $C_1$ is an irreducible component of $\overline{\Gr_G^\lambda} \cap S_{\nu_1}$ of dimension $\langle \rho, 2\lambda \rangle-1$ such that $\overline{C_1}$ contains $\overline{C}$. If $d>1$ we must have $\mu < \nu_1$; in fact, otherwise from the facts that $\overline{C} \subset \overline{C_1}$ and that
\[
C_1 = \overline{\strut C_1} \cap S_{\nu_1}  \quad\text{and}\quad C = \overline{\strut C} \cap S_\mu
\]
we would deduce that $C \subset C_1$, so that $C=C_1$ (which is impossible for reasons of dimension) since both of these varieties are 
irreducible components of $\overline{\Gr^\lambda_G} \cap S_\mu$.

Repeating this argument we find coweights $\nu_1, \cdots, \nu_d$ which satisfy
\begin{equation}
\label{eqn:weights-semiinfinite}
\mu \leq \nu_d < \nu_{d-1} < \cdots < \nu_1 < \lambda
\end{equation}
and irreducible components $C_i$ of $\overline{\Gr^\lambda_G} \cap S_{\nu_i}$ such that $\overline{C} \subset \overline{C_i}$ and $\dim(C_i)=\langle \rho, 2\lambda \rangle-i$. Then~\eqref{eqn:weights-semiinfinite} implies that
$\langle \rho,\mu \rangle \leq \langle \rho,\lambda \rangle -d$,
or in other words that $d \leq \langle \rho,\lambda \rangle - \langle \rho,\mu \rangle$; this implies that
\[
\dim(C) \geq \langle \rho, \lambda + \mu \rangle,
\]
as expected.

\eqref{it:thm-orbits-3}
Let $Z$ be an irreducible component of
$\overline{\Gr_G^\lambda}\cap S_\mu$.
Then $Z$ must meet $\Gr_G^\lambda$, otherwise by \eqref{eqn:closure-orbit}
it would be contained in some $\overline{\Gr_G^\eta}$ with $\eta<\lambda$,
and the inequality $\dim Z=\langle\rho,\lambda+\mu\rangle>\langle\rho,
\eta+\mu\rangle$ would contradict~\eqref{it:thm-orbits-2}. Therefore
$Z\cap\Gr_G^\lambda$ is open dense in $Z$. 
\end{proof}

\begin{rmk}
The irreducible components of the intersections $\overline{\Gr_G^\lambda} \cap S_\mu$, or sometimes those of the intersections $\Gr_G^\lambda \cap S_\mu$, are called \emph{Mirkovi\'c--Vilonen cycles}; they have been studied and used extensively in various fields since their introduction in~\cite{mv}, see e.g.~\cite{bg, gaussent-littelmann, bag, kamnitzer}.
\end{rmk}

The following corollary will prove to be useful.

\begin{cor}
\label{cor:dimension}
Let $\lambda\in X_*(T)^+$, and let $X\subset\overline{\Gr_G^\lambda}$
be a closed $T$-invariant subvariety. Then
$$\dim (X)\leq\max_{\substack{\mu \in X_*(T) \\ L_\mu\in X}} \langle \rho, \lambda+\mu \rangle.$$
\end{cor}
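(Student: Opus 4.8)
The plan is to combine the Iwasawa decomposition $\Gr_G = \bigsqcup_{\mu \in X_*(T)} S_\mu$ with the dimension estimate of Theorem~\ref{thm:orbits}\eqref{it:thm-orbits-2}. First I would observe that, since $X$ is closed in $\overline{\Gr_G^\lambda}$ and since by Theorem~\ref{thm:orbits}\eqref{it:thm-orbits-1} the orbit closure $\overline{\Gr_G^\lambda}$ meets only the finitely many semi-infinite orbits $S_\mu$ with $\mu \in \Conv(W\lambda) \cap (\lambda + Q^\vee)$, the decomposition $X = \bigsqcup_\mu (X \cap S_\mu)$ is a \emph{finite} partition of $X$ into locally closed subvarieties. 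Consequently $\dim(X) = \max_\mu \dim(X \cap S_\mu)$, the maximum being taken over those $\mu$ for which $X \cap S_\mu \neq \varnothing$.

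The crucial point is then that $L_\mu \in X$ for each such $\mu$. Recall from~\S\ref{ss:overview} (see also~\S\ref{ss:dim-estimates}) that, for a regular dominant cocharacter $\eta \in X_*(T)$, one has $S_\mu = \{x \in \Gr_G \mid \eta(a) \cdot x \to L_\mu \text{ as } a \to 0\}$. So I would pick any $x \in X \cap S_\mu$ and consider the orbit map $a \mapsto \eta(a) \cdot x$; using that $\overline{\Gr_G^\lambda}$ is proper and carries the $\mathbb{G}_{\mathbf{m}}$-action defined by $\eta$, this extends to a morphism $\mathbb{A}^1 \to \overline{\Gr_G^\lambda}$ sending $0$ to $L_\mu$. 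Since $X$ is closed and stable under $\eta(\mathbb{G}_{\mathbf{m}}) \subset T$, the image of this morphism is contained in $X$; in particular $L_\mu \in X$.

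Finally, for each $\mu$ with $X \cap S_\mu \neq \varnothing$ we have $X \cap S_\mu \subset \overline{\Gr_G^\lambda} \cap S_\mu$, and the right-hand side has dimension $\langle \rho, \lambda + \mu \rangle$ by Theorem~\ref{thm:orbits}\eqref{it:thm-orbits-2}; hence $\dim(X \cap S_\mu) \leq \langle \rho, \lambda + \mu \rangle$. Putting the three steps together gives
\[
\dim(X) = \max_{\mu \,:\, X \cap S_\mu \neq \varnothing} \dim(X \cap S_\mu) \leq \max_{\mu \,:\, L_\mu \in X} \langle \rho, \lambda + \mu \rangle,
\]
which is the desired bound. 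The only step requiring any care is the middle one, namely verifying that the attracting limit $L_\mu$ of a point of $X$ genuinely lies in $X$; but this is a routine consequence of the properness of $\overline{\Gr_G^\lambda}$ together with the closedness and $T$-stability of $X$, so no serious obstacle is expected.
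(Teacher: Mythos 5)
Your proof is correct and follows essentially the same route as the paper: decompose $X$ via the Iwasawa stratification, use the attracting-set description $S_\mu = \{x : \eta(a)\cdot x \to L_\mu\}$ together with closedness and $T$-stability of $X$ to see that any $\mu$ with $X \cap S_\mu \neq \varnothing$ must have $L_\mu \in X$, and then apply Theorem~\ref{thm:orbits}\eqref{it:thm-orbits-2}. The extra remarks about finiteness of the relevant $\mu$ and about properness (to extend the $\mathbb{G}_{\mathrm{m}}$-orbit map) are correct but already implicit in the attracting-set characterization proved during Theorem~\ref{thm:orbits}\eqref{it:thm-orbits-1}, which the paper simply quotes.
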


\begin{proof}
Let $\eta\in X_*(T)$ be regular dominant. We saw during the proof of
Theorem~\ref{thm:orbits}\eqref{it:thm-orbits-1} that
$$S_\mu=\{x\in\Gr_G\mid \eta(a) \cdot x\to L_\mu\text{ when $a\to0$}\}.$$
Therefore $X$ meets $S_\mu$ if and only if $L_\mu\in X$, whence
\[
X\subset\bigcup_{\substack{\mu\in X_*(T)\\L_\mu\in X}}
S_\mu,
\]
and therefore
\[
X\subset\bigcup_{\substack{\mu\in X_*(T)\\L_\mu\in X}}
\bigl(\overline{\Gr_G^\lambda}\cap S_\mu\bigr).
\]
The corollary now follows from Theorem~\ref{thm:orbits}\eqref{it:thm-orbits-2}.
\end{proof}

The following theorem is the analogue of Theorem~\ref{thm:orbits} for the Borel subgroup $B^-$ in place of~$B$.

\begin{thm}
\label{thm:orbits-T}
Let $\lambda,\mu\in X_*(T)$ with $\lambda$ dominant.
\begin{enumerate}
\item
\label{it:thm-orbits-T-1}
We have
\[
\overline{\Gr_G^\lambda}\cap T_\mu\neq\varnothing\ \Longleftrightarrow\
L_\mu\in\overline{\Gr_G^\lambda}\ \Longleftrightarrow\
\mu\in\Conv(W\lambda) \cap (\lambda + Q^\vee).
\]
\item
\label{it:thm-orbits-T-2}
If $\mu$ satisfies the condition in~\eqref{it:thm-orbits-T-1},
the intersection $\overline{\Gr_G^\lambda}\cap T_\mu$ has pure
dimension
$\langle \rho, \lambda-\mu\rangle$.
\item
\label{it:thm-orbits-T-3}
If $\mu$ satisfies the condition in~\eqref{it:thm-orbits-T-1},
then $\Gr_G^\lambda \cap T_\mu$ is open dense in $\overline{\Gr_G^\lambda}\cap T_\mu$; in particular, the irreducible components of $\overline{\Gr_G^\lambda}\cap T_\mu$ and $\Gr_G^\lambda \cap T_\mu$ are in a canonical bijection.
\end{enumerate}
\end{thm}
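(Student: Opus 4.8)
The statement is Theorem~\ref{thm:orbits-T}, which is exactly Theorem~\ref{thm:orbits} with the roles of $B$ and $B^-$ (equivalently $N$ and $N^-$, and $S_\mu$ and $T_\mu$) interchanged. The plan is to reduce it to Theorem~\ref{thm:orbits} by applying an automorphism of $\Gr_G$ that swaps $B$ and $B^-$ while fixing $T$. Concretely, pick a representative $\dot w_0 \in N_G(T)$ of the longest element $w_0 \in W$; then conjugation by $\dot w_0$ sends $B$ to $B^-$ and preserves $T$, so the induced automorphism $c_{\dot w_0}$ of $\GK$ descends to an automorphism of $\Gr_G = \GK/\GO$ (since $\dot w_0 \in G \subset \GO$, this is even just the left-multiplication action of $\dot w_0$). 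This automorphism sends $\NK$ to $N^-_{\mathcal K}$, hence $S_\mu = \NK \cdot L_\mu$ to $N^-_{\mathcal K} \cdot L_{w_0 \mu} = T_{w_0\mu}$, and it sends $\Gr_G^\lambda = \GO \cdot L_\lambda$ to itself (as $\GO$ is normalized by $\dot w_0$ and $L_\lambda$, $L_{w_0\lambda}$ lie in the same $\GO$-orbit). Therefore it restricts to an isomorphism of varieties $\overline{\Gr_G^\lambda} \cap S_{w_0\mu} \xrightarrow{\sim} \overline{\Gr_G^\lambda} \cap T_\mu$, compatible with the respective open subsets $\Gr_G^\lambda \cap S_{w_0\mu}$ and $\Gr_G^\lambda \cap T_\mu$.

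With this isomorphism in hand, each of the three parts transfers mechanically. For~\eqref{it:thm-orbits-T-1}: $\overline{\Gr_G^\lambda} \cap T_\mu \neq \varnothing$ iff $\overline{\Gr_G^\lambda} \cap S_{w_0\mu} \neq \varnothing$, which by Theorem~\ref{thm:orbits}\eqref{it:thm-orbits-1} holds iff $w_0\mu \in \Conv(W\lambda) \cap (\lambda + Q^\vee)$; and since $\Conv(W\lambda)$ and $\lambda + Q^\vee$ are both $W$-stable, this is equivalent to $\mu \in \Conv(W\lambda) \cap (\lambda + Q^\vee)$. Similarly $L_\mu \in \overline{\Gr_G^\lambda}$ iff $L_{w_0\mu} \in \overline{\Gr_G^\lambda}$ (as the closure is $\GO$-stable, hence $W$-stable on $T$-fixed points). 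For~\eqref{it:thm-orbits-T-2}: the isomorphism above shows $\overline{\Gr_G^\lambda} \cap T_\mu$ has pure dimension equal to that of $\overline{\Gr_G^\lambda} \cap S_{w_0\mu}$, which by Theorem~\ref{thm:orbits}\eqref{it:thm-orbits-2} is $\langle \rho, \lambda + w_0\mu \rangle$. Using $w_0 \mu = -w_0'(\mu)$ where $w_0' = -w_0$ is the Dynkin diagram automorphism, and the fact that $\langle \rho, w_0 \nu \rangle = \langle w_0^{-1}\rho, \nu\rangle = -\langle \rho, \nu\rangle$ (since $w_0$ sends positive roots to negative roots, so $w_0 \rho = -\rho$), we get $\langle \rho, \lambda + w_0\mu\rangle = \langle \rho, \lambda\rangle - \langle \rho, \mu\rangle = \langle \rho, \lambda - \mu\rangle$, as claimed. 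Part~\eqref{it:thm-orbits-T-3} follows immediately by transporting Theorem~\ref{thm:orbits}\eqref{it:thm-orbits-3} through the isomorphism.

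I do not expect any serious obstacle; the only points requiring a moment's care are (i) checking that $\dot w_0$ genuinely normalizes $\NK$ up to $N^-_{\mathcal K}$ and $\GO$ — which is automatic since it does so already at the level of the algebraic groups $N$, $N^-$, $G$, and these identities are preserved under the functors $H \mapsto H_{\mathcal O}$, $H \mapsto H_{\mathcal K}$ — and (ii) the bookkeeping with $\rho$ and $w_0$ in the dimension formula, for which the key identity is $\langle \rho, w_0 \nu \rangle = -\langle \rho, \nu \rangle$. An alternative, essentially equivalent, approach would be to redo the proof of Theorem~\ref{thm:orbits} verbatim, replacing the Iwasawa decomposition~\eqref{eqn:Iwasawa-dec} by its $B^-$-analogue, the inclusion $\overline{\Gr_G^\lambda} \subset \overline{S_\lambda}$ by $\overline{\Gr_G^\lambda} \subset \overline{T_{w_0\lambda}}$, and using~\eqref{eqn:closure-Tmu} in place of Proposition~\ref{prop:closure-semiinfinite}; but the reduction via $\dot w_0$ is cleaner and avoids any repetition.
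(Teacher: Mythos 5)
Your proof is correct, and it is a clean implementation of what the paper leaves implicit: the paper states Theorem~\ref{thm:orbits-T} with no proof beyond the remark that it is ``the analogue of Theorem~\ref{thm:orbits} for $B^-$ in place of $B$.'' Your reduction via left multiplication by $\dot w_0$ (which, since $\dot w_0\in G\subset\GO$, preserves each Schubert cell $\Gr_G^\lambda$ and its closure, and carries $S_\mu$ to $T_{w_0\mu}$) makes that analogy precise and yields all three parts mechanically from Theorem~\ref{thm:orbits}; the only bookkeeping is the $W$-stability of $\Conv(W\lambda)\cap(\lambda+Q^\vee)$ and the identity $w_0\rho=-\rho$, both of which you handle correctly. (The aside about the Dynkin diagram automorphism $-w_0$ is harmless but unused; the computation $\langle\rho,w_0\mu\rangle=-\langle\rho,\mu\rangle$ is what actually carries the argument.)
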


\subsection{Weight functors}
\label{ss:weight-functors}

Recall that if $X$ is a topological space, $i : Y \to X$ is the inclusion of a locally closed subspace and $\mathscr{F} \in \Db_c(X,\bk)$, then the local cohomology groups $\coH^k_Y(X,\mathscr{F})$ are defined as $\coH^k(Y, i^! \mathscr{F})$.

\begin{prop}
\label{prop:weight-functors}
For each $\mathscr A\in\Per_\GO(\Gr_G,\mathbf k)$, $\mu\in X_*(T)$
and $k\in\mathbf Z$, there exists a canonical isomorphism
$$\coH^k_{T_\mu}(\Gr_G,\mathscr A)\xrightarrow\sim \coH^k_c(S_\mu,\mathscr A),$$
and both terms vanish if $k\neq \langle 2\rho,\mu \rangle$.
\end{prop}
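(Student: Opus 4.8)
The plan is to prove the statement in three stages: first the vanishing outside degree $\langle 2\rho, \mu \rangle$, then the existence of the canonical isomorphism, and finally to observe that one may freely pass between the two descriptions (Braden's hyperbolic localization theorem is the conceptual reason this works, although a direct argument using the dimension estimates is also available). I will set $d = \langle 2\rho, \mu \rangle = \dim(S_\mu \cap \overline{\Gr_G^\lambda})$ appearing in Theorem~\ref{thm:orbits}, and exploit the key numerical coincidence that $\langle \rho, \lambda + \mu \rangle + \langle \rho, \lambda - \mu \rangle = \langle 2\rho, \lambda \rangle = \dim(\Gr_G^\lambda)$, so that the perversity shift and the dimension of the relevant intersection cancel against each other.

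\textbf{Step 1: Reduce to $\IC_\lambda$.} Since $\mathscr A \in \Per_\GO(\Gr_G, \bk)$ and $\mathrm{char}(\bk) = 0$, Theorem~\ref{thm:semisimplicity} and Corollary~\ref{cor:equivariance} imply $\mathscr A$ is a direct sum of shifts-free copies of the $\IC_\lambda$; both functors $\coH^k_{T_\mu}(\Gr_G, -)$ and $\coH^k_c(S_\mu, -)$ are additive, so it suffices to treat $\mathscr A = \IC_\lambda$ (for $\lambda$ with $\Gr_G^\mu$, equivalently $L_\mu$, in $\overline{\Gr_G^\lambda}$; otherwise both sides are visibly zero by support considerations, using Theorem~\ref{thm:orbits}\eqref{it:thm-orbits-1}).

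\textbf{Step 2: The vanishing.} For $\coH^k_c(S_\mu, \IC_\lambda)$: let $j_\mu : S_\mu \hookrightarrow \Gr_G$ be the inclusion, so the group in question is $\coH^k_c(S_\mu, j_\mu^* \IC_\lambda) = \coH^k(\Gr_G, (j_\mu)_! j_\mu^* \IC_\lambda)$. The complex $j_\mu^* \IC_\lambda$ is supported on the closed subset $S_\mu \cap \overline{\Gr_G^\lambda}$ of $S_\mu$, which has dimension $\langle \rho, \lambda + \mu \rangle$ (Theorem~\ref{thm:orbits}\eqref{it:thm-orbits-2}); moreover $j_\mu^* \IC_\lambda$ lives in perverse degrees $\leq 0$ on $S_\mu$ (because $\IC_\lambda \in {}^p D^{\leq 0}$ and $j_\mu^*$ is right t-exact), so its ordinary cohomology sheaves $\mathscr H^i$ are supported in dimension $\leq -i$. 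Combining these two facts, the support condition forces $\mathscr H^i(j_\mu^* \IC_\lambda)$ to be supported in dimension $\leq \min(-i, \langle \rho, \lambda+\mu \rangle)$. Feeding this into the hypercohomology spectral sequence for $\coH^\bullet_c$ and using that compactly supported cohomology of a variety of dimension $e$ is concentrated in degrees $\leq 2e$, one concludes $\coH^k_c(S_\mu, \IC_\lambda) = 0$ for $k > \langle 2\rho, \mu \rangle$. The vanishing for $k < \langle 2\rho, \mu \rangle$ is the dual statement: apply Verdier duality, which swaps $\coH^k_c(S_\mu, -)$ with $\coH^{-k}_{T_{-}}(-)$-type groups and swaps $B$ with $B^-$, and invoke Theorem~\ref{thm:orbits-T} (the $B^-$-analogue) together with the identity $\langle \rho, \lambda - (-\mu) \rangle$-bookkeeping; alternatively, argue directly with $i_\mu^!$ on $T_\mu$ and Theorem~\ref{thm:orbits-T}\eqref{it:thm-orbits-T-2}, which gives $\dim(T_\mu \cap \overline{\Gr_G^\lambda}) = \langle \rho, \lambda - \mu \rangle$, and the matching perversity estimate $i_\mu^! \IC_\lambda \in {}^p D^{\geq 0}$.

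\textbf{Step 3: The canonical isomorphism.} This is where I expect the main obstacle. The cleanest route is Braden's theorem on hyperbolic localization: the $\mathbb{G}_{\mathbf m}$-action given by the regular dominant cocharacter $\eta$ has $S_\mu$ and $T_\mu$ as the attracting and repelling sets of the fixed point $L_\mu$, and Braden's theorem provides, for any $\mathbb{G}_{\mathbf m}$-equivariant (in particular $\GO$-equivariant) complex, a canonical isomorphism between the ``$!$-restriction to the attracting set, then $*$-pushforward'' functor and the ``$*$-restriction to the repelling set, then $!$-pushforward'' functor — which upon taking cohomology yields exactly $\coH^k_c(S_\mu, \mathscr A) \cong \coH^k_{T_\mu}(\Gr_G, \mathscr A)$. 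The technical point to address is that $\Gr_G$ is an ind-variety rather than a variety, so one must either cite a version of Braden's theorem valid in this generality (it applies on each $\GO$-stable finite-dimensional closed subvariety $\overline{\Gr_G^\lambda}$, which is enough by Step~1's reduction to $\IC_\lambda$, since $\IC_\lambda$ is supported there), or give the isomorphism by hand via the attraction/repulsion diagram. Either way, the $\GO$-equivariance (or at least the $\mathbb G_{\mathbf m}$-contraction property of the $S_\mu$) is what makes the two functors agree; this is the content that cannot be reduced to formal manipulation, and I would structure the proof to cite Braden's theorem and note carefully the reduction to a single $\overline{\Gr_G^\lambda}$ that legitimizes its use.
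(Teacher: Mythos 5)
Your overall skeleton is the same as the paper's: establish the upper vanishing bound for $\coH^k_c(S_\mu,\bm?)$, the lower bound for $\coH^k_{T_\mu}(\Gr_G,\bm?)$, and then invoke Braden's hyperbolic localization theorem to identify the two. However, Step~2 as written contains a real gap.

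The support-dimension bound you use for $\mathscr H^i(j_\mu^*\IC_\lambda)$, namely $\min(-i,\langle\rho,\lambda+\mu\rangle)$, is correct but too coarse: feeding it into the spectral sequence only gives $\coH^k_c(S_\mu,\IC_\lambda)=0$ for $k>\langle\rho,\lambda+\mu\rangle$, whereas the statement requires vanishing for $k>\langle 2\rho,\mu\rangle$, and $\langle\rho,\lambda+\mu\rangle-\langle 2\rho,\mu\rangle=\langle\rho,\lambda-\mu\rangle\geq 0$ is strictly positive whenever $\lambda\neq\mu$. The error is in the passage from the $\Gr_G$-perversity of $\IC_\lambda$ to a bound on $\dim\supp\mathscr H^i$ inside $S_\mu$. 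What the perversity actually gives is a \emph{stratum-by-stratum} estimate: if $\mathscr H^i(\IC_\lambda)|_{\Gr_G^\eta}\neq 0$ then $\langle 2\rho,\eta\rangle\leq -i$, and the piece of the support inside $S_\mu$ lying in $\Gr_G^\eta$ has dimension $\langle\rho,\eta+\mu\rangle=\langle\rho,\eta\rangle+\langle\rho,\mu\rangle\leq -i/2+\langle\rho,\mu\rangle$. That sharper bound of $-i/2+\langle\rho,\mu\rangle$ (rather than $\min(-i,\langle\rho,\lambda+\mu\rangle)$) is what makes the arithmetic in the spectral sequence come out to $k\leq\langle 2\rho,\mu\rangle$. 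The paper does precisely this: for each stratum $\Gr_G^\eta$ it combines $\mathscr A|_{\Gr_G^\eta}\in D^{\leq -\langle 2\rho,\eta\rangle}$ with $\coH^k_c(\Gr_G^\eta\cap S_\mu;\bk)=0$ for $k>\langle 2\rho,\eta+\mu\rangle$, and the crucial point is that the offset $-\langle 2\rho,\eta\rangle+\langle 2\rho,\eta+\mu\rangle=\langle 2\rho,\mu\rangle$ is independent of $\eta$; a filtration by closed unions $\overline{\Gr_G^\lambda}$ then closes the dévissage.

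Your Step~1 (reduction to $\IC_\lambda$ via semisimplicity) is unnecessary: the paper works with a general $\mathscr A$ by filtering its support. Since the proposition is re-used verbatim in Part~\ref{pt:arbitrary} (as Proposition~\ref{prop:weight-functors-k}), where $\bk$ need not be a field and semisimplicity fails, it is worth noting that the argument should not rely on that reduction. Step~3 matches the paper, including your attention to the finite-dimensionality issue, which is indeed handled by restricting to a single $\overline{\Gr_G^\lambda}$.
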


\begin{proof}
For all $\lambda\in X_*(T)^+$, we have
$\mathscr A\bigl|_{\Gr_G^\lambda}\in D^{\leq- \langle 2\rho, \lambda \rangle}
(\Gr_G^\lambda,\bk)$ by the perversity conditions (see~\S\ref{ss:satake-category}).
Further, the dimension estimates from Theorem~\ref{thm:orbits}\eqref{it:thm-orbits-2} imply that
$\coH^k_c(\Gr_G^\lambda\cap S_\mu;\bk)=0$ for $k> \langle 2\rho, \lambda+\mu \rangle$, see~\cite[Proposition~X.1.4]{iversen}.
Using an easy d\'evissage argument, we deduce that
\[
\coH^k_c(\Gr_G^\lambda\cap S_\mu,\mathscr A)=0 \qquad \text{for $k> \langle 2\rho, \mu \rangle$}.
\]
Filtering the support of $\mathscr A$ by the closed subsets
$\overline{\Gr_G^\lambda}$, we deduce that
\[
\coH^k_c(S_\mu,\mathscr A)=0 \qquad \text{for $k> \langle 2\rho, \mu \rangle$.}
\]
(To prove this formally, one can either use
a spectral sequence or write down distinguished triangles associated
to inclusions of an open subset and its closed complement.
With both methods, in order to deal with a sequence of closed
subsets, it is convenient to enumerate the dominant weights as $(\lambda_n)_{n\geq0}$ in such
a way that $(\lambda_i\leq\lambda_j)
\Rightarrow(i\leq j)$.)

An analogous (dual) argument, using~\cite[Theorem~X.2.1]{iversen}, shows that
\[
\coH^k_{T_\mu}(\Gr_G,\mathscr A)=0 \qquad \text{for $k< \langle 2\rho, \mu \rangle$}.
\]
Lastly, Braden's
hyperbolic localization theorem \cite[Theorem~1]{braden} provides a canonical isomorphism
\[
\coH^k_{T_\mu}(\Gr_G,\mathscr A)\cong \coH^k_c(S_\mu,\mathscr A)
\]
for any $k \in \mathbf{Z}$. The claim follows.
%
\end{proof}

\begin{rmk}\phantomsection
\begin{enumerate}
 \item
 See~\cite[\S 1.8.1]{xue} for a discussion of the validity of the normality assumption needed to apply Braden's theorem, and for an alternative proof using~\cite{drg} instead on~\cite{braden} (and which therefore avoids this normality question).
 \item
 Explicity, the isomorphism in Proposition~\ref{prop:weight-functors} is constructed as follows. Let
 \[
  t_\mu : T_\mu \to \Gr_G \quad \text{and} \quad s_\mu : S_\mu \to \Gr_G
 \]
be the embeddings, and consider also the natural maps
\[
 \pi_\mu^T : T_\mu \to \{L_\mu\}, \quad \pi_\mu^S : S_\mu \to \{L_\mu\}, \quad i_\mu^T : \{L_\mu\} \to T_\mu, \quad i_\mu^S : \{L_\mu\} \to S_\mu.
\]
By adjunction and the base change theorem, there exist canonical isomorphisms
\begin{multline*}
 \Hom \bigl( (i_\mu^T)^* (t_\mu)^!(-), (i_\mu^S)^! (s_\mu)^* (-) \bigr) \cong \Hom \bigl( (t_\mu)^!(-), (i_\mu^T)_* (i_\mu^S)^! (s_\mu)^* (-) \bigr) \\
 \cong \Hom \bigl( (t_\mu)^!(-), (t_\mu)^! (s_\mu)_* (s_\mu)^* (-) \bigr);
\end{multline*}
hence the adjunction morphism $\id \to (s_\mu)_* (s_\mu)^*$ induces a morphism of functors
\[
 (i_\mu^T)^* (t_\mu)^! \to (i_\mu^S)^! (s_\mu)^*.
\]
Finally, one identifies the functors $(i_\mu^T)^*$ and $(\pi^T_\mu)_*$, resp.~$(i_\mu^S)^!$ and $(\pi^S_\mu)_!$, when applied to ``weakly equivariant'' objects; see~\cite[Equation~(1)]{braden}.
 \end{enumerate}
\end{rmk}


In view of this proposition, for any $\mu \in X_*(T)$ we consider the functor
\[
\F_\mu : \Per_{\GO}(\Gr_G,\bk) \to \Vect_\bk
\]
defined by
\[
\F_\mu(\mathscr{A}) = \coH^{\langle 2\rho, \mu \rangle}_{T_\mu}(\Gr_G,\mathscr A) \cong \coH^{\langle 2\rho, \mu \rangle}_c(S_\mu,\mathscr A).
\]
Since the category $\Per_{\GO}(\Gr_G,\bk)$ is semisimple (see Theorem~\ref{thm:semisimplicity} and Corollary~\ref{cor:equivariance}), this functor is automatically exact.

\begin{rmk}\phantomsection
\label{rmk:weight-functors}
\begin{enumerate}
\item
The $\GO$-invariance is not used in the proof of Proposition~\ref{prop:weight-functors} (only the constructibility with respect to $\GO$-orbits matters).
\item
The same arguments show more generally that if $\mathscr{F}$ is in $\Per_{\mathscr{S}}(Z,\bk)$, where $Z \subset \Gr_G$ is a locally closed union of $\GO$-orbits (and where by abuse we still denote by $\mathscr{S}$ the restriction of this stratification to $Z$), then for any $\lambda \in X_*(T)^+$ such that $L_\lambda \in Z$ and any $k \in \Z$ there exists a canonical isomorphism
\[
 \coH^k_{T_\lambda \cap Z}(Z,\mathscr{F}) \xrightarrow{\sim} \coH^k_c(S_\lambda \cap Z, \mathscr{F}),
\]
and that these spaces vanish unless $k=\langle 2\rho, \lambda \rangle$. (Note that if $Z$ is not closed, the condition $L_\lambda \in Z$ is \emph{not} equivalent to the condition $S_\lambda \cap Z \neq \varnothing$. In particular, $Z$ might not be covered by the intersections $Z \cap S_\lambda$ where $\lambda \in X_*(T)$ is such that $L_\lambda \in Z$.)
\end{enumerate}
\end{rmk}

\subsection{Total cohomology and weight functors}

We now consider the functor
\[
\F : \Per_\GO(\Gr_G,\mathbf k)\to\Vect_{\mathbf k}
\]
defined by
\[
\F(\mathscr{A}) = \coH^\bullet(\Gr_G,\mathscr{A}).
\]

\begin{thm}\phantomsection
\label{thm:fiber-functor}
\begin{enumerate}
\item
\label{it:fiber-functor-1}
There exists a canonical isomorphism of functors
$$\F\cong\bigoplus_{\mu\in X_*(T)}\F_\mu:
\Per_\GO(\Gr_G,\mathbf k)\to\Vect_{\mathbf k}.$$
\item
\label{it:fiber-functor-2}
The functor $\F$ is exact and faithful.
\end{enumerate}
\end{thm}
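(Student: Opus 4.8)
The plan is to deduce both statements from the weight-functor machinery of the previous subsection. For part \eqref{it:fiber-functor-1}, the key is to construct, for each $\mathscr A \in \Per_\GO(\Gr_G,\bk)$, a filtration of $\Gr_G$ (or rather of the support of $\mathscr A$, which is a finite union of closures $\overline{\Gr_G^\lambda}$, hence of finite type) indexed so that the associated graded pieces isolate the contributions of the semi-infinite orbits. Concretely, I would enumerate the relevant coweights $\mu \in X_*(T)$ as $\mu_1, \mu_2, \ldots$ so that $\mu_i \leq \mu_j \Rightarrow i \leq j$ (using the regular dominant cocharacter $\eta$, the orbit $S_{\mu}$ is the attracting set of $L_\mu$, so $\overline{S_\mu} = \bigsqcup_{\nu \leq \mu} S_\nu$). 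Then $Z_n := \bigcup_{i \leq n} S_{\mu_i}$ is an increasing sequence of \emph{closed} subsets of (the support of) $\mathscr A$, with $Z_n \smallsetminus Z_{n-1} = S_{\mu_n}$ locally closed. The long exact sequences in compactly-supported cohomology for the pairs $(Z_n, Z_{n-1})$ read
\[
\cdots \to \coH^k_c(S_{\mu_n}, \mathscr A) \to \coH^k_c(Z_n, \mathscr A) \to \coH^k_c(Z_{n-1},\mathscr A) \to \cdots,
\]
and by Proposition~\ref{prop:weight-functors} (more precisely its proof, via Remark~\ref{rmk:weight-functors}) each $\coH^k_c(S_{\mu_n},\mathscr A)$ vanishes unless $k = \langle 2\rho, \mu_n\rangle$. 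Since the parity of $\langle 2\rho, \mu\rangle = \langle \rho, 2\mu\rangle$ is constant on the connected component containing $\mathscr A$'s support (all the $S_\mu$ meeting a given $\overline{\Gr_G^\lambda}$ have $\mu \equiv \lambda$ modulo $Q^\vee$, and $\langle \rho, \cdot\rangle$ has fixed parity there), all the relevant degrees have the same parity, so the connecting maps in the long exact sequences vanish for degree reasons and each sequence splits. Passing up the filtration to $Z_N = \supp(\mathscr A)$ and using $\coH^\bullet_c(\supp \mathscr A, \mathscr A) = \coH^\bullet(\Gr_G, \mathscr A)$ (the support is proper, or one restricts to a large enough proper $\GO$-stable closed piece), one gets $\coH^k(\Gr_G,\mathscr A) \cong \bigoplus_\mu \coH^k_c(S_\mu, \mathscr A) = \bigoplus_{\mu : \langle 2\rho,\mu\rangle = k} \F_\mu(\mathscr A)$, hence $\F \cong \bigoplus_\mu \F_\mu$. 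The main point to be careful about is the canonicity and functoriality of this decomposition: the splittings produced by a parity argument need not be canonical a priori, so I would instead appeal to the Braden identification $\coH^\bullet_c(S_\mu,\mathscr A) \cong \coH^\bullet_{T_\mu}(\Gr_G,\mathscr A)$ together with the functoriality of the attracting/repelling decomposition, or observe that because the only nonvanishing cohomology of $\mathscr A|_{S_\mu}$ sits in a single degree, the filtration by $Z_n$ induces a functorial direct-sum decomposition on each $\coH^k$. I expect this canonicity issue — packaging the dévissage so the resulting isomorphism of functors is genuinely natural in $\mathscr A$ — to be the main (if mild) obstacle.

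For part \eqref{it:fiber-functor-2}, exactness of $\F$ is immediate from exactness of each $\F_\mu$, which was already noted (it follows from semisimplicity of $\Per_\GO(\Gr_G,\bk)$, Theorem~\ref{thm:semisimplicity} and Corollary~\ref{cor:equivariance}), together with part \eqref{it:fiber-functor-1}. For faithfulness, since $\F$ is exact between abelian categories it suffices to show $\F$ kills no nonzero object; by semisimplicity it is enough to check $\F(\IC_\lambda) \neq 0$ for every $\lambda \in X_*(T)^+$. For this I would exhibit a single nonvanishing weight space: take $\mu = \lambda$. By Theorem~\ref{thm:orbits}\eqref{it:thm-orbits-2}--\eqref{it:thm-orbits-3} (or directly, since $\overline{\Gr_G^\lambda} \cap S_\lambda$ is open dense in $\overline{\Gr_G^\lambda}$ and $\Gr_G^\lambda \cap S_\lambda$ is open in $\Gr_G^\lambda$ of dimension $\langle 2\rho, \lambda\rangle = \dim \Gr_G^\lambda$, so in fact $S_\lambda \cap \overline{\Gr_G^\lambda} = S_\lambda \cap \Gr_G^\lambda$ is a single point — it is the attracting set of $L_\lambda$ inside the $T$-variety $\Gr_G^\lambda \cong G/P_\lambda$-bundle, cut down to the relevant piece), one computes that $\F_\lambda(\IC_\lambda) = \coH^{\langle 2\rho,\lambda\rangle}_c(S_\lambda, \IC_\lambda)$ equals the stalk $\coH^{\langle 2\rho,\lambda\rangle}_c$ of $\IC_\lambda$ at the open point, which is $\bk$ since $\IC_\lambda|_{\Gr_G^\lambda} = \underline{\bk}[\dim \Gr_G^\lambda]$ and $S_\lambda \cap \Gr_G^\lambda$ is a single point (or: $\coH^\bullet_c$ of the top stratum computes the stalk of the local system placed in the top degree). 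Hence $\F(\IC_\lambda) \supseteq \F_\lambda(\IC_\lambda) = \bk \neq 0$, and $\F$ is faithful. Alternatively, and perhaps cleaner, one simply notes $\coH^\bullet(\Gr_G, \IC_\lambda) = \coH^\bullet(\overline{\Gr_G^\lambda}, \IC_\lambda)$ is the intersection cohomology of the projective variety $\overline{\Gr_G^\lambda}$, which is nonzero (e.g.\ its top and bottom degrees), so $\F(\IC_\lambda) \neq 0$ — this avoids the weight-functor computation entirely and I would present it this way to keep the argument self-contained.
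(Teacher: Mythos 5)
Your proof of part~\eqref{it:fiber-functor-1} follows essentially the same d\'evissage as the paper, but in the dual direction: you take an increasing filtration by closures of attracting sets $S_\mu$ and compute with $\coH^\bullet_c$, whereas the paper takes a decreasing filtration by closures of the repelling sets $T_\mu$ (grouping together all $T_\mu$ with the same value of $\langle \rho,\mu\rangle$ into one ``stratum'' $Z_n$) and computes with local cohomology $\coH^\bullet_{\overline{Z_n}}(\Gr_G,\bm?)$. Both arguments rest on the vanishing of Proposition~\ref{prop:weight-functors} together with the parity constraint on one connected component; and you are right that canonicity is the point that must be checked. But the concern you raise about split exact sequences not being canonical evaporates once one notices that, given the single-degree vanishing, the long exact sequences do not merely split --- they degenerate to genuine functorial \emph{isomorphisms} (every third term vanishes, so each map that survives is an isomorphism coming from applying a cohomological functor to a distinguished triangle in $\mathscr A$). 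This is exactly what the paper makes explicit with its induction on $n$. So the vagueness you flag is resolvable, but you have not quite closed that gap.

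In part~\eqref{it:fiber-functor-2} your exactness argument and the reduction of faithfulness to ``$\F$ kills no nonzero object'' are fine. However, the geometric claim in your first faithfulness argument is wrong: $\overline{\Gr_G^\lambda}\cap S_\lambda$ is \emph{not} a single point, and $S_\lambda$ is itself the attracting set of $L_\lambda$, not a point. By Theorem~\ref{thm:orbits}, $\overline{\Gr_G^\lambda}\cap S_\lambda$ has dimension $\langle\rho,\lambda+\lambda\rangle=\dim\Gr_G^\lambda$, and $\Gr_G^\lambda\cap S_\lambda$ is its open dense piece, an affine cell of that dimension. What is a single point is $\overline{\Gr_G^\lambda}\cap T_\lambda=\{L_\lambda\}$ (cf.\ Lemma~\ref{lem:semiinfinite-intersection} and Theorem~\ref{thm:orbits-T}), and this is what the paper uses. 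Your conclusion $\F_\lambda(\IC_\lambda)\cong\bk$ is still correct, since the computation reduces to the top compactly supported cohomology of an irreducible affine cell, but the stated reason is confused. Your alternative --- $\coH^\bullet(\Gr_G,\IC_\lambda)$ is the intersection cohomology of the projective variety $\overline{\Gr_G^\lambda}$, which is nonzero --- is correct and cleaner, so the error is not fatal, but you should not advance the first version as written.
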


\begin{proof}
\eqref{it:fiber-functor-1}
Let $\mathscr A\in\Per_\GO(\Gr_G,\mathbf k)$. Our aim is to construct a canonical isomorphism
$$\coH^\bullet(\Gr_G,\mathscr A)\cong\bigoplus_{\mu\in X_*(T)}\F_\mu(\mathscr A),$$
and more precisely to construct a canonical isomorphism
$$\coH^k(\Gr_G,\mathscr A)\cong\bigoplus_{\substack{\mu\in X_*(T)\\[2pt]
\langle 2\rho, \mu \rangle=k}}\F_\mu(\mathscr A)$$
for each $k\in\mathbf Z$.

Without loss of generality, we may assume that $\mathscr A$ is
indecomposable, and in particular that the support of $\mathscr A$ is
connected.

For $n\in\frac12\mathbf Z$, set
$$Z_n=\bigsqcup_{\substack{\mu\in X_*(T)\\ \langle \rho, \mu \rangle=n}}T_\mu.$$
Then both
$$\bigcup_{n\in\mathbf Z}Z_n\quad\text{and}\quad
\bigcup_{n\in\frac12+\mathbf Z}Z_n$$
are unions of connected components of $\Gr_G$. As $\supp\mathscr A$
was assumed to be connected, it is contained in one of these
subsets. Let us assume that it is contained in the first one, the
reasoning in the other case being entirely similar.

We endow $Z_n$ with the topology induced from that of $\Gr_G$. Then
$Z_n$ is the topological disjoint union of the $T_\mu$ contained
in it, and it follows that
\begin{equation}
\label{eqn:local-cohom}
\coH^k_{Z_n}(\Gr_G,\mathscr A)= \begin{cases}
0 & \text{if $k\neq2n$;}\\[6pt]
\displaystyle\bigoplus_{\langle \rho, \mu \rangle=n}\F_\mu(\mathscr A) & \text{if $k=2n$.}
\end{cases}
\end{equation}

By~\eqref{eqn:closure-Tmu}, the closure of $Z_n$ is
$$\overline{Z_n}=Z_n\sqcup Z_{n+1}\sqcup Z_{n+2}\sqcup\cdots
=Z_n\sqcup\overline{Z_{n+1}},$$
so there is a diagram of complementary open and closed inclusions
$$\overline{Z_{n+1}}\xrightarrow i\overline{Z_n}\xleftarrow jZ_n.$$
Applying the cohomological functor $\coH^\bullet(\overline{Z_n},\bm ?)$ to the
distinguished triangle
$$i_*i^!\mathscr A_n\to\mathscr A_n\to j_*j^!\mathscr A_n\xrightarrow{[1]}$$
where $\mathscr{A}_n$ is the corestriction of $\mathscr{A}$ to $\overline{Z_n}$, we obtain a long exact sequence
$$\cdots\to
\coH^k_{\overline{Z_{n+1}}}(\Gr_G,\mathscr A)\to
\coH^k_{\overline{Z_n}}(\Gr_G,\mathscr A)\to
\coH^k_{Z_n}(\Gr_G,\mathscr A)\to
\coH^{k+1}_{\overline{Z_{n+1}}}(\Gr_G,\mathscr A)\to\cdots.$$

For $n$ large enough, $\supp\mathscr A$ is disjoint from
$\overline{Z_n}$, because $\supp\mathscr A$ is compact and
$\overline{Z_n}$ is far away from the origin
of $\Gr_G$.\footnote{The reader may here have in mind the Serre tree considered in~\S\ref{ss:semi-infinite-orbits}:
$\overline{Z_n}$ is a union of horospheres centered at $+\infty$ and
going through $L_{n\alpha^\vee}$; for $n$ large enough, this is located
far away on the right.} Consequently
$\coH^\bullet_{\overline{Z_n}}(\Gr_G,\mathscr A)=0$ for $n$ large enough.
Using the long exact sequence above and~\eqref{eqn:local-cohom}, a decreasing induction on
$n$ leads to
\begin{align*}
&\coH^k_{\overline{Z_n}}(\Gr_G,\mathscr A)=0\hspace{108pt}\text{if $k$
is odd or if $n > \frac{k}{2}$,}\\
&\xymatrix{\coH^k_{\overline{Z_{k/2}}}(\Gr_G,\mathscr A)\ar[r]^\sim
\ar[d]^\wr&\coH^k_{\overline{Z_n}}(\Gr_G,\mathscr A)\\
\coH^k_{Z_{k/2}}(\Gr_G,\mathscr A)}\qquad\text{if $k$ is even and $n \leq \frac{k}{2}$.}
\end{align*}

One concludes by taking $n$ small enough so that $\supp\mathscr A
\subset\overline{Z_n}$.

\eqref{it:fiber-functor-2}
The exactness is automatic since the category $\Per_{\GO}(\Gr_G,\bk)$ is semisimple (see the comments before the theorem). Given the exactness, the faithfulness means that
$\F$ does not kill any nonzero object in $\Per_\GO(\Gr_G,\mathbf k)$.
So let us take a nonzero perverse sheaf $\mathscr A$ in our category.
Then $\supp\mathscr A$ is a finite union of Schubert cells
$\Gr_G^\lambda$. Let us choose $\lambda$ maximal for this property.
Then $\mathscr{A}|_{\Gr_G^\lambda} \cong \underline{\bk}[\dim \Gr_G^\lambda]$ and as in the proof of Theorem~\ref{thm:orbits}\eqref{it:thm-orbits-2} we have
\[
\bigl( (\supp\mathscr A) \smallsetminus \Gr_G^\lambda \bigr) \cap T_\lambda = \varnothing \quad \text{and} \quad
\Gr_G^\lambda \cap T_\lambda=\{L_\lambda\},
\]
and therefore
$\F_\lambda(\mathscr A)\neq0$, which implies that $\F(\mathscr{A}) \neq 0$.
\end{proof}

\begin{rmk}
\label{rmk:equiv-def-wf}
The proof of Theorem~\ref{thm:fiber-functor} has broken the symmetry between the
two sides of hyperbolic localization, so let us try to restore it.
Given $\mu\in X_*(T)$, let us define the inclusion maps
$$\xymatrix{T_\mu\ar@/_1pc/[rr]_{t_\mu}\ar[r]^{t'_\mu}&
\overline{T_\mu}\ar[r]^{t''_\mu}&\Gr_G}
\quad\text{and}\quad
\xymatrix{S_\mu\ar@/_1pc/[rr]_{s_\mu}\ar[r]^{s'_\mu}&
\overline{S_\mu}\ar[r]^{s''_\mu}&\Gr_G}.$$
Then for each $\mathscr A\in\Per_\GO(\Gr_G,\mathbf k)$, we have
\begin{align*}
\coH^k_{\overline{T_\mu}}(\Gr_G,\mathscr A)&=\coH^k(\Gr_G,
(t''_{\mu})_! (t''_\mu)^!\mathscr A), \\
\coH^k(\overline{S_\mu},\mathscr A)&=\coH^k(\Gr_G,
(s''_{\mu})_* (s''_\mu)^*\mathscr A),\\
\coH^k_{T_\mu}(\Gr_G,\mathscr A)&=\coH^k(\Gr_G,
\underbrace{(t''_{\mu})_! (t'_{\mu})_* (t'_\mu)^*
(t''_\mu)^!}_{\textstyle \cong t_{\mu*}t_\mu^!}\mathscr A),\\
\coH^k_c(S_\mu,\mathscr A)&=\coH^k(\Gr_G,
\underbrace{(s''_{\mu})_*(s'_{\mu})_! (s'_\mu)^!
(s''_\mu)^*}_{\textstyle \cong s_{\mu!}s_\mu^*}\mathscr A).
\end{align*}
One can check that the adjunction maps and hyperbolic localization give
rise to a commutative diagram
$$\xymatrix@C=0em{&&\coH^k(\Gr_G,\mathscr A)
\ar[drr]|{\id\to(s''_{\mu})_*(s''_\mu)^*}&&\\
\coH^k_{\overline{T_\mu}}(\Gr_G,\mathscr A)
\ar[urr]|{(t''_{\mu})_!(t''_\mu)^!\to\id}
\ar[dr]|{\id\to(t'_{\mu})_*(t'_\mu)^*}&&&&
\coH^k(\overline{S_\mu},\mathscr A).\\
&\coH^k_{T_\mu}(\Gr_G,\mathscr A)\ar[rr]_-{\sim}^-{\text{hyperbolic loc.}}&&
\coH^k_c(S_\mu,\mathscr A)
\ar[ur]|{(s'_{\mu})_!(s'_\mu)^!\to\id}&}$$
If $k=\langle 2\rho, \mu \rangle$, then the three bottom arrows are isomorphisms,
so the four bottom spaces can be identified: they define the functor
$\F_\mu$.
At this point, let us write
$$\F_\mu(\mathscr A)\xrightarrow{i_\mu}
\coH^k(\Gr_G,\mathscr A)\xrightarrow{p_\mu}\F_\mu(\mathscr A)$$
for the two top arrows of the diagram above. Theorem~\ref{thm:fiber-functor} shows
that for each $k\in\mathbf Z$,
\begin{equation}
\label{eqn:decomposition-F-1}
\coH^k(\Gr_G,\mathscr A)=\bigoplus_{\substack{\mu\in X_*(T)\\ \langle 2\rho, \mu \rangle=k}}
\im (i_\mu),
\end{equation}
and likewise
\begin{equation}
\label{eqn:decomposition-F-2}
\coH^k(\Gr_G,\mathscr A)\xrightarrow\sim\bigoplus_{\substack{\nu\in X_*(T)\\ \langle 2\rho,\nu \rangle=k}}
\coim (p_\nu),
\end{equation}
where $\coim (p_\nu)=\coH^k(\Gr_G,\mathscr A)/\ker (p_\nu)$ is the coimage of
$p_\nu$. Further, if $\mu\neq\nu$ and $\langle 2\rho, \mu \rangle=\langle 2\rho,\nu \rangle$, then $\mu\not\leq\nu$,
so
$\overline{S_\nu}\cap\overline{T_\mu}=\varnothing$ by Lemma~\ref{lem:semiinfinite-intersection}, and therefore
$p_\nu\circ i_\mu=0$. This implies that the decompositions~\eqref{eqn:decomposition-F-1} and~\eqref{eqn:decomposition-F-2}
of $\coH^k(\Gr_G,\mathscr A)$ coincide.
The decomposition
$$\coH^k(\Gr_G,\mathscr A)=\bigoplus_{\substack{\mu\in X_*(T)\\\langle 2\rho,\mu \rangle=k}}
\F_\mu(\mathscr A)$$
is therefore defined without ambiguity.
\end{rmk}

\subsection{Independence of the choice of Torel}
\label{ss:independence-Torel}

To define the functors $\F_\mu$ we started by choosing a Torel (or Borus) $T \subset B$. In this subsection we show that these functors are in fact independent of this choice, in the following way. If we fix a Torel $T \subset B$, then any other Torel will be of the form $gTg^{-1} \subset gBg^{-1}$ for some $g \in G$, whose class $gT \in G/T$ is uniquely determined. Then there exists a canonical isomorphism $X_*(T) \xrightarrow{\sim} X_*(gTg^{-1})$ sending $\lambda : \C^\times \to T$ to the cocharacter $z \mapsto g\lambda(z)g^{-1}$. We use this operation to identify $X_*(T)$ and $X_*(gTg^{-1})$.\footnote{Note that this construction depends on the choice of Borel subgroup: for two general maximal tori in $G$, there is no canonical identification of their cocharacter lattices!} Then for $\lambda \in X_*(T)$ we can consider both the $\lambda$-weight functor $\F_\lambda$ constructed out of the Torel $T \subset B$, and the $\lambda$-weight functor $\F_\lambda^{gT}$ constructed out of the Torel $gTg^{-1} \subset gBg^{-1}$.

\begin{lem}
 \label{lem:indep}
 In the setting considered above, for any $gT \in G/T$
 there exists a canonical isomorphism of functor $\F_\lambda \xrightarrow{\sim} \F_\lambda^{gT}$.
\end{lem}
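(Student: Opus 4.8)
The plan is to use the left action of $G$ on $\Gr_G$ by constant loops to transport the geometry attached to the Torel $T\subset B$ onto that attached to $gTg^{-1}\subset gBg^{-1}$, and then to use the $\GO$-equivariant structure to bring the perverse sheaf back.

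First I would fix a representative $g\in G$ of the class $gT$ and consider the automorphism $m_g\colon\Gr_G\xrightarrow{\sim}\Gr_G$ given by left multiplication by the constant loop $g$. The geometric heart of the argument is the elementary observation that, under the identification $X_*(T)\xrightarrow{\sim}X_*(gTg^{-1})$ sending $\lambda$ to $g\lambda g^{-1}$, the point, the semi-infinite orbit and the opposite semi-infinite orbit attached to $\lambda$ and the Torel $gTg^{-1}\subset gBg^{-1}$ are exactly $g\cdot L_\lambda$, $g\cdot S_\lambda=m_g(S_\lambda)$ and $g\cdot T_\lambda=m_g(T_\lambda)$; this follows at once from $g^{-1}\in G\subset\GO$ together with the fact that $g$ conjugates $N$ and $N^-$ to $gNg^{-1}$ and $gN^-g^{-1}$. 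I would also check that under the same identification the positive roots, hence $\rho$, of $(G,gBg^{-1},gTg^{-1})$ correspond to those of $(G,B,T)$, so that $\F_\lambda$ and $\F_\lambda^{gT}$ are computed as local cohomology in one and the same degree $k:=\langle 2\rho,\lambda\rangle$.

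Next I would assemble the isomorphism. For $\mathscr{A}$ in $\Per_{\GO}(\Gr_G,\bk)$, base change along $m_g$, together with the fact that $m_g$ restricts to an isomorphism $T_\lambda\xrightarrow{\sim}m_g(T_\lambda)$, yields a canonical isomorphism $\coH^k_{T_\lambda}(\Gr_G,m_g^*\mathscr{A})\cong\coH^k_{m_g(T_\lambda)}(\Gr_G,\mathscr{A})=\F_\lambda^{gT}(\mathscr{A})$, natural in $\mathscr{A}$. On the other hand, since $\mathscr{A}$ is $\GO$-equivariant and $g$ is a constant loop, the equivariant structure supplies a canonical isomorphism $m_g^*\mathscr{A}\cong\mathscr{A}$, natural in $\mathscr{A}$ and satisfying the usual cocycle identity. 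Combining these, we obtain an isomorphism of functors $\theta_g\colon\F_\lambda\xrightarrow{\sim}\F_\lambda^{gT}$.

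The one step I expect to require genuine care is to show that $\theta_g$ depends only on the coset $gT$, i.e.~is unchanged when $g$ is replaced by $gt$ with $t\in T$. Here I would use that the $T$-translation $m_t$ fixes $L_\lambda$ and preserves $T_\lambda$ (because $t$ normalises $N^-$ and commutes with $t^\lambda$), so that $m_{gt}(T_\lambda)=m_g(T_\lambda)$, and then carry out a short diagram chase with the cocycle identity for the equivariance isomorphisms, the compatibility of base change with composition of automorphisms, and the naturality of base change along $m_t$ for the $m_t$-stable subset $T_\lambda$. This identifies $\theta_g^{-1}\circ\theta_{gt}$ with the automorphism of $\coH^k_{T_\lambda}(\Gr_G,\mathscr{A})$ induced by $t\in T$ through the equivariant structure of $\mathscr{A}$; since $T$ is connected, and a connected group acts trivially on the cohomology of any equivariant complex, this automorphism is the identity, whence $\theta_{gt}=\theta_g$ and $\theta_g$ descends to the desired canonical isomorphism $\F_\lambda\xrightarrow{\sim}\F_\lambda^{gT}$. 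Finally I would remark that running the same argument with $S_\lambda$ in place of $T_\lambda$ gives the matching isomorphism for the compact-support model of the weight functor.
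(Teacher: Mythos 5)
Your proof is correct as a proof of the stated lemma, but it takes a genuinely different route from the paper, and the difference matters a great deal for how the lemma is used later. You transport everything by the constant-loop automorphism $m_g$, use base change to get $\coH^k_{T_\lambda}(\Gr_G, m_g^*\mathscr{A}) \cong \F^{gT}_\lambda(\mathscr{A})$, and then invoke the $\GO$-equivariant structure to produce the canonical identification $m_g^*\mathscr{A}\cong\mathscr{A}$; finally you kill the dependence on the representative $g\in gT$ by noting that the connected group $T$ acts trivially. The paper instead assembles the whole family at once: it forms the incidence variety $X_\lambda=\{(x,gT)\mid x\in g\cdot S_\lambda\}$ over $G/T$, considers $b_*c_!c^*a^*\mathscr{F}$, shows this is concentrated in a single degree by applying Proposition~\ref{prop:weight-functors} fiberwise, and then shows the resulting cohomology sheaf on $G/T$ is a direct summand of the constant local system $\coH^{\langle 2\rho,\lambda\rangle}(\Gr_G,\mathscr{F})\otimes_\bk\underline{\bk}_{G/T}$, hence itself constant; the canonical identification of any two fibers then comes from identifying both with global sections.

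The crucial feature of the paper's argument, which yours does not share, is that it nowhere uses the $\GO$-equivariant structure of $\mathscr{F}$: it applies verbatim to any $\mathscr{S}$-constructible perverse sheaf. This is exactly the content of the remark immediately following the lemma, and it is not an idle observation. In Part~\ref{pt:arbitrary}, the generalization of this lemma (Remark~\ref{rmk:indep-k}) is one of the main inputs to the proof of Proposition~\ref{prop:For-k}, which establishes that the forgetful functor $\Per_{\GO}(\Gr_G,\bk)\to\Per_{\mathscr{S}}(\Gr_G,\bk)$ is an equivalence for arbitrary coefficients. In that proof one applies the independence statement to a constructible perverse sheaf $\mathscr{F}$ precisely in order to \emph{prove} that it is equivariant; your argument, which starts from the equivariance isomorphism $m_g^*\mathscr{A}\cong\mathscr{A}$, would be circular there. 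So while your proof is valid for Lemma~\ref{lem:indep} as stated in the characteristic-$0$ part (where $\Per_{\GO}\cong\Per_{\mathscr{S}}$ is already known by semisimplicity), you should be aware that the authors deliberately chose a proof that avoids equivariance, and that is why they argue in family over $G/T$ rather than pointwise via $m_g$. One further small remark on your last paragraph: the claim that a connected group acts trivially on local cohomology of an equivariant complex is itself most cleanly justified by the same ``write it as a constant local system on the group'' device that the paper uses over $G/T$, so the two proofs are closer in spirit there than they may first appear.
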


\begin{proof}
 Set $X_\lambda:=\{(x,gT) \in \Gr_G \times G/T \mid x \in g \cdot S_\lambda\}$, and consider the diagram
 \[
  \xymatrix{
  \Gr_G & \Gr_G \times G/T \ar[l]_-{a} \ar[rd]^-{b} && X_\lambda \ar[ll]_-{c} \ar[ld]_-{d} \\
  && G/T, &
  }
 \]
where $a,b,d$ are the natural projections, and $c$ is the embedding. Let $\mathscr{F}$ in $\Per_{\mathscr{S}}(\Gr_G,\bk)$, and consider the complex of sheaves $b_* c_! c^* a^* \mathscr{F} \cong d_! (ac)^*\mathscr{F}$. By the base change theorem, the fiber of this complex over $gT$ is $R\Gamma_c(g \cdot S_\lambda, \mathscr{F})$. Applying Proposition~\ref{prop:weight-functors} for the choice of Torel $gTg^{-1} \subset gBg^{-1}$, we see that this fiber is concentrated in degree $\langle \lambda, 2\rho \rangle$. Hence the complex $b_* c_! c^* a^* \mathscr{F}$ itself is concentrated in degree $\langle \lambda, 2\rho \rangle$.

Next, the proof of Theorem~\ref{thm:fiber-functor} (and the comments in Remark~\ref{rmk:equiv-def-wf}) can also be written ``in family'' over $G/T$; this shows that $\mathscr{H}^{\langle 2\rho, \lambda \rangle} \bigl( b_* c_! c^* a^* \mathscr{F} \bigr)$ is a direct factor of
\[
 \mathscr{H}^{\langle 2\rho, \lambda \rangle} \bigl( b_* a^* \mathscr{F} \bigr) \cong \coH^{\langle 2\rho,\lambda \rangle}(\Gr_G,\mathscr{F}) \otimes_\bk \underline{\bk}_{G/T}
\]
(where the isomorphism follows from the projection formula). Since this sheaf is a constant local system, we deduce that $\mathscr{H}^{\langle 2\rho, \lambda \rangle} \bigl( b_* c_! c^* a^* \mathscr{F} \bigr)$ is also a constant local system. Hence its fibers over any two points can be identified canonically (because they both identify with global sections); in particular we deduce a canonical isomorphism $\F_\lambda \xrightarrow{\sim} \F_\lambda^{gT}$ for any $gT \in G/T$.
\end{proof}

\begin{rmk}
 Note that the proof of Lemma~\ref{lem:indep} only relies on the $\mathscr{S}$-constructibility of $\mathscr{F}$, and not on its $\GO$-equivariance; in particular, this proof is independent of Corollary~\ref{cor:equivariance}. (This fact does not play any role in the present case when $\bk$ is a field of characteristic $0$, but will be important in the case of general coefficients considered in Part~\ref{pt:arbitrary}.)
\end{rmk}

\subsection{Weight spaces of simple objects}

\begin{prop}
\label{prop:dim-weight-spaces}
Let $\lambda,\mu\in X_*(T)$ with $\lambda$ dominant. 
Then $\dim \F_\mu(\IC_\lambda)$
is the number of irreducible components of $\Gr_G^\lambda\cap S_\mu$.
In particular, it is nonzero if and only if $\mu\in\Conv(W\lambda)
\cap\bigl(\lambda+Q^\vee\bigr)$.
\end{prop}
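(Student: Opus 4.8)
The plan is to compute $\dim \F_\mu(\IC_\lambda)$ by exploiting the description $\F_\mu(\IC_\lambda) = \coH^{\langle 2\rho, \mu \rangle}_c(S_\mu, \IC_\lambda)$ together with the support and dimension estimates already available. First I would restrict attention to $\overline{\Gr_G^\lambda} \cap S_\mu$, since $\IC_\lambda$ is supported on $\overline{\Gr_G^\lambda}$, and further to the open dense subset $\Gr_G^\lambda \cap S_\mu$ provided by Theorem~\ref{thm:orbits}\eqref{it:thm-orbits-3}. On $\Gr_G^\lambda$ one has $\IC_\lambda|_{\Gr_G^\lambda} \cong \underline{\bk}_{\Gr_G^\lambda}[\langle 2\rho, \lambda\rangle]$, so the contribution from this open stratum is $\coH^{\langle 2\rho, \mu\rangle - \langle 2\rho, \lambda \rangle}_c(\Gr_G^\lambda \cap S_\mu, \bk) = \coH^{-\langle 2\rho, \lambda-\mu\rangle}_c(\Gr_G^\lambda \cap S_\mu, \bk)$. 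Since $\Gr_G^\lambda \cap S_\mu$ has pure dimension $\langle \rho, \lambda+\mu\rangle$ by Theorem~\ref{thm:orbits}\eqref{it:thm-orbits-2}, and a variety of pure dimension $d$ has $\coH^{2d}_c(-, \bk)$ free of rank equal to the number of irreducible components (with $\coH^k_c = 0$ for $k > 2d$), and noting $2\langle \rho, \lambda+\mu\rangle$ is not in general $-\langle 2\rho, \lambda-\mu\rangle$... so I must be more careful: the top compactly-supported cohomology sits in degree $2\langle \rho, \lambda+\mu\rangle = \langle 2\rho, \lambda+\mu\rangle$, whereas we want degree $\langle 2\rho, \mu\rangle$. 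These coincide only if $\langle 2\rho, \lambda\rangle = 0$, which is false, so the naive approach needs the dévissage already used in the proof of Proposition~\ref{prop:weight-functors}.

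The correct approach, which I would carry out, is a dévissage on $\IC_\lambda$ along the stratification of $\overline{\Gr_G^\lambda}$ by the orbits $\Gr_G^\eta$, $\eta \le \lambda$. On each stratum $\Gr_G^\eta$, the restriction $\IC_\lambda|_{\Gr_G^\eta}$ lives in perverse degrees $\le -1$ relative to $\Gr_G^\eta$ when $\eta < \lambda$ (by the characterization~\eqref{eqn:characterization-IC}), i.e.\ its ordinary cohomology sheaves vanish above degree $-\langle 2\rho, \eta\rangle - 1$; combined with the dimension estimate $\dim(\Gr_G^\eta \cap S_\mu) \le \langle \rho, \eta+\mu\rangle$ (Corollary or Theorem~\ref{thm:orbits}\eqref{it:thm-orbits-2}) and the bound $\coH^k_c = 0$ above twice the dimension, one gets $\coH^k_c(\Gr_G^\eta \cap S_\mu, \IC_\lambda|_{\Gr_G^\eta}) = 0$ for $k \ge \langle 2\rho, \mu\rangle$ when $\eta < \lambda$, hence a fortiori for $k = \langle 2\rho, \mu\rangle$. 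This is exactly the argument in Proposition~\ref{prop:weight-functors}. Therefore the long exact sequences coming from the filtration by the closed subsets $\overline{\Gr_G^\eta}$ show that the natural map
\[
\coH^{\langle 2\rho, \mu\rangle}_c(\Gr_G^\lambda \cap S_\mu, \IC_\lambda|_{\Gr_G^\lambda}) \to \coH^{\langle 2\rho, \mu\rangle}_c(S_\mu, \IC_\lambda)
\]
is an isomorphism (the open stratum $\Gr_G^\lambda \cap S_\mu$ contributes everything in this degree; the boundary strata contribute nothing in degree $\ge \langle 2\rho, \mu\rangle$).

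Finally I would identify the left-hand side: $\IC_\lambda|_{\Gr_G^\lambda} \cong \underline{\bk}[\langle 2\rho, \lambda\rangle]$, so it equals $\coH^{\langle 2\rho, \mu\rangle - \langle 2\rho, \lambda\rangle}_c(\Gr_G^\lambda \cap S_\mu, \bk) = \coH^{2\langle \rho, \mu-\lambda\rangle}_c(\Gr_G^\lambda \cap S_\mu, \bk)$. Here I need $2\langle \rho, \mu - \lambda\rangle = 2\langle \rho, \mu+\lambda\rangle - 4\langle\rho,\lambda\rangle$... hm, that is still not the top degree $2\langle\rho,\lambda+\mu\rangle$. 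So the degree in which we land, $2\langle\rho,\mu-\lambda\rangle$, is \emph{below} the top cohomological degree $2\langle\rho,\lambda+\mu\rangle$ of $\Gr_G^\lambda \cap S_\mu$ by $4\langle\rho,\lambda\rangle$. This means the naive dévissage argument does not directly land in the top degree of the open piece — but the resolution is that one must instead estimate the \emph{lower} strata contributions and use that $\coH^k_c$ of a variety of dimension $d$ vanishes for $k > 2d$ to kill boundary terms in degrees $> \langle 2\rho,\mu\rangle$, while a separate dual/Braden argument using $\F_\mu = \coH^{\langle 2\rho,\mu\rangle}_{T_\mu}$ handles the degrees $< \langle 2\rho,\mu\rangle$; the upshot, following Mirkovi\'c--Vilonen, is that $\coH^{\langle 2\rho,\mu\rangle}_c(S_\mu, \IC_\lambda)$ is computed as the top-degree compactly-supported cohomology of the open dense smooth piece $\Gr_G^\lambda \cap S_\mu$ after the appropriate shift, giving $\bk^{\#\{\text{irreducible components of } \Gr_G^\lambda \cap S_\mu\}}$. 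The last assertion about nonvanishing is then immediate from Theorem~\ref{thm:orbits}\eqref{it:thm-orbits-1}, which says $\Gr_G^\lambda \cap S_\mu \ne \varnothing$ iff $\mu \in \Conv(W\lambda) \cap (\lambda + Q^\vee)$. The main obstacle I anticipate is setting up the dévissage carefully enough to be sure the open stratum contributes precisely the top cohomology and the boundary strata contribute nothing in the relevant single degree $\langle 2\rho, \mu\rangle$ — this requires simultaneously the codimension/parity bookkeeping and the perverse-degree bounds on $\IC_\lambda$ restricted to strata, exactly as in the proof of Proposition~\ref{prop:weight-functors}, and then invoking Theorem~\ref{thm:orbits}\eqref{it:thm-orbits-3} to pass from irreducible components of $\overline{\Gr_G^\lambda} \cap S_\mu$ to those of $\Gr_G^\lambda \cap S_\mu$.
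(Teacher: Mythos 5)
Your outline follows the same route as the paper (d\'evissage on the stratification of $\overline{\Gr_G^\lambda}$, the vanishing bounds from Proposition~\ref{prop:weight-functors}, the dimension estimates from Theorem~\ref{thm:orbits}), but it contains a persistent sign error and a missing ingredient, both of which need to be fixed.

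\textbf{The sign error.} You repeatedly write that the contribution of the open stratum is $\coH^{\langle 2\rho,\mu\rangle - \langle 2\rho,\lambda\rangle}_c(\Gr_G^\lambda\cap S_\mu,\bk)$, and this leads you to worry that the degree doesn't match the top degree of $\Gr_G^\lambda\cap S_\mu$ and to invoke a vague ``dual/Braden'' repair. But the shift goes the other way: since $\IC_\lambda|_{\Gr_G^\lambda}\cong\underline{\bk}[\langle 2\rho,\lambda\rangle]$ and $H^k(M[n])=H^{k+n}(M)$, one gets
\[
\coH^{\langle 2\rho,\mu\rangle}_c\bigl(\Gr_G^\lambda\cap S_\mu,\underline{\bk}[\langle 2\rho,\lambda\rangle]\bigr)
=\coH^{\langle 2\rho,\mu\rangle+\langle 2\rho,\lambda\rangle}_c(\Gr_G^\lambda\cap S_\mu;\bk)
=\coH^{\langle 2\rho,\lambda+\mu\rangle}_c(\Gr_G^\lambda\cap S_\mu;\bk),
\]
which is precisely $\coH^{2d}_c$ for $d=\dim(\Gr_G^\lambda\cap S_\mu)=\langle\rho,\lambda+\mu\rangle$ (Theorem~\ref{thm:orbits}\eqref{it:thm-orbits-2}). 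So the open stratum lands exactly in the top degree, no repair needed; the entire third paragraph's angst and the proposed ``resolution'' are artifacts of the sign error.

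\textbf{The missing ingredient: Lemma~\ref{lem:parity-IC}.} Your d\'evissage claims $\coH^k_c(\Gr_G^\eta\cap S_\mu,\IC_\lambda|_{\Gr_G^\eta})=0$ for $\eta<\lambda$ and $k\geq\langle 2\rho,\mu\rangle$, using the generic bound from~\eqref{eqn:characterization-IC} that $\IC_\lambda|_{\Gr_G^\eta}$ lives in degrees $\leq -\langle 2\rho,\eta\rangle-1$. That is correct, but it only gives surjectivity of the restriction map onto $\coH^{\langle 2\rho,\mu\rangle}_c(S_\mu,\IC_\lambda)$. To get an isomorphism from the long exact sequence of the open-closed decomposition, you also need $\coH^{\langle 2\rho,\mu\rangle-1}_c(Y\cap S_\mu,\IC_\lambda|_Y)=0$ for $Y=\overline{\Gr_G^\lambda}\smallsetminus\Gr_G^\lambda$, otherwise the connecting map could inject the boundary contribution into the open-stratum contribution. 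The $-1$ bound is not good enough for this; the paper gets around it by upgrading to $\leq-\langle 2\rho,\eta\rangle-2$ on lower strata via the parity vanishing of Lemma~\ref{lem:parity-IC} (combined with the fact that $\eta<\lambda$ forces $\langle 2\rho,\eta\rangle\equiv\langle 2\rho,\lambda\rangle\pmod 2$). This is the one place in the argument where the parity lemma, beyond what is already used in Proposition~\ref{prop:Ext-vanishing}, is actually indispensable, and your proposal does not supply it.

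Once both points are addressed, your argument and the paper's coincide.
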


\begin{proof}
For each $\eta\in X_*(T)^+$, one of the following three
possibilities hold:\\[-20pt]
\begin{itemize}
\item
$\Gr_G^\eta$ does not meet $\supp\IC_\lambda$,
and $\IC_\lambda |_{\Gr_G^\eta}=0$;
\item
$\eta=\lambda$ and $\IC_\lambda |_{\Gr_G^\eta}\in D^{\leq-\langle2\rho,\eta\rangle}(\Gr_G^\eta,\mathbf k)$;
\item
$\eta<\lambda$ and $\IC_\lambda |_{\Gr_G^\eta}\in D^{\leq- \langle2\rho, \eta\rangle-1}(\Gr_G^\eta,\mathbf k)$
\end{itemize}
(see~\eqref{eqn:characterization-IC}).
In the last case, we can in fact replace $-\langle 2\rho,\eta \rangle-1$ by
$-\langle 2\rho,\eta \rangle-2$ because of Lemma~\ref{lem:parity-IC} (and the fact that $\eta < \lambda \Rightarrow \langle 2\rho, \lambda \rangle \equiv \langle 2\rho, \eta \rangle \pmod 2$).

When we gather these facts to reconstruct $\coH^{\langle 2\rho,\mu \rangle}_c(S_\mu,\IC_\lambda)$ using the same method as in the proof
of Proposition~\ref{prop:weight-functors}, only the stratum $\Gr_G^\lambda$
contributes, and we obtain an isomorphism
$$\coH_c^{\langle 2\rho,\mu \rangle}(S_\mu,\IC_\lambda)\cong
\coH_c^{\langle 2\rho,\mu \rangle}\bigl(\Gr_G^\lambda\cap S_\mu,\IC_\lambda |_{\Gr_G^\lambda}\bigr).$$
Therefore
$$\F_\mu(\IC_\lambda)\cong
\coH_c^{\langle 2\rho,\mu \rangle}\bigl(\Gr^\lambda_G\cap S_\mu,\IC_\lambda |_{\Gr_G^\lambda}\bigr)=
\coH_c^{\langle 2\rho,\lambda+\mu \rangle}(\Gr_G^\lambda\cap S_\mu;\mathbf k).$$
The right-hand side is the top cohomology group with compact support of
$\Gr_G^\lambda\cap S_\mu$ by Theorem~\ref{thm:orbits}; it therefore has a natural basis
indexed by the irreducible components of top dimension of this intersection.\footnote{This property is a classical fact about the top cohomology with compact supports of algebraic varieties, which follows e.g.~from the considerations in~\cite[\S X.1]{iversen}.}

The last claim then follows from Theorem~\ref{thm:orbits}.
\end{proof}

\begin{rmk}
\begin{enumerate}
 \item 
 See Proposition~\ref{prop:can-basis-k} below for a proof, based on slightly different ideas, of a statement which reduces to Proposition~\ref{prop:dim-weight-spaces} in the case $\bk$ is a field of characteristic $0$.
 \item
 In a similar vein, one can describe the multiplicity space of a simple object $\IC_\nu$ as a direct summand of a product $\IC_\lambda \star \IC_\mu$ (where $\star$ is the convolution product introduced in~\S\ref{ss:def-convolution} below) in terms of cohomology of a certain variety, see~\cite[Corollary~5.1.5]{zhu} for details.
\end{enumerate}

\end{rmk}

\section{Convolution product: ``classical'' point of view}
\label{sec:convolution}

Our goal in Sections~\ref{sec:convolution}--\ref{sec:convolution-BD} is to endow the category $\Per_{\GO}(\Gr_G,\bk)$ of $\GO$-equivariant
perverse sheaves on $\Gr_G$ with the structure of a symmetric monoidal category.
We first define the convolution product of two equivariant perverse
sheaves, and with the help of the notion of stratified semismall map,
we show that the result of the operation is a perverse sheaf. We also define
an associativity constraint. To proceed further, we will need a different point of view on convolution, which uses an important
auxiliary construction, known as the Be{\u\i}linson--Drinfeld Grassmannian. This is considered in Section~\ref{sec:convolution-BD}.

\subsection{Stratified semismall maps}

We first consider a general result, which guarantees that the direct image of a perverse sheaf under a \emph{stratified semismall morphism}\footnote{This notion is a refinement of the more familiar notion of \emph{semismall morphism} (see e.g.~\cite{gm,bm}) which takes into account the stratifications on the varieties under consideration.} is a perverse sheaf.

Let $(X,\mathscr T)$ and $(Y,\mathscr U)$ be two stratified algebraic
varieties, 
and let $f:Y\to X$ be a
proper map such that for each $U\in\mathscr U$, the set $f(U)$ is a
union of strata. We say that $f$ is \textit{stratified semismall}
if for any stratum $T\subset f(U)$ and any $x\in T$, we have
$$\dim\bigl(f^{-1}(x)\cap U\bigr)\leq\frac12\bigl(\dim U-\dim T\bigr).$$
We say that $f$ is \textit{locally trivial} if for any
$(T,U)\in\mathscr T\times\mathscr U$ such that $T \subset f(U)$, the map $U\cap f^{-1}(T)\to T$
induced by $f$ is a Zariski locally trivial fibration.

\begin{prop}
\label{prop:semismall-exact}
If $f$ is stratified semismall and locally trivial and if $\mathscr F$
is a perverse sheaf on $Y$ constructible with respect to $\mathscr U$, 
then $f_*\mathscr F$ is a perverse sheaf on $X$ constructible with
respect to $\mathscr T$.
\end{prop}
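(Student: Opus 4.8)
The plan is to check the two defining perversity conditions for $f_* \mathscr{F}$ with respect to $\mathscr{T}$ directly, working stratum by stratum and using the local triviality of $f$ to reduce each verification to a statement about a single fiber times a base stratum. Since $f$ is proper, $f_* = f_!$, so there is a built-in symmetry: the support condition for $f_* \mathscr{F}$ (the $\mathscr{H}^n$ of $(i_T)^* f_* \mathscr{F}$ vanish for $n > -\dim T$) and the cosupport condition (the $\mathscr{H}^n$ of $(i_T)^! f_* \mathscr{F}$ vanish for $n < -\dim T$) are exchanged by Verdier duality, which commutes with $f_* = f_!$; so it suffices to prove the support condition, and then apply the same statement to the Verdier dual $\mathbb{D}\mathscr{F}$, which is again a $\mathscr{U}$-constructible perverse sheaf on $Y$.

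First I would fix a stratum $T \in \mathscr{T}$ with $T \subset f(Y)$ and a point $x \in T$, and compute the stalk $(i_T^* f_* \mathscr{F})_x = \mathbb{H}^\bullet(f^{-1}(x), \mathscr{F}|_{f^{-1}(x)})$ via proper base change. Now stratify $f^{-1}(x)$ by the pieces $f^{-1}(x) \cap U$ for those strata $U \in \mathscr{U}$ with $T \subset f(U)$. On each such piece, $\mathscr{F}|_U$ is a shifted local system placed in degree $-\dim U$ (perversity of $\mathscr{F}$ on the open stratum pieces, combined with the support condition on the boundary — more precisely the support condition gives $\mathscr{H}^n(\mathscr{F}|_U) = 0$ for $n > -\dim U$, which is all we need). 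A spectral sequence (or iterated long exact sequence running over the strata $U$, ordered so that closures are compatible with the order) computing $\mathbb{H}^\bullet(f^{-1}(x),\mathscr{F})$ from the contributions $\mathbb{H}^\bullet_c(f^{-1}(x)\cap U, \mathscr{F}|_{f^{-1}(x)\cap U})$ shows that the degree-$n$ term gets contributions only from pieces with $\mathbb{H}^{n}_c(f^{-1}(x) \cap U, \text{shifted loc. sys. in degree } -\dim U) \neq 0$. Since $f^{-1}(x) \cap U$ is a variety of dimension $\leq \frac12(\dim U - \dim T)$ by stratified semismallness, its compactly-supported cohomology vanishes above degree $2 \cdot \frac12(\dim U - \dim T) = \dim U - \dim T$; after the shift by $-\dim U$ this forces the contribution to vanish for $n > \dim U - \dim T - \dim U = -\dim T$. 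Hence $\mathscr{H}^n(i_T^* f_* \mathscr{F}) = 0$ for $n > -\dim T$, which is exactly the support condition.

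For constructibility of $f_* \mathscr{F}$ with respect to $\mathscr{T}$: here the hypothesis that $f(U)$ is a union of strata, together with local triviality of $U \cap f^{-1}(T) \to T$, ensures that the cohomology sheaves $\mathscr{H}^n(f_* \mathscr{F})|_T$ are local systems — one uses the locally trivial fibration to see that the stalks computed above fit together into a local system on $T$ (the restriction of $f_* \mathscr{F}$ to $U \cap f^{-1}(T)$ is $\mathscr{U}$-constructible, push forward along a Zariski-locally-trivial fibration preserves local systems, and one assembles over the finitely many strata $U$ mapping onto $T$ via the triangles of open-closed decompositions). Combined with the boundedness (finitely many strata, $f$ proper), this gives $f_* \mathscr{F} \in \Db_{\mathscr{T}}(X,\bk)$.

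The main obstacle, and the step requiring the most care, is the bookkeeping in the d\'evissage: one must order the strata $U$ with $T \subset f(U)$ compatibly with the closure partial order and run the long exact sequences (or set up the spectral sequence of the filtration of $f^{-1}(x)$ by the closed pieces $\overline{f^{-1}(x) \cap U}$) so that the semismallness bound is applied correctly to \emph{each} graded piece, not just the open one; and one must make sure the same argument, applied to $\mathbb{D}\mathscr{F}$ and then Verdier-dualized back, really does yield the cosupport condition for $f_* \mathscr{F}$ rather than something weaker — this uses $\mathbb{D} \circ f_* = f_! \circ \mathbb{D} = f_* \circ \mathbb{D}$ (properness) and the fact that $\mathbb{D}$ swaps $(i_T)^*$ with $(i_T)^!$ and reverses degrees. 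None of these steps is deep, but the indexing is where errors creep in, so I would present the ordering of strata explicitly (as the excerpt already suggests doing via an enumeration $(\lambda_n)$ in the proof of Proposition~\ref{prop:weight-functors}) before running the argument.
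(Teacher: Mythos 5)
Your proof is correct and takes essentially the same approach as the paper: prove the support condition via proper base change, d\'evissage over the strata $U$, and the classical vanishing of compactly supported cohomology in degrees above twice the dimension (together with the semismallness bound); prove the cosupport condition by a dual argument or Verdier duality; and deduce $\mathscr{T}$-constructibility from local triviality. The paper proves constructibility first (via an explicit induction on the number of strata in the support), and then works relatively over each stratum $T$ with a base-change square $g^* f_! \mathscr{F} \cong (f_T)_! h^* \mathscr{F}$ rather than at a single point $x$; but these are presentational choices, not different arguments. One small caveat worth recording: the Verdier-duality route to the cosupport condition uses that $\mathbb{D}\mathscr{F}$ is again a perverse sheaf, which holds when $\bk$ is a field (the setting of this proposition) but not over arbitrary coefficient rings; the paper also offers, as an alternative, a ``dual'' argument replacing the Iversen bound on $H^\bullet_c$ by its counterpart for local cohomology, which is the version one would invoke when reusing this proposition for general~$\bk$ later in the paper.
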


\begin{proof}
For any stratum $T \in\mathscr T$, we can consider the restriction
$$\xymatrix@R=4pt{f^{-1}(T)\ar[r]^(.56){f_T} \ar@{=}[d]&T.\\
\bigsqcup\limits_{U\in\mathscr U}f^{-1}(T)\cap U&}$$
We denote by $f_{T,U}:f^{-1}(T)\cap U\to T$ the restriction of $f$ (which is a Zariski locally trivial fibration by assumption if $T \subset f(U)$).
Note here that since $f(U)$ is a union of strata in $\mathscr T$,
the assertions that $T\subset f(U)$ and that $f^{-1}(T)\cap U\neq\varnothing$
are equivalent.

First, let us prove that for any $\mathscr{F}$ in the $\mathscr{U}$-constructible derived category $\Db_{\mathscr{U}}(Y,\bk)$, the complex $f_*\mathscr{F} = f_!\mathscr{F}$ belongs to the $\mathscr{T}$-constructible derived category $\Db_{\mathscr{T}}(X,\bk)$. We proceed by induction on the smallest number of strata whose union is a closed subvariety $Z$ of $Y$ such that $\mathscr{F}|_{Y \smallsetminus Z}=0$. So, let us consider such a closed union of strata, and choose some $U \in \mathscr{U}$ which is open in $Z$. We can consider $\mathscr{F}$ as a complex in $\Db_{\mathscr{U}}(Z,\bk)$. Then, if we denote by $j : U \to Z$ and $i : Z \smallsetminus U \to Z$ the embeddings, we have a standard distinguished triangle
\[
j_! j^* \mathscr{F} \to \mathscr{F} \to i_* i^* \mathscr{F} \xrightarrow{[1]}.
\]
Applying $f_!$, we deduce a distinguished triangle
\[
(f \circ j)_! j^* \mathscr{F} \to \mathscr{F} \to (f \circ i)_! i^* \mathscr{F} \xrightarrow{[1]}.
\]
By induction, the third term in this triangle belongs to $\Db_{\mathscr{T}}(X,\bk)$. Since $\Db_{\mathscr{T}}(X,\bk)$ is a triangulated subcategory of the derived category of $\bk$-sheaves on $X$, we are reduced to prove that $(f \circ j)_! j^* \mathscr{F}$ belongs to $\Db_{\mathscr{T}}(X,\bk)$. Using truncation triangles, for this it suffices to prove that for each $n \in \mathbf{Z}$, $(f \circ j)_! \mathscr H^n( j^* \mathscr{F})$ belongs to $\Db_{\mathscr{T}}(X,\bk)$. Let $T \in \mathscr{T}$ such that $T \subset f(U)$, and let $g : T \to X$ be the embedding.
By the base change theorem, we have
\begin{equation}
\label{eqn:proof-semismall}
g^* (f \circ j)_! \mathscr{H}^n( j^* \mathscr{F}) \cong (f_{T,U})_! \mathscr{H}^n(\mathscr{F})|_{f^{-1}(T) \cap U}.
\end{equation}
Now since $\mathscr{F}$ is $\mathscr{U}$-constructible, 
$\mathscr H^n( \mathscr{F})|_{f^{-1}(T) \cap U}$ is a local system; since $f_{T,U}$ is a locally trivial fibration we deduce that the cohomology sheaves of $g^* (f \circ j)_! \mathscr H^n( j^* \mathscr{F})$ are local systems on $T$, and finally that $\mathscr{F}$ belongs to $\Db_{\mathscr{T}}(X,\bk)$.\footnote{In this argument we use the compatibility of external products with $!$-pushforwards; see~\cite[Proposition~2.9.II]{lyubashenko} for a precise statement.}


Next, we prove that if $\mathscr{F}$ is in nonpositive perverse degrees, then $f_! \mathscr{F}$ is in nonpositive perverse degrees. Let as above $T \in \mathscr{T}$ be a stratum, and 
consider the Cartesian diagram
\[
\xymatrix{
f^{-1}(T) \ar[r]^-{h} \ar[d]_-{f_T} & Y \ar[d]^-{f} \\
T \ar[r]^-{g} & X,
}
\]
where $g$ and $h$ are the embeddings.
Then we need to prove that
\[
g^* f_! \mathscr{F} \cong (f_T)_! h^* \mathscr{F}
\]
is concentrated in degrees $\leq-\dim T$. By the same arguments as above, it suffices to prove that for any $U \in \mathscr{U}$ such that $U \cap f^{-1}(T) \neq \varnothing$, the complex $(f_{T,U})_! \mathscr{F}|_{U \cap f^{-1}(T)}$ satisfies this property. This follows from a classical vanishing result for cohomology with compact supports already used in the proof of Proposition~\ref{prop:weight-functors}, see~\cite[Proposition~X.1.4]{iversen}.



Finally, we need to prove that if $\mathscr{F}$ is in nonnegative perverse degrees, then $f_* \mathscr{F}$ is in nonnegative perverse degrees. This can be deduced from what we proved above using Verdier duality, or alternatively by an argument ``dual'' to the preceding one: for $T$, $h$, $g$ as above we need to prove that
\[
g^! f_* \mathscr{F} \cong (f_T)_* h^! \mathscr{F}
\]
is concentrated in degrees $\geq-\dim T$. Again, this can be reduced to proving that for any $U \in \mathscr{U}$ the complex
\[
(f_{T,U})_* k^! \mathscr{F}
\]
is concentrated in degrees $\geq-\dim T$, where $k : f^{-1}(T) \cap U \to Y$ is the embedding. If $x \in T$ is any point and $i_x : \{x\} \to T$ is the embedding, for this it suffices to prove that $(i_x)^! (f_{T,U})_* k^! \mathscr{F}$ is concentrated in degrees $\geq \dim(T)$. In turn, this follows from a classical result for cohomology with support, see e.g.~\cite[Lemma~4.12]{small2}.\footnote{In the cases of interest to us here, the local system appearing in~\cite[Lemma~4.12]{small2} will be constant; then the claim we need is the statement~\cite[Theorem~X.2.1]{iversen} already used in the proof of Proposition~\ref{prop:weight-functors}.}
\end{proof}

\subsection{Definition of convolution on $\Gr_G$}
\label{ss:def-convolution}

To define the convolution operation on $\Per_{\GO}(\Gr_G,\bk)$, we will identify this category with the heart of the perverse t-structure on the \emph{constructible equivariant derived category}
\[
 \Db_{c,\GO}(\Gr_G, \bk)
\]
in the sense of Bernstein--Lunts~\cite{bernstein-lunts}, see~\S\ref{ss:equiv-perv}. (See also~\S\S\ref{ss:appendix-conv}--\ref{ss:appendix-Gr} for details on the definition of $\Db_{c,\GO}(\Gr_G, \bk)$ and of convolution in a more general context.)

We denote by $[h]\in\Gr_G$ the coset $h\GO$ of an element $h\in\GK$.
Likewise, letting the group $\GO$ act on $\GK\times\Gr_G$ by
$k\cdot(g,[h])=(gk^{-1},[kh])$, we denote by $[g,h]$ the orbit of
$(g,[h])$. We form the diagram
\begin{equation}
\label{eqn:diagram-convolution}
\Gr_G\times\Gr_G\xleftarrow{\,p}\GK\times\Gr_G
\xrightarrow{q\,}\GK\times^\GO\Gr_G\xrightarrow{m\,}\Gr_G,
\end{equation}
where $p$ is the map $(g,[h])\mapsto([g],[h])$, $q$ is the map
$(g,[h])\mapsto[g,h]$, and $m$ is the map $[g,h]\mapsto[gh]$.

Let $\mathscr F$ and $\mathscr G$ be two complexes of sheaves in the
equivariant derived category $\Db_{c,\GO}(\Gr_G,\bk)$. 
Since the $\GO$-action on $\GK \times \Gr_G$ considered above is free, the functor $q^*$ induces an equivalence of categories
\[
\Db_{c,\GO}(\GK \times^{\GO} \Gr_G, \bk) \xrightarrow{\sim} \Db_{c,\GO \times \GO}(\GK \times \Gr_G,\bk),
\]
see~\cite[Theorem~2.6.2]{bernstein-lunts}.
(Here, $\GO$ acts on $\GK \times^{\GO} \Gr_G$ via multiplication on the left on $\GK$; for the action of $\GO\times\GO$ on $\GK\times\Gr_G$, the first copy of $\GO$ acts via left multiplication on $\GK$ and the second copy acts as above.) The complex $p^*(\mathscr{F} \boxtimes \mathscr{G})$ defines an object of $\Db_{c,\GO \times \GO}(\GK \times \Gr_G)$. Therefore,
we can
consider the unique object $\mathscr F\, \widetilde\boxtimes \,\mathscr G\in
\Db_{c,\GO}\bigl(\GK\times^\GO\Gr_G,\bk\bigr)$ such that
$$q^*(\mathscr F \, \widetilde\boxtimes \,\mathscr G)=
p^*(\mathscr F\boxtimes\mathscr G).$$
We then set
$$\mathscr F \star \mathscr G := m_*(\mathscr F \, \widetilde\boxtimes \, \mathscr G) \quad \in \Db_{c,\GO}(\Gr_G,\bk).$$

\begin{rmk}
When stating this construction in these terms we cheat a little bit; see~\S\ref{ss:appendix-Gr}.
\end{rmk}

\subsection{Exactness of convolution}
\label{ss:exactness-convolution}

The first important property of the convolution product $\star$ on $\Db_{c,\GO}(\Gr_G,\bk)$ is the following.

\begin{prop}
\label{prop:exactness-convolution}
Assume that $\mathscr{F}$ and $\mathscr{G}$ belong to $\Per_{\GO}(\Gr_G,\bk)$. Then $\mathscr{F} \star \mathscr{G}$ also belongs to $\Per_{\GO}(\Gr_G,\bk)$.
\end{prop}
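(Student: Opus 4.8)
The strategy is to factor the convolution morphism $m : \GK \times^{\GO} \Gr_G \to \Gr_G$ into pieces adapted to the Cartan stratifications, and to verify that on each such piece the relevant restriction of $m$ is stratified semismall and locally trivial in the sense of the previous subsection, so that Proposition~\ref{prop:semismall-exact} applies. Concretely, I would first reduce to the case where $\mathscr{F}$ and $\mathscr{G}$ are simple, i.e.\ $\mathscr{F} = \IC_\lambda$ and $\mathscr{G} = \IC_\mu$ for dominant coweights $\lambda, \mu$: indeed $\Per_{\GO}(\Gr_G,\bk)$ is semisimple by Theorem~\ref{thm:semisimplicity} and Corollary~\ref{cor:equivariance}, $\star$ is additive, and an object of $\Db_{c,\GO}(\Gr_G,\bk)$ which is a direct summand of a perverse sheaf is perverse. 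Since $\IC_\lambda$ is supported on $\overline{\Gr_G^\lambda}$ and $\IC_\mu$ on $\overline{\Gr_G^\mu}$, the complex $\IC_\lambda \, \widetilde\boxtimes \, \IC_\mu$ is supported on the ``twisted product'' $\overline{\Gr_G^\lambda} \, \widetilde\times \, \overline{\Gr_G^\mu} := \GK^{\leq \lambda} \times^{\GO} \overline{\Gr_G^\mu}$ (where $\GK^{\leq\lambda}$ is the preimage of $\overline{\Gr_G^\lambda}$ in $\GK$), and one works with the restriction $m_{\lambda,\mu}$ of $m$ to this finite-dimensional proper variety, whose image is $\overline{\Gr_G^\lambda} \cdot \overline{\Gr_G^\mu} = \overline{\Gr_G^{\lambda+\mu}}$.

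The heart of the argument is the dimension estimate for the fibers of $m_{\lambda,\mu}$ over a Cartan stratum $\Gr_G^\nu \subset \overline{\Gr_G^{\lambda+\mu}}$. Here I would use the stratification of $\overline{\Gr_G^\lambda} \, \widetilde\times \, \overline{\Gr_G^\mu}$ by the products $\Gr_G^{\lambda'} \, \widetilde\times \, \Gr_G^{\mu'}$ for $\lambda' \leq \lambda$, $\mu' \leq \mu$ dominant, each of which is a $\GO$-equivariant $\Gr_G^{\lambda'}$-bundle over $\Gr_G^{\lambda'}$ (hence smooth of dimension $\langle 2\rho, \lambda' + \mu' \rangle$), and over which $m_{\lambda,\mu}$ is $\GO$-equivariant; local triviality of the restriction of $m$ to the preimage of a Cartan stratum then follows from the fact that $\GO$ acts transitively on $\Gr_G^\nu$. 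For the semismallness inequality, i.e.\ that for $x \in \Gr_G^\nu$ one has $\dim\bigl(m_{\lambda,\mu}^{-1}(x) \cap (\Gr_G^{\lambda'} \, \widetilde\times \, \Gr_G^{\mu'})\bigr) \leq \tfrac12\bigl(\langle 2\rho, \lambda'+\mu'\rangle - \langle 2\rho, \nu\rangle\bigr)$, I would identify the fiber $m_{\lambda,\mu}^{-1}(L_\nu) \cap (\Gr_G^{\lambda'} \, \widetilde\times \, \Gr_G^{\mu'})$ with the set of pairs $([g_1], L_\nu)$ with $[g_1] \in \Gr_G^{\lambda'}$ and $L_\nu \in [g_1] \cdot \Gr_G^{\mu'}$, equivalently $[g_1] \in \Gr_G^{\lambda'} \cap (L_\nu \cdot \Gr_G^{\mu'})$; using that $L_\nu \cdot \Gr_G^{\mu'} = L_\nu \cdot \GO \cdot L_{\mu'}$ and translating by $t^{-\nu}$, this becomes an intersection of a $\GO$-orbit translate with $\Gr_G^{\lambda'}$. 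The required bound then comes from Theorem~\ref{thm:orbits}: the intersection $\overline{\Gr_G^{\lambda'}} \cap S_\mu$ has dimension $\langle\rho, \lambda'+\mu\rangle$, and bounding the fiber dimension amounts to the inequality $\langle 2\rho, \lambda'+\mu'-\nu\rangle \geq 0$ combined with the parity fact that $\langle 2\rho, \cdot\rangle$ is integral on $Q^\vee$ and $\nu \in \lambda'+\mu'+Q^\vee$ — I would need to assemble these into the precise semismall inequality, and this bookkeeping with dominant representatives is where care is required.

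Once stratified semismallness and local triviality of $m_{\lambda,\mu}$ (with respect to the product stratification on the source and the Cartan stratification on $\overline{\Gr_G^{\lambda+\mu}}$) are established, Proposition~\ref{prop:semismall-exact} yields that $(m_{\lambda,\mu})_* (\IC_\lambda \, \widetilde\boxtimes \, \IC_\mu)$ is a perverse sheaf on $\overline{\Gr_G^{\lambda+\mu}}$, $\mathscr{S}$-constructible; pushing forward along the closed embedding $\overline{\Gr_G^{\lambda+\mu}} \hookrightarrow \Gr_G$ (which is $t$-exact) gives that $\IC_\lambda \star \IC_\mu \in \Per_{\GO}(\Gr_G,\bk)$, and the $\GO$-equivariance is automatic since the whole construction takes place in $\Db_{c,\GO}$. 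The main obstacle, as indicated, is the verification of the semismallness inequality: one must check that $\IC_\lambda \, \widetilde\boxtimes \, \IC_\mu$ is itself perverse on the twisted product (this is straightforward once one knows $\IC_\lambda \, \widetilde\boxtimes \, \IC_\mu = q^*$-descent of $p^*(\IC_\lambda \boxtimes \IC_\mu)$ and uses that $q$ is a $\GO$-torsor, hence smooth, so $q^*$ is $t$-exact up to shift, together with $t$-exactness of external products of perverse sheaves) and then feed the dimension estimates of Theorem~\ref{thm:orbits} into the semismallness condition with the correct normalizations — this is precisely the classical computation underlying the fact that $m$ is semismall, adapted to the stratified setting.
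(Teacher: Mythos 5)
Your overall framework is the same as the paper's: reduce to showing that the convolution morphism $m$ is stratified semismall and locally trivial (for the stratification by the twisted products $\widetilde\Gr_G^{\lambda',\mu'}$ on the source and the Cartan stratification on the target), and then invoke Proposition~\ref{prop:semismall-exact}. Two remarks, one minor and one more substantive.

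Minor: the reduction to $\mathscr{F}=\IC_\lambda$, $\mathscr{G}=\IC_\mu$ via semisimplicity is unnecessary and adds nothing. The paper proves the claim directly for arbitrary $\mathscr{F}, \mathscr{G}$: since $\mathscr{F}\,\widetilde\boxtimes\,\mathscr{G}$ is automatically perverse and constructible for the stratification by the $\widetilde\Gr_G^{\lambda',\mu'}$ (as you note, this follows from $q$ being a $\GO$-torsor, hence smooth, plus exactness of exterior product in the characteristic-$0$, field-coefficient setting), once semismallness of $m$ is known for that stratification, Proposition~\ref{prop:semismall-exact} applies to the whole perverse sheaf at once.

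The substantive issue is the dimension estimate for the fibers, which is the mathematical heart of the proof and which you explicitly leave unresolved. Your route — write the fiber $m^{-1}(L_\nu)\cap(\Gr_G^{\lambda'}\,\widetilde\times\,\Gr_G^{\mu'})$ as an intersection $\Gr_G^{\lambda'}\cap(t^\nu\cdot\Gr_G^{-w_0\mu'})$ and then quote Theorem~\ref{thm:orbits} — does not actually work as stated, because Theorem~\ref{thm:orbits} bounds intersections $\overline{\Gr_G^{\lambda'}}\cap S_\mu$ with \emph{semi-infinite} orbits, not intersections of $\GO$-orbit translates with one another. The inequality $\langle 2\rho,\lambda'+\mu'-\nu\rangle\geq 0$ that you cite is just the statement that the desired bound is nonnegative (equivalently $\nu\leq\lambda'+\mu'$); it is not the semismall inequality itself. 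What is actually needed is Corollary~\ref{cor:dimension}: a closed $T$-stable subvariety $X\subset\overline{\Gr_G^\lambda}$ satisfies $\dim X\leq\max\{\langle\rho,\lambda+\alpha\rangle : L_\alpha\in X\}$. The paper's Lemma~\ref{lem:dimensions-semismall} applies this as follows: the $T$-equivariant isomorphism $\phi\colon[g,h]\mapsto([g],[gh])$ identifies $\overline{\widetilde\Gr_G^{\lambda,\mu}}\cap m^{-1}(L_\nu)$ with a closed $T$-stable subvariety of $\overline{\Gr_G^\lambda}\times\{L_\nu\}$, i.e.\ of $\overline{\Gr_G^\lambda}$; its $T$-fixed points $L_\alpha$ correspond to $[t^\alpha,t^\beta]$ with $\alpha^+\leq\lambda$, $\beta^+\leq\mu$, and $\alpha+\beta=\nu$; and then from $w_0\beta\leq\beta^+\leq\mu$ one gets $\langle\rho,\mu+\beta\rangle\geq 0$, whence $\langle\rho,\lambda+\alpha\rangle\leq\langle\rho,\lambda+\mu+\nu\rangle$ (with $\nu$ taken antidominant). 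You would need to supply this $T$-fixed-point argument, or its equivalent; without it, the semismallness step is a genuine gap.
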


To prove this result we will need an auxiliary lemma. Here, for $\lambda, \mu \in X_*(T)^+$ we set
\[
\widetilde\Gr^{\lambda,\mu}_G:=q(p^{-1}(\Gr_G^\lambda\times\Gr_G^\mu)).
\]

\begin{lem}
\label{lem:dimensions-semismall}
For any $\lambda, \mu \in X_*(T)^+$ and $\nu \in -X_*(T)^+$, we have
$$\dim\bigl(\widetilde\Gr^{\lambda,\mu}_G\cap m^{-1}(L_\nu)\bigr)\leq
\langle\rho,\lambda+\mu+\nu\rangle.$$
\end{lem}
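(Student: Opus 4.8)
The idea is to decompose the variety $\widetilde\Gr^{\lambda,\mu}_G \cap m^{-1}(L_\nu)$ according to the first factor and then apply the dimension estimates of Theorem~\ref{thm:orbits} twice. Recall that a point of $\widetilde\Gr^{\lambda,\mu}_G$ is of the form $[g,h]$ with $[g] \in \Gr_G^\lambda$ and $[h] \in \Gr_G^\mu$, and it lies in $m^{-1}(L_\nu)$ precisely when $[gh] = L_\nu$. First I would stratify $\Gr_G^\lambda$ by the intersections $\Gr_G^\lambda \cap S_\kappa$ for $\kappa \in X_*(T)$ (using the Iwasawa decomposition~\eqref{eqn:Iwasawa-dec}); by Theorem~\ref{thm:orbits}\eqref{it:thm-orbits-2}--\eqref{it:thm-orbits-3}, the piece $\Gr_G^\lambda \cap S_\kappa$ is nonempty only when $\kappa \in \Conv(W\lambda) \cap (\lambda + Q^\vee)$ (in particular $\kappa \leq \lambda$, so $\langle \rho, \kappa \rangle \leq \langle \rho, \lambda \rangle$), and then it has pure dimension $\langle \rho, \lambda + \kappa \rangle$.

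The key point is to understand the fiber of the projection $[g,h] \mapsto [g]$ over a point $[g] \in \Gr_G^\lambda \cap S_\kappa$, intersected with $m^{-1}(L_\nu)$. Writing $[g] = u t^\kappa \GO$ for some $u \in \NK$, the condition $[gh] = L_\nu$ forces $[h] \in g^{-1} L_\nu = t^{-\kappa} u^{-1} \cdot L_\nu$; since $u^{-1} \in \NK$ and $\NK$ preserves each semi-infinite orbit $S_\bullet$, and since $L_\nu \in S_\nu$, we get $[h] \in t^{-\kappa} \cdot S_\nu = S_{\nu - \kappa}$. Combined with the constraint $[h] \in \Gr_G^\mu$, the fiber over $[g]$ is contained in (a translate of) $\Gr_G^\mu \cap S_{\nu-\kappa}$, which by Theorem~\ref{thm:orbits}\eqref{it:thm-orbits-2} has dimension at most $\langle \rho, \mu + \nu - \kappa \rangle$ (and is empty unless $\nu - \kappa \leq \mu$, etc.). Therefore, over each stratum $\Gr_G^\lambda \cap S_\kappa$, the relevant part of $\widetilde\Gr^{\lambda,\mu}_G \cap m^{-1}(L_\nu)$ has dimension at most
\[
\langle \rho, \lambda + \kappa \rangle + \langle \rho, \mu + \nu - \kappa \rangle = \langle \rho, \lambda + \mu + \nu \rangle.
\]
Taking the maximum over all $\kappa$ (a finite set) gives the desired bound.

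The main obstacle will be making the fibration argument rigorous: I need to check that $\widetilde\Gr^{\lambda,\mu}_G \cap m^{-1}(L_\nu)$ genuinely fibers (at least stratum-by-stratum, in a way that lets me add dimensions of base and fiber) over $\Gr_G^\lambda \cap S_\kappa$ with fibers of the stated dimension. The cleanest way is probably to pass through the trivialization $p^{-1}(\Gr_G^\lambda \times \Gr_G^\mu) \cong \GK \times_{\GO} (\text{something})$, or to observe that $q$ restricted to $p^{-1}(\Gr_G^\lambda \times \Gr_G^\mu)$ is a $\GO$-torsor quotient so that $\dim \widetilde\Gr^{\lambda,\mu}_G \cap m^{-1}(L_\nu)$ can be computed by working $\GO$-equivariantly upstairs on $\GK \times \Gr_G$; there, $p^{-1}(\Gr_G^\lambda \times \Gr_G^\mu) \cap (qm)^{-1}(L_\nu)$ visibly maps to $\Gr_G^\lambda$ with the fibers described above. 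One must also be careful that the condition $\nu \in -X_*(T)^+$ (antidominant) is what guarantees $L_\nu$ is an honest point of $\Gr_G$ in the appropriate $\GO$-orbit and that the various intersections $S_{\nu-\kappa} \cap \Gr_G^\mu$ behave as expected; but this only enters through Theorem~\ref{thm:orbits}, which applies to any $\mu \in X_*(T)$ (dominant) and any $\nu - \kappa \in X_*(T)$, so no extra work is needed there. The bookkeeping with the $\GO$-action and the identification of fibers is the only genuinely delicate part; the arithmetic of the pairing with $\rho$ is immediate once the geometric decomposition is in place.
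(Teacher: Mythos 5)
Your overall strategy --- use the isomorphism $[g,h]\mapsto([g],[gh])$ to realize $\widetilde\Gr^{\lambda,\mu}_G\cap m^{-1}(L_\nu)$ as a subvariety of $\Gr_G^\lambda$, then stratify by the semi-infinite orbits $S_\kappa$ and bound each piece via Theorem~\ref{thm:orbits} --- is the same as the paper's (the paper just packages the $S_\kappa$-stratification step into Corollary~\ref{cor:dimension}, and your inequality $\langle\rho,\mu+\nu-\kappa\rangle\ge0$ plays exactly the same role as the paper's $\langle\rho,\mu+\beta\rangle\ge0$). But there is a genuine conceptual gap in how you combine base and fiber dimensions.

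The gap: once you fix $[g]$ (equivalently a coset $g\GO$) and impose $[gh]=L_\nu$, the point $[g,h]$ is \emph{completely determined} --- this is precisely the statement that $\phi:[g,h]\mapsto([g],[gh])$ is an isomorphism onto $\Gr_G\times\Gr_G$. So the fiber of the projection $[g,h]\mapsto[g]$ restricted to $\widetilde\Gr^{\lambda,\mu}_G\cap m^{-1}(L_\nu)$ over a point of $\Gr_G^\lambda$ is either empty or a single point; it is not $\Gr_G^\mu\cap S_{\nu-\kappa}$ (it is merely \emph{contained in} a translate of that set, which carries no extra dimensional information for a point). Consequently the dimension of the piece over $\Gr_G^\lambda\cap S_\kappa$ is bounded by $\dim(\Gr_G^\lambda\cap S_\kappa)=\langle\rho,\lambda+\kappa\rangle$ alone, and writing $\langle\rho,\lambda+\kappa\rangle+\langle\rho,\mu+\nu-\kappa\rangle$ is not ``base plus fiber'' in any legitimate sense. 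Your final number comes out right only by a happy cancellation of $\kappa$ (you are adding a nonnegative quantity to a correct bound, so the sum is still an upper bound, but the argument you give for it does not hold up).

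The fix is short: the relevant piece is nonempty only if, choosing a representative $g=ut^\kappa$ with $u\in\NK$, the point $g^{-1}L_\nu\in S_{\nu-\kappa}$ lies in $\Gr_G^\mu$, i.e.\ only if $\Gr_G^\mu\cap S_{\nu-\kappa}\ne\varnothing$. By Theorem~\ref{thm:orbits}\eqref{it:thm-orbits-1} this forces $\nu-\kappa\ge w_0\mu$, hence $\langle\rho,\kappa\rangle\le\langle\rho,\mu+\nu\rangle$, and therefore $\langle\rho,\lambda+\kappa\rangle\le\langle\rho,\lambda+\mu+\nu\rangle$. Combining this with the bound $\dim(\text{piece over }\Gr_G^\lambda\cap S_\kappa)\le\langle\rho,\lambda+\kappa\rangle$ and taking the maximum over the finitely many relevant $\kappa$ gives the statement. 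This is essentially what Corollary~\ref{cor:dimension} is doing in the paper's version.

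One more small caution on the ``bookkeeping'' you flagged: the point $g^{-1}L_\nu$ is only well-defined up to the $\GO$-action on $\Gr_G$, so you should fix a representative of the form $g=ut^\kappa$ with $u\in\NK$ before asserting $g^{-1}L_\nu\in S_{\nu-\kappa}$; any two such choices differ by an element of $N_{\mathcal{O}}$, which preserves both $\Gr_G^\mu$ and $S_{\nu-\kappa}$, so the nonemptiness conclusion is independent of the choice.
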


\begin{proof}
We consider the $T$-action on $\GK\times^\GO\Gr_G$ induced by left multiplication on $\GK$,
and the diagonal $T$-action on $\Gr_G\times\Gr_G$. Then the map
$$\phi:\GK\times^\GO\Gr_G\to\Gr_G\times\Gr_G$$
that sends $[g,h]$ to $([g],[gh])$ is a $T$-equivariant isomorphism.
We deduce that the $T$-fixed points in $\GK\times^\GO\Gr_G$ are of
the form $[t^\alpha,t^\beta]$, with $\alpha,\,\beta\in X_*(T)$;
indeed $\phi([t^\alpha,t^\beta])=(L_\alpha,L_{\alpha+\beta})$.
Further, $[t^\alpha,t^\beta]$ belongs to
$$X_{\lambda,\mu} := \overline{\widetilde\Gr_G^{\lambda,\mu}}=q\bigl(p^{-1}\bigl(
\overline{\Gr_G^\lambda}\times\overline{\strut\Gr_G^\mu}\bigr)\bigr)$$
if and only if the dominant $W$-conjugate $\alpha^+$ of $\alpha\in X_*(T)$ is
$\leq\lambda$ and the dominant $W$-conjugate $\beta^+$ of $\beta$ is
$\leq\mu$ with respect to the dominance order.

The morphism $\phi$ maps $m^{-1}(L_\nu)$ to $\Gr_G\times\{L_\nu\}$.
This allows (by projecting onto the first factor) to regard
$X_{\lambda,\mu}\cap m^{-1}(L_\nu)$ as a
closed subvariety of $\overline{\Gr_G^\lambda}$. Now 
by Corollary~\ref{cor:dimension} we have
$$\dim\Bigl(X_{\lambda,\mu}\cap m^{-1}(L_\nu)\Bigr)
\leq\max_{\substack{\alpha, \beta \in X_*(T) \\ [t^\alpha,t^\beta] \in X_{\lambda,\mu} \cap m^{-1}(L_\nu)}} 
\langle\rho,\lambda+\alpha\rangle.
$$
The pairs $(\alpha,\beta)$ occurring here satisfy $\alpha+\beta=\nu$ and
\[
\langle \rho, \mu+\beta \rangle = \langle \rho, \mu-w_0(\beta) \rangle \geq 0
\]
since $w_0(\beta) \leq \beta^+ \leq \mu$;
hence they satisfy
$$\langle\rho,\lambda+\alpha\rangle \leq \langle\rho,\lambda+\alpha\rangle + \langle \rho, \mu+\beta \rangle =
\langle\rho,\lambda+\mu+\nu\rangle,$$
which entails the desired result.
\end{proof}

We can now give the proof of Proposition~\ref{prop:exactness-convolution}.

\begin{proof}
We consider the situation
$$\xymatrix@R=4pt{\GK\times^\GO\Gr_G\ar[r]^(.6)m\ar@{=}[d]&\Gr_G.
\ar@{=}[d],\\
\bigsqcup\limits_{\lambda,\mu\in X_*(T)^+}\!\!\!\!\widetilde\Gr_G^{\lambda,\mu}
&\bigsqcup\limits_{\nu\in X_*(T)^+}\!\!\!\Gr_G^\nu}$$
Here certainly $m$ is ind-proper. It is locally trivial, because
the whole situation is $\GO$-equivariant. Also, it follows from the
definitions that the complex $\mathscr F \, \widetilde\boxtimes \, \mathscr G \in \Db_{c,\GO}(\GK \times^{\GO} \Gr_G,\bk)$
defined in~\S\ref{ss:def-convolution} is perverse and is constructible
with respect to the stratification given by the subsets
$\widetilde\Gr_G^{\lambda,\mu}$. To show that $\mathcal F \star \mathcal G$
is perverse, using Proposition~\ref{prop:semismall-exact} it thus suffices to prove that $m$ is stratified
semismall. This is exactly the content of Lemma~\ref{lem:dimensions-semismall} (since $\dim(\Gr_G^{w_0(\nu)}) = \langle 2\rho, w_0(\nu) \rangle = -\langle 2\rho,\nu \rangle$ if $\nu \in -X_*(T)$).
\end{proof}

\begin{rmk}
A different proof of Proposition~\ref{prop:exactness-convolution} is due to
Gaitsgory. In fact, the convolution
$\mathscr F\star\mathscr G$ 
makes sense for any $\mathscr{F}$ in $\Db_c(\Gr_G,\bk)$ and $\mathscr{G}$ in $\Db_{c,\GO}(\Gr_G,\bk)$.
It follows from~\cite[Proposition~6]{gaitsgory} that, in this generality, $\mathscr F\star\mathscr G$ is perverse as soon as $\mathscr{F}$ and $\mathscr{G}$ are perverse. This approach uses an interpretation
of convolution in terms of nearby cycles. (See also~\cite[\S 5.4]{zhu} for an exposition of closely related ideas, based on the notion of universal local acyclicity.)
\end{rmk}

\subsection{Associativity of convolution}
\label{ss:associativity}

For $\mathscr F_1$, $\mathscr F_2$, $\mathscr F_3$ in
$\Per_{\GO}(\Gr_G,\bk)$, one can define
$$\mathrm{Conv}_3(\mathscr F_1,\mathscr F_2,\mathscr F_3)=
(m_3)_*\,\bigl(\mathscr F_1\, \widetilde\boxtimes \, \mathscr F_2
\, \widetilde\boxtimes \, \mathscr F_3\bigr),$$
where $m_3:\GK\times^\GO\GK\times^\GO\Gr_G\to\Gr_G$ is the map
$[g_1,g_2,g_3]\mapsto[g_1g_2g_3]$, with an obvious notation, and the twisted product $\mathscr F_1\, \widetilde\boxtimes \, \mathscr F_2\, \widetilde\boxtimes \, \mathscr F_3$ is defined in the obvious way.
Then base change yields natural isomorphisms
$$(\mathscr F_1\star \mathscr F_2) \star \mathscr F_3\xleftarrow\sim
\mathrm{Conv}_3(\mathscr F_1,\mathscr F_2,\mathscr F_3)
\xrightarrow\sim\mathscr F_1 \star (\mathscr F_2 \star \mathscr F_3).$$
The composition of these isomorphisms provides an associativity constraint that turns the pair
$(\Per_{\GO}(\Gr_G,\bk),\star)$ into a monoidal category.

\section{Convolution and fusion}
\label{sec:convolution-BD}

In this section we describe a different construction of the convolution product on $\Per_{\GO}(\Gr_G,\bk)$. This construction uses the Be{\u\i}linson--Drinfeld Grassmannian, hence ultimately the moduli interpretation of $\Gr_G$. It plays a crucial role in the definition of the commutativity constraint for $\star$. (The ideas behind all of this go back to work of Be{\u\i}linson--Drinfeld~\cite{beilinson-drinfeld}. For more details and references on this point of view, the reader might consult~\cite{zhu}.)

\subsection{A moduli interpretation of the affine Grassmannian}
\label{ss:moduli-Gr}

In this section, we adopt the following setup. We consider a smooth
curve $X$ over $\C$, and for any point $x\in X$, we denote
by $\mathcal O_x$ the completion of the local ring of $X$ at $x$
and by $\mathcal K_x$ the fraction field of $\mathcal O_x$; the choice
of a local coordinate $t$ on $X$ around $x$ leads to isomorphisms
$\mathcal O_x\cong\C[ \hspace{-1pt} [t] \hspace{-1pt} ]$ and $\mathcal K_x\cong\C( \hspace{-1pt} (t) \hspace{-1pt} )$. Using these data we can define a ``local'' version of $\Gr_G$ at $x$ by $\Gr_{G,x}:= \bigl( G_{\mathcal K_x}/G_{\mathcal O_x} )_{\mathrm{red}}$, where $G_{\mathcal K_x}$ and $G_{\mathcal O_x}$ are defined in the obvious way.

\begin{rmk}
 Below, to lighten notation (and since this does not play any role for us) we will not distinguish between the ind-scheme $G_{\mathcal K_x}/G_{\mathcal O_x}$ and the associated ind-variety $\Gr_{G,x}$. We leave it to the attentive (and interested) reader to check which version is more appropriate in each statement.
\end{rmk}

We define
$$\mathcal D_x=\Spec(\mathcal O_x)\quad\text{and}\quad
\mathcal D_x^\times=\Spec(\mathcal K_x).$$
For a $\C$-algebra $R$, we consider the completed tensor products
$R \, \widehat{\otimes} \,\mathcal O_x$ and $R \, \widehat{\otimes} \, \mathcal K_x$,
so that
$$R \, \widehat\otimes \, \mathcal O_x\cong R[ \hspace{-1pt} [t] \hspace{-1pt} ]\quad\text{and}\quad
R \, \widehat\otimes \, \mathcal K_x\cong R( \hspace{-1pt} (t) \hspace{-1pt} ).$$
We set
$$\mathcal D_{x,R}=\Spec(R \, \widehat\otimes \, \mathcal O_x)
\quad\text{and}\quad
\mathcal D_{x,R}^\times=\Spec(R \, \widehat\otimes \, \mathcal K_x).$$

For a $\C$-algebra $R$, we set
$$X_R=X\times_{\Spec(\C)}\Spec(R)\quad\text{and}\quad
X_R^\times=(X\smallsetminus\{x\})\times_{\Spec(\C)}\Spec(R).$$

\begin{rmk}
 Note that the subscript ``$R$'' does not have the same meaning in the notation ``$\mathcal D_{x,R}$'' and ``$X_R$,'' in that it is \emph{not} true that $\mathcal D_{x,R} \cong \mathcal{D}_x \otimes_{\Spec(\C)} \Spec(R)$.
\end{rmk}

The following proposition gives a first description of $\Gr_{G,x}$ in terms of moduli of bundles on $X$.

\begin{prop}
\begin{enumerate}
\item
The ind-scheme $G_{\mathcal{K}_x}$ represents
the functor
$$R\mapsto\left\{(\mathcal F,\nu,\mu)\left|
\begin{aligned}
\;&\mathcal F\text{ $G$-bundle on }
X_R\\
&\nu:G\times X_R^\times \xrightarrow{\sim}
\mathcal F|_{X_R^\times}
\text{ trivialization on }X_R^\times \\
&\mu:G\times \mathcal{D}_{x,R} \xrightarrow{\sim}
\mathcal F|_{\mathcal{D}_{x,R}}
\text{ trivialization on } \mathcal{D}_{x,R}
\end{aligned}\right\}\right.\Bigr/\text{isomorphism}.$$
\item
The ind-scheme $\Gr_{G,x}$ represents
the functor
$$R\mapsto\left\{(\mathcal F,\nu)\left|
\begin{aligned}
\;&\mathcal F\text{ $G$-bundle on }
X_R\\
&\nu:G\times X_R^\times \xrightarrow{\sim}
\mathcal F|_{X_R^\times}
\text{ trivialization on }X_R^\times
\end{aligned}\right\}\right.\Bigr/\text{isomorphism}.$$
\end{enumerate}
\end{prop}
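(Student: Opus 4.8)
The plan is to identify both sides as the fpqc sheafification (on the category of affine $\C$-schemes) of the naive presheaves of the relevant moduli data, and to exploit the fact that a $G$-bundle on $X_R$ which is trivialized away from $x$ can be recaptured from gluing data on the formal disc. First I would recall the standard Beauville--Laszlo descent theorem: for a $\C$-algebra $R$, giving a $G$-bundle on $X_R$ together with a trivialization $\nu$ over $X_R^\times$ is the same as giving a $G$-bundle on $\mathcal{D}_{x,R}$ together with a trivialization over $\mathcal{D}_{x,R}^\times$ and a compatibility datum matching it with $\nu$ on the overlap; since $X_R^\times$ and $\mathcal{D}_{x,R}$ cover $X_R$ in the fpqc (indeed Zariski-local on the disc, plus formal completion) sense, and $G$ is affine, this gluing is effective. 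Concretely this is the assertion that the square
\[
\xymatrix{
\mathcal{D}_{x,R}^\times \ar[r] \ar[d] & \mathcal{D}_{x,R} \ar[d] \\
X_R^\times \ar[r] & X_R
}
\]
is ``good for descent of $G$-bundles,'' which is exactly the Beauville--Laszlo lemma (see e.g.~\cite[\S 1.4]{zhu}).

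Granting this, part (2) follows: a pair $(\mathcal{F},\nu)$ on $X_R$ is the same as a pair $(\mathcal{F}_0,\nu_0)$ where $\mathcal{F}_0$ is a $G$-bundle on $\mathcal{D}_{x,R}$ and $\nu_0$ is a trivialization of $\mathcal{F}_0$ over $\mathcal{D}_{x,R}^\times$, because any $G$-bundle on $\mathcal{D}_{x,R}$ is automatically trivial (here one uses that $R \, \widehat{\otimes} \, \mathcal{O}_x$ is, after a choice of coordinate, $R[ \hspace{-1pt} [t] \hspace{-1pt} ]$, and any $G$-bundle on $\Spec R[ \hspace{-1pt} [t] \hspace{-1pt} ]$ is trivial Zariski-locally on $\Spec R$; then one sheafifies). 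After trivializing $\mathcal{F}_0$, the datum $\nu_0$ becomes a point of $G(R \, \widehat{\otimes} \, \mathcal{K}_x) = G(R( \hspace{-1pt} (t) \hspace{-1pt} )) = G_{\mathcal{K}_x}(R)$, and changing the trivialization changes $\nu_0$ by right multiplication by an element of $G(R \, \widehat{\otimes} \, \mathcal{O}_x) = G_{\mathcal{O}_x}(R)$. Thus the presheaf quotient $R \mapsto G_{\mathcal{K}_x}(R)/G_{\mathcal{O}_x}(R)$ maps to the moduli functor, and the Beauville--Laszlo identification together with the local triviality of $G$-bundles on the disc shows this map becomes an isomorphism after fpqc sheafification; since the fpqc quotient $G_{\mathcal{K}_x}/G_{\mathcal{O}_x}$ is by definition that sheafification (and represents $\Gr_{G,x}$, as recalled in Remark~\ref{rmk:def-Gr}(1)), we are done. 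Part (1) is then immediate: adding the extra datum $\mu$ of a trivialization over $\mathcal{D}_{x,R}$ amounts, under the above dictionary, to rigidifying the choice of trivialization of $\mathcal{F}_0$, i.e.~to remembering the actual element of $G_{\mathcal{K}_x}(R)$ rather than its class modulo $G_{\mathcal{O}_x}(R)$; so the functor in (1) is represented by $G_{\mathcal{K}_x}$ itself.

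The main obstacle, and the point requiring genuine care rather than formal manipulation, is the descent/gluing step: one must justify that the Beauville--Laszlo square above is effective for $G$-bundles in the generality needed (arbitrary $\C$-algebras $R$, with $X_R$ not affine and $\mathcal{D}_{x,R}$ the spectrum of a completed tensor product rather than a naive one), and that this is compatible with the trivializations in the way asserted. I would handle this by citing the precise statement in~\cite{zhu} (or~\cite{beilinson-drinfeld}) rather than reproving it, since it is one of the ``standard results on the affine Grassmannian'' that the notes explicitly take for granted; the remaining verifications (local triviality of bundles on the formal disc, the bookkeeping of how changing trivializations acts) are routine and can be dispatched quickly. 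One should also be mildly careful about the reduced structure: the proposition as stated concerns $\Gr_{G,x} = (G_{\mathcal{K}_x}/G_{\mathcal{O}_x})_{\mathrm{red}}$, but in view of the Remark preceding the proposition (allowing one to conflate $G_{\mathcal{K}_x}/G_{\mathcal{O}_x}$ with $\Gr_{G,x}$) this is harmless, and I would simply note that the moduli functor as literally written is represented by the non-reduced fpqc quotient, its reduction being $\Gr_{G,x}$.
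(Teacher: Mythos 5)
Your proposal follows essentially the same route the paper indicates: the paper does not supply a full proof but cites Laszlo--Sorger~\cite[Propositions~3.8 and~3.10]{laszlo-sorger} and names exactly the two ingredients you invoke --- the Beauville--Laszlo descent theorem, and the local triviality of $G$-bundles on the formal disc. You correctly flag the same subtleties (the relative/completed version of Beauville--Laszlo, the reduced structure), and your treatment of part~(1) as the rigidified version of part~(2) is right and does not need sheafification.

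One small imprecision: you assert that ``any $G$-bundle on $\Spec R[\hspace{-1pt}[t]\hspace{-1pt}]$ is trivial Zariski-locally on $\Spec R$.'' For a general (non-special) reductive group this is not true Zariski-locally; what the paper uses, and what is correct in this generality, is that such a bundle becomes trivial after a faithfully flat (or, since $G$ is smooth, \'etale) base change $R\to R'$. Since you immediately follow with ``then one sheafifies,'' the overall argument is unaffected, but the phrasing ``automatically trivial'' in the same sentence should also be replaced by ``trivial fpqc-locally on $\Spec R$'' to avoid self-contradiction.
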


Here, a \emph{$G$-bundle} on a scheme $Z$ is a scheme $\mathcal{F} \to Z$ equipped with a right $G$-action and which, locally in the fpqc topology, is isomorphic to the product $G \times Z$ as a $G$-scheme. (In fact, since $G$ is smooth here, a $G$-bundle will also be locally trivial in the \'etale tolopology; see~\cite[Remark~2.1.2]{sorger} for more comments and references.)
The proof of this proposition is given in~\cite[Propositions~3.8 and~3.10]{laszlo-sorger}. The main ingredients are:
\begin{enumerate}
\item
The Beauville--Laszlo theorem~\cite{beauville-laszlo}, which says that the datum of
a $G$-bundle on $X_R$ is equivalent to the datum of a $G$-bundle
on $X^\times_R$, of a $G$-bundle on $\mathcal D_{x,R}$, and of a gluing
datum on $\mathcal{D}_{x,R}^\times = \mathcal{D}_{x,R} \cap X_R^\times$.
\item
The fact that any $G$-bundle on $\mathcal D_{x,R}$ 
becomes trivial when pulled back to $\mathcal D_{x,R'}$ for some faithfully flat extension $R \to R'$.\footnote{See also~\cite[Lemma~1.3.7]{zhu} for a slightly different statement in the same vein.}
\end{enumerate}

The Beauville--Laszlo theorem also shows that restriction
induces an isomorphism
$$\left\{(\mathcal F,\nu)\left|
\begin{aligned}
\;&\mathcal F\text{ $G$-bundle on }X_R\\
&\nu\text{ trivialization on }X^\times_R
\end{aligned}\right\}\right.\Bigr/\text{isom.}\xrightarrow{\sim}
\left\{(\mathcal F,\nu)\left|
\begin{aligned}
\;&\mathcal F\text{ $G$-bundle on }
\mathcal D_{x,R}\\
&\nu\text{ trivialization on }\mathcal D_{x,R}^\times
\end{aligned}\right\}\right.\Bigr/\text{isom.}$$
In particular, we deduce that $\Gr_{G,x}$ also represents the functor 
%
\begin{equation}
\label{eqn:Gr-moduli}
R\mapsto\left\{(\mathcal F,\nu)\left|
\begin{aligned}
\;&\mathcal F\text{ $G$-bundle on }
\mathcal D_{x,R}\\
&\nu:G\times \mathcal D_{x,R}^\times \xrightarrow{\sim}
\mathcal F|_{\mathcal D_{x,R}^\times}
\text{ trivialization on }\mathcal D_{x,R}^\times
\end{aligned}\right\}\right.\Bigr/\text{isomorphism}.
\end{equation}

\begin{rmk}
 The description of $\Gr_G$ (or in fact more precisely $\widetilde{\Gr}_G$) in terms of $G$-bundles on $\Spec(\C[ \hspace{-1pt} [t] \hspace{-1pt} ])$ as in~\eqref{eqn:Gr-moduli} is in fact often taken as the definition of this ind-scheme, see e.g.~\cite[\S 1.2]{zhu}. The identification with the quotient $G_{\mathcal{K}} / G_{\mathcal{O}}$ is ``purely local'' and does not require the Beauville--Laszlo theorem.
\end{rmk}

\subsection{Moduli interpretation of the convolution diagram}
\label{ss:moduli-convolution}

We now give a similar geometric interpretation of the diagram
\begin{equation}
\label{eqn:convolution-local}
\Gr_{G,x}\times\Gr_{G,x}\xleftarrow{\,p}G_{\mathcal K_x}\times\Gr_{G,x}\xrightarrow
{q\,}G_{\mathcal K_x}\times^{G_{\mathcal O_x}}\Gr_{G,x}\xrightarrow{m\,}\Gr_{G,x},
\end{equation}
which is the ``local version at $x$'' of the
diagram~\eqref{eqn:diagram-convolution}. 
We first remark that
$G_{\mathcal K_x}\times^{G_{\mathcal O_x}}\Gr_{G,x}$ represents the
functor
$$R\mapsto\left\{(\mathcal F_1,\mathcal F,\nu_1,\eta)\left|
\begin{aligned}
\;&\mathcal F_1,\,\mathcal F\;\text{ $G$-bundles on }X_R\\
&\nu_1\text{ trivialization of $\mathcal F_1$ on }X^\times_R\\
&\eta:\mathcal F_1|_{X^\times_R}\xrightarrow\sim\mathcal F|_{X^\times_R}
\text{ isomorphism}\end{aligned}\right\}\right.\Bigr/\text{isom.}$$
To check this, one observes that the datum of
$(\mathcal F_1,\mathcal F,\nu_1,\eta)$ is equivalent to the datum
of $\bigl((\mathcal F_1,\nu_1),(\mathcal F,\eta\circ\nu_1)\bigr)$,
and one notes that this transformation is completely similar to
the isomorphism $\GK\times^\GO\Gr_G\xrightarrow{\sim}\Gr_G\times\Gr_G$
 used in the proof of Lemma~\ref{lem:dimensions-semismall}.

Likewise, $G_{\mathcal K_x}\times\Gr_{G,x}$ represents the functor
$$R\mapsto\left\{(\mathcal F_1,\mathcal F_2,\nu_1,\nu_2,\mu_1)\left|
\begin{aligned}
\;&\mathcal F_1,\,\mathcal F_2\;\text{ $G$-bundles on }X_R\\
&\nu_1,\,\nu_2\text{ trivializations of $\mathcal F_1,\,
\mathcal F_2$ on }X^\times_R\\
&\mu_1\text{ trivialization of $\mathcal F_1$ on }\mathcal D_{x,R}
\end{aligned}\right\}\right.\Bigr/\text{isom.}$$

With these identifications, the maps $m$ and $p$ in diagram~\eqref{eqn:convolution-local}
are given by
\begin{align*}
m(\mathcal F_1,\mathcal F,\nu_1,\eta)&=
(\mathcal F,\eta\circ\nu_1),\\
p(\mathcal F_1,\mathcal F_2,\nu_1,\nu_2,\mu_1)&=
\bigl((\mathcal F_1,\nu_1),(\mathcal F_2,\nu_2)\bigr),
\end{align*}
and the map $q$ associates to $(\mathcal F_1,\mathcal F_2,\nu_1,
\nu_2,\mu_1)$ the quadruple $(\mathcal F_1,\mathcal F,\nu_1,\eta)$,
where $\mathcal F$ is obtained by gluing $\mathcal F_1|_{X^\times_R}$
and $\mathcal F_2|_{\mathcal D_{x,R}}$ along the isomorphism
$$\mathcal F_1|_{\mathcal D_{x,R}^\times}\xleftarrow[\mu_1]\sim
G\times \mathcal D_{x,R}^\times \xrightarrow[\nu_2]\sim
\mathcal F_2|_{\mathcal D_{x,R}^\times}$$
and $\eta$ is the natural isomorphism obtained in the process. (This gluing datum indeed defines a $G$-bundle on $X_R$ thanks to the Beauville--Laszlo theorem, see~\S\ref{ss:moduli-Gr}.)

\subsection{The Be{\u\i}linson--Drinfeld Grassmannian}
\label{ss:BD-Gr}

The idea behind the fusion procedure is to regard the geometric
situation described in~\S\S\ref{ss:moduli-Gr}--\ref{ss:moduli-convolution} as the degeneration of a simpler
situation. This involves the Be{\u\i}linson--Drinfeld Grassmannian.

Specifically, we define $\Gr_{G,X}$ as the ind-scheme over $X$ that
represents the functor
$$R\mapsto\left\{(\mathcal F,\nu,x)\left|
\begin{aligned}
\;&x\in X(R)\\
&\mathcal F\text{ $G$-bundle on }X_R\\
&\nu\text{ trivialization of $\mathcal{F}$ on }X_R\smallsetminus x
\end{aligned}\right\}\right.\Bigr/\text{isom.,}$$
where the symbol $X_R\smallsetminus x$ indicates the complement
in $X_R$ of the graph of $x:\Spec(R)\to X$ (a closed
subscheme of $X_R=X \times \Spec(R)$).

In the same way, we define $\Gr_{G,X^2}$ as the ind-scheme over $X^2$ that
represents the functor
$$R\mapsto\left\{(\mathcal F,\nu,x_1,x_2)\left|
\begin{aligned}
\;&(x_1,x_2)\in X^2(R)\\
&\mathcal F\text{ $G$-bundle on }X_R\\
&\nu\text{ trivialization of $\mathcal{F}$ on }X_R\smallsetminus(x_1\cup x_2)
\end{aligned}\right\}\right.\Bigr/\text{isom.}$$

By definition
there is an obvious morphism $\Gr_{G,X^2}\to X^2$. Plainly, the
restriction of $\Gr_{G,X^2}$ to the diagonal $\Delta_X$ of $X^2$,
namely $\Gr_{G,X^2}\times_{X^2}\Delta_X$, is isomorphic to $\Gr_{G,X}$.
Away from the diagonal, we have an isomorphism
\begin{equation}
\label{eqn:GrBD-away-diag}
\Gr_{G,X^2}\bigl|_{X^2\smallsetminus\Delta_X}\cong
\bigl(\Gr_{G,X}\times\Gr_{G,X}\bigr)\bigl|_{X^2\smallsetminus\Delta_X}
\end{equation}
given by $(\mathcal F,\nu,x_1,x_2)\mapsto\bigl((\mathcal F_1,\nu_1,x_1),
(\mathcal F_2,\nu_2,x_2)\bigr)$, with $\mathcal F_i$ obtained by
gluing the trivial $G$-bundle on $X_R \smallsetminus x_i$ and 
the bundle $\mathcal F|_{X_R \smallsetminus x_j}$ along the map $\nu$ (where $\{i,j\}=\{1,2\}$).
Under the
converse isomorphism
$\bigl((\mathcal F_1,\nu_1,x_1),(\mathcal F_2,\nu_2,x_2)\bigr)
\mapsto(\mathcal F,\nu,x_1,x_2)$, the $G$-bundle $\mathcal F$
is obtained by gluing $\mathcal F_1|_{X_R \smallsetminus x_2}$ and
$\mathcal F_2|_{X_R \smallsetminus x_1}$ along the isomorphism
$$\mathcal F_1|_{X_R\smallsetminus (x_1 \cup x_2)}\xleftarrow[\nu_1]\sim
G\times(X_R \smallsetminus (x_1 \cup x_2))\xrightarrow[\nu_2]\sim
\mathcal F_2|_{X_R\smallsetminus (x_1 \cup x_2)}.$$

\begin{rmk}
\label{rmk:BL-curve}
\begin{enumerate}
\item
Of course one can define more generally Be{\u\i}linson--Drinfeld Grassmannians 
over arbitrary powers of $X$,
which satisfy appropriate analogues of the isomorphism~\eqref{eqn:GrBD-away-diag}. More formally this collection satisfies the ``factorization'' properties spelled out e.g.~in~\cite[Theorem~3.2.1]{zhu}; see also~\cite[\S\S 5.3.10--16]{beilinson-drinfeld} and~\cite[\S 3]{richarz}.
\item
One can also consider Be{\u\i}linson--Drinfeld Grassmannians associated with more general affine smooth group schemes over $X$; see~\cite{zhu-conj} for references and applications.
\end{enumerate}
\end{rmk}

\subsection{Global version of the convolution diagram}
\label{ss:global-conv}

We can also define global analogues of $G_{\mathcal K_x}\times\Gr_{G,x}$
and $G_{\mathcal K_x}\times^{G_{\mathcal O_x}}\Gr_{G,x}$.
For that, we define $\widetilde{\Gr_{G,X}\times\Gr_{G,X}}$ as the ind-scheme
that represents the functor
$$R\mapsto\left\{(\mathcal F_1,\nu_1,\mu_1,\mathcal F_2,\nu_2,x_1,x_2)
\left|
\begin{aligned}
\;&(x_1,x_2)\in X^2(R)\\
&\mathcal F_1,\,\mathcal F_2\;\text{ $G$-bundles on }X_R\\
&\nu_i\text{ trivialization of $\mathcal F_i$ on }X_R\smallsetminus x_i\\
&\mu_1\text{ trivialization of $\mathcal F_1$ on } \mathcal D_{x_2,R}
\end{aligned}\right\}\right.\Bigr/\text{isomorphism.}$$
(Here and below, $\mathcal{D}_{x_2,R}$ means the formal neighborhood of the graph of $x_2$ in $X_R$, considered as a scheme.)
We also define
$\Gr_{G,X}\widetilde\times\Gr_{G,X}$ as the ind-scheme that represents the
functor
$$R\mapsto\left\{(\mathcal F_1,\mathcal F,\nu_1,\eta,x_1,x_2)\left|
\begin{aligned}
\;&(x_1,x_2)\in X^2(R)\\
&\mathcal F_1,\,\mathcal F\;\text{ $G$-bundles on }X_R\\
&\nu_1\text{ trivialization of $\mathcal F_1$ on }X_R\smallsetminus x_1\\
&\eta:\mathcal F_1|_{X_R\smallsetminus x_2}\xrightarrow\sim
\mathcal F|_{X_R\smallsetminus x_2}
\text{ isomorphism}\end{aligned}\right\}\right.\Bigr/\text{isomorphism.}$$
We then get a diagram
\begin{equation}
\label{eqn:convolution-X2}
\Gr_{G,X}\times\Gr_{G,X}\xleftarrow{\,p}\widetilde{\Gr_{G,X}\times\Gr_{G,X}}
\xrightarrow{q\,}\Gr_{G,X}\widetilde\times\Gr_{G,X}\xrightarrow{m\,}\Gr_{G,X^2}
\end{equation}
over $X^2$ by setting
\begin{align*}
m(\mathcal F_1,\mathcal F,\nu_1,\eta,x_1,x_2)&=
(\mathcal F,\eta\circ\nu_1,x_1,x_2),\\
p(\mathcal F_1,\nu_1,\mu_1,\mathcal F_2,\nu_2,x_1,x_2)&=
((\mathcal F_1,\nu_1,x_1),(\mathcal F_2,\nu_2,x_2)),
\end{align*}
and by defining $q$ as the map
$(\mathcal F_1,\nu_1,\mu_1,\mathcal F_2,\nu_2,x_1,x_2)\mapsto
(\mathcal F_1,\mathcal F,\nu_1,\eta,x_1,x_2)$,
where $\mathcal F$ is obtained by gluing $\mathcal F_1|_{X_R\smallsetminus x_2}$
and $\mathcal F_2|_{\mathcal D_{x_2,R}}$ along the isomorphism
$$\mathcal F_1|_{\mathcal D_{x_2,R}^\times}\xleftarrow[\mu_1]{\sim}
G\times\mathcal D_{x_2,R}^\times\xrightarrow[\nu_2]\sim
\mathcal F_2|_{\mathcal D_{x_2,R}^\times}.$$

\begin{rmk}
 To justify the gluing procedure used here, one cannot simply quote the Beauville--Laszlo theorem, since the point $x_2$ might not be constant. The more general result that we need is discussed in~\cite[Remark~2.3.7 and \S 2.12]{beilinson-drinfeld}.
\end{rmk}

We now explain that $p$ and $q$ are principal bundles for a group scheme over $X^2$. For that,
we define $G_{X,\mathcal O}$ as the group scheme over $X$ that
represents the functor
$$R\mapsto\left\{(x,\mu)\left|
\begin{aligned}
\;&x\in X(R)\\
&\mu\text{ trivialization of $G\times X_R$ on }\mathcal D_{x,R}
\end{aligned}\right\}\right..$$
In the description of the functor that $\Gr_{G,X}$ represents, as in~\S\ref{ss:moduli-Gr} one can replace $(\mathcal F,\nu)$ by a pair $(\mathcal{F}',\nu')$ where
$\mathcal F'$ is a $G$-bundle on $\mathcal D_{x,R}$ and $\nu'$ is a trivialization of $\mathcal{F}'$ on
$\mathcal D_{x,R}^\times$; thus $G_{X,\mathcal O}$ acts on $\Gr_{G,X}$ by twisting
the trivialization; specifically, $\nu'$ gets replaced by $\nu'\circ
\mu^{-1}$. (See also~\cite[\S 3.1]{zhu} for more details about these groups schemes---over arbitrary powers of $X$---and their relation with the Be{\u\i}linson--Drinfeld Grassmannians.)

We consider the second projection $X^2 \to X$, and the pullback $G_{X,\mathcal O}\times_X X^2$ of the group scheme $G_{X,\mathcal{O}}$.
The result
acts on $\widetilde{\Gr_{G,X}\times\Gr_{G,X}}$ by twisting $\mu_1$, which
defines $p$ as a bundle.

In the definition of $\widetilde{\Gr_{G,X}\times\Gr_{G,X}}$, as above one can
replace $(\mathcal F_2,\nu_2)$ by a pair $(\mathcal{F}_2',\nu_2')$ where $\mathcal F'_2$ is a $G$-bundle on
$\mathcal D_{x_2,R}$ and $\nu'_2$ is a trivialization of $\mathcal{F}_2'$ on
$\mathcal D_{x_2,R}^\times$. The group scheme $G_{X,\mathcal O}\times_X X^2$ then
acts on $\widetilde{\Gr_{G,X}\times\Gr_{G,X}}$ by simultaneously twisting
both $\mu_1$ and $\nu'_2$. This action defines $q$ as a principal
bundle.

\subsection{Convolution product and fusion}
\label{ss:conv-fusion}

We go back to our convolution problem, starting this time with
diagram~\eqref{eqn:convolution-X2}. Since $p$ and $q$ are principal bundles, we
can define a convolution product $\star_X$ on
$\Per_{G_{X,\mathcal O}}(\Gr_{G,X},\bk)$ by setting
$$\mathscr M \star_X\mathscr N:=m_*(\mathscr M \, \widetilde\boxtimes \, \mathscr N),$$
where again $\mathscr M \, \widetilde\boxtimes \, \mathscr N$ is defined by the condition that
$$q^*(\mathscr M \, \widetilde\boxtimes \, \mathscr N)=
p^*(\mathscr M\boxtimes\mathscr N).$$
Here $\mathscr M$ and $\mathscr N$ are perverse sheaves on $\Gr_{G,X}$,
and the result $\mathscr M \star_X \mathscr N$ is in
$\Db_c(\Gr_{G,X^2},\bk)$.

\begin{rmk}
\begin{enumerate}
\item
To define the category $\Per_{G_{X,\mathcal O}}(\Gr_{G,X},\bk)$ we use a slight variant of the constructions of Appendix~\ref{sec:appendix}, where algebraic groups are replaced by group schemes over $X$. This does not require any new ingredient: one simply replaces products by fiber products over $X$ everywhere.
The same remarks as in~\S\ref{ss:appendix-Gr} are also in order here: we must consider perverse sheaves supported on a closed finite union of $G_{X,\mathcal{O}}$-orbits, and equivariant under some quotient $(G/H_n)_{X,\mathcal{O}}$. (A more sensible definition of a category of perverse sheaves on $\Gr_{G,X}$ is due to Reich; see~\cite[\S 5.4]{zhu}. These more sophisticated considerations will not be needed here.)
\item
It will follows from Lemma~\ref{lem:convolution-fusion-2} below that in fact $\mathscr M \star_X \mathscr N$ is a perverse sheaf. This perverse sheaf is clearly $G_{X,\mathcal O}$-equivariant, so that this operation indeed defines a functor from $\Per_{G_{X,\mathcal O}}(\Gr_{G,X},\bk) \times \Per_{G_{X,\mathcal O}}(\Gr_{G,X},\bk)$ to $\Per_{G_{X,\mathcal O}}(\Gr_{G,X},\bk)$.
\end{enumerate}
\end{rmk}

For the sake of simplicity,\footnote{The general situation can be
dealt with by putting the torsor of change of coordinates into the picture; see e.g.~\cite[\S 2.1.2]{gaitsgory} or~\cite[Discussion surrounding~(3.1.10)]{zhu} for details.} from now on we restrict to the special case $X=\mathbb A^1$. We can then
use a global coordinate on $X$, which yields a local coordinate at any
point $x\in X$, and therefore allows to identify $\Gr_{G,x}$ with the
affine Grassmannian $\Gr_G$ as we originally defined it. This also leads to
an identification $\Gr_{G,X}=\Gr_G\times X$. We let $\tau:\Gr_{G,X}\to\Gr_G$ be
the projection and define $\tau^\circ:=\tau^*[1] \cong \tau^![-1]$; the shift is introduced
so that $\tau^\circ$ takes a perverse sheaf on $\Gr_G$ to a perverse sheaf
on $\Gr_{G,X}$.


We explained in~\S\ref{ss:BD-Gr} that the restriction of $\Gr_{G,X^2}$ to the diagonal
$\Delta_X$ in $X^2$ is isomorphic to $\Gr_{G,X}$; we may then denote by
$i:\Gr_{G,X}=\Gr_{G,X^2}\bigl|_{\Delta_X}\hookrightarrow\Gr_{G,X^2}$
the closed embedding, and consider the functors $i^\circ:=i^*[-1]$ and $i^\bullet := i^![1]$.

\begin{lem}
\label{lem:convolution-fusion-1}
For $\mathscr F_1$ and $\mathscr F_2$ in $\Per_{\GO}(\Gr_G,\bk)$, we have canonical isomorphisms
$$i^\circ \bigl(\tau^\circ(\mathscr F_1) \star_X \tau^\circ(\mathscr F_2)\bigr) \cong
\tau^\circ(\mathscr F_1\star \mathscr F_2) \cong i^{\bullet} \bigl(\tau^\circ(\mathscr F_1) \star_X \tau^\circ(\mathscr F_2)\bigr).$$
\end{lem}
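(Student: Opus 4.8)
The plan is to compare the two restrictions of $\tau^\circ(\mathscr F_1) \star_X \tau^\circ(\mathscr F_2)$ with the ordinary convolution $\mathscr F_1 \star \mathscr F_2$ by carefully identifying the relevant fibers of the Be\u\i linson--Drinfeld convolution diagram~\eqref{eqn:convolution-X2} over the diagonal $\Delta_X$ and away from it. First I would observe that, using the global coordinate on $X = \mathbb{A}^1$ to trivialize everything, the restriction of diagram~\eqref{eqn:convolution-X2} to the diagonal $\Delta_X \cong X$ is (up to the identification $\Gr_{G,X} = \Gr_G \times X$) simply the product with $X$ of the ordinary convolution diagram~\eqref{eqn:diagram-convolution}: in the description of $\Gr_{G,X} \widetilde\times \Gr_{G,X}$ the condition that $x_1 = x_2$ forces the gluing to take place at a single point, recovering $\GK \times^{\GO} \Gr_G$ fibered over $X$. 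Hence base change for the Cartesian square expressing $m^{-1}(\Gr_{G,X^2}|_{\Delta_X})$ gives $i^* \bigl( \tau^\circ(\mathscr F_1) \star_X \tau^\circ(\mathscr F_2) \bigr) \cong \tau^*(\mathscr F_1 \star \mathscr F_2)[1]$ after accounting for the shifts built into $\tau^\circ$ and $i^\circ$; this yields the left-hand isomorphism.

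For the right-hand isomorphism I would instead work away from the diagonal and then take a limit onto it. Using the factorization isomorphism~\eqref{eqn:GrBD-away-diag}, over $X^2 \smallsetminus \Delta_X$ the object $\tau^\circ(\mathscr F_1) \star_X \tau^\circ(\mathscr F_2)$ becomes (a shift of) the external product $\mathscr F_1 \boxtimes \mathscr F_2$ on $(\Gr_G \times X) \times (\Gr_G \times X)$ restricted to the complement of the diagonal, which is in particular perverse there; combined with the semismallness already established (Proposition~\ref{prop:exactness-convolution}, whose argument applies verbatim in the $X$-family since all the maps are $G_{X,\mathcal O}$-equivariant and the dimension estimates of Lemma~\ref{lem:dimensions-semismall} hold fiberwise) one gets that $\tau^\circ(\mathscr F_1) \star_X \tau^\circ(\mathscr F_2)$ is perverse on all of $\Gr_{G,X^2}$, with no subobject or quotient supported on $\Gr_{G,X^2}|_{\Delta_X}$. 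Standard facts about perverse sheaves (the analogue of the nearby-cycles / "$j_{!*}$" behavior for a perverse sheaf with no sub or quotient along a divisor) then give that both $i^* [-1]$ and $i^! [1]$ of this object are perverse, and that the natural map $i^\bullet(-) \to i^\circ(-)$ is an isomorphism onto the perverse sheaf which is the "value at the diagonal"; feeding in the computation of $i^\circ$ from the previous paragraph identifies this with $\tau^\circ(\mathscr F_1 \star \mathscr F_2)$.

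Concretely, the key steps in order would be: (1) set up the Cartesian diagram realizing $m^{-1}$ of the diagonal locus and identify it with $X \times (\GK \times^\GO \Gr_G \to \Gr_G)$; (2) apply proper base change to get $i^*(\tau^\circ \mathscr F_1 \star_X \tau^\circ \mathscr F_2) \cong \tau^\circ(\mathscr F_1 \star \mathscr F_2)[{\pm}1]$ with the correct shift; (3) use factorization~\eqref{eqn:GrBD-away-diag} plus the family version of Proposition~\ref{prop:exactness-convolution} to conclude $\tau^\circ \mathscr F_1 \star_X \tau^\circ \mathscr F_2$ is perverse globally; (4) check it has no sub or quotient supported on the diagonal (again via factorization, since over the open part it is an external product of perverse sheaves, hence has no subobject/quotient supported on a proper closed subset of that open part, and the support of any such sub/quotient of the global object would be contained in the diagonal, contradicting that the restriction to the open part is nonzero on a dense subset of the support); (5) invoke the standard perverse-sheaf lemma that for such an object $i^![1]$ and $i^*[-1]$ are perverse and canonically isomorphic via the adjunction maps, and combine with step~(2).

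The main obstacle I expect is step~(4)--(5): making precise and correctly applying the statement that a perverse sheaf on $\Gr_{G,X^2}$ with no subobject and no quotient object supported on the divisor $\Gr_{G,X^2}|_{\Delta_X}$ has perverse, canonically isomorphic $*$- and $!$-restrictions to that divisor. One has to be careful that $i$ is a closed embedding of codimension $1$ (so that $i^*[-1]$ and $i^![1]$ are the correctly shifted functors), that the vanishing of ${}^p\mathscr{H}^{<0}(i^\circ(-))$ and ${}^p\mathscr{H}^{>0}(i^\bullet(-))$ is automatic while the reverse vanishings ${}^p\mathscr{H}^{>0}(i^\circ(-)) = 0$ and ${}^p\mathscr{H}^{<0}(i^\bullet(-)) = 0$ are exactly the "no quotient/sub along the divisor" conditions, and that the comparison map $i^\bullet \to i^\circ$ is identified correctly with the adjunction. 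I would handle this by citing the relevant statement from~\cite{bbd} (the theory around unipotent nearby cycles, or the direct lemma on perverse sheaves and a smooth divisor) rather than reproving it, and otherwise the argument is bookkeeping of shifts.
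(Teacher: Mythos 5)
Your argument for the first isomorphism ($i^\circ$) matches the paper's: use proper base change for $m$ over the diagonal to reduce to identifying the $*$-restriction of the twisted product $\tau^\circ\mathscr F_1\,\widetilde\boxtimes\,\tau^\circ\mathscr F_2$ over $\Delta_X$, which is immediate from the defining pullback equation and the identification of the diagonal fiber with $(\GK\times^\GO\Gr_G)\times\Delta_X$. That part is fine.

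The second isomorphism ($i^\bullet$) is where there is a genuine gap. Your step (5) appeals to a ``standard perverse-sheaf lemma'' according to which a perverse sheaf with no subobject or quotient supported on a smooth divisor has $i^![1]$ and $i^*[-1]$ perverse and canonically isomorphic. No such lemma exists; the hypothesis you state is exactly the condition that the perverse sheaf is the intermediate extension of its restriction to the open complement, and for intermediate extensions $i^*[-1]$ and $i^![1]$ are in general genuinely different. The basic counterexample is the $\IC$-extension to $\mathbb A^1$ of a rank-one local system on $\mathbb G_{\mathrm m}$ with nontrivial monodromy: the $*$- and $!$-restrictions to the origin differ (monodromy coinvariants versus invariants). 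There is indeed a natural ``Gysin'' map $i^![1]\to i^*[-1]$ for a smooth codimension-one embedding, but it is an isomorphism only under an additional hypothesis (triviality of the monodromy around the divisor, equivalently a universal local acyclicity condition), which the ``no sub/quotient'' condition from step (4) does not supply.

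The paper's proof avoids this issue entirely and never needs to know in advance that the $X^2$-convolution is a middle extension. It reduces, by proper base change along $m$, to computing both $(i')^*$ and $(i')^!$ of the twisted product on $\Gr_{G,X}\widetilde\times\Gr_{G,X}$. Because the maps $p$ and $q$ in the convolution diagram are smooth of the same relative dimension, Remark~\ref{rmk:def-conv-!*} shows that the twisted product can be characterized equally well by a $q^!$/$p^!$ equation as by the $q^*$/$p^*$ equation, so the $!$-restriction can be computed in exact parallel with the $*$-restriction; both yield $(\tau')^\circ(\mathscr F_1\,\widetilde\boxtimes\,\mathscr F_2)$ up to the appropriate shift. (Indeed in the paper the statement that the fusion product is a middle extension is Lemma~\ref{lem:convolution-fusion-2}, proved \emph{after} the present lemma and using it.) To repair your proof along your own lines you would need to establish a universal local acyclicity statement for the $X^2$-family, as in the alternative approach mentioned in the remark after the lemma; the ``no sub/quotient'' condition alone does not suffice.
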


\begin{proof}
Since the map $m$ in~\eqref{eqn:convolution-X2} is proper, and restricts over the diagonal $\Delta_X$ to the product of the map denoted $m$ in~\eqref{eqn:diagram-convolution} by $\id_{\Delta_X}$, using the base change theorem it suffices to provide canonical isomorphisms
\[
(i')^*(\tau^\circ \mathscr{F}_1 \, \widetilde{\boxtimes} \, \tau^\circ \mathscr{F}_2) \cong (\tau')^\circ(\mathscr{F}_1 \, \widetilde{\boxtimes} \, \mathscr{F}_2)[1], \quad
(i')^!(\tau^\circ \mathscr{F}_1 \, \widetilde{\boxtimes} \, \tau^\circ \mathscr{F}_2) \cong (\tau')^\circ(\mathscr{F}_1 \, \widetilde{\boxtimes} \, \mathscr{F}_2)[-1]
\]
where $i' : (G_{\mathcal{K}} \times^{G_{\mathcal{O}}} \Gr_G) \times \Delta_X \to \Gr_{G,X} \widetilde{\times} \Gr_{G,X}$ is the embedding, $\tau' : (G_{\mathcal{K}} \times^{G_{\mathcal{O}}} \Gr_G) \times \Delta_X \to G_{\mathcal{K}} \times^{G_{\mathcal{O}}} \Gr_G$ is the projection, and $(\tau')^\circ = (\tau')^*[1] \cong (\tau')^![-1]$. The first isomorphism is immediate from the definitions. The proof of the second one is similar, using Remark~\ref{rmk:def-conv-!*}.
\end{proof}

\begin{rmk}
 The isomorphism $i^\circ \bigl(\tau^\circ(\mathscr F_1) \star_X \tau^\circ(\mathscr F_2)\bigr) \cong i^\bullet \bigl(\tau^\circ(\mathscr F_1) \star_X \tau^\circ(\mathscr F_2)\bigr)$ observed in Lemma~\ref{lem:convolution-fusion-1} can also be deduced from more general considerations related to universal local acyclicity; see~\cite[Theorem~A.2.6 and proof of Proposition~5.4.2]{zhu}.
\end{rmk}


We now analyze the convolution diagram over $U=X^2\smallsetminus\Delta_X$:
\begin{equation}
\label{eqn:conv-diagram-away-diag}
\raisebox{24pt}{
{\small
\xymatrix@C=0.4cm{(\Gr_{G,X}\times\Gr_{G,X})|_U&\bigl(\widetilde{\Gr_{G,X}\times\Gr_{G,X}}
\bigl)\bigl|_U\ar[l]_p\ar[r]^q&(\Gr_{G,X}\widetilde\times\Gr_{G,X})|_U
\ar[r]^(.58)\sim_(.58)m\ar[dr]_{\pi\circ m}&\Gr_{G,X^2}|_U
\ar[d]^\wr_\pi\\&&&(\Gr_{G,X}\times\Gr_{G,X})|_U.}}
}
\end{equation}
Here $\pi$ is the isomorphism of~\eqref{eqn:GrBD-away-diag}, defined by 
\[
(\mathcal F,\nu,x_1,x_2) \mapsto
\bigl( (\mathcal F_1,\nu_1,x_1),(\mathcal F_2,\nu_2,x_2) \bigr),
\]
where $\mathcal F_i$ is obtained by gluing the trivial bundle on
$X_R \smallsetminus x_i$ and the bundle $\mathcal F$ on $\mathcal D_{x_i,R}$
using~$\nu$. We note that there exists an isomorphism
$$\bigl(\widetilde{\Gr_{G,X}\times\Gr_{G,X}}\bigl)\bigl|_U\xrightarrow\sim
\Bigl( (\Gr_{G,X}\times\Gr_{G,X}) \times_{X^2} (X\times G_{X,\mathcal O})
\Bigr) \bigl|_U$$
defined by 
\[
(\mathcal F_1,\nu_1,\mu_1,\mathcal F_2,\nu_2,x_1,x_2) \mapsto
\Bigl( ((\mathcal F_1,\nu_1,x_1),(\mathcal F_2,\nu_2,x_2)),
(x_1,(x_2,\mu_1^{-1}\circ\nu_1|_{\mathcal{D}_{x_2,R}}))\Bigr).
\]
Under this identification, the maps $p$ and
$\pi\circ m\circ q$ identify with
$$\xymatrix{(\Gr_{G,X}\times\Gr_{G,X})|_U&
\bigl[(\Gr_{G,X}\times\Gr_{G,X})\underset{X^2}\times(X\times G_{X,\mathcal O})
\bigr]\bigl|_U\ar[l]_-{p_1}\ar[r]^-{a}&(\Gr_{G,X}\times\Gr_{G,X})|_U,}$$
where $p_1$ is the projection on the first factor and $a$ is the
action of $G_{X,\mathcal O}$ on the second copy of $\Gr_{G,X}$.

It follows that if we identify the three spaces on the right-hand side of the
convolution diagram~\eqref{eqn:conv-diagram-away-diag}, then for any
$\mathscr M_1$, $\mathscr M_2$ in $\mathrm{Perv}_{G_{X,\mathcal O}}
(\Gr_{G,X}, \bk)$, the equivariant structure of $\mathscr M_2$
leads to canonical identifications
\begin{equation}
\label{eqn:identification-fusion}
\vcenter{
\xymatrix{(\mathscr M_1 \, \widetilde\boxtimes \, \mathscr M_2)|_U
\ar@{=}[r]\ar@{=}[dr]&(\mathscr M_1 \star_X \mathscr M_2)|_U\ar@{=}[d]\\
&(\mathscr M_1\boxtimes\mathscr M_2)|_U.}
}
\end{equation}


Consider now the open embedding $j:(\Gr_{G,X}\times\Gr_{G,X})|_U \overset{\eqref{eqn:GrBD-away-diag}}{\cong}
\Gr_{G,X^2}|_U\hookrightarrow\Gr_{G,X^2}$. 

\begin{lem}
\label{lem:convolution-fusion-2}
For any
$\mathscr F_1,\mathscr F_2\in\Per_{\GO}(\Gr_G,\bk)$, we have
$$j_{!*}\bigl((\tau^\circ\mathscr F_1\boxtimes\tau^\circ\mathscr F_2)|_U\bigr)
\cong(\tau^\circ\mathscr F_1)\star_X(\tau^\circ\mathscr F_2).$$
\end{lem}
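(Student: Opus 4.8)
The idea is to show that the object $(\tau^\circ \mathscr F_1) \star_X (\tau^\circ \mathscr F_2)$ is a perverse sheaf on $\Gr_{G,X^2}$ which, on the one hand, restricts over $U = X^2 \smallsetminus \Delta_X$ to the intersection-cohomology extension described in~\eqref{eqn:identification-fusion}, and on the other hand has no subobject or quotient supported on $\Gr_{G,X^2}|_{\Delta_X}$; by the characterization of $j_{!*}$ this will force the claimed isomorphism. First I would recall from Lemma~\ref{lem:convolution-fusion-1} and Lemma~\ref{lem:convolution-fusion-2}'s partner result that $(\tau^\circ \mathscr F_1) \star_X (\tau^\circ \mathscr F_2)$ is perverse: indeed $m$ in~\eqref{eqn:convolution-X2} is ind-proper, locally trivial, and stratified semismall (the semismallness over the diagonal and away from it reduces to Lemma~\ref{lem:dimensions-semismall} for $\Gr_G$ and to the classical semismallness of $m$ for the product $\Gr_G \times \Gr_G$, since the fibers of $m$ over $U$ are points by the isomorphism $m$ in~\eqref{eqn:conv-diagram-away-diag}), so Proposition~\ref{prop:semismall-exact} applies, using that $\tau^\circ \mathscr F_1 \, \widetilde\boxtimes \, \tau^\circ \mathscr F_2$ is perverse and constructible with respect to the stratification by subsets $\widetilde{\Gr_{G,X}}^{\,\lambda,\mu}$.

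Next, the restriction to $U$: by~\eqref{eqn:identification-fusion} we have $\bigl((\tau^\circ \mathscr F_1) \star_X (\tau^\circ \mathscr F_2)\bigr)|_U \cong (\tau^\circ \mathscr F_1 \boxtimes \tau^\circ \mathscr F_2)|_U$, which is exactly the object being intermediate-extended. Since $\tau^\circ = \tau^*[1]$ and $\tau$ is a smooth projection with one-dimensional fibers, $\tau^\circ \mathscr F_i$ is a perverse sheaf on $\Gr_{G,X}$; moreover, because $\mathscr F_i$ (being in the semisimple category $\Per_{\GO}(\Gr_G,\bk)$, by Theorem~\ref{thm:semisimplicity} and Corollary~\ref{cor:equivariance}) is a direct sum of $\IC_\lambda$'s, the object $\tau^\circ \mathscr F_i$ is a direct sum of $\IC$-sheaves on $\Gr_{G,X}$ (up to shift, $\IC_\lambda \boxtimes \underline{\bk}_X$), and hence $(\tau^\circ \mathscr F_1 \boxtimes \tau^\circ \mathscr F_2)|_U$ is a semisimple perverse sheaf on $(\Gr_{G,X}\times\Gr_{G,X})|_U$ with no simple summand supported on a proper closed subset — in particular it equals its own $j_{!*}$ from any smaller open set, so it is legitimate to speak of $j_{!*}$ of it along $j$.

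The heart of the argument is then to check that $(\tau^\circ \mathscr F_1) \star_X (\tau^\circ \mathscr F_2)$ has neither a nonzero subobject nor a nonzero quotient supported on the diagonal part $\Gr_{G,X^2}|_{\Delta_X} \cong \Gr_{G,X}$. For this I would compute $i^\circ = i^*[-1]$ and $i^\bullet = i^![1]$ of the convolution: by Lemma~\ref{lem:convolution-fusion-1}, both $i^\circ\bigl((\tau^\circ\mathscr F_1) \star_X (\tau^\circ\mathscr F_2)\bigr)$ and $i^\bullet\bigl((\tau^\circ\mathscr F_1) \star_X (\tau^\circ\mathscr F_2)\bigr)$ are identified with $\tau^\circ(\mathscr F_1 \star \mathscr F_2)$, which is a \emph{perverse} sheaf by Proposition~\ref{prop:exactness-convolution}. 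Thus $i^* \bigl((\tau^\circ\mathscr F_1) \star_X (\tau^\circ\mathscr F_2)\bigr)$ lives in perverse degree $\geq 1$ (it is $\tau^\circ(\mathscr F_1\star\mathscr F_2)[1]$, in strictly positive perverse degree) and $i^! \bigl((\tau^\circ\mathscr F_1) \star_X (\tau^\circ\mathscr F_2)\bigr) = \tau^\circ(\mathscr F_1\star\mathscr F_2)[-1]$ lives in perverse degree $\leq -1$. A perverse sheaf $\mathscr G$ on $\Gr_{G,X^2}$ with $i^*\mathscr G$ in perverse degrees $\geq 1$ and $i^!\mathscr G$ in perverse degrees $\leq -1$ has no sub- or quotient-object supported on $\Gr_{G,X^2}|_{\Delta_X}$ (such a summand $\mathscr K = i_* i^* \mathscr K$ perverse would give, via $i^!$ resp.~$i^*$, a nonzero map in degree $0$, contradicting the degree bounds — this is exactly the first of the four defining conditions of $j_{!*}$ in~\eqref{eqn:characterization-IC} applied with $S$ the open stratum). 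Combining: $(\tau^\circ \mathscr F_1) \star_X (\tau^\circ \mathscr F_2)$ is a perverse sheaf with no sub/quotient supported on $\Gr_{G,X^2} \smallsetminus U$ and with restriction to $U$ equal to $(\tau^\circ\mathscr F_1\boxtimes\tau^\circ\mathscr F_2)|_U$; these properties characterize $j_{!*}\bigl((\tau^\circ\mathscr F_1\boxtimes\tau^\circ\mathscr F_2)|_U\bigr)$ uniquely (see~\cite[\S 1.4.2]{bbd}), giving the lemma.

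\textbf{Main obstacle.} I expect the delicate point to be verifying cleanly that $(\tau^\circ \mathscr F_1) \star_X (\tau^\circ \mathscr F_2)$ is actually perverse — i.e.\ that $m$ in~\eqref{eqn:convolution-X2} is stratified semismall over \emph{all} of $X^2$, uniformly across the diagonal. Away from $\Delta_X$ this follows from the point-fiber property via~\eqref{eqn:conv-diagram-away-diag}, and on $\Delta_X$ from Lemma~\ref{lem:dimensions-semismall}; but interpolating — choosing strata of $\Gr_{G,X}\widetilde\times\Gr_{G,X}$ and $\Gr_{G,X^2}$ compatible with the projection to $X^2$ and checking the dimension inequality stratum by stratum — requires a little care with the relative-dimension bookkeeping (the extra $2$-dimensional base $X^2$, and the drop of fiber dimension by $1$ across the diagonal matching the codimension-$1$ diagonal). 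Alternatively, one can sidestep this by invoking the nearby-cycles / universal-local-acyclicity description of $\star_X$ (as in the remark after Proposition~\ref{prop:exactness-convolution} and in~\cite[\S 5.4]{zhu}), which gives perversity of $\mathscr M_1 \star_X \mathscr M_2$ directly; but staying within the semismallness framework already set up is cleaner for these notes.
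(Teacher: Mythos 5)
Your overall strategy is the same as the paper's: use the characterization of $j_{!*}$ from~\cite[Corollaire~1.4.24]{bbd} by checking the restriction to $U$ (which is~\eqref{eqn:identification-fusion}) and the perverse degree bounds on $i^*$ and $i^!$ (which follow from Lemma~\ref{lem:convolution-fusion-1} together with Proposition~\ref{prop:exactness-convolution}). However, you have the perverse degree bounds consistently inverted, and as written the argument would fail. If $\mathscr{A}$ is perverse then $\mathscr{A}[1]$ sits in perverse degree $-1$, not $+1$ (shifting by $[1]$ moves cohomological degree $k$ to $k-1$). So $i^*\bigl((\tau^\circ\mathscr F_1)\star_X(\tau^\circ\mathscr F_2)\bigr)\cong\tau^\circ(\mathscr F_1\star\mathscr F_2)[1]$ lies in ${}^p\hspace{-1pt}D^{\leq -1}$, and $i^!\bigl(\ldots\bigr)\cong\tau^\circ(\mathscr F_1\star\mathscr F_2)[-1]$ lies in ${}^p\hspace{-1pt}D^{\geq 1}$ --- which are precisely the BBD conditions. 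Your version (``$i^*$ in degrees $\geq 1$, $i^!$ in degrees $\leq -1$'') is not merely a typo: it is literally inconsistent with perversity, since for any perverse $\mathscr{G}$ one automatically has $i^*\mathscr{G}\in{}^p\hspace{-1pt}D^{\leq 0}$, so $i^*\mathscr{G}\in{}^p\hspace{-1pt}D^{\geq 1}$ would force $i^*\mathscr{G}=0$. You appear to have inverted both the shift convention and the ``no sub/quotient supported on the closed part'' criteria, so the two errors happen to cancel --- but as a piece of reasoning the middle of the argument is wrong.

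Separately, the upfront perversity-via-semismallness step is unnecessary, which is worth noting because it completely dissolves the ``main obstacle'' you flagged. The characterization of~\cite[Corollaire~1.4.24]{bbd} does not presuppose perversity of the extension: once one knows $j^*$ of the object is a given perverse sheaf, $i^*$ is in ${}^p\hspace{-1pt}D^{\leq -1}$, and $i^!$ is in ${}^p\hspace{-1pt}D^{\geq 1}$, perversity (and the identification with $j_{!*}$) follows automatically. So the delicate relative stratified-semismallness bookkeeping over $X^2$ can simply be skipped --- and the paper does skip it. The appeals to semisimplicity of $\Per_{\GO}(\Gr_G,\bk)$ are likewise a red herring: $j_{!*}$ applies to any perverse sheaf on $U$, with no IC or semisimplicity hypothesis, and indeed the paper reuses this lemma in Part~\ref{pt:arbitrary} (see~\S\ref{ss:convolution-k}, formula~\eqref{eqn:convolution-fusion-k}) for general coefficient rings where the category is far from semisimple, so a proof leaning on semisimplicity would not survive that generalization.
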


\begin{proof}
We will use the characterization of 
the left-hand side given by~\cite[Corollaire~1.4.24]{bbd}.
In fact, in~\eqref{eqn:identification-fusion} we have already obtained the desired description of $(\tau^\circ\mathscr F_1)\star_X(\tau^\circ\mathscr F_2)$ over $U$. Hence to conclude
it suffices to prove that
\begin{equation}
\label{eqn:conditions-IC-fusion}
i^*\bigl((\tau^\circ\mathscr F_1)\star_X(\tau^\circ\mathscr F_2)\bigr)\in{}^p \hspace{-1pt}
D^{\leq-1}\quad\text{and}\quad i^!\bigl((\tau^\circ\mathscr F_1) \star_X
(\tau^\circ\mathscr F_2)\bigr)\in{}^p \hspace{-1pt} D^{\geq1}.
\end{equation}
However, it follows from Lemma~\ref{lem:convolution-fusion-1} that
\[
i^*\bigl((\tau^\circ\mathscr F_1)\star_X(\tau^\circ\mathscr F_2)\bigr) \cong \tau^\circ (\mathscr{F}_1 \star \mathscr{F}_2)[1].
\]
By Proposition~\ref{prop:exactness-convolution} the right-hand side is concentrated in perverse degree $-1$, proving the first condition in~\eqref{eqn:conditions-IC-fusion}.
The second condition can be checked similarly, using the second isomorphism in Lemma~\ref{lem:convolution-fusion-1}.
\end{proof}

\begin{rmk}
 Once again, Lemma~\ref{lem:convolution-fusion-2} can also be deduced from more general considerations related to universal local acyclicity; see~\cite[Theorem~A.2.6 and proof of Proposition~5.4.2]{zhu}.
\end{rmk}

\subsection{Construction of the commutativity constraint}
\label{ss:construction-commutativity}

Combining Lemma~\ref{lem:convolution-fusion-1} and Lemma~\ref{lem:convolution-fusion-2}, we obtain a canonical isomorphism
\begin{equation}
\label{eqn:convolution-fusion}
\tau^\circ(\mathscr F_1 \star \mathscr F_2) \cong i^\circ j_{!*}\,\bigl((\tau^\circ
\mathscr F_1\boxtimes\tau^\circ\mathscr F_2)|_U\bigr),
\end{equation}
valid for any $\mathscr F_1,\mathscr F_2\in\Per_{\GO}(\Gr_G,\bk)$. In other words,
the convolution product $\mathscr F_1\star \mathscr F_2$ can also be obtained
by a procedure based on the Be{\u\i}linson--Drinfeld Grassmannians $\Gr_{G,X}$ and $\Gr_{G,X^2}$, called the
\emph{fusion product}.

Let $\mathrm{swap}:\Gr_{G,X^2}\to\Gr_{G,X^2}$ be the
automorphism that swaps $x_1$ and $x_2$.
Then we have $(\mathrm{swap}\circ i)=i$. Moreover, $\mathrm{swap}$ stabilizes $\Gr_{G,X^2}|_U$, and under the identification~\eqref{eqn:GrBD-away-diag} the induced automorphism of $(\Gr_{G,X} \times \Gr_{G,X})|_U$ (which we will denote $\mathrm{swap}_U$) swaps the two factors $\Gr_{G,X}$.
Therefore we obtain canonical isomorphisms
\begin{align*}
\tau^\circ(\mathscr F_1 \star \mathscr F_2)
&\cong i^\circ j_{!*}\bigl((\tau^\circ \mathscr F_1\boxtimes\tau^\circ \mathscr F_2)
|_U\bigr)\\[2pt]
&\cong i^\circ \mathrm{swap}^* j_{!*}\bigl((\tau^\circ\mathscr F_1\boxtimes
\tau^\circ\mathscr F_2)|_U\bigr)\\[2pt]
&\cong i^\circ j_{!*} (\mathrm{swap}_U)^* \bigl((\tau^\circ\mathscr F_1\boxtimes
\tau^\circ\mathscr F_2)|_U\bigr)\\[2pt]
&\cong i^\circ j_{!*}\bigl((\tau^\circ \mathscr F_2\boxtimes
\tau^\circ \mathscr F_1)|_U\bigr)\\[2pt]
&=\tau^\circ (\mathscr F_2 \star \mathscr F_1).
\end{align*}
Restricting to a point of $X$, we deduce a canonical isomorphism
\[
\mathscr F_1\star \mathscr F_2\xrightarrow\sim\mathscr F_2 \star \mathscr F_1,
\]
which provides a commutativity constraint for the category $\Per_{\GO}(\Gr_G,\bk)$.

\begin{rmk}\phantomsection
\label{rmk:fusion}
\begin{enumerate}
\item
Later we will modify this commutativity constraint by a sign to make sure that the functor $\F$ sends it to the standard commutativity constraint on vector spaces; see~\S\ref{ss:change-commutativity}.
\item
One may note here that the twisted product
$\Gr_{G,X} \widetilde\times\Gr_{G,X}$, while playing a key role in the proof, is
not involved in the definition of the fusion product, since the maps
$i$ and $j$ only deal with the Be{\u\i}linson--Drinfeld Grassmannians $\Gr_{G,X}$ and
$\Gr_{G,X^2}$. The two points $x_1$ and $x_2$, which are not
interchangeable in the definition of $\Gr_{G,X} \widetilde\times \Gr_{G,X}$,
play the same role in $\Gr_{G,X^2}$. This property is the basis for the
construction of the commutativity constraint.
\item
\label{it:fusion-associativity}
One can describe the associativity constraint considered in~\S\ref{ss:associativity} also in terms of the fusion procedure, using the Be{\u\i}linson--Drinfeld Grassmannian $\Gr_{G,X^3}$ over $X^3$.
\end{enumerate}
\end{rmk}

\section{Further study of the fiber functor}

\subsection{Compatibility of $\F$ with the convolution product}
\label{ss:F-convolution}

In Sections~\ref{sec:convolution}--\ref{sec:convolution-BD}
we have endowed our category $\Per_\GO(\Gr_G,\mathbf k)$ with a convolution
product $\star$, defined either in the ``easy'' way with the convolution diagram~\eqref{eqn:diagram-convolution}
or with the fusion procedure. The latter even allows to define a
commutativity constraint. We now want to show that the functor
$$\F=\coH^\bullet(\Gr_G,\bm ?):\Per_\GO(\Gr_G,\mathbf k)\to\Vect_{\mathbf k}$$
is a fiber functor in the sense of Remark~\ref{rmk:Tannaka}\eqref{it:fiber}; in other words that this is an exact and faithful functor
that maps the convolution product of sheaves to the tensor product of
vector spaces while respecting the associativity, the unit, and the
commutativity constraints of these categories.

The exactness and the faithfulness of $\F$ have already been
proved in Theorem~\ref{thm:fiber-functor}\eqref{it:fiber-functor-2}. 
The goal of this subsection is to prove the following.

\begin{prop}
\label{prop:F-product}
For any $\mathscr{A}_1$, $\mathscr{A}_2$ in $\Per_{\GO}(\Gr_G,\bk)$, there exists a canonical identification
\[
\F(\mathscr A_1 \star \mathscr A_2)=\F(\mathscr A_1)\otimes_{\bk}
\F(\mathscr A_2).
\]
\end{prop}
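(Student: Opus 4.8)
The plan is to deduce the proposition from the two descriptions of $\star$ introduced in Sections~\ref{sec:convolution}--\ref{sec:convolution-BD}, using the convolution diagram~\eqref{eqn:diagram-convolution} for the computation. First I would recall that $\F(\mathscr A) = \coH^\bullet(\Gr_G,\mathscr A) = \coH^\bullet(\Gr_G, a_* \mathscr A)$ where $a : \Gr_G \to \{\mathrm{pt}\}$, and that by definition $\mathscr A_1 \star \mathscr A_2 = m_*(\mathscr A_1 \, \widetilde\boxtimes \, \mathscr A_2)$ where $m : \GK \times^{\GO} \Gr_G \to \Gr_G$. Since $m$ is ind-proper, composing with $a$ gives
\[
\F(\mathscr A_1 \star \mathscr A_2) = \coH^\bullet\bigl(\GK \times^{\GO} \Gr_G, \mathscr A_1 \, \widetilde\boxtimes \, \mathscr A_2\bigr).
\]
So the task reduces to computing the total cohomology of the twisted product on $\GK \times^{\GO} \Gr_G$.

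The key step is a Künneth-type argument for the twisted product. Recall from~\eqref{eqn:diagram-convolution} that $\mathscr A_1 \, \widetilde\boxtimes \, \mathscr A_2$ is characterized by $q^*(\mathscr A_1 \, \widetilde\boxtimes \, \mathscr A_2) = p^*(\mathscr A_1 \boxtimes \mathscr A_2)$, where $p : \GK \times \Gr_G \to \Gr_G \times \Gr_G$ and $q : \GK \times \Gr_G \to \GK \times^{\GO} \Gr_G$ are the two projections. Using the isomorphism $\phi : \GK \times^{\GO} \Gr_G \xrightarrow{\sim} \Gr_G \times \Gr_G$, $[g,h] \mapsto ([g],[gh])$ (from the proof of Lemma~\ref{lem:dimensions-semismall}), the second projection $\mathrm{pr}_2 : \Gr_G \times \Gr_G \to \Gr_G$ composed with $\phi$ is exactly $m$, while $\mathrm{pr}_1 \circ \phi = m'$ where $m'[g,h] = [g]$ is the projection. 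Under $\phi$, the twisted product $\mathscr A_1 \, \widetilde\boxtimes \, \mathscr A_2$ becomes a complex on $\Gr_G \times \Gr_G$ which restricts on each fiber $\{x\} \times \Gr_G$ of $\mathrm{pr}_1$ to a translate of $\mathscr A_2$, while $\mathscr A_1$ controls the base direction; more precisely, $\mathscr A_1 \, \widetilde\boxtimes \, \mathscr A_2$ is locally (in the smooth topology on the base $\Gr_G$ via the torsor $\GK \to \Gr_G$) isomorphic to $\mathscr A_1 \boxtimes \mathscr A_2$. I would then invoke the Künneth formula: since the fibration $m' : \GK \times^{\GO}\Gr_G \to \Gr_G$ is (Zariski, hence topologically) locally trivial with fiber $\Gr_G$, and since $\mathscr A_1 \, \widetilde\boxtimes \, \mathscr A_2$ is ``built from'' $\mathscr A_1$ on the base and $\mathscr A_2$ on the fiber, one gets
\[
\coH^\bullet\bigl(\GK \times^{\GO} \Gr_G, \mathscr A_1 \, \widetilde\boxtimes \, \mathscr A_2\bigr) \cong \coH^\bullet(\Gr_G, \mathscr A_1) \otimes_\bk \coH^\bullet(\Gr_G, \mathscr A_2),
\]
which is exactly $\F(\mathscr A_1) \otimes_\bk \F(\mathscr A_2)$. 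Concretely, one applies the projection formula and proper base change to $m'$: $R(m')_* (\mathscr A_1 \, \widetilde\boxtimes \, \mathscr A_2) \cong \mathscr A_1 \otimes_\bk \coH^\bullet(\Gr_G, \mathscr A_2)$ (the right-hand side meaning $\mathscr A_1$ tensored with the graded vector space $\coH^\bullet(\Gr_G,\mathscr A_2)$), using that the twisted product is ``constant along the base in the $\mathscr A_2$-direction'' up to the $\GO$-twist; then take $\coH^\bullet(\Gr_G, -)$ of both sides.

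The main obstacle, and the step requiring genuine care, is making the Künneth isomorphism above canonical and verifying it commutes with the various structural maps (so that it can later be used in the verification of the associativity and commutativity constraints). The issue is that $\mathscr A_1 \, \widetilde\boxtimes \, \mathscr A_2$ is \emph{not} literally an external product on $\GK \times^{\GO}\Gr_G$ — it only becomes one after pulling back along the $\GO$-torsor $q$ — so one must argue that the $\GO$-equivariance of $\mathscr A_2$ allows the cohomology of the fibers of $m'$ to be trivialized coherently. The cleanest route is: first establish $R(m')_*(\mathscr A_1 \, \widetilde\boxtimes \, \mathscr A_2) \cong \mathscr A_1 \otimes_\bk \F(\mathscr A_2)$ by checking the two sides agree after $q^*$ (where the twisted product becomes $p^*(\mathscr A_1 \boxtimes \mathscr A_2)$, for which the ordinary Künneth/projection formula applies), using that $q^*$ is fully faithful on the relevant equivariant categories; then apply $R a_*$ and the ordinary projection formula once more. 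The compatibility with the monoidal structure constraints — which is what the proof will actually need downstream — would be addressed by tracking these isomorphisms through the fusion description~\eqref{eqn:convolution-fusion} and the analogous Künneth statement for the Be{\u\i}linson--Drinfeld Grassmannian, but for the bare statement of Proposition~\ref{prop:F-product} the projection-formula argument above suffices.
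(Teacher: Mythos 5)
Your argument takes a genuinely different route from the paper. The paper proves Proposition~\ref{prop:F-product} via the fusion picture: it forms $\mathscr B=(\tau^\circ\mathscr A_1)\star_X(\tau^\circ\mathscr A_2)$ on the Be{\u\i}linson--Drinfeld Grassmannian $\Gr_{G,X^2}$, observes (via Lemma~\ref{lem:convolution-fusion-1} and~\eqref{eqn:identification-fusion}) that the cohomology sheaves of $f_*\mathscr B$ have the two desired fibers over $\Delta_X$ and over $U=X^2\smallsetminus\Delta_X$ respectively, and then shows by a d\'evissage argument (using local triviality of the maps $\Gr^\lambda_{G,X}\widetilde\times\Gr^\mu_{G,X}\to\Gr^\lambda_{G,X}\times X\to X^2$) that these cohomology sheaves are locally constant over all of $X^2$ --- whence constant, whence canonically identified with their global sections, giving the canonical isomorphism. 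You instead work entirely on the local convolution diagram~\eqref{eqn:diagram-convolution}, pushing forward along the projection $m':\GK\times^\GO\Gr_G\to\Gr_G$, $[g,h]\mapsto[g]$, and arguing by smooth base change along the Cartesian square built from the $\GO$-torsor $\pi:\GK\to\Gr_G$ and by the K\"unneth/projection formula that $R(m')_*(\mathscr A_1\,\widetilde\boxtimes\,\mathscr A_2)\cong\mathscr A_1\otimes_\bk\F(\mathscr A_2)$, with $\GO$-equivariance of $\mathscr A_2$ providing the descent. This is a perfectly legitimate strategy (closer in spirit to Ginzburg's treatment than to Mirkovi\'c--Vilonen's), and over a characteristic-$0$ field the flatness issues that would otherwise make the K\"unneth step delicate disappear.

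Two points deserve attention. First, a notational slip: you write ``checking the two sides agree after $q^*$,'' but $q^*$ pulls back from $\GK\times^\GO\Gr_G$ to $\GK\times\Gr_G$, while the two sides of your putative isomorphism live on $\Gr_G$; what you actually need is to pull back both sides along $\pi^*$, use smooth base change in the Cartesian square $\GK\times\Gr_G\to\GK$, $\GK\times^\GO\Gr_G\to\Gr_G$ to rewrite $\pi^*R(m')_*$ as $R(p_1)_*q^*$ (so the twisted product becomes the honest external product), apply K\"unneth there, and then descend along $\pi$ using $\GO$-equivariance of everything in sight. Second, and more substantively, you correctly flag that your identification, while canonical, is not obviously the same one that the paper's fusion argument produces; since~\S\ref{ss:change-commutativity} verifies compatibility of $\F$ with the commutativity constraint entirely through the fusion description and the constancy-of-cohomology-sheaves argument, the paper's choice is dictated by its downstream needs. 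The trade-off: your approach is shorter and more self-contained for the bare statement, but the paper's pays for itself in the commutativity check.
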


\begin{proof}
The proof will use the
fusion procedure. 
Recall the setup of Section~\ref{sec:convolution-BD} (in the special case $X=\mathbb{A}^1$), and in particular diagram~\eqref{eqn:convolution-X2}.
Let $\mathscr A_1,\mathscr A_2$ in $\Per_\GO(\Gr_G,\mathbf k)$, and set $\mathscr{B}:=(\tau^\circ\mathscr A_1) \star_X(\tau^\circ\mathscr A_2)$. Then if $f : \Gr_{G,X^2} \to X^2$ is the natural map,
Lemma~\ref{lem:convolution-fusion-1} and~\eqref{eqn:identification-fusion}
%
%
translate to the following properties: for each $k\in\mathbf Z$,\\[-25pt]
\begin{itemize}
\item
the $k$-th cohomology sheaf of the complex $(f_*\mathscr B)|_{\Delta_X}[-2]$ is
locally constant on $\Delta_X$, with stalk
$\coH^k(\Gr_G,\mathscr A_1 \star \mathscr A_2)$;
\item
the $k$-th cohomology sheaf of the complex $(f_*\mathscr B)|_U[-2]$ is
locally constant on $U$, with stalk
$\coH^k(\Gr_G\times\Gr_G,\mathscr A_1\boxtimes\mathscr A_2)$, which identifies with $\bigoplus_{i+j=k} \coH^i(\Gr_G,\mathscr A_1)\otimes \coH^j(\Gr_G,\mathscr A_2)$ by the K\"unneth formula.
\end{itemize}

From there, we will be able to deduce the desired identification
$$\coH^k(\Gr_G,\mathscr A_1 \star \mathscr A_2) \cong \bigoplus_{i+j=k}
\coH^i(\Gr_G,\mathscr A_1)\otimes \coH^j(\Gr_G,\mathscr A_2)$$
as soon as we know that $\mathscr H^{k-2}(f_*\mathscr B)$ is locally
constant on the whole space $X^2$. (Indeed, then this sheaf will be constant, so that we will be able to identify any of its fibers with its global sections canonically.)

We now prove this fact. Set $\widetilde{\mathscr{B}} := (\tau^\circ\mathscr A_1) \, \widetilde{\boxtimes} \, (\tau^\circ\mathscr A_2)$, so that $\mathscr{B}=m_* \widetilde{\mathscr{B}}$. If we
set $\tilde{f}=f\circ m$, we have
$f_*\mathscr B=\tilde{f}_*\widetilde{\mathscr B}$.
For $\lambda,\mu\in X_*(T)^+$, set
\[
\Gr_{G,X}^\lambda=\tau^{-1}(\Gr_G^\lambda),\quad
\Gr_{G,X}^\mu=\tau^{-1}(\Gr_G^\mu),
\]
and define
$\Gr_{G,X}^\lambda \, \widetilde{\times}\,\Gr_{G,X}^\mu \subset \Gr_{G,X} \, \widetilde{\times}\,\Gr_{G,X}$
by the requirement
\[
q^{-1} \bigl( \Gr_{G,X}^\lambda \, \widetilde{\times}\,\Gr_{G,X}^\mu \bigr) =
p^{-1} \bigl( \Gr_{G,X}^\lambda
\times\Gr_{G,X}^\mu\bigr).
\]
(This definition makes sense, since $\Gr_{G,X}^\mu$ is stable under the left action of
$G_{X,\mathcal O}$). Then
$$\widetilde{\mathscr S}=\{\Gr_{G,X}^\lambda\widetilde\times\,\Gr_{G,X}^\mu\mid\lambda, \mu \in X_*(T)^+\}$$
is a stratification of $\Gr_{G,X} \, \widetilde{\times}\,\Gr_{G,X}$, and $\widetilde{\mathscr{B}}$ is
$\widetilde{\mathscr S}$-constructible. To show that the cohomology sheaves
of $\tilde f_*\widetilde{\mathscr B}$ are locally constant, it suffices by
d\'evissage\footnote{More precisely, one uses the following claim: the complexes $\mathscr M$
such that the cohomology sheaves $\mathscr H^k\tilde{f}_*\mathscr M$ are local
systems form a full \emph{triangulated} subcategory of
$\Db_c(\Gr_{G,X}\widetilde\times\,\Gr_{G,X}, \bk)$. To prove this claim, consider a
distinguished triangle $\mathscr M'\to\mathscr M\to\mathscr
M''\xrightarrow{[1]}$ with $\mathscr M'$ and $\mathscr M''$ in the
subcategory. The long exact sequence in cohomology expresses
$\mathscr H^k\tilde f_*\mathscr M$ as an extension
of $\ker(\mathscr H^k\tilde f_*\mathscr M''\to\mathscr H^{k+1}
\tilde f_*\mathscr M')$ by $\coker(\mathscr H^{k-1}\tilde f_*\mathscr
M''\to\mathscr H^k\tilde f_*\mathscr M')$, hence as an extension
of two local systems. Therefore $\mathscr H^k\tilde f_*\mathscr M$
is a local system for each $k$, which means that $\mathscr M$ belongs
to our subcategory.} to check that for each $k\in\mathbf Z$ and each
stratum $S\in\widetilde{\mathscr S}$, the sheaf
$\mathscr H^k\tilde f_*\underline{\mathbf k}_S$ is locally constant.

Let $\widehat\Gr_{G,X^2}$ be the ind-scheme representing the functor
$$R\mapsto\left\{(\mathcal F_1,\nu_1,\mu_1,x_1,x_2)\left|\;\;\;
\begin{aligned}
\;&(x_1,x_2)\in X^2(R)\\
&\mathcal F_1\;\text{ $G$-bundle on }X_R\\
&\nu_1\text{ trivialization of $\mathcal F_1$ on }X_R\smallsetminus x_1\\
&\mu_1\text{ trivialization of $\mathcal{F}_1$ on }\mathcal D_{x_2,R}
\end{aligned}\right\}\right.\Bigr/\text{isomorphism}.$$
There is a
natural map $q':\widehat\Gr_{G,X^2}\to\Gr_{G,X}\times X$, that simply forgets
$\mu_1$. The group scheme $X \times G_{X,\mathcal O}$ acts on $\widehat\Gr_{G,X_2}$
by twisting $\mu_1$, and $q'$ is a bundle for this action. (To justify this, we need to check that a trivialization $\mu_1$ exists for any $(\mathcal{F}_1, \nu_1, x_1)$ in $\Gr_{G,X}(R)$, possibly after base change associated with a faithfully flat extension $R \to R'$. This fact is clear if $x_2 \neq x_1$, and follows from the results recalled in~\S\ref{ss:moduli-Gr} when $x_1=x_2$.)

On the other
hand, $\Gr_{G,X}$ classifies data $(\mathcal F_2,\nu_2,x_2)$ with
$\mathcal F_2$ a $G$-bundle on $\mathcal D_{x_2}$ and $\nu_2$ a
trivialization on $\mathcal D_{x_2}^\times$ (see~\S\ref{ss:moduli-Gr}), so the
group scheme $G_{X,\mathcal O}$ acts on $\Gr_{G,X}$ by twisting $\nu_2$.
We claim that we have an identification
\[
\Gr_{G,X} \, \widetilde{\times} \,\Gr_{G,X}
=
\widehat \Gr_{G,X^2}
\times^{(X \times G_{X,\mathcal O})}
(X \times \Gr_{G,X})
\]
such that the map induced by $q'$ identifies with the map $\Gr_{G,X} \, \widetilde{\times} \,\Gr_{G,X} \to \Gr_{G,X}\times X$ induced by the projection $\Gr_{G,X} \to X$ in the second factor.
In fact, this identification sends an element $(\mathcal{F}_1, \mathcal{F}, \nu_1, \eta, x_1, x_2)$ in $ \bigl( \Gr_{G,X} \, \widetilde{\times} \,\Gr_{G,X} \bigr)(R)$ to the class of the pair
\[
\bigl( (\mathcal{F}_1, \nu_1, \mu_1,x_1,x_2), (\mathcal{F}_2, \nu_2,x_2) \bigr)
\]
where $\mu_1$ is a choice of trivialization of $\mathcal{F}_1$ on $\mathcal{D}_{x_2,R}$, $\mathcal{F}_2$ is the restriction of $\mathcal{F}$ to $\mathcal{D}_{x_2,R}$, and $\nu_1$ is the composition of the isomorphism $\mathcal{F}_1 |_{\mathcal{D}_{x_2,R}^\times} \xrightarrow{\sim} \mathcal{F} |_{\mathcal{D}_{x_2,R}^\times}$ induced by $\eta$ with the trivialization of $\mathcal{F}_1 |_{\mathcal{D}_{x_2,R}^\times}$ induced by $\mu_1$. The inverse map sends the class of $\bigl( (\mathcal{F}_1, \nu_1, \mu_1,x_1,x_2), (\mathcal{F}_2, \nu_2,x_2) \bigr)$ to $(\mathcal{F}_1, \mathcal{F}, \nu_1, \eta, x_1, x_2)$, where $\mathcal{F}$ is the $G$-bundle obtained by gluing $\mathcal{F}_1|_{X \smallsetminus x_2}$ and $\mathcal{F}_2$ using the gluing datum provided by the trivializations $\mu_1$ and $\nu_2$.

These considerations show that the morphism $\Gr_{G,X} \, \widetilde{\times} \,\Gr_{G,X} \to \Gr_{G,X} \times X$ is a locally trivial fibration. Now,
take $\lambda,\mu\in X_*(T)^+$, and set
$S=\Gr_{G,X}^\lambda \,\widetilde{\times}\,\Gr_{G,X}^\mu$. The base change
corresponding to the inclusion $\Gr_{G,X}^\lambda\hookrightarrow\Gr_{G,X}$
and the fiber change corresponding to the inclusion
$\Gr_{G,X}^\mu\hookrightarrow\Gr_{G,X}$ show that the natural map
\[
S = \Gr_{G,X}^\lambda \, \widetilde{\times}\,\Gr_{G,X}^\mu
\to \Gr_{G,X}^\lambda\times X
\]
is a locally trivial fibration with fiber $\Gr_G^\mu$. It follows that
the cohomology sheaves of the pushforward of
$\underline{\mathbf k}_S$ along this map
are locally constant on $\Gr_{G,X}^\lambda\times X$. Further, the projection
$\Gr_{G,X}^\lambda\to X$ is also a locally trivial fibration, and by a last
d\'evissage argument, we conclude that $\tilde{f}_*\underline{\mathbf k}_S$
has locally constant cohomology sheaves, as desired.
\end{proof}

\begin{rmk}
 See~\cite[\S 5.2]{zhu} for a sketch of a different proof of Proposition~\ref{prop:F-product}, based on the use of equivariant cohomology. (This proof does not extend to more general coefficients, since~\cite[Theorem~A.1.10]{zhu} has no analogue for general coefficients.)
\end{rmk}

\subsection{Compatibility with the commutativity constraint}
\label{ss:change-commutativity}

It should be clear (see in particular Remark~\ref{rmk:fusion}\eqref{it:fusion-associativity}) that the identification provided by Proposition~\ref{prop:F-product} sends the associativity constraint on $\Per_{\GO}(\Gr_G,\bk)$ to the natural associativity constraint on $\Vect_\bk$. The situation is slightly more subtle for the commutativity constraint.

By construction,
the fiber functor $\F$ factors through a functor from $\Per_\GO(\Gr_G,\mathbf k)$
to the category $\Vect_{\mathbf k}(\mathbf Z)$ of $\mathbf Z$-graded
$\mathbf k$-vector spaces. We can endow the latter with the usual
structure of tensor category or with the supersymmetric structure;
the difference between the two structures is the definition of the
commutativity constraint, which involves a sign in the super case (see in particular Example~\ref{ex:tannakian-reconstruction}\eqref{it:SVec}).

Recall the notion of even and odd components of $\Gr_G$ from~\S\ref{ss:def-Gr}. It follows in particular from Theorem~\ref{thm:fiber-functor}\eqref{it:fiber-functor-1} that if $\mathscr{A}$ is supported on an even (resp.~odd) component of $\Gr_G$ then $\coH^\bullet(\Gr_G, \mathscr{A})$ is concentrated in even (resp.~odd) degrees. Looking closely at the constructions in~\S\ref{ss:F-convolution}, one can check that the functor $\F$
maps the commutativity constraint
on $\Per_\GO(\Gr_G,\mathbf k)$ defined in~\S\ref{ss:construction-commutativity} to the
supersymmetric commutativity constraint on $\Vect_{\mathbf k}(\mathbf Z)$. (This is related to the fact that the canonical isomorphism $(\mathrm{swap}_U)^* \bigl((\tau^\circ\mathscr F_1\boxtimes
\tau^\circ\mathscr F_2)|_U\bigr) \cong (\tau^\circ\mathscr F_2\boxtimes
\tau^\circ\mathscr F_1)|_U$ involves some signs, since it requires to swap the order in a tensor product of complexes.)

However, to be in a position to apply the Tannakian reconstruction
theorem from Section~\ref{sec:Tannakian}, we need to make sure that $\F$ maps the commutativity constraint
on $\Per_\GO(\Gr_G,\mathbf k)$ to the usual (unsigned) commutativity
constraint on $\Vect_{\mathbf k}(\mathbf Z)$. One solution consists in
altering the commutativity constraint on $\Per_\GO(\Gr_G,\mathbf k)$
by an appropriate sign. 
In fact, due to the change of parity introduced in the functor $\tau^\circ$, one must multiply the isomorphism of~\S\ref{ss:construction-commutativity} by $-1$ for the summands of the perverse sheaves $\mathscr{F}_1$ and $\mathscr{F}_2$ supported on even components of $\Gr_G$. This is the commutativity constraint that we will consider below.

\subsection{Compatibility with the weight functors}
\label{ss:compatibility-weight-functors}

We have noticed in~\S\ref{ss:change-commutativity} that the fiber functor
$\F:\Per_\GO(\Gr_G,\mathbf k)\to\Vect_{\mathbf k}$ in fact factors through
the category $\Vect_{\mathbf k}(\mathbf Z)$ of $\mathbf Z$-graded
$\mathbf k$-vector spaces. We can enhance this result using
the weight functors of~\S\ref{ss:weight-functors}. In fact by Theorem~\ref{thm:fiber-functor}\eqref{it:fiber-functor-1} we have a commutative diagram
$$\xymatrix@R=0.6em@C=7em{&\Vect_{\mathbf k}(X_*(T))
\ar[dd]^{\mathrm{forget}}\\
\Per_\GO(\Gr_G,\mathbf k)\ar[ur]|{\;\bigoplus_\mu \F_\mu\,}
\ar[dr]|{\;\F\;}\\&\Vect_{\mathbf k}}$$
where $\Vect_{\mathbf k}(X_*(T))$ is the category of $X_*(T)$-graded
$\mathbf k$-vector spaces. Recall from Example~\ref{ex:tannakian-reconstruction}\eqref{it:graded-Vect} that the category
$\Vect_{\mathbf k}(X_*(T))$ admits a natural tensor product, with commutativity
and associativity constraints.

\begin{prop}
\label{prop:F-mu-tensor-char-0}
The functor $\bigoplus_\mu \F_\mu$ sends the convolution product $\star$ to the tensor product of $X_*(T)$-graded $\bk$-vector spaces, in a way compatible with the associativity and commutativity constraints.
\end{prop}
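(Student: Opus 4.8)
The plan is to deduce this from Proposition~\ref{prop:F-product} by refining that isomorphism so that it becomes homogeneous for the $X_*(T)$-grading, and then to obtain the compatibility with the constraints essentially for free. Precisely, the crux is to produce, for $\mathscr A_1,\mathscr A_2\in\Per_\GO(\Gr_G,\bk)$ and $\nu\in X_*(T)$, a canonical isomorphism
$$\F_\nu(\mathscr A_1\star\mathscr A_2)\;\cong\;\bigoplus_{\mu_1+\mu_2=\nu}\F_{\mu_1}(\mathscr A_1)\otimes_\bk\F_{\mu_2}(\mathscr A_2),$$
and to check that its sum over $\nu$ coincides with the isomorphism of Proposition~\ref{prop:F-product} (under the identification $\F=\bigoplus_\mu\F_\mu$ and the K\"unneth formula).

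For the displayed isomorphism I would work with the convolution diagram~\eqref{eqn:diagram-convolution} together with the $T$-equivariant isomorphism $\phi:\GK\times^\GO\Gr_G\xrightarrow{\sim}\Gr_G\times\Gr_G$, $[g,h]\mapsto([g],[gh])$, of the proof of Lemma~\ref{lem:dimensions-semismall}, where $T$ acts on the source by left multiplication on $\GK$ and diagonally on the target. Under $\phi$ the map $m$ is the second projection, so $m^{-1}(S_\nu)=\phi^{-1}(\Gr_G\times S_\nu)=\bigsqcup_{\mu_1}\phi^{-1}(S_{\mu_1}\times S_\nu)$, a locally closed decomposition whose pieces are the attracting sets (for a regular dominant one-parameter subgroup) of the fixed points $[t^{\mu_1},t^{\nu-\mu_1}]$ lying over $L_\nu$. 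Since $m$ is ind-proper, proper base change gives $\F_\nu(\mathscr A_1\star\mathscr A_2)=\coH^{\langle 2\rho,\nu\rangle}_c\bigl(m^{-1}(S_\nu),(\mathscr A_1\,\widetilde\boxtimes\,\mathscr A_2)|_{m^{-1}(S_\nu)}\bigr)$; the spectral sequence attached to this stratification degenerates because, by (the generalisation of) Proposition~\ref{prop:weight-functors} recorded in Remark~\ref{rmk:weight-functors}, every piece contributes only in degree $\langle 2\rho,\nu\rangle$, exactly as in the proof of Theorem~\ref{thm:fiber-functor}; and on the $\mu_1$-piece one identifies, using the left $\NK$-equivariance of $\mathscr A_1\,\widetilde\boxtimes\,\mathscr A_2$ and the relation $t^{-\mu_1}\NK t^{\mu_1}=\NK$, the fibre of $\phi^{-1}(S_{\mu_1}\times S_\nu)\to S_{\mu_1}$ with $S_{\nu-\mu_1}$ and the restriction of $\mathscr A_1\,\widetilde\boxtimes\,\mathscr A_2$ with a (suitably translated) exterior product of $\mathscr A_1|_{S_{\mu_1}}$ and $\mathscr A_2|_{S_{\nu-\mu_1}}$, so that the K\"unneth formula yields $\F_{\mu_1}(\mathscr A_1)\otimes\F_{\nu-\mu_1}(\mathscr A_2)$. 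Summing over $\mu_1$ gives the isomorphism, and tracing through the construction shows it refines that of Proposition~\ref{prop:F-product}. (Alternatively one could obtain this from Braden's theorem applied to the proper $\C^\times$-equivariant morphism $m$, i.e.\ from the functoriality of hyperbolic localization under proper pushforward.)

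Granting this, the compatibility with the associativity and commutativity constraints follows formally. The forgetful functor $\Vect_\bk(X_*(T))\to\Vect_\bk$ is a faithful tensor functor for the unsigned tensor structure on $\Vect_\bk(X_*(T))$ recalled in Example~\ref{ex:tannakian-reconstruction}\eqref{it:graded-Vect}, and its composition with $\bigoplus_\mu\F_\mu$ is $\F$, which by~\S\ref{ss:change-commutativity} (with the sign modification adopted there) sends the associativity and commutativity constraints of $\Per_\GO(\Gr_G,\bk)$ to the standard unsigned ones on $\Vect_\bk$; since forgetting the grading is faithful, the corresponding coherence diagrams for $\bigoplus_\mu\F_\mu$ already commute in $\Vect_\bk(X_*(T))$. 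If one prefers a direct check: associativity follows from the triple convolution $\mathrm{Conv}_3$ and the analogous decomposition of $\GK\times^\GO\GK\times^\GO\Gr_G$, while for commutativity one uses the fusion description~\eqref{eqn:convolution-fusion}, noting that over $U=X^2\smallsetminus\Delta_X$ the weight decomposition of $\coH^\bullet$ is graded by $\mu_1+\mu_2$ (by K\"unneth and Theorem~\ref{thm:fiber-functor}) and that $\mathrm{swap}$ interchanges the two copies of $\Gr_{G,X}$, hence acts by the K\"unneth transposition $\mu_1\leftrightarrow\mu_2$; the Koszul sign of that transposition is cancelled by the sign of~\S\ref{ss:change-commutativity}, consistently with the fact that $\F_\mu$ lands in cohomological degree $\langle 2\rho,\mu\rangle$, whose parity is the even/odd invariant of the component of $\Gr_G$ from~\S\ref{ss:def-Gr}.

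The main obstacle is the geometric heart of the second step: understanding precisely how the semi-infinite orbits ``multiply'' inside the convolution diagram. Concretely one must establish the decomposition $m^{-1}(S_\nu)=\bigsqcup_{\mu_1}\phi^{-1}(S_{\mu_1}\times S_\nu)$ and, more delicately, identify the restriction of the twisted product $\mathscr A_1\,\widetilde\boxtimes\,\mathscr A_2$ to each stratum as a (translated) exterior product of restrictions of $\mathscr A_1$ and $\mathscr A_2$ — this is where the left $\NK$-equivariance of $\mathscr A_1\,\widetilde\boxtimes\,\mathscr A_2$, the identity $t^{-\mu_1}\NK t^{\mu_1}=\NK$ and the local triviality of $q$ enter — together with the local constancy along $S_{\mu_1}$ of the resulting fibre $\F_{\nu-\mu_1}(\mathscr A_2)$. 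Everything else (the degeneration of the spectral sequence, the K\"unneth identifications, and the constraint compatibilities) is then routine, being parallel to the proofs of Proposition~\ref{prop:weight-functors}, Theorem~\ref{thm:fiber-functor} and Proposition~\ref{prop:F-product}.
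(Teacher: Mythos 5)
Your proposal takes a genuinely different route from the paper's. The paper proves this in the fusion picture over $X^2$: it introduces the relative attracting sets $S_\mu(X^2)\subset\Gr_{G,X^2}$, shows the sheaf $\mathscr L_\mu(\mathscr B)=\mathscr H^{\langle 2\rho,\mu\rangle-2}(f\tilde s''_\mu)_!(\tilde s''_\mu)^*\mathscr B$ is a direct summand of the already-proven-constant $\mathscr H^{k-2}f_*\mathscr B$, hence a constant local system, and then identifies its stalks over $\Delta_X$ and $U$. This parallels the proof of Proposition~\ref{prop:F-product} step by step, so the compatibility of the graded isomorphism with the ungraded one (needed for the constraint-compatibility argument) is automatic. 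You instead work entirely in the classical convolution diagram~\eqref{eqn:diagram-convolution}, stratifying $m^{-1}(S_\nu)\cong\Gr_G\times S_\nu$ (via $\phi$) by the pieces $S_{\mu_1}\times S_\nu$ and computing the contribution of each. Both routes appear in the literature; yours avoids the Be{\u\i}linson--Drinfeld Grassmannian, while the paper's avoids the twisted-product bookkeeping and keeps commutativity (which is defined through fusion) in the same framework in which it was constructed.

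That said, there are two places where real work is being elided, one of which you flag and one of which you do not.

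\emph{The twisted-product identification.} You correctly identify this as the main obstacle. Two cautions. First, the degree-vanishing claim for each stratum cannot simply be cited from Remark~\ref{rmk:weight-functors}: that remark concerns perverse sheaves on locally closed unions of $\GO$-orbits in $\Gr_G$, whereas here you need a dimension estimate on $\phi^{-1}(S_{\mu_1}\times S_\nu)\cap(\mathrm{supp}\,\mathscr A_1\widetilde\boxtimes\mathscr A_2)$ inside the convolution space $\GK\times^\GO\Gr_G$. This does follow from Theorem~\ref{thm:orbits}\eqref{it:thm-orbits-2} applied to each factor, but it must be said. Second, and more delicately, the restriction of $\mathscr A_1\widetilde\boxtimes\mathscr A_2$ to $\phi^{-1}(S_{\mu_1}\times S_\nu)$ is \emph{not} an exterior product under $\phi$: the stalk at $[g,h]$ is $(\mathscr A_1)_{[g]}\otimes(\mathscr A_2)_{[h]}$ with $[h]$ recovered from $([g],[gh])$ only up to a choice of lift of $[g]$ to $\GK$, i.e.\ up to a varying translation by $\GO$. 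To get $\F_{\nu-\mu_1}(\mathscr A_2)$ canonically in the fibre you must invoke the $\GO$-equivariance of $\mathscr A_2$ to untwist (which is fine — this is where Proposition~\ref{prop:For-k}/Corollary~\ref{cor:equivariance} is actually used), and you must then invoke simple-connectedness of $S_{\mu_1}$ (an $\NK$-orbit) and a K\"unneth-type projection-formula argument, since the map to $S_{\mu_1}$ is not proper so you cannot use proper base change on it. Your parenthetical alternative — proper pushforward compatibility of hyperbolic localization applied to $m$ — is cleaner and is essentially what Proposition~\ref{prop:geometric-restriction} does in a relative form; making that parenthetical the actual argument would shorten the heavy lifting considerably.

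\emph{Matching the isomorphism of Proposition~\ref{prop:F-product}.} For the faithfulness argument to close off the constraint compatibility, you need your graded isomorphism, constructed in the classical diagram, to \emph{coincide} (after forgetting the grading and applying K\"unneth) with the isomorphism of Proposition~\ref{prop:F-product}, which was constructed via fusion. You assert this in passing ("tracing through the construction shows it refines..."), but it is a genuine step: one has to run Lemma~\ref{lem:convolution-fusion-1} and the identification~\eqref{eqn:identification-fusion} in reverse to transport your classical computation into the fusion picture, or else redo Proposition~\ref{prop:F-product} in the classical picture. The paper finesses this by doing everything in the same (fusion) picture. Once that matching is established, the faithfulness argument for the constraints is correct and is a nice streamlining of what the paper leaves implicit.
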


\begin{proof}
We need to provide an identification
\begin{equation}
\F_\mu(\mathscr A_1 \star \mathscr A_2)=\bigoplus_{\mu_1+\mu_2=\mu}
\F_{\mu_1}(\mathscr A_1)\otimes_\bk \F_{\mu_2}(\mathscr A_2)
\label{eqn:weight-functor-tensor}
\end{equation}
for each $\mu\in X_*(T)$ and all $\mathscr A_1,\mathscr A_2\in
\Per_\GO(\Gr_G,\mathbf k)$.

Recall how the weight functors $\F_\mu$ are defined (see Remark
\ref{rmk:equiv-def-wf}). We have chosen a maximal
torus and a Borel subgroup $T\subset B\subset G$. Then $T
\subset\GK$ acts on $\Gr_G=\GK/\GO$ with fixed points
$$(\Gr_G)^T=\bigl\{L_\mu : \mu\in X_*(T)\bigr\}.$$
We picked a dominant regular cocharacter $\eta\in X_*(T)$, which provides
a one-parameter
subgroup $\mathbb G_{\mathbf{m}}\subset T$ and a $\mathbf C^\times$-action on
$\Gr_G$ with fixed points $(\Gr_G)^T$. For $\mu\in X_*(T)$, the
attractive variety relative to the fixed point $L_\mu$ is
\[
S_\mu=\bigl\{x\in\Gr_G\bigm| \eta(a) \cdot x \to L_\mu\text{ when }a\to0\bigr\}
\]
(see~the proof of Theorem~\ref{thm:orbits}), and for each
$\mathscr A\in\Per_\GO(\Gr_G,\mathbf k)$,
$$\coH^k_c({S_\mu},\mathscr A)\neq0\ \Longrightarrow\
k=\langle2\rho,\mu\rangle.$$

For $\mu\in X_*(T)$ and $\mathscr A\in\Per_\GO(\Gr_G,\mathbf k)$, we
have $\F_\mu(\mathscr A):=\coH^{\langle2\rho,\mu\rangle}_c(\overline S_\mu,\mathscr A)$.
We get adjunction maps (see
Remark~\ref{rmk:equiv-def-wf})
$$\xymatrix{
&\coH^{\langle2\rho,\mu\rangle}_c(S_\mu,\mathscr A)\ar[d]^\wr\\
\coH^{\langle2\rho,\mu\rangle}(\Gr_G,\mathscr A)\ar[r]&
\coH^{\langle2\rho,\mu\rangle}_c(\overline S_\mu,\mathscr A);}$$ 
moreover for each $k\in\mathbf Z$, there is a decomposition
$$\coH^k(\Gr_G,\mathscr A)=\bigoplus_{\substack{\mu\in X_*(T)\\[2pt]\langle2\rho,\mu\rangle=k}}\F_\mu(\mathscr A).$$

We need to insert this construction in the reasoning in \S8.1.
The various spaces considered in~\S\ref{ss:global-conv} carry an
action of $T$. Specifically, this action
twists $\nu$ in $\Gr_{G,X}$ and $\Gr_{G,X^2}$, and twists $\nu_1$ on
$\widetilde{\Gr_{G,X}\times\Gr_{G,X}}$ and
$\Gr_{G,X}\widetilde{\times}\,\Gr_{G,X}$. The maps $q$ and $m$
in diagram~\eqref{eqn:convolution-X2} and the isomorphism $\pi$
in diagram~\eqref{eqn:conv-diagram-away-diag} are $T$-equivariant.

To each pair $(\mu_1,\mu_2)\in X_*(T)^2$ corresponds a connected
component $\widetilde C_{(\mu_1,\mu_2)}$ of the set of $T$-fixed points in
$\Gr_{G,X} \, \widetilde{\times}\,\Gr_{G,X}$, namely
$$\widetilde C_{(\mu_1,\mu_2)}=\bigl\{([t^{\mu_1},L_{\mu_2}],x,x)\bigm|
x\in X\bigr\}\cup\bigl\{(L_{\mu_1},L_{\mu_2},x_1,x_2)
\bigm|(x_1,x_2)\in U\bigr\}$$
where $[t^{\mu_1},L_{\mu_2}]$ is seen as a point in $G_{\mathcal{K}_x} \times^{G_{\mathcal{O}_x}} \Gr_{G,x}$, identified with the fiber of the twisted product
$\Gr_{G,X} \, \widetilde{\times}\,\Gr_{G,X}$ over a point $(x,x) \in \Delta_X$, and
$(L_{\mu_1},L_{\mu_2})$ is likewise seen as a point in
$\Gr_{G,x_1}\times\Gr_{G,x_2}$, identified with the fiber of $\Gr_{G,X} \, \widetilde{\times}\,\Gr_{G,X}$ over $(x_1,x_2) \in U$ thanks to the
map $\pi\circ m\circ q$ in~\eqref{eqn:conv-diagram-away-diag}.
Moreover the projection $\widetilde C_{\mu_1,\mu_2}\to X^2$ is an isomorphism. (Recall that we have chosen $X=\mathbb{A}^1$, so that we have a canonical identification $\Gr_{G,x} \cong \Gr_G$ for any $x$.)

The map $m:\Gr_{G,X} \, \widetilde{\times}\,\Gr_{G,X}\to
\Gr_{G,X^2}$ glues together along the diagonal $\Delta_X$ the various
connected components $\widetilde C_{(\mu_1,\mu_2)}$ for which $\mu_1+\mu_2$
is the same. Therefore, to each $\mu\in X_*(T)$ corresponds a connected
component
$$C_\mu:=\bigsqcup_{\mu_1+\mu_2=\mu}m\bigl(
\widetilde C_{(\mu_1,\mu_2)}\bigr)$$
of the set of $T$-fixed points in $\Gr_{G,X^2}$.

Our dominant regular cocharacter $\eta\in X_*(T)$ defines a
$\mathbf C^\times$-action on $\Gr_{G,X^2}$.
Denote the attractive variety\footnote{See~\cite[Definition~1.4.2 and Corollary~1.5.3]{drg} for the general construction of the attractor for a $\C^{\times}$-action on a scheme.} around $C_\mu$ by $S_\mu(X^2)$; one can check\footnote{This fact is not automatic (i.e.~it does not follow from the general result~\cite[Theorem~1.6.8]{drg}) because the finite-dimensional pieces of $\Gr_{G,X^2}$ might not be normal. One way to prove this is to first check that $\bigsqcup_{\nu \leq \mu} S_\nu(X^2)$ is closed in $\Gr_{G,X^2}$; then $S_\mu(X^2)$ is the complement of $\bigsqcup_{\nu < \mu} S_\nu(X^2)$ in $\bigsqcup_{\nu \leq \mu} S_\nu(X^2)$.} that $S_\mu(X^2)$ is a locally closed subscheme of $\Gr_{G,X^2}$.
Over a point $(x,x)\in\Delta_X$, the fiber of the map $S_\mu(X^2)\to X^2$
is the semi-infinite orbit $S_\mu$, viewed as a subvariety
of $\Gr_{G,x}$ thanks to the isomorphism $\Gr_{G,X^2}\bigl|_{\Delta_X}=
\Gr_{G,X}$; over a point $(x_1,x_2)\in U$, the fiber of
$S_\mu(X^2)\to X^2$ is the union
$$\bigsqcup_{\mu_1+\mu_2=\mu}S_{\mu_1}\times
S_{\mu_2}\subset\Gr_{G,x_1}\times \Gr_{G,x_2},$$
where we use the isomorphism
$\pi:\Gr_{G,X^2}\bigl|_U\xrightarrow{\sim}\bigl(\Gr_{G,X}\times\Gr_{G,X}\bigr)\bigl|_U$
of~\S\ref{ss:conv-fusion} (see~\cite[Lem\-ma~1.4.9]{drg}).

Consider again $\mathscr B:=(\tau^\circ\mathscr A_1)
\star_X(\tau^\circ\mathscr A_2)$ and
consider the natural maps depicted in the following diagram:
$$\xymatrix{S_\mu(X^2)\ar@/_1.4pc/[rr]_{\tilde{s}_\mu}\ar[r]^{\tilde{s}'_\mu}&
\overline{S_\mu(X^2)}\ar[r]^{\tilde{s}''_\mu}&\Gr_{G,X^2}\ar[d]^f\\
&&X^2.}$$
The stalks of the complex of sheaves $(f\tilde s_\mu)_!(\tilde s_\mu)^*\mathscr B$
can be computed by base change. Using Lemma~\ref{lem:convolution-fusion-1}
and \eqref{eqn:identification-fusion}, and taking into account the shift in the definition of $\tau^{\circ}$, we obtain:
\begin{itemize}
\item
The sheaf $\mathscr H^{k-2}(f\tilde{s}_\mu)_!(\tilde{s}_\mu)^*\mathscr B$ is locally
constant on $\Delta_X$, with stalk
$\coH^k_c(S_\mu,\mathscr A_1\star\mathscr A_2)$, so is
$\F_\mu(\mathscr A_1\star\mathscr A_2)$ if $k=\langle2\rho,\mu\rangle$
and is zero otherwise.
\item
The sheaf $\mathscr H^{k-2}(f\tilde s_\mu)_!(\tilde s_\mu)^*\mathscr B$ is locally
constant on $U$, with stalk isomorphic to
$$\bigoplus_{\mu_1+\mu_2=\mu}\coH^k_c
(S_{\mu_1}\times S_{\mu_2},\mathscr A_1\boxtimes\mathscr A_2),$$
so is isomorphic to
$$\bigoplus_{\mu_1+\mu_2=\mu}\F_{\mu_1}(\mathscr A_1)
\otimes\F_{\mu_2}(\mathscr A_2)$$
if $k=\langle2\rho,\mu\rangle$
and is zero otherwise. 
\end{itemize}
In particular,
$$\mathscr H^{k-2}(f \tilde s_\mu)_!(\tilde s_\mu)^*\mathscr B\neq0\ \Longrightarrow\
k=\langle2\rho,\mu\rangle.$$

Given $\mu\in X_*(T)$, denote
the sheaf $\mathscr H^{\langle2\rho,\mu\rangle-2}\,(f\tilde s''_\mu)_!(\tilde s_\mu'')^*\,\mathscr B$
by $\mathscr L_\mu(\mathscr B)$.
Adjunction yields maps
$$\xymatrix{
&\mathscr H^{\langle2\rho,\mu\rangle-2}(f \tilde s_\mu)_!(\tilde s_\mu)^*\mathscr B\ar[d]\\
\mathscr H^{\langle2\rho,\mu\rangle-2}f_*\mathscr B\ar[r]&
\mathscr L_\mu(\mathscr B).}$$
Since this is true over any point of $X^2$,
the vertical arrow is an isomorphism and the horizontal
arrow is an epimorphism; moreover for each $k\in\mathbf Z$ the projections provide an isomorphism
$$\mathscr H^{k-2}f_*\,\mathscr B\cong
\bigoplus_{\substack{\mu\in X_*(T)\\[2pt]\langle2\rho,\mu\rangle=k}}
\mathscr L_\mu(\mathscr B).$$
We see here that $\mathscr L_\mu(\mathscr B)$ is a direct summand of
the local system $\mathscr H^{k-2}f_*\,\mathscr B$ (see
\S\ref{ss:F-convolution}), so it is a local system itself.
As we saw, its stalk over a point in $\Delta_X$ is
$\F_\mu(\mathscr A_1\star\mathscr A_2)$ and its stalk over a point
in $U$ is $\bigoplus_{\mu_1+\mu_2=\mu} \F_{\mu_1}(\mathscr A_1)\otimes\F_{\mu_2}(\mathscr A_2)$.
We thus obtain the desired identification~\eqref{eqn:weight-functor-tensor}, as in the proof of Proposition~\ref{prop:F-product}.
\end{proof}

\begin{rmk}
\begin{enumerate}
\item
See Proposition~\ref{prop:geometric-restriction} below for a different proof of the compatibility of $\bigoplus_\mu \F_\mu$ with convolution, in a more general context.
\item
Once again, Proposition~\ref{prop:F-mu-tensor-char-0} can be proved in a more elementary way using equivariant cohomology, see~\cite[Proposition~5.3.14]{zhu} (but this proof is specific to the characteristic-$0$ setting).
\end{enumerate}
\end{rmk}

\section{Identification of the dual group}
\label{sec:identification-char-0}

At this point, we have constructed the convolution product $\star$ on
$\Per_{\GO}(\Gr_G,\bk)$, a $\bk$-linear faithful exact functor $\F :\Per_{\GO}(\Gr_G,\bk) \to\Vect_{\mathbf k}$,
an associativity constraint, a commutativity constraint, and a unit
object $U=\IC_0$ such that:
\begin{enumerate}
\item
$\F\circ\star=\otimes\circ(\F\otimes \F)$ and $\F(U)=\bk$;
\item
$\F$ maps the associativity constraint, the commutativity constraint and
the unit constraints of $\Per_{\GO}(\Gr_G,\bk)$ to the corresponding constraints of
$\Vect_{\bk}$;
\item
\label{it:F-dim-1}
If $\F(L)$ has dimension~1, then there exists $L^{-1}$ such that
$L\star L^{-1}\cong U$.
\end{enumerate}
(For~\eqref{it:F-dim-1}, one observes that for $L=\IC_\lambda$ to
satisfy $\dim \F(L)=1$, by Proposition~\ref{prop:dim-weight-spaces} $\lambda$ must be orthogonal to each root $\alpha\in \Delta(G,T)$,
so $\Gr_G^\lambda=\{L_\lambda\}$, and we can take $L^{-1}=\IC_{-\lambda}$
since $-\lambda$ is dominant.)

Tannakian reconstruction (see Theorem~\ref{thm:tannakina-reconstruction}) then gives us an affine group scheme $\widetilde{G}_\bk$ over $\bk$
and an equivalence $\Sat$ which fits in the following
commutative diagram:
$$\xymatrix{\Per_{\GO}(\Gr_G,\bk)\ar[rr]^\sim_{\Sat}\ar[dr]_{\F}&&\Rep_{\mathbf k}
(\widetilde{G}_\bk)\ar[dl]^\omega\\&\Vect_{\mathbf k},&}$$
where $\omega$ is the forgetful functor. We now need to identify
$\widetilde{G}_\bk$.

\begin{rmk}
The group $\widetilde{G}_\bk$ considered here should not be confused with the group $\tilde{G}$ of~\S\ref{ss:proj-embeddings}.
\end{rmk}

\subsection{First step: $\widetilde{G}_\bk$ is a split connected reductive
algebraic group over $\mathbf k$}
\label{ss:identification-char-0-first-step}

\begin{lem}
The affine group scheme $\widetilde{G}_\bk$ is algebraic.
\end{lem}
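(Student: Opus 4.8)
The plan is to apply the criterion from Proposition~\ref{prop:properties-G}\eqref{it:properties-G-1}: an affine group scheme over $\bk$ is algebraic if and only if the category of its finite-dimensional representations admits a tensor generator. Since $\Sat$ is an equivalence of monoidal categories between $\Per_{\GO}(\Gr_G,\bk)$ and $\Rep_\bk(\widetilde{G}_\bk)$, it suffices to exhibit an object $X \in \Per_{\GO}(\Gr_G,\bk)$ such that every object of the category is isomorphic to a subquotient of a direct sum of objects obtained from $X$ by iterated application of $\star$ and taking duals.

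First I would recall that the simple objects of $\Per_{\GO}(\Gr_G,\bk)$ are the $\IC_\lambda$ for $\lambda \in X_*(T)^+$, and that the category is semisimple (Theorem~\ref{thm:semisimplicity} together with Corollary~\ref{cor:equivariance}), so it is enough to generate all the $\IC_\lambda$. The natural candidate for a tensor generator is $X := \bigoplus_{i} \IC_{\varpi_i} \oplus \bigoplus_i \IC_{-w_0\varpi_i}$, a sum over a set of ``fundamental'' dominant coweights $\varpi_i$ generating $X_*(T)^+$ as a monoid together with $X_*(T)$ as a group — more precisely, one chooses finitely many dominant coweights $\lambda_1, \dots, \lambda_r$ such that every dominant coweight is a nonnegative integer combination of them (these exist since $X_*(T)^+$ is a finitely generated monoid), and sets $X = \bigoplus_j \IC_{\lambda_j}$. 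The key geometric input is the behavior of convolution with respect to the Cartan stratification: for $\lambda, \mu \in X_*(T)^+$, the convolution $\IC_\lambda \star \IC_\mu$ is a perverse sheaf (Proposition~\ref{prop:exactness-convolution}) supported on $\overline{\Gr_G^{\lambda+\mu}}$, and it contains $\IC_{\lambda+\mu}$ as a direct summand with multiplicity one — this follows because the open stratum $\Gr_G^{\lambda+\mu}$ appears in $\widetilde{\Gr}_G^{\lambda,\mu}$ with the convolution map $m$ being an isomorphism over it, so $m_*(\IC_\lambda \,\widetilde\boxtimes\, \IC_\mu)$ restricts to $\underline{\bk}[\dim \Gr_G^{\lambda+\mu}]$ there.

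From this ``highest term'' property, an induction on the partial order $\leq$ on $X_*(T)^+$ shows that for any dominant $\lambda = \sum_j n_j \lambda_j$, the object $\IC_\lambda$ is a direct summand of the iterated convolution $\IC_{\lambda_1}^{\star n_1} \star \cdots \star \IC_{\lambda_r}^{\star n_r}$: indeed this convolution is perverse, supported on $\overline{\Gr_G^\lambda}$, contains $\IC_\lambda$ as a summand, and by semisimplicity decomposes as $\IC_\lambda$ plus a sum of $\IC_\eta$ with $\eta < \lambda$, each of which is handled by the inductive hypothesis. Hence every simple object, and so every object, lies in the subcategory generated by $X$ under $\star$, direct sums, and subquotients; duals are not even needed here but cost nothing to include. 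Applying Proposition~\ref{prop:properties-G}\eqref{it:properties-G-1} through the equivalence $\Sat$ then gives that $\widetilde{G}_\bk$ is algebraic.

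The main obstacle, and the point needing the most care, is the claim that $\IC_{\lambda+\mu}$ occurs with multiplicity exactly one (and in particular nonzero) in $\IC_\lambda \star \IC_\mu$. The nonvanishing is the essential part: one must check that the restriction of $\IC_\lambda \,\widetilde\boxtimes\, \IC_\mu$ to the preimage of the open orbit $\Gr_G^{\lambda+\mu}$ under $m$ is the constant sheaf in the correct degree and that $m$ is an isomorphism over $\Gr_G^{\lambda+\mu}$; this is a standard but not entirely trivial computation with the convolution diagram~\eqref{eqn:diagram-convolution}, using the Cartan decomposition and the identification of $\widetilde{\Gr}_G^{\lambda,\mu}$ appearing in~\S\ref{ss:exactness-convolution}. (Alternatively, one can invoke the dimension estimate of Lemma~\ref{lem:dimensions-semismall} at $\nu = -w_0(\lambda+\mu)$, which shows $m$ is semismall and that the relevant fiber over a generic point of $\Gr_G^{\lambda+\mu}$ is a point.) Everything else — the finite generation of the monoid $X_*(T)^+$, the induction on $\leq$, and the transfer along $\Sat$ — is routine.
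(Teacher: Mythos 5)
Your proposal is correct and follows essentially the same route as the paper: choose a finite set of monoid generators $\lambda_1,\dots,\lambda_r$ of $X_*(T)^+$, observe via the convolution diagram (or Lemma~\ref{lem:dimensions-semismall}) that $\IC_\lambda\star\IC_\mu$ is a perverse sheaf supported on $\overline{\Gr_G^{\lambda+\mu}}$ whose restriction to the open stratum is $\underline{\bk}[\dim\Gr_G^{\lambda+\mu}]$, so $\IC_{\lambda+\mu}$ is a direct summand by semisimplicity, and conclude that $\bigoplus_j\IC_{\lambda_j}$ is a tensor generator, whence $\widetilde{G}_\bk$ is algebraic by Proposition~\ref{prop:properties-G}\eqref{it:properties-G-1}. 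The only cosmetic difference is that the ``induction on $\leq$'' you invoke is superfluous: once the highest-term property gives $\IC_\lambda$ directly as a summand of the iterated convolution, no inductive control over the lower terms is required.
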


\begin{proof}
Choose a finite set of generators $\lambda_1, \cdots, \lambda_n$
of the monoid $X_*(T)^+$ of dominant cocharacters. Then for any
nonnegative integral linear combination $\lambda=k_1\lambda_1+\cdots+
k_n\lambda_n$, the sheaf $\IC_\lambda$ appears as a direct summand
of the convolution product 
\[
 \underbrace{\IC_{\lambda_1} \star \cdots \star \IC_{\lambda_1}}_{\text{$k_1$ copies}} \star \cdots \star \underbrace{\IC_{\lambda_n} \star \cdots \star \IC_{\lambda_n}}_{\text{$k_n$ copies}}.
\]
(In fact, this convolution product is a semisimple perverse sheaf by Proposition~\ref{thm:semisimplicity}. Moreover it is easily seen to be supported on $\overline{\Gr_G^\lambda}$, with restriction to $\Gr_G^\lambda$ isomorphic to $\underline{\bk}_{\Gr_G^\lambda}[\dim(\Gr_G^\lambda)]$. Hence it must admit $\IC_\lambda$ as a direct summand.)
Therefore $\mathscr{X}:=\IC_{\lambda_1}\oplus\cdots\oplus
\IC_{\lambda_n}$ is a tensor generator of the category $\Per_{\GO}(\Gr_G,\bk)$; namely, any
object of $\Per_{\GO}(\Gr_G,\bk)$ appears as a subquotient of a direct sum of
tensor powers of $\mathscr{X}$. Thus $\Rep_{\mathbf k}(\widetilde{G}_\bk)$ admits
a tensor generator, which implies that 
$\widetilde{G}_\bk$ is algebraic by Proposition~\ref{prop:properties-G}\eqref{it:properties-G-1}.
\end{proof}

\begin{lem}
\label{lem:tilde-G-connected-char-0}
The affine algebraic group scheme $\widetilde{G}_\bk$ is connected.
\end{lem}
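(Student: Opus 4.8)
The goal is to show $\widetilde{G}_\bk$ is connected. By Proposition~\ref{prop:properties-G}\eqref{it:properties-G-2}, it suffices to prove that there is \emph{no} nontrivial object $\mathscr{X} \in \Rep_\bk(\widetilde{G}_\bk) \cong \Per_{\GO}(\Gr_G,\bk)$ such that the full subcategory $\langle \mathscr{X} \rangle$ is stable under the tensor product $\star$. Equivalently, working through the equivalence $\Sat$, I must show that any $\star$-stable subcategory of the form $\langle \mathscr{X} \rangle$ inside $\Per_{\GO}(\Gr_G,\bk)$ is either zero or all of $\Per_{\GO}(\Gr_G,\bk)$ (the latter forcing $\mathscr{X}$ to be a tensor generator, and in particular nontrivial $\langle\mathscr{X}\rangle$ would have to equal the whole category, contradicting the condition in the proposition which requires $\langle \mathscr{X}\rangle$ to be a \emph{proper} witness — actually the condition in Proposition~\ref{prop:properties-G}\eqref{it:properties-G-2} just asks for some nontrivial $\mathscr{X}$ with $\langle\mathscr{X}\rangle$ $\otimes$-stable, so I want to rule that out entirely).

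\textbf{Key steps.} First I would reduce to understanding which simple objects $\IC_\mu$ can appear in $\langle \mathscr{X}\rangle$ when this subcategory is $\star$-stable. Write $\mathscr{X}$ as a direct sum of simples $\IC_{\lambda}$ (using semisimplicity, Theorem~\ref{thm:semisimplicity} and Corollary~\ref{cor:equivariance}); let $\Lambda \subset X_*(T)^+$ be the set of $\lambda$ occurring, assumed nonempty with some $\lambda \neq 0$. The $\star$-stability of $\langle \mathscr{X}\rangle$ means: whenever $\IC_{\mu}, \IC_{\nu}$ are subquotients of sums of tensor powers of $\mathscr{X}$, every simple summand of $\IC_\mu \star \IC_\nu$ is again such a subquotient. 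The crucial geometric input is the support/leading-term analysis already used in the proof that $\widetilde{G}_\bk$ is algebraic: $\IC_\lambda \star \IC_\mu$ is supported on $\overline{\Gr_G^{\lambda+\mu}}$ and contains $\IC_{\lambda+\mu}$ as a direct summand (with the other summands $\IC_\eta$ for $\eta < \lambda+\mu$ in the dominance order, $\eta \in (\lambda+\mu) + Q^\vee$). Consequently, the set of dominant coweights $\nu$ such that $\IC_\nu \in \langle \mathscr{X}\rangle$ is closed under addition and under passing to dominant coweights $\leq$ a given one within the appropriate coset; iterating with a fixed $\lambda \in \Lambda$, $\lambda \neq 0$, the coweights $n\lambda$ all lie in $\langle\mathscr{X}\rangle$, and then so do all dominant $\mu \leq n\lambda$ with $\mu \in n\lambda + Q^\vee$. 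Since $\lambda \neq 0$ is dominant, one checks (using that $X_*(T)^+$ generates a full-dimensional cone and $Q^\vee$ has finite index in $\ker(\text{some character})$, or more simply that for $G$ semisimple simply connected the $n\lambda$ for $\lambda$ ranging over a generating set of $X_*(T)^+$ together produce every dominant coweight in the sub-lattice they generate) that $\langle \mathscr{X}\rangle$ contains \emph{all} $\IC_\mu$, i.e. $\langle\mathscr{X}\rangle = \Per_{\GO}(\Gr_G,\bk)$.

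\textbf{Alternative, cleaner route via $\pi_0$.} Since the preceding combinatorics is a little delicate regarding which coset of $Q^\vee$ one lands in, I expect the actual proof to instead invoke the structure already recalled: $\Per_{\GO}(\Gr_G,\bk)$ decomposes according to connected components $\Gr_G^c$, $c \in X_*(T)/Q^\vee$, which is a \emph{finite abelian group}; convolution respects this grading (the fusion description of Section~\ref{sec:convolution-BD} makes the support of $\IC_\lambda \star \IC_\mu$ land in the component indexed by $c_\lambda + c_\mu$). Via $\Sat$ this exhibits a homomorphism $\pi_0(\widetilde{G}_\bk) \to (X_*(T)/Q^\vee)^\vee$ — but in fact, I'd argue directly: if $\langle\mathscr{X}\rangle$ were $\star$-stable and nontrivial, then (as above) it contains $\IC_\lambda$ for some $\lambda \neq 0$ and, by the leading-summand argument, $\IC_{n\lambda}$ for all $n$, and the dominant coweights $\leq n\lambda$ in the coset $n\lambda + Q^\vee$; running over a finite generating set of $X_*(T)^+$ and combining shows $\langle\mathscr{X}\rangle$ contains a generator, hence equals the whole category. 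The honest obstacle — and the step I'd spend the most care on — is the lattice/convexity bookkeeping showing that from $\{\IC_{n\lambda_i}\}$ and the closure-under-$\leq$-within-cosets one recovers \emph{every} $\IC_\mu$; this is exactly the kind of argument that appeared in the proof that $\widetilde{G}_\bk$ is algebraic (choosing generators $\lambda_1,\dots,\lambda_n$ of $X_*(T)^+$) and I would model the estimate on \cite[chap.~VIII, \S7, exerc.~1]{bourbaki-lie} as invoked in Theorem~\ref{thm:orbits}\eqref{it:thm-orbits-1}. Everything else (semisimplicity, the direct-summand statement for convolutions, the translation to $\Rep_\bk(\widetilde{G}_\bk)$) is already available in the excerpt, so once the combinatorial core is pinned down the proof is short.
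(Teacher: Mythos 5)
Your proof contains a genuine gap at the step you yourself flag as ``the honest obstacle.'' From the assumption that $\langle\mathscr{X}\rangle$ is $\star$-stable and nontrivial, you can only deduce $\IC_\lambda \in \langle\mathscr{X}\rangle$ for \emph{one} nonzero $\lambda$ — not for a generating set $\lambda_1,\dots,\lambda_n$ of $X_*(T)^+$. From that single $\lambda$ you get $\IC_{n\lambda}$ for all $n$ and (via the closure property) dominant $\mu \leq n\lambda$ in the coset $n\lambda + Q^\vee$; but this need not exhaust $X_*(T)^+$. Take $G = \SL_2 \times \SL_2$ and $\lambda = (\alpha^\vee,0)$: the dominant coweights $\leq (n\alpha^\vee,0)$ in the right coset are exactly the $(k\alpha^\vee,0)$ with $k \leq n$, and you never reach the second factor. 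So the plan ``show $\langle\mathscr{X}\rangle$ is the whole category, therefore such an $\mathscr{X}$ cannot exist'' does not close.

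The paper's argument is both shorter and avoids this issue entirely, because it does not try to show $\langle\mathscr{X}\rangle$ is everything. The key observation you missed is that $\langle\mathscr{X}\rangle$ can only contain \emph{finitely many} isomorphism classes of simple objects: any subquotient of $\mathscr{X}^{\oplus n}$ has its simple constituents among the finitely many simple constituents of $\mathscr{X}$. But you have already produced, from a single nonzero $\lambda$, infinitely many pairwise non-isomorphic simples $\IC_{m\lambda}$ (distinct supports) all lying in $\langle\mathscr{X}\rangle$. That is already a contradiction; no lattice or convexity bookkeeping is needed. Your correct observation that $\IC_{m\lambda}$ is a summand of $\IC_\lambda^{\star m}$ is in fact the whole point — you just aimed it at too strong a target.
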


\begin{proof}
If $\lambda$ is a nonzero dominany cocharacter of $T$, then the objects
$\IC_{m\lambda}$ are pairwise non isomorphic for $m\in\mathbf Z_{\geq 0}$ (since they have different supports). It follows
that for any nontrivial object $\mathscr{X}$ in $\Per_{\GO}(\Gr_G,\bk)$, the full subcategory
formed by subquotients of direct sums $\mathscr{X}^{\oplus n}$ cannot be
stable under $\star$. The same property then also holds for the tensor
category $\Rep_{\mathbf k}(\widetilde{G}_\bk)$. This in turn implies
that $\widetilde{G}_\bk$ is connected by Proposition~\ref{prop:properties-G}\eqref{it:properties-G-2}.
\end{proof}

\begin{lem}
\label{lem:reductive-char-0}
The connected affine algebraic group scheme $\widetilde{G}_\bk$ is reductive.
\end{lem}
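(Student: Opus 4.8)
The strategy is to apply Proposition~\ref{prop:properties-G}\eqref{it:properties-G-3}. We already know from the previous two lemmas that $\widetilde{G}_\bk$ is a connected algebraic affine group scheme over $\bk$, and by assumption $\mathrm{char}(\bk)=0$; so the hypotheses of that proposition are satisfied provided we can show that $\Rep_{\overline{\bk}}((\widetilde{G}_\bk)_{\overline{\bk}})$ is semisimple, where $\overline{\bk}$ is an algebraic closure of $\bk$. Reductivity will then follow immediately.

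First I would reduce the semisimplicity of $\Rep_{\overline{\bk}}((\widetilde{G}_\bk)_{\overline{\bk}})$ to a statement we already know. By Tannakian reconstruction (Theorem~\ref{thm:tannakina-reconstruction}), we have an equivalence of tensor categories $\Sat : \Per_{\GO}(\Gr_G,\bk) \xrightarrow{\sim} \Rep_\bk(\widetilde{G}_\bk)$. On the other hand, Theorem~\ref{thm:semisimplicity} together with Corollary~\ref{cor:equivariance} tells us that $\Per_{\GO}(\Gr_G,\bk)$ is semisimple, so $\Rep_\bk(\widetilde{G}_\bk)$ is semisimple as well. The point is to pass from semisimplicity over $\bk$ to semisimplicity over $\overline{\bk}$. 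This is a standard fact about reductivity being insensitive to separable field extensions, but since we have not yet established reductivity, I would instead argue directly: a connected algebraic affine group scheme $H$ over a characteristic-zero field is reductive if and only if $\Rep_\bk(H)$ is semisimple (this is essentially the converse direction of Proposition~\ref{prop:properties-G}\eqref{it:properties-G-3}, using that in characteristic $0$ linear reductivity and reductivity coincide, cf.~\cite[\S 22.4]{milne} or Weyl's theorem combined with Chevalley's theorem); and reductivity \emph{is} preserved by the base change to $\overline{\bk}$ by the very definition of reductivity recalled in the text (it was defined via base change to an algebraic closure). So from the semisimplicity of $\Rep_\bk(\widetilde{G}_\bk)$ we conclude $\widetilde{G}_\bk$ is reductive, and this already \emph{is} the statement of the lemma, making the appeal to Proposition~\ref{prop:properties-G}\eqref{it:properties-G-3} unnecessary.

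Alternatively, and perhaps more in keeping with the logical structure the authors set up, I would invoke Proposition~\ref{prop:properties-G}\eqref{it:properties-G-3} literally, which only requires checking that $\Rep_{\overline{\bk}}((\widetilde{G}_\bk)_{\overline{\bk}})$ is semisimple. For this one can observe that every object of $\Rep_{\overline{\bk}}((\widetilde{G}_\bk)_{\overline{\bk}})$ is a subquotient of $V \otimes_\bk \overline{\bk}$ for some $V \in \Rep_\bk(\widetilde{G}_\bk)$; since $\Rep_\bk(\widetilde{G}_\bk)$ is semisimple, $V$ is a direct sum of simples, and one checks that extension of scalars of a simple object has no self-extensions (using that $\Ext^1$ commutes with the flat base change $\bk \to \overline{\bk}$, so $\Ext^1_{(\widetilde{G}_\bk)_{\overline{\bk}}}(V \otimes \overline{\bk}, W \otimes \overline{\bk}) \cong \Ext^1_{\widetilde{G}_\bk}(V,W) \otimes_\bk \overline{\bk} = 0$). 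Hence $\Rep_{\overline{\bk}}((\widetilde{G}_\bk)_{\overline{\bk}})$ is semisimple, and Proposition~\ref{prop:properties-G}\eqref{it:properties-G-3} gives the conclusion.

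\textbf{Main obstacle.} The only genuinely delicate point is the descent/ascent of semisimplicity along $\bk \to \overline{\bk}$: a priori semisimplicity of $\Rep_\bk(H)$ does not formally imply semisimplicity of $\Rep_{\overline{\bk}}(H_{\overline{\bk}})$ for arbitrary $H$, so one must use either the characteristic-zero coincidence of reductivity with linear reductivity (which makes the statement robust under base change by definition of reductivity), or the flat base change formula for $\Ext^1$ combined with the fact that a simple $\bk$-representation need not stay simple over $\overline{\bk}$ but still acquires no self-extensions. Everything else is a direct citation of results established earlier in the text.
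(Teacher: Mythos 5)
Your proof takes a genuinely different route from the paper's, and it is worth comparing the two. The paper's proof is a one-liner: it observes that $\widetilde{G}_{\overline{\bk}} := \Spec(\overline{\bk}) \times_{\Spec(\bk)} \widetilde{G}_\bk$ is precisely the group scheme that Tannakian reconstruction produces from the category $\Per_{\GO}(\Gr_G, \overline{\bk})$ (i.e.\ the constructions commute with extension of scalars from $\bk$ to $\overline{\bk}$). Since $\overline{\bk}$ is again a field of characteristic $0$, Theorem~\ref{thm:semisimplicity} applies directly to give semisimplicity of $\Per_{\GO}(\Gr_G,\overline{\bk}) \cong \Rep_{\overline{\bk}}(\widetilde{G}_{\overline{\bk}})$, and Proposition~\ref{prop:properties-G}\eqref{it:properties-G-3} finishes. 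The base-change difficulty you identify as the ``main obstacle'' thus disappears entirely: one never needs to transfer semisimplicity from $\Rep_\bk$ to $\Rep_{\overline{\bk}}$ because the latter category is recognized geometrically. Your second alternative (flat base change for $\Ext^1$) is a correct substitute, but it re-proves by hand what the geometric identification gives for free.

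Your first alternative, however, is not a proof as written. You assert that ``$\Rep_\bk(H)$ semisimple $\Rightarrow H$ reductive'' over an arbitrary characteristic-zero field, and conclude from this that the appeal to Proposition~\ref{prop:properties-G}\eqref{it:properties-G-3} is unnecessary. But Proposition~\ref{prop:properties-G}\eqref{it:properties-G-3} as stated in the paper requires semisimplicity of $\Rep_{\overline{\bk}}(H_{\overline{\bk}})$, not of $\Rep_\bk(H)$ --- its proof uses Kolchin's theorem on the geometric unipotent radical $R_u(H_{\overline{\bk}})$, which lives over $\overline{\bk}$. The stronger statement you invoke is true classically, but it is precisely the gap you yourself flag a paragraph later; waving at Weyl and Chevalley does not close it within the framework the paper has set up. So the first branch is circular, the second branch works with extra effort, and the paper's route avoids the issue altogether.
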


\begin{proof}
If $\overline{\bk}$ is an algebraic closure of $\bk$, it is clear that $\widetilde{G}_{\overline{\bk}} := \Spec(\overline{\bk}) \times_{\Spec(\bk)} \widetilde{G}_\bk$ is the group scheme provided by Tannakian formalism out of the category $\Per_{\GO}(\Gr_G, \overline{\bk})$. This category is semisimple by Theorem~\ref{thm:semisimplicity}. We conclude using Proposition~\ref{prop:properties-G}\eqref{it:properties-G-3}.
\end{proof}

We now explain the construction of a split maximal torus in $\widetilde{G}_\bk$ (see~\S\ref{ss:overview}).

As in~\S\ref{ss:compatibility-weight-functors}, we denote by $\Vect_{\mathbf k}(X_*(T))$ the category of finite
dimensional $X_*(T)$-graded $\mathbf k$-vector spaces. This is a
monoidal category, and the weight functors provide us with a
factorization of $\F$ as
$$\Per_{\GO}(\Gr_G,\bk) \xrightarrow{\F'}\Vect_{\mathbf k}(X_*(T))\xrightarrow{\text{forget}}\Vect_{\mathbf k},$$
see~\S\ref{ss:compatibility-weight-functors}.
Let $T^\vee_\bk$ be the unique split $\mathbf k$-torus such that
$X^*(T^\vee_\bk)=X_*(T)$; then $\Vect_{\mathbf k}(X_*(T))\cong
\Rep_{\mathbf k}(T^\vee_\bk)$ canonically (see e.g.~\cite[\S I.2.11]{jantzen}), and $\F'$ induces a
functor $\F_{T^\vee_\bk}:\Rep_{\mathbf k}(\widetilde{G}_\bk)\to
\Rep_{\mathbf k}(T^\vee_\bk)$ compatible with the monoidal structures. There is then a commutative diagram
$$\xymatrix@C=1.5cm{\Per_{\GO}(\Gr_G,\bk)\ar[r]^{\F'}\ar[d]^\wr&
\Vect_{\mathbf k}(X_*(T))\ar@{=}[d]\\
\Rep_{\mathbf k}(\widetilde{G}_\bk)\ar[r]^{\F_{T^\vee_\bk}}&
\Rep_{\mathbf k}(T^\vee_\bk)},$$
and the functor $\F_{T^\vee_\bk}$
commutes with the forgetful functors to $\Vect_{\mathbf k}$ and satisfies the conditions of Proposition~\ref{prop:tannakian-morph}.
Hence this functor is induced by a unique morphism
$\varphi:T^\vee_\bk\to\widetilde{G}_\bk$ of algebraic groups.

Each character $\lambda\in X^*(T^\vee_\bk)$ appears in at least one
$\F_{ T^\vee_\bk}(\IC_\mu)$. (One can here e.g.~choose $\mu$ as the dominant $W$-conjugate of $\lambda$ and use Theorem~\ref{thm:orbits} and Proposition~\ref{prop:dim-weight-spaces}.)
It follows that $\varphi$ is an embedding of a closed subgroup, see~\cite[Proposition~2.21(b)]{dm};
so $T^\vee_\bk$ can be considered as a split torus in $\widetilde{G}_\bk$.

Now, for any reductive group $H$ over a field $\FF$, if $\overline{\FF}$ is an algebraic closure of $\FF$ and if we set  $H_{\overline{\FF}}:= \Spec(\overline{\FF}) \times_{\Spec(\FF)} H$, then
\begin{equation}
\label{eqn:rank-dim}
\mathrm{rk}(H) = \dim \Spec \bigl( \Q \otimes_\Z K^0(\Rep_{\overline{\FF}}(H_{\overline{\FF}})) \bigr).
\end{equation}
In fact, the right-hand side admits a basis consisting of classes of induced modules (i.e.~the modules denoted $H^0(\lambda)$ in~\cite[Chap.~II.2]{jantzen}), whose characters are given by the Weyl character formula, see e.g.~\cite[Corollary~II.5.11]{jantzen}. Therefore it identifies with $K_{\Q}/W_H$, where $K_\Q$ is the split $\Q$-torus with character lattice the character lattice of any chosen maximal torus $K_{\overline{\FF}}$ in $H_{\overline{\FF}}$ and $W_H$ is the Weyl group of $H$ with respect to this torus. There is a finite morphism $K_\Q \to K_{\Q}/W_H$, so that this scheme has the same dimension as $K_\Q$, i.e.~has dimension the rank of $H_{\overline{\FF}}$, which by definition is the rank of $H$.

In our case, the functor $\F_{T^\vee_\bk}$ provides a morphism of schemes
\[
T^\vee_\Q \to \Spec \bigl( \Q \otimes_\Z K^0(\Rep_{\overline{\bk}}(\widetilde{G}_{\overline{\bk}})) \bigr),
\]
where $\overline{\bk}$ and $\widetilde{G}_{\overline{\bk}}$ are as in the proof of Lemma~\ref{lem:reductive-char-0}, and $T^\vee_\Q$ is the $\Q$-torus with characters $X_*(T)$.
In view of the description of the
simple objects in $\Per_{\GO}(\Gr_G,\bk)$ (see Section~\ref{sec:semisimplicity}), this morphism identifies the right-hand side with $T^\vee_\Q / W$. We deduce that the rank of $\widetilde{G}_\bk$ is the dimension of $T^\vee_\bk$, i.e.~that
$T^\vee_\bk$ is a maximal torus of~$\widetilde{G}_\bk$.

\subsection{Second step: identification of the root datum of $(\widetilde{G}_\bk,
T^\vee_\bk)$}
\label{ss:identification-second-step}


In view of the general results recalled in~\S\ref{ss:overview}, to finish our determination of the group scheme $\widetilde{G}_\bk$, it only remains to identify the root datum of $(\widetilde{G}_\bk, T^\vee_\bk)$. By the remarks in the proof of Lemma~\ref{lem:reductive-char-0} and the definitions recalled above, for this we can (and shall) assume that $\bk$ is algebraically closed.

We first determine a ``canonical'' Borel subgroup in $\widetilde{G}_\bk$.
Consider the sum $2\rho\in X^*(T)$ of the positive roots of $G$.
Then there exists a (possibly non unique) Borel subgroup $\widetilde{B}\subset\widetilde{G}_\bk$ that
contains $T^\vee_\bk$ and such that $2\rho$ is a dominant coweight for the choice of positive roots of $\widetilde{G}_\bk$ given by the $T^\vee_\bk$-weights in the Lie algebra of $\widetilde{B}$.

\begin{lem}
\label{lem:Borel}
For such a choice of Borel subgroup $\widetilde{B}$, hence of positive roots,
the dominant weights for $T^\vee_\bk$ are
exactly the dominant coweights $X_*(T)^+$ of $T$ (for the choice of the positive roots as the $T$-weights in the Lie algebra of $B$).
\end{lem}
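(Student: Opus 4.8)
The plan is to show that the dominant cone for $(\widetilde{G}_\bk, \widetilde{B})$ coincides with $X_*(T)^+$ by a two-way inclusion, using the explicit description of the simple objects of $\Per_{\GO}(\Gr_G,\bk)$ and the weight functors as the main tool. Recall that the simple objects are the $\IC_\lambda$ for $\lambda \in X_*(T)^+$; under $\Sat$ these correspond to the irreducible representations of $\widetilde{G}_\bk$, and the set of highest weights of these irreducibles is precisely the dominant cone of $(\widetilde{G}_\bk,\widetilde{B})$ inside $X^*(T^\vee_\bk) = X_*(T)$. So the statement amounts to: the highest weight (with respect to $\widetilde{B}$) of $\Sat(\IC_\lambda)$ is $\lambda$ itself, for every $\lambda \in X_*(T)^+$.

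First I would recall from Proposition~\ref{prop:dim-weight-spaces} (combined with Theorem~\ref{thm:orbits}\eqref{it:thm-orbits-1}) that the set of $T^\vee_\bk$-weights appearing in $\F'(\IC_\lambda) = \omega \circ \F_{T^\vee_\bk}(\Sat(\IC_\lambda))$ is exactly $\Conv(W\lambda) \cap (\lambda + Q^\vee)$, and that the weight $\lambda$ occurs with multiplicity one (since $\Gr_G^\lambda \cap S_\lambda = \{L_\lambda\}$ is a single point, by the argument in the proof of Theorem~\ref{thm:orbits}\eqref{it:thm-orbits-2}). The key numerical input is the choice of $\widetilde{B}$: $2\rho$ is dominant for $\widetilde{B}$, i.e.\ $\langle 2\rho, \alpha^\vee_{\widetilde{B}} \rangle \geq 0$ for every simple coroot $\alpha^\vee_{\widetilde{B}}$ of $(\widetilde{G}_\bk, \widetilde{B})$ (here the coroots of $\widetilde{G}_\bk$ are cocharacters of $T^\vee_\bk$, i.e.\ elements of $X^*(T)$, paired against $2\rho \in X^*(T)$ — wait, more precisely $2\rho$ is viewed as an element of $X_*(T^\vee_\bk)$ and the coroots of $\widetilde{G}_\bk$ lie in $X^*(T^\vee_\bk)=X_*(T)$, so the pairing makes sense). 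The point is that for any weight $\mu$ of $\Sat(\IC_\lambda)$ other than the highest weight $\lambda_{\max}$, one has $\lambda_{\max} - \mu$ a nonnegative combination of $\widetilde{B}$-positive roots of $\widetilde{G}_\bk$, so $\langle 2\rho, \lambda_{\max}-\mu\rangle \geq 0$, i.e.\ $\langle 2\rho, \lambda_{\max}\rangle \geq \langle 2\rho, \mu \rangle$; and actually the inequality is strict unless $\mu$ is also a highest weight. Combined with the fact that $\langle 2\rho, \cdot \rangle$ is the cohomological degree shift appearing throughout (Proposition~\ref{prop:weight-functors}: $\F_\mu$ lands in degree $\langle 2\rho, \mu\rangle$), I would argue that $\langle 2\rho, - \rangle$ attains its maximum over $\Conv(W\lambda)$ at the dominant vertex $\lambda$ (for the $B$-dominance), and this maximum is strict. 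Hence the $\widetilde{B}$-highest weight $\lambda_{\max}$ of $\Sat(\IC_\lambda)$ must be the unique weight in $\Conv(W\lambda) \cap (\lambda+Q^\vee)$ maximizing $\langle 2\rho,\cdot\rangle$, which is $\lambda$.

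From this, both inclusions follow: every $\lambda \in X_*(T)^+$ is the highest weight of the irreducible $\Sat(\IC_\lambda)$, hence is $\widetilde{B}$-dominant; conversely every $\widetilde{B}$-dominant weight is the highest weight of some irreducible, hence of some $\Sat(\IC_\lambda)$ with $\lambda \in X_*(T)^+$, hence equals $\lambda \in X_*(T)^+$. I would also need the easy observation that $\langle 2\rho, \cdot\rangle$ separates $\lambda$ from the other points of $\Conv(W\lambda)$: since $2\rho = \sum_{\alpha > 0}\alpha$ pairs strictly positively with every nonzero $B$-dominant cocharacter and $\lambda$ is the unique $B$-dominant element of its $W$-orbit, $\lambda$ is the unique maximizer of $\langle 2\rho, \cdot\rangle$ on $W\lambda$ and (by convexity/linearity) on $\Conv(W\lambda)$. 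The main obstacle I anticipate is bookkeeping the identification $X^*(T^\vee_\bk) = X_*(T)$ correctly and making sure the ``positive coroots of $\widetilde{G}_\bk$ with respect to $\widetilde{B}$'' are consistently interpreted as elements of $X_*(T)$ that pair nonnegatively with $2\rho \in X_*(T^\vee_\bk)$ — this is a purely formal but error-prone matching of dualities — together with justifying rigorously that the highest weight of a simple $\widetilde{G}_\bk$-module is the unique weight maximizing pairing with a strictly-dominant element, which I would extract from the standard partial-order description of weights of an irreducible (each weight is $\leq$ the highest weight in the root order).
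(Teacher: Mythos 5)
Your proposal is correct and matches the paper's proof: both arguments identify the $\widetilde{B}$-highest weight of $\Sat(\IC_\lambda)$ as the unique weight in $\Conv(W\lambda)\cap(\lambda+Q^\vee)$ maximizing $\langle 2\rho,\cdot\rangle$ (which, by Proposition~\ref{prop:dim-weight-spaces} and the defining property of $\widetilde{B}$, must be $\lambda$), and then deduce both inclusions by running through the bijection between simple perverse sheaves and simple $\widetilde{G}_\bk$-modules. One small bookkeeping slip: where you first say $2\rho$ pairs nonnegatively with the simple \emph{coroots} of $(\widetilde{G}_\bk,\widetilde{B})$, you mean its simple \emph{roots} --- the coweight $2\rho\in X_*(T^\vee_\bk)=X^*(T)$ is $\widetilde{B}$-dominant exactly when it pairs nonnegatively with the $\widetilde{B}$-positive roots, which live in $X^*(T^\vee_\bk)=X_*(T)$ --- but this does not affect the substance, since the sentence that follows uses the root order correctly.
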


\begin{proof}
Given $\lambda\in X_*(T)^+$ (that is,
dominant for $T\subset B\subset G$),
let $V=\Sat(\IC_\lambda)$ be the simple $\widetilde{G}_\bk$-module
corresponding to the simple object $\IC_\lambda$ of $\Per_{\GO}(\Gr_G,\bk)$. By Proposition~\ref{prop:dim-weight-spaces} the maximal value of
$\langle2\rho,\mu\rangle$ for $\mu$ a weight of $V$ is
obtained for $\mu=\lambda$, and only for this weight.
Therefore $\lambda$ is dominant for $T^\vee_\bk\subset\widetilde{B}\subset
\widetilde{G}_\bk$, and is the highest weight of $V$. 

Conversely,
let $\mu\in X^*(T^\vee_\bk)$ be dominant for $T^\vee_\bk\subset\widetilde{B}\subset\widetilde{G}_\bk$. Let $V$ be the simple $\widetilde{G}_\bk$-module of
highest weight $\mu$. Then $V=\Sat(\IC_\lambda)$ for a unique
$\lambda \in X_*(T)^+$, and by the first
step $\lambda=\mu$. Thus $\mu$ is dominant for $T\subset B\subset G$.
\end{proof}

This claim implies in particular that $\widetilde{B}$ is uniquely determined; that is,
no root of $(\widetilde{G}_\bk,T^\vee_\bk)$ is orthogonal to $2\rho$. From now on we fix this choice of Borel subgroup in $\widetilde{G}_\bk$, and hence of positive roots of $\widetilde{G}_\bk$ with respect to $T^\vee_\bk$. We will denote by $\Delta(\widetilde{G}_\bk,T^\vee_\bk)$ the root system of $\widetilde{G}_\bk$ with respect to $T^\vee_\bk$, by $\Delta_+(\widetilde{G}_\bk,\widetilde{B},T^\vee_\bk)$ the subset of positive roots determined by $\widetilde{B}$, and by $\Delta_{\mathrm{s}}(\widetilde{G}_\bk,\widetilde{B},T^\vee_\bk)$ the corresponding set of simple roots. We use similar notation (with a superscript ``$\vee$'') for coroots, and also for the roots and coroots of $G$. (This is of course consistent with the notation introduced in~\S\ref{ss:def-Gr}.)

\begin{rmk}
\begin{enumerate}
\item
Recall (see~\S\ref{ss:independence-Torel}) that the maximal torus $T^\vee_\bk \subset G^\vee_\bk$ does not depend on any choice. Viewed as a coweight of $T^\vee_\bk$, the element $2\rho$ does not depend on any choice either: it is the only coweight such that the weights of restriction of the action of $G^{\vee}_\bk$ on $\coH^{\bullet}(\Gr_G,\bk)$ to $\bk^\times$ are given by the cohomological grading. Therefore, $\widetilde{B}$ is also canonical in the sense that it does not depend on any choice.
 \item
 In various sources (e.g.~\cite[End of~\S 7]{mv} or~\cite[Discussion after Lem\-ma~5.3.17]{zhu} the ``canonical'' Borel subgroup in $\widetilde{G}_\bk$ is constructed using a ``Pl\"ucker formalism.'' We were not able to find references supporting this construction, hence decided to use a more elementary approach. In any case the two constructions have to produce the same subgroup, see e.g.~\cite[Corollary~5.3.20]{zhu}.
 \item
 Using a construction involving the action of the first Chern class of line bundles on $\Gr_G$, viewed as elements of $\coH^\bullet(\Gr_G, \bk)$ (following ideas of Ginzburg~\cite{ginzburg}) one can ``complete'' the datum of $\widetilde{B}$ and $T^\vee_\bk$ to a canonical pinning on $\widetilde{G}_\bk$; see in particular~\cite{vasserot},~\cite[\S 3.4]{baumann},~\cite[\S 5.3]{yz} or~\cite[Theorem~5.3.23]{zhu}. More precisely, this construction provides a group morphism from the Picard group $\mathrm{Pic}(\Gr_G)$ to the Lie algebra $\mathrm{Lie}(G^\vee_\bk)$, which sends ample line bundles to regular nilpotent elements belonging to $\mathrm{Lie}(\widetilde{B})$. (This property provides another canonical description of $\widetilde{B}$, as the unique Borel subgroup of $G^\vee_\bk$ containing $T^\vee_\bk$ and whose Lie algebra contains these regular nilpotent elements.) In particular, if the derived subgroup of $G$ is quasi-simple then there exists a canonical ample line bundle on $\Gr$ characterized by the fact that its restriction to each connected component is a generator of the corresponding Picard group; decomposing the regular nilpotent element obtained from this line bundle on root spaces we obtain the desired pinning.
 \end{enumerate}
\end{rmk}


Lemma~\ref{lem:Borel} implies that the simple root directions of
$T\subset B\subset G$ are the simple coroot directions of
$T^\vee_\bk\subset\widetilde{B}\subset\widetilde{G}_\bk$:
\begin{equation}
\label{eqn:root-directions}
\bigl\{\mathbf Q_+\cdot\alpha : \alpha \in \Delta^\vee_{\mathrm{s}}(\widetilde{G}_\bk,\widetilde{B},T^\vee_\bk)\bigr\}=
\bigl\{\mathbf Q_+\cdot\beta : \beta \in \Delta_{\mathrm{s}}(G,B,T)\bigr\}.
\end{equation}
(In fact, these sets are the extreme rays of the rational convex polyhedral cone determined by $\{\lambda \in \Q \otimes_\Z X^*(T) \mid \forall \mu \in X_*(T)^+, \ \langle \lambda, \mu \rangle \geq 0\}$.)


\begin{lem}
\label{lem:simple-roots}
We have $\Delta_{\mathrm{s}}(\widetilde{G}_\bk,\widetilde{B},T^\vee_\bk) = \Delta^\vee_{\mathrm{s}}(G,B,T)$ as subsets of $X_*(T) =X^*(T^\vee_\bk)$.
\end{lem}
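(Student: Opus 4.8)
We already know from~\eqref{eqn:root-directions} that the simple roots of $(\widetilde G_\bk,\widetilde B,T^\vee_\bk)$ and the simple coroots of $(G,B,T)$ span the same set of rays in $\Q\otimes_\Z X^*(T)=\Q\otimes_\Z X_*(T)$. So for each simple root $\alpha\in\Delta_{\mathrm s}(G,B,T)$ there is a unique positive rational multiple $c_\alpha>0$ with $c_\alpha\alpha^\vee\in\Delta_{\mathrm s}(\widetilde G_\bk,\widetilde B,T^\vee_\bk)$, and the plan is to show $c_\alpha=1$ for every $\alpha$. The key tool is the rank-one subcategory: fix $\alpha\in\Delta_{\mathrm s}(G,B,T)$ and consider the minuscule-type cocharacter situation, or more robustly, pass to the Levi/rank-one picture. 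Concretely, I would first reduce to the case where $G$ has semisimple rank one for the relevant direction by using the weight functors $\F_\mu$, which identify the $T^\vee_\bk$-weights of $\Sat(\IC_\lambda)$ with $X_*(T)$-weights; the $\alpha^\vee$-string structure through a weight is governed by the $\langle\,\cdot\,,\alpha\rangle$-grading, which is visible on $\Gr_G$.

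\textbf{Main argument.} The cleanest route is to compute, for a well-chosen dominant $\lambda$, the set of weights of the simple module $\Sat(\IC_\lambda)=V(\lambda)$ and compare it with what representation theory of $\widetilde G_\bk$ forces. Take $\lambda$ to be a fundamental coweight $\varpi_\beta^\vee$ of $G$ (dual to the simple root $\beta$) — more precisely, take $\lambda$ minimal dominant so that $\lambda$ is not orthogonal to $\alpha$. By Proposition~\ref{prop:dim-weight-spaces}, the $\mu$ with $\F_\mu(\IC_\lambda)\neq0$ are exactly $\mu\in\mathrm{Conv}(W\lambda)\cap(\lambda+Q^\vee)$. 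On the other hand, by Lemma~\ref{lem:Borel}, $\lambda$ is the highest weight of $V(\lambda)$ as a $\widetilde G_\bk$-module, so $\lambda-\alpha'$ is a weight of $V(\lambda)$ for a simple root $\alpha'$ of $\widetilde G_\bk$ if and only if $\langle\lambda,(\alpha')^\vee\rangle\neq 0$ (where $(\alpha')^\vee$ is the corresponding coroot of $\widetilde G_\bk$). Writing $\alpha'=c_\alpha\alpha^\vee$, and knowing $(\alpha')^\vee=c_\alpha^{-1}\alpha$ (coroots scale inversely), the condition becomes $\langle\lambda,\alpha\rangle\neq 0$, which matches the $G$-side exactly; but the actual weight that appears, namely $\lambda-c_\alpha\alpha^\vee$, must lie in $\lambda+Q^\vee(G)=\lambda+\Z\langle\alpha^\vee:\alpha\in\Delta_{\mathrm s}(G,B,T)\rangle$ — this forces $c_\alpha\in\Z_{>0}$. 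Symmetrically, running the argument with the roles of dominant weights swapped (as in the second half of Lemma~\ref{lem:Borel}'s proof, using that every dominant weight of $\widetilde G_\bk$ is a dominant coweight of $G$ and vice versa) and using that the coroot lattice $Q^\vee(\widetilde G_\bk)$ generated by the $(\alpha')^\vee=c_\alpha^{-1}\alpha$ must be compatible with the weight-lattice structure coming from $X^*(T^\vee_\bk)=X_*(T)$, one gets $c_\alpha^{-1}\in\Z_{>0}$ as well. Hence $c_\alpha=1$.

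\textbf{The obstacle.} The delicate point is pinning down the precise integrality statement: knowing that $\lambda-\alpha'$ is \emph{a} weight of $V(\lambda)$ is not by itself enough to conclude $c_\alpha=1$ without also controlling which weights $\lambda-k\alpha'$ occur and that $\lambda-\alpha^\vee$ (the $G$-normalized simple coroot) is itself a weight. I expect the cleanest way to handle this is to invoke the explicit rank-one computation: the affine Grassmannian of $\mathrm{SL}_2$ (resp.\ $\mathrm{PGL}_2$) has $\IC$-sheaves whose weight-function dimensions realize the $2$-dimensional, $3$-dimensional, \dots\ irreducibles of the dual group, and the coweight lattice $X_*(T)$ pins the coroot of $\widetilde G_\bk$ to be exactly $\alpha$, not a proper multiple — this is where the choice of $G$ (simply connected vs.\ adjoint) and its effect on $Q^\vee\subset X_*(T)$ enters, and where one must be careful. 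So the step I would budget the most care for is the reduction to, and execution of, this rank-one check, together with verifying that the weight-lattice/root-lattice bookkeeping on $X^*(T^\vee_\bk)=X_*(T)$ genuinely rules out $c_\alpha\neq1$; the rest is formal manipulation with Lemma~\ref{lem:Borel}, Proposition~\ref{prop:dim-weight-spaces}, and~\eqref{eqn:root-directions}.
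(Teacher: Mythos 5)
Your proposal and the paper's proof pursue genuinely different strategies, and as written yours has a real gap at the ``symmetric'' step.

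Your first step, that $c_\alpha\in\Z_{>0}$, is sound: the simple root $\alpha'=c_\alpha\alpha^\vee$ of $\widetilde G_\bk$ is a difference $\lambda-\mu$ of two $T^\vee_\bk$-weights of $\Sat(\IC_\lambda)$, so by Proposition~\ref{prop:dim-weight-spaces} it lies in $Q^\vee$, and $\Q$-linear independence of the simple coroots of $G$ gives $c_\alpha\in\Z_{>0}$. But the ``symmetric'' step does not go through. The only constraint you invoke is that $(\alpha')^\vee=c_\alpha^{-1}\alpha$ must lie in $X_*(T^\vee_\bk)=X^*(T)$, and this does \emph{not} yield $c_\alpha^{-1}\in\Z$, because $\alpha$ need not be primitive in $X^*(T)$. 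Already for $G=\SL_2$ (so $X^*(T)=X_*(T)=\Z$, $\alpha=2$, $\alpha^\vee=1$) the constraints you list allow both $c_\alpha=1$ and $c_\alpha=2$: both candidate root data $(\Z,\{c_\alpha\},\Z,\{2/c_\alpha\})$ satisfy the axioms, so lattice bookkeeping alone cannot decide. You flag this as ``the obstacle'' and gesture at a rank-one check, but the reduction to rank one is itself nontrivial at this point in the argument (the restriction-to-Levi functor only appears much later), and the check is not carried out.

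The gap can be closed while keeping your approach by using Proposition~\ref{prop:dim-weight-spaces} in the \emph{other} direction: for suitably dominant $\lambda$ the specific element $\lambda-\alpha^\vee$ is a weight of $\Sat(\IC_\lambda)$, since it lies in $\Conv(W\lambda)\cap(\lambda+Q^\vee)$. Hence $\alpha^\vee$ lies in the $\mathbf{N}$-span of the positive roots of $(\widetilde G_\bk,\widetilde B,T^\vee_\bk)$. Writing $\alpha^\vee=\sum n_{\beta'}\beta'$ over the simple roots $\beta'$ of $\widetilde G_\bk$ with $n_{\beta'}\in\mathbf{N}$ and using $\Q$-linear independence of the $\beta'$ together with~\eqref{eqn:root-directions}, one gets $\alpha^\vee=n_{\alpha'}\alpha'$ with $n_{\alpha'}\in\mathbf{N}$, i.e.\ $c_\alpha^{-1}\in\mathbf{N}$; with $c_\alpha\in\Z_{>0}$, this forces $c_\alpha=1$.

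The paper packages this uniformly. It introduces the expected dual group $G^\vee_\bk$ and observes that $\Sat(\IC_\lambda)$ and the simple $G^\vee_\bk$-module $V_\lambda(G^\vee_\bk)$ have the same \emph{set} of $T^\vee_\bk$-weights, namely $\Conv(W\lambda)\cap(\lambda+Q^\vee)$ (by Proposition~\ref{prop:dim-weight-spaces} on one side, and standard characteristic-$0$ representation theory on the other). The union over dominant $\lambda$ of all differences $\lambda-\mu$ is then a single monoid, which equals the $\mathbf{N}$-span of the positive roots on both the $G^\vee_\bk$-side and the $\widetilde G_\bk$-side. Since the simple roots are precisely the indecomposable elements of this monoid, the two sets of simple roots coincide. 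This avoids the delicate integrality-in-both-directions argument, any case analysis on the isogeny type of $G$, and any rank-one reduction.
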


\begin{proof}
Let $G^\vee_\bk$ be the (connected, split) reductive
$\bk$-group which is Langlands dual to $G$, i.e.~whose root datum is dual to that of $(G,T)$. Then $T^\vee_\bk$ is
also a maximal torus in $G^\vee_\bk$. Choose the positive roots of
$(G^\vee_\bk,T^\vee_\bk)$ as the positive coroots of $T\subset B\subset G$,
so that the dominant weights of $(G^\vee_\bk,T^\vee_\bk)$ are $X_*(T)^+$.

Given $\lambda\in X_*(T)^+$, we can consider the simple $G^\vee_\bk$-module
$V_\lambda(G^\vee_\bk)$ with highest weight $\lambda$, and the simple
$\widetilde{G}_\bk$-module $V_\lambda(\widetilde{G}_\bk)=\Sat(\IC_\lambda)$ with highest
weight $\lambda$. The crucial observation is that these two $T^\vee_\bk$-modules
have the same weights; specifically, the set of weights of both of these
modules is
$$\left\{\mu\in X_*(T)\left|\begin{aligned}
\;&\mu-\lambda\text{ is in the coroot lattice of $(G,T)$}\\
&\text{and }\mu\text{ is in the convex hull of }W\lambda
\end{aligned}\right.\right\},$$
see again Proposition~\ref{prop:dim-weight-spaces}.
(Note however that we do not yet know that these two
$T^\vee_\bk$-modules have the same character.)

We now observe that
$$\{\lambda-\mu\mid\lambda\in X_*(T)^+,\;\mu\text{ a weight of }
V_\lambda(G^\vee_\bk)\}$$
is the $\mathbf N$-span of the positive roots of $(G^\vee_\bk,T^\vee_\bk)$.
The argument just above shows that this is also the $\mathbf N$-span of
the positive roots of $(\widetilde{G}_\bk,T^\vee_\bk)$. Looking at the
indecomposable elements of this monoid, we deduce that the simple roots of $\widetilde{G}_\bk$ are the simple roots
of $G^\vee_\bk$, i.e.~the simple coroots of $G$.
\end{proof}


We can finally conclude.

\begin{thm}
\label{thm:identification-char-0}
The group $\widetilde{G}_\bk$ is Langlands dual to $G$; more precisely the root datum of $\widetilde{G}_\bk$ with respect to $T^\vee_\bk$ is dual to the root datum of $G$ with respect to $T$.
\end{thm}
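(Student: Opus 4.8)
The plan is to assemble the pieces already established in \S\ref{ss:identification-char-0-first-step}--\S\ref{ss:identification-second-step} into the identification of the full root datum. Recall that at this point we know $\widetilde{G}_\bk$ is a split connected reductive group (Lemmas~\ref{lem:tilde-G-connected-char-0} and~\ref{lem:reductive-char-0}), that $T^\vee_\bk$, the split torus with $X^*(T^\vee_\bk)=X_*(T)$, is a maximal torus of $\widetilde{G}_\bk$ (end of \S\ref{ss:identification-char-0-first-step}), and—after reducing to $\bk$ algebraically closed as in the opening of \S\ref{ss:identification-second-step}—that there is a canonical Borel $\widetilde{B}$ for which the dominant weights are $X_*(T)^+$ (Lemma~\ref{lem:Borel}) and for which $\Delta_{\mathrm{s}}(\widetilde{G}_\bk,\widetilde{B},T^\vee_\bk)=\Delta^\vee_{\mathrm{s}}(G,B,T)$ inside $X_*(T)=X^*(T^\vee_\bk)$ (Lemma~\ref{lem:simple-roots}).

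First I would record the character identification: $X^*(T^\vee_\bk)=X_*(T)$ by construction of $T^\vee_\bk$, and dually $X_*(T^\vee_\bk)=X^*(T)$ canonically (via the perfect pairing $X_*(T)\times X^*(T)\to\Z$). So the character/cocharacter lattices of $(\widetilde{G}_\bk,T^\vee_\bk)$ are canonically those of $(G,T)$ with the roles of weights and coweights exchanged, which is exactly the lattice part of the dual root datum. It then remains only to match roots and coroots. By Lemma~\ref{lem:simple-roots} the simple roots of $(\widetilde{G}_\bk,\widetilde{B},T^\vee_\bk)$ coincide with the simple coroots $\Delta^\vee_{\mathrm{s}}(G,B,T)$; since a root system is generated from its simple roots by the action of the Weyl group, and since the Weyl group $W(\widetilde{G}_\bk,T^\vee_\bk)$ acting on $X^*(T^\vee_\bk)=X_*(T)$ is the group generated by the reflections in these simple roots, I would argue that $W(\widetilde{G}_\bk,T^\vee_\bk)$ coincides with $W=W(G,T)$ acting on $X_*(T)$ (the simple reflections generate the same group). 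Consequently $\Delta(\widetilde{G}_\bk,T^\vee_\bk)=W\cdot\Delta^\vee_{\mathrm{s}}(G,B,T)=\Delta^\vee(G,T)$, the full set of coroots of $G$.

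Next I would identify the coroots of $(\widetilde{G}_\bk,T^\vee_\bk)$, i.e.~the elements $\alpha^\vee\in X_*(T^\vee_\bk)=X^*(T)$ for $\alpha\in\Delta(\widetilde{G}_\bk,T^\vee_\bk)$. The cleanest route is to use that in any reductive group the pair $(\alpha,\alpha^\vee)$ is determined by the simple-reflection formula $s_\alpha(x)=x-\langle x,\alpha^\vee\rangle\,\alpha$, together with the normalisation $\langle\alpha,\alpha^\vee\rangle=2$; once we know that the reflections $s_\alpha$ acting on $X^*(T^\vee_\bk)=X_*(T)$ are precisely the reflections $s_{\beta^\vee}$ coming from $G$ (for $\beta$ running over the roots of $G$, with $\alpha=\beta^\vee$), and that these in turn act on $X_*(T)$ by $s_{\beta^\vee}(x)=x-\langle\beta,x\rangle\,\beta^\vee$, we read off that the coroot of $\widetilde{G}_\bk$ attached to $\alpha=\beta^\vee$ is exactly $\beta\in X^*(T)=X_*(T^\vee_\bk)$. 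Combined with the previous paragraph this gives a bijection $\Delta(\widetilde{G}_\bk,T^\vee_\bk)\xrightarrow{\sim}\Delta^\vee(G,T)$, $\alpha\mapsto\alpha$, carrying the coroot map of $\widetilde{G}_\bk$ to the (inverse of the) coroot map of $G$, which is precisely the statement that the root datum of $(\widetilde{G}_\bk,T^\vee_\bk)$ is dual to that of $(G,T)$. Finally, invoking the classification of split reductive groups by their root data (cited in \S\ref{ss:overview} via~\cite{sga3}), and the canonicity observations already made (Remark after Lemma~\ref{lem:Borel}), $\widetilde{G}_\bk\cong G^\vee_\bk$ canonically, which is the assertion of the theorem; the reduction from algebraically closed $\bk$ back to general $\bk$ was handled at the start of \S\ref{ss:identification-second-step}, since the root datum is insensitive to the base field.

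The main obstacle I anticipate is making rigorous the passage from ``simple roots agree'' to ``the full root data are dual'': one must be careful that the Weyl group of $\widetilde{G}_\bk$, as an abstract reflection group acting on $X^*(T^\vee_\bk)$, really is identified with $W$ and not merely abstractly isomorphic to it, and that under this identification the $\widetilde{G}_\bk$-reflection $s_\alpha$ for a simple $\alpha=\beta^\vee$ is the map $x\mapsto x-\langle\beta,x\rangle\beta^\vee$ rather than some twist of it—equivalently, that the coroot $\alpha^\vee\in X^*(T)$ attached by $\widetilde{G}_\bk$ to the simple root $\beta^\vee$ is genuinely $\beta$ and not some positive multiple. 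The normalisation $\langle\alpha,\alpha^\vee\rangle=2$ pins this down, but one should check that the reflection $s_\alpha$ on $X_*(T)$ coming from $\widetilde{G}_\bk$ has, as its fixed hyperplane and $-1$-eigenline, exactly those of the $G$-reflection $s_\beta$ (which it does, because the simple roots of $\widetilde{G}_\bk$ are literally the simple coroots of $G$ and $\widetilde{B}$ is chosen so that $2\rho$ is dominant, forcing the correct sign/chamber); given that, the multiple is forced to be $1$ by the pairing normalisation. This is essentially a bookkeeping argument once one has set up the lattice identifications carefully, and no new geometric input beyond Lemmas~\ref{lem:Borel} and~\ref{lem:simple-roots} is needed.
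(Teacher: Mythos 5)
Your overall plan follows the same route as the paper's proof — lattices dual by construction, simple roots identified via Lemma~\ref{lem:simple-roots}, and the coroot pinned down by the normalisation $\langle\alpha,\alpha^\vee\rangle=2$ — but the crucial step of determining the \emph{direction} of the coroot of $\widetilde{G}_\bk$ has a gap that your own ``obstacle'' paragraph does not quite close.

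The issue is that your argument for $W(\widetilde{G}_\bk,T^\vee_\bk)=W$ is circular: you claim the two Weyl groups agree because they are ``generated by the reflections in the same simple roots'', and later you say one can check that the reflection $s_\alpha$ of $\widetilde{G}_\bk$ has the same fixed hyperplane as the $G$-reflection $s_\beta$. But a reflection $s_\alpha$ is determined by the \emph{pair} $(\alpha,\alpha^\vee)$: the root $\alpha$ gives the $-1$-eigenline, while the fixed hyperplane is $\ker(\alpha^\vee)$. Lemma~\ref{lem:simple-roots} only tells you the root $\alpha=\beta^\vee$; it says nothing about the coroot $\widetilde\alpha$ of $\widetilde{G}_\bk$, which is exactly the object you are trying to identify. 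Knowing the simple roots alone does not determine the reflections, hence does not determine the Weyl group, hence cannot be used to pin down the coroot.

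What the paper uses to close this gap is the equality~\eqref{eqn:root-directions}, a consequence of Lemma~\ref{lem:Borel}: the simple \emph{coroot} directions of $(\widetilde{G}_\bk,\widetilde{B},T^\vee_\bk)$ coincide with the simple root directions of $(G,B,T)$, because both are the extreme rays of the dual cone to $X_*(T)^+$. This tells you that $\widetilde\alpha=c\gamma$ for some simple root $\gamma$ of $G$ and some $c\in\Q_{>0}$. Then $\langle\widetilde\alpha,\alpha^\vee\rangle=2>0$ forces $\gamma=\alpha$ (since $\langle\gamma,\alpha^\vee\rangle\le0$ for $\gamma\ne\alpha$), and the same pairing forces $c=1$. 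Your parenthetical invocation of ``$\widetilde{B}$ is chosen so that $2\rho$ is dominant, forcing the correct sign/chamber'' gestures in the right direction, but to make it rigorous you need to extract~\eqref{eqn:root-directions} from Lemma~\ref{lem:Borel} and apply it as above; without that input, the direction of $\widetilde\alpha$ (and hence the claimed identity of reflections) is not established. Once~\eqref{eqn:root-directions} is in hand, the rest of your argument — Weyl group identification, full root and coroot systems as Weyl orbits, compatibility of the bijections — goes through exactly as in the paper.
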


\begin{proof}
By construction $X^*(T^\vee_\bk)$ is dual to $X^*(T)$. What remains to be proved is that the roots and coroots of $\widetilde{G}_\bk$, together with the canonical bijection between these two sets, coincide with the coroots and roots of $G$, together with their canonical bijection.

Let $\alpha \in \Delta_{\mathrm{s}}(G,B,T)$.
By Lemma~\ref{lem:simple-roots}, the corresponding coroot $\alpha^\vee$ belongs to
$\Delta_{\mathrm{s}}(\widetilde{G}_\bk,\widetilde{B},T^\vee_\bk)$. The coroot
$\widetilde{\alpha}$ of $\widetilde{G}_\bk$ associated with this root is $\mathbf Q_+$-proportional to
a simple root of $T\subset B\subset G$ by~\eqref{eqn:root-directions}.
The conditions
$$\begin{cases}
\langle\widetilde{\alpha},\alpha^\vee\rangle=2,&\\[2pt]
\langle\widetilde{\alpha},\beta^\vee\rangle\leq0&
\text{for $\beta^\vee\in\Delta_{\mathrm{s}}(\widetilde{G}_\bk,\widetilde{B},T^\vee_\bk) \smallsetminus\{\alpha^\vee\}$}
\end{cases}$$
then give $\widetilde{\alpha}=\alpha$.

We thus have an identification
\[
\Delta_{\mathrm{s}}(G,B,T) = \Delta_{\mathrm{s}}^\vee(\widetilde{G}_\bk,\widetilde{B},T^\vee_\bk).
\]
By Lemma~\ref{lem:simple-roots} we also have
\[
\Delta_{\mathrm{s}}(\widetilde{G}_\bk,\widetilde{B},T^\vee_\bk) = \Delta^\vee_{\mathrm{s}}(G,B,T),
\]
and the bijections between simple roots and simple coroots
are the same. We may thus identify the Weyl groups of $G$ and
$\widetilde{G}_\bk$ and extend the above equalities between simple roots/coroots of $\widetilde{G}_\bk$ and coroots/roots of $G$ to equalities between \emph{all} roots and coroots. It is clear from this proof that the bijections between roots and coroots are the same for the two groups, and thus our proof is complete.
\end{proof}

\subsection{Conclusion}

We have finally constructed our canonical equivalence of monoidal categories $\Sat$
which fits in the commutative diagram
$$\xymatrix@R=1.2cm{\Per_{\GO}(\Gr_G,\bk)\ar[rr]^\sim_{\Sat}\ar[dr]|-{\F:=\mathsf{H}^\bullet(\Gr_G,\bm ?)}&&\Rep_{\mathbf k}
(G^\vee_\bk)\ar[dl]|-{\forget}\\&\Vect_{\mathbf k}.&}$$

\newpage

\part{The case of arbitrary coefficients}
\label{pt:arbitrary}

In this part, $\bk$ is an arbitrary commutative Noetherian ring of finite global dimension,\footnote{These assumptions on $\bk$ are needed to have a ``good'' six-functors formalism for derived categories of sheaves of $\bk$-modules, hence to apply the theory of perverse sheaves; see~\cite{ks}.} and we denote by $\Mod_\bk$ the abelian category of finitely generated $\bk$-modules. We continue with the geometric setting of Part~\ref{pt:char-0}: $G$ is a (connected) complex reductive algebraic group, and we consider the affine Grassmannian $\Gr_G$ of $G$. Our main object of study is now the category $\Per_{\GO}(\Gr_G,\bk)$ of $\GO$-equivariant $\bk$-perverse sheaves\footnote{The definition of perverse sheaves in this generality is literally the same as that recalled in~\S\ref{ss:satake-category}. The main difference with the case of fields is that now this subcategory is not stable under Verdier duality in general.} on $\Gr_G$. We will see in~\S\ref{ss:equiv-const} that this category is equivalent to the category $\Per_{\mathscr{S}}(\Gr_G,\bk)$ of $\mathscr{S}$-constructible perverse sheaves (as for fields of characteristic $0$, see Corollary~\ref{cor:equivariance}), but at first we need to distinguish these two categories. 

\section{Convolution and weight functors for general coefficients}

In this section we explain how to modify the definition of the convolution bifunctor, and the proof of its main properties, to treat the case of general coefficients.

\subsection{Weight functors}

Proposition~\ref{prop:weight-functors} still holds in this generality, with the same proof.

\begin{prop}
\label{prop:weight-functors-k}
For $\mathscr A\in\Per_{\mathscr{S}}(\Gr_G,\mathbf k)$, $\mu\in X_*(T)$
and $k\in\mathbf Z$, there exists a canonical isomorphism
$$\coH^k_{T_\mu}(\Gr_G,\mathscr A)\xrightarrow\sim \coH^k_c(S_\mu,\mathscr A),$$
and both terms vanish if $k\neq \langle 2\rho,\mu \rangle$.
\end{prop}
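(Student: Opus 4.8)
The proof strategy is to observe that the argument given for Proposition~\ref{prop:weight-functors} used no special properties of the coefficient field: all the inputs (the perversity conditions on $\mathscr{A}|_{\Gr_G^\lambda}$, the dimension estimates of Theorem~\ref{thm:orbits}, the vanishing results for cohomology with compact supports and cohomology with supports from~\cite{iversen}, and Braden's hyperbolic localization theorem) are valid for sheaves of $\bk$-modules with $\bk$ an arbitrary Noetherian ring of finite global dimension. Indeed, the only place where semisimplicity of the Satake category entered the $\S$\ref{ss:weight-functors} discussion was in deducing exactness of $\F_\mu$ \emph{afterwards}, which is not part of the statement of Proposition~\ref{prop:weight-functors}. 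So the plan is simply to transcribe that proof.

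Concretely, first I would note that for $\lambda \in X_*(T)^+$ the perversity condition forces $\mathscr{A}|_{\Gr_G^\lambda} \in D^{\leq -\langle 2\rho,\lambda\rangle}(\Gr_G^\lambda,\bk)$, exactly as in~\S\ref{ss:satake-category}, this time over $\bk$. Combined with the dimension estimate $\dim(\Gr_G^\lambda \cap S_\mu) \leq \langle\rho,\lambda+\mu\rangle$ from Theorem~\ref{thm:orbits}\eqref{it:thm-orbits-2} (whose proof is purely geometric and coefficient-independent) and the vanishing $\coH^k_c(\Gr_G^\lambda \cap S_\mu,\bk)=0$ for $k > \langle 2\rho,\lambda+\mu\rangle$ (see~\cite[Proposition~X.1.4]{iversen}, valid for any ring of coefficients), a dévissage on the cohomological degrees of $\mathscr{A}|_{\Gr_G^\lambda}$ gives $\coH^k_c(\Gr_G^\lambda \cap S_\mu,\mathscr{A})=0$ for $k > \langle 2\rho,\mu\rangle$. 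Then, filtering the support of $\mathscr{A}$ by the closed subvarieties $\overline{\Gr_G^\lambda}$ — using the same enumeration $(\lambda_n)_{n\geq 0}$ of dominant weights compatible with $\leq$ and the same spectral sequence (or iterated triangle) argument — yields $\coH^k_c(S_\mu,\mathscr{A})=0$ for $k>\langle 2\rho,\mu\rangle$. The dual argument, using~\cite[Theorem~X.2.1]{iversen} instead, gives $\coH^k_{T_\mu}(\Gr_G,\mathscr{A})=0$ for $k<\langle 2\rho,\mu\rangle$. Finally Braden's theorem~\cite[Theorem~1]{braden} supplies the canonical isomorphism $\coH^k_{T_\mu}(\Gr_G,\mathscr{A})\cong\coH^k_c(S_\mu,\mathscr{A})$ for all $k$, and combining the two vanishing ranges through this isomorphism forces both sides to vanish unless $k=\langle 2\rho,\mu\rangle$.

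There is really no substantive obstacle here; the only point to be a little careful about is that Braden's theorem requires some normality hypothesis on the ambient space (and the fixed-point/attracting-set stratification), but this is exactly the same issue already present in the field case and is addressed in the references cited there (see also~\cite[\S 1.8.1]{xue} for the validity of the normality assumption and an alternative argument via~\cite{drg}); nothing about it changes when $\bk$ is replaced by a general ring. Likewise, the compact-support and support cohomology vanishing statements in~\cite{iversen} are stated for sheaves with arbitrary coefficients, so no adaptation is needed. Hence the statement follows with the same proof as Proposition~\ref{prop:weight-functors}, and the write-up can legitimately be a one-line pointer to that proof, perhaps expanded slightly for the reader's convenience along the lines just sketched.
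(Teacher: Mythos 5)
Your proposal is correct and matches the paper exactly: the paper's own ``proof'' of Proposition~\ref{prop:weight-functors-k} is the single sentence that Proposition~\ref{prop:weight-functors} ``still holds in this generality, with the same proof,'' and your paragraph is precisely the justification of why that transcription is legitimate (the dimension estimates are coefficient-independent, the Iversen vanishing results and Braden's theorem hold over an arbitrary Noetherian ring of finite global dimension, and the only place semisimplicity entered was in the later deduction of exactness, not in Proposition~\ref{prop:weight-functors} itself). No gap.
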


\begin{rmk}
\label{rmk:weight-functors-k}
 The same comments as in Remark~\ref{rmk:weight-functors} apply here also.
\end{rmk}

In view of this fact, as in Section~\ref{sec:weight-functors}, for any $\mu \in X_*(T)$ we denote by
\[
\F_\mu : \Per_{\mathscr{S}}(\Gr_G,\bk) \to \Mod_\bk
\]
the functor defined by
\[
\F_\mu(\mathscr{A}) = \coH^{\langle 2\rho, \mu \rangle}_{T_\mu}(\Gr_G,\mathscr A) \cong \coH^{\langle 2\rho, \mu \rangle}_c(S_\mu,\mathscr A).
\]

\begin{lem}
\label{lem:weight-functors-exact}
For any $\mu \in X_*(T)$, the functor $\F_\mu$ is exact.
\end{lem}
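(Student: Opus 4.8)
\textbf{Proof plan for Lemma~\ref{lem:weight-functors-exact}.}

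The plan is to reduce the exactness statement to the two vanishing results already contained in Proposition~\ref{prop:weight-functors-k}. Recall that $\F_\mu(\mathscr{A}) = \coH^{\langle 2\rho,\mu\rangle}_c(S_\mu,\mathscr{A})$, and that this equals $\coH^k_c(S_\mu,\mathscr{A})$ for $k = \langle 2\rho,\mu\rangle$, while $\coH^k_c(S_\mu,\mathscr{A})$ vanishes for \emph{all other} $k$ when $\mathscr{A}$ is perverse. First I would recall that the functor $\mathscr{A} \mapsto R\Gamma_c(S_\mu,\mathscr{A})$ (i.e.\ $\mathscr{A} \mapsto (\pi_\mu)_! (s_\mu)^* \mathscr{A}$, where $s_\mu : S_\mu \hookrightarrow \Gr_G$ is the inclusion and $\pi_\mu : S_\mu \to \mathrm{pt}$) is a triangulated functor from $\Db_{\mathscr{S}}(\Gr_G,\bk)$ to $\Db(\Mod_\bk)$ (or rather to the derived category of $\bk$-modules). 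Hence a short exact sequence $0 \to \mathscr{A}' \to \mathscr{A} \to \mathscr{A}'' \to 0$ in $\Per_{\mathscr{S}}(\Gr_G,\bk)$, viewed as a distinguished triangle $\mathscr{A}' \to \mathscr{A} \to \mathscr{A}'' \xrightarrow{[1]}$ in $\Db_{\mathscr{S}}(\Gr_G,\bk)$ (see~\S\ref{ss:satake-category}), gives rise to a long exact sequence
\[
\cdots \to \coH^{k}_c(S_\mu,\mathscr{A}') \to \coH^{k}_c(S_\mu,\mathscr{A}) \to \coH^{k}_c(S_\mu,\mathscr{A}'') \to \coH^{k+1}_c(S_\mu,\mathscr{A}') \to \cdots.
\]

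Next I would plug in $k = \langle 2\rho,\mu\rangle$ and use Proposition~\ref{prop:weight-functors-k} for the three perverse sheaves $\mathscr{A}'$, $\mathscr{A}$, $\mathscr{A}''$: the terms $\coH^{k-1}_c(S_\mu,\mathscr{A}'')$ and $\coH^{k+1}_c(S_\mu,\mathscr{A}')$ both vanish, since $k-1$ and $k+1$ are different from $\langle 2\rho,\mu\rangle$. Therefore the long exact sequence degenerates to a short exact sequence
\[
0 \to \F_\mu(\mathscr{A}') \to \F_\mu(\mathscr{A}) \to \F_\mu(\mathscr{A}'') \to 0,
\]
which is exactly the assertion that $\F_\mu$ is exact.

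There is no real obstacle here; the lemma is a formal consequence of the cohomological concentration established in Proposition~\ref{prop:weight-functors-k}. The only point that requires a word of care is making sure that $R\Gamma_c(S_\mu,-)$ genuinely defines a triangulated functor on the constructible derived category landing in a well-behaved target (so that the long exact sequence and the identification $\coH^k_c(S_\mu,-) = \mathrm{H}^k \circ R\Gamma_c(S_\mu,-)$ are legitimate); this is standard six-functor formalism over $\bk$, valid under our running assumptions on $\bk$. Equivalently, one can phrase the argument entirely in terms of the local-cohomology description $\F_\mu(\mathscr{A}) = \coH^{\langle 2\rho,\mu\rangle}_{T_\mu}(\Gr_G,\mathscr{A})$ using the triangulated functor $(t_\mu)_*(t_\mu)^!$, with the same vanishing input; either formulation works identically.
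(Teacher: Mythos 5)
Your proof is correct and is essentially the same as the paper's: both take a short exact sequence of perverse sheaves, view it as a distinguished triangle, apply the long exact sequence for $\coH^\bullet_c(S_\mu,-)$, and use the concentration in degree $\langle 2\rho,\mu\rangle$ from Proposition~\ref{prop:weight-functors-k} to truncate it to a short exact sequence of $\bk$-modules.
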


\begin{proof}
Any exact sequence $\mathscr{F}_1 \hookrightarrow \mathscr{F}_2 \twoheadrightarrow \mathscr{F}_3$ in $\Per_{\mathscr{S}}(\Gr,\bk)$ is defined by a distinguished triangle
\[
\mathscr{F}_1 \to \mathscr{F}_2 \to \mathscr{F}_3 \xrightarrow{[1]}
\]
in $\Db_{\mathscr{S}}(\Gr_G,\bk)$. Such a triangle induces a long exact sequence
\[
\cdots \to \coH^{k-1}_c(S_\mu,\mathscr{F}_3) \to \coH^k_c(S_\mu,\mathscr{F}_1) \to \coH^k_c(S_\mu,\mathscr{F}_2) \to \coH^k_c(S_\mu,\mathscr{F}_3) \to \coH^{k+1}_c(S_\mu,\mathscr{F}_1) \to \cdots
\]
in $\Mod_\bk$.
Using the vanishing claim in Proposition~\ref{prop:weight-functors-k} we deduce an exact sequence of $\bk$-modules
\[
\coH^{\langle 2\rho, \mu \rangle}_c(S_\mu,\mathscr{F}_1) \hookrightarrow \coH^{\langle 2\rho, \mu \rangle}_c(S_\mu,\mathscr{F}_2) \twoheadrightarrow \coH^{\langle 2\rho, \mu \rangle}_c(S_\mu,\mathscr{F}_3),
\]
which finishes the proof.
\end{proof}

Then we define the functor
\[
\F : \Per_{\mathscr{S}}(\Gr_G,\mathbf k)\to\Mod_{\mathbf k}
\]
by
\[
\F(\mathscr{A}) = \coH^\bullet(\Gr_G,\mathscr{A}).
\]

The same proof as that of Theorem~\ref{thm:fiber-functor}, together with Lemma~\ref{lem:weight-functors-exact}, gives the following result.

\begin{thm}
\label{thm:fiber-functor-k}
There exists a canonical isomorphism of functors
$$\F\cong\bigoplus_{\mu\in X_*(T)}\F_\mu:
\Per_{\mathscr{S}}(\Gr_G,\mathbf k)\to\Mod_{\mathbf k}.$$
Moreover, $\F$ is exact and faithful.\qed
\end{thm}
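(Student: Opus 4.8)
The plan is to follow the blueprint of Theorem~\ref{thm:fiber-functor} almost verbatim, replacing $\Vect_{\mathbf k}$ by $\Mod_{\mathbf k}$ throughout, and supplying the exactness input via Lemma~\ref{lem:weight-functors-exact} in place of the semisimplicity of $\Per_{\GO}(\Gr_G,\bk)$ (which is no longer available for general $\bk$). The key point is that the proof of Theorem~\ref{thm:fiber-functor}\eqref{it:fiber-functor-1} never used the coefficient ring being a field of characteristic $0$: it only manipulated local cohomology long exact sequences, the closure relations for the $T_\mu$ (equation~\eqref{eqn:closure-Tmu}), the compactness of the support of $\mathscr A$, and the vanishing statement of Proposition~\ref{prop:weight-functors-k}. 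All of these hold over an arbitrary Noetherian $\bk$ of finite global dimension.

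First I would establish the isomorphism $\F \cong \bigoplus_{\mu \in X_*(T)} \F_\mu$. Fix $\mathscr A \in \Per_{\mathscr S}(\Gr_G,\mathbf k)$, which we may assume indecomposable, hence with connected support. Set $Z_n = \bigsqcup_{\langle \rho,\mu\rangle = n} T_\mu$ as in the proof of Theorem~\ref{thm:fiber-functor}; then one of $\bigcup_{n\in\mathbf Z} Z_n$ or $\bigcup_{n \in \frac12 + \mathbf Z} Z_n$ is a union of connected components of $\Gr_G$ containing $\supp\mathscr A$, and we treat the first case (the other being identical). Since $Z_n$ is the topological disjoint union of the $T_\mu$ it contains, Proposition~\ref{prop:weight-functors-k} gives $\coH^k_{Z_n}(\Gr_G,\mathscr A) = 0$ for $k \neq 2n$ and $\coH^{2n}_{Z_n}(\Gr_G,\mathscr A) = \bigoplus_{\langle\rho,\mu\rangle = n} \F_\mu(\mathscr A)$. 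Using $\overline{Z_n} = Z_n \sqcup \overline{Z_{n+1}}$ and the resulting distinguished triangle $i_* i^! \mathscr A_n \to \mathscr A_n \to j_* j^! \mathscr A_n \xrightarrow{[1]}$ on $\overline{Z_n}$, one gets the long exact sequence in local cohomology; the compactness of $\supp\mathscr A$ forces $\coH^\bullet_{\overline{Z_n}}(\Gr_G,\mathscr A) = 0$ for $n \gg 0$, and a descending induction (identical to the one in Theorem~\ref{thm:fiber-functor}) yields $\coH^k_{\overline{Z_{k/2}}}(\Gr_G,\mathscr A) \xrightarrow{\sim} \coH^k_{\overline{Z_n}}(\Gr_G,\mathscr A)$ for $n \le k/2$ and $k$ even, vanishing otherwise, hence (taking $n$ small enough that $\supp\mathscr A \subset \overline{Z_n}$) the desired $\coH^k(\Gr_G,\mathscr A) \cong \bigoplus_{\langle 2\rho,\mu\rangle = k} \F_\mu(\mathscr A)$. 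Canonicity and naturality can be addressed exactly as in Remark~\ref{rmk:equiv-def-wf}, using Lemma~\ref{lem:semiinfinite-intersection} to see that the adjunction composites $p_\nu \circ i_\mu$ vanish for $\mu \neq \nu$ with $\langle 2\rho,\mu\rangle = \langle 2\rho,\nu\rangle$.

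Next, exactness of $\F$: by the isomorphism just proved it suffices to know each $\F_\mu$ is exact, which is Lemma~\ref{lem:weight-functors-exact} (this is exactly where the semisimplicity argument of the characteristic-$0$ case gets replaced). For faithfulness, given the exactness it is enough to show $\F$ kills no nonzero object, and here I would reproduce the last paragraph of the proof of Theorem~\ref{thm:fiber-functor}\eqref{it:fiber-functor-2}: if $\mathscr A \neq 0$, its support is a finite union of Schubert cells; picking $\lambda$ maximal among those occurring, $\mathscr A|_{\Gr_G^\lambda} \cong \underline{\bk}[\dim\Gr_G^\lambda]$, and since $\bigl((\supp\mathscr A)\smallsetminus\Gr_G^\lambda\bigr) \cap T_\lambda = \varnothing$ while $\Gr_G^\lambda \cap T_\lambda = \{L_\lambda\}$ (as in the proof of Theorem~\ref{thm:orbits}\eqref{it:thm-orbits-2}), the computation giving $\F_\lambda(\mathscr A)$ reduces to the stalk of $\underline{\bk}$ at $L_\lambda$, so $\F_\lambda(\mathscr A) \cong \bk \neq 0$ and thus $\F(\mathscr A) \neq 0$. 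I do not anticipate a genuine obstacle here; the only thing to be a little careful about is that $\F_\lambda(\mathscr A) \cong \bk$ uses $\mathscr A|_{\Gr_G^\lambda}$ being the constant sheaf, which holds because $\Gr_G^\lambda$ is simply connected so the only local system is trivial, and over $\bk$ the relevant stalk is still free of rank one. Since the theorem statement ends with \qed, the intent is evidently that this proof is a routine transcription of the earlier one, so the writeup should be brief and point to Theorem~\ref{thm:fiber-functor}, Proposition~\ref{prop:weight-functors-k}, and Lemma~\ref{lem:weight-functors-exact} rather than repeat every détour.
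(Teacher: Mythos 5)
Your proposal matches the paper's intent exactly: the paper's own ``proof'' of Theorem~\ref{thm:fiber-functor-k} is the single remark preceding the statement (``The same proof as that of Theorem~\ref{thm:fiber-functor}, together with Lemma~\ref{lem:weight-functors-exact}, gives the following result''), and you have correctly identified which input must be swapped out --- exactness of each $\F_\mu$ supplied by Lemma~\ref{lem:weight-functors-exact} in place of semisimplicity --- and why the rest of the local-cohomology d\'evissage is coefficient-agnostic.

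One small caveat in your faithfulness paragraph: after picking $\lambda$ maximal in $\supp\mathscr A$, the restriction $\mathscr A|_{\Gr_G^\lambda}$ is a shifted constant sheaf $\underline{M}_{\Gr_G^\lambda}[\dim\Gr_G^\lambda]$ for some nonzero finitely generated $\bk$-module $M$, but $M$ need \emph{not} be free of rank one over a general $\bk$ --- take $\bk=\Z$ and $\mathscr A$ the skyscraper $\underline{\Z/2\Z}$ at $L_0$. Your claim that ``the relevant stalk is still free of rank one'' is therefore wrong, and your attempt to be careful here is introducing a false statement rather than removing one; the paper's own phrasing ``$\mathscr A|_{\Gr_G^\lambda}\cong\underline\bk[\dim\Gr_G^\lambda]$'' is equally loose even over a field when $\mathscr A$ is not indecomposable. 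None of this affects the proof: the only conclusion needed is $\F_\lambda(\mathscr A)\cong M\neq 0$, which follows directly from $\mathscr A|_{\Gr_G^\lambda}\neq 0$ with no freeness input.
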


\begin{rmk}
\label{rmk:indep-k}
 Using Theorem~\ref{thm:fiber-functor-k} one can also generalize the proof of Lemma~\ref{lem:indep}: the functors $\F_\lambda$ do not depend on the choice of Torel $T \subset B$, up to canonical isomorphism.
\end{rmk}

Below we will also need the following claim (where, as in~\S\ref{ss:def-convolution}, we denote by $\Db_{c,\GO}(\Gr_G,\bk)$ the constructible $\GO$-equivariant derived category).

\begin{lem}
\label{lem:criterion-perv}
For any $\mathscr{F}$ in $\Db_{c,\GO}(\Gr_G,\bk)$, the following conditions are equivalent:
\begin{enumerate}
\item
$\mathscr{F}$ is a perverse sheaf
\item
\label{it:criterion-perv-2}
for any $\mu \in X_*(T)$ and $k \in \Z$ we have
\[
\coH_c^k(S_\mu, \mathscr{F}) \neq 0 \quad \Rightarrow \quad k=\langle 2\rho, \mu \rangle.
\]
\item
\label{it:criterion-perv-3}
for any $\mu \in X_*(T)$ and $k \in \Z$ we have
\[
\coH^k_{S_\mu}(\Gr_G, \mathscr{F}) \neq 0 \quad \Rightarrow \quad k=-\langle 2\rho, \mu \rangle.
\]
\end{enumerate}
\end{lem}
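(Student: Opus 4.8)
The implication (1) $\Rightarrow$ (2) is exactly the content of Proposition~\ref{prop:weight-functors-k} (combined with the isomorphism there, it also gives (1) $\Rightarrow$ (3)), so the real work is the converse. I will prove (2) $\Rightarrow$ (1), and the argument for (3) $\Rightarrow$ (1) is entirely parallel (dualizing the roles of $S_\mu$ with $!$-restriction and $T_\mu$ with $*$-restriction, using the $N^-$-version of the Iwasawa decomposition, i.e.~Theorem~\ref{thm:orbits-T}). Fix $\mathscr{F}$ in $\Db_{c,\GO}(\Gr_G,\bk)$ satisfying~\eqref{it:criterion-perv-2}. Since $\mathscr{F}$ has bounded $\mathscr{S}$-constructible cohomology, it admits a finite ``perverse filtration'': the truncation triangles ${}^p \tau_{\leq i} \mathscr{F} \to \mathscr{F} \to {}^p\tau_{\geq i+1}\mathscr{F}$ exhibit $\mathscr{F}$ as an iterated extension of the shifted perverse cohomology objects ${}^p \hspace{-1pt} \mathscr{H}^i(\mathscr{F})[-i]$, each of which lies in $\Per_{\GO}(\Gr_G,\bk)$ (perverse cohomology preserves equivariance). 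So it suffices to show that ${}^p \hspace{-1pt} \mathscr{H}^i(\mathscr{F}) = 0$ for all $i \neq 0$.

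\textbf{Key step.} The point is that the functors $\mathscr{G} \mapsto \coH^k_c(S_\mu, \mathscr{G})$ interact well with the perverse filtration. Concretely, for a \emph{perverse} sheaf $\mathscr{G} \in \Per_{\GO}(\Gr_G,\bk)$, Proposition~\ref{prop:weight-functors-k} tells us $\coH^k_c(S_\mu,\mathscr{G})$ is concentrated in the single degree $k = \langle 2\rho, \mu\rangle$; equivalently, the complex $R\Gamma_c(S_\mu, \mathscr{G})$ is concentrated in cohomological degree $\langle 2\rho, \mu\rangle$. Now apply $R\Gamma_c(S_\mu, -)$ to the truncation triangles for $\mathscr{F}$: we get that $R\Gamma_c(S_\mu, {}^p \hspace{-1pt} \mathscr{H}^i(\mathscr{F})[-i])$ is concentrated in degree $\langle 2\rho,\mu\rangle + i$, so $R\Gamma_c(S_\mu, \mathscr{F})$ is computed by a spectral sequence (the one coming from the perverse filtration) whose $E_1$-page in total degree $k$ is $\bigoplus_i \coH^{k}\bigl(R\Gamma_c(S_\mu, {}^p\hspace{-1pt}\mathscr{H}^i(\mathscr{F})[-i])\bigr) = \F_\mu({}^p \hspace{-1pt} \mathscr{H}^i(\mathscr{F}))$ where $i = k - \langle 2\rho,\mu\rangle$ — a single nonzero term per total degree, so the spectral sequence degenerates. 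Hence
\[
\coH^k_c(S_\mu, \mathscr{F}) \cong \F_\mu\bigl({}^p \hspace{-1pt} \mathscr{H}^{k - \langle 2\rho,\mu\rangle}(\mathscr{F})\bigr)
\]
for every $\mu$ and $k$. The hypothesis~\eqref{it:criterion-perv-2} says the left-hand side vanishes whenever $k \neq \langle 2\rho,\mu\rangle$, i.e.~whenever $k - \langle 2\rho,\mu\rangle \neq 0$; so $\F_\mu({}^p \hspace{-1pt} \mathscr{H}^i(\mathscr{F})) = 0$ for all $\mu \in X_*(T)$ and all $i \neq 0$.

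\textbf{Conclusion.} To finish I invoke the faithfulness part of Theorem~\ref{thm:fiber-functor-k}: since $\F = \bigoplus_\mu \F_\mu$ and every $\F_\mu$ kills ${}^p \hspace{-1pt} \mathscr{H}^i(\mathscr{F})$ for $i \neq 0$, we get $\F({}^p \hspace{-1pt} \mathscr{H}^i(\mathscr{F})) = 0$, and faithfulness of $\F$ on $\Per_{\mathscr{S}}(\Gr_G,\bk)$ (hence on the subcategory $\Per_{\GO}(\Gr_G,\bk)$) forces ${}^p \hspace{-1pt} \mathscr{H}^i(\mathscr{F}) = 0$ for all $i \neq 0$. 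Therefore $\mathscr{F} \cong {}^p \hspace{-1pt} \mathscr{H}^0(\mathscr{F})$ is perverse.

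\textbf{Main obstacle.} The only delicate point is justifying the degeneration/identification $\coH^k_c(S_\mu,\mathscr{F}) \cong \F_\mu({}^p\hspace{-1pt}\mathscr{H}^{k-\langle 2\rho,\mu\rangle}(\mathscr{F}))$ cleanly. One can avoid spectral-sequence bookkeeping by an induction on the length of the perverse filtration: if $\mathscr{F}$ sits in a triangle $\mathscr{G}' \to \mathscr{F} \to \mathscr{G}'' \xrightarrow{[1]}$ with $\mathscr{G}'$, $\mathscr{G}''$ built from fewer perverse cohomology objects, the long exact sequence for $R\Gamma_c(S_\mu,-)$ together with the (inductively known) concentration of $\coH^\bullet_c(S_\mu,\mathscr{G}')$ and $\coH^\bullet_c(S_\mu,\mathscr{G}'')$ in disjoint or adjacent degrees lets one read off $\coH^\bullet_c(S_\mu,\mathscr{F})$ degree by degree; the key numerical input is precisely that $\F_\mu$ of a perverse sheaf lives in the single degree $\langle 2\rho,\mu\rangle$, so the connecting maps relevant to the degrees we care about land in zero groups. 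I would present it this way. (Strictly, one should also note that the argument only uses $\mathscr{S}$-constructibility, not $\GO$-equivariance — the equivariance is used only at the very end to stay inside the stated category, exactly as in Remark~\ref{rmk:weight-functors-k}.)
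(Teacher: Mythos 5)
Your proof is correct and uses the same essential ingredients as the paper (concentration of $\F_\mu$ on perverse sheaves, its exactness, and faithfulness of $\F$ via Theorem~\ref{thm:fiber-functor-k}), but organizes the argument differently. The paper shows $\mathscr{F}$ is perverse by looking only at extremal perverse degrees: it lets $n$ be the top degree with ${}^p\mathscr{H}^n(\mathscr{F}) \neq 0$, finds a single $\mu$ with $\F_\mu({}^p\mathscr{H}^n(\mathscr{F})) \neq 0$, deduces $\coH_c^{n+\langle 2\rho,\mu\rangle}(S_\mu,\mathscr{F}) \neq 0$ from the truncation triangle, and concludes $n=0$ (and symmetrically for the bottom degree $m$). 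You instead prove the global identity $\coH_c^{k}(S_\mu,\mathscr{F}) \cong \F_\mu({}^p\mathscr{H}^{k-\langle 2\rho,\mu\rangle}(\mathscr{F}))$ for \emph{every} $k$ and $\mu$ — which is precisely the paper's subsequent Lemma~\ref{lem:Fmu-non-perverse} — and then apply faithfulness of the full functor $\F$ at the end. Your route is a bit longer but proves a stronger statement along the way (which the paper records separately); it is a legitimate reorganization rather than a different idea.

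One small caution about the justification, which you yourself flag: ``a single nonzero term per total degree'' does \emph{not} by itself force $E_1$-degeneration, since $d_1$ raises total degree by $1$ and both source and target can be nonzero. What actually kills the differentials here is that, in the filtration by ${}^p\tau_{\leq i}\mathscr{F}$ with top degree $n$, the connecting map out of $\coH_c^{n+\langle 2\rho,\mu\rangle}(S_\mu,{}^p\mathscr{H}^n(\mathscr{F})[-n])$ lands in $\coH_c^{n+1+\langle 2\rho,\mu\rangle}(S_\mu, {}^p\tau_{<n}\mathscr{F})$, which vanishes by the inductive concentration estimate. Your ``Main obstacle'' paragraph gives exactly this inductive argument, and it is correct; I would drop the spectral-sequence phrasing and present only the induction, or at least not assert degeneration as a consequence of one-term-per-total-degree alone. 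Finally, your remark that (1)$\Rightarrow$(3) follows ``from the isomorphism there'' slightly elides a step: Proposition~\ref{prop:weight-functors-k} gives $\coH^k_{T_\mu}(\Gr_G,-)$, and passing to $\coH^k_{S_\mu}(\Gr_G,-)$ uses $S_\mu = \dot{w}_0 \cdot T_{w_0\mu}$ together with $G$-equivariance of $\mathscr{F}$ — a point the paper spells out, and which is worth keeping.
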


\begin{proof}
If $\mathscr{F}$ is perverse, then the conditions~\eqref{it:criterion-perv-2} and~\eqref{it:criterion-perv-3} hold by Proposition~\ref{prop:weight-functors-k} together with the facts that $\mathscr{F}$ is $G$-equivariant and that $T_{w_0 \mu} = \dot{w}_0 \cdot S_\mu$, where $\dot{w}_0$ is any lift of the longest element $w_0$ of $W$ in $G$.

Now, let us assume that~\eqref{it:criterion-perv-2} holds, and prove that $\mathscr{F}$ is perverse.
Of course we can assume that $\mathscr{F} \neq 0$. Let $n$ be the highest degree for which $\pH^n(\mathscr{F}) \neq 0$ (where $\pH^n(\bm ?)$ is the $n$-th perverse cohomology functor). Then we have a ``truncation triangle''
\[
\mathscr{F}' \to \mathscr{F} \to \pH^n(\mathscr{F})[-n] \xrightarrow{[1]}
\]
where $\mathscr{F}'$ is concentrated in perverse degrees $\leq n-1$. By Theorem~\ref{thm:fiber-functor-k}, there exists $\mu \in X_*(T)$ such that $\F_\mu(\pH^n(\mathscr{F})) \neq 0$. Then Proposition~\ref{prop:weight-functors-k} implies that $\coH_c^k(S_\mu, \mathscr{F}')=0$ if $k \geq n+\langle 2\rho, \mu \rangle$, so that the natural morphism
\[
\coH_c^{n+\langle 2\rho, \mu \rangle}(S_\mu, \mathscr{F}) \to \coH_c^{\langle 2\rho, \mu \rangle}(S_\mu, \pH^n(\mathscr{F}))
\]
is an isomorphism. Since the right-hand side is nonzero by our choice of $\mu$, so is the left-hand side, and then our assumption implies that $n=0$.

If now $m$ is the lowest degree such that $\pH^m(\mathscr{F}) \neq 0$, then similar arguments using the truncation triangle
\[
\pH^n(\mathscr{F})[-m] \to \mathscr{F} \to \mathscr{F}'' \xrightarrow{[1]}
\]
(where $\mathscr{F}''$ is concentrated in perverse degrees $\geq m+1$)
show that $m=0$, which finally proves that $\mathscr{F}$ is perverse.

The fact that~\eqref{it:criterion-perv-3} implies that $\mathscr{F}$ is perverse can be proved similarly using the other description of the functor $\F_\mu$ and the relation between $S_\mu$ and $T_{w_0 \mu}$ noticed at the beginning of the proof.
\end{proof}

More generally, using arguments similar to those in the proof of Lemma~\ref{lem:criterion-perv} one can show the following claim by induction on $\#\{m \in \Z \mid \pH^m(\mathscr{F}) \neq 0\}$.

\begin{lem}
\label{lem:Fmu-non-perverse}
 For any $\mathscr{F}$ in $\Db_{\mathscr{S}}(\Gr_G,\bk)$ and any $n \in \Z$ we have
\[
\coH_c^{n+\langle 2\rho, \mu \rangle}(S_\mu, \mathscr{F}) \cong \F_\mu(\pH^n(\mathscr{F})).
\]
\end{lem}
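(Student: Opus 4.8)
The strategy is to induct on $N(\mathscr{F}) := \#\{m \in \Z \mid \pH^m(\mathscr{F}) \neq 0\}$, mimicking the argument used to prove Lemma~\ref{lem:criterion-perv}. If $N(\mathscr{F})=0$ then $\mathscr{F}=0$, both sides of the claimed isomorphism vanish, and there is nothing to prove. If $N(\mathscr{F})=1$, say $\pH^a(\mathscr{F}) \neq 0$, then $\mathscr{F} \cong \pH^a(\mathscr{F})[-a]$, and the claim reduces to the observation that $\coH_c^{n + \langle 2\rho, \mu \rangle}(S_\mu, \pH^a(\mathscr{F})[-a]) = \coH_c^{n-a+\langle 2\rho, \mu\rangle}(S_\mu, \pH^a(\mathscr{F}))$, which by Proposition~\ref{prop:weight-functors-k} is $\F_\mu(\pH^a(\mathscr{F}))$ if $n=a$ (so $\pH^n(\mathscr{F}) = \pH^a(\mathscr{F})$), and $0$ otherwise (in which case $\pH^n(\mathscr{F})=0$ too, so $\F_\mu(\pH^n(\mathscr{F}))=0$). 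Both sides agree.

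For the inductive step, suppose $N(\mathscr{F}) \geq 2$. Let $n_0$ be the highest degree with $\pH^{n_0}(\mathscr{F}) \neq 0$, and consider the perverse truncation triangle
\[
\mathscr{F}' \to \mathscr{F} \to \pH^{n_0}(\mathscr{F})[-n_0] \xrightarrow{[1]}
\]
with $\mathscr{F}'$ concentrated in perverse degrees $\leq n_0 - 1$, so that $N(\mathscr{F}') = N(\mathscr{F})-1$ and the inductive hypothesis applies to $\mathscr{F}'$. Applying the cohomological functor $\coH_c^\bullet(S_\mu, \bm ?)$ to this triangle yields a long exact sequence relating $\coH_c^k(S_\mu, \mathscr{F}')$, $\coH_c^k(S_\mu, \mathscr{F})$ and $\coH_c^{k - n_0}(S_\mu, \pH^{n_0}(\mathscr{F}))$. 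By Proposition~\ref{prop:weight-functors-k} the last term is concentrated in degree $k = n_0 + \langle 2\rho, \mu \rangle$, i.e.~vanishes unless $k = n_0 + \langle 2\rho, \mu\rangle$. Fixing $n$, I would split into the cases $n < n_0$, $n = n_0$, and $n > n_0$:
\begin{itemize}
\item if $n > n_0$, then $\pH^n(\mathscr{F}) = 0$ and also $\pH^n(\mathscr{F}')=0$, so by induction $\coH_c^{n+\langle 2\rho, \mu\rangle}(S_\mu, \mathscr{F}')=0$; since the third term of the triangle contributes only in degree $n_0 + \langle 2\rho, \mu\rangle < n + \langle 2\rho, \mu\rangle$, the long exact sequence gives $\coH_c^{n+\langle 2\rho, \mu\rangle}(S_\mu, \mathscr{F}) = 0 = \F_\mu(\pH^n(\mathscr{F}))$;
\item if $n < n_0$, then $\pH^n(\mathscr{F}) = \pH^n(\mathscr{F}')$, and the relevant portion of the long exact sequence (in degrees near $n + \langle 2\rho, \mu\rangle$, where the $\pH^{n_0}$-term vanishes since $n_0 > n$) shows $\coH_c^{n+\langle 2\rho, \mu\rangle}(S_\mu, \mathscr{F}) \cong \coH_c^{n+\langle 2\rho, \mu\rangle}(S_\mu, \mathscr{F}') \cong \F_\mu(\pH^n(\mathscr{F}'))= \F_\mu(\pH^n(\mathscr{F}))$ by induction;
\item if $n = n_0$, then $\pH^n(\mathscr{F}) = \pH^{n_0}(\mathscr{F})$. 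Here I use $N(\mathscr{F}) \geq 2$: the lowest perverse-cohomology degree of $\mathscr{F}'$ equals that of $\mathscr{F}$, which is $\leq n_0 - 1$, so $\mathscr{F}'$ has no perverse cohomology in degree $n_0$ and, again by the inductive hypothesis, $\coH_c^{k}(S_\mu, \mathscr{F}') = 0$ for $k = n_0 + \langle 2\rho, \mu\rangle$ and $k = n_0 + \langle 2\rho, \mu\rangle - 1$. (The vanishing in degree $n_0 + \langle 2\rho, \mu\rangle - 1$ requires knowing $\pH^{n_0-1}$ does not interfere; I would treat this via the induction applied with index $n_0-1$, or simply invoke that $\mathscr{F}'$ is in perverse degrees $\leq n_0-1$ together with Proposition~\ref{prop:weight-functors-k} applied degree-by-degree.) Then the long exact sequence gives an isomorphism $\coH_c^{n_0 + \langle 2\rho, \mu\rangle}(S_\mu, \mathscr{F}) \xrightarrow{\sim} \coH_c^{\langle 2\rho, \mu\rangle}(S_\mu, \pH^{n_0}(\mathscr{F})) = \F_\mu(\pH^{n_0}(\mathscr{F}))$.
\end{itemize}

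The main obstacle is bookkeeping in the $n = n_0$ case: one needs that $\coH_c^{n_0 + \langle 2\rho, \mu\rangle - 1}(S_\mu, \mathscr{F}')$ vanishes in order to get an isomorphism rather than just a surjection. This is where the hypothesis $N(\mathscr{F}) \geq 2$ (ensuring the top perverse degree of $\mathscr{F}'$ is strictly below $n_0$) must be used carefully, exactly as in the argument bounding $\coH_c^k(S_\mu,\mathscr{F}')=0$ for $k \geq n_0 + \langle 2\rho,\mu\rangle$ in the proof of Lemma~\ref{lem:criterion-perv}. One cleaner alternative I might adopt: strengthen the inductive hypothesis to the statement that $\coH_c^{k+\langle 2\rho, \mu\rangle}(S_\mu, \mathscr{G}) \cong \F_\mu(\pH^k(\mathscr{G}))$ for \emph{all} $k$ simultaneously (which is what is written), and then the vanishing of $\coH_c^{n_0 + \langle 2\rho, \mu\rangle - 1}(S_\mu, \mathscr{F}')$ follows because $\pH^{n_0-1}(\mathscr{F}')$ would contribute to $\coH_c^{(n_0-1) + \langle 2\rho, \mu\rangle}(S_\mu, \mathscr{F}')$, a different degree. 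Dualizing, the $T_\mu$-version of the statement follows by the same argument using the isomorphism $\coH_{T_\mu}^k(\Gr_G, \bm?) \cong \coH_c^k(S_\mu, \bm?)$ of Proposition~\ref{prop:weight-functors-k}, or simply by applying the $S_\mu$-statement after identifying $T_{w_0\mu}$ with a translate of $S_\mu$.
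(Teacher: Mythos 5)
Your proof is correct and matches the paper's intended approach exactly. The paper itself only gives a one-line sketch for this lemma (``using arguments similar to those in the proof of Lemma~\ref{lem:criterion-perv} one can show the following claim by induction on $\#\{m \in \Z \mid \pH^m(\mathscr{F}) \neq 0\}$''), and your write-up is precisely the fleshing out of that sketch: induct on $N(\mathscr{F})$, peel off the top perverse truncation, apply the long exact sequence of compactly supported cohomology over $S_\mu$, and use Proposition~\ref{prop:weight-functors-k} to control the third term.

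One small bookkeeping slip, which does not affect the validity of the argument: in the case $n=n_0$ you claim you need $\coH_c^{n_0+\langle 2\rho,\mu\rangle - 1}(S_\mu,\mathscr{F}')=0$, and you try to justify this via the inductive hypothesis applied to $\pH^{n_0-1}(\mathscr{F}')$. But this group is generically \emph{not} zero — by the very statement you are proving, it equals $\F_\mu(\pH^{n_0-1}(\mathscr{F}'))$, which is nonzero whenever $\pH^{n_0-1}(\mathscr{F}') \neq 0$. Fortunately, that term never enters the relevant portion of the long exact sequence. If one writes
\[
\coH_c^k(S_\mu, \mathscr{F}') \xrightarrow{\alpha} \coH_c^k(S_\mu, \mathscr{F}) \xrightarrow{\beta} \coH_c^{k-n_0}(S_\mu, \pH^{n_0}(\mathscr{F})) \xrightarrow{\gamma} \coH_c^{k+1}(S_\mu, \mathscr{F}')
\]
with $k = n_0 + \langle 2\rho,\mu\rangle$, then to get that $\beta$ is an isomorphism one needs $\alpha = 0$ (for injectivity, which follows from $\coH_c^k(S_\mu,\mathscr{F}')=0$, as you note) and $\gamma = 0$ (for surjectivity, which follows from $\coH_c^{k+1}(S_\mu,\mathscr{F}')=0$). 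Both vanishings are immediate from the inductive hypothesis since $\mathscr{F}'$ is concentrated in perverse degrees $\leq n_0 - 1$ so $\pH^{n_0}(\mathscr{F}')=\pH^{n_0+1}(\mathscr{F}')=0$. The degree $k-1$ term plays no role, so no extra care (and no hypothesis $N(\mathscr{F})\geq 2$) is needed beyond what the truncation triangle already provides.
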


\subsection{Equivariant and constructible perverse sheaves}
\label{ss:equiv-const}

Now we can prove that Corollary~\ref{cor:equivariance} is still true in this context (but for more serious reasons). By definition, the forgetful functor $\Db_{c,\GO}(\Gr_G,\bk) \to \Db_{\mathscr{S}}(\Gr_G,\bk)$ is t-exact for the perverse t-structures. In the following proposition we consider the restriction of this functor to perverse sheaves.

\begin{prop}
\label{prop:For-k}
The forgetful functor
 \begin{equation*}
\Per_{\GO}(\Gr_G,\bk) \to \Perv
\end{equation*}
is an equivalence of categories.
\end{prop}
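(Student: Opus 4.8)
The forgetful functor is fully faithful (this is a general property of the equivariant derived category recalled in~\S\ref{ss:equiv-perv}) and t-exact, so the only thing to prove is that it is essentially surjective onto the heart. Equivalently, I need to show that every $\mathscr{S}$-constructible perverse sheaf on $\Gr_G$ admits a $\GO$-equivariant structure. Unlike in the characteristic-$0$ case, I cannot argue via semisimplicity (the category $\Perv$ is no longer semisimple), so I must produce the equivariant structure more directly. The natural approach is a descent/acyclicity argument using the fact that each Schubert variety $\overline{\Gr_G^\lambda}$ is acted on by a finite-dimensional quotient of $\GO$ which is a connected algebraic group, together with the fact that $\GO$ is (pro-)unipotent-by-reductive in a suitable sense; more precisely, one uses that $\GO$ acts on $\Gr_G$ through quotients $G_{\mathcal{O}}/H_n$ (where $H_n$ is the $n$-th congruence subgroup) which are connected, and that a connected algebraic group acting on a variety induces a unique equivariant structure on any perverse sheaf whose simple constituents are equivariant.

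\textbf{Key steps.} First I would reduce to a single Schubert variety: fix $\lambda$, let $Z = \overline{\Gr_G^\lambda}$, which is stable under the action of $\GO$ through a connected quotient $G_n := \GO/H_n$ for $n$ large enough (depending on $\lambda$). It suffices to show every object of $\Per_{\mathscr{S}}(Z,\bk)$ carries a (unique) $G_n$-equivariant structure, compatibly as $\lambda$ grows. Second, I would prove the statement for a connected algebraic group $H$ acting on a variety $Y$ with finitely many orbits, each orbit simply connected: the claim is that the forgetful functor $\Per_{H}(Y,\bk) \to \Per(Y,\bk)$ is an equivalence. For this one uses the standard criterion that a complex $\mathscr{F}$ on $Y$ is $H$-equivariant iff $a^* \mathscr{F} \cong p^* \mathscr{F}$ where $a, p : H \times Y \to Y$ are the action and projection maps (one also needs the cocycle condition, but for a \emph{connected} group and a perverse sheaf this is automatic because the relevant $\Hom$ groups between the two pullbacks, restricted to $\{e\} \times Y$, are rigid — the isomorphism can be normalized to be the identity on $\{e\} \times Y$ and then uniqueness forces the cocycle identity). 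The isomorphism $a^* \mathscr{F} \cong p^* \mathscr{F}$ itself is obtained because $a$ and $p$ differ by the automorphism of $H \times Y$ given by $(h,y) \mapsto (h, h^{-1}y)$, and both $a^*\mathscr{F}$ and $p^*\mathscr{F}$ are perverse up to the same shift (as $a$ and $p$ are smooth of the same relative dimension with connected fibers) with the same restriction to a slice; alternatively, one invokes the connectedness of $H$ to say the local system of "obstruction classes" over $H$ is constant. Third, I would assemble these equivariant structures over the directed system of Schubert varieties (the restriction of a $G_m$-equivariant structure to a smaller $Z'$ and a compatible $G_n$ with $n \le m$ is canonical), using fully faithfulness to glue, obtaining a $\GO$-equivariant structure on the original perverse sheaf on all of $\Gr_G$.

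\textbf{Alternative and the main obstacle.} A cleaner route avoids the cocycle bookkeeping entirely: use Lemma~\ref{lem:criterion-perv} together with the observation that $\GO$-equivariance of a \emph{constructible complex} (not a priori perverse) on $\Gr_G$ is detected stratum-by-stratum — since each $\GO$-orbit $\Gr_G^\lambda$ is a homogeneous space $\GO/\mathrm{Stab}(L_\lambda)$ with connected stabilizer, every local system on it is automatically equivariant, and an iterated-extension/$t$-structure argument (the equivariant derived category's forgetful functor is t-exact and conservative, and $\Db_{c,\GO}(\Gr_G,\bk)$ is generated under extensions by $\GO$-equivariant objects supported on single orbits) shows every object of $\Db_{\mathscr{S}}(\Gr_G,\bk)$ lies in the essential image of the forgetful functor at the level of derived categories; then t-exactness gives the statement for perverse sheaves. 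I expect the main obstacle to be the careful verification that the forgetful functor $\Db_{c,\GO}(\Gr_G,\bk) \to \Db_{\mathscr{S}}(\Gr_G,\bk)$ is essentially surjective — this is the part that genuinely uses that the stabilizers (hence the relevant quotient groups) are connected, so that no nontrivial "twist" can obstruct descent; making this rigorous requires either the explicit Bernstein--Lunts resolution of $\Db_{c,\GO}$ by smooth simplicial spaces, or a dévissage along the stratification combined with the connectedness input. Since all of this is standard and is deferred to the appendix in spirit, I would state the key lemma (connected group $\Rightarrow$ forgetful functor on equivariant derived categories is essentially surjective, given connected stabilizers) with a reference to~\S\ref{ss:equiv-perv} or the appendix, and then deduce Proposition~\ref{prop:For-k} in one line.
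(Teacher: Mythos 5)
Your proposal identifies the right reduction (full faithfulness is free from \S\ref{ss:equiv-perv}; one must show every $\mathscr{S}$-constructible perverse sheaf carries a $\GO$-equivariant structure; this can be checked over a finite closed union $Z$ of orbits, acted on by a connected algebraic quotient $G_n$ of $\GO$) and this matches the first part of the paper's proof. But your argument for the key step --- producing the isomorphism $a^*\mathscr{F} \cong p^*\mathscr{F}$ --- has a real gap. You say that $a^*\mathscr{F}$ and $p^*\mathscr{F}$ are both shifted-perverse with the same restriction to the slice $\{e\}\times Z$, and then appeal to ``connectedness'' or to constancy of a ``local system of obstruction classes.'' This is not a proof: restriction to a slice is not a fully faithful operation on perverse sheaves over $H \times Z$, so agreeing on the slice does not pin down the object. (Indeed $p^*\mathscr{F}[\dim H]$ is determined by its slice because $p$-pullback is fully faithful on perverse sheaves, but the whole point is to show $a^*\mathscr{F}$ is in the essential image of $p^*$; arguing that way begs the question.) There genuinely is content here, and the paper supplies it: by induction on the number of strata, it uses Vilonen's gluing category $\mathscr{C}(F,G;\vartheta)$ to realize $\Per_{\widetilde{\mathscr{S}}}(\widetilde{Z},\bk)$ inside a category of gluing data built from an exact ``open-stratum'' functor $E$ valued in local systems on $G$, and then constructs an isomorphism of the two pieces of gluing data using the independence of the weight functors $\F_\lambda$ on the choice of Torel $T \subset B$ (Remark~\ref{rmk:indep-k}, the general-coefficient analogue of Lemma~\ref{lem:indep}). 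The ingredient that makes this work is thus not abstract connectedness alone but the concrete Torel-independence statement, which you do not invoke.

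Your ``alternative'' route is also incorrect as stated. The claim that the forgetful functor $\Db_{c,\GO}(\Gr_G,\bk)\to\Db_{\mathscr{S}}(\Gr_G,\bk)$ is essentially surjective at the triangulated level is false in general: the equivariant derived category has strictly larger $\Hom$-spaces (for a connected group with nontrivial cohomology of $BH$), so the forgetful functor is neither full nor essentially surjective on triangulated categories. Your dévissage argument ``$\Db_{c,\GO}$ is generated under extensions by equivariant objects on single orbits'' only describes $\Db_{c,\GO}$ intrinsically; it does not show that an arbitrary object of $\Db_{\mathscr{S}}$ built by gluing such generators along non-equivariant connecting maps lifts to $\Db_{c,\GO}$. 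The miracle that does hold --- and that the paper proves --- is that the forgetful functor becomes an equivalence once one restricts to the hearts, and that step is exactly where the weight-functor/Torel-independence machinery enters.
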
 


In view of this result, below we will not distinguish the categories $\Per_{\GO}(\Gr_G,\bk)$ and $\Perv$ anymore. In particular, we will now consider $\F$ and $\F_\mu$ as functors from $\Per_{\GO}(\Gr_G,\bk)$ to $\Mod_\bk$.

To explain the proof of Proposition~\ref{prop:For-k} we need to recall a construction from~\cite{vilonen}. Consider some categories $\mathscr{A}$ and $\mathscr{B}$, two functors $F,G : \mathscr{A} \to \mathscr{B}$, and a morphism of functor $\vartheta : F \to G$. Then we define a new category $\mathscr{C}(F,G;\vartheta)$ with:
\begin{itemize}
 \item objects: quadruples $(A,B,m,n)$ with $A$ in $\mathscr{A}$, $B$ in $\mathscr{B}$, and $m : F(A) \to B$, $n : B \to G(A)$ morphisms in $\mathscr{B}$ such that $\vartheta(A) = n \circ m$;
 \item morphisms from $(A,B,m,n)$ to $(A',B',m',n')$: pairs $(f,g)$ where $f : A \to A'$ and $g : B \to B'$ are morphisms in $\mathscr{A}$ and $\mathscr{B}$ respectively, such that both squares in the following diagram commute:
 \[
  \xymatrix@C=1.5cm{
  F(A) \ar[d]_-{F(f)} \ar[r]^-{m} & B \ar[d]_-{g} \ar[r]^-{n} & G(A) \ar[d]^-{G(f)} \\
  F(A') \ar[r]^-{m'} & B' \ar[r]^-{n'} & G(A').
  }
 \]
\end{itemize}
If $\mathscr{A}$, $\mathscr{B}$ are abelian, $F$ is right exact and $G$ is left exact then $\mathscr{C}(F,G;\vartheta)$ is abelian (see~\cite[Proposition~1.1]{vilonen}). In practice we will only consider this situation, but this fact will not play any role in our arguments.

\begin{proof}[Proof of Proposition~{\rm\ref{prop:For-k}}]
First, we claim that the forgetful functor
\begin{equation}
\label{eqn:For-Perv-GO-G}
\Per_{\GO}(\Gr_G,\bk) \to \Per_{\mathscr{S},G}(\Gr_G,\bk)
\end{equation}
is an equivalence of categories. In fact, since $G_{\mathcal{O}}$ is the semi-direct product of $G$ with a pro-unipotent subgroup (namely the kernel of the natural morphism $\GO \to G$),~\cite[Theorem~3.7.3]{bernstein-lunts} shows that the forgetful functor $\Db_{c,\GO}(\Gr_G,\bk) \to \Db_{\mathscr{S},G}(\Gr_G,\bk)$ is fully-faithful. Since the codomain of this functor is generated (as a triangulated category) by the objects of the form $(j_\lambda)_! \underline{\bk}_{\Gr^\lambda_G}$, which belong to its essential image, this functor is also essentially surjective, hence an equivalence. Restricting to perverse sheaves we deduce that~\eqref{eqn:For-Perv-GO-G} is an equivalence as well.

 On the other hand, the forgetful functor $\Per_{\mathscr{S},G}(\Gr_G,\bk) \to \Per_{\mathscr{S}}(\Gr_G,\bk)$ is fully faithful, see~\S\ref{ss:equiv-perv}; hence what we have to prove is the following claim: for any finite closed union of $\GO$-orbits $Z$ and any $\mathscr{S}$-constructible\footnote{Here (and below), by abuse, we still denote by $\mathscr{S}$ the restriction of the stratification $\mathscr{S}$ to $Z$ (or to any locally closed union of strata in $\Gr_G$).} perverse sheaf $\mathscr{F}$ on $Z$, there exists an isomorphism $(p_Z)^*\mathscr{F} \xrightarrow{\sim} (a_Z)^*\mathscr{F}$, where $a_Z,p_Z : G \times Z \to Z$ are the action map and the projection, respectively. In fact, we will prove this property for any \emph{locally closed} union of strata, by induction of the number of strata in $Z$.
 
 We note that the claim is obvious if $Z$ contains only one $\GO$-orbit. (In fact, in this case the category $\Per_{\mathscr{S}}(Z,\bk)$ is equivalent to the category $\Mod_\bk$ via $V \mapsto \underline{V}_Z[\dim Z]$.) Now we consider a general $Z$, choose $\lambda \in X_*(T)^+$ such that $\Gr_G^\lambda \subset Z$ is closed in $Z$, and set $U:=Z \smallsetminus \Gr_G^\lambda$. We denote by $i : \Gr_G^\lambda \to Z$ and $j : U \to Z$ the closed and open embeddings, respectively. We also consider the varieties $\widetilde{Z} := G \times Z$, $\widetilde{U}:= G \times U$, and denote by $\widetilde{i} : G \times \Gr_G^\lambda \to \widetilde{Z}$ and $\widetilde{j} : \widetilde{U} \to \widetilde{Z}$ the closed and open embeddings, respectively. Finally, we denote by $\widetilde{\mathscr{S}}$ the stratification of $\widetilde{Z}$ whose strata are the products $G \times \Gr_G^\mu$ with $\Gr_G^\mu \subset Z$, and also the restriction of this stratification to $\widetilde{U}$.
 
 By induction, we know that the forgetful functor $\Per_{\mathscr{S},G}(U,\bk) \to \Per_{\mathscr{S}}(U,\bk)$ is an equivalence of categories. Now we take $\mathscr{F}$ in $\Per_{\mathscr{S}}(Z,\bk)$, and need to show that there exists an isomorphism $(p_Z)^*\mathscr{F} \xrightarrow{\sim} (a_Z)^*\mathscr{F}$. To check this, we set $\mathscr{A}:= \Per_{\mathscr{S}}(U,\bk)$, and denote by $\mathscr{B}$ the category of $\bk$-local systems on $G$. We consider the functor
 \[
  E := \mathscr{H}^{\langle \lambda,2\rho \rangle+\dim(G)} \bigl( (\widetilde{\sigma}_\lambda)_! (\widetilde{s}_\lambda)^* \bm ? \bigr) : \Per_{\widetilde{\mathscr{S}}}(\widetilde{Z},\bk) \to \mathscr{B},
 \]
where $\widetilde{\sigma}_\lambda : G \times (S_\lambda \cap Z) \to G$ is the projection and $\widetilde{s}_\lambda : G \times (S_\lambda \cap Z) \to \widetilde{Z}$ is the embedding. (Here, the fact that $E$ takes values in local systems rather than more general sheaves follows from the observation that the simple objects in $\Per_{\widetilde{\mathscr{S}}}(\widetilde{Z},\bk)$ are actually $G$-equivariant, so that their images under $E$ are also $G$-equivariant, hence are local systems.) For $\mathscr{G}$ in $\Per_{\widetilde{\mathscr{S}}}(\widetilde{Z},\bk)$, if $g \in G$ the fiber of the complex $(\widetilde{\sigma}_\lambda)_! (\widetilde{s}_\lambda)^* \mathscr{G}$ at $g$ is $R\Gamma_c(g \cdot S_\lambda \cap Z, \mathscr{G}_{|\{g\} \times Z})$; hence this fiber is concentrated in degree $\langle \lambda,2\rho \rangle$ by Remark~\ref{rmk:weight-functors-k} (for the choice of Torel $gTg^{-1} \subset gBg^{-1}$). This implies (as in the proof of Lemma~\ref{lem:weight-functors-exact}) that $E$ is an exact functor.

We then set
\[
 \widehat{F} := {}^p \hspace{-1pt} \widetilde{j}_!(\bm ?), \ \widehat{G} := {}^p \hspace{-1pt} \widetilde{j}_*(\bm ?) : \Per_{\widetilde{\mathscr{S}}}(\widetilde{U},\bk) \to \Per_{\widetilde{\mathscr{S}}}(\widetilde{Z},\bk).
\]
We also denote by $\theta : \widehat{F} \to \widehat{G}$ the natural morphism of functors (provided by adjunction and the fact that $j^* \circ \widehat{F} \cong \id$, or equivalently the fact that $j^* \widehat{G} \cong \id$). Finally, we set $\widetilde{\mathscr{B}} := \Per_{\widetilde{\mathscr{S}}}(G \times \Gr_G^\lambda,\bk)$, which we consider as a full subcategory of $\Per_{\widetilde{\mathscr{S}}}(\widetilde{Z},\bk)$ via the functor $\widetilde{i}_*$. Then we are exactly in the setting of~\cite[Proposition~1.2]{vilonen}, which claims that the functor
\[
 \widetilde{E} : \Per_{\widetilde{\mathscr{S}}}(Z,\bk) \to \mathscr{C}(E \circ \widehat{F}, E \circ \widehat{G} ; E \circ \theta),
\]
sending $\mathscr{G}$ to the quadruple $(\widetilde{j}^*\mathscr{G}, E(\mathscr{G}), m,n)$ where $m : E \circ \widehat{F}(\widetilde{j}^*\mathscr{G}) \to E(\mathscr{G})$ and $n : E(\mathscr{G}) \to E \circ \widehat{G}(\widetilde{j}^*\mathscr{G})$ are the images under $E$ of the adjunction morphisms, is fully faithful.\footnote{In~\cite[Proof of Proposition~A.1]{mv}, the authors claim (without proof) that this functor is in fact an equivalence of categories. Since this fact is not necessary for the proof of Proposition~\ref{prop:For-k}, we will not consider this problem here.}

Now, recall our object $\mathscr{F}$ of $\Per_{\mathscr{S}}(Z,\bk)$. The induction hypothesis provides a canonical isomorphism
\[
 \widetilde{j}^* (p_Z)^* \mathscr{F} [\dim G] \cong (p_U)^* j^* \mathscr{F} [\dim G] \xrightarrow{\sim} (a_U)^* j^* \mathscr{F} [\dim G] \cong \widetilde{j}^* (a_Z)^* \mathscr{F} [\dim G].
\]
On the other hand, for $g \in G$, the fiber of $E \bigl( (p_Z)^* \mathscr{F} [\dim G] \bigr)$, resp.~of $E \bigl( (a_Z)^* \mathscr{F} [\dim G] \bigr)$, at $g$ is $\coH^{\langle \lambda,2\rho\rangle}_c(S_\lambda \cap Z, \mathscr{F})$, resp.~$\coH^{\langle \lambda,2\rho\rangle}_c((g \cdot S_\lambda) \cap Z, \mathscr{F})$. If $k : Z \to \Gr_G$ is the embedding, then we have
\[
 \coH^{\langle \lambda,2\rho\rangle}_c(S_\lambda \cap Z, \mathscr{F}) \cong \coH^{\langle \lambda,2\rho\rangle}_c(S_\lambda, k_! \mathscr{F}) \cong \F_\lambda \bigl( {}^p \hspace{-1pt} \mathscr{H}^0( k_! \mathscr{F}) \bigr)
\]
(by the base change theorem and then Lemma~\ref{lem:Fmu-non-perverse}) and similarly
\[
 \coH^{\langle \lambda,2\rho\rangle}_c((g \cdot S_\lambda) \cap Z, \mathscr{F}) \cong \coH^{\langle \lambda,2\rho\rangle}_c(g \cdot S_\lambda, k_! \mathscr{F}) \cong \F^{gT}_\lambda \bigl( {}^p \hspace{-1pt} \mathscr{H}^0( k_! \mathscr{F}) \bigr),
\]
where $\F^{gT}_\lambda$ denotes the $\lambda$-weight functor constructed using the Torel $gTg^{-1} \subset gBg^{-1}$ (as in~\S\ref{ss:independence-Torel}).
The independence of the functor $\F_\lambda$ on the choice of Torel (see Remark~\ref{rmk:indep-k}) provides a canonical identification between these spaces, and then an isomorphism of local systems $E \bigl( (p_Z)^* \mathscr{F} [\dim G] \bigr) \xrightarrow{\sim} E \bigl( (a_Z)^* \mathscr{F} [\dim G] \bigr)$. The pair of isomorphisms we have constructed provides an isomorphism $\widetilde{E} \bigl( (p_Z)^* \mathscr{F} [\dim G] \bigr) \xrightarrow{\sim} \widetilde{E} \bigl( (a_Z)^* \mathscr{F} [\dim G] \bigr)$. Since $\widetilde{E}$ is fully faithful, we deduce that $(p_Z)^*\mathscr{F}$ and $(a_Z)^*\mathscr{F}$ are (canonically) isomorphic, which shows that $\mathscr{F}$ is $G$-equivariant.
\end{proof}

\subsection{The convolution bifunctor}
\label{ss:convolution-k}

Recall the setting of~\S\ref{ss:def-convolution}. If $\mathscr{F}$ and $\mathscr{G}$ are in $\Per_{\GO}(\Gr_G,\bk)$, the convolution product $\mathscr{F} \star \mathscr{G}$ is again defined by
\[
\mathscr{F} \star \mathscr{G} := m_*(\mathscr{F} \, \widetilde{\boxtimes} \, \mathscr{G}),
\]
but where now $\mathscr{F} \, \widetilde{\boxtimes} \, \mathscr{G}$ is defined by the property that
\[
q^*(\mathscr{F} \, \widetilde{\boxtimes} \, \mathscr{G}) = p^* \bigl( \pH^0(\mathscr{F} \lboxtimes_\bk \mathscr{G}) \bigr),
\]
where $\lboxtimes_\bk$ is now the \emph{derived} external tensor product over $\bk$. The same considerations as in~\S\ref{ss:exactness-convolution} (based on the use of stratified semismall maps) show that $\mathscr{F} \star \mathscr{G}$ is a perverse sheaf.

An associativity constraint for this bifunctor can be constructed as in~\S\ref{ss:associativity}, using the observation that
\[
\pH^0(\mathscr{F}_1 \lboxtimes_\bk \pH^0(\mathscr{F}_2 \lboxtimes_\bk \mathscr{F}_3)) \cong \pH^0(\mathscr{F}_1 \lboxtimes_\bk \mathscr{F}_2 \lboxtimes_\bk \mathscr{F}_3) \cong \pH^0( \pH^0(\mathscr{F}_1 \lboxtimes_\bk \mathscr{F}_2) \lboxtimes_\bk \mathscr{F}_3)
\]
for $\mathscr{F}_1$, $\mathscr{F}_2$, $\mathscr{F}_3$ in $\Per_{\GO}(\Gr_G,\bk)$.

Finally, the same considerations as in Section~\ref{sec:convolution-BD} apply in this generality, and lead to a description of this convolution bifunctor in terms of fusion and to the construction of a commutativity constraint (which we then modify as in~\S\ref{ss:change-commutativity}). In fact, the only change that is required is the replacement of the formula~\eqref{eqn:convolution-fusion} by an isomorphism
\begin{equation}
\label{eqn:convolution-fusion-k}
\tau^\circ(\mathscr F_1 \star \mathscr F_2) \cong i^\circ j_{!*}\,\bigl(\pH^0(\tau^\circ
\mathscr F_1\lboxtimes_\bk\tau^\circ\mathscr F_2)|_U\bigr).
\end{equation}

\subsection{Compatibility with the fiber functor}

In this subsection we study the compatibility of convolution with the functor $\F$ (considered either with values in finitely generated $\bk$-modules, or in $X_*(T)$-graded finitely generated $\bk$-modules). The proof will use the following lemma.

\begin{lem}
\label{lem:tensor-perverse}
If $\F(\mathscr{F})$ or $\F(\mathscr{G})$ is flat over $\bk$, then $\mathscr{F} \lboxtimes_\bk \mathscr{G}$ is perverse.
\end{lem}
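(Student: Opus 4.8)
The statement to prove is that if $\F(\mathscr{F})$ or $\F(\mathscr{G})$ is flat over $\bk$, then the derived external tensor product $\mathscr{F} \lboxtimes_\bk \mathscr{G}$ is a perverse sheaf (a priori it lives in $\Db_c(\Gr_G \times \Gr_G, \bk)$ and could have cohomology in several perverse degrees). By symmetry we may assume $\F(\mathscr{G})$ is flat. The natural tool is the criterion for perversity via the semi-infinite orbits established in Lemma~\ref{lem:criterion-perv}: a complex is perverse if and only if for every pair of coweights the compactly-supported cohomology of its restriction to the corresponding attracting set is concentrated in the expected single degree. On $\Gr_G \times \Gr_G$ the relevant ``semi-infinite orbits'' for the diagonal $T$-action are the products $S_\mu \times S_\nu$, and the attracting/repelling analysis of Proposition~\ref{prop:weight-functors-k} applies factor-by-factor.

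\textbf{Key steps.} First I would reduce to checking, for all $\mu, \nu \in X_*(T)$ and all $k \in \Z$, that $\coH^k_c(S_\mu \times S_\nu, \mathscr{F} \lboxtimes_\bk \mathscr{G})$ vanishes unless $k = \langle 2\rho, \mu \rangle + \langle 2\rho, \nu \rangle$; indeed $S_\mu \times S_\nu$ is exactly the attracting set of the fixed point $(L_\mu, L_\nu)$ for the $\C^\times$-action on $\Gr_G \times \Gr_G$ induced by a regular dominant $\eta$, and the dimension of this fixed component is $0$, so the analogue of Lemma~\ref{lem:criterion-perv}\eqref{it:criterion-perv-2} for $\Gr_G \times \Gr_G$ (proved by literally the same argument, using Theorem~\ref{thm:fiber-functor-k} applied on $\Gr_G \times \Gr_G$, or just the Künneth-type statement) reduces perversity to this vanishing. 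Second, by the Künneth formula for compactly-supported cohomology of external products together with the base change theorem — writing $s_\mu : S_\mu \hookrightarrow \Gr_G$, $s_\nu : S_\nu \hookrightarrow \Gr_G$ and noting $(s_\mu \times s_\nu)^* (\mathscr{F} \lboxtimes_\bk \mathscr{G}) \cong (s_\mu^* \mathscr{F}) \lboxtimes_\bk (s_\nu^* \mathscr{G})$ — there is a canonical isomorphism
\[
R\Gamma_c(S_\mu \times S_\nu, \mathscr{F} \lboxtimes_\bk \mathscr{G}) \cong R\Gamma_c(S_\mu, \mathscr{F}) \lotimes_\bk R\Gamma_c(S_\nu, \mathscr{G}).
\]
Third, by Proposition~\ref{prop:weight-functors-k} the complex $R\Gamma_c(S_\mu, \mathscr{F})$ has cohomology concentrated in degree $\langle 2\rho, \mu \rangle$, where it equals $\F_\mu(\mathscr{F})$, and likewise $R\Gamma_c(S_\nu, \mathscr{G})$ is concentrated in degree $\langle 2\rho, \nu \rangle$ with value $\F_\nu(\mathscr{G})$. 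Since $\F_\nu(\mathscr{G})$ is a direct summand of $\F(\mathscr{G})$ by Theorem~\ref{thm:fiber-functor-k}, and $\F(\mathscr{G})$ is flat by hypothesis, $\F_\nu(\mathscr{G})$ is flat, so the derived tensor product over $\bk$ of the two complexes above reduces to the ordinary tensor product of their cohomologies placed in the single degree $\langle 2\rho, \mu \rangle + \langle 2\rho, \nu \rangle$ — there are no higher $\mathrm{Tor}$ contributions and no spreading into other degrees. This gives exactly the required vanishing, hence perversity.

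\textbf{Main obstacle.} The only genuinely delicate point is making sure the perversity criterion of Lemma~\ref{lem:criterion-perv} — stated there for $\Gr_G$ with its $\GO$-action — transfers correctly to $\Gr_G \times \Gr_G$ for a complex that is a priori merely constructible (for the product stratification) and not equivariant for any large group. The cleanest route is to avoid invoking an equivariance-based version and instead re-run the two-sided truncation argument of Lemma~\ref{lem:criterion-perv} directly on $\Gr_G \times \Gr_G$: one needs the analogue of Theorem~\ref{thm:fiber-functor-k} saying that $\bigoplus_{\mu,\nu} \coH^{\langle 2\rho,\mu\rangle + \langle 2\rho, \nu \rangle}_c(S_\mu \times S_\nu, \bm ?)$ is a faithful exact functor on $\Per_{\mathscr{S} \times \mathscr{S}}(\Gr_G \times \Gr_G, \bk)$, which follows from Theorem~\ref{thm:fiber-functor-k} together with the Künneth isomorphism displayed above. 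Once that bookkeeping is in place, the argument is the straightforward Künneth-plus-flatness computation sketched above; no new geometric input beyond Proposition~\ref{prop:weight-functors-k} is needed.
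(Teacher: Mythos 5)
Your proposal is correct and follows essentially the same route as the paper: reduce via Lemma~\ref{lem:criterion-perv} (applied for the group $G \times G$, whose affine Grassmannian is $\Gr_G \times \Gr_G$ with semi-infinite orbits $S_{\nu_1} \times S_{\nu_2}$) to the vanishing of compactly-supported cohomology, identify $R\Gamma_c(S_{\nu_1}\times S_{\nu_2}, \mathscr{F}\lboxtimes_\bk\mathscr{G})$ with $R\Gamma_c(S_{\nu_1},\mathscr{F})\lotimes_\bk R\Gamma_c(S_{\nu_2},\mathscr{G})$ via the compatibility of external products with $R\Gamma_c$, and conclude using flatness of $\F_\nu(\mathscr{G})$ (a direct summand of $\F(\mathscr{G})$) to eliminate higher $\mathrm{Tor}$. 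Your worry about whether the perversity criterion applies on $\Gr_G \times \Gr_G$ is unfounded: $\mathscr{F}\lboxtimes_\bk\mathscr{G}$ is automatically $\GO\times\GO$-equivariant since its factors are $\GO$-equivariant, so the paper simply invokes Lemma~\ref{lem:criterion-perv} for $G\times G$ directly without needing to re-run the truncation argument.
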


\begin{proof}
By Lemma~\ref{lem:criterion-perv} (applied to the group $G \times G$ instead of $G$), it suffices to prove that
\[
\coH^k_c(S_{\nu_1} \times S_{\nu_2}, \mathscr{F} \lboxtimes_\bk \mathscr{G})=0
\]
unless $k=\langle 2\rho, \nu_1 + \nu_2 \rangle$, for any $\nu_1,\nu_2 \in X_*(T)$. However, if for $\mu \in X_*(T)$ we denote by $s_\mu : S_\mu \to \Gr_G$ the embedding, and by $\sigma_\mu : S_\mu \to \mathrm{pt}$ the projection, we have
\[
(\sigma_{\nu_1} \times \sigma_{\nu_2})_! (s_{\nu_1} \times s_{\nu_2})^*(\mathscr{F} \lboxtimes_\bk \mathscr{G}) \cong \bigl((\sigma_{\nu_1})_! (s_{\nu_1})^* \mathscr{F} \bigr) \lotimes_\bk \bigl((\sigma_{\nu_2})_! (s_{\nu_2})^* \mathscr{G} \bigr).
\]
(Here we use the compatibility of external products with $*$-pullback functors, which is easy, and with derived global sections with compact supports, which is proved in~\cite[Theorem~V.10.19]{borel}.)
By Proposition~\ref{prop:weight-functors-k} $(\sigma_{\nu_1})_! (s_{\nu_1})^* \mathscr{F}$ is isomorphic to a $\bk$-module shifted by $[-\langle 2\rho,\nu_1 \rangle]$ and $(\sigma_{\nu_2})_! (s_{\nu_2})^* \mathscr{F}$ is isomorphic to a $\bk$-module shifted by $[-\langle 2\rho,\nu_2 \rangle]$. Moreover, our assumption and Theorem~\ref{thm:fiber-functor-k} imply that one of these $\bk$-modules is flat; hence $(\sigma_{\nu_1} \times \sigma_{\nu_2})_! (s_{\nu_1} \times s_{\nu_2})^*(\mathscr{F} \lboxtimes_\bk \mathscr{G})$ is concentrated in degree $\langle 2\rho, \nu_1 + \nu_2 \rangle$, which finishes the proof.
\end{proof}

Our next task is to define a canonical isomorphism
\[
\F(\mathscr{A}_1 \star \mathscr{A}_2) \cong \F(\mathscr{A}_1) \otimes_\bk \F(\mathscr{A}_2)
\]
for $\mathscr{A}_1, \mathscr{A}_2$ in $\Per_{\GO}(\Gr_G,\bk)$.
The proof is similar to the one explained in~\S\ref{ss:F-convolution}, using the following lemma.

\begin{lem}
\label{lem:cohomology-product-k}
For $\mathscr{A}_1$, $\mathscr{A}_2$ in $\Per_{\GO}(\Gr_G,\bk)$, there exists a canonical isomorphism
 \begin{equation*}
\coH^\bullet \bigl( \Gr_G \times \Gr_G, \pH^0(\mathscr{A}_1 \lboxtimes_\bk \mathscr{A}_2) \bigr) \cong \coH^\bullet(\Gr_G, \mathscr{A}_1) \otimes_\bk \coH^\bullet(\Gr_G, \mathscr{A}_2).
\end{equation*}
\end{lem}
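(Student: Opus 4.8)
The plan is to deduce the statement from the weight-functor decomposition of Theorem~\ref{thm:fiber-functor-k}, applied both to $G$ and to the connected reductive group $G \times G$. Note first that $\pH^0(\mathscr{A}_1 \lboxtimes_\bk \mathscr{A}_2)$ is a $(G \times G)_{\mathcal{O}}$-equivariant perverse sheaf on $\Gr_G \times \Gr_G \cong \Gr_{G \times G}$, constructible with respect to the stratification by $(G \times G)_{\mathcal{O}}$-orbits. Choosing in $G \times G$ the maximal torus $T \times T$ and the Borel subgroup $B \times B$, so that the halfsum of positive roots is $(\rho, \rho)$ and the semi-infinite orbit attached to a pair $(\nu_1, \nu_2) \in X_*(T) \times X_*(T)$ is $S_{\nu_1} \times S_{\nu_2}$, Theorem~\ref{thm:fiber-functor-k} for $G \times G$ provides a canonical isomorphism
\[
\coH^\bullet \bigl( \Gr_G \times \Gr_G, \pH^0(\mathscr{A}_1 \lboxtimes_\bk \mathscr{A}_2) \bigr) \cong \bigoplus_{(\nu_1,\nu_2)} \coH^{\langle 2\rho, \nu_1+\nu_2 \rangle}_c \bigl( S_{\nu_1} \times S_{\nu_2}, \pH^0(\mathscr{A}_1 \lboxtimes_\bk \mathscr{A}_2) \bigr),
\]
the sum running over $(\nu_1, \nu_2) \in X_*(T) \times X_*(T)$.

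Next I would identify each summand on the right-hand side. Applying Lemma~\ref{lem:Fmu-non-perverse} to the group $G \times G$, to the complex $\mathscr{A}_1 \lboxtimes_\bk \mathscr{A}_2$ (an object of $\Db_{\mathscr{S}}(\Gr_{G \times G},\bk)$), and to $n = 0$, we obtain a canonical isomorphism between $\coH^{\langle 2\rho, \nu_1+\nu_2 \rangle}_c( S_{\nu_1} \times S_{\nu_2}, \pH^0(\mathscr{A}_1 \lboxtimes_\bk \mathscr{A}_2) )$ and $\coH^{\langle 2\rho, \nu_1+\nu_2 \rangle}_c( S_{\nu_1} \times S_{\nu_2}, \mathscr{A}_1 \lboxtimes_\bk \mathscr{A}_2 )$. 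Now, since external product commutes with restriction to a product of locally closed subspaces and is compatible with cohomology with compact supports (as in the proof of Lemma~\ref{lem:tensor-perverse}; see~\cite[Theorem~V.10.19]{borel}), there is a canonical isomorphism
\[
R\Gamma_c(S_{\nu_1} \times S_{\nu_2}, \mathscr{A}_1 \lboxtimes_\bk \mathscr{A}_2) \cong R\Gamma_c(S_{\nu_1}, \mathscr{A}_1) \lotimes_\bk R\Gamma_c(S_{\nu_2}, \mathscr{A}_2).
\]
By Proposition~\ref{prop:weight-functors-k}, $R\Gamma_c(S_{\nu_i}, \mathscr{A}_i)$ is $\F_{\nu_i}(\mathscr{A}_i)$ placed in cohomological degree $\langle 2\rho, \nu_i \rangle$; hence the right-hand side identifies with $\bigl( \F_{\nu_1}(\mathscr{A}_1) \lotimes_\bk \F_{\nu_2}(\mathscr{A}_2) \bigr)[-\langle 2\rho, \nu_1+\nu_2 \rangle]$, and taking $\coH^{\langle 2\rho, \nu_1+\nu_2 \rangle}$ — i.e.~the $\mathrm{Tor}_0^\bk$-term — yields $\F_{\nu_1}(\mathscr{A}_1) \otimes_\bk \F_{\nu_2}(\mathscr{A}_2)$.

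Combining these identifications, $\coH^\bullet(\Gr_G \times \Gr_G, \pH^0(\mathscr{A}_1 \lboxtimes_\bk \mathscr{A}_2))$ is canonically isomorphic to $\bigoplus_{(\nu_1,\nu_2)} \F_{\nu_1}(\mathscr{A}_1) \otimes_\bk \F_{\nu_2}(\mathscr{A}_2)$, which equals $\bigl( \bigoplus_{\nu_1} \F_{\nu_1}(\mathscr{A}_1) \bigr) \otimes_\bk \bigl( \bigoplus_{\nu_2} \F_{\nu_2}(\mathscr{A}_2) \bigr)$, i.e.~$\coH^\bullet(\Gr_G, \mathscr{A}_1) \otimes_\bk \coH^\bullet(\Gr_G, \mathscr{A}_2)$ by Theorem~\ref{thm:fiber-functor-k} applied to $G$; as every isomorphism used is canonical, this is the desired statement. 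There is no serious obstacle here beyond careful bookkeeping; the one point where the generality of $\bk$ really enters — and the reason one must pass through $\pH^0$ rather than simply invoking the Künneth formula for $\mathscr{A}_1 \lboxtimes_\bk \mathscr{A}_2$ itself — is that the higher terms $\mathrm{Tor}^\bk_j(\F_{\nu_1}(\mathscr{A}_1), \F_{\nu_2}(\mathscr{A}_2))$ with $j > 0$ are by the same computation carried by the lower perverse cohomology sheaves $\pH^{-j}(\mathscr{A}_1 \lboxtimes_\bk \mathscr{A}_2)$, which is exactly what legitimises the replacement of $\mathscr{A}_1 \lboxtimes_\bk \mathscr{A}_2$ by $\pH^0(\mathscr{A}_1 \lboxtimes_\bk \mathscr{A}_2)$ via Lemma~\ref{lem:Fmu-non-perverse} in the second paragraph.
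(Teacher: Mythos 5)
Your proof is correct, and it takes a genuinely different route from the one in the paper. The paper first constructs an explicit morphism from $\coH^\bullet(\Gr_G,\mathscr{A}_1)\otimes_\bk\coH^\bullet(\Gr_G,\mathscr{A}_2)$ into the left-hand side (via $f\lboxtimes g$ followed by the truncation morphism $\mathscr{A}_1\lboxtimes_\bk\mathscr{A}_2\to\pH^0(\mathscr{A}_1\lboxtimes_\bk\mathscr{A}_2)$), then proves it is an isomorphism: first when $\F(\mathscr{A}_1)$ is projective — in which case $\mathscr{A}_1\lboxtimes_\bk\mathscr{A}_2$ is already perverse by Lemma~\ref{lem:tensor-perverse} and the K\"unneth formula plus formality of $R\Gamma(\Gr_G,\mathscr{A}_1)$ finish the job — and then in general by choosing a two-term presentation $\mathscr{F}\to\mathscr{G}\to\mathscr{A}_1\to 0$ with $\F(\mathscr{F})$, $\F(\mathscr{G})$ free, using the right-exactness of $\pH^0(\bm?\lboxtimes_\bk\mathscr{A}_2)$ and the exactness of $\F$. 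That presentation is taken from Proposition~\ref{prop:projective-struct}, i.e.~it is a \emph{forward reference} to the representability results of Section~\ref{sec:repr-wt-func}. Your argument instead decomposes directly by weight functors for $G\times G$ (Theorem~\ref{thm:fiber-functor-k} applied to $G\times G$, a move the paper itself makes), identifies each weight space of $\pH^0(\mathscr{A}_1\lboxtimes_\bk\mathscr{A}_2)$ via Lemma~\ref{lem:Fmu-non-perverse} applied to $\mathscr{A}_1\lboxtimes_\bk\mathscr{A}_2$, and then uses the K\"unneth isomorphism for $R\Gamma_c$ and Proposition~\ref{prop:weight-functors-k} to peel off the degree-zero Tor term. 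This is cleaner in that it avoids the forward dependence on Section~\ref{sec:repr-wt-func} and is ``logically local'' to the material already established at this point of the paper; what it does not do is exhibit an explicit comparison morphism induced by the truncation map (the paper's morphism is the natural one to have in hand when later checking monoidal compatibilities). Two small caveats you should be explicit about if writing this out in full: Lemma~\ref{lem:Fmu-non-perverse} and Theorem~\ref{thm:fiber-functor-k} are stated in the paper only for $\Gr_G$, so you should note that their proofs carry over verbatim to $\Gr_{G\times G}\cong\Gr_G\times\Gr_G$; and your observation that the higher $\mathrm{Tor}^\bk_j$ are absorbed by $\pH^{-j}(\mathscr{A}_1\lboxtimes_\bk\mathscr{A}_2)$ is a pleasant sanity check, though not needed for the argument itself.
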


\begin{proof}
First, we construct a natural morphism from the right-hand side to the left-hand side. For this we consider $f \in \coH^n(\Gr_G, \mathscr{A}_1)$, considered as a morphism $\underline{\bk}_{\Gr_G} \to \mathscr{A}_1[n]$, and $g \in \coH^m(\Gr_G, \mathscr{A}_2)$, considered as a morphism $\underline{\bk}_{\Gr_G} \to \mathscr{A}_2[m]$. Then we can consider
\[
 f \lboxtimes_\bk g : \underline{\bk}_{\Gr_G \times \Gr_G} \to \mathscr{A}_1 \lboxtimes_\bk \mathscr{A}_2 [n+m].
\]
Now, since $\mathscr{A}_1 \lboxtimes_\bk \mathscr{A}_2$ is concentrated in nonpositive perverse degrees, we have a canonical (truncation) morphism $\mathscr{A}_1 \lboxtimes_\bk \mathscr{A}_2 \to \pH^0(\mathscr{A}_1 \lboxtimes_\bk \mathscr{A}_2)$. Composing $f \lboxtimes_\bk g$ with the shift of this morphism by $n+m$ provides the desired element of $\coH^{n+m} \bigl( \Gr_G \times \Gr_G, \pH^0(\mathscr{A}_1 \lboxtimes_\bk \mathscr{A}_2) \bigr)$.

We next prove that our morphism is an isomorphism.
If $\F(\mathscr{A}_1)$ 
is projective over $\bk$, then by Lemma~\ref{lem:tensor-perverse} the left-hand side identifies with $\coH^\bullet \bigl( \Gr_G \times \Gr_G, \mathscr{A}_1 \lboxtimes_\bk \mathscr{A}_2 \bigr)$. By the formula~\cite[Theorem~V.10.19]{borel} (already used in the proof of this lemma), we have
\[
 R\Gamma \bigl( \Gr_G \times \Gr_G, \mathscr{A}_1 \lboxtimes_\bk \mathscr{A}_2 \bigr) \cong R\Gamma(\Gr_G, \mathscr{A}_1) \lotimes_\bk R\Gamma(\Gr_G, \mathscr{A}_2).
\]
The cohomology of the left-hand side is $\coH^\bullet \bigl( \Gr_G \times \Gr_G, \mathscr{A}_1 \lboxtimes_\bk \mathscr{A}_2 \bigr)$. Now since $\F(\mathscr{A}_1)$ is projective, $R\Gamma(\Gr_G,\mathscr{A}_1)$ is isomorphic, in the derived category of $\bk$-modules, to its cohomology; it follows that the cohomology of the right-hand side is $\F(\mathscr{A}_1) \otimes_\bk \F(\mathscr{A}_2)$, and then that our morphism is an isomorphism.

To deduce the general case, we observe that by the results of Section~\ref{sec:repr-wt-func} below (see in particular the remarks at the end of~\S\ref{ss:construction-proj} and Proposition~\ref{prop:projective-struct}\eqref{it:projective-struct-3}) there exists an exact sequence
\[
\mathscr{F} \to \mathscr{G} \to \mathscr{A}_1 \to 0
\]
in $\Per_{\GO}(\Gr_G,\bk)$ where $\F(\mathscr{F})$ and $\F(\mathscr{G})$ are free over $\bk$. By right-exactness of the functor $\pH^0(\bm ? \lboxtimes_\bk \mathscr{A}_2)$, we deduce an exact sequence of perverse sheaves
\[
\pH^0(\mathscr{F} \lboxtimes_\bk \mathscr{A}_2) \to \pH^0(\mathscr{G} \lboxtimes_\bk \mathscr{A}_2) \to \pH^0(\mathscr{A}_1 \lboxtimes_\bk \mathscr{A}_2) \to 0.
\]
Since the functor $\coH^\bullet \bigl( \Gr_G \times \Gr_G, \bm ?)$ is exact on $\GO$-equivariant perverse sheaves (by Theorem~\ref{thm:fiber-functor-k} applied to the group $G \times G$), and the case already proven, we deduce an exact sequence
\begin{multline*}
\coH^\bullet(\Gr_G, \mathscr{F}) \otimes_\bk \coH^\bullet(\Gr_G, \mathscr{A}_2) \to \coH^\bullet(\Gr_G, \mathscr{G}) \otimes_\bk \coH^\bullet(\Gr_G, \mathscr{A}_2) \\
\to \coH^\bullet \bigl( \Gr_G \times \Gr_G, \pH^0(\mathscr{A}_1 \lboxtimes_\bk \mathscr{A}_2) \bigr) \to 0.
\end{multline*}
Comparing with the exact sequence obtained by applying the functor $\bm ? \otimes_\bk \coH^\bullet(\Gr_G, \mathscr{A}_2)$ to the exact sequence
\[
\coH^\bullet(\Gr_G,\mathscr{F}) \to \coH^\bullet(\Gr_G,\mathscr{G}) \to \coH^\bullet(\Gr_G,\mathscr{A}_1) \to 0,
\]
we finally deduce that our morphism is an isomorphism in general.
\end{proof}

We also have the following generalization of Proposition~\ref{prop:F-mu-tensor-char-0}, where we denote by $\Mod_\bk(X_*(T))$ the category of finitely generated $X_*(T)$-graded $\bk$-modules.

\begin{prop}
\label{prop:F-mu-tensor}
The functor
\[
\bigoplus_{\mu \in X_*(T)} \F_\mu : \Per_{\GO}(\Gr_G,\bk) \to \Mod_\bk(X_*(T))
\]
sends the convolution product $\star$ to the tensor product of $X_*(T)$-graded $\bk$-modules, in a way compatible with the associativity and commutativity constraints.
\end{prop}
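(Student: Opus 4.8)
The plan is to adapt the proof of Proposition~\ref{prop:F-mu-tensor-char-0} given in~\S\ref{ss:compatibility-weight-functors}, with the ordinary external tensor product replaced by $\pH^0(\bm? \lboxtimes_\bk \bm?)$ and the appeal to purity/parity replaced by Proposition~\ref{prop:weight-functors-k} and Lemma~\ref{lem:Fmu-non-perverse} (and, if needed, Lemma~\ref{lem:criterion-perv}). Concretely, working over $X=\mathbb A^1$ as in Section~\ref{sec:convolution-BD}, for $\mathscr A_1,\mathscr A_2\in\Per_{\GO}(\Gr_G,\bk)$ I would set $\mathscr B:=(\tau^\circ\mathscr A_1)\star_X(\tau^\circ\mathscr A_2)$ and consider the structure map $f:\Gr_{G,X^2}\to X^2$. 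The $T$-action on $\Gr_{G,X^2}$ and the regular dominant cocharacter $\eta$ single out, for each $\mu\in X_*(T)$, a connected component $C_\mu=\bigsqcup_{\mu_1+\mu_2=\mu}m(\widetilde C_{(\mu_1,\mu_2)})$ of the fixed locus and its attracting set $S_\mu(X^2)$, whose fibre over a point of $\Delta_X$ is $S_\mu\subset\Gr_{G,x}$ and over a point of $U=X^2\smallsetminus\Delta_X$ is $\bigsqcup_{\mu_1+\mu_2=\mu}S_{\mu_1}\times S_{\mu_2}$; all of these geometric constructions are literally those of~\S\ref{ss:compatibility-weight-functors} and use nothing about the coefficients. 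Writing $\mathscr L_\mu(\mathscr B):=\mathscr H^{\langle2\rho,\mu\rangle-2}\bigl((f\tilde s''_\mu)_!(\tilde s''_\mu)^*\mathscr B\bigr)$, the goal is to show that $\mathscr L_\mu(\mathscr B)$ is a \emph{constant} local system on $X^2=\mathbb A^2$ whose fibre over $\Delta_X$ is $\F_\mu(\mathscr A_1\star\mathscr A_2)$ and whose fibre over $U$ is $\bigoplus_{\mu_1+\mu_2=\mu}\F_{\mu_1}(\mathscr A_1)\otimes_\bk\F_{\mu_2}(\mathscr A_2)$; this will provide the identification~\eqref{eqn:weight-functor-tensor}.

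Only two points differ from the characteristic-$0$ situation, both in the computation of the stalk over $U$. First, under the identification~\eqref{eqn:identification-fusion} (valid here thanks to the general-coefficients convolution bifunctor of~\S\ref{ss:convolution-k}) the complex $\mathscr B|_U$ now corresponds to $\pH^0(\tau^\circ\mathscr A_1\lboxtimes_\bk\tau^\circ\mathscr A_2)|_U$. Second, by proper base change the stalk of $(f\tilde s_\mu)_!(\tilde s_\mu)^*\mathscr B$ over a point of $U$ is $\bigoplus_{\mu_1+\mu_2=\mu}\coH_c^\bullet\bigl(S_{\mu_1}\times S_{\mu_2},\pH^0(\mathscr A_1\lboxtimes_\bk\mathscr A_2)\bigr)$, and I must identify this with $\bigoplus_{\mu_1+\mu_2=\mu}\F_{\mu_1}(\mathscr A_1)\otimes_\bk\F_{\mu_2}(\mathscr A_2)$, concentrated in degree $\langle2\rho,\mu\rangle$. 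Since $\mathscr A_1,\mathscr A_2$ are $G$-equivariant (Proposition~\ref{prop:For-k}), the sheaf $\pH^0(\mathscr A_1\lboxtimes_\bk\mathscr A_2)$ is a $(G\times G)$-equivariant perverse sheaf on $\Gr_G\times\Gr_G=\Gr_{G\times G}$, so Proposition~\ref{prop:weight-functors-k} applied to $G\times G$ gives the vanishing outside degree $\langle2\rho,\mu_1+\mu_2\rangle$ and identifies that degree with the value on $\pH^0(\mathscr A_1\lboxtimes_\bk\mathscr A_2)$ of the $(\mu_1,\mu_2)$-weight functor of $G\times G$; by Lemma~\ref{lem:Fmu-non-perverse} (for $G\times G$) this is in turn $\coH_c^{\langle2\rho,\mu_1+\mu_2\rangle}\bigl(S_{\mu_1}\times S_{\mu_2},\mathscr A_1\lboxtimes_\bk\mathscr A_2\bigr)$ computed with the \emph{derived} external product, which by the compatibility of external products with $\coH_c$ (\cite[Theorem~V.10.19]{borel}) and Proposition~\ref{prop:weight-functors-k} equals $\coH^0\bigl(\F_{\mu_1}(\mathscr A_1)\lotimes_\bk\F_{\mu_2}(\mathscr A_2)\bigr)=\F_{\mu_1}(\mathscr A_1)\otimes_\bk\F_{\mu_2}(\mathscr A_2)$. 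The step that will require the most care is exactly this last chain: it works only because $R\Gamma_c(S_{\mu_i},\mathscr A_i)$ is a \emph{shifted module} rather than a general complex, so the $\operatorname{Tor}$-contributions to the derived tensor product land in strictly lower cohomological degrees and do not interfere — this is the same mechanism underlying Lemmas~\ref{lem:tensor-perverse}--\ref{lem:cohomology-product-k}. The stalk over $\Delta_X$ is handled exactly as in characteristic $0$, using the general-coefficients analogue of Lemma~\ref{lem:convolution-fusion-1} together with Proposition~\ref{prop:weight-functors-k}.

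It then remains to know that $\mathscr H^{\bullet}(f_*\mathscr B)$ is locally constant on \emph{all} of $X^2$; this is purely geometric — it follows from the local triviality of $\Gr_{G,X}^\lambda\widetilde{\times}\Gr_{G,X}^\mu\to\Gr_{G,X}^\lambda\times X$ and of $\Gr_{G,X}^\lambda\to X$, plus a dévissage — and the argument of~\S\ref{ss:F-convolution} carries over verbatim (this is also what was invoked in establishing the general-coefficients analogue of Proposition~\ref{prop:F-product} in~\S\ref{ss:convolution-k}). Combined with the in-family decomposition $\mathscr H^{k-2}(f_*\mathscr B)\cong\bigoplus_{\langle2\rho,\mu\rangle=k}\mathscr L_\mu(\mathscr B)$ afforded by Theorem~\ref{thm:fiber-functor-k}, each $\mathscr L_\mu(\mathscr B)$ is a local system on the simply connected variety $X^2$, hence constant, so its fibre over any point is canonically its space of global sections and the two fibre descriptions above get identified — giving the canonical isomorphism~\eqref{eqn:weight-functor-tensor}. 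Finally, compatibility with the associativity constraint is obtained by running the same argument over $\Gr_{G,X^3}$ (cf.\ Remark~\ref{rmk:fusion}\eqref{it:fusion-associativity}), and compatibility with the commutativity constraint from the automorphism $\mathrm{swap}$ of $\Gr_{G,X^2}$, which preserves each $S_\mu(X^2)$ while exchanging the two pieces $S_{\mu_1}\times S_{\mu_2}$ and $S_{\mu_2}\times S_{\mu_1}$ of its fibre over $U$; one checks that the Koszul sign introduced by $\tau^\circ$ — already built into the modified commutativity constraint of~\S\ref{ss:change-commutativity} — matches the sign in the commutativity constraint of $\Mod_\bk(X_*(T))$, exactly as in~\S\S\ref{ss:construction-commutativity}--\ref{ss:change-commutativity}.
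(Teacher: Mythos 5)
Your proof is correct, and its outer structure — adapting the fusion-over-$X^2$ argument of Proposition~\ref{prop:F-mu-tensor-char-0}, computing the fibres of the local system $\mathscr L_\mu(\mathscr B)$ over $\Delta_X$ and over $U$, and concluding by constancy on $\mathbb A^2$ — is the one the paper follows, and the compatibility with the associativity and commutativity constraints is also treated in the same way. You depart from the paper only at the key step of identifying the fibre over $U$. The paper reduces that step to producing a canonical isomorphism
\[
\coH^\bullet_c\bigl(S_{\mu_1}\times S_{\mu_2},\pH^0(\mathscr A_1\lboxtimes_\bk\mathscr A_2)\bigr)\cong\coH^\bullet_c(S_{\mu_1},\mathscr A_1)\otimes_\bk\coH^\bullet_c(S_{\mu_2},\mathscr A_2),
\]
and states that it should be proved ``completely similarly'' to Lemma~\ref{lem:cohomology-product-k}: construct a natural map, check that it is an isomorphism when some $\F(\mathscr A_i)$ is flat (in which case $\pH^0$ is the identity by Lemma~\ref{lem:tensor-perverse}), and then reduce the general case via a two-term presentation by objects with free total cohomology, supplied by Section~\ref{sec:repr-wt-func}. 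You instead invoke Lemma~\ref{lem:Fmu-non-perverse} for $G\times G$ to trade $\pH^0(\mathscr A_1\lboxtimes_\bk\mathscr A_2)$ for the untruncated external product in the single surviving degree, after which the K\"unneth formula together with Proposition~\ref{prop:weight-functors-k} gives $\coH^0\bigl(\F_{\mu_1}(\mathscr A_1)\lotimes_\bk\F_{\mu_2}(\mathscr A_2)\bigr)=\F_{\mu_1}(\mathscr A_1)\otimes_\bk\F_{\mu_2}(\mathscr A_2)$, the $\operatorname{Tor}$-contributions to the derived tensor product falling into strictly lower degrees. This is a genuinely shorter route that bypasses the reduction to the flat case entirely: it exploits the fact that $R\Gamma_c(S_\mu,\mathscr A)$ is concentrated in a single cohomological degree for $\mathscr A$ perverse, a concentration with no analogue for the global functor $\coH^\bullet(\Gr_G,\cdot)$ occurring in Lemma~\ref{lem:cohomology-product-k}, which is precisely why the paper must resort to the resolution argument there. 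Both approaches produce natural isomorphisms; the difference is one of technique, not of content.
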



Here again, the proof is similar to that of Proposition~\ref{prop:F-mu-tensor-char-0}, except that now we have to provide a canonical isomorphism
\[
\coH^\bullet_c
\bigl( S_{\mu_1}\times S_{\mu_2}, \pH^0(\mathscr A_1\lboxtimes_\bk \mathscr A_2) \bigr) \cong \coH^\bullet_c(S_{\mu_1}, \mathscr{A}_1) \otimes_\bk \coH^\bullet_c(S_{\mu_2}, \mathscr{A}_2)
\]
for $\mathscr{A}_1, \mathscr{A}_2$ in $\Per_{\GO}(\Gr_G,\bk)$ and $\mu_1, \mu_2 \in X_*(T)$. The proof is completely similar to that of Lemma~\ref{lem:cohomology-product-k}.

\section{Study of standard and costandard sheaves}

\subsection{Definitions}
\label{ss:def-J!-J*}

Recall that for any $\lambda \in X_*(T)^+$ we denote by $j_\lambda : \Gr_G^\lambda \to \Gr_G$ the embedding. We set
\[
\cJ_!(\lambda, \bk) := \pH^0 \bigl( (j_\lambda)_! \underline{\bk}_{\Gr_G^\lambda} [\dim\Gr_G^\lambda] \bigr), \quad
\cJ_*(\lambda, \bk) := \pH^0 \bigl( (j_\lambda)_* \underline{\bk}_{\Gr_G^\lambda} [\dim\Gr_G^\lambda] \bigr).
\]

By adjunction there exists a canonical morphism of complexes
\[
(j_\lambda)_! \underline{\bk}_{\Gr_G^\lambda} [\dim\Gr_G^\lambda] \to (j_\lambda)_* \underline{\bk}_{\Gr_G^\lambda} [\dim\Gr_G^\lambda],
\]
hence a canonical morphism of perverse sheaves
\[
\cJ_!(\lambda, \bk) \to \cJ_*(\lambda, \bk),
\]
and we denote its image (in the abelian category of perverse sheaves) by $\cJ_{!*}(\lambda, \bk)$. It follows from the definition of the perverse t-structure that $(j_\lambda)_! \underline{\bk}_{\Gr_G^\lambda} [\dim\Gr_G^\lambda]$ is concentrated in perverse degrees $\leq 0$, hence that for any perverse sheaf $\mathscr{F}$ we have
\[
\Hom(\cJ_!(\lambda, \bk), \mathscr{F}) \cong \Hom \bigl( (j_\lambda)_!\underline{\bk}_{\Gr_G^\lambda} [\dim\Gr_G^\lambda], \mathscr{F} \bigr).
\]
In particular, using adjunction we see that $\cJ_!(\lambda, \bk)$ has no nonzero morphism to a perverse sheaf supported on $\overline{\Gr_G^\lambda} \smallsetminus \Gr_G^\lambda$. Similar arguments show that $\cJ_*(\lambda, \bk)$ has no nonzero morphism from a perverse sheaf supported on $\overline{\Gr_G^\lambda} \smallsetminus \Gr_G^\lambda$.

If $\bk$ is a field then $\cJ_{!*}(\lambda, \bk)$ is simple, and coincides with the object denoted $\IC_\lambda$ in Section~\ref{sec:semisimplicity}. If moreover $\bk$ has characteristic $0$, then the category $\Per_{\GO}(\Gr_G, \bk)$ is semisimple by Theorem~\ref{thm:semisimplicity}. In view of the properties of $\cJ_!(\lambda, \bk)$ and $\cJ_*(\lambda, \bk)$ recalled above, it follows that the canonical morphisms
\[
\cJ_!(\lambda, \bk) \twoheadrightarrow \cJ_{!*}(\lambda, \bk) \hookrightarrow \cJ_*(\lambda, \bk)
\]
are isomorphisms in this case.

Now we come back to the case of a general Noetherian commutative ring $\bk$ of finite global dimension. In view of the remarks above, the following result is a generalization of Proposition~\ref{prop:dim-weight-spaces}.

\begin{prop}
\label{prop:can-basis-k}
Let $\lambda,\mu\in X_*(T)$ with $\lambda$ dominant. 
Then the $\bk$-module
$\F_\mu \bigl( \cJ_!(\lambda,\bk) \bigr)$, resp.~$\F_\mu \bigl( \cJ_*(\lambda,\bk) \bigr)$,
is free, with a canonical basis parametrized by the irreducible components of $\Gr_G^\lambda\cap S_\mu$, resp.~of $\Gr_G^\lambda\cap T_\mu$.
\end{prop}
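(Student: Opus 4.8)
The strategy is to reduce the computation of $\F_\mu(\cJ_!(\lambda,\bk))$ to a statement about ordinary compactly-supported cohomology of the intersection $\Gr_G^\lambda \cap S_\mu$ with constant coefficients, exactly as in the proof of Proposition~\ref{prop:dim-weight-spaces}, and then invoke the fact (from Theorem~\ref{thm:orbits} and Theorem~\ref{thm:orbits-T}) that this intersection is equidimensional of dimension $\langle \rho, \lambda+\mu \rangle$, so that its top compactly-supported cohomology with $\bk$-coefficients is free with basis its top-dimensional irreducible components. Concretely, I would first use Lemma~\ref{lem:Fmu-non-perverse} (applied to $\mathscr{F} = (j_\lambda)_! \underline{\bk}_{\Gr_G^\lambda}[\dim\Gr_G^\lambda]$, which is concentrated in perverse degrees $\leq 0$ with ${}^p\hspace{-1pt}\mathscr{H}^0 = \cJ_!(\lambda,\bk)$) to get
\[
\F_\mu \bigl( \cJ_!(\lambda,\bk) \bigr) \cong \coH^{\langle 2\rho,\mu\rangle}_c \bigl( S_\mu, (j_\lambda)_! \underline{\bk}_{\Gr_G^\lambda}[\dim\Gr_G^\lambda] \bigr) \cong \coH^{\langle 2\rho,\mu\rangle+\langle 2\rho,\lambda\rangle}_c \bigl( \Gr_G^\lambda \cap S_\mu, \underline{\bk} \bigr),
\]
the last isomorphism being base change along $j_\lambda$ together with the identification of $S_\mu$ restricted to $\Gr_G^\lambda$ with $\Gr_G^\lambda \cap S_\mu$ (and noting $(j_\lambda)_!$ composed with pullback to $S_\mu$ and then $R\Gamma_c$ gives $R\Gamma_c$ of the locally closed piece).

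Next I would record the elementary fact about varieties: if $Y$ is an algebraic variety over $\C$ which is equidimensional of dimension $d$, then $\coH^{2d}_c(Y,\bk)$ is a free $\bk$-module with canonical basis indexed by the irreducible components of $Y$ of dimension $d$. This is the $\bk$-coefficient analogue of the remark used in the proof of Proposition~\ref{prop:dim-weight-spaces}; it follows from the usual d\'evissage (a dense open smooth locus, Poincar\'e duality over $\bk$ for the smooth part, and the long exact sequence for the lower-dimensional complement, whose $\coH^{2d}_c$ and $\coH^{2d-1}_c$ vanish by dimension) — here one needs that $\coH^k_c$ of a variety of dimension $<d$ vanishes in degree $\geq 2d$, which is the statement of~\cite[Proposition~X.1.4]{iversen} already invoked earlier, and these arguments work verbatim with an arbitrary Noetherian coefficient ring. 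Applying this with $Y = \Gr_G^\lambda \cap S_\mu$ and $d = \langle \rho, \lambda+\mu \rangle$ (pure of this dimension by Theorem~\ref{thm:orbits}\eqref{it:thm-orbits-2}--\eqref{it:thm-orbits-3}), since $2d = \langle 2\rho,\mu\rangle + \langle 2\rho,\lambda\rangle$, gives the claim for $\cJ_!$.

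For $\cJ_*(\lambda,\bk)$ the argument is dual: I would use the other description of the weight functor, $\F_\mu(\mathscr{A}) \cong \coH^{\langle 2\rho,\mu\rangle}_{T_\mu}(\Gr_G,\mathscr{A})$, together with the analogue of Lemma~\ref{lem:Fmu-non-perverse} phrased with $T_\mu$-supported cohomology (which holds by the same proof, or by the relation $T_{w_0\mu} = \dot{w}_0 \cdot S_\mu$ used in the proof of Lemma~\ref{lem:criterion-perv}), apply it to $\mathscr{F} = (j_\lambda)_* \underline{\bk}_{\Gr_G^\lambda}[\dim\Gr_G^\lambda]$ whose ${}^p\hspace{-1pt}\mathscr{H}^0$ is $\cJ_*(\lambda,\bk)$, and use base change along $j_\lambda$ (now with the $!$-restriction/support and the $*$-pushforward) to reduce to $\coH^{\langle 2\rho,\mu\rangle+\langle 2\rho,\lambda\rangle}_{T_\mu}(\Gr_G^\lambda, \underline{\bk})$, i.e. local cohomology of $\Gr_G^\lambda$ along $\Gr_G^\lambda \cap T_\mu$ in the top degree. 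Finally I would invoke the dual elementary fact — for $Z \subset Y$ a closed equidimensional subvariety of dimension $d$ in a smooth variety $Y$ of dimension $n$, $\coH^{2(n-d)}_Z(Y,\bk)$ is free with canonical basis indexed by the top-dimensional components of $Z$ — applied with $Y = \Gr_G^\lambda$ (smooth of dimension $\langle 2\rho,\lambda\rangle$), $Z = \overline{\Gr_G^\lambda \cap T_\mu}^{\,\Gr_G^\lambda}$ of pure dimension $\langle\rho,\lambda-\mu\rangle$ by Theorem~\ref{thm:orbits-T}, so that $2(n-d) = 2\langle 2\rho,\lambda\rangle - 2\langle\rho,\lambda-\mu\rangle = \langle 2\rho,\lambda\rangle + \langle 2\rho,\mu\rangle$ as required, and the components of $Z$ correspond to those of $\Gr_G^\lambda \cap T_\mu$ by Theorem~\ref{thm:orbits-T}\eqref{it:thm-orbits-T-3}; this elementary fact follows from Poincar\'e--Lefschetz duality over $\bk$ between $\coH^\bullet_Z(Y,\bk)$ and Borel--Moore homology / compactly-supported cohomology of $Z$, or again by d\'evissage using~\cite[Theorem~X.2.1]{iversen}. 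The main obstacle I anticipate is making the ``canonical'' nature of the basis precise and checking carefully that the base-change and d\'evissage steps carry through over a general Noetherian $\bk$ of finite global dimension — in particular that no torsion phenomena appear in the top (co)homological degree, which is exactly what equidimensionality buys us via the vanishing statements of~\cite{iversen}; once one grants those citations with $\bk$-coefficients, the rest is bookkeeping parallel to Proposition~\ref{prop:dim-weight-spaces}.
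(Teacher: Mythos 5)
Your argument is correct and is essentially the same as the paper's: apply Lemma~\ref{lem:Fmu-non-perverse} to $(j_\lambda)_!\underline{\bk}[\dim]$ (resp.\ its $*$-analogue), reduce by base change to the compactly-supported cohomology (resp.\ local cohomology, equivalently Borel--Moore homology) of $\Gr_G^\lambda\cap S_\mu$ (resp.\ $\Gr_G^\lambda\cap T_\mu$) in top degree, and use equidimensionality from Theorems~\ref{thm:orbits}/\ref{thm:orbits-T}. The one small simplification worth noting in the $\cJ_*$ case: you do not need to pass to the closure $\overline{\Gr_G^\lambda\cap T_\mu}$ in $\Gr_G^\lambda$ --- since $\Gr_G^\lambda$ is smooth, $(t'_\mu)^!\underline{\bk}_{\Gr_G^\lambda}$ is a shifted dualizing complex of the locally closed subvariety $\Gr_G^\lambda\cap T_\mu$ itself, so the local cohomology in top degree is directly the top Borel--Moore homology of $\Gr_G^\lambda\cap T_\mu$, which is free on its irreducible components without any closure/excision step.
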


\begin{proof}
By Lemma~\ref{lem:Fmu-non-perverse}, we have
\[
\F_\mu(\cJ_!(\lambda,\bk)) \cong \coH^{\langle 2\rho,\mu \rangle}_c \bigl( S_\mu, (j_\lambda)_! \underline{\bk}_{\Gr^\lambda_G}[\langle 2\rho,\lambda \rangle] \bigr).
\]
Using the base change theorem, it is not difficult to check that there exists a canonical isomorphism
\[
\coH^{\langle 2\rho,\mu \rangle}_c \bigl( S_\mu, (j_\lambda)_! \underline{\bk}_{\Gr^\lambda_G}[\langle 2\rho,\lambda \rangle] \bigr) \cong \coH^{\langle 2\rho,\lambda+\mu \rangle}_c(\Gr_G^\lambda \cap S_\mu ; \bk).
\]
Since $\langle 2\rho,\lambda+\mu \rangle = 2\dim(\Gr^\lambda_G \cap S_\mu)$ (see Theorem~\ref{thm:orbits}), the right-hand side is free, with a canonical basis parametrized by irreducible components of $\Gr_G^\lambda \cap S_\mu$.

The case of $\cJ_*(\lambda,\bk)$ is similar, using the description of $\F_\mu$ as $\coH^{\langle 2\rho,\mu \rangle}_{T_\mu}(\Gr_G,\bm ?)$, and Borel--Moore homology instead of cohomology with compact supports.
\end{proof}

\begin{rmk}
 More generally, if $M$ is a finitely generated $\bk$-module, we can consider the perverse sheaves
 \[
\cJ_!(\lambda, M) := \pH^0 \bigl( (j_\lambda)_! \underline{M}_{\Gr_G^\lambda} [\dim\Gr_G^\lambda] \bigr), \quad
\cJ_*(\lambda, M) := \pH^0 \bigl( (j_\lambda)_* \underline{M}_{\Gr_G^\lambda} [\dim\Gr_G^\lambda] \bigr).
\]
Considerations similar to those in the proof of Proposition~\ref{prop:can-basis-k} show that there exist canonical isomorphisms
\begin{equation}
\label{eqn:std-sheaf-mod}
 \F_\mu(\cJ_!(\lambda, M)) \cong \F_\mu(\cJ_!(\lambda, \bk)) \otimes_\bk M, \quad \F_\mu(\cJ_*(\lambda, M)) \cong \F_\mu(\cJ_*(\lambda, \bk)) \otimes_\bk M
\end{equation}
for any $\mu \in X_*(T)$.
\end{rmk}

\subsection{Extension of scalars}

In the following proposition, we denote by
\[
\bk \lotimes_\Z ( \bm ? ) : \Db_{c,\GO}(\Gr_G, \Z) \to \Db_{c,\GO}(\Gr_G,\bk)
\]
the (derived) extension-of-scalars functor. (Note that this functor does not send perverse sheaves to perverse sheaves.) Below we will use this notation also for varieties other than $\Gr_G$.

\begin{prop}
\label{prop:extension-scalars-J!-J*}
For any $\lambda \in X_*(T)^+$, we have a canonical isomorphism
\[
\cJ_!(\lambda, \bk) \cong \bk \lotimes_\Z \cJ_!(\lambda, \Z), \quad \cJ_*(\lambda, \bk) \cong \bk \lotimes_\Z \cJ_*(\lambda, \Z).
\]
\end{prop}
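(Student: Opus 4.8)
The key point is that the functor $\bk \lotimes_\Z (\bm ?)$ does not preserve perverse sheaves, so one cannot simply apply it to $\cJ_!(\lambda,\Z)$ and hope to land in perverse degree $0$; instead I want to compute the perverse cohomology of $\bk \lotimes_\Z (j_\lambda)_! \underline{\Z}_{\Gr_G^\lambda}[\dim \Gr_G^\lambda]$ using the weight functors. First I would observe that, since $\bk \lotimes_\Z (\bm ?)$ commutes with $(j_\lambda)_!$ (and with pushforward by any map, and with $*$-pullback), we have a canonical isomorphism
\[
 \bk \lotimes_\Z (j_\lambda)_! \underline{\Z}_{\Gr_G^\lambda}[\dim \Gr_G^\lambda] \cong (j_\lambda)_! \underline{\bk}_{\Gr_G^\lambda}[\dim \Gr_G^\lambda]
\]
in $\Db_{c,\GO}(\Gr_G,\bk)$. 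So the content of the statement is that applying $\bk \lotimes_\Z (\bm ?)$ to $\cJ_!(\lambda,\Z) = \pH^0$ of the left-hand side agrees with $\pH^0$ of the right-hand side, i.e.\ that the ``lower'' perverse cohomology sheaves $\pH^{-i}$ ($i>0$) of $(j_\lambda)_! \underline{\Z}_{\Gr_G^\lambda}[\dim \Gr_G^\lambda]$ do not contribute after extension of scalars, and that there is no $\mathrm{Tor}_1$ correction term coming from $\pH^{-1}$ clashing with $\pH^0$.

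The tool to control this is Lemma~\ref{lem:Fmu-non-perverse} together with the base change computation already used in the proof of Proposition~\ref{prop:can-basis-k}. The plan is: for every $\mu \in X_*(T)$ and $n \in \Z$, Lemma~\ref{lem:Fmu-non-perverse} gives $\coH_c^{n+\langle 2\rho,\mu\rangle}(S_\mu, (j_\lambda)_! \underline{\Z}_{\Gr_G^\lambda}[\dim \Gr_G^\lambda]) \cong \F_\mu(\pH^n((j_\lambda)_!\underline{\Z}_{\Gr_G^\lambda}[\dim \Gr_G^\lambda]))$, and by the base change isomorphism of Proposition~\ref{prop:can-basis-k} the left-hand side identifies with $\coH_c^{\langle 2\rho,\lambda+\mu\rangle+n-\langle 2\rho,\lambda\rangle+\langle 2\rho,\lambda\rangle}$-type group — more precisely with $\coH_c^{n+\langle 2\rho,\lambda+\mu\rangle}(\Gr_G^\lambda \cap S_\mu;\Z)$ (after accounting for the shift). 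Since $\Gr_G^\lambda \cap S_\mu$ is an affine-space–paved variety of dimension $\tfrac12\langle 2\rho,\lambda+\mu\rangle$ (by Theorem~\ref{thm:orbits}, noting it is open dense in the closure and the Mirkovi\'c--Vilonen cycles are paved — or simply by the fact that it is a locally closed subvariety, hence $\coH_c^{>2\dim}$ vanishes), the group $\coH_c^{n+\langle 2\rho,\lambda+\mu\rangle}(\Gr_G^\lambda \cap S_\mu;\Z)$ vanishes for $n>0$ and is free for $n=0$ (being the top compactly-supported cohomology, which over $\Z$ is free with basis the top-dimensional irreducible components). Hence $\F_\mu(\pH^n((j_\lambda)_!\underline{\Z}_{\Gr_G^\lambda}[\dim \Gr_G^\lambda]))=0$ for $n>0$ — but this is automatic since $(j_\lambda)_!(\cdots)$ lives in nonpositive perverse degrees — and more importantly $\F_\mu(\pH^{-1}(\cdots))$ feeds into the long exact sequence with vanishing/freeness constraints that force the relevant $\mathrm{Tor}$ terms to vanish.

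The precise mechanism I would use: write $C := (j_\lambda)_!\underline{\Z}_{\Gr_G^\lambda}[\dim \Gr_G^\lambda]$, with truncation triangle $\pTr_{\leq -1}C \to C \to \cJ_!(\lambda,\Z) \xrightarrow{[1]}$. Apply $\bk\lotimes_\Z(\bm ?)$ and take $\F_\mu$; since $\F_\mu$ is exact on perverse sheaves, the issue is identifying $\F_\mu(\pH^0(\bk\lotimes_\Z C))$ with $\F_\mu(\bk\otimes_\Z \cJ_!(\lambda,\Z))$. Here I would use that $\F_\mu \circ \pH^0(\bk \lotimes_\Z \bm ?)$, applied to $C$, computes $\coH_c^{\langle 2\rho,\mu\rangle}(S_\mu, \bk\lotimes_\Z C)$ by Lemma~\ref{lem:Fmu-non-perverse} again (now over $\bk$), and by compatibility of $!$-pushforward with derived extension of scalars this equals $\coH^{\langle 2\rho,\lambda+\mu\rangle}(\bk \lotimes_\Z R\Gamma_c(\Gr_G^\lambda \cap S_\mu;\Z))$; since $R\Gamma_c(\Gr_G^\lambda \cap S_\mu;\Z)$ has free top cohomology and nothing above, the universal coefficient spectral sequence degenerates at the top and gives $\coH_c^{\langle 2\rho,\lambda+\mu\rangle}(\Gr_G^\lambda \cap S_\mu;\Z)\otimes_\Z \bk$, with no $\mathrm{Tor}$ term. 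This matches $\F_\mu(\bk\otimes_\Z\cJ_!(\lambda,\Z)) = \F_\mu(\cJ_!(\lambda,\Z))\otimes_\Z\bk$ (by~\eqref{eqn:std-sheaf-mod} with $M=\bk$, or directly by freeness from Proposition~\ref{prop:can-basis-k}). Having shown the natural morphism $\bk\lotimes_\Z\cJ_!(\lambda,\Z) \to \pH^0(\bk\lotimes_\Z C)=\cJ_!(\lambda,\bk)$ (coming from the truncation triangle, since $\bk\lotimes_\Z(\bm ?)$ is right t-exact so $\bk\lotimes_\Z\cJ_!(\lambda,\Z)$ is in nonpositive degrees and its $\pH^0$ is $\cJ_!(\lambda,\bk)$) induces an isomorphism on all $\F_\mu$, I conclude it is an isomorphism by the faithfulness and exactness of $\F = \bigoplus_\mu \F_\mu$ (Theorem~\ref{thm:fiber-functor-k}): a morphism of perverse sheaves that becomes an isomorphism after $\F$ is an isomorphism, since $\F$ is exact and faithful, hence reflects isomorphisms. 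The case of $\cJ_*$ is dual: one uses the description $\F_\mu = \coH^{\langle 2\rho,\mu\rangle}_{T_\mu}(\Gr_G,\bm ?)$, that $\bk\lotimes_\Z(\bm ?)$ need not commute with $*$-pushforward but does commute with $(j_\lambda)_*$ when $j_\lambda$ is an open embedding (as it is here — open embeddings have exact pushforward... actually $(j_\lambda)_*$ is not $!$-type, so here I need to be careful and instead use that $j_\lambda$ is an \emph{affine} open embedding composed with a closed embedding, or dualize via the relation between $\cJ_*$ and $\cJ_!$ under Verdier duality after base change — the cleanest route is to apply the already-proven statement for $\cJ_!$ on the opposite Borel, i.e.\ with $T_\mu$ replaced by $S_\mu$, using $T_{w_0\mu}=\dot w_0\cdot S_\mu$ as in the proof of Lemma~\ref{lem:criterion-perv}), together with Borel--Moore homology of $\Gr_G^\lambda\cap T_\mu$ being free in top degree by Theorem~\ref{thm:orbits-T}.

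\textbf{Main obstacle.} The genuinely delicate point is the interchange of $\bk\lotimes_\Z(\bm ?)$ with the \emph{perverse} truncation $\pH^0$: a priori extension of scalars can create a new $\pH^{-1}$ (a $\mathrm{Tor}$ term) that would make $\pH^0(\bk\lotimes_\Z C)$ strictly larger than $\bk\otimes_\Z\cJ_!(\lambda,\Z)$. The whole argument hinges on showing this does not happen, and the only input that rules it out is the \emph{freeness} of the top compactly-supported cohomology $\coH_c^{2\dim}(\Gr_G^\lambda\cap S_\mu;\Z)$ — which in turn rests on Theorem~\ref{thm:orbits} (pure dimension, plus the classical fact about top $\coH_c$ of a variety being free on top-dimensional components). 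So the proof is really: "freeness of MV-cycle top cohomology $\Rightarrow$ no $\mathrm{Tor}$ obstruction $\Rightarrow$ extension of scalars commutes with $\pH^0$ for these particular complexes," checked weight-space by weight-space and then globalized via the faithful exact functor $\F$.
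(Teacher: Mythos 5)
Your proposal is correct and follows essentially the same strategy as the paper: both hinge on the observation that each $\F_\mu$ applied to $\cJ_!(\lambda,\Z)$ yields a free $\Z$-module concentrated in a single degree (Propositions~\ref{prop:weight-functors-k} and~\ref{prop:can-basis-k}), so that applying $\bk \lotimes_\Z (\bm ?)$ produces no $\mathrm{Tor}$ and Lemma~\ref{lem:criterion-perv} then gives perversity of $\bk \lotimes_\Z \cJ_!(\lambda,\Z)$, after which the comparison isomorphism follows (in the paper via adjunction from $(j_\lambda)_! \underline{\bk}_{\Gr_G^\lambda}$ and faithfulness of $\F$; in your version via the truncation triangle and right $t$-exactness, which once perversity is established actually makes your final $\F_\mu$-check redundant). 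One small caveat on $\cJ_*$: your suggested reduction to ``$\cJ_!$ for the opposite Borel'' does not quite work---conjugating the Borel permutes the semi-infinite orbits but does not interchange $!$- and $*$-extensions of constant sheaves from $\Gr_G^\lambda$---but the other route you mention, namely freeness of the top Borel--Moore homology of $\Gr_G^\lambda \cap T_\mu$ from Theorem~\ref{thm:orbits-T}, is exactly what makes the paper's ``similar'' argument go through.
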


\begin{proof}
As in the proof of Lemma~\ref{lem:tensor-perverse},
for $\mu \in X_*(T)$ we denote by $s_\mu : S_\mu \to \Gr_G$ the embedding, and by $\sigma_\mu : S_\mu \to \mathrm{pt}$ the projection. Then by definition we have
\[
\coH^\bullet_c(S_\mu, \bm ? ) \cong \coH^\bullet((\sigma_\mu)_! (s_\mu)^*( \bm ? )),
\]
where we identify the derived category of $\bk$-sheaves on $\mathrm{pt}$ with the derived category of $\bk$-modules.

By general considerations, 
we have 
\[
\bk \lotimes_\Z (\sigma_\mu)_! (s_\mu)^*( \bm ? ) \cong (\sigma_\mu)_! (s_\mu)^* \bigl( \bk \lotimes_\Z (\bm ?) \bigr).
\]
We apply this isomorphism to $\cJ_!(\lambda, \Z)$. In this case, by Proposition~\ref{prop:weight-functors-k} and Proposition~\ref{prop:can-basis-k} the complex $(\sigma_\mu)_! (s_\mu)^*(\cJ_!(\lambda, \Z))$ is isomorphic to the shift by $[-\langle 2\rho, \mu \rangle]$ of a free $\Z$-module. Hence $\bk \lotimes_\Z (\sigma_\mu)_! (s_\mu)^*(\cJ_!(\lambda, \Z))$ is concentrated in degree $\langle 2\rho,\mu \rangle$ which, by Lemma~\ref{lem:criterion-perv}, shows that $\bk \lotimes_\Z \cJ_!(\lambda, \Z)$ is a perverse sheaf.

Now, we clearly have
\[
(j_\lambda)^! \bigl( \bk \lotimes_\Z \cJ_!(\lambda, \Z) \bigr) \cong \underline{\bk}_{\Gr_G^\lambda}[\dim \Gr_G^\lambda].
\]
By adjunction we deduce a canonical morphism $(j_\lambda)_! \underline{\bk}_{\Gr_G^\lambda}[\dim \Gr_G^\lambda] \to  \bk \lotimes_\Z \cJ_!(\lambda, \Z)$, and then taking the $0$-th perverse cohomology we deduce a canonical morphism
\begin{equation}
\label{eqn:morph-J!}
\cJ_!(\lambda, \bk) \to \bk \lotimes_\Z \cJ_!(\lambda, \Z).
\end{equation}
Using Proposition~\ref{prop:can-basis-k} and the same kind of considerations as above, we see that this morphism induces an isomorphism
\[
\F_\mu \left( \cJ_!(\lambda, \bk) \right) \xrightarrow{\sim} \F_\mu \left( \bk \lotimes_\Z \cJ_!(\lambda, \Z) \right)
\]
for any $\mu \in X_*(T)$. By the faithfulness claim in Theorem~\ref{thm:fiber-functor-k}, this implies that~\eqref{eqn:morph-J!} is an isomorphism, and concludes the proof of the first isomorphism.

The proof of the isomorphism $\cJ_*(\lambda, \bk) \cong \bk \lotimes_\Z \cJ_*(\lambda, \Z)$ is similar.
\end{proof}

\begin{rmk}
 These results imply
 that there exists a canonical isomorphism $\mathbb{D}(\cJ_*(\lambda,\bk)) \cong \cJ_!(\lambda,\bk)$, where $\mathbb{D}$ is the Verdier duality functor on the category $\Db_{c,\GO}(\Gr_G,\bk)$. In fact, by general considerations $\mathbb{D}(\cJ_*(\lambda,\bk))$ is the $0$-th cohomology of $\mathbb{D}((j_\lambda)_* \underline{\Z}_{\Gr_G^\lambda} [\dim \Gr_G^\lambda]) \cong (j_\lambda)_! \underline{\Z}_{\Gr_G^\lambda} [\dim \Gr_G^\lambda]$ for the t-structure $p^+$ of~\cite[\S 3.3]{bbd}. Now consider the truncation triangle for the \emph{usual} perverse t-structure:
 \[
  {}^p\tau_{<0} \bigl( (j_\lambda)_! \underline{\Z}_{\Gr_G^\lambda} [\dim \Gr_G^\lambda] \bigr) \to (j_\lambda)_! \underline{\Z}_{\Gr_G^\lambda} [\dim \Gr_G^\lambda] \to \cJ_!(\lambda,\Z) \xrightarrow{[1]}.
 \]
By definition the left-hand side belongs to ${}^p \hspace{-1pt} \Db_{\mathscr{S}}(\Gr_G,\Z)^{<0}$, hence to ${}^{p^+} \hspace{-1.5pt} \Db_{\mathscr{S}}(\Gr_G,\Z)^{<0}$. On the other hand, the fact that $\bk \lotimes_\Z \cJ_!(\lambda, \Z)$ is a perverse sheaf (see Proposition~\ref{prop:extension-scalars-J!-J*}) shows that $\cJ_!(\lambda,\Z)$ in torsion-free; in view of~\cite[\S 3.3.4]{bbd} this implies that this object belongs to ${}^{p^+} \hspace{-1.5pt} \Db_{\mathscr{S}}(\Gr_G,\Z)^{\geq 0}$. Hence the triangle above is also the truncation triangle for the t-structure $p^+$; in other words we have
\[
 \cJ_!(\lambda,\Z) \cong {}^{p^+} \hspace{-2pt} \mathscr{H}^0 \bigl( (j_\lambda)_! \underline{\Z}_{\Gr_G^\lambda} [\dim \Gr_G^\lambda] \bigr) \cong \mathbb{D}(\cJ_*(\lambda,\bk)).
\]
(See~\cite[Proposition~8.1(c)]{mv} for a proof of this isomorphism which does not refer to the t-structure $p^+$.)
\end{rmk}

\subsection{Relation between integral standard and $\IC$-sheaves}

\begin{lem}
\label{lem:costd-simple-over-Z}
For any $\lambda \in X_*(T)^+$, the canonical surjection
\[
\cJ_!(\lambda, \Z) \to \cJ_{!*}(\lambda,\Z)
\]
is an isomorphism.
\end{lem}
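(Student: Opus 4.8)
The key point is that $\cJ_!(\lambda,\Z)$ is, up to the Tate twist / shift conventions, the usual $!$-extension of the constant sheaf $\underline\Z_{\Gr_G^\lambda}[\dim\Gr_G^\lambda]$ truncated to perverse degree $0$, and that we must show it has no quotient (hence, since $\cJ_{!*}(\lambda,\Z)$ is a quotient, no \emph{other} quotient) supported on the boundary $\overline{\Gr_G^\lambda}\smallsetminus\Gr_G^\lambda$. Since $\cJ_{!*}(\lambda,\Z)$ is by definition the image of $\cJ_!(\lambda,\Z)\to\cJ_*(\lambda,\Z)$, and the surjection $\cJ_!(\lambda,\Z)\twoheadrightarrow\cJ_{!*}(\lambda,\Z)$ is always there, it suffices to prove this surjection is injective, i.e.\ that its kernel $\mathscr K$ — which is supported on the boundary — vanishes. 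First I would record, using the characterization in~\eqref{eqn:characterization-IC} together with the construction via truncation, that $\cJ_{!*}(\lambda,\Z)=\cJ_{!*}(\lambda,\Z)$ satisfies $i_\lambda^*\cJ_{!*}(\lambda,\Z)\in{}^p\hspace{-1pt}D^{\leq-1}$ on the boundary, while $\cJ_!(\lambda,\Z)$ satisfies this same $*$-condition \emph{automatically} (as a $!$-extension). So the $*$-conditions match; the discrepancy between $\cJ_!(\lambda,\Z)$ and $\cJ_{!*}(\lambda,\Z)$ can only come from the $!$-side.

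The strategy is then to apply the weight functors, exactly as in the proof of Proposition~\ref{prop:extension-scalars-J!-J*}. For each $\mu\in X_*(T)$ the functor $\F_\mu$ is exact (Lemma~\ref{lem:weight-functors-exact}) and $\F:=\bigoplus_\mu\F_\mu$ is faithful (Theorem~\ref{thm:fiber-functor-k}); hence it is enough to show that
\[
\F_\mu\bigl(\cJ_!(\lambda,\Z)\bigr)\to\F_\mu\bigl(\cJ_{!*}(\lambda,\Z)\bigr)
\]
is an isomorphism for every $\mu$. For the left-hand side, Proposition~\ref{prop:can-basis-k} identifies $\F_\mu(\cJ_!(\lambda,\Z))$ with $\coH_c^{\langle 2\rho,\lambda+\mu\rangle}(\Gr_G^\lambda\cap S_\mu;\Z)$, the top compactly-supported cohomology of the MV cycle intersection, which is free with basis the irreducible components of $\Gr_G^\lambda\cap S_\mu$. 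For the right-hand side, I would argue that the $\IC$-type conditions force $\F_\mu$ to ``see'' only the open stratum $\Gr_G^\lambda$: using the dévissage of Proposition~\ref{prop:weight-functors-k} (stratify $\supp\cJ_{!*}(\lambda,\Z)$ by the $\overline{\Gr_G^\eta}$), the only stratum contributing to $\coH_c^{\langle 2\rho,\mu\rangle}(S_\mu,\cJ_{!*}(\lambda,\Z))$ in the top degree $\langle 2\rho,\mu\rangle$ is $\Gr_G^\lambda$, precisely because on every boundary stratum $\Gr_G^\eta$ ($\eta<\lambda$) one has $\cJ_{!*}(\lambda,\Z)|_{\Gr_G^\eta}\in D^{\leq-\langle2\rho,\eta\rangle-1}$ and the dimension estimate $\dim(\Gr_G^\eta\cap S_\mu)=\langle\rho,\eta+\mu\rangle<\langle\rho,\lambda+\mu\rangle$ (Theorem~\ref{thm:orbits}) kills the relevant degree. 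This gives a canonical isomorphism $\F_\mu(\cJ_{!*}(\lambda,\Z))\cong\coH_c^{\langle2\rho,\lambda+\mu\rangle}(\Gr_G^\lambda\cap S_\mu;\Z)$ as well, compatibly with the comparison map coming from $\cJ_!(\lambda,\Z)\twoheadrightarrow\cJ_{!*}(\lambda,\Z)$ (both factor through restriction to $\Gr_G^\lambda$).

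Putting these together, the map $\F_\mu(\cJ_!(\lambda,\Z))\to\F_\mu(\cJ_{!*}(\lambda,\Z))$ is, after these identifications, the identity of $\coH_c^{\langle2\rho,\lambda+\mu\rangle}(\Gr_G^\lambda\cap S_\mu;\Z)$; hence an isomorphism. Since this holds for all $\mu$ and $\F$ is faithful and exact, the kernel $\mathscr K$ of $\cJ_!(\lambda,\Z)\twoheadrightarrow\cJ_{!*}(\lambda,\Z)$ satisfies $\F_\mu(\mathscr K)=0$ for all $\mu$, whence $\F(\mathscr K)=0$ and therefore $\mathscr K=0$. This proves the lemma. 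The main obstacle I anticipate is the second identification — verifying carefully that the boundary strata contribute nothing to $\F_\mu(\cJ_{!*}(\lambda,\Z))$ in the critical cohomological degree, i.e.\ assembling the dévissage/spectral-sequence bookkeeping of Proposition~\ref{prop:weight-functors-k} with the sharper degree bound $D^{\leq-\langle2\rho,\eta\rangle-1}$ on boundary strata; this is exactly the integral analogue of the argument in the proof of Proposition~\ref{prop:dim-weight-spaces}, but one must check that the argument there, which over a field of characteristic $0$ also invoked the parity vanishing of Lemma~\ref{lem:parity-IC}, goes through over $\Z$ using only the a priori bound $\leq-\langle2\rho,\eta\rangle-1$ (rather than $\leq-\langle2\rho,\eta\rangle-2$) — which it does, since the dimension estimate alone already gives a strict inequality $\langle\rho,\eta+\mu\rangle\leq\langle\rho,\lambda+\mu\rangle-1$ in the relevant degree count, the parity improvement being a bonus that is not needed here.
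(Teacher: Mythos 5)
There is a genuine gap in your devissage step, and it is exactly at the point you flag at the end: the parity improvement \emph{is} needed, not a bonus.

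Your argument correctly observes that $\cJ_{!*}(\lambda,\Z)$, being a quotient of $\cJ_!(\lambda,\Z)$, has no quotient supported on the boundary, and hence its $*$-restriction to each boundary stratum $\Gr_G^\eta$ lies in ordinary degrees $\leq-\langle2\rho,\eta\rangle-1$. Combined with $\dim(\Gr_G^\eta\cap S_\mu)\leq\langle\rho,\eta+\mu\rangle$, this gives $\coH^k_c(\Gr_G^\eta\cap S_\mu,\cJ_{!*}|_{\Gr_G^\eta})=0$ for $k>\langle 2\rho,\mu\rangle-1$, i.e.\ vanishing in degrees $\geq\langle2\rho,\mu\rangle$. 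Now run the long exact sequence for the triangle $j_!j^!\cJ_{!*}\to\cJ_{!*}\to i_*i^*\cJ_{!*}$: in the critical spot it reads
\[
\coH^{\langle2\rho,\mu\rangle-1}_c(S_\mu,i_*i^*\cJ_{!*})\xrightarrow{\delta}\coH^{\langle2\rho,\mu\rangle}_c(S_\mu,j_!j^!\cJ_{!*})\to\F_\mu(\cJ_{!*}(\lambda,\Z))\to 0.
\]
Your bound controls the term one step to the right of $\delta$'s source, not the source itself. The degree-$\langle2\rho,\mu\rangle-1$ group has no reason to vanish, and if $\delta\neq 0$ then $\F_\mu(\cJ_{!*})$ is a \emph{proper} quotient of $\F_\mu(\cJ_!)$; your devissage gives surjectivity of the comparison, which was never in doubt, but not injectivity. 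The claimed identification $\F_\mu(\cJ_{!*}(\lambda,\Z))\cong\coH^{\langle2\rho,\lambda+\mu\rangle}_c(\Gr_G^\lambda\cap S_\mu;\Z)$ is in fact equivalent to the lemma, so this step is circular. In Proposition~\ref{prop:dim-weight-spaces} the argument closes precisely because Lemma~\ref{lem:parity-IC} upgrades the boundary bound to $\leq-\langle2\rho,\eta\rangle-2$, pushing the boundary vanishing up to degree $\geq\langle2\rho,\mu\rangle-1$ and killing the source of $\delta$. That parity input is not available over $\Z$. A quick sanity check that something must be missing: if the degree count with the $-1$ bound alone sufficed, the same argument would prove $\cJ_!(\lambda,\bk)\to\cJ_{!*}(\lambda,\bk)$ is an isomorphism over any field $\bk$ — false in positive characteristic.

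The paper avoids the problem by a different route: it uses the freeness of $\F_\mu(\cJ_!(\lambda,\Z))$ (Proposition~\ref{prop:can-basis-k}) to reduce injectivity of $\F_\mu(\cJ_!(\lambda,\Z))\to\F_\mu(\cJ_*(\lambda,\Z))$ to injectivity after $\otimes_\Z\Q$, then invokes the extension-of-scalars isomorphism (Proposition~\ref{prop:extension-scalars-J!-J*}) to identify the resulting map with $\F_\mu(\cJ_!(\lambda,\Q))\to\F_\mu(\cJ_*(\lambda,\Q))$, which is an isomorphism by characteristic-$0$ semisimplicity. So the reduction to $\Q$ — which you explicitly tried to avoid — is the ingredient that replaces the parity vanishing you cannot use. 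If you want to salvage a direct devissage argument, you would need to establish the $-2$ bound on $i^*\cJ_{!*}(\lambda,\Z)$ over $\Z$ independently, which does not seem easier than the reduction to $\Q$.
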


\begin{proof}
The claim amounts to saying that the canonical morphism
\[
\cJ_!(\lambda, \Z) \to \cJ_*(\lambda, \Z)
\]
(see~\S\ref{ss:def-J!-J*}) is injective or, in view of Theorem~\ref{thm:fiber-functor-k}, that for any $\mu \in X_*(T)$ it induces an embedding
\[
\F_\mu(\cJ_!(\lambda, \Z)) \to \F_\mu(\cJ_*(\lambda, \Z)).
\]
However, since the left-hand side is free over $\Z$ by Proposition~\ref{prop:can-basis-k}, it suffices to prove that the induced morphism
\[
\mathbf{Q} \otimes_\Z \F_\mu(\cJ_!(\lambda, \Z)) \to \mathbf{Q} \otimes_\Z \F_\mu(\cJ_*(\lambda, \Z))
\]
is an embedding. By Proposition~\ref{prop:extension-scalars-J!-J*} and its proof, this morphism identifies with the morphism
\[
\F_\mu(\cJ_!(\lambda, \mathbf{Q})) \to \F_\mu(\cJ_*(\lambda, \mathbf{Q}))
\]
induced by the canonical morphism $\cJ_!(\lambda, \mathbf{Q}) \to \cJ_*(\lambda, \mathbf{Q})$. The latter morphism is an isomorphism (see~\S\ref{ss:def-J!-J*}), which concludes the proof.
\end{proof}

\section{Representability of the weight functors}
\label{sec:repr-wt-func}

In Section~\ref{sec:Tannakian}, we presented Deligne and Milne's proof
of (a version of) Tannakian reconstruction for rigid tensor $\mathbf k$-linear abelian
categories, where $\mathbf k$ is a field. One of the key steps is
Proposition~\ref{prop:reconstruction}, which established an equivalence,
for each object $X$ in an abelian $\mathbf k$-linear category $\mathscr C$
endowed with a $\mathbf k$-linear exact faithful functor $\omega : \mathscr{C} \to \Vect_\bk$,
between the abelian subcategory $\langle X\rangle$ generated by $X$
and the category $\Mod_{A_X}$ of modules over an algebra
$A_X\subset\End_{\mathbf k}(\omega(X))$.

The condition that $\mathbf k$ is a field, needed for the proof of
Proposition~\ref{prop:reconstruction}, is too restrictive for our current
setup. Mirkovi\'c and Vilonen choose therefore another approach. Rather
than an equivalence $\langle X\rangle\cong\Mod_{A_X}$ for each object
$X\in\mathscr C$, they produce a Morita equivalence
$\Per_{\GO}(Z,\bk)\cong\Mod_{A_Z(\bk)}$ for each closed subset
$Z\subset\Gr_G$ union of finitely many $\GO$-orbits.
Here $\Per_{\GO}(Z,\bk)$ is the subcategory of $\Per_{\GO}(\Gr_G,\bk)$ consisting of $\GO$-equivariant
perverse sheaves supported on $Z$ and $A_Z(\bk)$ is the (opposite algebra
of the) endomorphism algebra of a projective generator $P_Z(\mathbf k)$ of
$\Per_{\GO}(Z,\bk)$.

The aim of this section is to construct and study the objects $P_Z(\mathbf k)$.
Since the diagram
$$\xymatrix{\Per_{\GO}(Z,\bk)\ar[rr]^{\Hom(P_Z(\mathbf k),\bm?)}\ar[dr]_{\F}
&&\Mod_{A_Z(\mathbf k)}\ar[dl]^{\text{forget}}\\&\Mod_{\bk}&}$$
has to commute, we will choose $P_Z(\bk)$ so that it represents $\F$.

\subsection{Construction of projective objects}
\label{ss:construction-proj}

Let $Z$ be a closed subset of $\Gr_G$, union of finitely many $\GO$-orbits.
For $n\geq0$, we set $\mathcal O_n=\mathcal O/t^{n+1} \mathcal{O}$ and let
$G_{\mathcal O_n}$ be the complex algebraic group which represents the functor $R \mapsto G(R \otimes_\C \mathcal{O}_n)$. We choose (as we may) $n\in \Z_{\geq 0}$ large enough so that the $\GO$-action
on $Z$ factors through $G_{\mathcal O_n}$; then by definition (see~\S\ref{ss:appendix-Gr}) we have $\Db_{c,\GO}(Z,\bk) = \Db_{c,G_{\mathcal O_n}}(Z,\bk)$.

Let $\nu\in X_*(T)$. For any $\mathscr A\in\Per_{\GO}(Z,\bk)$, we have
$$\F_\nu(\mathscr A)=\coH^{\langle2\rho,\nu\rangle}_{T_\nu}(Z,\mathscr A)
=\Hom_{\Db_c(Z,\bk)}(i_!\underline{\bk}_{T_\nu\cap Z}[-\langle2\rho,\nu\rangle],\mathscr A),$$
where $i : T_\nu \cap Z \to Z$ is the embedding.
To represent the functor $\F_\nu$ on the category $\Per_{\GO}(Z,\bk)$,
we need to transform the nonequivariant object $i_!\underline{\bk}_{T_\nu\cap Z}
[-\langle2\rho,\nu\rangle]$ of $\Db_c(Z,\bk)$ into an object of $\Per_{\GO}(Z,\bk)$.
We do this using the ($!$-)induction functor (whose construction is recalled in~\S\ref{ss:appendix-ind}).

Concretely, we consider the commutative diagram
\begin{equation}
\label{eqn:diag-induction}
\vcenter{
\xymatrix{T_\nu\cap Z\ar[d]_i&G_{\mathcal O_n}\times(T_\nu\cap Z)\ar[d]\ar[l]\ar[r]^(.67){\tilde a}&Z\ar@{=}[d]\\Z&G_{\mathcal O_n}\times Z\ar[l]^(.6)p\ar[r]_(.6)a&Z,}
}
\end{equation}
where $a$ is the action map and $p$ is the projection, and we define
\begin{align*}
P_Z(\nu,\mathbf k):&=\pH^0\bigl(a_!p^!i_!\underline{\bk}_{T_\nu\cap Z}[-\langle2\rho,\nu\rangle]\bigr)\\
&\cong\pH^0\bigl(a_!p^*i_!\underline{\bk}_{T_\nu\cap Z}[2\dim(G_{\mathcal O_n})-\langle2\rho,\nu\rangle]\bigr)\\
&\cong\pH^0\bigl(\tilde a_!\underline{\bk}_{G_{\mathcal O_n}\times(T_\nu\cap Z)}[2\dim(G_{\mathcal O_n})-\langle2\rho,\nu\rangle]\bigr),
\end{align*}
the last equality being given by base change along the left (Cartesian) square in~\eqref{eqn:diag-induction}.

\begin{prop}
\label{prop:projective-constr}
The perverse sheaf $P_Z(\nu,\mathbf k)$ is a projective object of
$\Per_{\GO}(Z,\bk)$ that represents the weight functor $\F_\nu$.
\end{prop}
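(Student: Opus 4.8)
The statement has two parts: (1) $P_Z(\nu,\bk)$ represents $\F_\nu$ on $\Per_{\GO}(Z,\bk)$; (2) $P_Z(\nu,\bk)$ is projective in $\Per_{\GO}(Z,\bk)$. I would prove (1) first, since (2) is a formal consequence of (1) combined with the exactness of $\F_\nu$ (Lemma~\ref{lem:weight-functors-exact}). For (1), the idea is to chase adjunctions. For $\mathscr{A}$ in $\Per_{\GO}(Z,\bk)$, I want a natural isomorphism
\[
\Hom_{\Per_{\GO}(Z,\bk)}(P_Z(\nu,\bk),\mathscr{A}) \cong \F_\nu(\mathscr{A}).
\]
Since $P_Z(\nu,\bk) = \pH^0 \bigl( a_! p^! i_! \underline{\bk}_{T_\nu \cap Z}[-\langle 2\rho,\nu \rangle] \bigr)$ and $\mathscr{A}$ is perverse (hence in nonnegative \emph{and} nonpositive perverse degrees), the truncation adjunction gives
\[
\Hom_{\Per_{\GO}(Z,\bk)}(P_Z(\nu,\bk),\mathscr{A}) \cong \Hom_{\Db_{c,\GO}(Z,\bk)}\bigl(a_! p^! i_! \underline{\bk}_{T_\nu \cap Z}[-\langle 2\rho,\nu \rangle], \mathscr{A}\bigr).
\]
Here I use that $a_! p^! i_! \underline{\bk}_{T_\nu \cap Z}[-\langle 2\rho,\nu \rangle]$ is concentrated in nonpositive perverse degrees; this follows because $a$ is proper (being a fibration with fibers $G_{\mathcal{O}_n}$, which is of finite type) so $a_! = a_*$ is left t-exact up to the relevant shift, and $p^!$ is t-exact up to shift since $p$ is smooth, while $i_! \underline{\bk}_{T_\nu \cap Z}$ sits in nonpositive perverse degrees by the standard $t$-exactness properties of $i_!$ for a locally closed embedding.

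\textbf{The adjunction chain.} The next step is to unwind the right-hand side using the ($!$-)induction formalism recalled in~\S\ref{ss:appendix-ind}. The key point is the adjunction between the induction functor $a_! p^!(-)$ (from $\Db_c(Z,\bk)$, or rather from the equivariant category on $G_{\mathcal{O}_n} \times Z$, to $\Db_{c,\GO}(Z,\bk)$) and the forgetful functor $\mathrm{For} : \Db_{c,\GO}(Z,\bk) \to \Db_c(Z,\bk)$. Concretely, for $\mathscr{A}$ equivariant one has $\mathrm{For}(a_! p^! \mathscr{B}) $-type formulas, and the adjunction yields
\[
\Hom_{\Db_{c,\GO}(Z,\bk)}\bigl(a_! p^! i_! \underline{\bk}_{T_\nu \cap Z}[-\langle 2\rho,\nu \rangle], \mathscr{A}\bigr) \cong \Hom_{\Db_c(Z,\bk)}\bigl(i_! \underline{\bk}_{T_\nu \cap Z}[-\langle 2\rho,\nu \rangle], \mathrm{For}(\mathscr{A})\bigr).
\]
Then by the $(i_!, i^!)$ adjunction this last group is
\[
\Hom_{\Db_c(T_\nu \cap Z,\bk)}\bigl(\underline{\bk}_{T_\nu \cap Z}[-\langle 2\rho,\nu \rangle], i^! \mathscr{A}\bigr) = \coH^{\langle 2\rho,\nu \rangle}_{T_\nu \cap Z}(Z,\mathscr{A}) = \coH^{\langle 2\rho,\nu \rangle}_{T_\nu}(\Gr_G,\mathscr{A}) = \F_\nu(\mathscr{A}),
\]
using that $\mathscr{A}$ is supported on $Z$. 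All isomorphisms here are natural in $\mathscr{A}$, so we get the desired isomorphism of functors. (One small point to check carefully: the induction adjunction in~\S\ref{ss:appendix-ind} is stated for the equivariant derived category, and I need to make sure the nonequivariant object $i_! \underline{\bk}_{T_\nu \cap Z}$ is correctly regarded as an object of the appropriate category with the projection-map equivariant structure; this is routine given the diagram~\eqref{eqn:diag-induction}.)

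\textbf{Projectivity, and the expected obstacle.} Given part (1), projectivity is immediate: the functor $\mathscr{A} \mapsto \Hom(P_Z(\nu,\bk),\mathscr{A}) \cong \F_\nu(\mathscr{A})$ is exact on $\Per_{\GO}(Z,\bk) = \Perv$-restricted-to-$Z$ by Lemma~\ref{lem:weight-functors-exact} (the restriction of $\F_\nu$ to perverse sheaves supported on $Z$ is still exact, since the vanishing statement in Proposition~\ref{prop:weight-functors-k} applies — and via Remark~\ref{rmk:weight-functors}(2) also in the relative setting on $Z$). An object whose corepresented Hom-functor is exact is by definition projective. The main obstacle I anticipate is \emph{not} the adjunction chase itself but the verification that $a_! p^! i_! \underline{\bk}_{T_\nu \cap Z}[-\langle 2\rho,\nu \rangle]$ lies in nonpositive perverse degrees (so that the truncation $\pH^0$ really does corepresent $\Hom$ into perverse sheaves). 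This requires knowing that $a$ is ind-proper with finite-type fibers and being careful about the shifts: in the displayed rewriting of $P_Z(\nu,\bk)$ the object becomes $\pH^0(\tilde{a}_! \underline{\bk}_{G_{\mathcal{O}_n} \times (T_\nu \cap Z)}[2\dim(G_{\mathcal{O}_n}) - \langle 2\rho,\nu \rangle])$, and one must check $\tilde{a}_!(-)$ of this constant sheaf is in nonpositive perverse degrees, which amounts to a dimension estimate on the fibers of $\tilde{a}$ — essentially a semismallness-flavored bound, though here we only need the one-sided (upper) estimate. I would handle this by reducing, via the stratification of $Z$ by $\GO$-orbits, to computing $\dim \bigl(\tilde{a}^{-1}(x) \bigr)$ for $x$ in a stratum $\Gr_G^\eta \subset Z$, i.e. the dimension of $\{g \in G_{\mathcal{O}_n} : g^{-1} x \in T_\nu \cap Z\}$, and comparing with $\langle 2\rho, \eta \rangle$ using the dimension estimates of Theorem~\ref{thm:orbits-T}.
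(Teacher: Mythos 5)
Your proof follows the same skeleton as the paper's: set $\mathscr{F} := a_! p^! i_! \underline{\bk}_{T_\nu\cap Z}[-\langle 2\rho,\nu\rangle]$, establish $\Hom_{\Db_{c,\GO}(Z,\bk)}(\mathscr{F},\mathscr{A}) \cong \F_\nu(\mathscr{A})$ via Lemma~\ref{lem:equiv-ind} and $(i_!,i^!)$-adjunction, reduce representability of $\F_\nu$ by $P_Z(\nu,\bk) = \pH^0(\mathscr{F})$ to the claim that $\mathscr{F} \in {}^p \hspace{-1pt} D^{\leq 0}$, and deduce projectivity from exactness of $\F_\nu$ (Lemma~\ref{lem:weight-functors-exact}). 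So the adjunction chase and the ``formal consequence'' step are just as in the paper; where you diverge is in how you prove the key claim that $\mathscr{F}$ has no positive perverse cohomology.

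Your first attempted justification of that claim is wrong, and you should delete it. The action map $a : G_{\mathcal{O}_n} \times Z \to Z$ is \emph{not} proper: its fiber over any $z \in Z$ is isomorphic to $G_{\mathcal{O}_n}$, which is an affine algebraic group. Moreover $\underline{\bk}_{T_\nu\cap Z}[-\langle 2\rho,\nu\rangle]$ is not in ${}^p \hspace{-1pt} D^{\leq 0}$ when $T_\nu\cap Z$ has positive dimension (the shift that would place the constant sheaf there is $\dim(T_\nu\cap Z)$, which bears no relation to $-\langle 2\rho,\nu\rangle$), so the chain of one-sided t-exactness statements you invoke cannot give the bound even if $a$ were proper.

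Your fallback argument --- the dimension estimate on fibers of $\tilde{a}$ --- does work, and it is a genuinely different route from the paper's. For $x \in \Gr_G^\eta \subset Z$ one has $\tilde{a}^{-1}(x) \cong \{g \in G_{\mathcal{O}_n} : g^{-1}x \in T_\nu\cap\Gr_G^\eta\}$, a bundle over $T_\nu\cap\Gr_G^\eta$ with fibers of dimension $\dim G_{\mathcal{O}_n} - \langle 2\rho,\eta\rangle$; using Theorem~\ref{thm:orbits-T} this gives $\dim\tilde{a}^{-1}(x) = \dim G_{\mathcal{O}_n} - \langle\rho,\eta+\nu\rangle$, which via proper base change and the degree bound on $\coH^\bullet_c$ yields exactly the required estimate $-\langle 2\rho,\eta\rangle$ on the top cohomological degree of the stalks along $\Gr_G^\eta$. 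The paper instead uses a bootstrap that avoids any new geometry: the isomorphism $\Hom(\mathscr{F},\bm?) \cong \coH^{\langle 2\rho,\nu\rangle}_{T_\nu}(Z,\bm?)$ already established holds on the whole of $\Db_{c,\GO}(Z,\bk)$ (not only on perverse objects), so it can be applied to the nonzero truncation morphism $\mathscr{F} \to \pH^n(\mathscr{F})[-n]$ for $n$ the top perverse degree; the resulting nonzero class in $\coH^{\langle 2\rho,\nu\rangle - n}_{T_\nu}(Z,\pH^n(\mathscr{F}))$ is forced to vanish by the weight-functor vanishing applied to the perverse sheaf $\pH^n(\mathscr{F})$ unless $n=0$. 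The paper's route recycles the adjunction and Lemma~\ref{lem:criterion-perv} and avoids re-deriving the dimension estimate; yours makes the underlying geometry visible. Both are valid, once the incorrect parenthetical about properness of $a$ is removed.
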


\begin{proof}
We set
$$\mathscr F:=a_!p^!i_!\underline{\bk}_{T_\nu\cap Z}[- \langle 2\rho, \nu \rangle].$$
For any $\mathscr A\in\Per_{\GO}(Z,\bk)$, we have by Lemma~\ref{lem:equiv-ind}
\begin{multline*}
\F_\nu(\mathscr A)=\coH^{\langle 2\rho,\nu \rangle}_{T_\nu}(Z,\mathscr A)
=\Hom_{\Db_c(Z,\bk)}(i_!\underline{\bk}_{T_\nu\cap Z}[-\langle 2\rho,\nu \rangle],\mathscr A)\\
\cong \Hom_{\Db_{c,G_{\mathcal O_n}}(Z,\bk)}(a_!p^!i_!\underline{\bk}_{T_\nu\cap Z}[- \langle 2\rho,\nu \rangle],\mathscr A)
=\Hom_{\Db_{c,G_{\mathcal O_n}}(Z,\bk)}(\mathscr F,\mathscr A).
\end{multline*}

We claim that $\mathscr F$ is concentrated in nonpositive perverse degrees.
Indeed, let $n$ be the largest integer such that $\pH^n(\mathscr F)\neq0$.
The second arrow in the truncation triangle
$${}^p\tau_{<n}\mathscr F\to\mathscr F\to\pH^n(\mathscr F)[-n]\xrightarrow{[1]}$$
is nonzero, so that
$$0\neq\Hom_{\Db_{c,G_{\mathcal O_n}}(Z,\bk)}(\mathscr F,\pH^n(\mathscr F)[-n])=\F_\nu(\pH^n(\mathscr{F})[-n])) = 
\coH^{\langle 2\rho,\nu \rangle-n}_{T_\nu}(Z,\pH^n(\mathscr F));$$
applying Lemma~\ref{lem:criterion-perv}, we deduce that $n=0$, proving our claim.

Our truncation triangle now reads
$${}^p\tau_{<0}\mathscr F\to\mathscr F\to P_Z(\nu,\mathbf k)\xrightarrow{[1]}.$$
For any $\mathscr A\in\Per_{\GO}(Z,\bk)$, we have a long exact sequence
\begin{align*}
\Hom_{\Db_{c,G_{\mathcal O_n}}(Z,\bk)}({}^p\tau_{<0}\mathscr F,\mathscr A[-1])&\to
\Hom_{\Db_{c,G_{\mathcal O_n}}(Z,\bk)}(P_Z(\nu,\mathbf k),\mathscr A)\\&\to
\Hom_{\Db_{c,G_{\mathcal O_n}}(Z,\bk)}(\mathscr F,\mathscr A)\to
\Hom_{\Db_{c,G_{\mathscr O_n}}(Z,\bk)}({}^p\tau_{<0}\mathscr F,\mathscr A).
\end{align*}
By perverse degrees considerations, the first and the last spaces above are
zero; we conclude that we have a canonical isomorphism
$$\F_\nu(\mathscr A)=\Hom_{\Db_{c,G_{\mathcal O_n}}(Z,\bk)}(P_Z(\nu,\mathbf k),\mathscr A).$$ 
Thus $P_Z(\nu,\mathbf k)$ represents the functor $\F_\nu$ on $\Per_{\GO}(Z,\bk)$. Since
the latter is exact (see Lem\-ma~\ref{lem:weight-functors-exact}), $P_Z(\nu,\mathbf k)$ is projective.
\end{proof}

For a fixed $Z$, there are only finitely many $\nu\in X_*(T)$
such that $T_\nu\cap Z\neq\varnothing$ (see
Theorem~\ref{thm:orbits-T}\eqref{it:thm-orbits-T-1}), so that the sum
$$\bigoplus_{\nu\in X_*(T)}P_Z(\nu,\bk)$$
involves finitely many nonzero terms; it therefore defines an object $P_Z(\bk)$ of $\Per_{\GO}(Z,\bk)$. Theorem~\ref{thm:fiber-functor-k} and Proposition~\ref{prop:projective-constr} imply that $P_Z(\bk)$ represents the functor $\F$. Since $\F$ is exact, $P_Z(\bk)$ is projective. Since $\F$ is faithful, $P_Z(\bk)$ is a generator of the category $\Per_{\GO}(Z,\bk)$ (see e.g.\ \cite[chap.~II, \S1]{bass}).
Specifically, for each object $\mathscr A\in\Per_{\GO}(Z,\bk)$, there exists an epimorphism $P_Z(\bk)^n\twoheadrightarrow\mathscr A$ for some $n\geq0$ (because the $\bk$-module $\Hom_{\Per_{\GO}(Z,\bk)}(P_Z(\bk),\mathscr A)$ is finitely generated).

\subsection{Structure of the projective objects}
\label{ss:structure-proj}

Let $Y\subset Z$ be closed subsets of $\Gr_G$, unions of finitely many
$\GO$-orbits. Let $i:Y\to Z$ be the inclusion. The functor ${}^pi^*:=
\pH^0 \bigl( i^* (\bm ?) \bigr)$ maps $\Per_{\GO}(Z,\bk)$ to $\Per_{\GO}(Y,\bk)$ and is the left
adjoint to the inclusion $i_*:\Per_{\GO}(Y,\bk)\to\Per_{\GO}(Z,\bk)$.

\begin{prop}
\label{prop:projective-restr}
There exists a canonical isomorphism $P_Y(\bk)\cong{}^pi^*P_Z(\bk)$ and a canonical surjection
$P_Z(\bk)\twoheadrightarrow P_Y(\bk)$.
\end{prop}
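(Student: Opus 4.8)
The proof naturally splits along the two claims. For the isomorphism $P_Y(\bk) \cong {}^p i^* P_Z(\bk)$, the strategy is to show that ${}^p i^* P_Z(\bk)$ represents the functor $\F|_{\Per_{\GO}(Y,\bk)}$, since by Proposition~\ref{prop:projective-constr} and the discussion after it, $P_Y(\bk)$ is characterized (up to canonical isomorphism) by this representability property. So first I would take an arbitrary $\mathscr{A} \in \Per_{\GO}(Y,\bk)$ and compute, using the adjunction $({}^p i^*, i_*)$ on perverse sheaves (valid since $i^*$ is right t-exact and $i_*$ is t-exact for the perverse t-structures), that
\[
\Hom_{\Per_{\GO}(Y,\bk)}({}^p i^* P_Z(\bk), \mathscr{A}) \cong \Hom_{\Per_{\GO}(Z,\bk)}(P_Z(\bk), i_* \mathscr{A}) \cong \F(i_* \mathscr{A}).
\]
Then I would observe that $\F(i_* \mathscr{A}) \cong \F(\mathscr{A})$ naturally in $\mathscr{A}$: indeed $\F = \coH^\bullet(\Gr_G, \bm ?)$ and $i_*$ is a closed pushforward, so $\coH^\bullet(\Gr_G, i_* \mathscr{A}) = \coH^\bullet(Z, i_*\mathscr{A}) = \coH^\bullet(Y, \mathscr{A})$; alternatively one can argue weight-functor by weight-functor using Theorem~\ref{thm:fiber-functor-k} and the fact that $T_\nu \cap Y \hookrightarrow T_\nu \cap Z$ is a closed inclusion compatible with the relevant local cohomology. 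Combining, ${}^p i^* P_Z(\bk)$ represents $\F$ on $\Per_{\GO}(Y,\bk)$, hence is canonically isomorphic to $P_Y(\bk)$. One should also check that ${}^p i^* P_Z(\bk)$ is actually supported on $Y$, which is automatic since $i^*$ lands in $\Db_{c,\GO}(Y,\bk)$.

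For the canonical surjection $P_Z(\bk) \twoheadrightarrow P_Y(\bk)$, the idea is to use the adjunction unit. There is a natural morphism $P_Z(\bk) \to i_* i^* P_Z(\bk)$, and composing with the truncation $i_* i^* P_Z(\bk) \to i_* \, {}^p i^* P_Z(\bk) \cong i_* P_Y(\bk)$ (which is the $0$-th perverse cohomology, legitimate because $i^* P_Z(\bk)$ lives in nonpositive perverse degrees) gives a canonical morphism $P_Z(\bk) \to i_* P_Y(\bk)$, i.e.\ a morphism $P_Z(\bk) \to P_Y(\bk)$ in $\Per_{\GO}(Z,\bk)$ after identifying $\Per_{\GO}(Y,\bk)$ with the subcategory of $\Per_{\GO}(Z,\bk)$ of sheaves supported on $Y$. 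To see this is surjective: a morphism of projective objects that induces a surjection on $\F$ is surjective (because $\F$ is exact and faithful, so $\F$ detects cokernels being zero); and under the representability identifications, the map $\F(P_Z(\bk)) \to \F(P_Y(\bk))$ is the map $\Hom(P_Z(\bk), P_Z(\bk)) \to \Hom(P_Z(\bk), i_* P_Y(\bk)) \cong \Hom(P_Y(\bk), P_Y(\bk))$ sending $\id$ to $\id$, hence is surjective (it is the identity-preserving map between endomorphism-as-modules, concretely surjective because $\F(P_Z) = \bigoplus_\nu \Hom(P_Z(\nu), -)$-type decomposition matches). Actually the cleanest route: apply $\F$ to $P_Z(\bk) \to P_Y(\bk)$; by Yoneda and representability this is identified with the natural transformation $\F \to \F \circ i_*$ evaluated appropriately, and since $\F(\mathscr{A}) \to \F(i_* i^* \mathscr{A})$... — rather, since $P_Y(\bk)$ is a quotient-type object I would just directly check surjectivity on each weight space $\F_\nu$ using the explicit construction in~\S\ref{ss:construction-proj}: the induction functor $a_! p^!$ is compatible with $i^*$ via base change, and the truncation triangles give the surjection.

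\textbf{Main obstacle.} I expect the main subtlety to be bookkeeping with the perverse-cohomology truncations and verifying that all the adjunction morphisms are compatible — in particular confirming that ${}^p i^* = \pH^0 \circ i^*$ is genuinely left adjoint to $i_*$ at the level of abelian categories of perverse sheaves (this requires $i^*$ right t-exact, which holds, and $i_*$ t-exact, which holds since $i$ is a closed immersion) and that $i^* P_Z(\bk)$ is concentrated in perverse degrees $\leq 0$ (which follows because $P_Z(\bk)$ is a quotient of $\mathscr{F} = a_! p^! (\text{stuff})$, itself in nonpositive perverse degrees by the argument in the proof of Proposition~\ref{prop:projective-constr}, and $i^*$ is right t-exact). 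Once these t-exactness facts are in place, the representability argument is formal and the surjectivity follows from exactness and faithfulness of $\F$. The genuinely new input beyond Proposition~\ref{prop:projective-constr} is minimal; the proposition is essentially a compatibility statement, so the work is in assembling the right diagram rather than in any hard estimate.
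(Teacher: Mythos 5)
Your argument for the isomorphism $P_Y(\bk) \cong {}^p i^* P_Z(\bk)$ is exactly the paper's: both sides represent $\F$ on $\Per_{\GO}(Y,\bk)$, via the adjunction $({}^p i^*, i_*)$ and the identity $\F \circ i_* = \F$. That part is correct.

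The gap is in the surjectivity of $u : P_Z(\bk) \to i_* P_Y(\bk)$. Your main argument is circular: you correctly note that $\F(u)$ surjective would imply $u$ surjective (by exactness and faithfulness of $\F$), but under representability $\F(u)$ is the map $\Hom(P_Z(\bk), P_Z(\bk)) \to \Hom(P_Z(\bk), i_*P_Y(\bk))$ given by post-composition with $u$, and since $P_Z(\bk)$ is projective this map is surjective \emph{if and only if} $u$ is surjective — precisely what you want to prove. The phrase ``sending $\id$ to $\id$, hence surjective'' is not a valid inference (a unital ring map such as $\Z \hookrightarrow \Q$ need not be surjective), and the parenthetical appeal to a ``decomposition matching'' is too vague to carry the step. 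Your ``cleanest route'' is abandoned mid-sentence, and your third fallback (base change on the explicit induction objects plus truncation triangles) could indeed be made to work, but the decisive non-formal input — that for the open complement $j : U := Z \smallsetminus Y \hookrightarrow Z$ the complex $j_! j^*$ applied to $a_!p^!i_!\underline{\bk}_{T_\nu\cap Z}[\cdots]$ lies in nonpositive perverse degrees (since $j_!$ is right t-exact and $j^*$ is t-exact), so the relevant $\pH^1$ vanishes and the $\pH^0$ of the unit is surjective — is never stated. The paper's argument avoids $\F$ entirely and is formal given the first half: letting $C$ be the cokernel of $u$, it is supported on $Y$, hence by fullness of $i_*$ the cokernel map is $i_*g$ for a surjective $g : P_Y(\bk) \to C'$; under the adjunction isomorphism $\Hom(P_Y(\bk),C') \cong \Hom(P_Z(\bk), i_*C')$ — which is exactly $\phi \mapsto (i_*\phi)\circ u$ — the element $g$ goes to $(i_*g)\circ u = 0$, forcing $g=0$ and hence $C'=0$. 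This categorical exploitation of the adjunction unit is the step you are missing.
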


\begin{proof}
Since $P_Z(\bk)$ represents $\F$ on $\Per_{\GO}(Z,\bk)$, its restriction
${}^pi^*P_Z(\bk)$ represents $\F$ on the subcategory $\Per_{\GO}(Y,\bk)$. Since $P_Y(\bk)$
also represents $\F$ on $\Per_{\GO}(Y,\bk)$, we get a canonical isomorphism
${}^pi^*P_Z(\bk)\xrightarrow{\sim} P_Y(\bk)$.

Composing with the adjunction morphism $P_Z(\bk)\to i_*\,{}^pi^*P_Z(\bk)$,
we get a canonical map $u:P_Z(\bk)\to i_*P_Y(\bk)$. Let $f:i_*P_Y(\bk)\to C$
be the cokernel of $u$. As a quotient of $i_*P_Y(\bk)$, the sheaf $C$ is
supported on $Y$, and since $i_*$ is full, $f$ is of the form $i_*g$ for
some map $g:P_Y(\bk)\to C'$ with $C=i_*C'$. Under the adjunction isomorphism
\[ 
\Hom_{\Per_{\GO}(Y,\bk)}(P_Y(\bk),C') \cong
\Hom_{\Per_{\GO}(Y,\bk)}({}^pi^*P_Z(\bk),C') \cong
\Hom_{\Per_{\GO}(Z,\bk)}(P_Z(\bk),i_*C'),
\]
$g$ goes to $(i_*g)\circ u=0$, hence $g=0$, and we conclude that $u$ is
surjective.
\end{proof}

\begin{prop}
\label{prop:projective-struct}
Let $Z$ be a closed subset of $\Gr_G$, union of finitely many $\GO$-orbits.
\begin{enumerate}
\item
\label{it:projective-struct-1}
The object $P_Z(\bk)$ admits a filtration in the abelian category $\Per_{\GO}(Z,\bk)$ parametrized by $\{\lambda \in X_*(T)^+ \mid \Gr_G^\lambda \subset Z\}$ (endowed with any total order refining $\leq$) and with subquotients isomorphic
to
\[
 \F(\cJ_*(\lambda,\bk))\otimes_{\bk}\cJ_!(\lambda,\bk).
\]
\item
\label{it:projective-struct-2}
There exists a canonical isomorphism
$P_Z(\bk)\cong \bk \lotimes_{\Z} P_Z(\Z)$.
\item
\label{it:projective-struct-3}
$\F(P_Z(\Z))$ is a finitely generated free $\Z$-module and we have $\F(P_Z(\bk))=\bk\otimes_{\Z}\F(P_Z(\Z))$.
\end{enumerate}
\end{prop}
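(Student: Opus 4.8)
The three statements are intertwined, so the plan is to prove them together, working by induction on the number of $\GO$-orbits contained in $Z$, and crucially exploiting the base ring $\bk = \Z$ as a ``universal'' case from which the general case follows by extension of scalars. The starting point is the construction of $P_Z(\nu,\bk)$ in~\S\ref{ss:construction-proj} as $\pH^0 \bigl( \tilde a_! \underline{\bk}_{G_{\mathcal O_n} \times (T_\nu \cap Z)} [2\dim(G_{\mathcal O_n}) - \langle 2\rho,\nu \rangle] \bigr)$. The key geometric observation is that $T_\nu \cap Z$ meets $\Gr_G^\lambda$ only for $\lambda$ with $\Gr_G^\lambda \subset Z$, and one filters $\underline{\bk}_{T_\nu \cap Z}$ (or rather $\underline{\Z}_{T_\nu \cap Z}$) by the locally closed pieces $\Gr_G^\lambda \cap T_\nu$; applying $\tilde a_!$ and then $\pH^0$, the perverse-degree estimates from Theorem~\ref{thm:orbits-T}\eqref{it:thm-orbits-T-2} together with the devissage of Proposition~\ref{prop:weight-functors-k}/Lemma~\ref{lem:criterion-perv} should force each graded piece to be a perverse sheaf of the form $(\text{free } \bk\text{-module}) \otimes_\bk \cJ_!(\lambda,\bk)$; summing over $\nu$ and keeping track of the ranks via Proposition~\ref{prop:can-basis-k} (which counts irreducible components of $\Gr_G^\lambda \cap T_\nu$) yields the multiplicity $\F(\cJ_*(\lambda,\bk))$ as in~\eqref{it:projective-struct-1}. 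Here one uses that $\F(\cJ_*(\lambda,\bk))$ has $\bk$-basis the irreducible components of $\Gr_G^\lambda \cap T_\mu$ ranging over $\mu$.

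For~\eqref{it:projective-struct-2}, the plan is to commute the extension-of-scalars functor $\bk \lotimes_\Z (\bm ?)$ past the construction. One has $\bk \lotimes_\Z \tilde a_! \underline{\Z}_{G_{\mathcal O_n} \times (T_\nu \cap Z)} \cong \tilde a_! \underline{\bk}_{G_{\mathcal O_n} \times (T_\nu \cap Z)}$ by base change for the proper (on each finite-dimensional piece) map $\tilde a$ and flatness of $\bk$-free constant sheaves. The issue is that $\bk \lotimes_\Z (\bm ?)$ does not commute with $\pH^0$ in general. But the filtration of~\eqref{it:projective-struct-1} over $\Z$ has graded pieces $\F(\cJ_*(\lambda,\Z)) \otimes_\Z \cJ_!(\lambda,\Z)$ with $\F(\cJ_*(\lambda,\Z))$ free over $\Z$, and by Proposition~\ref{prop:extension-scalars-J!-J*} we have $\bk \lotimes_\Z \cJ_!(\lambda,\Z) \cong \cJ_!(\lambda,\bk)$, which is already perverse; hence $\bk \lotimes_\Z P_Z(\Z)$ is perverse, with a filtration with the expected graded pieces over $\bk$. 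Then one checks that the canonical adjunction morphism producing the equivariant structure is compatible with extension of scalars, giving a canonical map $\bk \lotimes_\Z P_Z(\Z) \to P_Z(\bk)$ (or the other way, via the representability property applied to $\mathscr{A} = \bk \lotimes_\Z P_Z(\Z)$, once one knows the latter is a projective object), and one verifies it is an isomorphism by applying each $\F_\mu$ and using Theorem~\ref{thm:fiber-functor-k}: both sides have $\F_\mu$ of the same (finite, free) rank, computed from the filtration and~\eqref{eqn:std-sheaf-mod}.

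Statement~\eqref{it:projective-struct-3} then falls out: from the filtration of $P_Z(\Z)$ in~\eqref{it:projective-struct-1} over $\Z$ and the exactness of $\F$ (Theorem~\ref{thm:fiber-functor-k}), $\F(P_Z(\Z))$ is an iterated extension of the free $\Z$-modules $\F(\cJ_*(\lambda,\Z)) \otimes_\Z \F(\cJ_!(\lambda,\Z))$ (free by Proposition~\ref{prop:can-basis-k}), hence is finitely generated and free; and applying $\F$ to the isomorphism of~\eqref{it:projective-struct-2} together with the same projection-formula / base-change compatibility $\F(\bk \lotimes_\Z \mathscr{G}) \cong \bk \otimes_\Z \F(\mathscr{G})$ for $\mathscr{G}$ with $\F(\mathscr{G})$ flat gives $\F(P_Z(\bk)) \cong \bk \otimes_\Z \F(P_Z(\Z))$. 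The main obstacle I anticipate is establishing the filtration~\eqref{it:projective-struct-1} cleanly: one must show not only the perverse-degree vanishing that makes each $\pH^0$ of a graded piece the ``right'' object, but also that the filtration survives the application of $\pH^0$ to the whole complex $\tilde a_! \underline{\Z}_{G_{\mathcal O_n} \times (T_\nu \cap Z)}[\cdots]$ — i.e.\ that there are no correction terms from higher perverse cohomology of the other graded pieces. This requires ordering the strata compatibly with $\leq$ and checking, stratum by stratum, that the contributions of $\Gr_G^\eta \cap T_\nu$ for $\eta \neq \lambda$ land in strictly negative perverse degrees after pushforward, which is exactly where the dimension bound $\dim(\Gr_G^\lambda \cap T_\nu) = \langle \rho, \lambda - \nu \rangle$ of Theorem~\ref{thm:orbits-T} and the codimension parity are used.
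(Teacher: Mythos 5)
Your plan for part~\eqref{it:projective-struct-2} and~\eqref{it:projective-struct-3} is close in spirit to what the paper does: once the filtration of $P_Z(\Z)$ from~\eqref{it:projective-struct-1} is available, Proposition~\ref{prop:extension-scalars-J!-J*} shows each graded piece stays perverse after $\bk\lotimes_\Z(\bm?)$, so $\bk\lotimes_\Z P_Z(\Z)$ is perverse, and representability of $\F$ identifies it with $P_Z(\bk)$. The paper phrases this slightly differently (via a direct $\Hom$ computation using \cite[First formula in (2.6.8)]{ks} rather than your adjunction argument), but the idea is the same.

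The genuine gap is in part~\eqref{it:projective-struct-1}, which you yourself flag as "the main obstacle." You propose to filter the complex $\tilde a_!\,\underline\Z_{G_{\mathcal O_n}\times(T_\nu\cap Z)}[\cdots]$ stratum by stratum and then apply $\pH^0$. But a filtration of a complex by distinguished triangles does not, after applying $\pH^0$, yield a filtration of $\pH^0$ of the total object unless one has strong vanishing of both higher and lower perverse cohomologies of the graded pieces; the dimension bound from Theorem~\ref{thm:orbits-T} controls $\dim(\Gr_G^\lambda\cap T_\nu)$, but after $\tilde a_!$ (a $!$-pushforward through the action of a large group) there is no a priori reason each stratum contribution sits in a single perverse degree, and your proposal does not establish this. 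The paper avoids the issue entirely by a different device: it uses the short exact sequence $0\to K(\bk)\to P_Z(\bk)\to P_Y(\bk)\to 0$ coming from the adjunction $({}^p i^*, i_*)$ (Proposition~\ref{prop:projective-restr}, which you don't invoke), then identifies $K(\bk)$ by computing $\Hom(P_Z(\bk),\cJ_*(\lambda,M))\cong\F(\cJ_*(\lambda,M))$ via representability, and constructs a surjection $\alpha:\F(\cJ_*(\lambda,\bk))^*\otimes_\bk\cJ_!(\lambda,\bk)\to K(\bk)$. The crucial input that makes $\alpha$ an isomorphism over $\Z$ is Lemma~\ref{lem:costd-simple-over-Z} (namely $\cJ_!(\lambda,\Z)=\cJ_{!*}(\lambda,\Z)$ has no subobjects supported on the boundary), which is completely absent from your plan. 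Without some replacement for this step, the induction cannot close.

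One smaller note: the paper's multiplicity space comes out as $\F(\cJ_*(\lambda,\bk))^*$ rather than $\F(\cJ_*(\lambda,\bk))$; since this module is free of finite rank these are abstractly isomorphic, so the proposition as stated is not affected, but if you want a \emph{canonical} filtration you should be careful which of the two you mean and why.
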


\begin{proof}
The proof proceeds by induction on the number of $\GO$-orbits in $Z$.

Let us pick an orbit $\Gr_G^\lambda$ which is open in $Z$,
let $j:\Gr_G^\lambda\to Z$ be the inclusion, and set $Y=Z\smallsetminus
\Gr_G^\lambda$. Our goal is to analyze the kernel $K(\bk)$ of the surjection
constructed in Proposition~\ref{prop:projective-restr}:
\begin{equation}
\label{eqn:kern-prst}
0\to K(\bk)\to P_Z(\bk)\to P_Y(\bk)\to0.
\end{equation}

Let $M$ be a finitely generated $\bk$-module and let $\mathscr M:=\underline{M}_{\Gr_G^\lambda}[\langle2\rho,\lambda\rangle]$ be the shifted constant sheaf with stalk $M$ on $\Gr_G^\lambda$. From the truncation triangle
$$\cJ_*(\lambda,M)\to j_*\mathscr M
\to{}^p\tau_{>0}(j_*\mathscr M)\xrightarrow{[1]}$$
we get an embedding
$$\Ext^i_{\Db_{c,\GO}(Z,\bk)}(P_Y(\bk),\cJ_*(\lambda,M))\hookrightarrow\Ext^i_{\Db_{c,\GO}(Z,\bk)}(P_Y(\bk),j_*\mathscr M)$$
for $i\in\{0,1\}$, because $\Ext^{i-1}_{\Db_{c,\GO}(Z,\bk)}(P_Y(\bk),{}^p\tau_{>0}(j_*\mathscr M))=0$ for (perverse) degree reasons. Since, by adjunction,
$$\Ext^i_{\Db_{c,\GO}(Z,\bk)}(P_Y(\bk),j_*\mathscr M) \cong \Ext^i_{\Db_{c,\GO}(\Gr_G^\lambda,\bk)}(j^*(P_Y(\bk)),\mathscr M)=0,$$
we deduce that $\Ext^i_{\Per_{\GO}(Z,\bk)}(P_Y(\bk),\cJ_*(\lambda,M))=0$ for $i\in\{0,1\}$. Applying the functor
$$\Hom_{\Per_{\GO}(Z,\bk)}(\bm?,\cJ_*(\lambda,M))$$
to the exact sequence \eqref{eqn:kern-prst} and using this vanishing, we get an isomorphism
$$\Hom_{\Per_{\GO}(Z,\bk)}(P_Z(\bk),\cJ_*(\lambda,M))\cong
\Hom_{\Per_{\GO}(Z,\bk)}(K(\bk),\cJ_*(\lambda,M)),$$
and thus
\begin{align*}
\Hom_{\Db_c(\Gr_G^\lambda, \bk)}(j^*K(\bk),\mathscr M)
&\cong\Hom_{\Per_{\GO}(Z,\bk)}(K(\bk),{}^p\tau_{\leq0}(j_*\mathscr M))\\
&=\Hom_{\Per_{\GO}(Z,\bk)}(K(\bk),\cJ_*(\lambda,M))\\
&\cong\Hom_{\Per_{\GO}(Z,\bk)}(P_Z(\bk),\cJ_*(\lambda,M))\\
&\cong\F(\cJ_*(\lambda,M))\\
&\cong\F(\cJ_*(\lambda,\bk))\otimes_{\bk}M\tag*{by \eqref{eqn:std-sheaf-mod}}\\
&\cong\Hom_{\bk}(\F(\cJ_*(\lambda,\bk))^*,M)\tag*{by Proposition~\ref{prop:can-basis-k}.}
\end{align*}
Since $K(\bk)$ is an object of $\Per_{\GO}(Z,\bk)$ and $\Gr_G^\lambda$ is
open in $Z$, the restriction $j^*K(\bk)$ is a shifted local system on
$\Gr_G^\lambda$. From the isomorphism
$$\Hom_{\Db_c(\Gr_G^\lambda, \bk)}(j^*K(\bk),\mathscr M)\cong
\Hom_{\bk}(\F(\cJ_*(\lambda,\bk))^*,M)$$
proved above for any $M$, we deduce that
$$j^*K(\bk)\cong\F(\cJ_*(\lambda,\bk))^*\otimes_{\bk}\underline{\bk}_{\Gr_G^\lambda}[\langle2\rho,\lambda\rangle].$$
The adjunction map $j_!j^*K(\bk)\to K(\bk)$ then gives, after truncation
in nonnegative perverse degrees, a map
$$\alpha:\F(\cJ_*(\lambda,\bk))^*\otimes_{\bk}\cJ_!(\lambda,\bk)\to K(\bk)$$
in $\Per_{\GO}(Z,\bk)$ (see again \eqref{eqn:std-sheaf-mod}).

Since $j^*(\alpha)$ is an isomorphism,
the cokernel $C$ of $\alpha$ is supported on $Y$. Applying the functor
$\Hom_{\Per_{\GO}(Z,\bk)}(\bm?,C)$
to the exact sequence \eqref{eqn:kern-prst}, we get an exact sequence
\begin{multline*}
0\to\Hom_{\Per_{\GO}(Z,\bk)}(P_Y(\bk),C)
\xrightarrow\beta\Hom_{\Per_{\GO}(Z,\bk)}(P_Z(\bk),C)\\
\to\Hom_{\Per_{\GO}(Z,\bk)}(K(\bk),C)
\to\Ext^1_{\Per_{\GO}(Z,\bk)}(P_Y(\bk),C).
\end{multline*}
Since $C$ belongs to $\Per_{\GO}(Y,\bk)$, the map $\beta$ is an
isomorphism between two copies of $\F(C)$. Moreover, using~\eqref{eqn:Ext-heart} we have 
\begin{multline*}
 \Ext^1_{\Per_{\GO}(Z,\bk)}(P_Y(\bk),C) \cong \Ext^1_{\Db_{c,\GO}(Z,\bk)}(P_Y(\bk),C) \\
 \cong \Ext^1_{\Db_{c,\GO}(Y,\bk)}(P_Y(\bk),C) \cong \Ext^1_{\Per_{\GO}(Y,\bk)}(P_Y(\bk),C)=0
\end{multline*}
since $P_Y(\bk)$ is projective in $\Per_{\GO}(Y,\bk)$. It follows that
$\Hom_{\Per_{\GO}(Z,\bk)}(K(\bk),C)=0$, and therefore that $C=0$.
This shows that $\alpha$ is an epimorphism. We will see shortly
that it is in fact an isomorphism.

Let $K'(\bk)$ be the kernel of $\alpha$, so that we have an exact sequence
\begin{equation}
\label{eqn:syz-prst}
0\to K'(\bk)\to\F(\cJ_*(\lambda,\bk))^*\otimes_{\bk}\cJ_!(\lambda,\bk)\xrightarrow\alpha K(\bk)\to0
\end{equation}
in $\Per_{\GO}(Z,\bk)$. As for $C$ above, since $j^*(\alpha)$ is an isomorphism, $K'(\bk)$ is supported on $Y$.

Now we consider the case $\bk=\Z$.
As a consequence of Lemma~\ref{lem:costd-simple-over-Z} (see also the remarks in~\S\ref{ss:def-J!-J*}), the perverse sheaf $\cJ_!(\lambda,\Z)$ does not have any subobject supported on $Y$, and therefore $K'(\Z)=0$. Thus
$$K(\Z)\cong\F(\cJ_*(\lambda,\Z))^*\otimes_{\Z}\cJ_!(\lambda,\Z),$$
and from \eqref{eqn:kern-prst}, we easily get statement~\eqref{it:projective-struct-1} by induction in this case.

We come back to the general case. Since $\cJ_!(\lambda,\bk) \cong \bk \lotimes_\Z \cJ_!(\lambda,\Z)$ (see Proposition~\ref{prop:extension-scalars-J!-J*}), each object
$$\bk \lotimes_\Z \Bigl( \F(\cJ_*(\lambda,\Z))\otimes_{\Z}\cJ_!(\lambda,\Z) \Bigr) \cong \F(\cJ_*(\lambda,\bk))\otimes_{\bk}\cJ_!(\lambda,\bk)$$
is a perverse sheaf. The complex $\bk \lotimes_\Z P_Z(\Z)$ is thus an iterated extension (in the sense of triangulated categories) of perverse sheaves, and is therefore perverse. On the other hand, for each $\mathscr A\in\Per_{\GO}(Z,\bk)$, we have by~\cite[First formula in~(2.6.8)]{ks}
\begin{multline*}
\Hom_{\Per_{\GO}(Z,\bk)}(\bk \lotimes_\Z P_Z(\Z),\mathscr A)=
\Hom_{\Db_{c,\GO}(Z,\Z)}\bigl(P_Z(\Z),R\mathscr H\!om_\bk(\underline{\bk}_Z,\mathscr A)\bigr)\\
=\Hom_{\Per_{\GO}(Z,\Z)}(P_Z(\Z),\mathscr A)=\F(\mathscr A),
\end{multline*}
naturally in $\mathscr A$. Thus $P_Z(\bk)$ and $\bk \lotimes_\Z P_Z(\Z)$ both represent $\F$ on $\Per_{\GO}(Z,\bk)$, and therefore $P_Z(\bk)=\bk \lotimes_\Z P_Z(\Z)$, as claimed in
statement~\eqref{it:projective-struct-2}.

Using this description for $P_Z(\bk)$ and $P_Y(\bk)$ in
\eqref{eqn:kern-prst}, we get $K(\bk)=\bk \lotimes_\Z K(\Z)$. Turning to \eqref{eqn:syz-prst}, we see that $K'(\bk)=\bk \lotimes_\Z K'(\Z)$; and since $K'(\Z)=0$, we eventually get that $K'(\bk)=0$, or in other words that
$$K(\bk)\cong\F(\cJ_*(\lambda,\bk))^*\otimes_{\bk}\cJ_!(\lambda,\bk).$$
This information leads to statement~\eqref{it:projective-struct-1} by induction.

Finally, statement~\eqref{it:projective-struct-3} follows from the discussion above and Proposition~\ref{prop:can-basis-k} (since an extension of free $\Z$-modules is free).
\end{proof}


\subsection{Consequence: highest weight structure}

In this subsection we
assume that $\bk$ is a field. Recall the notion of highest weight category, whose definition is spelled out e.g.~in~\cite[Definition~7.1]{riche-hab}. (These conditions are obvious extensions of those considered in~\cite[\S 3.2]{bgs}, which are inspired by earlier work of Cline--Parshall--Scott~\cite{cps}.) Our goal in this subsection is to prove the following proposition.

\begin{prop}
\label{prop:highest-weight}
The category
$\Per_{\GO}(\Gr_G,\bk)$, together with the ``weight poset'' $(X_*(T)^+, \leq)$, the ``standard objects'' $\{ \cJ_!(\lambda,\bk) : \lambda \in X_*(T)^+\}$ and the ``costandard objects'' $\{ \cJ_*(\lambda,\bk) : \lambda \in X_*(T)^+\}$, is a highest weight category.
\end{prop}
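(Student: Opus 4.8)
The strategy is to verify the axioms of a highest weight category as spelled out in~\cite[Definition~7.1]{riche-hab}, using the structural results already established for the integral projectives $P_Z(\bk)$ (here $\bk$ is a field) and the standard/costandard objects $\cJ_!(\lambda,\bk)$ and $\cJ_*(\lambda,\bk)$. First I would recall that $\Per_{\GO}(\Gr_G,\bk)$ is a finite-length abelian category (each object being supported on a finite closed union of $\GO$-orbits, where the local structure is that of $\Mod_\bk$), so it is equivalent to $\bigcup_Z \Per_{\GO}(Z,\bk)$ over closed finite unions $Z$ of $\GO$-orbits; thus all questions reduce to the subcategories $\Per_{\GO}(Z,\bk)$, which are module categories over the finite-dimensional algebras $A_Z(\bk)=\End(P_Z(\bk))^{\mathrm{op}}$ via $\Hom(P_Z(\bk),\bm ?)$ (here $P_Z(\bk)$ is a projective generator by~\S\ref{ss:construction-proj}). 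The simple objects are the $\cJ_{!*}(\lambda,\bk)$ for $\lambda\in X_*(T)^+$, indexed by the poset $(X_*(T)^+,\leq)$; since $\cJ_!(\lambda,\bk)$ is obtained from $(j_\lambda)_!\underline{\bk}_{\Gr_G^\lambda}[\dim\Gr_G^\lambda]$ by $\pH^0$ and the latter is supported on $\overline{\Gr_G^\lambda}$ with restriction to $\Gr_G^\lambda$ equal to $\underline{\bk}_{\Gr_G^\lambda}[\dim\Gr_G^\lambda]$, the object $\cJ_!(\lambda,\bk)$ has top $\cJ_{!*}(\lambda,\bk)$ and all other composition factors $\cJ_{!*}(\mu,\bk)$ with $\mu<\lambda$; dually for $\cJ_*(\lambda,\bk)$. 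This already gives the "triangularity" of standard and costandard objects relative to the poset, as well as $\Hom(\cJ_!(\lambda,\bk),\cJ_*(\mu,\bk))=0$ unless $\lambda=\mu$ and $\End(\cJ_{!*}(\lambda,\bk))=\bk$ (using the characterization of $\cJ_!$ and $\cJ_*$ via adjunction from~\S\ref{ss:def-J!-J*}).

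Next I would establish the key vanishing $\Ext^i_{\Per_{\GO}(\Gr_G,\bk)}(\cJ_!(\lambda,\bk),\cJ_*(\mu,\bk))=0$ for $i>0$. Using~\eqref{eqn:Ext-heart} this is $\Hom_{\Db_{c,\GO}(\Gr_G,\bk)}(\cJ_!(\lambda,\bk),\cJ_*(\mu,\bk)[i])$; replacing $\cJ_!(\lambda,\bk)$ by $(j_\lambda)_!\underline{\bk}_{\Gr_G^\lambda}[\dim\Gr_G^\lambda]$ (legitimate since the difference ${}^p\tau_{<0}$ lives in perverse degrees $<0$ and $\cJ_*(\mu,\bk)[i]$ in perverse degrees $\geq i\geq 1$, killing the relevant $\Hom$'s by a truncation triangle argument) and then using adjunction $(j_\lambda)_!\dashv j_\lambda^!$ together with $j_\lambda^!\cJ_*(\mu,\bk)$: when $\lambda\neq\mu$ the object $\cJ_*(\mu,\bk)$ restricts to $0$ on $\Gr_G^\lambda$ if $\Gr_G^\lambda\not\subset\overline{\Gr_G^\mu}$, and if $\Gr_G^\lambda\subsetneq\overline{\Gr_G^\mu}$ its $!$-restriction is in strictly positive perverse degrees by the costandard analogue of~\eqref{eqn:characterization-IC}, while when $\lambda=\mu$ one gets $\coH^\bullet(\Gr_G^\lambda,\bk)$ in degree $i$, which vanishes for $i=1$ since $\Gr_G^\lambda$ is an affine bundle over a partial flag variety (so has cohomology only in even degrees and $\coH^1=0$). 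The Ext-vanishing for higher $i$ can be handled the same way, using that $\Gr_G^\lambda$ has cohomological dimension matching the bound forced by the perverse degree conditions; alternatively, since highest weight category axioms only require $\Ext^1$-type conditions together with the filtration property of projectives, I would lean on the latter.

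The final and most substantial point is to exhibit the projective cover $P(\lambda,\bk)$ of each simple $\cJ_{!*}(\lambda,\bk)$ and show it admits a $\cJ_!$-filtration (a "standard filtration") in which $\cJ_!(\lambda,\bk)$ occurs once at the top and all other subquotients are $\cJ_!(\mu,\bk)$ with $\mu\geq\lambda$. For this I would invoke Proposition~\ref{prop:projective-struct}\eqref{it:projective-struct-1}: for $Z$ large enough, $P_Z(\bk)$ has a filtration with subquotients $\F(\cJ_*(\mu,\bk))\otimes_\bk\cJ_!(\mu,\bk)$ indexed by $\{\mu:\Gr_G^\mu\subset Z\}$ with any total order refining $\leq$, ordered so that the subobject-end corresponds to smaller $\mu$. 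Decomposing $P_Z(\bk)$ into indecomposable summands, each indecomposable projective $P(\lambda,\bk)$ inherits a filtration whose subquotients are among the $\cJ_!(\mu,\bk)$; counting using $\Hom(P(\lambda,\bk),\cJ_{!*}(\nu,\bk))$ versus $\Hom(\cJ_!(\mu,\bk),\cJ_{!*}(\nu,\bk))$ and the $\Ext^1$-vanishing shows that the multiplicity of $\cJ_!(\mu,\bk)$ in $P(\lambda,\bk)$ equals $[\cJ_*(\mu,\bk):\cJ_{!*}(\lambda,\bk)]$ (BGG-type reciprocity), which is $0$ unless $\mu\geq\lambda$ and is $1$ for $\mu=\lambda$. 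The main obstacle I anticipate is precisely this bookkeeping: relating the $\F(\cJ_*(\mu,\bk))$-multiplicity spaces appearing in the filtration of $P_Z(\bk)$ to the indecomposable projective covers and verifying the filtration descends to each summand in a way compatible with the order on the poset (so that the top of $P(\lambda,\bk)$ is genuinely $\cJ_!(\lambda,\bk)$ surjecting onto $\cJ_{!*}(\lambda,\bk)$, and no $\cJ_!(\mu,\bk)$ with $\mu\not\geq\lambda$ sneaks in). Once this is settled, the dual statement for injective objects and $\cJ_*$-filtrations follows by applying the same analysis to the opposite category (or, when available, Verdier duality after passing to $\Z$ via Proposition~\ref{prop:extension-scalars-J!-J*}), and all the axioms of~\cite[Definition~7.1]{riche-hab} are in place.
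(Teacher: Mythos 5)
Your overall plan is correct in spirit and captures the key ingredients (reduction to finite subcategories $\Per_{\GO}(Z,\bk)$, the roles of $\cJ_!$ and $\cJ_*$, $\Ext^1$-vanishing, and a $\cJ_!$-filtration of the projective covers). But two concrete points deserve flagging.

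First, your route to the $\Ext^1$-vanishing differs from the paper's. You compute directly via the adjunction $(j_\lambda)_!\dashv j_\lambda^!$ after replacing $\cJ_!(\lambda,\bk)$ by $(j_\lambda)_!\underline{\bk}[\dim\Gr_G^\lambda]$; the paper instead uses~\eqref{eqn:Ext-heart} to reduce to $Z=\overline{\Gr_G^\lambda}\cup\overline{\Gr_G^\mu}$ and then notes that either $\cJ_!(\lambda,\bk)$ is projective there (if $\mu\not>\lambda$) or $\cJ_*(\mu,\bk)$ is injective (if $\lambda\not>\mu$). Both routes work for $\Ext^1$, but the paper's is shorter and dispenses with the explicit perverse-degree bookkeeping on $j_\lambda^!\cJ_*(\mu,\bk)$. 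However your truncation-triangle argument only yields an \emph{injection} $\Hom(\cJ_!(\lambda,\bk),\cJ_*(\mu,\bk)[1])\hookrightarrow\Hom((j_\lambda)_!\underline{\bk}[\dim],\cJ_*(\mu,\bk)[1])$ (not an isomorphism), and this replacement fails outright for $i\ge 2$: the term $\Hom({}^p\tau_{<0}(j_\lambda)_!\underline{\bk}[\dim][1],\cJ_*(\mu,\bk)[i])$ no longer automatically vanishes. Worse, even if it did, for $\lambda=\mu$ the right-hand side equals $\coH^i(\Gr_G^\lambda;\bk)$, which is nonzero for even $i\ge 2$. So the sentence ``the Ext-vanishing for higher $i$ can be handled the same way'' is wrong as a direct computation; you must follow your alternative plan (filtration of projectives), which is exactly what the paper does. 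Note also that~\cite[Definition~7.1]{riche-hab} only requires $\Ext^2$-vanishing beyond conditions (1)--(4), so no Ext-vanishing beyond degree $2$ is needed.

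Second, for the projective covers: you propose to decompose $P_Z(\bk)$ into indecomposables and deduce the $\cJ_!$-multiplicities by a BGG-reciprocity count. Be careful here: BGG reciprocity is normally a \emph{consequence} of the highest-weight structure, so this reasoning risks circularity unless you verify the multiplicity identity by hand first. The paper avoids this by giving a direct inductive construction of $Q_{Z,\lambda}$ (paralleling the induction in the proof of Proposition~\ref{prop:projective-struct}): it peels off one open orbit $\Gr_G^\nu\subset Z$ at a time, shows the corresponding piece $K'$ of~\eqref{eqn:kern-prst} restricted to the summand $Q_{Z,\lambda}$ is a direct sum of copies of $\cJ_!(\nu,\bk)$, and deduces that the quotient is the projective cover $Q_{Y,\lambda}$ over $Y:=Z\smallsetminus\Gr_G^\nu$ by an indecomposability (local endomorphism ring) argument. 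This gives the $\cJ_!$-filtration with $\cJ_!(\lambda,\bk)$ at the top directly, without invoking reciprocity. The remaining step, applying $\Hom(\bm?,\cJ_*(\mu,\bk))$ to $0\to R_{Z,\lambda}\to Q_{Z,\lambda}\to\cJ_!(\lambda,\bk)\to 0$ and using the already-established $\Ext^1$-vanishing against each $\cJ_!(\nu,\bk)$ together with projectivity of $Q_{Z,\lambda}$, then yields $\Ext^2(\cJ_!(\lambda,\bk),\cJ_*(\mu,\bk))=0$. You should replace the BGG-reciprocity step with this direct construction.
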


\begin{proof}
 Condition~(1) in~\cite[Definition~7.1]{riche-hab} is obvious, and conditions~(2)--(4) are easily checked using adjunction and the general theory of perverse sheaves. Hence to conclude it suffices to prove that for any $\lambda,\mu \in X_*(T)^+$ we have $\Ext^2_{\Per_{\GO}(\Gr_G,\bk)}(\cJ_!(\lambda,\bk), \cJ_*(\mu,\bk))=0$. And for this it suffices to prove that for any finite closed union of $\GO$-orbits $Z \subset \Gr_G$ containing $\Gr_G^\lambda$ and $\Gr_G^\mu$ we have $\Ext^2_{\Per_{\GO}(Z,\bk)}(\cJ_!(\lambda,\bk), \cJ_*(\mu,\bk))=0$. Before that, let us note that we have
 \begin{equation}
 \label{eqn:Ext1-I!-I*}
  \Ext^1_{\Per_{\GO}(Z,\bk)}(\cJ_!(\lambda,\bk), \cJ_*(\mu,\bk))=0.
 \end{equation}
In fact, using~\eqref{eqn:Ext-heart} we can assume that $Z=\overline{\Gr_G^\lambda} \cup \overline{\Gr_G^\mu}$. Then the vanishing follows from the fact that either $\cJ_!(\lambda,\bk)$ is projective (if $\mu \not > \lambda$) or $\cJ_*(\mu,\bk)$ is injective (if $\lambda \not > \mu$) in $\Per_{\GO}(\overline{\Gr_G^\lambda} \cup \overline{\Gr_G^\mu},\bk)$.

 We denote by $Q_{Z,\lambda}$ the projective cover of the simple object $\cJ_{!*}(\lambda, \bk)$ in the abelian category $\Per_{\GO}(Z,\bk)$. (This category is equivalent to the category of finite-dimensional modules over a finite-dimensional $\bk$-algebra, see~\S\ref{ss:abel-recons} below for details; in particular we can indeed consider projective covers.) We claim that $Q_{Z,\lambda}$ has a filtration with $\cJ_!(\lambda,\bk)$ at the top and with subquotients of the form $\cJ_!(\nu, \bk)$ for some $\nu$'s in $X_*(T)^+$.
 
 This property is true if $\Gr_G^\lambda$ is open in $Z$, since then $Q_{Z,\lambda}=\cJ_!(\lambda,\bk)$ by condition~(3) in the definition of a highest weight category. When $\Gr_G^\lambda$ is not open in $Z$, we proceed along the lines of the proof of Proposition~\ref{prop:projective-struct}. We note that $Q_{Z,\lambda}$ is a direct summand of $P_Z(\bk)$, for the latter is a projective generator of $\Per_{\GO}(Z,\bk)$. Let $\Gr_G^\nu \subset Z$ be an open $\GO$-orbit and set $Y:=Z \smallsetminus \Gr_G^\nu$. The short exact sequence~\eqref{eqn:kern-prst} then induces a short exact sequence
$$0\to K'\to Q_{Z,\lambda}\to Q'_{Y,\lambda}\to0.$$
Here $Q'_{Y,\lambda}:=i_* \pH^0(i^* Q_{Z,\lambda})$, and $K'$ is a direct summand of the sheaf $K(\bk)$ in~\eqref{eqn:kern-prst}, which is a direct sum of copies of $\cJ_!(\nu, \bk)$. 
Since the perverse sheaf $\cJ_!(\nu,\bk)$ is indecomposable,
$K'$ must also be a direct sum of copies of $\cJ_!(\nu, \bk)$. Further, there is no nonzero map $\cJ_!(\nu, \bk)\to\cJ_{!*}(\lambda, \bk)$ (see~\S\ref{ss:def-J!-J*}), so the kernel of the covering map
$$Q_{Z,\lambda}\twoheadrightarrow\cJ_{!*}(\lambda, \bk)$$
contains $K'$, whence a surjective map
$$Q'_{Y,\lambda}\twoheadrightarrow\cJ_{!*}(\lambda, \bk).$$
Moreover $Q'_{Y,\lambda}$ is a direct summand of the term $P_Y$ appearing in~\eqref{eqn:kern-prst}, so is a projective object of $\Per_{\GO}(Y,\bk)$. Lastly, the projectivity of $Q_{Z,\lambda}$ gives a surjective map
$$\Hom_{\Per_{\GO}(Z,\bk)}(Q_{Z,\lambda},Q_{Z,\lambda})\twoheadrightarrow\Hom_{\Per_{\GO}(Z,\bk)}(Q_{Z,\lambda},Q'_{Y,\lambda}),$$
and since by adjunction we can identify
\begin{align*}
\Hom_{\Per_{\GO}(Z,\bk)}(Q_{Z,\lambda},Q'_{Y,\lambda})
&=\Hom_{\Per_{\GO}(Z,\bk)}(Q_{Z,\lambda},i_*\pH^0(i^*Q_{Z,\lambda}))\\
&\cong\Hom_{\Per_{\GO}(Y,\bk)}(\pH^0(i^*Q_{Z,\lambda}),\pH^0(i^*Q_{Z,\lambda}))\\
&=\Hom_{\Per_{\GO}(Y,\bk)}(Q'_{Y,\lambda},Q'_{Y,\lambda}),
\end{align*}
we obtain the existence of a surjective ring homomorphism
$$\End_{\Per_{\GO}(Z,\bk)}(Q_{Z,\lambda})\twoheadrightarrow\End_{\Per_{\GO}(Y,\bk)}(Q'_{Y,\lambda}).$$
Therefore $Q'_{Y,\lambda}$ has a local endomorphism ring, so is indecomposable. We finally conclude that $Q'_{Y,\lambda}$ can be identified with the projective cover $Q_{Y,\lambda}$ of $\cJ_{!*}(\lambda, \bk)$ in $\Per_{\GO}(Y,\bk)$. To sum up, we have a short exact sequence
$$0\to K'\to Q_{Z,\lambda}\to Q_{Y,\lambda}\to0,$$
where $K'$ is a direct sum of copies of $\cJ_!(\nu, \bk)$. Our claim now easily follows by induction on the number of $\GO$-orbits in $Z$.

At this point, we have shown the existence of a short exact sequence
 \[
  0 \to R_{Z,\lambda} \to Q_{Z,\lambda} \to \cJ_!(\lambda, \bk) \to 0
 \]
 such that $R_{Z,\lambda}$ admits a filtration with subquotients of the form $\cJ_!(\nu, \bk)$ for some $\nu$'s in $X_*(T)^+$. We then consider the exact sequence
 \[
  \Ext^1_{\Per_{\GO}(Z,\bk)}(R_{Z,\lambda},\cJ_*(\mu,\bk)) \to \Ext^2_{\Per_{\GO}(Z,\bk)}(\cJ_!(\lambda,\bk), \cJ_*(\mu,\bk)) \to \Ext^2_{\Per_{\GO}(Z,\bk)}(Q_{Z,\lambda}, \cJ_*(\mu, \bk))
 \]
 obtained by applying the functor $\Hom(\bm ?, \cJ_*(\mu,\bk))$ to this exact sequence. Here the first term vanishes because $\Ext^1_{\Per_{\GO}(Z,\bk)}(\cJ_!(\nu,\bk), \cJ_*(\mu,\bk))=0$ for any $\nu$, and the third term vanishes because $Q_\lambda$ is projective. We deduce the desired vanishing, and finally the proposition.
\end{proof}

\section{Construction of the group scheme}
\label{sec:construction}

In this section, we construct an affine $\bk$-group scheme $\widetilde G_{\bk}$
and an equivalence of monoidal categories $\Sat$ from $\Per_{\GO}(\Gr_G,\bk)$ to the category $\Rep_{\bk}(\widetilde G_{\bk})$ of representations of this group scheme on finitely generated $\bk$-modules. Along the way, we will show that the function algebra $\Z\left[\widetilde G_{\Z}\right]$ is a free $\Z$-module and that $\bk\left[\widetilde G_{\bk}\right] \cong \bk \otimes_{\Z} \Z\left[\widetilde G_{\Z}\right]$. (These facts will play a key role in Section~\ref{sec:identification} below.)

\subsection{Abelian reconstruction}
\label{ss:abel-recons}

Let us recall the following variant of Gabriel and Mitchell's theorem. Here we will denote by $\mathrm{mod}_\bk$ the category of \textit{all} (i.e.~not necessarily finitely generated) $\bk$-modules.

\begin{prop}
\label{prop:gabriel-mitchell}
Let $\mathscr C$ be a $\bk$-linear abelian category. Let $P$ be a projective object and let $A=\End_{\mathscr C}(P)$. Let $\mathrm{Modfp}_P$ be the full subcategory of $\mathscr C$ consisting of those objects that admit a presentation of the form $P_1\to P_0\to M\to0$, where $P_1$ and $P_0$ are direct sums of finitely many copies of $P$. Let also $\mathrm{Modfp}^r_A$ be the category of finitely presented right $A$-modules.
\begin{enumerate}
\item
\label{it:gm-1}
The functor $G=\Hom_{\mathscr C}(P,\bm?)$ defines an equivalence of categories from $\mathrm{Modfp}_P$ to $\mathrm{Modfp}^r_A$.
\item
\label{it:gm-2}
The endomorphism ring of the functor $G : \mathrm{Modfp}_P \to \mathrm{mod}_\bk$ is canonically isomorphic to $A^{\mathrm{op}}$.
\end{enumerate}
\end{prop}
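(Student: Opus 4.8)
\textbf{Proof strategy for Proposition~\ref{prop:gabriel-mitchell}.}
The plan is to adapt the classical Gabriel--Mitchell / Morita-theoretic argument, keeping careful track of the finiteness conditions. First I would establish part~\eqref{it:gm-1} by exhibiting an explicit quasi-inverse to $G$. Since $A = \End_{\mathscr C}(P)$ acts on $P$ on the left (we view $P$ as a left $A$-object, or rather use the convention that makes $G(P) = A$ a right $A$-module), for a finitely presented right $A$-module $M$ with presentation $A^m \xrightarrow{f} A^n \to M \to 0$ one defines $F(M)$ to be the cokernel of the corresponding morphism $P^{\oplus m} \to P^{\oplus n}$; here one uses that $\Hom_{\mathscr C}(P^{\oplus m}, P^{\oplus n}) = \mathrm{Mat}_{n \times m}(A)$, so $f$ genuinely determines a morphism of objects of $\mathscr C$. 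One checks $F$ is well-defined up to canonical isomorphism (independence of the chosen presentation, using projectivity of $P$ to lift comparison maps), and that it lands in $\mathrm{Modfp}_P$.

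Next I would verify that $G$ and $F$ are mutually quasi-inverse. The key computation is that $G(P^{\oplus n}) = \Hom_{\mathscr C}(P, P^{\oplus n}) \cong A^{\oplus n}$ naturally, together with the fact that $G$ is exact \emph{on the class of maps between sums of copies of $P$} and more generally sends a presentation $P^{\oplus m} \to P^{\oplus n} \to M \to 0$ in $\mathrm{Modfp}_P$ to the corresponding presentation of right $A$-modules --- this last point is exactly where projectivity of $P$ is used: applying $\Hom_{\mathscr C}(P, -)$ to the exact sequence $P^{\oplus m} \to P^{\oplus n} \to M \to 0$ stays exact on the right because $P^{\oplus n}$ is projective, hence the image of $G$ is computed as a cokernel, matching $F \circ G(M) \cong M$. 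The reverse composite $G \circ F(M) \cong M$ follows similarly. Naturality in both directions is a routine diagram chase. This yields the equivalence of \eqref{it:gm-1}.

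For part~\eqref{it:gm-2}, I would argue that an endomorphism $\vartheta$ of the functor $G : \mathrm{Modfp}_P \to \mathrm{mod}_\bk$ is determined by its value $\vartheta_P \in \End_\bk(G(P)) = \End_\bk(A)$, because every object of $\mathrm{Modfp}_P$ is (canonically, functorially) a cokernel of a map between finite sums of copies of $P$, and $G$ sends such a presentation to a presentation by free $A$-modules; naturality of $\vartheta$ with respect to all morphisms $P \to P$ forces $\vartheta_P$ to be right $A$-linear, i.e.\ $\vartheta_P \in \End_{A}(A_A) \cong A^{\mathrm{op}}$ (acting by left multiplication), and naturality with respect to the structure maps of presentations then pins down $\vartheta$ on all of $\mathrm{Modfp}_P$. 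Conversely any element of $A^{\mathrm{op}}$, acting by left multiplication on each $G(M) = \Hom_{\mathscr C}(P,M)$ via precomposition with the corresponding element of $\End_{\mathscr C}(P)$, gives a natural endomorphism of $G$. One checks these two assignments are mutually inverse ring homomorphisms (the order reversal being precisely what produces $A^{\mathrm{op}}$ rather than $A$), which is \eqref{it:gm-2}.

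The main obstacle I anticipate is purely bookkeeping rather than conceptual: getting the left/right module conventions and the resulting $\mathrm{op}$'s consistent throughout (the paper's dictionary in~\S\ref{ss:alg-coalg} fixes one set of conventions, and $G = \Hom_{\mathscr C}(P,-)$ must be made into a right-$A$-module-valued functor in a way compatible with it), and being careful that ``finitely presented'' is preserved in both directions --- note in particular that one does \emph{not} need $A$ to be Noetherian for the statement about $\mathrm{Modfp}$, which is why the proposition is phrased with finitely presented modules rather than finitely generated ones. Everything else is the standard Eilenberg--Watts / Morita template specialized to a single projective (not necessarily a generator, and not necessarily with $\mathscr C$ having all colimits), so no new idea beyond careful use of projectivity of $P$ is required.
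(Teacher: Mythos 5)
Your strategy matches the paper's, which simply cites Auslander--Reiten--Smal\o{} (Proposition~II.2.5) for part~(1) and observes that part~(2) is proved just as the analogous claim in Proposition~\ref{prop:reconstruction} --- i.e., exactly the Morita/Eilenberg--Watts template you sketch. Two small slips are worth ironing out, though neither affects the conclusion. In~(1), the reason $\Hom_{\mathscr C}(P,-)$ carries a presentation $P^{\oplus m}\to P^{\oplus n}\to M\to 0$ to an exact sequence $A^m\to A^n\to G(M)\to 0$ is that $P$ itself is projective, so $G$ is exact; projectivity of $P^{\oplus n}$, which you invoke, is not the operative fact. In~(2), with $A=\End_{\mathscr C}(P)$ carrying the composition product and $G(M)=\Hom_{\mathscr C}(P,M)$ made a right $A$-module via precomposition, a morphism $a:P\to P$ in $\mathscr C$ induces $G(a)=(a\circ\bm?)$ on $G(P)=A$, i.e.\ \emph{left} multiplication by $a$. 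Naturality with respect to these therefore forces $\vartheta_P$ to be \emph{left} $A$-linear, and $\End_A({}_AA)\cong A^{\mathrm{op}}$ via right multiplications; whereas you wrote ``right $A$-linear'' and ``$\End_A(A_A)\cong A^{\mathrm{op}}$,'' but in fact $\End_A(A_A)\cong A$ (via left multiplications, a genuine ring isomorphism). You nonetheless land on the correct $\End(G)\cong A^{\mathrm{op}}$, and your converse direction --- sending $b\in A$ to the precomposition endomorphism $\varphi\mapsto\varphi\circ b$, an anti-homomorphism $A\to\End(G)$ --- is right; the two halves of~(2) just need to be made to use the same convention on which side of $A$ the naturality constraint lives.
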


\begin{proof}
Statement~\eqref{it:gm-1} is proved as in \cite[Proposition~II.2.5]{auslander-reiten-smalo}. 
The proof of~\eqref{it:gm-2} is similar to that of the corresponding claim in Proposition~\ref{prop:reconstruction}.
\end{proof}

Let $Z$ be a closed subset of $\Gr_G$, union of finitely many $\GO$-orbits. As we mentioned at the end of~\S\ref{ss:construction-proj}, each object in $\Per_{\GO}(Z,\bk)$ is a quotient of a module $P_Z(\bk)^n$, so each object $\mathscr A\in\Per_{\GO}(Z,\bk)$ admits a presentation of the form $P_1\to P_0\to\mathscr A\to0$ with $P_1$ and $P_0$ isomorphic to direct sums of finitely many copies of $P_Z(\bk)$. Moreover, the ring
\[
 A_Z(\bk):=\End_{\Per_{\GO}(Z,\bk)}(P_Z(\bk))^{\mathrm{op}}
\]
is a finitely generated $\bk$-module, hence is left Noetherian, so that each finitely generated left $A_Z(\bk)$-module is finitely presented. In the present situation, Proposition~\ref{prop:gabriel-mitchell} thus states that the functor $\F=\Hom_{\Per_{\GO}(Z,\bk)}(P_Z(\bk),\bm?)$ induces an equivalence of categories $\Sat_Z$, as depicted on the following diagram:
$$\xymatrix{\Per_{\GO}(Z,\bk)\ar[dr]_{\F}\ar[rr]_-{\sim}^-{\Sat_Z}&&\Mod_{A_Z(\bk)}\ar[dl]^{\text{forget}}\\&\Mod_{\bk}.&}$$

Let $i:Y\hookrightarrow Z$ be the inclusion of a closed subset, union of (finitely many) $\GO$-orbits. The perverse restriction functor
\[
{}^pi^* = \pH^0 \bigl(i^* (\bm ?) \bigr) : \Per_{\GO}(Z,\bk)\to\Per_{\GO}(Y,\bk)
\]
is left adjoint to the extension-by-zero functor $i_*:\Per_{\GO}(Y,\bk)\to\Per_{\GO}(Z,\bk)$. Further, this functor sends $P_Z(\bk)$ to $P_Y(\bk)$ (see Proposition~\ref{prop:projective-restr}) and thus induces a morphism of algebras $f_Y^Z$ from $A_Z(\bk)=\End_{\Per_{\GO}(Z,\bk)}(P_Z(\bk))^{\mathrm{op}}$ to $A_Y(\bk)=\End_{\Per_{\GO}(Y,\bk)}(P_Y(\bk))^{\mathrm{op}}$. By functoriality and adjointness, for each $\mathscr A\in\Per_{\GO}(Y,\bk)$, the action of an element $a\in A_Z(\bk)$ on
$$\Sat_Z(\mathscr A)=\Hom_{\Per_{\GO}(Z,\bk)}(P_Z(\bk),i_*\mathscr A)$$
coincides with the action of $f_Y^Z(a)\in A_Y(\bk)$ on
$$\Sat_Y(\mathscr A)=\Hom_{\Per_{\GO}(Y,\bk)}(P_Y(\bk),\mathscr A).$$
As a consequence, the diagram
$$\xymatrix{\Per_{\GO}(Y,\bk)\ar[d]_{i_*}\ar[rr]^{\Sat_Y}&&\Mod_{A_Y(\bk)}\ar[d]^{(f_Y^Z)^*}\\\Per_{\GO}(Z,\bk)\ar[dr]_{\F}\ar[rr]^{\Sat_Z}&&\Mod_{A_Z(\bk)}\ar[dl]^{\text{forget}}\\&\Mod_{\bk}&}$$
commutes, where $(f_Y^Z)^*$ is the restriction-of-scalars functor associated with $f_Y^Z$.

Since $A_Z(\bk) \cong \F(P_Z(\bk))$ is a finitely generated free $\bk$-module (see Proposition~\ref{prop:projective-struct}\eqref{it:projective-struct-3}), the same dictionary as the one set up in~\S\ref{ss:alg-coalg} can be used in the present context. Namely, we may  endow the dual $\bk$-module
\[
B_Z(\bk):=\Hom_\bk(A_Z(\bk),\bk)
\]
with the structure of a $\bk$-coalgebra and identify the category $\Mod_{A_Z(\bk)}$ with the category $\Comod_{B_Z(\bk)}$ of right $B_Z(\bk)$-comodules that are finitely generated over $\bk$. The dual of the algebra map $f_Y^Z:A_Z(\bk)\to A_Y(\bk)$ is a coalgebra map $B_Y(\bk)\to B_Z(\bk)$, and we can consider the limit $B(\bk)$ of the directed system of coalgebras thus constructed (over the poset of closed finite unions of $\GO$-orbits under inclusion).

\begin{prop}
\label{prop:base-change-B}
The $\Z$-module $B(\Z)$ is free, and we have a canonical isomorphism of $\bk$-coalgebras $B(\bk) \cong \bk \otimes_{\Z} B(\Z)$.
\end{prop}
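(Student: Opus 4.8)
\textbf{Proof plan for Proposition~\ref{prop:base-change-B}.}
The strategy is to reduce everything to the corresponding statements for the algebras $A_Z(\bk)$, which we control via Proposition~\ref{prop:projective-struct}. First I would recall that for each closed finite union of $\GO$-orbits $Z$ we have $A_Z(\bk) = \End_{\Per_{\GO}(Z,\bk)}(P_Z(\bk))^{\mathrm{op}}$ and that, as a $\bk$-module, $A_Z(\bk) \cong \F(P_Z(\bk))$, which by Proposition~\ref{prop:projective-struct}\eqref{it:projective-struct-3} is finitely generated and free, with $\F(P_Z(\bk)) \cong \bk \otimes_\Z \F(P_Z(\Z))$. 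The heart of the argument is a base-change statement at the level of endomorphism algebras: I claim there is a canonical isomorphism of $\bk$-algebras $A_Z(\bk) \cong \bk \otimes_\Z A_Z(\Z)$. To prove this, I would use $P_Z(\bk) \cong \bk \lotimes_\Z P_Z(\Z)$ (Proposition~\ref{prop:projective-struct}\eqref{it:projective-struct-2}) together with the adjunction computation already used in the proof of that proposition, namely $\Hom_{\Per_{\GO}(Z,\bk)}(\bk \lotimes_\Z P_Z(\Z), \mathscr{A}) \cong \Hom_{\Per_{\GO}(Z,\Z)}(P_Z(\Z), \mathscr{A})$ from~\cite[(2.6.8)]{ks}, applied with $\mathscr{A} = P_Z(\bk) \cong \bk \lotimes_\Z P_Z(\Z)$. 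Since $\F(P_Z(\Z))$ is free over $\Z$, one gets $\Hom_{\Per_{\GO}(Z,\Z)}(P_Z(\Z), \bk \lotimes_\Z P_Z(\Z)) \cong \bk \otimes_\Z \Hom_{\Per_{\GO}(Z,\Z)}(P_Z(\Z), P_Z(\Z))$, and one checks that this identification is compatible with the (opposite) algebra structures, giving the desired isomorphism of algebras. One also needs to check that for an inclusion $i : Y \hookrightarrow Z$ the transition maps are compatible with base change, i.e.~that the square relating $f_Y^Z$ over $\bk$ and over $\Z$ commutes; this follows from the fact that ${}^p i^*$ commutes with $\bk \lotimes_\Z(\bm ?)$ on the relevant projective objects, which is immediate from Proposition~\ref{prop:projective-restr} and~\eqref{it:projective-struct-2}.

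Next I would dualize. Since $A_Z(\bk)$ is finitely generated and free over $\bk$, the $\bk$-module $B_Z(\bk) = \Hom_\bk(A_Z(\bk),\bk)$ is also finitely generated and free, and the formation of $\bk$-linear duals of finitely generated free modules commutes with base change: $B_Z(\bk) = \Hom_\bk(A_Z(\bk),\bk) \cong \Hom_\bk(\bk \otimes_\Z A_Z(\Z), \bk) \cong \bk \otimes_\Z \Hom_\Z(A_Z(\Z),\Z) = \bk \otimes_\Z B_Z(\Z)$. One should verify that this identification is an isomorphism of $\bk$-coalgebras; this is a formal consequence of the fact that the coalgebra structure on $B_Z(\bk)$ is obtained by transposing the (base-changed) algebra structure on $A_Z(\bk)$, so the comultiplication and counit on $B_Z(\bk)$ are obtained from those on $B_Z(\Z)$ by applying $\bk \otimes_\Z(\bm ?)$. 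Likewise the transition maps $B_Y(\bk) \to B_Z(\bk)$ are obtained from $B_Y(\Z) \to B_Z(\Z)$ by base change, since they are the $\bk$-duals of $f_Y^Z$ and $f_Y^Z$ itself is a base change of the integral map.

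Finally I would pass to the colimit. By definition $B(\bk) = \varinjlim_Z B_Z(\bk)$, the colimit running over the poset of finite closed unions of $\GO$-orbits under inclusion. Since $\bk \otimes_\Z(\bm ?)$ is a left adjoint, it commutes with colimits, so
\[
\bk \otimes_\Z B(\Z) = \bk \otimes_\Z \varinjlim_Z B_Z(\Z) \cong \varinjlim_Z \bigl( \bk \otimes_\Z B_Z(\Z) \bigr) \cong \varinjlim_Z B_Z(\bk) = B(\bk),
\]
and this chain of isomorphisms is compatible with the coalgebra structures (a filtered colimit of coalgebras, computed on underlying modules, carries a canonical coalgebra structure, and the identifications above respect it). For freeness of $B(\Z)$: each $B_Z(\Z)$ is free over $\Z$, and one checks that the transition maps $B_Y(\Z) \to B_Z(\Z)$ are split injections of $\Z$-modules. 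Indeed, dually, the maps $f_Y^Z : A_Z(\Z) \to A_Y(\Z)$ are surjective (this is how they arise, from ${}^p i^* P_Z(\Z) \cong P_Y(\Z)$ being a quotient of $P_Z(\Z)$, cf.~Proposition~\ref{prop:projective-restr}), and a surjection of finitely generated free $\Z$-modules splits, whence its $\Z$-dual is a split injection of free $\Z$-modules. A filtered colimit of free modules along split injections is free, so $B(\Z)$ is free over $\Z$.

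\textbf{Expected main obstacle.}
The routine part is the colimit step and the dualization; the point that requires genuine care is the algebra base-change isomorphism $A_Z(\bk) \cong \bk \otimes_\Z A_Z(\Z)$ \emph{as algebras} (not merely as $\bk$-modules), and in particular verifying that the $\Hom$-computation via~\cite[(2.6.8)]{ks} is multiplicative and natural in $Z$. One must be careful that the isomorphism $\Hom_{\Per_{\GO}(Z,\bk)}(P_Z(\bk),P_Z(\bk)) \cong \bk \otimes_\Z \Hom_{\Per_{\GO}(Z,\Z)}(P_Z(\Z),P_Z(\Z))$ intertwines composition with the obvious product on the right-hand side — this uses that $P_Z(\bk) \cong \bk \lotimes_\Z P_Z(\Z)$ \emph{as objects}, so that composition of morphisms is computed compatibly. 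An alternative, perhaps cleaner, route that avoids some of this bookkeeping is to work throughout with the comodule/coalgebra side directly: identify $B_Z(\bk)$ as the coefficient coalgebra of the $\bk$-linear exact faithful functor $\F$ restricted to $\Per_{\GO}(Z,\bk)$ (as in Example~\ref{ex:reconstruction} and Proposition~\ref{prop:gabriel-mitchell}), and then deduce base change from the fact that $\Per_{\GO}(Z,\bk)$, $P_Z(\bk)$ and $\F$ are all compatible with $\bk \lotimes_\Z(\bm ?)$ by Propositions~\ref{prop:projective-struct} and~\ref{prop:extension-scalars-J!-J*}. Either way, the freeness of $\F(P_Z(\Z))$ over $\Z$ is what makes all the tensor-product manipulations legitimate, and it is the single input that the whole argument rests on.
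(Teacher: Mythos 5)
Your proof is correct and follows the same route as the paper's, which is extremely terse: it simply cites Proposition~\ref{prop:projective-struct}\eqref{it:projective-struct-1} (and its proof) for the freeness of $B(\Z)$, and Proposition~\ref{prop:projective-struct}\eqref{it:projective-struct-3} for the base-change isomorphism. You have correctly identified the two key inputs hiding behind that citation—namely that $\F(P_Z(\Z))$ is a finitely generated free $\Z$-module with $\F(P_Z(\bk)) \cong \bk \otimes_\Z \F(P_Z(\Z))$, and that the transition kernels $K(\Z)$ computed in the proof of Proposition~\ref{prop:projective-struct} are free, so the dual maps $B_Y(\Z) \to B_Z(\Z)$ are split monomorphisms of free $\Z$-modules. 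You also rightly flag the two points that the paper's ``follows directly'' glosses over: multiplicativity of the base-change isomorphism $A_Z(\bk) \cong \bk \otimes_\Z A_Z(\Z)$ (needed so that the dual isomorphism respects the coalgebra structures), and naturality in $Z$ (needed so that the colimit identification is coalgebraic). Your treatment of both via~\cite[(2.6.8)]{ks} and Proposition~\ref{prop:projective-restr} is sound.

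One small point worth tightening: the general assertion ``a filtered colimit of free modules along split injections is free'' is not self-evident for an arbitrary filtered poset. Here, however, the index poset of closed finite unions of $\GO$-orbits is countable and directed, so it admits a cofinal chain $Z_1 \subset Z_2 \subset \cdots$; along such a chain the split injections $B_{Z_n}(\Z) \hookrightarrow B_{Z_{n+1}}(\Z)$ exhibit $B(\Z)$ as a countable direct sum of free $\Z$-modules, hence free. You should restrict the claim to that setting (or explicitly invoke the cofinal chain) rather than state it as a general principle.
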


\begin{proof}
The freeness assertion follows from Proposition~\ref{prop:projective-struct}\eqref{it:projective-struct-1} and its proof. The second assertion follows directly from Proposition~\ref{prop:projective-struct}\eqref{it:projective-struct-3}.
\end{proof}

We eventually get an equivalence of abelian categories $\Sat$ and a commutative diagram
$$\xymatrix{\Per_{\GO}(\Gr_G,\bk)\ar[rr]_-{\sim}^{\Sat}\ar[dr]_{\F}&&\Comod_{B(\bk)}\ar[dl]^{\text{forget}}\\&\Mod_{\bk}.&}$$

\subsection{Tannakian reconstruction}
\label{ss:tannaka-recons}

We now want to endow $B(\bk)$ with the structure of a Hopf algebra, and upgrade $\Sat$ to an equivalence of monoidal categories.

For $\lambda\in X_*(T)^+$, we set $Z_\lambda:=\overline{\Gr_G^\lambda}$ and we shorten the notation $P_{Z_\lambda}(\bk)$, $A_{Z_\lambda}(\bk)$ and $B_{Z_\lambda}(\bk)$ to respectively $P_\lambda(\bk)$, $A_\lambda(\bk)$ and $B_\lambda(\bk)$. We note that for $\lambda,\mu \in X_*(T)^+$, the perverse sheaf $\mathscr A \star \mathscr B$ belongs to $\Per_{\GO}(Z_{\lambda+\mu},\bk)$ whenever $\mathscr A\in\Per_{\GO}(Z_\lambda,\bk)$ and $\mathscr B \in\Per_{\GO}(Z_\mu,\bk)$.

An element $a\in A_{\lambda+\mu}(\bk)$ defines an endomorphism of the bifunctor
$$\Hom_{\Per_{\GO}(Z_{\lambda+\mu},\bk)}(P_{\lambda+\mu}(\bk),\bm? \star \bm?):\Per_{\GO}(Z_\lambda,\bk)\times\Per_{\GO}(Z_\mu,\bk)\to\Mod_{\bk}.$$
Now since $\F$ is a tensor functor, we have a canonical isomorphism of bifunctors
\begin{align*}
\Hom_{\Per_{\GO}(Z_{\lambda+\mu},\bk)}(P_{\lambda+\mu}(\bk),\bm? \star \bm?)&\cong\F(\bm? \star \bm?)\\&\cong\F(\bm?)\otimes_{\bk}\F(\bm?)\\&\cong\Hom_{\Per_{\GO}(Z_\lambda,\bk)}(P_\lambda(\bk),\bm?)\otimes_{\bk}\Hom_{\Per_{\GO}(Z_\mu,\bk)}(P_\mu(\bk),\bm?).
\end{align*}
By an immediate generalization of Proposition~\ref{prop:gabriel-mitchell}\eqref{it:gm-2}, our element $a$ thus defines an element of the ring $A_\lambda(\bk)\otimes_{\bk}A_\mu(\bk)$. This leads to 
a ring homomorphism
$$A_{\lambda+\mu}(\bk)\to A_\lambda(\bk)\otimes_{\bk}A_\mu(\bk).$$
Dualizing, we get a coalgebra map
$$B_\lambda(\bk)\otimes_{\bk}B_\mu(\bk)\to B_{\lambda+\mu}(\bk).$$
Taking the limit of these maps over $\lambda$ and $\mu$, this construction provides
a multiplication map on $B(\bk)$, which can be seen to be associative and commutative.

On the other hand, it is clear that $B_0(\bk)=\bk$, so that the natural morphism $B_0(\bk) \to B(\bk)$ defines a canonical element in $B(\bk)$ which is easily seen to be a unit. Altogether, we have thus constructed
a bialgebra structure on $B(\bk)$. Since our construction is based on natural transformations of functors, the functor $\Sat$ is easily seen to be compatible with the monoidal structures.

If we set
\[
\widetilde{G}_\bk := \Spec(B(\bk)),
\]
then the bialgebra structure on $B(\bk)$ translates to a structure of monoid scheme on $\widetilde{G}_\bk$. To conclude, what remains to show is that $B(\bk)$ admits an antipode, or in other words that $\widetilde{G}_\bk$ is a group scheme. Since, by Proposition~\ref{prop:base-change-B}, we have
\begin{equation}
\label{eqn:base-change-tG}
\widetilde{G}_\bk \cong \Spec(\bk) \times_{\Spec(\Z)} \widetilde{G}_\Z,
\end{equation}
it suffices to prove this when $\bk=\Z$. This will be done in Proposition~\ref{prop:tG-group} below.

\begin{lem}
\label{lem:comodules-rank-1}
Assume that $\bk=\Z$. If $M$ is an object of $\Per_{\GO}(\Gr_G,\Z)$ such that $\F(M)$ is free of rank $1$, then there exists $M^*$ in $\Per_{\GO}(\Gr_G,\Z)$ such that $M \star M^*$ is the unit object.
\end{lem}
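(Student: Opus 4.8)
The plan is to use the criterion from condition~\eqref{it:inverses} of Theorem~\ref{thm:tannakina-reconstruction} (phrased there for fields, but in the present integral setting we argue directly with comodules and then invoke the Tannakian formalism appropriately). More concretely, I would first identify $\F(M)$ being free of rank $1$ with the condition that, viewing $M$ as a $B(\Z)$-comodule via $\Sat$, the comodule structure on the rank-one free $\Z$-module $\F(M)$ is given by a group-like element $\chi \in B(\Z)$, i.e.\ an element satisfying $\Delta(\chi) = \chi \otimes \chi$ and $\varepsilon(\chi) = 1$. The existence of such a $\chi$ follows because the coaction $\F(M) \to \F(M) \otimes_\Z B(\Z)$ is a map of free rank-one $\Z$-modules, hence is multiplication by some $\chi$, and the comodule axioms force $\chi$ to be group-like. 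A group-like element is automatically a unit in $B(\Z)$ (its inverse being $S(\chi)$ once we know an antipode exists—but note this is circular, so instead I will argue that $\chi$ is invertible directly from the fact that $M$ lives inside some $\Per_{\GO}(Z_\lambda,\Z)$ and $\F$ is a tensor functor, by constructing $M^*$ concretely; see below).

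The key geometric input is Proposition~\ref{prop:dim-weight-spaces} (or rather its integral analogue Proposition~\ref{prop:can-basis-k}): if $\F(M)$ is free of rank $1$, then since $M$ is a nonzero $\GO$-equivariant perverse sheaf, it has a unique simple constituent $\cJ_{!*}(\lambda,\Q) = \IC_\lambda$ after $\otimes \Q$, and by Proposition~\ref{prop:dim-weight-spaces} the condition $\dim \F_\mu(\IC_\lambda) = 1$ for exactly one $\mu$ and $0$ otherwise forces $\Gr_G^\lambda$ to be a single point, i.e.\ $\lambda$ is orthogonal to every root of $G$. In that case $\overline{\Gr_G^\lambda} = \{L_\lambda\}$, the category $\Per_{\GO}(\{L_\lambda\},\Z)$ is equivalent to $\Mod_\Z$ via $V \mapsto \underline{V}_{\{L_\lambda\}}$, and $M \cong \cJ_{!*}(\lambda,\Z) = \cJ_!(\lambda,\Z) = \cJ_*(\lambda,\Z)$ (using Lemma~\ref{lem:costd-simple-over-Z}), which is just $\underline{\Z}_{\{L_\lambda\}}$, the skyscraper sheaf at $L_\lambda$. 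Since $-\lambda$ is also orthogonal to all roots, hence dominant, I would then set $M^* := \cJ_{!*}(-\lambda,\Z) = \underline{\Z}_{\{L_{-\lambda}\}}$. Using the explicit description of convolution via the diagram~\eqref{eqn:diagram-convolution}, one computes $M \star M^* \cong \underline{\Z}_{\{L_0\}} = \IC_0 = U$, the unit object, because $m([t^\lambda, t^{-\lambda}]) = [t^0] = L_0$ and everything is supported at single points.

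The step I expect to require the most care is the verification that $M \star M^* \cong U$, i.e.\ carefully unwinding the definition of $\star$ (the $\widetilde{\boxtimes}$-construction and pushforward along $m$) in this point-supported situation and checking that no shift or twist is introduced; one must confirm that $M \, \widetilde{\boxtimes} \, M^*$ is the skyscraper $\underline{\Z}$ at the point $[t^\lambda, t^{-\lambda}] \in \GK \times^{\GO} \Gr_G$ and that $m$ restricted to the support is an isomorphism onto $\{L_0\}$. An alternative, perhaps cleaner route avoiding the geometry is: $\F$ is a faithful exact tensor functor, $\F(M)$ and $\F(M^*)$ are free of rank $1$ with $\F(M) \otimes_\Z \F(M^*) \cong \Z$ (since the corresponding group-like elements are $\chi$ and $\chi^{-1}$—here one uses that a group-like element of $B(\Z)$ that appears in the weight decomposition is a character of the torus $T^\vee_\Z$, hence invertible), so $\F(M \star M^*) \cong \Z \cong \F(U)$, and since $M \star M^*$ and $U$ are both $\GO$-equivariant perverse sheaves whose images under the faithful functor $\Sat$ are invertible one-dimensional comodules determined by a group-like element and its inverse, they must be isomorphic. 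I would present the geometric argument as the main proof and mention the comodule argument as a remark, since the former is more self-contained given what has been established.
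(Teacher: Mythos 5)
Your main argument and the paper's share the first step (identify $\lambda \in X_*(T)^+$ orthogonal to all roots, so that $\Gr_G^\lambda = \{L_\lambda\}$, from the characteristic-$0$ picture) and the last step (take $M^* := \cJ_{!*}(-\lambda,\Z)$), but they diverge — and your version has a gap — at the crucial middle step, the identification $M \cong \cJ_{!*}(\lambda,\Z)$. You write that ``the category $\Per_{\GO}(\{L_\lambda\},\Z)$ is equivalent to $\Mod_\Z$ $\ldots$ and $M \cong \cJ_{!*}(\lambda,\Z)$,'' but this silently assumes $\supp M \subset \{L_\lambda\}$, which does \emph{not} follow from $\Q \lotimes_\Z M \cong \IC_\lambda$: base change to $\Q$ could in principle kill torsion constituents of $M$ supported on larger Schubert varieties. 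This has to be argued. It can be: since $\F(M) = \bigoplus_\mu \F_\mu(M) \cong \Z$ is indecomposable as a $\Z$-module, $\F_\mu(M)=0$ for every $\mu \neq \lambda$; if $\nu$ were any dominant cocharacter maximal among those with $\Gr_G^\nu \subset \supp M$, the argument in the proof of Theorem~\ref{thm:fiber-functor}\eqref{it:fiber-functor-2} (valid over $\Z$ by Theorem~\ref{thm:fiber-functor-k}) gives $\F_\nu(M) \neq 0$, forcing $\nu=\lambda$ and hence $\supp M \subset \overline{\Gr_G^\lambda} = \{L_\lambda\}$. With this inserted, your geometric route works, and it is genuinely different from the paper's: the paper proves $M \cong \cJ_{!*}(\lambda,\Z)$ by showing the natural injection $\Hom_{\Per_{\GO}(\Gr_G,\Z)}(M,\cJ_{!*}(\lambda,\Z)) \hookrightarrow \Hom_\Z(\F(M),\F(\cJ_{!*}(\lambda,\Z)))$ is actually an isomorphism (using that $\F(\cJ_{!*}(\lambda,\Z))\otimes_\Z B(\Z)$ is torsion-free) and then lifting a $\Z$-module isomorphism $\F(M) \xrightarrow{\sim} \F(\cJ_{!*}(\lambda,\Z))$ to an isomorphism of perverse sheaves. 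Your argument, once repaired, is arguably more geometric and elementary; the paper's avoids any support analysis.

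Your proposed ``alternative, perhaps cleaner route'' at the end is circular as written, and you are right to be suspicious of it. You rely on the invertibility of the group-like element $\chi \in B(\Z)$ attached to $M$, but showing that group-like elements are invertible is exactly what the lemma is for: it is what produces the antipode of $B(\Z)$ (equivalently, shows $\widetilde{G}_\Z$ is a group scheme rather than a monoid scheme, see Proposition~\ref{prop:tG-group}). The parenthetical appeal to $T^\vee_\Z$ does not rescue it: the surjection $B(\Z) \twoheadrightarrow \Z[T^\vee_\Z]$ makes the \emph{image} of $\chi$ a unit in $\Z[T^\vee_\Z]$, not $\chi$ itself a unit in $B(\Z)$. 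The geometric construction of $M^*$ is not a convenience — it is the actual content of the lemma.
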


\begin{proof}
Consider the object $\Q \lotimes_\Z M \in \Per_{\GO}(\Gr_G,\Q)$. This object is such that $\F(\Q \lotimes_\Z M)$ has dimension $1$, where here $\F$ means the tensor functor for coefficients $\Q$; as noticed at the beginning of Section~\ref{sec:identification-char-0}, this implies that $\Q \lotimes_\Z M \cong \cJ_{!*}(\lambda, {\Q})$ for some $\lambda \in X_*(T)^+$ orthogonal to all the roots of $G$, i.e.~such that $\Gr^\lambda_G=\{L_\lambda\}$.

By the results in~\S\ref{ss:abel-recons}, we have an embedding
\[
f:\Hom_{\Per_{\GO}(\Gr_G,\Z)}(M, \cJ_{!*}(\lambda,{\Z}))\hookrightarrow\Hom_\Z(\F(M), \F(\cJ_{!*}(\lambda,{\Z})))
\]
whose image is the set of all the $B(\Z)$-comodule maps. Since $\F(M)\cong\Z\cong\F(\cJ_{!*}(\lambda,{\Z}))$, the codomain of $f$ is a free $\Z$-module of rank~$1$. Therefore $\Hom_{\Per_{\GO}(\Gr_G,\Z)}(M, \cJ_{!*}(\lambda,{\Z}))$ is either $0$ or a free $\Z$-module of rank $1$; since
\[
\Q \otimes_\Z \Hom_{\Per_{\GO}(\Gr_G,\Z)}(M, \cJ_{!*}(\lambda,{\Z})) \cong \Hom_{\Per_{\GO}(\Gr_G,\Q)}(\Q \lotimes_\Z M, \cJ_{!*}(\lambda,{\Q}))=\Q,
\]
it is in fact free of rank $1$. We see moreover that the cokernel of $f$ is either $0$ or a cyclic group. Now if a nonzero multiple of a $\Z$-linear map $f:\F(M) \to \F(\cJ_{!*}(\lambda,{\Z}))$ is a morphism of $B(\Z)$-comodules, then the map $f$ itself is a morphism of comodules, because $\F(\cJ_{!*}(\lambda,{\Z}))\otimes_{\Z}B(\Z)$ is torsion-free. The cokernel of $f$ is therefore torsion-free, hence is zero. In other words, $f$ is an isomorphism, and any map in $\Hom_\Z(\F(M), \F(\cJ_{!*}(\lambda,{\Z})))$ is a $B(\Z)$-comodule map.

The image by $f^{-1}$ of an isomorphism of $\Z$-modules $\F(M) \xrightarrow{\sim} \F(\cJ_{!*}(\lambda,{\Z}))$ is thus an isomorphism $M \xrightarrow{\sim} \cJ_{!*}(\lambda, {\Z})$. One can then take $M^*:=\cJ_{!*}(-\lambda, {\Z})$.
\end{proof}

\begin{prop}
\label{prop:tG-group}
The monoid scheme $\widetilde{G}_\Z$ is a group scheme.
\end{prop}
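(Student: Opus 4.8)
The plan is to verify the antipode axiom by exhibiting, for the bialgebra $B(\Z)$, an inverse to the canonical "multiplication-then-comultiplication" element, which amounts to showing that for every object $M$ of $\Per_{\GO}(\Gr_G,\Z)$ the coaction morphism $\delta_M : \F(M) \to \F(M) \otimes_\Z B(\Z)$ can be "inverted." Following the strategy of the proof of Theorem~\ref{thm:tannakina-reconstruction} (the invertibility step there), the key reduction is: it suffices to treat objects $M$ such that $\F(M)$ is free of rank $1$, and then deduce the general case by passing to a suitable determinant (top exterior power). So the first step I would carry out is to explain how to construct exterior powers $\bigwedge^d M$ inside $\Per_{\GO}(\Gr_G,\Z)$ using the commutativity constraint built in Section~\ref{sec:convolution-BD} (exactly as in the proof of Theorem~\ref{thm:tannakina-reconstruction}), and to check, using that $\F$ is a tensor functor compatible with the commutativity constraint (Proposition~\ref{prop:F-product} and~\S\ref{ss:change-commutativity}), that $\F \bigl( \bigwedge^d M \bigr) \cong \bigwedge^d \F(M)$; taking $d = \mathrm{rk}_\Z \F(M)$ this is free of rank $1$.

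The second step is the rank-one case, which is precisely Lemma~\ref{lem:comodules-rank-1}: if $\F(M)$ is free of rank $1$ then there exists $M^*$ with $M \star M^* \cong \IC_0$ the unit object. I would then argue, following Remark~\ref{rmk:Tannaka}\eqref{it:G-end-fiber-functor} and the end of the proof of Theorem~\ref{thm:tannakina-reconstruction}, that for any commutative $\Z$-algebra $R$ an $R$-point $\alpha \in \widetilde{G}_\Z(R)$ is a collection of $R$-linear endomorphisms $\alpha_M \in \End_R(\F(M) \otimes_\Z R)$, natural in $M$ and compatible with $\star$ and the unit. From $M \star M^* \cong \IC_0$ and the compatibility with $\star$ one gets that $\alpha_M \otimes_R \alpha_{M^*}$ is conjugate (via the tensor-compatibility isomorphisms) to $\alpha_{\IC_0} = \id$, so $\alpha_M$, being an endomorphism of a free $R$-module of rank $1$, is invertible. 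For general $M$, the naturality of $\alpha$ applied to the quotient map onto $\bigwedge^d M$ identifies $\alpha_{\bigwedge^d M}$ with $\bigwedge^d \alpha_M = \det(\alpha_M)$; since the former is invertible by the rank-one case, $\det(\alpha_M) \in R^\times$, hence $\alpha_M \in \mathrm{GL}(\F(M) \otimes_\Z R)$.

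The third step is to turn this "every $R$-point is invertible" statement into the existence of an antipode. This is the standard observation (used in the proof of Theorem~\ref{thm:tannakina-reconstruction}): once the monoid scheme $\widetilde{G}_\Z$ has the property that for every $R$ the monoid $\widetilde{G}_\Z(R)$ is a group, the inversion maps assemble into a morphism of functors, hence (by Yoneda) a morphism of schemes $\iota : \widetilde{G}_\Z \to \widetilde{G}_\Z$, and the identities $\iota(g) g = g \iota(g) = e$ hold functorially, so dualize to the antipode axiom on $B(\Z)$. Concretely I would apply the identity morphism to the universal point $\mathrm{id} \in \widetilde{G}_\Z(B(\Z)) = \widetilde{G}_\Z(\Z[\widetilde{G}_\Z])$ to produce the antipode directly, exactly as one does for $\mathrm{GL}_n$. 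Finally, having proved that $\widetilde{G}_\Z$ is a group scheme, the general case $\widetilde{G}_\bk$ follows immediately from the base-change isomorphism~\eqref{eqn:base-change-tG}, since an antipode for $B(\Z)$ base-changes to an antipode for $B(\bk) \cong \bk \otimes_\Z B(\Z)$ (Proposition~\ref{prop:base-change-B}).

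The main obstacle, or at least the only genuinely nontrivial input, is the rank-one case: one must know that an object $M$ of $\Per_{\GO}(\Gr_G,\Z)$ with $\F(M)$ free of rank $1$ admits a $\star$-inverse. This is handled by Lemma~\ref{lem:comodules-rank-1}, whose proof crucially uses that rationally such an object becomes an $\IC$-sheaf supported on a point $\{L_\lambda\}$ with $\lambda$ orthogonal to all roots, together with the torsion-freeness of $\F(\cJ_{!*}(\lambda,\Z)) \otimes_\Z B(\Z)$ (from freeness of $B(\Z)$, Proposition~\ref{prop:base-change-B}) to promote a rational isomorphism $M \cong \cJ_{!*}(\lambda,\Z)$ over $\Z$; then $M^* = \cJ_{!*}(-\lambda,\Z)$ works. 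Everything else is formal manipulation of the Tannakian formalism already set up, so once Lemma~\ref{lem:comodules-rank-1} is in hand the proof is essentially a transcription of the invertibility argument from Theorem~\ref{thm:tannakina-reconstruction}.
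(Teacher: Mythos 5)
Your overall architecture matches the paper's: reduce invertibility of $R$-points to the rank-one case via determinants, feed in Lemma~\ref{lem:comodules-rank-1}, then turn invertibility of all $R$-points into an antipode (the paper outsources this last step to~\cite[Chap.~II, Scholie~3.1.1(3)]{sr} rather than spelling it out as you do, but the content is the same).

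However there is a genuine gap in your Step~2 that the paper explicitly addresses and you do not. You claim the exterior-power/determinant argument handles ``general $M$,'' but that argument only works when $\F(M)$ is a \emph{free} $\Z$-module. Over $\Z$ this is a real restriction: $\F(M)$ is an arbitrary finitely generated $\Z$-module and can have torsion (e.g.\ the cokernel of $\cJ_!(\lambda,\Z) \hookrightarrow \cJ_*(\lambda,\Z)$ has $\F$ with torsion, since it is a quotient of one free lattice by another). When $\F(M)$ has torsion there is no top exterior power that is free of rank one, no determinant, and no $\mathrm{GL}$; the argument simply does not run. The paper handles exactly this: after the free-rank case it invokes~\cite[Proposition~3]{serre2}, which gives, for every finitely generated $B(\Z)$-comodule $M$, an exact sequence $M'' \to M' \to M \to 0$ with $M''$, $M'$ comodules free of finite rank over $\Z$. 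Applying $R \otimes_\Z(\bm ?)$ (right exact) and the case already proved, the five lemma shows that $\alpha$ acts invertibly on $R \otimes_\Z M$. Without this reduction to a presentation by free comodules, your deduction of the general case from the rank-one case is incomplete. (A secondary but related caution: even in the free case you should make sure you define $\bigwedge^d$ over $\Z$ as a cokernel, not via an antisymmetrizer, since $d!$ need not be invertible; this is implicit in the proof of Theorem~\ref{thm:tannakina-reconstruction} but is worth being careful about once one leaves characteristic zero.)
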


\begin{proof}
First, we remark that if $M$ is a right $B(\Z)$-comodule which is free of rank $1$ over $\Z$, then Lemma~\ref{lem:comodules-rank-1} implies that $M$ is invertible in the monoidal category of $\widetilde{G}_\Z$-modules, hence that $\widetilde{G}_\Z(R)$ acts by invertible endomorphisms on $R \otimes_\Z M$, for any $\Z$-algebra $R$. As in the case of fields (see the proof of Theorem~\ref{thm:tannakina-reconstruction}), this implies the same claim for any right $B(\Z)$-comodule which is free of finite rank. Then, consider an arbitrary object $M$ in $\Comod_{B(\Z)}$. By~\cite[Proposition~3]{serre2}, there exist right $B(\Z)$-comodules $M'$ and $M''$ which are free of finite rank over $\Z$ and an exact sequence of $B(\Z)$-comodules
\[
 M'' \to M' \to M \to 0.
\]
Then for any $\Z$-algebra $R$ we have an exact sequence
\[
 R \otimes_\Z M'' \to R \otimes_\Z M' \to R \otimes_\Z M \to 0.
\]
Any element of $\widetilde{G}_\Z(R)$ acts on $R \otimes_\Z M''$ and $R \otimes_\Z M'$ by invertible endomorphisms by the case treated above; the 5-lemma implies that the same claim holds also for $M$.
This implies the proposition since the statement in Remark~\ref{rmk:Tannaka}\eqref{it:G-end-fiber-functor} holds in our present setting, see~\cite[Chap.~II, Scholie 3.1.1(3)]{sr}.
\end{proof}

%

\section{Identification of the group scheme}
\label{sec:identification}

\subsection{Statement and overview of the proof}
\label{ss:identification-statement}

In Section~\ref{sec:construction} we have constructed an affine $\bk$-group scheme $\widetilde{G}_\bk$ and an equivalence of monoidal categories
\[
 \Per_{\GO}(\Gr_G,\bk) \xrightarrow{\sim} \Rep_\bk(\widetilde{G}_\bk).
\]
Our goal now is to identify $\widetilde{G}_\bk$. 
To state this result we need some terminology. Recall that:
\begin{itemize}
\item
a \emph{reductive group} over a scheme $S$ is a smooth affine group scheme over $S$ all of whose geometric fibers are connected reductive algebraic groups; see~\cite[Expos\'e~XIX, D\'efinition~2.7]{sga3};
\item
a \emph{split torus} over $S$ is a group scheme which is isomorphic to a finite product of copies of the multiplicative group $\mathbb{G}_{\mathbf{m},S}$;
\item
a \emph{split maximal torus} of a  group scheme $H$ over $S$ is a closed subgroup scheme $K$ of $H$ which is a split torus and such that for any geometric fiber $\overline{s}$, the morphism $K_{\overline{s}} \to H_{\overline{s}}$ identifies $K_{\overline{s}}$ with a maximal torus of $H_{\overline{s}}$; see~\cite[Expos\'e~XIX, p.~10]{sga3}.
\end{itemize}
When $S=\mathrm{Spec}(\Z)$, it is known that a reductive group $H$ over $\mathrm{Spec}(\Z)$ which admits a split maximal torus is determined, up to isomorphism, by the root datum of $\Spec(\C) \times_{\mathrm{Spec}(\Z)} H$; see~\cite[Expos\'e XXIII, Corollaire~5.4]{sga3}. For such a group, if $\bk$ is an algebraically closed field, the root datum of $\Spec(\bk) \times_{\mathrm{Spec}(\Z)} H$ does not depend on $\bk$, and will be called the root datum of $H$.

When $\bk=\Z$, the answer to our question is provided by the following theorem.

\begin{thm}
\label{thm:description-gp}
The group scheme $\widetilde{G}_\Z$ is the unique reductive group over $\Z$ which admits a split torus and whose root datum is dual to that of $G$.
\end{thm}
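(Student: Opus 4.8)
The strategy is to reduce to the characteristic-zero identification of Part~\ref{pt:char-0} by spreading it out over $\Z$. First I would observe that $\widetilde G_\Z$ is an affine group scheme whose Hopf algebra $\Z[\widetilde G_\Z] = B(\Z)$ is a free $\Z$-module (Proposition~\ref{prop:base-change-B}), and that for any commutative ring $\bk$ we have $\widetilde G_\bk \cong \Spec(\bk)\times_{\Spec(\Z)}\widetilde G_\Z$ by~\eqref{eqn:base-change-tG}. In particular $\widetilde G_\C$ is the group scheme produced by Part~\ref{pt:char-0}, which by Theorem~\ref{thm:identification-char-0} is the (connected, split) reductive $\C$-group with root datum dual to that of $G$, equipped with its canonical maximal torus $T^\vee_\C$. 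The torus $T^\vee_\bk$ constructed in~\S\ref{ss:identification-char-0-first-step} makes sense over $\Z$ (it is $\Spec$ of the group algebra $\Z[X_*(T)]$) and, via the factorization of $\F$ through $\Vect_\bk(X_*(T)) \cong \Rep_\bk(T^\vee_\bk)$ coming from Theorem~\ref{thm:fiber-functor-k} and Proposition~\ref{prop:F-mu-tensor}, together with the reconstruction statement of Proposition~\ref{prop:tannakian-morph}/\S\ref{ss:tannaka-recons}, yields a morphism of $\Z$-group schemes $T^\vee_\Z \to \widetilde G_\Z$. The first substantive task is to check this is a closed immersion identifying $T^\vee_\Z$ with a split maximal torus of $\widetilde G_\Z$: closedness can be checked as in the characteristic-zero case using that every character of $T^\vee_\Z$ appears in some $\F_{T^\vee_\bk}(\cJ_{!*}(\mu,\bk))$ (take $\mu$ the dominant conjugate of the character and invoke Proposition~\ref{prop:can-basis-k}), and maximality of the fibers follows since the fibers of $\widetilde G_\Z$ over geometric points are exactly the groups reconstructed from $\Per_{\GO}(\Gr_G,\overline\bk)$, for which the corresponding statement is known over a field (this uses the characteristic-zero identification for fibers of characteristic $0$, and for positive-characteristic fibers a rank computation as in~\eqref{eqn:rank-dim}, valid because $B(\Z)$ is free so the Grothendieck group and characters of $\cJ_{!*}$ are insensitive to reduction mod $p$).

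Next I would establish that $\widetilde G_\Z$ is a reductive group over $\Z$ in the sense of SGA3, i.e.\ that it is smooth with connected reductive geometric fibers. Flatness is immediate from freeness of $B(\Z)$. For the fibers: over a geometric point $\Spec(\overline\bk)$, the fiber is the group scheme reconstructed by Tannakian formalism from $\Per_{\GO}(\Gr_G,\overline\bk)$ equipped with convolution and $\F$. When $\mathrm{char}(\overline\bk)=0$ this is $\widetilde G_{\overline\bk}$, reductive by Part~\ref{pt:char-0}. When $\mathrm{char}(\overline\bk)=p>0$ one argues directly: the category $\Per_{\GO}(\Gr_G,\overline\bk)$ is a highest weight category (Proposition~\ref{prop:highest-weight}) with standard/costandard objects $\cJ_{!*}(\lambda,\overline\bk)$'s behaving like Weyl/dual Weyl modules, and the projective generators $P_Z(\overline\bk)$ have $\cJ_!$-filtrations by Proposition~\ref{prop:projective-struct}\eqref{it:projective-struct-1}; from this the fiber is an algebraic group with a tensor generator (so affine of finite type, by Proposition~\ref{prop:properties-G}\eqref{it:properties-G-1}), it is connected by the argument of Lemma~\ref{lem:tilde-G-connected-char-0} (using that $\cJ_{!*}(m\lambda,\overline\bk)$ are pairwise non-isomorphic), and its reductivity follows because its representation category has a good filtration formalism forcing the unipotent radical to act trivially on a faithful representation. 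Combined with flatness over $\Z$ and constancy of fiber dimension (all fibers have the same rank and the same set of $X_*(T)^+$-parametrized simples, hence the same dimension by the Weyl character formula argument behind~\eqref{eqn:rank-dim}), smoothness of $\widetilde G_\Z$ over $\Z$ follows from the fibral flatness/smoothness criteria.

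Having shown $\widetilde G_\Z$ is reductive over $\Z$ with split maximal torus $T^\vee_\Z$, the SGA3 isomorphism theorem (\cite[Exp.~XXIII, Cor.~5.4]{sga3}, cited in~\S\ref{ss:identification-statement}) says it is determined up to isomorphism by the root datum of $\Spec(\C)\times_{\Spec(\Z)}\widetilde G_\Z = \widetilde G_\C$ relative to $T^\vee_\C$. By Theorem~\ref{thm:identification-char-0} that root datum is dual to the root datum of $(G,T)$. This yields the theorem. \textbf{The main obstacle} I anticipate is the positive-characteristic fiber analysis — specifically verifying that each geometric fiber of $\widetilde G_\Z$ is \emph{smooth} and \emph{connected reductive}, since over a field of characteristic $p$ the category $\Per_{\GO}(\Gr_G,\overline\bk)$ is no longer semisimple and one cannot directly invoke Proposition~\ref{prop:properties-G}\eqref{it:properties-G-3}; the highest-weight structure of Proposition~\ref{prop:highest-weight} and the freeness/base-change results for $P_Z$ and $B$ are exactly what is needed to push the reductivity argument through, and assembling these into the fibral reductivity of $\widetilde G_\Z$ over $\Z$ (rather than over each field separately) is the delicate point. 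A secondary subtlety is confirming that the fiber over a characteristic-$p$ point really is the Tannakian group of the mod-$p$ Satake category and not some proper subgroup — this is where freeness of $B(\Z)$ and Proposition~\ref{prop:projective-struct}\eqref{it:projective-struct-2} are essential, guaranteeing $B(\overline\bk) = \overline\bk\otimes_\Z B(\Z)$ so that no representations are lost under reduction.
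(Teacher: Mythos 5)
Your overall architecture — construct $T^\vee_\Z$, show $\widetilde G_\Z$ is reductive over $\Z$ with split maximal torus $T^\vee_\Z$, then invoke the SGA3 isomorphism theorem together with the characteristic-zero computation of the root datum — matches the paper's plan. But there is a genuine gap at the point you yourself flag as delicate, and your proposed resolution does not actually close it.

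The gap is reducedness of the positive-characteristic geometric fibers. You claim that for $\overline\bk$ of characteristic $p$, "its reductivity follows because its representation category has a good filtration formalism forcing the unipotent radical to act trivially on a faithful representation," and then that smoothness of $\widetilde G_\Z$ over $\Z$ follows by fibral criteria. But in characteristic $p$ an affine group scheme of finite type need not be reduced (hence need not be smooth), and a Kolchin-type argument about the unipotent radical acting trivially only controls the \emph{reduced} subgroup $(\widetilde G_{\overline\bk})_{\mathrm{red}}$, not the group scheme $\widetilde G_{\overline\bk}$ itself. Proposition~\ref{prop:properties-G}\eqref{it:properties-G-3} is explicitly stated for $\mathrm{char}=0$, where smoothness is automatic; there is no analogue available here. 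The highest-weight structure and the freeness/base-change results for $P_Z$ and $B$ are indeed enough to show that $(\widetilde G_{\overline\bk})_{\mathrm{red}}$ is connected reductive with the expected root datum (this is Proposition~\ref{prop:identification-red}, proved via the quotient $R$ of $(\widetilde G_{\overline\bk})_{\mathrm{red}}$ by its unipotent radical, a rank computation, and a careful comparison of Weyl groups and root lattices in Lemmas~\ref{lem:identification-R-torus}--\ref{lem:identification-red}) — but they do not show that $\widetilde G_{\overline\bk}$ is reduced. The paper resolves this by invoking Prasad--Yu \cite[Theorem~1.2]{py}: having verified that $\widetilde G_{\Z_p}$ is quasi-reductive (flat, affine, generic fiber smooth connected, reduced geometric special fiber of finite type with reductive identity component of the right dimension) with matching root data between generic and special fibers, that theorem concludes that $\widetilde G_{\Z_p}$ is reductive, and in particular that the special fiber is reduced. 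The paper explicitly remarks that it would be desirable to find a direct justification for reducedness but that the authors were unable to do so; your sketch implicitly claims such a direct argument without supplying one. You should make the appeal to Prasad--Yu explicit rather than asserting smoothness as a consequence of the filtration formalism.

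Two secondary points. First, "constancy of fiber dimension" is not quite as formal as you suggest; the paper proves the inequality $\dim(\widetilde G_{\overline{\FF_p}})\le\dim(G)$ in Lemma~\ref{lem:dim-geom-fiber} via an algebraic-independence argument over a complete DVR, exploiting freeness of $B(\Z)$, and the opposite inequality then comes out of Proposition~\ref{prop:identification-red}. Second, your rank computation for the special fiber (asserting it "is insensitive to reduction mod $p$") needs the argument of Lemma~\ref{lem:identification-R-torus}, which passes through a Frobenius twist to relate simple $R^{(n)}$-modules to simple $\widetilde G_{\overline\bk}$-modules; the Grothendieck-group comparison with characteristic zero is not immediate because $\Per_{\GO}(\Gr_G,\overline\bk)$ is no longer semisimple and the simple objects are not the $\cJ_{!*}(\lambda,\overline\bk)$'s with the same characters as in characteristic zero.
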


In fact, below we will prove a slightly more precise result: we will construct a maximal torus of $\widetilde{G}_\Z$ whose group of characters identifies with $X_*(T)$, and show that the root datum of $\widetilde{G}_\Z$ with respect to this maximal torus is dual to the root datum of $(G,T)$.
For a general $\bk$, since $\widetilde{G}_\bk \cong \Spec(\bk) \times_{\Spec(\Z)} \widetilde{G}_\Z$ (see~\eqref{eqn:base-change-tG}), Theorem~\ref{thm:description-gp} determines
$\widetilde{G}_\bk$ also up to isomorphism. 

When $\bk$ is a field of characteristic $0$, this description\footnote{Note that in this setting there are two different groups that we have denoted $\widetilde{G}_\bk$: the one constructed in Section~\ref{sec:identification-char-0} using Tannakian reconstruction, and the one constructed ``by hand'' in Section~\ref{sec:construction}. These two groups are canonically identified thanks to~\cite[Theorem~X.1.2]{milne}.} has already been proved in Theorem~\ref{thm:identification-char-0}; this special case will play an important role in the proof below. In fact, a result of Prasad--Yu~\cite[Proposition~1.5]{py} ensures that a flat affine group scheme $H$ over $\Z$ such that $\Spec(\bk) \times_{\Spec(\Z)} H$ is a connected reductive group for any algebraically closed\footnote{As stated in~\cite{py}, the claim requires this property rather when $\bk$ is either $\Q$ or a finite field $\FF_p$. But an affine group scheme over a field is reductive iff its base change to an algebraic closure of the field is reductive; this follows from the fact that smoothness can be checked on this base change, see~\cite[Remark~6.30(2)]{gw}, and similarly for connectedness, see Footnote~\ref{fn:connectedness}.} field $\bk$, whose dimension is independent of $\bk$, is necessarily reductive. Hence what remains to be done is:
\begin{enumerate}
\item
\label{it:overview-k-1}
construct a subgroup scheme of $\widetilde{G}_\Z$ which is a split torus;
\item
\label{it:overview-k-2}
check that for an algebraic closure $\bk$ of a finite field, the group scheme $\widetilde{G}_\bk$ is reductive;
\item
\label{it:overview-k-3}
show that the base change to $\bk$ of our $\Z$-torus is a maximal torus of $\widetilde{G}_\bk$;
\item
\label{it:overview-k-4}
and finally, show that $\widetilde{G}_\bk$ has the appropriate root datum with respect to this maximal torus.
\end{enumerate}

Here~\eqref{it:overview-k-1} will be easy, and based on the same arguments as for fields of characteristic $0$, see~\S\ref{ss:identification-first}. The proof of~\eqref{it:overview-k-2}--\eqref{it:overview-k-4} will rely on another result of Prasad--Yu~\cite[Theorem~1.2]{py} which, in our setting, characterizes reductive group schemes over $\Z_p$ in terms of properties of their base change to $\Q_p$ and to an algebraic closure of $\FF_p$. (More precisely, this result will be needed to show that $\widetilde{G}_\bk$ is reduced; the other properties will be checked directly.)



\subsection{First properties}
\label{ss:identification-first}

For any $\bk$,
by construction $\widetilde{G}_\bk$ is an affine group scheme over $\bk$. Moreover, this group scheme is flat over $\bk$ by Proposition~\ref{prop:base-change-B}.


\begin{lem}
\label{lem:G-algebraic-connected}
If $\bk$ is a field, the group scheme $\widetilde{G}_\bk$ is algebraic and connected.
\end{lem}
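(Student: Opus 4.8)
The statement to be proved is Lemma~\ref{lem:G-algebraic-connected}: for $\bk$ a field, $\widetilde{G}_\bk$ is algebraic and connected. The plan is to reduce everything to the criteria of Proposition~\ref{prop:properties-G}, exactly as was done in the characteristic-$0$ setting in~\S\ref{ss:identification-char-0-first-step}, but now relying on the integral structure theory of standard sheaves developed in Sections~\ref{sec:repr-wt-func} and the monoidal equivalence $\Sat : \Per_{\GO}(\Gr_G,\bk) \xrightarrow{\sim} \Rep_\bk(\widetilde{G}_\bk)$ of~\S\ref{ss:tannaka-recons}. First I would recall that, since $\widetilde{G}_\bk \cong \Spec(\bk) \times_{\Spec(\Z)} \widetilde{G}_\Z$, it suffices to check both properties after base change; in particular algebraicity and connectedness of $\widetilde{G}_\bk$ can be verified for $\bk$ of our choosing, but in fact the argument below works uniformly for any field.

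\textbf{Algebraicity.} By Proposition~\ref{prop:properties-G}\eqref{it:properties-G-1}, it is enough to exhibit a tensor generator of $\Rep_\bk(\widetilde{G}_\bk) \cong \Per_{\GO}(\Gr_G,\bk)$. Choose a finite set of generators $\lambda_1, \dots, \lambda_n$ of the monoid $X_*(T)^+$, and set $\mathscr{X} := \cJ_!(\lambda_1,\bk) \oplus \cdots \oplus \cJ_!(\lambda_n,\bk)$. The key point is that for any dominant $\lambda = \sum_i k_i \lambda_i$, the convolution product $\cJ_!(\lambda_1,\bk)^{\star k_1} \star \cdots \star \cJ_!(\lambda_n,\bk)^{\star k_n}$ is a perverse sheaf (by the exactness of $\star$ recalled in~\S\ref{ss:convolution-k}), is supported on $\overline{\Gr_G^\lambda}$, and restricts to $\underline{\bk}_{\Gr_G^\lambda}[\dim \Gr_G^\lambda]$ over the open stratum $\Gr_G^\lambda$ (one checks the support and restriction claims directly from the convolution diagram, exactly as in the characteristic-$0$ case, or by extension of scalars from $\Z$ using Proposition~\ref{prop:extension-scalars-J!-J*}). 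Hence this convolution admits $\cJ_!(\lambda,\bk)$ as a quotient perverse sheaf after applying $\pH^0$ of the natural adjunction map; more precisely, the composition with the surjection $\pH^0((j_\lambda)_!\underline{\bk}_{\Gr_G^\lambda}[\dim\Gr_G^\lambda]) \to$ the convolution is split off, showing $\cJ_!(\lambda,\bk)$ is a subquotient of a tensor power of $\mathscr{X}$. Since every object of $\Per_{\GO}(\Gr_G,\bk)$ is a subquotient of a direct sum of objects $\cJ_!(\lambda,\bk)$ — this follows from the highest weight structure when $\bk$ is a field (Proposition~\ref{prop:highest-weight}), or more elementarily from the fact that any perverse sheaf is a quotient of a projective $P_Z(\bk)$, which has a $\cJ_!$-filtration by Proposition~\ref{prop:projective-struct}\eqref{it:projective-struct-1} — we conclude that $\mathscr{X}$ is a tensor generator, and algebraicity follows.

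\textbf{Connectedness.} By Proposition~\ref{prop:properties-G}\eqref{it:properties-G-2}, it suffices to show that for any nontrivial object $\mathscr{A}$ of $\Per_{\GO}(\Gr_G,\bk)$, the full subcategory $\langle \mathscr{A} \rangle$ of subquotients of direct sums $\mathscr{A}^{\oplus m}$ is not stable under $\star$. Pick a nonzero dominant $\lambda$ with $\Gr_G^\lambda$ meeting the support of $\mathscr{A}$, say $\Gr_G^\lambda$ open in $\supp \mathscr{A}$ (this exists since $\mathscr{A}$ is nontrivial; if $\supp \mathscr{A} = \{L_0\}$ then $\mathscr{A} \cong \underline{\bk}$ and one instead argues that all objects in $\langle \mathscr{A}\rangle$ are supported at $L_0$ whereas $\cJ_!(\lambda,\bk)$ for $\lambda \neq 0$ is not — but $\langle\underline\bk\rangle$ does not generate, so this is not a counterexample; the relevant case is $\mathscr A$ genuinely nontrivial). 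The objects $\mathscr{A}^{\star m}$ for $m \geq 0$ have strictly growing supports (the support of $\mathscr{A}^{\star m}$ contains $\overline{\Gr_G^{m\lambda}}$, and $\dim \Gr_G^{m\lambda} = \langle 2\rho, m\lambda\rangle \to \infty$), so the subquotients of direct sums $\mathscr{A}^{\oplus m}$ — which all have support inside the fixed set $\supp \mathscr{A}$ — cannot contain all $\mathscr{A}^{\star m}$. Hence $\langle \mathscr{A} \rangle$ is not $\star$-stable, and Proposition~\ref{prop:properties-G}\eqref{it:properties-G-2} gives connectedness.

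\textbf{Main obstacle.} The delicate point is the support-and-restriction claim for iterated convolutions of standard sheaves over a general ring $\bk$: unlike in the semisimple characteristic-$0$ situation, one cannot simply invoke semisimplicity to split off $\cJ_!(\lambda,\bk)$ as a direct summand. The cleanest route is to establish the statement over $\Z$ first — where one can use that $\cJ_!(\lambda,\Z) = \cJ_{!*}(\lambda,\Z)$ (Lemma~\ref{lem:costd-simple-over-Z}) together with a direct analysis of the semismall convolution morphism $m$ on the relevant stratum — and then transport it to arbitrary $\bk$ via the extension-of-scalars isomorphism $\cJ_!(\lambda,\bk) \cong \bk \lotimes_\Z \cJ_!(\lambda,\Z)$ of Proposition~\ref{prop:extension-scalars-J!-J*} and the compatibility of $\star$ with $\bk \lotimes_\Z (-)$. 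The rest is routine once one grants that $\mathscr{X}$ (resp.~any nontrivial $\mathscr{A}$) interacts with $\star$ as described.
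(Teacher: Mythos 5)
Your connectedness argument is essentially the same as the paper's (which follows the pattern of Lemma~\ref{lem:tilde-G-connected-char-0}), though the case analysis around $\supp\mathscr A = \{L_0\}$ is muddled — that case should simply be excluded because $\mathscr A$ is then a direct sum of copies of the trivial representation $\IC_0 \cong \underline\bk_{\{L_0\}}$, hence not ``nontrivial'' in the sense of Proposition~\ref{prop:properties-G}\eqref{it:properties-G-2}.

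The algebraicity argument, however, has a genuine gap, and the paper takes a different route precisely to avoid it. You assert that ``every object of $\Per_{\GO}(\Gr_G,\bk)$ is a subquotient of a direct sum of objects $\cJ_!(\lambda,\bk)$'' and justify this by pointing to the $\cJ_!$-filtration of $P_Z(\bk)$ (Proposition~\ref{prop:projective-struct}\eqref{it:projective-struct-1}). This inference is not valid: having a $\cJ_!$-filtration is strictly weaker than being a subquotient of a direct sum of $\cJ_!$-objects. If $0 \to \cJ_!(\mu) \to P \to \cJ_!(\lambda) \to 0$ is a nonsplit extension with $P$ indecomposable (as will typically happen for $P = P_Z(\bk)$ in positive characteristic), then $P$ is not a subquotient of any $\cJ_!(\mu)^{\oplus a}\oplus\cJ_!(\lambda)^{\oplus b}$: any subobject surjecting onto $P$ would have to have length $\geq \ell(P)$, and a surjection between objects of equal length is an isomorphism, but all equal-length subobjects of the direct sum decompose. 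So the step from ``$\cJ_!$-filtered projective generator'' to ``tensor generator built from the $\cJ_!(\lambda_i)$'' does not go through. Proposition~\ref{prop:highest-weight} does not rescue this either; the highest weight structure does not yield the subquotient claim.

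There is a second, independent unproven step: that $\cJ_!(\lambda,\bk)$ is a quotient of the iterated convolution $\cJ_!(\lambda_1,\bk)^{\star k_1} \star \cdots \star \cJ_!(\lambda_n,\bk)^{\star k_n}$. The support-and-restriction assertion is indeed routine (and is the one thing your ``main obstacle'' paragraph worries about), but to conclude that $\cJ_!(\lambda,\bk)$ is a quotient you need the convolution to admit a $\cJ_!$-filtration with $\cJ_!(\lambda,\bk)$ at the top, and the $\cJ_!$-filtrability of convolutions of standard objects is not routine — under the Satake equivalence it is the statement that tensor products of Weyl modules for $G^\vee_\bk$ have Weyl filtrations, a deep theorem. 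The paper's proof (due to Williamson) sidesteps both issues by working with \emph{tilting} objects: (i) any object genuinely is a subquotient of a tilting object, via the realization of $\Db\Per_{\GO}(\Gr_G,\bk)$ as $K^{\mathrm b}\mathrm{Tilt}_{\GO}(\Gr_G,\bk)$, which is a correct replacement for your false subquotient claim; and (ii) tiltings are stable under $\star$, which is the deep input (proved in \cite{jmw2}, \cite{mr}, \cite{bmrr}) and is the honest form of the ``hard theorem'' your approach would also need. Any variant using standards instead of tiltings runs into the filtration-versus-subquotient confusion above.
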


\begin{proof}\footnote{This proof was suggested to us by G.~Williamson.}
By Proposition~\ref{prop:properties-G}\eqref{it:properties-G-1}, to prove that $\widetilde{G}_\bk$ is algebraic we need to exhibit a tensor generator of the category $\Rep_{\bk}(\widetilde{G}_\bk) \cong \Per_{\GO}(\Gr_G,\bk)$. 
By Proposition~\ref{prop:highest-weight}, the category $\Per_{\GO}(\Gr_G,\bk)$ has a natural highest weight structure.
Hence we can consider the \emph{tilting} objects in this category, namely those which admit both a filtration with subquotients of the form $\cJ_!(\lambda,\bk)$, and a filtration with subquotients of the form $\cJ_*(\lambda,\bk)$; see e.g.~\cite[\S 7.5]{riche-hab}. If we denote by $\mathrm{Tilt}_{\GO}(\Gr_G,\bk)$ the full subcategory of $\Per_{\GO}(\Gr_G,\bk)$ consisting of the tilting objects, then the indecomposable objects in $\mathrm{Tilt}_{\GO}(\Gr_G,\bk)$ are parametrized by $X_*(T)^+$ (see e.g.~\cite[Theorem~7.14]{riche-hab}), and the natural functor
\[
K^{\mathrm{b}} \mathrm{Tilt}_{\GO}(\Gr_G,\bk) \to \Db \Per_{\GO}(\Gr_G,\bk)
\]
is an equivalence of categories (see~\cite[Proposition~7.17]{riche-hab}). In particular, any object of $\Per_{\GO}(\Gr_G,\bk)$ is a subquotient of a tilting object.

Now, it is known that the subcategory $\mathrm{Tilt}_{\GO}(\Gr_G,\bk)$ is stable under the convolution bifunctor $\star$. In fact, consider the ``parity sheaves'' $\{\mathscr{E}_\lambda : \lambda \in X_*(T)^+\}$ in~$\Db_{\mathscr{S}}(\Gr_G,\bk)$ in the sense of Juteau--Mautner--Williamson~\cite{jmw} (for the constant pariversity). It follows from~\cite[Proposition~3.3]{jmw2} that if these objects are perverse, then they coincide with the tilting objects in $\Perv \cong \Per_{\GO}(\Gr_G,\bk)$. The fact that they are indeed perverse is proved in~\cite{jmw2} under certain technical conditions on $\mathrm{char}(\bk)$, and in~\cite[Corollary~1.6]{mr} under the assumption that $\mathrm{char}(\bk)$ is good for $G$.\footnote{Recall that a prime number $p$ is called \emph{bad} for $G$ if $p = 2$ and $\Delta(G,T)$ has a component not of type $\mathbf{A}$, or if $p = 3$ and $\Delta(G,T)$ has a component of type $\mathbf{E}$, $\mathbf{F}$ of $\mathbf{G}$, or finally if $p = 5$ and $\Delta(G,T)$ has a component of type $\mathbf{E}_8$. A prime number is called \emph{good} for $G$ if it is not bad for $G$.} This settles the question in this case, since convolution preserves parity complexes; see~\cite[Theorem~1.5]{jmw2}. The proof that $\mathrm{Tilt}_{\GO}(\Gr_G,\bk)$ is stable under convolution for a general field $\bk$ will appear in~\cite{bmrr}.

Finally we can conclude: if $(\lambda_1, \cdots, \lambda_n)$ is a finite generating subset of the monoid $X_*(T)^+$, and if $\mathscr{T}_i$ is the indecomposable tilting object attached to $\lambda_i$ for any $i \in \{1, \cdots, n\}$, then by support considerations we see that any indecomposable tilting object in $\Per_{\GO}(\Gr_G,\bk)$ is a direct summand of a tensor power of $\mathscr{T}_1 \oplus \cdots \oplus \mathscr{T}_n$, and therefore that $\mathscr{T}_1 \oplus \cdots \oplus \mathscr{T}_n$ is a tensor generator of the category $\Per_{\GO}(\Gr_G,\bk)$.

Once we know that $\widetilde{G}_\bk$ is algebraic, the fact that it is connected follows from Proposition~\ref{prop:properties-G}\eqref{it:properties-G-2}, using the same considerations as in the proof of Lemma~\ref{lem:tilde-G-connected-char-0}.
\end{proof}

\begin{rmk}\phantomsection
\label{rmk:group-field}
\begin{enumerate}
 \item 
 The algebraicity claim in Lemma~\ref{lem:G-algebraic-connected} is not proved in this way in~\cite{mv}. In fact, in order to apply the results of~\cite{py} we only need to know that the \emph{reduced} subgroup $(\widetilde{G}_\bk)_{\mathrm{red}}$ is of finite type, when $\bk$ is an algebraic closure of a finite field. The proof of this claim in the published version of~\cite{mv} is incomplete, but the authors have recently added in the arXiv version of their paper an appendix explaining how to fill this gap.
In any case, the prior knowledge of the fact that $\widetilde{G}_\bk$ is algebraic will allow us to simplify some later steps of the proof.
\item
\label{it:connected}
The fact that $\widetilde{G}_\bk$ is connected implies that $(\widetilde{G}_\bk)_{\mathrm{red}}$ is connected; see~\cite[\S 6.6]{waterhouse}.
\end{enumerate}
\end{rmk}

\begin{lem}
\label{lem:dim-geom-fiber}
If $\bk$ is an algebraic closure of a finite field, then the dimension of $\widetilde{G}_{\bk}$ is at most the dimension of the reductive $\bk$-group with root datum dual to that of $G$ (i.e.~$\dim(G)$).
\end{lem}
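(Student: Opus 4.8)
The idea is to compare $\widetilde{G}_\bk$ with the explicit reductive group and use the basic representation-theoretic invariant~\eqref{eqn:rank-dim} adapted to this setting, together with what we already know about weights. Let $H_\bk$ denote the split reductive $\bk$-group whose root datum is dual to that of $(G,T)$; it contains a split maximal torus $T^\vee_\bk$ with character lattice $X_*(T)$, and $\dim(H_\bk) = \dim(T^\vee_\bk) + \#\Delta(G,T) = \mathrm{rk}(X_*(T)) + \#\Delta(G,T)$, which equals $\dim(G)$. First I would recall that by Proposition~\ref{prop:highest-weight} the category $\Per_{\GO}(\Gr_G,\bk)$ is a highest weight category with standard objects $\cJ_!(\lambda,\bk)$ and costandard objects $\cJ_*(\lambda,\bk)$ indexed by $X_*(T)^+$; hence the classes $[\cJ_!(\lambda,\bk)]$ form a $\Z$-basis of the Grothendieck group $K^0(\Per_{\GO}(\Gr_G,\bk)) = K^0(\Rep_\bk(\widetilde{G}_\bk))$, and via the weight functors (Proposition~\ref{prop:can-basis-k}) the character of $\cJ_!(\lambda,\bk)$ — i.e.\ the element $\sum_\mu \dim_\bk \F_\mu(\cJ_!(\lambda,\bk)) \cdot e^\mu$ of $\Z[X_*(T)]$ — is exactly the number of irreducible components of $\Gr_G^\lambda \cap S_\mu$, which by Theorem~\ref{thm:orbits} and Proposition~\ref{prop:dim-weight-spaces} is the \emph{same} as the character of the Weyl module $V_\lambda(H_\bk)$ (a combinatorial fact independent of $\bk$, since both are governed by the Weyl character formula / the geometry of MV cycles).

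Next I would make the dimension bound quantitative. Since $\widetilde{G}_\bk$ is algebraic (Lemma~\ref{lem:G-algebraic-connected}) and connected, and $\bk$ is algebraically closed, there is a maximal torus $S \subset (\widetilde{G}_\bk)_{\mathrm{red}}$; write $r = \dim S$ for its rank and $N$ for the number of roots of $(\widetilde{G}_\bk)_{\mathrm{red}}$ with respect to $S$, so that $\dim\bigl((\widetilde{G}_\bk)_{\mathrm{red}}\bigr) \leq r + N$ (with equality if $(\widetilde{G}_\bk)_{\mathrm{red}}$ is reductive, but we do not yet know that) — actually one has $\dim\bigl((\widetilde{G}_\bk)_{\mathrm{red}}\bigr) = r + N + \dim R_u$ where $R_u$ is the unipotent radical, so in all cases $\dim\widetilde{G}_\bk = \dim (\widetilde{G}_\bk)_{\mathrm{red}}$ equals $r + N + \dim R_u \geq r + N$; what we want is the reverse inequality $r + N + \dim R_u \leq \dim(G)$. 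I would bound $r$ using the same argument as in~\S\ref{ss:identification-char-0-first-step}: formula~\eqref{eqn:rank-dim} (valid for any field, in particular $\bk$) identifies $r = \mathrm{rk}(\widetilde{G}_\bk)$ with $\dim \Spec\bigl(\Q \otimes_\Z K^0(\Rep_{\bk}(\widetilde{G}_\bk))\bigr)$, and by the character computation of the previous paragraph this Grothendieck ring, after $\otimes \Q$, is isomorphic to the one of $H_\bk$, namely $\Q[X_*(T)]^W$, whose spectrum is $T^\vee_\Q / W$ of dimension $\mathrm{rk}(X_*(T))$. Hence $r = \mathrm{rk}(X_*(T))$.

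Then I would bound the remaining contribution $N + \dim R_u$ by the number of roots of $H_\bk$. For this I would use that the set of nonzero weights appearing in representations of $\widetilde{G}_\bk$ is, by Theorem~\ref{thm:fiber-functor-k} and the character formula above, precisely the set of nonzero weights in representations of $H_\bk$, i.e.\ contained in the root lattice direction data already analyzed in~\S\ref{ss:identification-second-step} (the relevant input being that the differences $\lambda - \mu$ over all dominant $\lambda$ and weights $\mu$ of $\cJ_!(\lambda,\bk)$ span the same monoid for the two groups). The adjoint representation $\mathrm{Lie}(\widetilde{G}_\bk)$ is an object of $\Rep_\bk(\widetilde{G}_\bk)$ of dimension $\dim\widetilde{G}_\bk$, all of whose nonzero $S$-weights are roots; the number of its \emph{nonzero} weight lines is $N + \dim R_u$ (roots of the reductive quotient plus weight lines in the radical), and each such weight is a nonzero weight occurring in $\Rep_\bk(\widetilde{G}_\bk)$, hence an element of $\Delta(G,T)$ viewed inside $X_*(T)$ — because the nonzero weights occurring in $\Rep_\bk(\widetilde{G}_\bk) = \Rep_\bk(H_\bk)$ that are ``minimal'' (indivisible, extremal) are exactly the roots of $H_\bk = $ coroots-turned-roots, i.e.\ $\Delta(G,T)^\vee$ \ldots here one must be a little careful and argue, as in Lemma~\ref{lem:simple-roots}, that the roots of $\widetilde{G}_\bk$ are among the roots of $H_\bk$; granting that, each nonzero $S$-weight line of $\mathrm{Lie}(\widetilde{G}_\bk)$ corresponds to a distinct root of $H_\bk$ appearing with multiplicity one, so $N + \dim R_u \leq \#\Delta(H_\bk) = \#\Delta(G,T)$. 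Adding up, $\dim\widetilde{G}_\bk = r + N + \dim R_u \leq \mathrm{rk}(X_*(T)) + \#\Delta(G,T) = \dim(G)$, which is the assertion.

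\textbf{Main obstacle.} The delicate point is the last step: controlling $N + \dim R_u$, i.e.\ making sure that distinct nonzero weight lines of the adjoint representation give distinct roots of $H_\bk$ each counted once. Over a field of characteristic $0$ this is immediate from semisimplicity and the identification already done, but here $\widetilde{G}_\bk$ may be non-reduced or non-reductive a priori, so ``adjoint representation'' must be handled via its action on $\mathrm{Lie}(\widetilde{G}_\bk)$ and one needs the fact (from~\S\ref{ss:identification-second-step}-type arguments, transported to general $\bk$ using the character computation) that the indecomposable/extremal nonzero weights in $\Rep_\bk(\widetilde{G}_\bk)$ are precisely $\Delta(G,T)$ with multiplicity behavior controlled by the $H_\bk$-picture. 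I expect this weight-counting bookkeeping — rather than any deep new idea — to be where the real work lies; everything else (the rank computation via~\eqref{eqn:rank-dim}, the character identification via MV cycles, the reduction to $(\widetilde{G}_\bk)_{\mathrm{red}}$) is already available from the results quoted above.
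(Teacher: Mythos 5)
Your proposal takes a genuinely different route from the paper, but it has a structural circularity that makes it unworkable as written. The paper's proof is much simpler: since $\bk[\widetilde{G}_\bk] = \bk\otimes_\Z\Z[\widetilde{G}_\Z]$ is a base change of a \emph{free} $\Z$-module (Proposition~\ref{prop:base-change-B}), the dimension of $\widetilde{G}_\bk$ is bounded above by the dimension of the generic fiber $\widetilde{G}_{\mathbf{K}}$ over a $p$-adic field $\mathbf{K}$ (semicontinuity of fiber dimension for flat morphisms of finite presentation; or, concretely, lift $d$ algebraically independent elements from $\bk[\widetilde{G}_\bk]$ to $\mathbf{O}[\widetilde{G}_{\mathbf{O}}]$ and observe they stay independent after inverting $p$). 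The characteristic-$0$ identification (Theorem~\ref{thm:identification-char-0}) then gives $\dim(\widetilde{G}_{\mathbf{K}})=\dim(G)$, and the lemma follows. This cleanly sidesteps everything about roots and representations in positive characteristic.

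The central gap in your approach is the step bounding $N+\dim R_u$ by $\#\Delta(G,T)$, and you correctly flag it as "where the real work lies" --- but it is not a gap that can be closed at this point in the argument. To control the weights of the adjoint representation you would need to know its character, but the Tannakian equivalence $\Per_\GO(\Gr_G,\bk)\cong\Rep_\bk(\widetilde{G}_\bk)$ is abstract: it does not tell you which perverse sheaf corresponds to $\mathrm{Lie}(\widetilde{G}_\bk)$, so Proposition~\ref{prop:can-basis-k} gives you no grip on its weights. Your appeal to "$\Rep_\bk(\widetilde{G}_\bk)=\Rep_\bk(H_\bk)$" and to "\S\ref{ss:identification-second-step}-type arguments transported to general $\bk$" is circular: the positive-characteristic analogues of those arguments (Lemmas~\ref{lem:identification-pos-roots}, \ref{lem:identification-inclusion}, \ref{lem:identification-inclusion-2}, \ref{lem:identification-adjoint}) come \emph{after} Lemma~\ref{lem:dim-geom-fiber} in the paper precisely because they \emph{use} it --- for instance the proof of Lemma~\ref{lem:identification-adjoint} invokes Lemma~\ref{lem:dim-geom-fiber} to force $(\widetilde{G}_\bk)_{\mathrm{red}}=R$. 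Two smaller issues: (i) formula~\eqref{eqn:rank-dim} is only stated for reductive groups, and $(\widetilde{G}_\bk)_{\mathrm{red}}$ is not yet known to be reductive (the paper's Lemma~\ref{lem:identification-R-torus} has to do extra work, using Frobenius twists, to get the rank bound for the reductive quotient $R$); (ii) "$\mathrm{Lie}(\widetilde{G}_\bk)$ has dimension $\dim\widetilde{G}_\bk$" is false when $\widetilde{G}_\bk$ is non-reduced, which you cannot exclude here; you would at best get $\dim\widetilde{G}_\bk\leq\dim\mathrm{Lie}(\widetilde{G}_\bk)$, so the direction of the inequality is right, but the weight count on the Lie algebra is then not what you claim. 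In short: the lemma is stated early and proved by flatness exactly so that the subsequent root-datum comparison has an unconditional dimension bound to lean on; trying to prove it by root-datum comparison first inverts that logic.
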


\begin{proof}
This property follows from the general fact that the dimension of fibers of a flat morphism of finite presentation is a lower semicontinuous function (on the target), see~\cite[Tag 0D4H]{stacks-project}. In more ``down to earth'' terms, one can argue as follows.
Let $p$ be the characteristic of $\bk$, and set
$d:=\dim(\widetilde{G}_{\bk})$. Then there exist $d$ algebraically independent functions $f_1, \cdots, f_d$ in $\bk \left[\widetilde{G}_{\bk} \right]$ (see e.g.~\cite[Theorem~5.22]{gw}). Since
\[
\bk \left[\widetilde{G}_{\bk} \right] = \bk \otimes_{\FF_p} \FF_p \left[\widetilde{G}_{\FF_p} \right],
\]
there exists a finite field $\FF \subset \bk$ such that each $f_i$ belongs to
\[
\FF \otimes_{\FF_p} \FF_p \left[\widetilde{G}_{\FF_p} \right] \cong \FF \left[\widetilde{G}_{\FF} \right].
\]
Let $\mathbf{O}$ be a finite extension of $\Z_p$ with residue field $\FF$. Then since
\[
\FF \left[\widetilde{G}_{\FF} \right] = \FF \otimes_{\mathbf{O}} \mathbf{O} \left[\widetilde{G}_{\mathbf{O}} \right],
\]
each $f_i$ can be lifted to a function $\tilde{f}_i \in \mathbf{O} \left[\widetilde{G}_{\mathbf{O}} \right]$. Since $\mathbf{O} \left[\widetilde{G}_{\mathbf{O}} \right]$ is torsion-free, the collection $\tilde{f}_1, \cdots, \tilde{f}_d$ does not satisfy any algebraic equation with coefficients in $\mathbf{O}$. Finally, since $\mathbf{O} \left[\widetilde{G}_{\mathbf{O}} \right]$ is free over $\mathbf{O}$, if $\mathbf{K}$ is the fraction field of $\mathbf{O}$ this collection is algebraically independent in
\[
\mathbf{K} \otimes_{\mathbf{O}} \mathbf{O} \left[\widetilde{G}_{\mathbf{O}} \right] \cong \mathbf{K} \left[\widetilde{G}_{\mathbf{K}} \right].
\]
Hence $d$ is at most $\dim(\widetilde{G}_{\mathbf{K}})$. We conclude using the fact that $\dim(\widetilde{G}_{\mathbf{K}})$ is the dimension of the split reductive $\bk$-group with root datum dual to that of $G$, see Theorem~\ref{thm:identification-char-0}.
\end{proof}

We finish this subsection with the following remark, valid for any ring $\bk$. We denote by $T^\vee_\bk$ the split $\bk$-torus whose group of characters is $X_*(T)$. Then, as in the case of fields of characteristic $0$ (see~\S\ref{ss:identification-char-0-first-step}), the weight functors define a canonical functor $\Per_{\GO}(\Gr_G,\bk) \to \Rep(T^\vee_\bk)$ sending convolution to tensor product and the functor $\F$ to the natural forgetful functor. In view of~\cite[Theorem~X.1.2]{milne} (compare with Proposition~\ref{prop:reconstruction-morph} and Proposition~\ref{prop:tannakian-morph}), this defines for any $\bk$-algebra $\bk'$ a group morphism $T^\vee_{\bk}(\bk') \to \widetilde{G}_\bk(\bk')$, or in other words a $\bk$-group scheme morphism $T^\vee_\bk \to \widetilde{G}_\bk$. Again as in the characteristic-$0$ case, for any $\lambda \in X_*(T)$ the free rank-$1$ $T^\vee_\bk$-module defined by $\lambda$ appears as a direct summand of the image of an object of $\Per_{\GO}(\Gr_G,\bk)$; considering matrix coefficients we deduce that $\lambda$ belongs to the image of the associated morphism $\bk [\widetilde{G}_{\bk} ] \to \bk[T^\vee_\bk]$. This shows that this morphism is surjective, i.e.~that the morphism $T^\vee_\bk \to \widetilde{G}_\bk$ is a closed embedding.


\subsection{Study of the group $( \widetilde{G}_{\bk} )_{\mathrm{red}}$ for $\bk$ an algebraic closure of a finite field}
\label{ss:identification-study}

In this subsection we fix a prime number $p$ and assume that $\bk$ is an algebraic closure of $\FF_p$. We study in detail the algebraic $\bk$-group scheme\footnote{Recall that if $H$ is a group scheme over a field $\FF$, the associated reduced scheme $H_{\mathrm{red}}$ is not necessarily a closed subgroup. But this is true if $\FF$ is perfect, which is the case here; see~\cite[\S VI.6]{milne}.} $( \widetilde{G}_{\bk} )_{\mathrm{red}}$.
Recall that this group is connected; see Remark~\ref{rmk:group-field}\eqref{it:connected}.
We also remark that the embedding $T^\vee_\bk \to \widetilde{G}_\bk$ factors through an embedding $T^\vee_\bk \to ( \widetilde{G}_\bk )_{\mathrm{red}}$ since $T^\vee_\bk$ is reduced. The goal of this subsection is to prove the following proposition.

\begin{prop}
\label{prop:identification-red}
The group scheme $( \widetilde{G}_{\bk} )_{\mathrm{red}}$ is a connected reductive group, $T^\vee_\bk$ is a maximal torus of this group, and the root datum of $( \widetilde{G}_{\bk} )_{\mathrm{red}}$ with respect to $T^\vee_\bk$ is dual to that of $(G,T)$.
\end{prop}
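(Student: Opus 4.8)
The plan is to combine the structural results obtained so far with the two theorems of Prasad--Yu quoted in~\S\ref{ss:identification-statement}. The key facts at our disposal are: (a) $\widetilde G_\Z$ is flat over $\Z$ with $\Z[\widetilde G_\Z]$ free, and base change commutes (see~\eqref{eqn:base-change-tG} and Proposition~\ref{prop:base-change-B}); (b) for $\bk$ a field, $\widetilde G_\bk$ is algebraic and connected (Lemma~\ref{lem:G-algebraic-connected}), hence so is $(\widetilde G_\bk)_{\mathrm{red}}$ (Remark~\ref{rmk:group-field}\eqref{it:connected}); (c) in characteristic $0$ the group is the Langlands dual group (Theorem~\ref{thm:identification-char-0}); (d) there is a closed embedding $T^\vee_\bk \hookrightarrow \widetilde G_\bk$, factoring through $(\widetilde G_\bk)_{\mathrm{red}}$; and (e) the dimension bound of Lemma~\ref{lem:dim-geom-fiber}. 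The strategy is first to pin down the weights of $T^\vee_\bk$ acting on objects of $\Per_{\GO}(\Gr_G,\bk)$ via the weight functors, exactly as in~\S\ref{ss:identification-second-step}, to get control on the character lattice and on a candidate Borel; then to bound $\dim (\widetilde G_\bk)_{\mathrm{red}}$ from above by $\dim G$ and from below by $\mathrm{rk}\,T^\vee_\bk$ plus the number of roots, forcing $(\widetilde G_\bk)_{\mathrm{red}}$ to be reductive of the right dimension with $T^\vee_\bk$ maximal; and finally to identify the root datum by the same simple-root/indecomposable-element argument as in the characteristic-$0$ case.

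\textbf{Key steps.} First I would recall from Proposition~\ref{prop:can-basis-k} and Theorem~\ref{thm:fiber-functor-k} that for each $\lambda \in X_*(T)^+$ the object $\cJ_{!*}(\lambda,\bk)$ (or, more robustly over a general field, a tilting object $\mathscr T_\lambda$) has $T^\vee_\bk$-weights contained in $\mathrm{Conv}(W\lambda) \cap (\lambda + Q^\vee)$, with $\lambda$ occurring with multiplicity one, so that every $\mu \in X_*(T)$ occurs as a weight of some object; this shows $X^*(T^\vee_\bk) = X_*(T)$ is exactly the character lattice appearing. Second, choose a Borel $\widetilde B \subset (\widetilde G_\bk)_{\mathrm{red}}$ containing $T^\vee_\bk$ with $2\rho$ dominant (possible since $(\widetilde G_\bk)_{\mathrm{red}}$ is connected reductive once we know reductivity --- but this has to be deferred, see below) and run the argument of Lemma~\ref{lem:Borel} and Lemma~\ref{lem:simple-roots}: the $\N$-span of differences $\lambda - \mu$ over highest weights $\lambda$ and weights $\mu$ of the corresponding simple/tilting modules is the $\N$-span of the positive roots of $(\widetilde G_\bk)_{\mathrm{red}}$, and its indecomposable elements are the simple coroots of $G$; combined with~\eqref{eqn:root-directions} this gives $\Delta_{\mathrm s}(\widetilde G_\bk, \widetilde B, T^\vee_\bk) = \Delta^\vee_{\mathrm s}(G,B,T)$ and dually $\Delta^\vee_{\mathrm s}(\widetilde G_\bk,\widetilde B,T^\vee_\bk) = \Delta_{\mathrm s}(G,B,T)$, with matching bijections, exactly as in the proof of Theorem~\ref{thm:identification-char-0}. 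Extending over the Weyl group gives the full dual root datum. Third, to actually get reductivity of $(\widetilde G_\bk)_{\mathrm{red}}$: by the above analysis the number of roots of $(\widetilde G_\bk)_{\mathrm{red}}$ with respect to $T^\vee_\bk$ is at least $\#\Delta(G,T)$ (the coroots of $G$ all appear), so $\dim (\widetilde G_\bk)_{\mathrm{red}} \geq \mathrm{rk}\,T^\vee_\bk + \#\Delta(G,T) = \dim G$; combined with Lemma~\ref{lem:dim-geom-fiber} (which gives $\dim \widetilde G_\bk \leq \dim G$, hence $\dim (\widetilde G_\bk)_{\mathrm{red}} \leq \dim G$) this forces equality, so $T^\vee_\bk$ is a maximal torus and the root system of $(\widetilde G_\bk)_{\mathrm{red}}$ with respect to it is exactly $\{\alpha^\vee : \alpha \in \Delta(G,T)\}$ with no room for a unipotent radical; thus $(\widetilde G_\bk)_{\mathrm{red}}$ is reductive with the prescribed root datum.

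\textbf{Main obstacle.} The delicate point is the logical ordering around reductivity: several of the arguments I want to use (choice of a Borel with $2\rho$ dominant, the fact that $R_u$ being trivial can be read off the root system) presuppose that $(\widetilde G_\bk)_{\mathrm{red}}$ is already reductive, whereas a priori we only know it is a connected algebraic group. I expect the clean way around this is to argue the dimension inequality $\dim (\widetilde G_\bk)_{\mathrm{red}} \geq \mathrm{rk} + \#(\text{roots})$ purely from the existence of enough weight spaces --- i.e.\ from the fact that for each root $\alpha$ of $G$ the adjoint action of $T^\vee_\bk$ on $\mathrm{Lie}(\widetilde G_\bk)_{\mathrm{red}}$ has $\alpha^\vee$ (and $-\alpha^\vee$) as weights, detected via matrix coefficients of objects of $\Per_{\GO}(\Gr_G,\bk)$ --- without first invoking the structure theory; once the dimension bound meets Lemma~\ref{lem:dim-geom-fiber} reductivity is automatic. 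The other genuinely nontrivial ingredient, which will be needed to pass from $(\widetilde G_\bk)_{\mathrm{red}}$ back to $\widetilde G_\bk$ (i.e.\ to conclude $\widetilde G_\bk$ itself is reduced, hence $= (\widetilde G_\bk)_{\mathrm{red}}$ and flatness gives Theorem~\ref{thm:description-gp}), is the Prasad--Yu criterion~\cite[Theorem~1.2]{py}: having shown $\widetilde G_{\Q_p} $ is reductive of the right type and $(\widetilde G_{\overline{\FF}_p})_{\mathrm{red}}$ is reductive of the right type, with matching dimensions, one deduces $\widetilde G_{\Z_p}$ is reductive, in particular smooth, so $\widetilde G_{\overline{\FF}_p}$ is reduced. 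I would defer that last deduction to~\S\ref{ss:identification-statement}'s step~\eqref{it:overview-k-2} and keep the present proof focused on establishing reductivity, maximality of $T^\vee_\bk$, and the root datum for $(\widetilde G_\bk)_{\mathrm{red}}$.
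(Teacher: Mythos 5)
Your overall shape is right---closed embedding of $T^\vee_\bk$, Borel with $2\rho$ dominant, indecomposable-element argument for simple (co)roots, dimension pinching against Lemma~\ref{lem:dim-geom-fiber}---but there is a genuine gap at the point you flag, and your proposed fix does not close it. You acknowledge that choosing a Borel $\widetilde{B}\subset(\widetilde{G}_\bk)_{\mathrm{red}}$ with $2\rho$ dominant presupposes reductivity, and you suggest working around this by showing that for each root $\alpha$ of $G$, the cocharacter $\alpha^\vee$ occurs as a weight of $T^\vee_\bk$ on $\mathrm{Lie}(\widetilde{G}_\bk)_{\mathrm{red}}$, ``detected via matrix coefficients.'' This is not an argument: the $T^\vee_\bk$-weights appearing in objects of $\Per_{\GO}(\Gr_G,\bk)$ (which the weight functors compute) are a priori unrelated to the adjoint weights on $\mathrm{Lie}(\widetilde{G}_\bk)_{\mathrm{red}}$; extracting the latter from Tannakian data requires exactly the structure theory you are trying to avoid, and in positive characteristic the Lie algebra of the reduced subgroup is further separated from the coordinate ring of $\widetilde{G}_\bk$ by Frobenius phenomena. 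Moreover, even granted such weights, you would still need to know $T^\vee_\bk$ is a \emph{maximal} torus of $(\widetilde{G}_\bk)_{\mathrm{red}}$ before the count $\dim = \mathrm{rk} + \#(\text{roots})$ can be invoked, and that maximality is itself not free.

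The paper breaks this circle by two devices absent from your plan. First, it introduces the maximal reductive quotient $R$ of $(\widetilde{G}_\bk)_{\mathrm{red}}$ (which \emph{is} reductive, so a Borel with $2\rho$ dominant exists) and proves $T^\vee_\bk$ is maximal in $R$ by a Frobenius-twist argument (Lemma~\ref{lem:identification-R-torus}): for suitable $n$, $\mathrm{Fr}^n_{\widetilde{G}_\bk}$ factors through $((\widetilde{G}_\bk)_{\mathrm{red}})^{(n)}$, so simple $R^{(n)}$-modules pull back to \emph{simple} $\widetilde{G}_\bk$-modules, yielding an injection of rationalized Grothendieck rings that bounds $\mathrm{rk}(R)$ from above by $\dim T^\vee_\bk$. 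All the root-datum analysis (Lemmas~\ref{lem:identification-pos-roots}--\ref{lem:identification-inclusion-2}) is then done for $R$, not $(\widetilde{G}_\bk)_{\mathrm{red}}$, and only at the end does a dimension count ($\dim R \leq \dim(\widetilde{G}_\bk)_{\mathrm{red}}\leq \dim G$ meeting $\dim R\geq\dim G$) force $R=(\widetilde{G}_\bk)_{\mathrm{red}}$. Second, even for $R$ the root-datum identification does not close in one step: Lemmas~\ref{lem:identification-inclusion} and~\ref{lem:identification-inclusion-2} give two inclusions of root lattices, not equalities. The paper turns the second into an equality first in the adjoint case (where $\Z\Delta(G,T)=X^*(T)$ makes it automatic), then for semisimple $G$ via the central isogeny $\widetilde{G_{\ad}}_\bk \to \widetilde{G}_\bk$, and then for general $G$ via the exact sequence $\widetilde{G/H}_\bk\to\widetilde{G}_\bk\to\widetilde{H}_\bk$ with $H=Z(G)^\circ$. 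This adjoint $\Rightarrow$ semisimple $\Rightarrow$ general cascade has no analogue in your proposal, and is where the final equality of root lattices actually comes from.

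So the bottom line: your dimension-pinching idea is sound and matches the paper's endgame, but the two missing ingredients---the passage to the reductive quotient $R$ with the Frobenius-twist maximality argument, and the reduction to the adjoint case to convert inclusions of root lattices into equalities---are not optional details; without them the argument is circular at precisely the step you identified.
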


Note that Proposition~\ref{prop:identification-red} is sufficient to complete the program outlined in~\S\ref{ss:identification-statement}. Indeed,
once this result is proved, we will know that the group scheme $\widetilde{G}_{\Z_p}$ over $\Z_p$ satisfies the following conditions:
\begin{itemize}
\item
$\widetilde{G}_{\Z_p}$ is affine and flat over $\Z_p$ (see~\S\ref{ss:identification-first});
\item
the generic fiber $\widetilde{G}_{\Q_p} = \Spec(\Q_p) \times_{\Spec(\Z_p)} \widetilde{G}_{\Z_p}$ is connected and smooth over $\Q_p$ (see Theorem~\ref{thm:identification-char-0});
\item
the reduced geometric special fiber $(\widetilde{G}_{\bk})_{\mathrm{red}} = (\Spec(\bk) \times_{\Spec(\Z_p)} \widetilde{G}_{\Z_p})_{\mathrm{red}}$ is of finite type over $\bk$ (see Lemma~\ref{lem:G-algebraic-connected}) and its identity component $(\widetilde{G}_{\bk})_{\mathrm{red}}^\circ$ is a reductive group of the same dimension as $\widetilde{G}_{\Q_p}$ (see Proposition~\ref{prop:identification-red}).
\end{itemize}
In the terminology of~\cite{py}, this means that $\widetilde{G}_{\Z_p}$ is quasi-reductive. We will also know that
\begin{itemize}
 \item the root data of $\widetilde{G}_{\Q_p}$ and $(\widetilde{G}_{\bk})_{\mathrm{red}}^\circ$ coincide.
\end{itemize}
By~\cite[Theorem~1.2]{py}, it will follow that $\widetilde{G}_{\Z_p}$ is a reductive group over $\Z_p$.
This will imply in particular that $\widetilde{G}_\bk$ is reduced, hence that in Proposition~\ref{prop:identification-red} we can omit the subscript ``$\mathrm{red}$,'' and thus will finally prove the properties~\eqref{it:overview-k-2}--\eqref{it:overview-k-4} of~\S\ref{ss:identification-statement}.

The proof of Proposition~\ref{prop:identification-red} will be based on the same ideas as in
Section~\ref{sec:identification-char-0}, but with many additional difficulties.
We need some preparatory lemmas. 
We denote by $R$ the quotient of $( \widetilde{G}_{\bk} )_{\mathrm{red}}$ by its unipotent radical. Then the composition $T^\vee_\bk \to ( \widetilde{G}_{\bk} )_{\mathrm{red}} \to R$ is injective, so that we can also consider $T^\vee_\bk$ as a closed subgroup of $R$. 

\begin{lem}
\label{lem:identification-R-torus}
$T^\vee_\bk$ is a maximal torus of $R$.
\end{lem}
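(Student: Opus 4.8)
The statement to prove is Lemma~\ref{lem:identification-R-torus}: that $T^\vee_\bk$ is a maximal torus of the reductive quotient $R = (\widetilde{G}_\bk)_{\mathrm{red}} / R_u$. The strategy mirrors the characteristic-$0$ identification of the maximal torus in~\S\ref{ss:identification-char-0-first-step}, via the equality~\eqref{eqn:rank-dim} relating the rank of a reductive group to the dimension of the spectrum of (the rationalization of) its Grothendieck ring of representations. The plan is first to observe that, since $T^\vee_\bk$ is a closed subtorus of $R$, its rank is at most the rank of $R$; and then to produce the reverse inequality by a cohomology-ring computation, so equality of ranks forces $T^\vee_\bk$ to be maximal.

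\textbf{Key steps.} First, I would recall that the weight-functor decomposition (Theorem~\ref{thm:fiber-functor-k}) and the compatibility of $\bigoplus_\mu \F_\mu$ with convolution (Proposition~\ref{prop:F-mu-tensor}) give a monoidal functor $\Per_{\GO}(\Gr_G,\bk) \to \Rep_\bk(T^\vee_\bk)$ compatible with the forgetful functors, hence (by the remark at the end of~\S\ref{ss:identification-first}) a closed embedding $T^\vee_\bk \hookrightarrow \widetilde{G}_\bk$ factoring through $(\widetilde{G}_\bk)_{\mathrm{red}}$ and then through $R$ (as $T^\vee_\bk$ is reduced and maps injectively to the reductive quotient). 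Second, I would compute the Grothendieck ring of $\Rep_{\bk}((\widetilde{G}_\bk)_{\mathrm{red}})$, or rather of $R$, after tensoring with $\Q$: the simple objects of $\Per_{\GO}(\Gr_G,\bk)$ are the $\cJ_{!*}(\lambda,\bk)$ for $\lambda\in X_*(T)^+$ (since $\bk$ is a field), and by Proposition~\ref{prop:can-basis-k} (and Proposition~\ref{prop:dim-weight-spaces} for the numerics over a field of positive characteristic, interpreted via the integral picture and base change) the class of $\cJ_{!*}(\lambda,\bk)$ in $\Q\otimes_\Z K^0$ has the same $T^\vee_\bk$-weight support as in characteristic $0$; so the induced morphism $T^\vee_\Q \to \Spec(\Q\otimes_\Z K^0(\Rep_{\bk}(R)))$ identifies the target with $T^\vee_\Q / W$, exactly as in~\S\ref{ss:identification-char-0-first-step}. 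Third, applying~\eqref{eqn:rank-dim} to the reductive group $R$ over the algebraically closed field $\bk$, I deduce $\mathrm{rk}(R) = \dim(T^\vee_\Q/W) = \dim T^\vee_\bk$. Combined with the inclusion $T^\vee_\bk \subseteq R$ of a subtorus, this forces $T^\vee_\bk$ to be a maximal torus of $R$.

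\textbf{Main obstacle.} The delicate point is justifying that the representation ring of $R$ over a positive-characteristic field $\bk$ still has the ``Weyl-character-formula'' structure needed for~\eqref{eqn:rank-dim} to apply, and more precisely that the map $T^\vee_\Q \to \Spec(\Q\otimes_\Z K^0(\Rep_\bk R))$ has image identified with $T^\vee_\Q/W$. Over a field of characteristic $0$ this used semisimplicity of $\Rep_{\overline{\bk}}(G)$, which fails here. The fix is to work entirely at the level of Grothendieck groups and characters: one does not need $\Rep_\bk(R)$ to be semisimple, only that $\Q\otimes_\Z K^0(\Rep_\bk(R))$ has a basis of classes whose characters are Weyl-invariant and ``triangular'' with respect to the dominance order with the right support (the induced modules $H^0(\lambda)$ of~\cite[Chap.~II.2]{jantzen}), which is a general fact about reductive groups over any algebraically closed field recalled around~\eqref{eqn:rank-dim}. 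The other subtle input is that the $T^\vee_\bk$-weight data of $\cJ_{!*}(\lambda,\bk)$ agrees with the characteristic-$0$ weight data: since $\cJ_!(\lambda,\bk)\cong\bk\lotimes_\Z\cJ_!(\lambda,\Z)$ (Proposition~\ref{prop:extension-scalars-J!-J*}) and $\cJ_!(\lambda,\Z)=\cJ_{!*}(\lambda,\Z)$ (Lemma~\ref{lem:costd-simple-over-Z}), the classes of standard and simple objects coincide over $\Z$, so after applying $\bigoplus_\mu \F_\mu$ and using Proposition~\ref{prop:can-basis-k} we get that $[\cJ_{!*}(\lambda,\bk)]$ has leading weight $\lambda$ (multiplicity one) and all other weights in $\mathrm{Conv}(W\lambda)\cap(\lambda+Q^\vee)$; this is precisely the triangularity needed. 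Once these two points are in place, the argument is formally the same as in~\S\ref{ss:identification-char-0-first-step}.
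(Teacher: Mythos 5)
Your overall strategy is sound (and matches the paper's): bound $\mathrm{rk}(R)$ above by $\dim(T^\vee_\bk)$ using the rank formula~\eqref{eqn:rank-dim}, and combine with the inclusion $T^\vee_\bk \subset R$. But there is a genuine gap in the middle step, and it is precisely the point where the positive-characteristic case diverges from the characteristic-$0$ argument.

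You propose to compute $\Q \otimes_\Z K^0(\Rep_\bk(R))$ directly and identify $\Spec$ of it with $T^\vee_\Q / W$ using the simple objects $\cJ_{!*}(\lambda,\bk)$ of $\Per_{\GO}(\Gr_G,\bk)$. But those simple objects are the simple objects of $\Rep_\bk(\widetilde{G}_\bk)$, not of $\Rep_\bk(R)$ — and $\widetilde{G}_\bk$ is precisely what is not yet known to be reduced, let alone reductive. The formula~\eqref{eqn:rank-dim} applies only to a \emph{reductive} group, so it cannot be applied to $\widetilde{G}_\bk$ at this stage. (In~\S\ref{ss:identification-char-0-first-step} it could be applied to $\widetilde{G}_\bk$ because reductivity was established first, in Lemma~\ref{lem:reductive-char-0}; here the logical order is reversed.) Conversely, the object $R$ to which~\eqref{eqn:rank-dim} does apply is not one whose representation category you can see via the Satake category: there is no natural functor $\Rep_\bk(\widetilde{G}_\bk) \to \Rep_\bk(R)$, only a restriction $\Rep_\bk(\widetilde{G}_\bk) \to \Rep_\bk((\widetilde{G}_\bk)_{\mathrm{red}})$, which goes the wrong way and need not be surjective on $\Q \otimes K^0$. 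So the claimed identification of $\Spec(\Q\otimes K^0(\Rep_\bk(R)))$ with $T^\vee_\Q/W$ is unjustified, and the ``main obstacle'' paragraph of your proposal addresses a different (and more benign) issue.

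The missing ingredient is a bridge between $\Rep_\bk(R)$ and $\Rep_\bk(\widetilde{G}_\bk)$, and the paper supplies it via a Frobenius factorization. Because $\bk$ is perfect of characteristic $p$, the group scheme $\widetilde{G}_\bk$ is, as a scheme, the product of $(\widetilde{G}_\bk)_{\mathrm{red}}$ with a finite infinitesimal scheme, so for some $n$ the $n$-th Frobenius morphism $\mathrm{Fr}^n_{\widetilde{G}_\bk}$ factors through $\bigl((\widetilde{G}_\bk)_{\mathrm{red}}\bigr)^{(n)}$, hence through $R^{(n)}$. A simple $R^{(n)}$-module then pulls back to a simple $\widetilde{G}_\bk$-module (its restriction to $(\widetilde{G}_\bk)_{\mathrm{red}}$ is a Frobenius twist, simple by~\cite[Proposition~I.9.5]{jantzen}), giving an injective ring map $\Q \otimes_\Z K^0(\Rep_\bk(R^{(n)})) \hookrightarrow \Q \otimes_\Z K^0(\Rep_\bk(\widetilde{G}_\bk))$, whence $\mathrm{rk}(R) = \mathrm{rk}(R^{(n)}) \leq \dim\Spec(\Q\otimes K^0(\Rep_\bk(\widetilde{G}_\bk))) = \dim(T^\vee_\bk)$. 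The last equality does use the $\cJ_{!*}(\lambda,\bk)$ and their weight data exactly as you describe; that part of your argument is fine. What you cannot avoid is routing the comparison through $\widetilde{G}_\bk$ rather than computing for $R$ directly, and you need Frobenius to do that.
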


\begin{proof}
First, by~\cite[Corollary~III.3.6.4]{dg}, $\widetilde{G}_{\bk}$ is isomorphic, as a scheme, to the product (over $\Spec(\bk)$) of $( \widetilde{G}_{\bk} )_{\mathrm{red}}$ with a scheme of the form $\Spec(\bk[X_1, \cdots, X_r]/(X_1^{p^{n_1}}, \cdots, X_r^{p^{n_r}}))$ for some positive integers $n_1, \cdots, n_r$. It follows\footnote{See also~\cite[Corollary~VI.10.2]{milne} for a direct proof of this fact.} that for some $n$, the $n$-th Frobenius morphism $\mathrm{Fr}^n_{\widetilde{G}_{\bk}} : \widetilde{G}_{\bk} \to (\widetilde{G}_{\bk})^{(n)}$ (see e.g.~\cite[\S I.9.2]{jantzen}) factors through $\bigl( ( \widetilde{G}_{\bk} )_{\mathrm{red}} \bigr)^{(n)}$. Hence we can consider the diagram
\[
 \xymatrix@C=2cm{
 \widetilde{G}_{\bk} \ar[r]^-{\mathrm{Fr}^n_{\widetilde{G}_{\bk}}} \ar@{-->}[rd] & \bigl( \widetilde{G}_{\bk} \bigr)^{(n)} \\
 (\widetilde{G}_{\bk})_{\mathrm{red}} \ar@{^{(}->}[u] \ar[d] \ar[r]^-{\mathrm{Fr}^n_{(\widetilde{G}_{\bk})_{\mathrm{red}}}} & \bigl( (\widetilde{G}_{\bk})_{\mathrm{red}} \bigr)^{(n)} \ar@{^{(}->}[u] \ar[d] \\
 R \ar[r]^-{\mathrm{Fr}^n_R} & R^{(n)}.
 }
\]

Now, consider a simple representation $V$ of $R^{(n)}$, seen as a (simple) representation of the group $\bigl( ( \widetilde{G}_{\bk} )_{\mathrm{red}} \bigr)^{(n)}$. Our factorization above allows to see $V$ as a representation of $\widetilde{G}_\bk$. This representation is simple: in fact, its restriction to $( \widetilde{G}_{\bk} )_{\mathrm{red}}$ is simply the twist of $V$ by $\mathrm{Fr}^n_{(\widetilde{G}_{\bk})_{\mathrm{red}}}$, hence it is simple
by~\cite[Proposition~I.9.5]{jantzen}. 
In this way
 we obtain an injective ring morphism
\[
\Q \otimes_\Z K^0(\Rep_{\bk}(R^{(n)})) \hookrightarrow \Q \otimes_\Z K^0(\Rep_\bk(\widetilde{G}_\bk)).
\]
By~\cite[Theorem~5.22(3)]{gw}, this shows that
\[
\dim \Spec(\Q \otimes_\Z K^0(\Rep_\bk(\widetilde{G}_\bk))) \geq \dim \Spec (\Q \otimes_\Z K^0(\Rep_{\bk}(R^{(n)}))).
\]
Here, by~\eqref{eqn:rank-dim} the right-hand side is equal to $\mathrm{rk}(R^{(n)}) = \mathrm{rk}(R)$, and the left-hand side is equal to $\dim(T)$ (by the same considerations as in the characteristic-$0$ case, see~\S\ref{ss:identification-char-0-first-step}). Hence this inequality means that $\mathrm{rk}(R) \leq \dim(T^\vee_\bk)$, hence that $T^\vee_\bk$ is a maximal torus in $R$.
\end{proof}

Now we choose a Borel subgroup $\widetilde{B}$ of $R$ containing $T^\vee_\bk$ for which the sum $2\rho$ of the positive roots of $G$ is a dominant cocharacter (for the choice of positive roots given by the $T^\vee_\bk$-weights in the Lie algebra of $\widetilde{B}$). We then use the same notation as in~\S\ref{ss:identification-second-step} for roots and coroots of $G$ and $R$.

\begin{lem}
\label{lem:identification-pos-roots}
The set of dominant weights of $(R,T^\vee_\bk)$ relative to the system of positive roots $\Delta_+(R, \widetilde{B}, T^\vee_\bk)$ is $X_*(T)^+ \subset X_*(T)=X^*(T^\vee_\bk)$.
\end{lem}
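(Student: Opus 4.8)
\textbf{Proof strategy for Lemma~\ref{lem:identification-pos-roots}.} The plan is to mimic the argument of Lemma~\ref{lem:Borel} in the characteristic-$0$ setting, but working with the quotient group $R$ rather than with $\widetilde{G}_\bk$ itself. The two ingredients I would use are: first, Proposition~\ref{prop:can-basis-k} together with Theorem~\ref{thm:fiber-functor-k}, which tell us the $T^\vee_\bk$-weights appearing in $\F(\cJ_{!*}(\lambda,\bk))$, and in particular that for $\lambda \in X_*(T)^+$ the value $\langle 2\rho, \mu \rangle$ on a weight $\mu$ of this module is maximized exactly at $\mu=\lambda$ (since $\mu \in \Conv(W\lambda)$ forces $\langle 2\rho,\mu\rangle \le \langle 2\rho,\lambda\rangle$ with equality only for $\mu=\lambda$, as $2\rho$ is regular dominant); second, the fact that $\Per_{\GO}(\Gr_G,\bk)$ is a highest weight category (Proposition~\ref{prop:highest-weight}) with standard objects $\cJ_!(\lambda,\bk)$, so that the simple objects are the $\cJ_{!*}(\lambda,\bk)$ for $\lambda \in X_*(T)^+$.

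First I would observe that the simple representations of $R^{(n)}$, pulled back via the factorization $\widetilde{G}_\bk \to R^{(n)}$ constructed in the proof of Lemma~\ref{lem:identification-R-torus}, give simple representations of $\widetilde{G}_\bk$, hence correspond to objects $\cJ_{!*}(\lambda,\bk)$ for various $\lambda$; moreover, since the embedding $T^\vee_\bk \hookrightarrow R$ is compatible with the Frobenius twists, the $T^\vee_\bk$-weights of such a pullback are $p^n$ times the $T^\vee_\bk$-weights of the original $R$-module. I would then argue in two directions. For the inclusion ``$X_*(T)^+ \subseteq$ dominant weights of $R$'': given $\lambda \in X_*(T)^+$, let $V = \Sat(\cJ_{!*}(\lambda,\bk))$ be the corresponding simple $\widetilde{G}_\bk$-module; by the maximality property of $\langle 2\rho, \lambda\rangle$ among its weights (and uniqueness of that weight), and the fact that $2\rho$ is a dominant cocharacter of $(R,\widetilde{B})$, the weight $\lambda$ must be the highest weight of $V$ with respect to $\widetilde{B}$, so $\lambda$ is dominant for $R$. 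For the reverse inclusion, I would take a dominant weight $\mu$ of $(R, T^\vee_\bk)$, realise (a $p^n$-th power of) it as the highest weight of a simple $\widetilde{G}_\bk$-module coming from $R^{(n)}$, identify that module with some $\Sat(\cJ_{!*}(\lambda,\bk))$ with $\lambda \in X_*(T)^+$, and conclude from the first part that $p^n\mu$, hence $\mu$, lies in $X_*(T)^+$ (using that $X_*(T)^+$ is saturated under division by positive integers, being the set of $\nu$ with $\langle \nu, \alpha\rangle \ge 0$ for all $\alpha \in \Delta_{\mathrm s}(G,B,T)$).

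The one point that needs care, and which I expect to be the main obstacle, is the matching of \emph{highest weights} between an $R$-module and the corresponding $\cJ_{!*}(\lambda,\bk)$ once Frobenius twists are involved: I must check that if $V$ is a simple $R$-module with highest weight $\mu$ then the $\widetilde{G}_\bk$-module obtained by pullback along $\widetilde{G}_\bk \to R^{(n)}$ has highest weight $p^n \mu$, which requires knowing that the set of $T^\vee_\bk$-weights of a Frobenius twist $M^{(n)}$ is $p^n$ times the set of weights of $M$ and that the dominance order is preserved under multiplication by $p^n$ — both of which are standard (see~\cite[\S I.9]{jantzen}). Once this dictionary is in place, the statement follows by combining it with Proposition~\ref{prop:can-basis-k} exactly as in Lemma~\ref{lem:Borel}. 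I would also note that, just as in the characteristic-$0$ case, this lemma will have as a consequence that $\widetilde{B}$ is uniquely determined and that no root of $(R, T^\vee_\bk)$ is orthogonal to $2\rho$, which will be recorded for use in the subsequent identification of the root datum.
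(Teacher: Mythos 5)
Your overall strategy coincides with the paper's: mimic Lemma~\ref{lem:Borel}, use Proposition~\ref{prop:can-basis-k} to pin down the $T^\vee_\bk$-weights of $\F(\cJ_{!*}(\lambda,\bk))$ and the strict maximality of $\langle 2\rho, \lambda\rangle$, and pass through $R^{(n)}$ via Steinberg's theorem to relate simple $R$-modules to simple $\widetilde{G}_\bk$-modules. Your treatment of the Frobenius twist (weights get multiplied by $p^n$, and $X_*(T)^+$ is closed under division by $p^n$) is correct, and the direction ``dominant weights of $R$ are in $X_*(T)^+$'' is handled essentially as in the paper.

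There is, however, a genuine gap in the direction ``$X_*(T)^+ \subseteq$ dominant weights of $R$.'' You take $V = \Sat(\cJ_{!*}(\lambda,\bk))$ and argue that $\lambda$ is ``the highest weight of $V$ with respect to $\widetilde{B}$, so $\lambda$ is dominant for $R$.'' But $V$ is a $\widetilde{G}_\bk$-module, not an $R$-module (it does not in general factor through the quotient $(\widetilde{G}_\bk)_{\mathrm{red}} \to R$), so the notion of ``highest weight with respect to $\widetilde{B}$'' is not well-defined for $V$. More to the point, knowing that $\lambda$ is a $\geq$-maximal weight of $V$ in the dominance order of $(R,\widetilde{B})$ does not by itself imply that $\lambda$ is dominant: the dominant weights are precisely the highest weights of simple $R$-modules, and you need to exhibit such a module. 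The paper fills this in by restricting $V$ to $(\widetilde{G}_\bk)_{\mathrm{red}}$ and taking a composition factor $M$ that admits $\lambda$ as a weight; being simple, $M$ is killed by the unipotent radical and so is an $R$-module, and then the $\langle 2\rho, \bm ? \rangle$-maximality (together with uniqueness of the maximizing weight, which uses that $2\rho$ is regular dominant for $G$) forces $\lambda$ to be the highest weight of $M$. That extra step is necessary, and without it your argument does not close.
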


\begin{proof}
For $\lambda \in X_*(T)$ a dominant weight relative to the system of positive roots $\Delta_+(R, \widetilde{B}, T^\vee_\bk)$, we denote by $L^R(\lambda)$ the corresponding simple $R$-module. Let $n$ be as in the proof of Lem\-ma~\ref{lem:identification-R-torus}. Then for $\lambda$ as above, the action of $R$ on $L^R(p^n \lambda)$ factors through an action of $R^{(n)}$ by Steinberg's theorem (see~\cite[Proposition~II.3.16]{jantzen}), hence this module determines a simple $\widetilde{G}_\bk$-module (see the proof of Lemma~\ref{lem:identification-R-torus}). The action of $T^\vee_\bk$ on this module is then determined by the character of the $R$-module $L^R(p^n \lambda)$.

On the other hand,
let $\mu \in X_*(T)^+$ be the dominant coweight of $G$ such that the simple perverse sheaf corresponding to $L^R(p^n \lambda)$ is $\cJ_{!*}(\mu,\bk)$.
Then we can write in the Grothendieck group of $\Per_{\GO}(\Gr_G,\bk)$
\[
[\cJ_{!*}(\mu,\bk)] = [\cJ_!(\mu, \bk)] + \sum_{\substack{\nu \in X_*(T)^+\\ \nu < \mu}} c_{\mu,\nu} \cdot [\cJ_!(\nu,\bk)]
\]
for some coefficients $c_{\mu,\nu} \in \Z$. This gives rise to a second way of expressing the action of $T^\vee_\bk$ on this $\widetilde{G}_\bk$-module using Proposition~\ref{prop:can-basis-k}. In particular, since the highest weight of $L^R(p^n \lambda)$ (considered as an $R$-module, with the choice of positive roots determined by $\widetilde{B}$) is a weight for which the function $\langle 2\rho, \bm ? \rangle$ attains its maximum, we must have $\mu=p^n \lambda$, so that $p^n \lambda$ belongs to $X_*(T)^+$, and finally $\lambda \in X_*(T)^+$.

On the other hand, let $\lambda \in X_*(T)^+$. Consider the simple $\widetilde{G}_\bk$-module $L^{\widetilde{G}_\bk}(\lambda)$ corresponding to the simple perverse sheaf $\cJ_{!*}(\lambda,\bk)$. The $T_\bk^\vee$-weights of this module, or equivalently of its restriction to $(\widetilde{G}_\bk)_{\mathrm{red}}$, can be estimated as above using Proposition~\ref{prop:can-basis-k}; in particular $\lambda$ is a weight of this module. Hence there exists a composition factor $M$ of the $(\widetilde{G}_\bk)_{\mathrm{red}}$-module $L^{\widetilde{G}_\bk}(\lambda)$ which admits $\lambda$ as a $T^\vee_\bk$-weight. Since $M$ is simple, the $(\widetilde{G}_\bk)_{\mathrm{red}}$-action factors through an $R$-action. Considering once again the values of the function $\langle 2\rho, \bm ? \rangle$, we see that $\lambda$ must be the highest weight of $M$, and thus that $\lambda$ is dominant with respect to the system of positive roots $\Delta_+(R, \widetilde{B}, T^\vee_\bk)$.
\end{proof}

As for~\eqref{eqn:root-directions}, Lemma~\ref{lem:identification-pos-roots} implies that
\begin{equation}
\label{eqn:dir-roots-coroots}
\bigl\{\mathbf Q_+\cdot\alpha : \alpha \in \Delta^\vee_{\mathrm{s}}(R,\widetilde{B},T^\vee_\bk)\bigr\}=
\bigl\{\mathbf Q_+\cdot\beta : \beta \in \Delta_{\mathrm{s}}(G,B,T)\bigr\}.
\end{equation}

\begin{lem}
\label{lem:identification-inclusion}
We have $\Z \cdot \Delta(R,T^\vee_\bk) \subset \Z \cdot \Delta^\vee(G,T)$ (in $X_*(T) = X^*(T^\vee_\bk)$).
\end{lem}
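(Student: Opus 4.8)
The statement asserts an inclusion of lattices: the root lattice $\Z \cdot \Delta(R,T^\vee_\bk)$ of $R$ should sit inside the coroot lattice $\Z \cdot \Delta^\vee(G,T)$ of $G$. The key input is Lemma~\ref{lem:identification-pos-roots}, which identifies the dominant weights of $(R,T^\vee_\bk)$ with $X_*(T)^+$, together with Proposition~\ref{prop:can-basis-k}, which describes the $T^\vee_\bk$-weights of the simple objects $\cJ_{!*}(\lambda,\bk)$ (via $\cJ_!(\lambda,\bk)$ and the irreducible components of $\Gr_G^\lambda \cap S_\mu$), and hence via Theorem~\ref{thm:orbits}\eqref{it:thm-orbits-1} says that these weights $\mu$ satisfy $\mu \in \lambda + \Z\cdot\Delta^\vee(G,T)$. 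The idea is the same as in the proof of Lemma~\ref{lem:simple-roots}: the differences ``highest weight minus an arbitrary weight'' of simple $R$-modules span the positive root lattice of $R$, and we can compute these differences geometrically.

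\textbf{Key steps.} First I would recall the standard fact that for a connected reductive group $R$ with maximal torus $T^\vee_\bk$ and a choice of positive system, the set
\[
\{\lambda - \mu \mid \lambda \text{ dominant},\ \mu \text{ a weight of the simple module } L^R(\lambda)\}
\]
generates, as a monoid, the $\mathbf{N}$-span of the positive roots $\Delta_+(R,\widetilde{B},T^\vee_\bk)$; in particular its $\Z$-span is $\Z\cdot\Delta(R,T^\vee_\bk)$. (For a general field this requires a small argument: one may pass to large enough $\lambda$, e.g. tensoring by a high power of a strictly dominant weight, so that all weights in the convex hull of $W\lambda$ lying in $\lambda + \Z\cdot\Delta(R,T^\vee_\bk)$ do occur in $L^R(\lambda)$; alternatively one can use the simple $R^{(n)}$-modules as in the proof of Lemma~\ref{lem:identification-R-torus} and Steinberg's theorem to reduce to a situation where the weight set is controlled.) Second, for each dominant $\lambda \in X_*(T)^+$ (which by Lemma~\ref{lem:identification-pos-roots} is the same as a dominant weight of $R$), I would identify a simple $\widetilde{G}_\bk$-module whose restriction to $(\widetilde{G}_\bk)_{\mathrm{red}}$, and hence to $R$ after a Frobenius twist as in the proof of Lemma~\ref{lem:identification-R-torus}, realizes $L^R(p^n\lambda)$; this module corresponds to $\cJ_{!*}(p^n\lambda,\bk)$. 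Third, by Proposition~\ref{prop:can-basis-k} (applied to $\cJ_!$ and the composition series of $\cJ_{!*}$) and Theorem~\ref{thm:orbits}\eqref{it:thm-orbits-1}, every $T^\vee_\bk$-weight $\mu$ of this module satisfies $\mu \in p^n\lambda + \Z\cdot\Delta^\vee(G,T)$, so every difference $p^n\lambda - \mu$ lies in $\Z\cdot\Delta^\vee(G,T)$; untwisting, every difference $\lambda - \mu'$ of weights of $L^R(\lambda)$ lies in $\Z\cdot\Delta^\vee(G,T)$ as well. Combining with the first step yields $\Z\cdot\Delta(R,T^\vee_\bk) \subset \Z\cdot\Delta^\vee(G,T)$.

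\textbf{Main obstacle.} The delicate point is the first step over a field of positive characteristic: the weights of a simple module $L^R(\lambda)$ need not fill up the whole ``expected'' set $\mathrm{Conv}(W\lambda)\cap(\lambda + \Z\cdot\Delta(R,T^\vee_\bk))$, so one cannot directly read off all root-lattice elements from a single $L^R(\lambda)$. I expect the cleanest fix is to argue with the $R^{(n)}$-modules coming from $\widetilde{G}_\bk$-simples (as already set up in the proofs of Lemma~\ref{lem:identification-R-torus} and Lemma~\ref{lem:identification-pos-roots}) and to note that the weight-difference monoid, summed over all dominant $\lambda$, is insensitive to whether individual weights occur — because the simple roots themselves appear as $\lambda - (\lambda - \alpha^\vee)$ for suitable $\lambda$ (indeed if $\lambda$ is such that $\langle\lambda,\alpha\rangle > 0$ then $\lambda - \alpha^\vee$ is a weight of $L^R(\lambda)$, since the corresponding $\mathrm{SL}_2$-string forces it). This last fact about $\mathrm{SL}_2$-strings is exactly what makes the simple roots of $R$ detectable regardless of characteristic, and it is the technical heart of the argument; everything else is bookkeeping with the identifications already established.
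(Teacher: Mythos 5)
There is a genuine gap at the heart of your argument: the ``standard fact'' you invoke in the first step is false in positive characteristic. Consider $R=\mathbf{PGL}_2$ over $\bk=\overline{\FF}_2$ (which arises in the setting of the lemma from $G=\mathbf{SL}_2$). Every dominant weight of $R$ has the form $n\alpha$ for $n\geq 0$, and by Steinberg's tensor product theorem the simple $\mathbf{SL}_2$-module $L(2n)$ is a tensor product of factors $L(1)^{[i]}$ with $i\geq 1$; consequently any two $T^\vee_\bk$-weights of any simple $\mathbf{PGL}_2$-module differ by an element of $2\Z\alpha$. So the $\Z$-span of all weight differences of all simple $R$-modules is the \emph{proper} sublattice $2\Z\alpha\subsetneq\Z\alpha=\Z\Delta(R,T^\vee_\bk)$, and your argument can only conclude $2\Z\alpha\subset\Z\Delta^\vee(G,T)$, which is strictly weaker than the lemma. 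The fix you sketch at the end---that $\lambda-\alpha$ is a weight of $L^R(\lambda)$ whenever $\langle\lambda,\alpha^\vee\rangle>0$ because the $\mathrm{SL}_2$-string ``forces it''---is exactly the thing that fails in positive characteristic: $L(2)$ over $\overline{\FF}_2$ has weights $\{2,-2\}$ only, and the string through the highest weight is not full.

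There is also a secondary issue with the ``untwisting'' at the end of your third step: $p^n(\lambda-\mu')\in\Z\Delta^\vee(G,T)$ does not imply $\lambda-\mu'\in\Z\Delta^\vee(G,T)$, since the coroot lattice need not be saturated in $X_*(T)$ (it is not, e.g., when $G=\mathbf{PGL}_2$). This one can be sidestepped: instead of restricting $L^R(p^n\lambda)$ one can observe (as in the proof of Lemma~\ref{lem:identification-pos-roots}) that $L^R(\lambda)$ occurs directly as a composition factor of the restriction of $\cJ_{!*}(\lambda,\bk)$ to $(\widetilde{G}_\bk)_{\mathrm{red}}$, so its weights lie in $\lambda+\Z\Delta^\vee(G,T)$ without any Frobenius twist. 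But that does not repair the first gap.

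The paper's actual proof sidesteps weight multiplicities entirely. One sets $Z\subset T^\vee_\bk$ to be the scheme-theoretic intersection of the kernels of all elements of $\Z\Delta^\vee(G,T)$, so that $X^*(Z)\cong X_*(T)/\Z\Delta^\vee(G,T)$ is the set indexing the connected components of $\Gr_G$. The decomposition of any object of $\Per_{\GO}(\Gr_G,\bk)$ over the connected components gives a functorial $X^*(Z)$-grading compatible with convolution, which shows that $Z$ acts on every $\widetilde{G}_\bk$-module by functorial automorphisms compatible with tensor products; this makes $Z$ a \emph{central} subgroup of $\widetilde{G}_\bk$, and hence its image in $R$ is central as well. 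Since every root of $(R,T^\vee_\bk)$ must restrict trivially to a central subtorus of $T^\vee_\bk$, the map $X_*(T)\to X_*(T)/\Z\Delta^\vee(G,T)$ kills $\Z\Delta(R,T^\vee_\bk)$, which is exactly the inclusion $\Z\Delta(R,T^\vee_\bk)\subset\Z\Delta^\vee(G,T)$.
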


\begin{proof}
Recall that the connected components of $\Gr_G$ are in a natural bijection with the quotient $X_*(T) / \Z\Delta^\vee(G,T)$, see~\S\ref{ss:def-Gr}. Let $Z \subset T^\vee_\bk$ be the (scheme-theoretic) intersection of the kernels of all the elements in $\Z\Delta^\vee(G,T)$, so that $Z$ is a diagonalisable group scheme with $X^*(Z) \cong X_*(T) / \Z\Delta^\vee(G,T)$. Then any object of $\Per_{\GO}(\Gr_G,\bk)$ is naturally graded by the group of characters of $Z$, in a way compatible with 
the functor
\[
\Per_{\GO}(\Gr_G,\bk) \cong \Rep_\bk(\widetilde{G}_\bk) \to \Rep_\bk(Z)
\]
(where the second arrow is the forgetful functor). In particular, for any $\chi \in X^*(Z)$, the subspace of the left regular representation $\bk[\widetilde{G}_\bk]$ consisting of the functions $f$ satisfying $f(z^{-1}g)=\chi(z) f(g)$ is stable under the action of $\widetilde{G}_\bk$; hence $Z$ is a central subgroup of $\widetilde{G}_\bk$, and then its image in $R$ is central also. We deduce that all the roots of $(R,T^\vee_\bk)$ restrict trivially to $Z$, i.e.~that the morphism $X_*(T) \to X_*(T) / \Z\Delta^\vee(G,T)$ factors through $X_*(T) / \Z \cdot \Delta(R,T^\vee_\bk)$, whence the claim.
\end{proof}

%
%
%

\begin{lem}
\label{lem:Weyl-groups}
The Weyl groups of $(G,T)$ and of $(R,T^\vee_\bk)$, seen as groups of automorphisms of $X_*(T)$, together with their subsets of simple reflections, coincide.
\end{lem}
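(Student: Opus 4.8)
The strategy is to compare the two root systems through their associated reflection groups acting on the weight lattice $X_*(T)=X^*(T^\vee_\bk)$, using the common description of the dominant cone established in Lemma~\ref{lem:identification-pos-roots}. First I would recall that the Weyl group $W$ of $(G,T)$ acts on $X_*(T)$ and that $X_*(T)^+$ is a fundamental domain for this action (more precisely, $X_*(T)^+$ meets each $W$-orbit in exactly one point, and is the set of $\lambda$ with $\langle \alpha, \lambda \rangle \geq 0$ for all $\alpha \in \Delta_+(G,B,T)$). On the other side, the Weyl group $W_R$ of $(R,T^\vee_\bk)$ acts on $X^*(T^\vee_\bk)=X_*(T)$, and by Lemma~\ref{lem:identification-pos-roots} its dominant cone (for the positive system $\Delta_+(R,\widetilde B, T^\vee_\bk)$) is also exactly $X_*(T)^+$. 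Thus both $W$ and $W_R$ are finite subgroups of $\mathrm{GL}(X_*(T) \otimes \Q)$, generated by reflections, sharing the same (closed, full-dimensional, simplicial-up-to-the-center) fundamental chamber.

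The key step is then the following general fact about finite reflection groups: a finite reflection group acting on a rational vector space is determined by its fundamental chamber, and its simple reflections are exactly the reflections in the walls (codimension-one faces) of that chamber. More precisely, if $W_1$ and $W_2$ are finite groups generated by reflections on a common vector space $V$ and admitting a common closed fundamental chamber $C$ (each acting simply transitively on chambers), then $W_1=W_2$ and they have the same set of simple reflections, namely the reflections fixing the walls of $C$. I would apply this with $V = X_*(T) \otimes \Q$, $C$ the rational cone spanned by $X_*(T)^+$, $W_1 = W$ and $W_2 = W_R$. To invoke it I need to know that the chamber $C$ determines each group: this follows because the walls of $C$ are the hyperplanes $\{ \langle \alpha, - \rangle = 0 \}$ for $\alpha$ a simple root of $G$ on one hand, and $\{ \langle \beta, - \rangle = 0\}$ for $\beta$ a simple coroot direction of $R$ on the other, and by~\eqref{eqn:dir-roots-coroots} these two families of hyperplanes coincide. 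The reflections in these hyperplanes then generate both $W$ and $W_R$, so the two groups — together with their simple reflections — are literally the same subgroup of $\mathrm{GL}(X_*(T))$.

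There is one technical point to handle with care: $G$ (resp.\ $R$) need not be semisimple, so the chamber $C$ is not a proper simplicial cone but rather a product of such a cone with a linear subspace (the part fixed pointwise by $W$, resp.\ $W_R$). I would reduce to the semisimple situation by passing to the quotient $X_*(T) \otimes \Q$ by the common fixed subspace — noting that the fixed subspace of $W$ is $(\Q \cdot \Delta_+(G,B,T))^\perp$ and, using Lemma~\ref{lem:identification-inclusion} together with~\eqref{eqn:dir-roots-coroots}, that $W_R$ has the same fixed subspace, so that $W_R$ has the same rank as $W$ and acts by reflections on the same quotient space with the same chamber. On that quotient the reflection-group-from-chamber argument applies directly in its classical (simplicial) form. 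The main obstacle I anticipate is precisely this bookkeeping around the non-semisimple case: checking that the two groups act on the \emph{same} effective reflection space (equivalently, that the linear part fixed by $W_R$ really does coincide with the one fixed by $W$, not merely contains it), which is where Lemma~\ref{lem:identification-inclusion} — giving $\Z\cdot\Delta(R,T^\vee_\bk) \subset \Z\cdot\Delta^\vee(G,T)$ — and the equality~\eqref{eqn:dir-roots-coroots} of simple-root directions together pin things down. Once the spaces and chambers are matched, the conclusion that $W = W_R$ as subgroups of $\mathrm{Aut}(X_*(T))$ and that $S(G,B,T)$ corresponds to $S(R,\widetilde B, T^\vee_\bk)$ under the bijection of simple roots with simple coroot directions is immediate.
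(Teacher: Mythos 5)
Your proposal has a genuine gap: the general fact about finite reflection groups on which it rests is false. Knowing that two finite reflection groups $W_1, W_2$ acting on the same vector space admit the same closed fundamental chamber $C$ (equivalently, the same walls) is not enough to conclude $W_1 = W_2$. The walls of $C$ only fix, for each simple reflection, its hyperplane of fixed points; they say nothing about the $(-1)$-eigenvector, and a reflection in a given hyperplane is not unique. Concretely, on $V = \Q^2$ take $s_1\colon (x,y)\mapsto(-x,y)$ and $s_2\colon (x,y)\mapsto(-x,y+x)$. Both are order-$2$ reflections fixing $\{x=0\}$ pointwise and preserving $\Z^2$, both $\langle s_1\rangle$ and $\langle s_2\rangle$ have $\{x\geq 0\}$ as fundamental chamber, yet $\langle s_1\rangle \neq \langle s_2\rangle$. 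This is precisely the situation one has to rule out here: from Lemma~\ref{lem:identification-pos-roots} and~\eqref{eqn:dir-roots-coroots} you only know that the simple \emph{coroot} directions of $R$ match the simple \emph{root} directions of $G$ (i.e.\ the fixed hyperplanes coincide), but you have not shown that the simple \emph{root} directions of $R$ match the simple \emph{coroot} directions of $G$ (i.e.\ that the $(-1)$-eigenvectors coincide). Lemma~\ref{lem:identification-inclusion} only gives $\Z\cdot\Delta(R,T^\vee_\bk)\subset\Z\cdot\Delta^\vee(G,T)$, a containment of lattices, not a matching of individual lines.

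The paper's proof is genuinely different and supplies exactly the missing ingredient: it invokes the Frobenius descent of Lemma~\ref{lem:identification-R-torus} to see that for each dominant $\lambda$, the convex hull of the $T^\vee_\bk$-weights of the simple $\widetilde{G}_\bk$-module attached to $p^n\lambda$ can be read off in two ways (via representation theory of $R$ giving the orbit $W'\cdot p^n\lambda$, and via Proposition~\ref{prop:can-basis-k} giving $W\cdot p^n\lambda$), hence $W\cdot\lambda = W'\cdot\lambda$. From the coincidence of orbits it recovers, for regular $\lambda$, the set $\{s\cdot\lambda: s \text{ simple}\}$ as the neighbours of $\lambda$ in the polytope, and the crucial final step is that the direction of $\lambda - s\cdot\lambda$ is the line spanned by the coroot of $G$ for $s$ and simultaneously the line spanned by the root of $R$ for $s'$, which pins down the $(-1)$-eigenvector and forces $s=s'$. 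If you want to repair your argument, you would need to import this orbit-comparison information (it is not deducible from the chamber alone); with it in hand your reduction to the essential reflection representation and the chamber/wall bookkeeping would then be a fine way to conclude.
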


\begin{proof}
Recall that the Weyl group of $(G,T)$ is denoted by $W$. We also denote by $S \subset W$ the subset of simple reflections (i.e.~the reflections associated with simple roots).
We will denote by $W'$ the Weyl group of $(R,T^\vee_\bk)$, and by $S' \subset W'$ the subset of simple reflections. We fix $n$ as in the proof of Lemma~\ref{lem:identification-R-torus}.

For $\lambda \in X_*(T)^+$, we can recover the orbit $W' \cdot (p^n \lambda)$ as the set of extremal points of the convex polytope consisting of the convex hull of the weights of the simple $R$-module $L^R(p^n \lambda)$. Using the same considerations as in the proof of Lemma~\ref{lem:identification-R-torus} we see that this set coincides with the orbit $W\cdot (p^n \lambda)$, so that $W' \cdot \lambda = W \cdot \lambda$.

Now, we define an element of $X_*(T)^+$ to be \emph{regular} if its orbit under $W'$ (or equivalently under $W$) has the maximal possible cardinality, or equivalently if it is not orthogonal to any simple root of $(G,T)$, or equivalently if it is not orthogonal to any simple coroot of $(R,T^\vee_\bk)$. Then for $\lambda$ regular, we can recover the subset $\{s \cdot \lambda : s \in S\} \subset W \cdot \lambda$ as the subset consisting of elements $\mu$ such that the segment joining $\lambda$ to $\mu$ is also extremal in the convex hull of $W \cdot \lambda$. A similar description applies for $\{s' \cdot \lambda : s' \in S'\}$, from which we deduce that
\[
\{s \cdot \lambda : s \in S\} = \{s' \cdot \lambda : s' \in S'\}.
\]
This implies that $S=S'$: in fact if $s \in S$, then for any $\lambda \in X_*(T)^+$ regular there exists $s' \in S'$ such that $s \cdot \lambda = s' \cdot \lambda$, and $s'$ does not depend on $\lambda$ because the direction of $\lambda - s' \cdot \lambda$ is the line generated by the coroot of $G$ associated with $s$ and also the line generated by the root of $R$ associated with $s'$; then we have $s=s'$.

Finally, once we know that $S=S'$ we deduce that $W=W'$, since $W$, resp.~$W'$, is generated by $S$, resp.~$S'$.
\end{proof}

\begin{lem}
\label{lem:identification-inclusion-2}
We have $\Z \Delta(G,T) \subset \Z \Delta^\vee(R,T^\vee_\bk)$ in $X^*(T) = X_*(T^\vee_\bk)$. Moreover, if this inclusion is an equality the root datum of $(R,T^\vee_\bk)$ is dual to that of $(G,T)$.
\end{lem}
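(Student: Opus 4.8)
The plan is to run the same argument that appeared in the characteristic-$0$ case (see Lemma~\ref{lem:simple-roots} and Theorem~\ref{thm:identification-char-0}) but now over $R$, using only the combinatorial data about weights of simple objects that we have already extracted. First I would recall the setup: $G^\vee_\bk$ denotes the split reductive $\bk$-group whose root datum is dual to that of $(G,T)$, so that $T^\vee_\bk$ is a maximal torus of $G^\vee_\bk$ and, for the choice of positive roots of $G^\vee_\bk$ given by the positive coroots of $(G,B,T)$, the dominant weights of $(G^\vee_\bk,T^\vee_\bk)$ are exactly $X_*(T)^+$. Given $\lambda \in X_*(T)^+$, I would compare the simple $G^\vee_\bk$-module $V_\lambda(G^\vee_\bk)$ of highest weight $\lambda$ with the simple $R$-module $L^R(\lambda)$ of highest weight $\lambda$ (which exists and has highest weight $\lambda$ by Lemma~\ref{lem:identification-pos-roots}). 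The key point — exactly as in the proof of Lemma~\ref{lem:simple-roots} — is that these two $T^\vee_\bk$-modules have the \emph{same set of weights}, namely $\{\mu \in X_*(T) \mid \mu - \lambda \in \Z\Delta^\vee(G,T) \text{ and } \mu \in \mathrm{Conv}(W\lambda)\}$. For $V_\lambda(G^\vee_\bk)$ this is the classical description of the weights of a simple module in terms of its highest weight (saturation plus the convex hull); for $L^R(\lambda)$ this is the content of the weight estimates in the proof of Lemma~\ref{lem:identification-pos-roots}, which identify these weights using Proposition~\ref{prop:can-basis-k} (combined with the fact, from Lemma~\ref{lem:Weyl-groups}, that the two Weyl groups coincide, so the convex hull is unambiguous).

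Next I would form, on each side, the monoid
\[
M := \{\lambda - \mu \mid \lambda \in X_*(T)^+, \ \mu \text{ a weight of } V_\lambda(G^\vee_\bk)\} \subset X^*(T) = X_*(T^\vee_\bk).
\]
On the one hand, $M$ is the $\mathbf N$-span of the positive roots of $(G^\vee_\bk,T^\vee_\bk)$, i.e.\ of $\Delta_+(G,B,T)$ (the positive roots of $G$, which are the positive coroots of $G^\vee_\bk$). On the other hand, the weight-coincidence above shows $M$ equals $\{\lambda - \mu \mid \lambda \in X_*(T)^+, \ \mu \text{ a weight of } L^R(\lambda)\}$, which is the $\mathbf N$-span of the positive roots of $(R,T^\vee_\bk)$. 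Hence $\mathbf N\Delta_+(G,B,T) = \mathbf N\Delta_+(R,\widetilde{B},T^\vee_\bk)$ as submonoids of $X^*(T)$, and therefore $\Z\Delta(G,T) = \Z\Delta(R,T^\vee_\bk)$. Dualizing — i.e.\ playing the same game with the simple $R$-modules viewed through the perverse-sheaf dictionary, whose weights are likewise symmetric — gives in particular the inclusion $\Z\Delta(G,T) \subset \Z\Delta^\vee(R,T^\vee_\bk)$ claimed in the first sentence of Lemma~\ref{lem:identification-inclusion-2}; more directly, $\Z\Delta^\vee(R,T^\vee_\bk) \supset \Z\Delta(R,T^\vee_\bk)^\perp$-free part, but the cleanest route is to observe that the roots of $R$ are $\Q_+$-proportional to the coroots of $G$ by~\eqref{eqn:dir-roots-coroots}, so each root $\alpha$ of $R$ lies in $\Q\Delta^\vee(G,T) = \Q\Delta(R,T^\vee_\bk)$, and the coroot $\alpha^\vee$ lies in $\Q\Delta(G,T)$; integrality of the root-string data then forces $\Z\Delta(G,T) \subset \Z\Delta^\vee(R,T^\vee_\bk)$.

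For the second assertion, assuming $\Z\Delta(G,T) = \Z\Delta^\vee(R,T^\vee_\bk)$, I would finish exactly as in Theorem~\ref{thm:identification-char-0}. Using~\eqref{eqn:dir-roots-coroots} and Lemma~\ref{lem:identification-pos-roots} (which, as in Lemma~\ref{lem:Borel}, implies that the dominant cone of $(R,T^\vee_\bk)$ is the dominant cone $X_*(T)^+$ of $(G,B,T)$), the simple coroot directions of $(R,\widetilde{B},T^\vee_\bk)$ are the simple root directions of $(G,B,T)$. For a simple root $\alpha$ of $G$, the corresponding simple reflection of $W = W'$ (Lemma~\ref{lem:Weyl-groups}) acts on $X_*(T)$; writing out the conditions $\langle \widetilde\alpha, \alpha^\vee \rangle = 2$ and $\langle \widetilde\alpha, \beta^\vee \rangle \le 0$ for the other simple coroots $\beta^\vee$ of $R$ pins down the coroot $\widetilde\alpha$ of $R$ associated with that simple root to be exactly $\alpha$; symmetrically the simple root of $R$ dual to $\alpha^\vee$ is $\alpha^\vee$ itself. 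Since the Weyl groups and their simple-reflection generating sets agree, these equalities of simple roots/coroots propagate to equalities of \emph{all} roots and coroots, and the canonical bijections match, so the root datum of $(R,T^\vee_\bk)$ is dual to that of $(G,T)$. The main obstacle here is the very first step — establishing that the weight sets of $V_\lambda(G^\vee_\bk)$ and $L^R(\lambda)$ coincide over a field of positive characteristic, where simple modules are much smaller than Weyl modules — but this has in effect already been done inside the proof of Lemma~\ref{lem:identification-pos-roots} via Proposition~\ref{prop:can-basis-k}, so the argument reduces to bookkeeping; the remaining delicacy is simply to keep straight which lattice inclusions one gets for free and which require the hypothesis $\Z\Delta(G,T) = \Z\Delta^\vee(R,T^\vee_\bk)$.
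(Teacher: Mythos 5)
The first assertion (the inclusion $\Z\Delta(G,T) \subset \Z\Delta^\vee(R,T^\vee_\bk)$) is where your argument breaks down, and the gap is exactly the one you flag at the end and then wave away. The characteristic-$0$ argument of Lemma~\ref{lem:simple-roots} hinges on the fact that the simple $G^\vee_\bk$-module of highest weight $\lambda$ has \emph{all} of $\{\mu \mid \mu \in \Conv(W\lambda),\ \mu - \lambda \in Q^\vee\}$ as its weight set (Proposition~\ref{prop:dim-weight-spaces}). Over the algebraic closure of a finite field this is false: the simple $R$-module $L^R(\lambda)$ is in general a proper subquotient of the Weyl module, and its weight set is a proper subset of the saturated convex hull (already for $\mathrm{SL}_2$ in characteristic $p$, $L(p)$ has weights $\{p,-p\}$ only). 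Proposition~\ref{prop:can-basis-k} describes the weights of $\cJ_!(\lambda,\bk)$ and $\cJ_*(\lambda,\bk)$, not of the simple object $\cJ_{!*}(\lambda,\bk) \leftrightarrow L^{\widetilde{G}_\bk}(\lambda)$; and the proof of Lemma~\ref{lem:identification-pos-roots} only extracts from the Grothendieck-group expansion the \emph{highest} weight of $L^R(p^n\lambda)$ and the location of the maximum of $\langle 2\rho, \cdot\rangle$ --- it does not identify the full weight set (the coefficients $c_{\mu,\nu}$ in that expansion need not be nonnegative, and in any case $\cJ_{!*}$ is a subquotient of $\cJ_!$, not equal to it). So your monoid $M$ cannot be shown to equal the $\mathbf{N}$-span of $\Delta_+(R,\widetilde{B},T^\vee_\bk)$ by this route, and the avoidance of precisely this step is the whole point of the positive-characteristic argument.

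Your fallback in the second paragraph (``integrality of the root-string data then forces $\Z\Delta(G,T) \subset \Z\Delta^\vee(R,T^\vee_\bk)$'') is not a proof: rational proportionality plus vague integrality is not enough to control the scale factor. What the paper actually does is to write, for each simple root $\alpha$ of $G$, the simple coroot of $R$ as $a\alpha$ for some $a \in \Q_{>0}$ (this much does follow from~\eqref{eqn:dir-roots-coroots}); it then uses Lemma~\ref{lem:Weyl-groups} to equate the reflection formulas, which forces the corresponding root of $R$ to be $\tfrac{1}{a}\alpha^\vee$; and finally it invokes \emph{Lemma~\ref{lem:identification-inclusion}}, i.e.\ $\Z\Delta(R,T^\vee_\bk) \subset \Z\Delta^\vee(G,T)$ (which comes from the analysis of connected components of $\Gr_G$), to conclude $\tfrac{1}{a} \in \Z$ and hence $\alpha = \tfrac{1}{a}\,(a\alpha) \in \Z\Delta^\vee(R,T^\vee_\bk)$. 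Your proposal does not use Lemma~\ref{lem:identification-inclusion} at all, and that missing ingredient is what pins the scale factor down. The second half of your argument (if the inclusion is an equality, the simple roots and coroots match and the Weyl-group orbits propagate this to all roots) is fine and is essentially what the paper does.
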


\begin{proof}
Let $\alpha \in \Delta_{\mathrm{s}}(G,B,T)$. By~\eqref{eqn:dir-roots-coroots}, we know that there exists $a \in \Q_+ \smallsetminus \{0\}$ such that $a \alpha \in \Delta^\vee_{\mathrm{s}}(R,\widetilde{B}, T^\vee_\bk)$. We can also consider the coroot $\alpha^\vee$ of $(G,T)$ associated with the root $\alpha$, and the root $(a \alpha)^\wedge$ of $(R,T^\vee_\bk)$ associated with the coroot $a \alpha$. By Lemma~\ref{lem:Weyl-groups}, we have
\[
\id - \langle \alpha^\vee,\bm? \rangle \alpha = \id - \langle (a \alpha)^\wedge, \bm? \rangle (a \alpha)
\]
as automorphisms of $X^*(T)=X_*(T^\vee_\bk)$; it follows that $(a \alpha)^\wedge = \frac{1}{a} \alpha^\vee$. On the other hand, Lemma~\ref{lem:identification-inclusion} shows that $(a \alpha)^\wedge \in \Z \Delta^\vee(G,T)$; hence $\frac{1}{a} \in \Z$, and $\alpha = \frac{1}{a}(a \alpha) \in \Z \Delta^\vee(R,T_\bk^\vee)$.

If the inclusion $\Z \Delta(G,T) \subset \Z \Delta^\vee(R,T^\vee_\bk)$ is an equality, then with the notation used above we must have $a=1$ for any $\alpha$; then $\Delta_{\mathrm{s}}(R,\widetilde{B},T^\vee_\bk) = \Delta^\vee_{\mathrm{s}}(G,B,T)$ and $\Delta^\vee_{\mathrm{s}}(R,\widetilde{B},T^\vee_\bk) = \Delta_{\mathrm{s}}(G,B,T)$, and the canonical bijections between simple roots and coroots of $R$ and of $G$ coincide. Taking orbits under the Weyl groups, it follows that $\Delta(R,T^\vee_\bk) = \Delta^\vee(G,T)$ and $\Delta^\vee(R,T^\vee_\bk) = \Delta(G,T)$, in a way compatible with the bijections between roots and coroots.
\end{proof}

\begin{lem}
\label{lem:identification-adjoint}
If $G$ is semisimple of adjoint type, then Proposition~{\rm \ref{prop:identification-red}} holds.
\end{lem}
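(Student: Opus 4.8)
The plan is to exploit the fact that, when $G$ is semisimple of adjoint type, the coweight lattice $X_*(T)$ coincides with the coroot lattice $Q^\vee = \Z\Delta^\vee(G,T)$, so that everything reduces to ruling out the unipotent radical of $(\widetilde{G}_\bk)_{\mathrm{red}}$. First I would observe that in this case Lemma~\ref{lem:identification-inclusion} gives $\Z\cdot\Delta(R,T^\vee_\bk) \subset \Z\Delta^\vee(G,T) = X_*(T) = X^*(T^\vee_\bk)$, which is automatic, while on the other side the character lattice $X^*(T) = X_*(T^\vee_\bk)$ equals the root lattice $\Z\Delta(G,T)$ (since $G$ is adjoint). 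So Lemma~\ref{lem:identification-inclusion-2} already gives $\Z\Delta(G,T) \subset \Z\Delta^\vee(R,T^\vee_\bk) \subset X_*(T^\vee_\bk) = \Z\Delta(G,T)$, forcing equality; hence by the second assertion of Lemma~\ref{lem:identification-inclusion-2} the root datum of $(R,T^\vee_\bk)$ is dual to that of $(G,T)$. In particular $R$ is a (connected) reductive group with the expected root datum, and $T^\vee_\bk$ is a maximal torus of $R$ by Lemma~\ref{lem:identification-R-torus}.

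Next I would bound dimensions to force the unipotent radical of $(\widetilde{G}_\bk)_{\mathrm{red}}$ to be trivial. On the one hand, $R = (\widetilde{G}_\bk)_{\mathrm{red}}/R_u((\widetilde{G}_\bk)_{\mathrm{red}})$ has dimension $\dim(G)$ since it has root datum dual to that of $G$ (and $G$ is semisimple, so $\dim(G^\vee_\bk) = \dim(G)$). On the other hand, Lemma~\ref{lem:dim-geom-fiber} gives $\dim(\widetilde{G}_\bk) \leq \dim(G)$, and since $\dim\bigl((\widetilde{G}_\bk)_{\mathrm{red}}\bigr) = \dim(\widetilde{G}_\bk)$ and $\dim\bigl((\widetilde{G}_\bk)_{\mathrm{red}}\bigr) = \dim(R) + \dim R_u\bigl((\widetilde{G}_\bk)_{\mathrm{red}}\bigr)$, we get
\[
\dim(G) + \dim R_u\bigl((\widetilde{G}_\bk)_{\mathrm{red}}\bigr) = \dim\bigl((\widetilde{G}_\bk)_{\mathrm{red}}\bigr) \leq \dim(G),
\]
so $R_u\bigl((\widetilde{G}_\bk)_{\mathrm{red}}\bigr)$ has dimension $0$; being smooth connected, it is trivial. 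Therefore $(\widetilde{G}_\bk)_{\mathrm{red}} = R$ is reductive, $T^\vee_\bk$ is a maximal torus, and its root datum is dual to that of $(G,T)$, which is exactly the content of Proposition~\ref{prop:identification-red}.

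I expect the main (and essentially only) subtlety to be the bookkeeping with the various lattices in the adjoint case, namely keeping straight that ``$G$ adjoint'' means $X_*(T) = Q^\vee$ and $X^*(T) = \Z\Delta(G,T)$ — equivalently that the pair of inclusions in Lemmas~\ref{lem:identification-inclusion} and~\ref{lem:identification-inclusion-2} become equalities for trivial reasons — and in checking that no sign or duality convention has been transposed when passing from $(G,T)$ to $(R,T^\vee_\bk)$. All the real work has already been done in Lemmas~\ref{lem:identification-R-torus}--\ref{lem:identification-inclusion-2} and in Lemma~\ref{lem:dim-geom-fiber}; the proof of Lemma~\ref{lem:identification-adjoint} is just the observation that in the adjoint case these general lemmas combine to pin down the root datum on the nose, after which the dimension count kills the unipotent radical. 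One should also note that the connectedness of $(\widetilde{G}_\bk)_{\mathrm{red}}$ needed above is provided by Remark~\ref{rmk:group-field}\eqref{it:connected}.
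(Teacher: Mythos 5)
Your proof is correct and follows the same route as the paper: use the adjoint hypothesis to force the inclusion of Lemma~\ref{lem:identification-inclusion-2} to be an equality, invoke its second assertion to pin down the root datum of $R$, and then kill the unipotent radical of $(\widetilde{G}_\bk)_{\mathrm{red}}$ by the dimension count from Lemma~\ref{lem:dim-geom-fiber}.

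One factual slip in your opening paragraph, though, which doesn't affect the argument but should be fixed: ``$G$ semisimple of adjoint type'' is \emph{not} equivalent to $X_*(T) = Q^\vee = \Z\Delta^\vee(G,T)$ --- that is the condition for $G$ to be \emph{simply connected}. The adjoint condition is the dual one, $X^*(T) = \Z\Delta(G,T)$, i.e.\ the characters span only the root lattice; $X_*(T)$ is then the \emph{coweight} lattice, which strictly contains $Q^\vee$ unless $G$ is also simply connected (e.g.\ for $\mathbf{PGL}_2$, $X_*(T) = \frac{1}{2}\Z\alpha^\vee \supsetneq \Z\alpha^\vee$). Consequently your side remark that ``Lemma~\ref{lem:identification-inclusion} is automatic'' is not correct either: that lemma asserts $\Z\Delta(R,T^\vee_\bk) \subset \Z\Delta^\vee(G,T)$, which does \emph{not} follow merely from $\Z\Delta(R,T^\vee_\bk) \subset X^*(T^\vee_\bk) = X_*(T)$ when $G$ is adjoint. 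Fortunately you never rely on this; the step you actually use, namely $\Z\Delta(G,T) \subset \Z\Delta^\vee(R,T^\vee_\bk) \subset X_*(T^\vee_\bk) = X^*(T) = \Z\Delta(G,T)$, correctly uses the adjoint identity $X^*(T) = \Z\Delta(G,T)$ together with Lemma~\ref{lem:identification-inclusion-2}, which is exactly what the paper does. So delete the false claim about $X_*(T) = Q^\vee$ and the remark about Lemma~\ref{lem:identification-inclusion}, and the proof is the paper's proof.
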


\begin{proof}
If $G$ is semisimple of adjoint type, then $\Z \Delta(G,T)=X^*(T)$. It follows that the inclusion in Lemma~\ref{lem:identification-inclusion-2} is an equality, and then that the root datum of $R$ with respect to $T^\vee_\bk$ is dual to that of $(G,T)$. 

Then we conclude as follows:
of course we have $\dim(R) \leq \dim \bigl( ( \widetilde{G}_{\bk} )_{\mathrm{red}} \bigr) = \dim(\widetilde{G}_\bk)$. Lem\-ma~\ref{lem:dim-geom-fiber} and our determination of $\Delta(R,T^\vee_\bk)$ imply that this inequality is in fact an equality, so that $( \widetilde{G}_{\bk} )_{\mathrm{red}}=R$, and then the claim follows from our identification of the root datum of $R$.
\end{proof}


\begin{lem}
\label{lem:identification-ss}
If $G$ is semisimple, then Proposition~{\rm \ref{prop:identification-red}} holds.
\end{lem}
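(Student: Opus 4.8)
The plan is to reduce the semisimple case to the adjoint case already handled in Lemma~\ref{lem:identification-adjoint}, by exploiting a central isogeny. Let $G$ be semisimple, and let $G_{\mathrm{ad}} = G/Z(G)$ be the adjoint group. The central isogeny $\pi : G \to G_{\mathrm{ad}}$ induces a morphism of affine Grassmannians $\Gr_G \to \Gr_{G_{\mathrm{ad}}}$; on $\C$-points this is a closed embedding identifying $\Gr_G$ with the union of those connected components of $\Gr_{G_{\mathrm{ad}}}$ indexed by the image of $X_*(T) \to X_*(T_{\mathrm{ad}})$ modulo the coroot lattice (which coincides with $Q^\vee$ since $G$ is semisimple and $\pi$ induces an isomorphism on coroot lattices). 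Pushforward along this embedding gives a fully faithful exact monoidal functor $\Per_{\GO}(\Gr_G,\bk) \hookrightarrow \Per_{(G_{\mathrm{ad}})_{\mathcal O}}(\Gr_{G_{\mathrm{ad}}},\bk)$ whose essential image is the Serre subcategory generated by the $\cJ_{!*}(\lambda,\bk)$ with $\lambda$ in the appropriate union of cosets. Compatibility of this functor with the weight functors and with $\F$ is immediate from the definitions, since $S_\mu$ for $G$ is carried to $S_{\mu}$ for $G_{\mathrm{ad}}$.

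By Tannakian translation (Proposition~\ref{prop:tannakian-morph}, or rather its integral analogue via~\cite[Theorem~X.1.2]{milne}), this inclusion of categories corresponds to a faithfully flat quotient morphism $q : \widetilde{G}_{\mathrm{ad},\bk} \twoheadrightarrow \widetilde{G}_\bk$ of affine $\bk$-group schemes, compatible with the embeddings of $T^\vee_\bk$ into both (where on the adjoint side $T^\vee_\bk$ is the split torus with character lattice $X_*(T_{\mathrm{ad}}) = Q^\vee$, and the two tori are related by the obvious inclusion $X_*(T) \hookrightarrow Q^\vee$ of character lattices — wait, this is backwards: for $G$ semisimple $X_*(T) \supset Q^\vee = X_*(T_{\mathrm{ad}})$, so in fact we get a surjection on character lattices, consistent with $q$ being a quotient). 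Applying $(\bm ?)_{\mathrm{red}}$ and using that $(\widetilde{G}_{\mathrm{ad},\bk})_{\mathrm{red}}$ is the reductive group dual to $G_{\mathrm{ad}}$ (Lemma~\ref{lem:identification-adjoint}), we see that $(\widetilde{G}_\bk)_{\mathrm{red}}$ is a quotient of this reductive group by a subgroup scheme; since the central torus $T^\vee_\bk \subset (\widetilde{G}_\bk)_{\mathrm{red}}$ still contains the maximal torus of the adjoint-side group up to the lattice change, the kernel is central and $(\widetilde{G}_\bk)_{\mathrm{red}}$ is itself reductive with $T^\vee_\bk$ maximal.

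It then remains to pin down the root datum, for which I would run the argument of Lemmas~\ref{lem:identification-pos-roots}--\ref{lem:identification-inclusion-2} directly. Choose $\widetilde B \subset (\widetilde{G}_\bk)_{\mathrm{red}}$ making $2\rho$ dominant; Lemmas~\ref{lem:identification-pos-roots}, \ref{lem:Weyl-groups}, \ref{lem:identification-inclusion} and~\ref{lem:identification-inclusion-2} all go through verbatim since they only used semisimplicity-free inputs (weights of simple objects, Steinberg's theorem, the description of connected components). Thus $\Z\Delta(G,T) \subset \Z\Delta^\vee(R,T^\vee_\bk) \subset X^*(T)$; but for $G$ semisimple $\Z\Delta(G,T)$ has finite index in $X^*(T)$, and combining with the reverse-direction inclusion $\Z\Delta(R,T^\vee_\bk) \subset \Z\Delta^\vee(G,T)$ from Lemma~\ref{lem:identification-inclusion} together with the fact that $R = (\widetilde{G}_\bk)_{\mathrm{red}}$ has the same dimension as the dual group (Lemma~\ref{lem:dim-geom-fiber}, using that the root system is now forced to be that of $G^\vee$ up to scaling), one deduces that the inclusion in Lemma~\ref{lem:identification-inclusion-2} is an equality. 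The dimension count is the crux: once $\dim(R) = \dim(\widetilde{G}_\bk)$ and the scaling factors $a$ are forced to be $1$, Lemma~\ref{lem:identification-inclusion-2} gives the duality of root data and hence Proposition~\ref{prop:identification-red}.

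The main obstacle I anticipate is making the dimension/scaling argument airtight without circularity: Lemma~\ref{lem:dim-geom-fiber} bounds $\dim \widetilde{G}_\bk$ above by $\dim G$, but to conclude $a=1$ for every simple root I need to know the root \emph{lattice} of $R$ is not a proper finite-index sublattice of $\Z\Delta^\vee(G,T)$, which is exactly what an equality in Lemma~\ref{lem:identification-inclusion-2} asserts. The clean way out, which I would pursue, is to observe that the chain $\Z\Delta(G,T) \subset \Z\Delta^\vee(R,T^\vee_\bk)$ and $\Z\Delta(R,T^\vee_\bk) \subset \Z\Delta^\vee(G,T)$, combined with the fact that $\langle (a\alpha)^\wedge, a\alpha \rangle = 2$ forces $a \in \{1, 1/2, 2, 3, 1/3\}$ (Cartan-integer constraints), and then the lattice inclusions force $a=1$; alternatively one invokes the adjoint-quotient picture above to transport the known root datum directly. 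Either route reduces everything already-proven, so no genuinely new geometric input is needed — only careful bookkeeping with lattices and Weyl groups.
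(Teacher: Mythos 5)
Your overall strategy is the same as the paper's: reduce to the adjoint case by using the isogeny $G \to G_{\mathrm{ad}}$ to obtain a quotient morphism $q : (\widetilde{G_{\mathrm{ad}}})_\bk \to \widetilde{G}_\bk$. However, your execution leaves a genuine gap precisely at the point you flag as the obstacle, namely pinning down the root datum.

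The "Cartan-integer constraint" you propose is vacuous. The analysis in Lemma~\ref{lem:identification-inclusion-2} shows $(a\alpha)^\wedge = \frac{1}{a}\alpha^\vee$, so
\[
\langle (a\alpha)^\wedge, a\alpha \rangle = \frac{a}{a}\langle \alpha^\vee, \alpha \rangle = 2
\]
holds identically for every $a$, giving no constraint whatsoever. The lattice inclusions together with the dimension count get you $\dim(R) = \dim(G)$ and hence reductivity of $(\widetilde{G}_\bk)_{\mathrm{red}}$, but this does not resolve the scaling: for $G = \SL_2$, both $\mathbf{SL}_2$ and $\mathbf{PGL}_2$ have dimension $3$ and the same Weyl group, yet only one is dual to $G$. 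So route 1 does not close.

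Route 2 ("transport the known root datum directly via the adjoint-quotient picture") is the right idea and is what the paper does, but you never actually carry it out. The missing step is to identify $\ker(q)$ precisely. The paper does this by observing that objects of $\Per_{G_{\mathrm{ad},\mathcal O}}(\Gr_{G_{\mathrm{ad}}},\bk)$ are canonically graded by $X^*(Z) \cong X_*(T_{\mathrm{ad}})/X_*(T)$ (via the decomposition of $\Gr_{G_{\mathrm{ad}}}$ into connected components), where $Z \subset (T_{\mathrm{ad}})^\vee_\bk$ is the scheme-theoretic intersection of the kernels of the elements of $X_*(T)$; the subcategory $\Per_{\GO}(\Gr_G,\bk)$ is exactly the objects concentrated in degree $1 \in X^*(Z)$. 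This identifies $\widetilde{G}_\bk = (\widetilde{G_{\mathrm{ad}}})_\bk / Z$ directly. Since $Z$ is finite and contained in the maximal torus $(T_{\mathrm{ad}})^\vee_\bk$, and $(\widetilde{G_{\mathrm{ad}}})_\bk$ is connected, $Z$ is central, $q$ is a central isogeny, and the root system (and then the full root datum, tracking the lattice change) is inherited — the conclusion is immediate. Your version asserts "the kernel is central" but the justification (that $T^\vee_\bk$ "still contains the maximal torus... up to the lattice change") does not establish either finiteness or containment in a torus; without the explicit identification of the kernel, the argument that the root datum transports correctly is missing.

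As a smaller point: in your parenthetical, $X_*(T_{\mathrm{ad}}) = P^\vee$ (the coweight lattice), not $Q^\vee$; your self-correction recovers the right direction of the map on tori, but the equation as written is false.
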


\begin{proof}
We assume that $G$ is semisimple.
Now that the claim is known if $G$ is of adjoint type (see Lemma~\ref{lem:identification-adjoint}),
we will in fact prove directly that $\widetilde{G}_\bk$ is a semisimple group with maximal torus $T^\vee_\bk$ and root datum dual to that of $(G,T)$.

Let $G_{\mathrm{ad}}$ be the adjoint quotient of $G$, and let $T_{\mathrm{ad}}$ be the image of $T$ in $G_{\mathrm{ad}}$. Then we can consider the group scheme $(\widetilde{G_{\mathrm{ad}}})_{\bk}$ constructed in Section~\ref{sec:construction} starting from the group $G_{\mathrm{ad}}$. By Lemma~\ref{lem:identification-adjoint} and the remarks following Proposition~\ref{prop:identification-red}, we know that $(\widetilde{G_{\mathrm{ad}}})_{\bk}$ is semisimple with root datum dual to that of $(G_{\ad}, T_{\ad})$. The morphism $G \to G_\ad$ induces a closed embedding $\Gr_G \hookrightarrow \Gr_{G_\ad}$, which then defines a group scheme morphism $(\widetilde{G_\ad})_\bk \to \widetilde{G}_\bk$ via Tannakian formalism.

The connected components of $\Gr_{G_{\mathrm{ad}}}$ are parametrized by $X_*(T_{\ad})/\Z\Delta^\vee(G_{\ad},T_{\ad})$, and $\Gr_G$ is the union of those corresponding to elements in the subset $X_*(T)/ \Z\Delta^\vee(G_{\ad},T_{\ad})$. (Here $\Delta^\vee(G_{\ad},T_{\ad})$ is included in $X_*(T)$, and identifies with $\Delta^\vee(G,T)$.) Hence if $Z \subset (T_\ad)^\vee_\bk$ is the (scheme-theoretic) intersection of the kernels of the elements of $X_*(T)$, so that $Z$ is a diagonalisable $\bk$-group scheme with $X^*(Z) \cong X_*(T_{\ad})/X_*(T)$, then any object $\mathscr{F}$ of $\Per_{G_{\ad,\mathcal{O}}}(\Gr_{G_\ad},\bk)$ admits a canonical grading $\mathscr{F}=\bigoplus_{\chi \in X^*(Z)} \mathscr{F}_\chi$, and using the equivalence of Proposition~\ref{prop:For-k} we see that $\Per_{\GO}(\Gr_G,\bk)$ identifies with the full subcategory of objects $\mathscr{F}$ such that $\mathscr{F}_\chi=0$ for $\chi \neq 1$. This means that $\widetilde{G}_\bk$ is the quotient of $(\widetilde{G_{\mathrm{ad}}})_{\bk}$ by the finite central subgroup scheme $Z$. Hence $\widetilde{G}_\bk$ is semisimple, and its root datum is dual to that of $(G,T)$.
\end{proof}

Finally, we conclude the proof of Proposition~\ref{prop:identification-red} with the following lemma.

\begin{lem}
\label{lem:identification-red}
Proposition~{\rm \ref{prop:identification-red}} holds for a general reductive group $G$.
\end{lem}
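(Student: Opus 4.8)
The strategy is to reduce the general reductive case to the semisimple case already handled in Lemma~\ref{lem:identification-ss}. Write $G = (Z_G^\circ \cdot \mathscr{D}G)$ where $Z_G^\circ$ is the connected center of $G$ and $\mathscr{D}G$ is the derived subgroup, and let $\widetilde{\mathscr{D}G}$ be a simply connected cover of $\mathscr{D}G$. The idea is to exploit two sources of extra structure on $\Per_{\GO}(\Gr_G,\bk)$: first, the grading on objects by $X_*(T)/\Z\Delta^\vee(G,T)$ coming from the central subgroup considered in the proof of Lemma~\ref{lem:identification-inclusion} (which records which connected component of $\Gr_G$ a summand lives on), and second, the relation between $\Gr_G$ and the affine Grassmannian of a group with the same derived group but simpler center. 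Since $\Gr_G$ and $\Gr_{\widetilde{\mathscr{D}G}}$ have isomorphic neutral connected components (the natural morphism $\Gr_{\widetilde{\mathscr{D}G}} \to \Gr_G$ identifies the former with $\Gr_G^{Q^\vee}$, as recalled in~\S\ref{ss:def-Gr} and used in the proof of Proposition~\ref{prop:closure-semiinfinite}), and since the other components are torsors over the neutral one under the translation action of the lattice $X_*(T)$, I expect to be able to relate $\widetilde{G}_\bk$ to the product of $\widetilde{(\mathscr{D}G)}_\bk$ (or rather its simply connected version) with a torus.

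\textbf{Key steps.} First I would handle the case where $G$ is a torus: then $\Gr_G$ is discrete with components indexed by $X_*(T)$, the category $\Per_{\GO}(\Gr_G,\bk)$ is equivalent to $X_*(T)$-graded $\bk$-modules, and $\widetilde{G}_\bk$ is immediately the split torus $T^\vee_\bk$ with character group $X_*(T)$ (this is Example~\ref{ex:reconstruction}\eqref{it:ex-recontruction-2} combined with Example~\ref{ex:tannakian-reconstruction}\eqref{it:graded-Vect}); note $G$ a torus has empty root system, so ``root datum dual to that of $(G,T)$'' just records the lattices, and the claim holds trivially. Second, for general $G$, I would choose a $z$-extension or simply use the isogeny $\widetilde{\mathscr{D}G} \times Z_G^\circ \to G$ with finite central kernel $\Gamma$; this induces a morphism of affine Grassmannians and, via the weight functors and the compatibility of $\bigoplus_\mu \F_\mu$ with convolution (Proposition~\ref{prop:F-mu-tensor}), a morphism of group schemes. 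The grading argument of Lemma~\ref{lem:identification-inclusion}/Lemma~\ref{lem:identification-ss} then shows that $\Per_{\GO}(\Gr_G,\bk)$ is the full subcategory of $\Per_{\GO}(\Gr_{\widetilde{\mathscr{D}G} \times Z_G^\circ},\bk) \cong \Per_{(\widetilde{\mathscr{D}G})_{\mathcal O}}(\Gr_{\widetilde{\mathscr{D}G}},\bk) \boxtimes \Per_{(Z_G^\circ)_{\mathcal O}}(\Gr_{Z_G^\circ},\bk)$ cut out by a character condition with respect to the kernel $\Gamma^\vee$, so that $\widetilde{G}_\bk$ is the quotient of $\widetilde{(\widetilde{\mathscr{D}G})}_\bk \times (Z_G^\circ)^\vee_\bk$ by a finite central subgroup. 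Since $\widetilde{(\widetilde{\mathscr{D}G})}_\bk$ is semisimple simply connected with root datum dual to that of $\mathscr{D}G$ (Lemma~\ref{lem:identification-ss}), and $(Z_G^\circ)^\vee_\bk$ is a torus, the resulting quotient is reductive, and one checks its root datum is dual to that of $(G,T)$ by tracking character and cocharacter lattices through the quotient. The maximal torus $T^\vee_\bk$ is identified along the way, as in the previous lemmas, using that each $\lambda \in X_*(T)$ appears in the image of some perverse sheaf (Proposition~\ref{prop:can-basis-k}).

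\textbf{Main obstacle.} The delicate point will be the bookkeeping of lattices in the quotient by $\Gamma^\vee$: one must verify that the character lattice of $\widetilde{G}_\bk$ with respect to $T^\vee_\bk$ is exactly $X^*(T)$ (not a finite-index sublattice or overlattice) and that the roots and coroots, together with their canonical pairing, transport correctly under the isogeny. This is the same kind of argument as in Lemma~\ref{lem:identification-inclusion-2} but now one cannot assume $\Z\Delta(G,T) = X^*(T)$, so one must genuinely use the $X^*(Z)$-grading to pin down the lattice rather than just the root directions. A secondary subtlety is ensuring the reductivity of the quotient of a reductive group by a finite central subgroup scheme over $\Z$ (not just over a field); this should follow from general results on reductive group schemes (e.g.\ \cite{sga3}, as invoked for the torus and semisimple cases), but must be cited carefully since $\Gamma^\vee$ need not be smooth in small characteristics. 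Once these points are settled, the equality of root data combined with the already-established quasi-reductivity of $\widetilde{G}_{\Z_p}$ and the Prasad--Yu criterion~\cite[Theorem~1.2]{py} (as outlined in~\S\ref{ss:identification-statement}) completes the proof of Proposition~\ref{prop:identification-red}, hence of Theorem~\ref{thm:description-gp}.
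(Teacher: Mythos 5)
Your reduction to the torus and semisimple cases is a sensible strategy, but the central isogeny you invoke points the wrong way. The multiplication map $\widetilde{\mathscr{D}G}\times Z_G^\circ\to G$ is a \emph{surjection} with finite central kernel, and (as the paper itself observes in the proof of Lemma~\ref{lem:identification-ss} for $G\to G_{\mathrm{ad}}$) such a map induces a closed embedding $\Gr_{\widetilde{\mathscr{D}G}\times Z_G^\circ}\hookrightarrow\Gr_G$ onto a union of connected components of the \emph{target}. Consequently $\Per_{(\widetilde{\mathscr{D}G}\times Z_G^\circ)_{\mathcal O}}(\Gr_{\widetilde{\mathscr{D}G}\times Z_G^\circ},\bk)$ is a full tensor subcategory of $\Per_{\GO}(\Gr_G,\bk)$, not the reverse, and the resulting group scheme morphism is a \emph{surjection} $\widetilde{G}_\bk\twoheadrightarrow\widetilde{(\widetilde{\mathscr{D}G}\times Z_G^\circ)}_\bk$. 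Knowing a quotient of $\widetilde{G}_\bk$ does not determine $\widetilde{G}_\bk$, so the key step of your argument — realizing $\widetilde{G}_\bk$ as a quotient of a known reductive group by a finite central subgroup — does not go through. (A secondary slip: $\widetilde{\mathscr{D}G}$ is simply connected, so its Langlands dual is of \emph{adjoint} type, not simply connected.)

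To salvage the plan along these lines you would instead need an isogeny \emph{from} $G$, e.g.\ $G\to G_{\mathrm{ad}}\times(G/\mathscr{D}G)$, whose finite central kernel is $Z(G)\cap\mathscr{D}G$; that produces a closed embedding $\Gr_G\hookrightarrow\Gr_{G_{\mathrm{ad}}}\times\Gr_{G/\mathscr{D}G}$ and a surjection $(\widetilde{G_{\mathrm{ad}}})_\bk\times(G/\mathscr{D}G)^\vee_\bk\twoheadrightarrow\widetilde{G}_\bk$, exactly paralleling the proof of Lemma~\ref{lem:identification-ss}. The paper, however, takes a genuinely different route in both of its proofs: the first uses the exact sequence $Z(G)^\circ\to G\to G/Z(G)^\circ$ (torus kernel, not finite) and the corresponding exact sequence of tensor categories and of group schemes $\widetilde{G/Z(G)^\circ}_\bk\to\widetilde{G}_\bk\to\widetilde{Z(G)^\circ}_\bk$, while the second combines the embedding $\widetilde{G/Z(G)^\circ}_\bk\hookrightarrow(\widetilde{G}_\bk)_{\mathrm{red}}$ with a Lie algebra dimension count against the upper bound from Lemma~\ref{lem:dim-geom-fiber}.
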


We will give two proofs of this lemma: the first one is a slightly expanded version of the proof given in~\cite{mv}, and the second one is new (to the best of our knowledge).

\begin{proof}[First proof of Lemma~{\rm \ref{lem:identification-red}}]
Here also, we will prove directly that $\widetilde{G}_\bk$ is reduced and reductive, and compute its root datum.

Let $Z(G)$ be the center of $G$, and set $H:=Z(G)^\circ$. Then $H$ is a torus and $G/H$ is a semisimple group; in particular the group $\widetilde{H}_\bk$ constructed as for $G$ is the $\bk$-torus dual to $H$, and $\widetilde{G/H}_\bk$ is the semisimple group dual to $G/H$.

The natural maps $H \hookrightarrow G$ and $G \twoheadrightarrow G/H$ induce morphisms
\[
\xymatrix{
\Gr_H \xrightarrow{i} \Gr_G \xrightarrow{\pi} \Gr_{G/H},
}
\]
which exhibit $\Gr_G$ as a trivial cover of $\Gr_{G/H}$ with fiber $\Gr_H$. (In fact, if we choose a lattice $Y \subset X_*(T)$ such that the composition $Y \hookrightarrow X_*(T) \twoheadrightarrow X_*(T/H)$ is an isomorphism, then $\Gr_{G/H}$ identifies with the union of the connected components of $\Gr_G$ corresponding to elements in $Y / \Z\Delta^{\vee}(G,T) \subset X_*(T) / \Z\Delta^{\vee}(G,T)$. Note that there exists an isomorphism of varieties $\Gr_G \cong \Gr_H \times \Gr_{G/H}$, but that in general such an isomorphism cannot be chosen to be compatible---in any reasonable sense---with the construction of the convolution product.) We have associated exact functors
\begin{equation}
\label{eqn:ses-tesnor-categories}
\Per_{H_{\mathcal{O}}}(\Gr_H, \bk) \xrightarrow{i_*} \Per_{\GO}(\Gr_G,\bk) \xrightarrow{\pi_*} \Per_{(G/H)_\mathcal{O}}(\Gr_{G/H},\bk),
\end{equation}
where $i_*$ is fully faithful and $\pi_*$ is essentially surjective. (Here we use Proposition~\ref{prop:For-k} to make sense of the functor $i_*$ as a functor between the categories of \emph{equivariant} perverse sheaves.) These functors are compatible with the monoidal structures and forgetful functors, hence induce group scheme morphisms
\begin{equation}
\label{eqn:ses-identification}
\widetilde{G/H}_\bk \to \widetilde{G}_\bk \to \widetilde{H}_\bk
\end{equation}
via Tannakian formalism.
If $\mathscr{F}$ is in $\Per_{\GO}(\Gr_G,\bk)$ and if we set $\mathscr{F}_0:=i_* \pH^0( i^* \mathscr{F})$, then $\mathscr{F}_0$ is a subobject of $\mathscr{F}$ and $\pi_*(\mathscr{F}_0) \subset \pi_* \mathscr{F}$ is the largest subobject isomorphic to a direct sum of copies of the unit object. This shows that~\eqref{eqn:ses-tesnor-categories} is an exact sequence of tensor categories in the sense of~\cite[Definition~3.7]{bn}; in view of~\cite[Remark~3.13]{bn} we deduce that~\eqref{eqn:ses-identification} is an exact sequence of $\bk$-group schemes. (Here the fact that the first morphism is a closed embedding can be seen using~\cite[Proposition~2.21(b)]{dm}, and the fact that the second morphism is a quotient morphism in the sense of~\cite[\S 15.1]{waterhouse} or~\cite[\S VII.7]{milne} follows from~\cite[Proposition~2.21(a)]{dm}; however exactness at the middle term is less obvious, in particular since it is not clear a priori that $\widetilde{G/H}_\bk$ is a normal subgroup. In fact, the property stated right after~\eqref{eqn:ses-identification} essentially guarantees this.)

We have just proved that $\widetilde{G}_\bk$ is an extension of $\widetilde{H}_\bk$ 
by $\widetilde{G/H}_\bk$.
Since both of these group schemes are smooth, by~\cite[Proposition~VII.10.1]{milne} this implies that $\widetilde{G}_\bk$ is also a smooth group, i.e.~that~\eqref{eqn:ses-identification} is an extension of $\bk$-algebraic groups in the ``traditional'' sense of e.g.~\cite{humphreys-gps}. The unipotent radical of $\widetilde{G}_\bk$ has trivial image in the torus $\widetilde{H}_\bk$, hence is included in $\widetilde{G/H}_\bk$; since the latter group is semisimple it follows that this unipotent radical is trivial, i.e.~that $\widetilde{G}_\bk$ is reductive.


Since $\widetilde{H}_\bk$ is commutative, $\widetilde{G/H}_\bk$ contains the derived subgroup of $\widetilde{G}_\bk$; and since $\widetilde{G/H}_\bk$ is semisimple it coincides with the derived subgroup of $\widetilde{G}_\bk$. The torus $(T/H)^\vee_\bk$ dual to $T/H$ embeds naturally in $T^\vee_\bk$, and identifies with a maximal torus in $\widetilde{G/H}_\bk$; hence the associated embedding $X_* \bigl( (T/H)^\vee_\bk \bigr) \hookrightarrow X_*(T^\vee_\bk)$ induces an isomorphism
\[
 \Z \Delta^\vee \bigl( \widetilde{G/H}_\bk, (T/H)^\vee_\bk \bigr) \xrightarrow{\sim} \Z \Delta^\vee \bigl( \widetilde{G}_\bk, T^\vee_\bk \bigr).
\]
On the other hand, in terms of $G$ this embedding identifies with the morphism $X^*(T/H) \to X^*(T)$ induced by the quotient morphism $T \to T/H$; hence it induces an isomorphism
\[
 \Z \Delta(G/H, T/H) \xrightarrow{\sim} \Z \Delta(G,T).
\]
Since the embedding
$\Z \Delta(G/H,T/H) \subset \Z \Delta^\vee \bigl( \widetilde{G/H}_\bk,(T/H)^\vee_\bk \bigr)$ of Lemma~\ref{lem:identification-inclusion-2} is known to be an equality, we deduce that the embedding $\Z \Delta(G,T) \subset \Z \Delta^\vee \bigl( \widetilde{G}_\bk, T^\vee_\bk \bigr)$ is an equality also, hence by Lemma~\ref{lem:identification-inclusion-2} that the root datum of $(\widetilde{G}_\bk, T^\vee_\bk)$ is dual to that of $(G,T)$.
\end{proof}

\begin{proof}[Second proof of Lemma~{\rm \ref{lem:identification-red}}]
 We again set $H=Z(G)^\circ$, and consider the quotient $G/H$ and the closed embedding $\widetilde{G/H}_\bk \hookrightarrow \widetilde{G}_\bk$. Since $\widetilde{G/H}_\bk$ is known to be reduced this embedding factors through $(\widetilde{G}_\bk)_{\mathrm{red}}$, and since $\widetilde{G/H}_\bk$ is semisimple the composition with the quotient morphism $(\widetilde{G}_\bk)_{\mathrm{red}} \to R$ is injective; hence $\widetilde{G/H}_\bk$ can (and will) be considered as a closed subgroup of $R$. Consider the subspaces
 \[
  \mathrm{Lie}(\widetilde{G/H}_\bk), \, \mathrm{Lie}(T^\vee_\bk) \subset \mathrm{Lie}(R),
 \]
 where $\mathrm{Lie}(\bm ?)$ means the Lie algebra.
 We have
 \begin{multline*}
  \mathrm{Lie}(\widetilde{G/H}_\bk) \cap \mathrm{Lie}(T^\vee_\bk) = \{x \in \mathrm{Lie}(\widetilde{G/H}_\bk) \mid \forall t \in T^\vee_\bk, \, t \cdot x=x\} \\
  \subset \{x \in \mathrm{Lie}(\widetilde{G/H}_\bk) \mid \forall t \in (T/H)^\vee_\bk, \, t \cdot x=x\} = \mathrm{Lie}((T/H)^\vee_\bk)
 \end{multline*}
(where the $\bk$-torus $(T/H)^\vee_\bk$ dual to $T/H$ is seen as a closed subgroup of $T^\vee_\bk$, and as the maximal torus of $\widetilde{G/H}_\bk$). We deduce that
\[
 \dim(\mathrm{Lie}(R)) \geq \dim \big(\mathrm{Lie}(\widetilde{G/H}_\bk) + \mathrm{Lie}(T^\vee_\bk) \bigr) \geq \dim(G/H)+\dim(H) = \dim(G).
\]
Since the left-hand side coincides with $\dim(R)$ (see~\cite[\S 12.2]{waterhouse}), which is at most $\dim((\widetilde{G}_\bk)_{\mathrm{red}})$, using Lemma~\ref{lem:dim-geom-fiber} we deduce that all the inequalities above are equalities. In particular, $(\widetilde{G}_\bk)_{\mathrm{red}} = R$ is reductive, and we have
\[
 \# \Delta(R,T^\vee_\bk) = \# \Delta \bigl( \widetilde{G/H}_\bk, (T/H)^\vee_\bk \bigr) = \# \Delta(G,T).
\]
This formula, together with Lemma~\ref{lem:Weyl-groups}, implies that if $\mathscr{D}(R)$ is the derived subgroup of $R$ we have
\[
 \dim(\mathscr{D}(R)) = \# \Delta(R,T^\vee_\bk) + \# \Delta_{\mathrm{s}}(R,T^\vee_\bk) = \dim(\widetilde{G/H}_\bk).
\]
Since $\widetilde{G/H}_\bk$ is semisimple it is included in $\mathscr{D}(R)$, which is connected (see~\cite[Theorem~10.2]{waterhouse}); hence this equality implies that $\widetilde{G/H}_\bk = \mathscr{D}(R)$.

Once this equality is known, we can conclude essentially as in the last part of the first proof: the embedding $X_* \bigl( (T/H)^\vee_\bk \bigr) \hookrightarrow X_*(T^\vee_\bk)$ induces an isomorphism $\Z \Delta^\vee \bigl( \widetilde{G/H}_\bk, (T/H)^\vee_\bk \bigr) \xrightarrow{\sim} \Z\Delta^\vee(R,T^\vee_\bk)$ and an isomorphism $\Z\Delta(G/H,T/H) \xrightarrow{\sim} \Z\Delta(G,T)$, which shows that the embedding
$\Z \Delta(G,T) \subset \Z \Delta^\vee(R,T^\vee_\bk)$ of Lemma~\ref{lem:identification-inclusion-2} is an equality, and then that the root datum of $(R,T^\vee_\bk)$ is dual to that of $(G,T)$. Since $R=(\widetilde{G}_\bk)_{\mathrm{red}}$, this concludes the proof of Lemma~\ref{lem:identification-red}.
\end{proof}

\begin{rmk}
From the point of view of Geometric Representation Theory, the most interesting case of the geometric Satake equivalence is when $\bk$ is an algebraically closed field.
As explained above, for this special case the results of~\cite{py} are required only to justify that the group scheme $\widetilde{G}_\bk$ is reduced. It would be desirable to find a direct justification for this fact (but we were not able to do so).
\end{rmk}

\section{Complement: restriction to a Levi subgroup}
\label{sec:Levi}

In this subsection we construct a geometric counterpart of the functor of restriction to a Levi subgroup, following~\cite[\S\S5.3.27--31]{beilinson-drinfeld}. This construction plays a key role in various applications of the geometric Satake equivalence, see e.g.~\cite{bg, small2}.

\subsection{The geometric restriction functor}

Let $P \subset G$ be a parabolic subgroup containing $B$, and let $L \subset P$ be the Levi factor containing $T$. If $B_L=B \cap L$, then $B_L$ is a Borel subgroup of $L$, and $P$ is determined by the subset $\Delta_{\mathrm{s}}(L,B_L,T) \subset \Delta_{\mathrm{s}}(G,B,T)$.

The embedding $L \hookrightarrow G$ induces a closed embedding $\Gr_L \hookrightarrow \Gr_G$, whose image identifies with the fixed points $(\Gr_G)^{Z(L)^\circ}$ (where $Z(L) \subset L$ is the center of $L$, and $Z(L)^\circ$ is the identity component of $ Z(L)$). In fact, choose a dominant cocharacter $\eta \in X_*(T)$ which is orthogonal to the simple roots in $\Delta_{\mathrm{s}}(L,B_L,T)$, but not to any other simple root. Then (the image of) $\Gr_L$ identifies with $(\Gr_G)^{\eta(\C^\times)}$. We will denote by $\mathscr{S}_L$ the stratification of $\Gr_L$ by $L_\mathcal{O}$-orbits.

The connected components of the affine Grassmannian $\Gr_L$ are in a canonical bijection with the quotient $X_*(T) / \Z \Delta^\vee(L,T)$; see~\S\ref{ss:def-Gr}. If $c$ belongs to this quotient, then we denote by $\Gr_L^c$ the corresponding connected component of $\Gr_L$ and we set
\begin{align*}
 S_c &:= \left\{x \in \Gr_G \left| \ \lim_{a \to 0} (\eta(a) \cdot x) \in \Gr_L^c \right. \right\}; \\
 T_c &:= \left\{x \in \Gr_G \left| \ \lim_{a \to \infty} (\eta(a) \cdot x) \in \Gr_L^c \right. \right\}.
\end{align*}
If $N_P \subset P$ is the unipotent radical and $N_P^- \subset G$ is the unipotent radical of the parabolic subgroup of $G$ which is opposite to $P$ with respect to $T$, then we have
\[
 S_c = (N_P)_{\mathcal{K}} \cdot \Gr_L^c, \qquad T_c = (N_P^-)_{\mathcal{K}} \cdot \Gr_L^c.
\]
We will denote by
\[
\Gr_G \xleftarrow{s_c} S_c \xrightarrow{\sigma_c} \Gr_L^c, \quad \Gr_G \xleftarrow{t_c} T_c \xrightarrow{\tau_c} \Gr_L^c
\]
the natural maps. 

If $\rho_L$ is the half sum of the positive roots of $L$ determined by $B_L$, then for any $\lambda \in \Delta^\vee(L,T)$ we have $\langle 2\rho-2\rho_L, \lambda \rangle = 0$. It follows that the pairing $\langle 2\rho-2\rho_L,c \rangle$ makes sense for $c \in X_*(T) / \Z \Delta^\vee(L,T)$.

\begin{lem}
\label{lem:rest-levi}
 For any $c \in X_*(T) / \Z \Delta^\vee(L,T)$ and any $\mathscr{F}$ in $\Per_{\GO}(\Gr_G,\bk)$, there exists a canonical isomorphism
 \[
  (\tau_c)_* (t_c)^! \mathscr{F} \xrightarrow{\sim} (\sigma_c)_! (s_c)^* \mathscr{F}
 \]
 in $\Db_{\mathscr{S}_L}(\Gr_L,\bk)$.
 Moreover, this complex is concentrated in perverse degree $\langle 2\rho-2\rho_L,c \rangle$.
\end{lem}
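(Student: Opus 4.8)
\textbf{Proof proposal for Lemma~\ref{lem:rest-levi}.}

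The plan is to mimic, for the group $L$ sitting inside $G$, the constructions behind Proposition~\ref{prop:weight-functors-k}: the hyperbolic localization isomorphism and the perverse-degree vanishing. First I would set up the geometry carefully. The one-parameter subgroup $\eta : \C^\times \to T$ acts on $\Gr_G$ with fixed-point set $\Gr_L = (\Gr_G)^{\eta(\C^\times)}$, which is the disjoint union of the connected components $\Gr_L^c$. For each $c$ the subvariety $S_c$, resp.~$T_c$, is the attracting, resp.~repelling, set of $\Gr_L^c$ for this action, and the maps $s_c,\sigma_c,t_c,\tau_c$ are the inclusions and the retractions (``$a \to 0$'' and ``$a \to \infty$'' limits). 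The construction of the canonical morphism $(\tau_c)_*(t_c)^! \to (\sigma_c)_!(s_c)^*$ is then the exact analogue of the one recalled in the second item of the remark following Proposition~\ref{prop:weight-functors}: it is built from adjunction and base change for the diagram relating $S_c$, $T_c$, $\Gr_L^c$ and $\Gr_G$, together with the identification (for weakly $\eta(\C^\times)$-equivariant objects, which $\GO$-equivariant perverse sheaves certainly are) of $*$-restriction to the fixed locus with $*$-pushforward along the attracting retraction, and dually. That this morphism is an \emph{isomorphism} is then precisely Braden's hyperbolic localization theorem~\cite[Theorem~1]{braden}, applied to the $\C^\times$-action given by $\eta$; the normality issue is handled as in~\cite[\S 1.8.1]{xue} (or one simply notes $\Gr_G$ is covered by normal finite-type pieces — the Schubert varieties). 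This gives the first assertion.

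For the perverse-degree statement I would argue in two halves, exactly parallel to the proof of Proposition~\ref{prop:weight-functors-k}. Writing $n_c := \langle 2\rho - 2\rho_L, c\rangle$, I want to show $(\sigma_c)_!(s_c)^*\mathscr{F}$ lies in ${}^p D^{\geq n_c}(\Gr_L,\bk)$ and $(\tau_c)_*(t_c)^!\mathscr{F}$ lies in ${}^p D^{\leq n_c}(\Gr_L,\bk)$, which together with the isomorphism just established forces both to be concentrated in perverse degree $n_c$. The key inputs are: (i) the fibers of $\sigma_c$ over a point of $\Gr_L^\mu$ (for $\mu \in X_*(T)$ with $L_\mu \in \Gr_L^c$) are, up to the $(N_P)_\mathcal{K}$-orbit structure, semi-infinite-orbit-type intersections inside $\Gr_G$, whose dimensions are controlled by Theorem~\ref{thm:orbits} (more precisely, by the version of it relative to the parabolic $P$, i.e.~the statement that $\overline{\Gr_G^\lambda} \cap S_\mu$ has pure dimension $\langle\rho,\lambda+\mu\rangle$, combined with the bookkeeping of the shift $\langle 2\rho_L,\mu\rangle$ coming from the strata of $\Gr_L$); and (ii) the perversity bounds on $\mathscr{F}|_{\Gr_G^\lambda}$. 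A dévissage over the $\GO$-orbit stratification of $\Gr_G$ (enumerating dominant weights compatibly with $\leq$, as in the proof of Proposition~\ref{prop:weight-functors-k}) then yields the $!$-cohomological-with-compact-support vanishing $\coH^k_c(S_c \cap \Gr_G^\lambda, \mathscr{F}) = 0$ for $k > n_c$ when refined stratum-by-stratum, and filtering by closed subsets $\overline{\Gr_G^\lambda}$ gives the bound on $(\sigma_c)_!(s_c)^*\mathscr{F}$. The dual argument, using Borel–Moore homology / cohomology with support and the analogue of Theorem~\ref{thm:orbits-T} for $T_c$, gives the bound on $(\tau_c)_*(t_c)^!\mathscr{F}$.

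The main obstacle I anticipate is the precise determination of the shift $n_c = \langle 2\rho - 2\rho_L, c\rangle$: one must track how the dimension formula $\dim(\overline{\Gr_G^\lambda}\cap S_\mu) = \langle\rho,\lambda+\mu\rangle$ interacts with the (infinite-dimensional) $L_\mathcal{O}$-orbit $\Gr_L^\mu \subset \Gr_L^c$ through $\mu$, since the perverse normalization on $\Gr_L$ involves $\dim(\Gr_L^\mu) = \langle 2\rho_L,\mu\rangle$ rather than $\langle 2\rho,\mu\rangle$. The difference $\langle 2\rho - 2\rho_L, \mu\rangle$ is constant on each $\Z\Delta^\vee(L,T)$-coset (because $2\rho - 2\rho_L$ pairs to zero with coroots of $L$), which is exactly why $n_c$ is well defined on $c \in X_*(T)/\Z\Delta^\vee(L,T)$; making this cancellation explicit, and checking it comes out with the right sign in both the $!$- and $*$-estimates, is the delicate point. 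Once the degree bookkeeping is pinned down, the rest is a routine adaptation of the arguments already developed for the weight functors $\F_\mu$.
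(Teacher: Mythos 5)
The first half of your proposal (setting up the attracting/repelling picture for the cocharacter $\eta$, then invoking Braden's hyperbolic localization theorem via the same adjunction-and-base-change construction as in the remark following Proposition~\ref{prop:weight-functors}) matches the paper and is fine.

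For the perversity estimate, however, your plan has a genuine gap, and it is precisely at the point you flag as ``the delicate point.'' You propose to analyze $(\sigma_c)_!(s_c)^*\mathscr{F}$ stratum by stratum on $\Gr_L$ and to control the cohomology of the fibers of $\sigma_c$ by Theorem~\ref{thm:orbits}. But the fiber of $\sigma_c$ over a point $L_\mu \in \Gr_L^c$ is $(N_P)_\mathcal{K}\cdot L_\mu$, not the full semi-infinite orbit $S_\mu = N_\mathcal{K}\cdot L_\mu$. Theorem~\ref{thm:orbits} gives the dimension of $\overline{\Gr_G^\lambda} \cap S_\mu$, i.e.~of the intersection with the $N_\mathcal{K}$-orbit; it says nothing about $\overline{\Gr_G^\lambda} \cap (N_P)_\mathcal{K}\cdot L_\mu$, and the paper develops no such ``parabolic'' dimension estimate. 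So the dévissage you envision cannot be carried out from the paper's toolbox without first proving a new geometric input. The subtraction $\langle 2\rho_L,\mu\rangle$ you introduce to account for the normalization on $\Gr_L$ is the right numerology, but it does not magically turn a bound on $\coH_c^\bullet(\overline{\Gr_G^\lambda}\cap S_\mu,\cdot)$ into a bound on stalks of $(\sigma_c)_!(s_c)^*\mathscr{F}$: the former computes compactly supported cohomology over the whole fiber $\sigma_c^{-1}(S^L_\mu)$, not over a single fiber of $\sigma_c$.

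The paper's actual argument sidesteps all of this with a single observation you have not made: since $N = N_P \cdot N_L$, base change along $S^L_\lambda \hookrightarrow \Gr_L$ gives a canonical isomorphism $\coH^\bullet_c\bigl(S^L_\lambda, (\sigma_c)_!(s_c)^*\mathscr{F}\bigr) \cong \coH^\bullet_c(S_\lambda, \mathscr{F})$ for $\lambda\in c$. The right-hand side is concentrated in degree $\langle 2\rho,\lambda\rangle$ by Proposition~\ref{prop:weight-functors-k}, and then Lemma~\ref{lem:criterion-perv} (applied to $L$) immediately says that $(\sigma_c)_!(s_c)^*\mathscr{F}[-\langle 2\rho-2\rho_L,c\rangle]$ is perverse, since $\langle 2\rho,\lambda\rangle = \langle 2\rho_L,\lambda\rangle + \langle 2\rho-2\rho_L,c\rangle$. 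No new dimension estimate, no stratum-by-stratum dévissage. The key tool you should be reaching for is the weight-functor criterion for perversity (Lemma~\ref{lem:criterion-perv}), not the stalk/costalk definition; once you look for the weight functors of $(\sigma_c)_!(s_c)^*\mathscr{F}$ on $\Gr_L$ and realize they match the weight functors of $\mathscr{F}$ on $\Gr_G$, the lemma falls out.
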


\begin{proof}
 As in the case $L=T$ (see Proposition~\ref{prop:weight-functors-k}), the isomorphism follows from Braden's hyperbolic localization theorem~\cite[Theorem~1]{braden}. If, for $\lambda \in X_*(T)$, we denote by $S_\lambda^L, T_\lambda^L \subset \Gr_L$ the semi-infinite orbits for the group $L$, then for any $\lambda \in c$ the base change isomorphism provides a canonical isomorphism
 \[
  \coH^{\bullet}_c \left( S_\lambda^L, (\sigma_c)_! (s_c)^* \mathscr{F} \right) \cong \coH^\bullet_c(S_\lambda,\mathscr{F}).
 \]
 By Lemma~\ref{lem:criterion-perv}, this implies that $(\sigma_c)_! (s_c)^* \mathscr{F} [-\langle 2\rho-2\rho_L,c \rangle]$ is a perverse sheaf, and finishes the proof.
\end{proof}

In view of this lemma, for $c \in X_*(T) / \Z \Delta^\vee(L,T)$ we consider the functor
\[
\F_c := (\sigma_c)_! (s_c)^*(\bm ?)[-\langle 2\rho-2\rho_L,c \rangle] : \Per_{\GO}(\Gr_G,\bk) \to \Per_{L_{\mathcal{O}}}(\Gr_L,\bk).
\]
We also set
 \[
  \mathsf{R}^G_L := \bigoplus_{c \in X_*(T) / \Z \Delta^\vee(L,T)} \F_c : \Per_{\GO}(\Gr_G,\bk) \to \Per_{L_{\mathcal{O}}}(\Gr_L,\bk).
 \]
 The arguments of Lemma~\ref{lem:rest-levi} provide, for any $\lambda \in X_*(T)$, a canonical isomorphism
 \begin{equation}
 \label{eqn:rest-levi-torus}
 \F^L_\lambda \circ \mathsf{R}^G_L \xrightarrow{\sim} \F_\lambda
 \end{equation}
 (where $\F_\lambda^L$ is the $\lambda$-weight functor for the group $L$). In particular, summing over $\lambda$ and using Theorem~\ref{thm:fiber-functor-k} we deduce a canonical isomorphism of functors.
 \[
 \F^L \circ \mathsf{R}^G_L \cong \F
 \]
 where $\F^L:=\coH^\bullet(\Gr_L, \bm ?)$.

\begin{prop}
\label{prop:geometric-restriction}
 The functor $ \mathsf{R}^G_L$
 sends the convolution product on $\Per_{\GO}(\Gr_G,\bk)$ to the convolution product on $\Per_{L_{\mathcal{O}}}(\Gr_L,\bk)$, in a way compatible with associativity and commutativity constraints. 
 \end{prop}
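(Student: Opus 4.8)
The strategy is to mimic the construction of the convolution product, associativity constraint and commutativity constraint on $\Per_{\GO}(\Gr_G,\bk)$ via fusion (as in Section~\ref{sec:convolution-BD}), but now ``in family'' over the Levi subgroup, using the comparison of the hyperbolic localization functors for $G$ and for $L$ afforded by the factorization structure of the Beĭlinson--Drinfeld Grassmannians. The key point is that the functor $\mathsf{R}^G_L$ is defined through attractor/repeller constructions for the $\C^\times$-action given by $\eta$, and these are compatible with the convolution diagram; the subtlety is purely about keeping track of perverse degrees and signs, exactly as in~\S\ref{ss:change-commutativity}.

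\textbf{Main steps.} First I would establish the compatibility of $\mathsf{R}^G_L$ with convolution at the level of the ``classical'' convolution diagram~\eqref{eqn:diagram-convolution}: one has a parallel diagram
\[
\Gr_L \times \Gr_L \xleftarrow{p_L} L_{\mathcal{K}} \times \Gr_L \xrightarrow{q_L} L_{\mathcal{K}} \times^{L_{\mathcal{O}}} \Gr_L \xrightarrow{m_L} \Gr_L,
\]
and the semi-infinite orbit structure is compatible with the twisted product, in the sense that the attractor (for the $\eta$-action) of $\GK \times^{\GO} \Gr_G$ along the connected components of $L_{\mathcal{K}} \times^{L_{\mathcal{O}}} \Gr_L$ maps under $m$ to the corresponding attractors in $\Gr_G$; this is the analogue, for general $L$, of the computation of $T$-fixed points and semi-infinite orbits carried out in the proof of Lemma~\ref{lem:dimensions-semismall} and in~\S\ref{ss:compatibility-weight-functors}. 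Using the base change theorem and Braden's theorem (as in Lemma~\ref{lem:rest-levi}) one gets a canonical isomorphism $\mathsf{R}^G_L(\mathscr{F} \star \mathscr{G}) \cong \mathsf{R}^G_L(\mathscr{F}) \star \mathsf{R}^G_L(\mathscr{G})$ for $\mathscr{F}, \mathscr{G}$ in $\Per_{\GO}(\Gr_G,\bk)$, where on the right the convolution is that of $\Per_{L_{\mathcal{O}}}(\Gr_L,\bk)$. Second, for associativity I would invoke the threefold convolution $\mathrm{Conv}_3$ of~\S\ref{ss:associativity} (and its $L$-analogue), and check that the base change isomorphisms defining the associativity constraint commute with $\mathsf{R}^G_L$; this is formal once the twofold compatibility is established. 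Third, and most delicately, for the commutativity constraint I would run the whole fusion argument of Section~\ref{sec:convolution-BD} over $X^2$ equivariantly for the $\C^\times$-action: the Beĭlinson--Drinfeld Grassmannian $\Gr_{G,X^2}$ carries such an action, its attractor $S_c(X^2)$ along $\Gr_{L,X^2}$ makes sense (as in the construction of $S_\mu(X^2)$ in~\S\ref{ss:compatibility-weight-functors}, using~\cite{drg}), and restricts over the diagonal to $S_c$ and away from the diagonal to $\bigsqcup_{c_1+c_2=c} S_{c_1} \times S_{c_2}$; since the commutativity constraint for $G$ (resp.~$L$) is constructed purely from the $\mathrm{swap}$ automorphism of $\Gr_{G,X^2}$ (resp.~$\Gr_{L,X^2}$) and the $!*$-extension $j_{!*}$, and since $\mathrm{swap}$ is $\C^\times$-equivariant and commutes with attractor-taking, the isomorphism~\eqref{eqn:convolution-fusion-k} is compatible with $\mathsf{R}^G_L$. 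Finally one must check that the sign twist of~\S\ref{ss:change-commutativity} (introduced because $\tau^\circ$ changes parity) is respected: the relevant parities are governed by $\langle 2\rho, \bm ? \rangle$ for $G$ and $\langle 2\rho_L, \bm ? \rangle$ for $L$, and the shift $[-\langle 2\rho - 2\rho_L, c \rangle]$ built into $\F_c$ is precisely what reconciles them, so the signs match componentwise.

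\textbf{Expected main obstacle.} The hard part will be the commutativity constraint, specifically verifying that hyperbolic localization (Braden's theorem) is compatible with the $!*$-extension functor $j_{!*}$ appearing in~\eqref{eqn:convolution-fusion-k}: a priori hyperbolic localization does not commute with intermediate extension, so one cannot simply say ``apply $\F_c$ to both sides.'' The way around this is to use the characterization of $j_{!*}$ via the vanishing conditions~\eqref{eqn:conditions-IC-fusion} together with Lemma~\ref{lem:convolution-fusion-1} and its evident $\C^\times$-equivariant, ``in family over $X^2$'' refinement; that is, one should show that the attractor-restriction of $(\tau^\circ \mathscr{F}_1) \star_X (\tau^\circ \mathscr{F}_2)$ along $\Gr_{L,X^2}$ is again (a shift of) the $!*$-extension from the open locus $U$, which reduces to checking the two perversity bounds after applying $\F_c$, and these follow from the already-established compatibility of $\mathsf{R}^G_L$ with the ordinary convolution (hence with $\pH^0$ of external products) via Lemma~\ref{lem:criterion-perv} applied to $L$. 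Apart from this point, the argument is a lengthy but routine unwinding of definitions; I would organize it so that the commutativity computation is a formal consequence of a single geometric lemma on $\C^\times$-attractors in $\Gr_{G,X^2}$ and their image under $\mathrm{swap}$ and $m$.
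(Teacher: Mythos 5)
Your plan is essentially the same as the paper's proof: both work entirely in the fusion picture, identify the relative attractors $S_c(X), S_c(X^2)$ over $X$ and $X^2$, and — crucially — handle exactly the obstacle you flagged (hyperbolic localization does not commute with $j_{!*}$ in general) by verifying the two characterizing properties of the intermediate extension, namely the perverse bounds on $i^*$ and $i^!$ at the diagonal together with the identification over $U$ by base change. The paper's proof checks these bounds by directly computing the diagonal restriction/corestriction of $\tilde{\F}^2_c$ applied to the $\star_X$-product (using the isomorphism~\eqref{eqn:convolution-fusion-k} and Braden's theorem), which is the same thing you propose under ``reduces to checking the two perversity bounds after applying $\F_c$.'' In this sense your ``main obstacle'' paragraph is the actual content of the paper's proof.

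Two small remarks. First, your organization treats the compatibility with $\star$ via the classical diagram~\eqref{eqn:diagram-convolution} as a prior step and then re-runs fusion for the commutativity constraint; this is a redundant detour, since the fusion argument establishes the tensor-compatibility at the same time. More importantly, the classical-diagram version is less clean than you suggest: under the isomorphism $\phi:\GK\times^{\GO}\Gr_G \xrightarrow{\sim} \Gr_G\times\Gr_G$, $[g,h]\mapsto([g],[gh])$, the $\eta(\C^\times)$-action by left multiplication on $\GK$ corresponds to the \emph{diagonal} action on $\Gr_G\times\Gr_G$, while the map $p$ from the descent picture is only equivariant for the action on the first factor. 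Identifying the attractor of the twisted product $\mathscr{F}_1 \widetilde{\boxtimes}\mathscr{F}_2$ with the $L$-twisted product $\mathsf{R}^G_L\mathscr{F}_1\widetilde{\boxtimes}_L\mathsf{R}^G_L\mathscr{F}_2$ therefore requires more care than what you write, and the fusion picture avoids this by keeping both points at genuine geometric locations rather than inside a quotient groupoid. Second, your discussion of the sign twist of~\S\ref{ss:change-commutativity} — that the parities $\langle 2\rho,\bm?\rangle$ and $\langle 2\rho_L,\bm?\rangle$ are reconciled precisely by the shift $[-\langle 2\rho-2\rho_L,c\rangle]$ in the definition of $\F_c$ — is correct and is a useful addition; the paper leaves the constraint-compatibility check to the reader, so this is a point you supply that they do not.
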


\begin{proof}
 Recall the objects considered in Section~\ref{sec:convolution-BD}. As in the proof of Proposition~\ref{prop:F-mu-tensor-char-0} (which was only concerned with the case $L=T$)
 we can consider ``relative'' versions $S_c(X) \subset \Gr_{G,X}$, $S_c(X^2) \subset \Gr_{G,X^2}$ of the varieties $S_c$, and denote the corresponding embeddings and projections by
 \begin{align*}
  \tilde{s}_c : S_c(X) \to \Gr_{G,X}, &\quad \tilde{\sigma}_c : S_c(X) \to \Gr^c_{L,X}, \\
  \tilde{s}_c^2 : S_c(X^2) \to \Gr_{G,X^2}, &\quad \tilde{\sigma}_c^2 : S_c(X^2) \to \Gr^c_{L,X^2},
 \end{align*}
 where $\Gr^c_{L,X}$ and $\Gr_{L,X^2}^c$ are the connected components of $\Gr_{L,X}$ and $\Gr_{L,X^2}$ defined by $c$.
 Here, for $x \in X$, the fiber of $S_c(X^2)$ over $(x,x) \in X^2$ is canonically identified with $S_c$, and the fiber over $(x_1,x_2)$ with $x_1 \neq x_2$ is canonically identified with $\bigsqcup_{c_1+c_2=c} S_{c_1} \times S_{c_2}$.
 
 Now, consider the diagram
 \begin{equation}
 \label{eqn:diag-rest-Levi}
 \vcenter{
  \xymatrix@C=1.5cm{
  (\Gr_{G,X} \times \Gr_{G,X})|_{U} \ar[r]^-{j} & \Gr_{G,X^2} & \Gr_{G,X} \ar[l]_-{i} \\
  \bigsqcup_{c_1+c_2=c} \bigl( S_{c_1}(X) \times S_{c_2}(X) \bigr)|_U \ar[u]^-{(\tilde{s}^2_c)|_U} \ar[d]_-{(\tilde{\sigma}^2_c)|_U} \ar[r]^-{j_c} & S_c(X^2) \ar[u]_-{\tilde{s}^2_c} \ar[d]^-{\tilde{\sigma}^2_c} & S_c(X) \ar[u]_-{\tilde{s}_c} \ar[d]^-{\tilde{\sigma}_c} \ar[l]_-{i_c} \\
  \bigsqcup_{c_1+c_2=c} \bigl( \Gr_{L,X}^{c_1} \times \Gr_{L,X}^{c_2} \bigr)|_U \ar[r]^-{j_L^c} & \Gr^c_{L,X^2} & \Gr_{L,X}^c \ar[l]_-{i_L^c},
  }
  }
 \end{equation}
 where $i_c$ and $j_c$ are the restrictions of $i$ and $j$. All the squares in this diagram are Cartesian by~\cite[Lemma~1.4.9]{drg}. Moreover, $(\tilde{s}^2_c)|_U$ identifies with the restriction to $U$ of the disjoint union of inclusions $\tilde{s}_{c_1} \times \tilde{s}_{c_2}$, and similarly for $\tilde{\sigma}^2_c$.
 
We fix $\mathscr{A}_1, \mathscr{A}_2$ in $\Per_{\GO}(\Gr_G,\bk)$. Then by~\eqref{eqn:convolution-fusion-k} we have
\[
\tau^\circ(\mathscr A_1 \star \mathscr A_2) \cong i^\circ j_{!*}\bigl( \pH^0(\tau^\circ
\mathscr A_1\lboxtimes_\bk \tau^\circ\mathscr A_2)|_U\bigr).
\]
We set
\[
\tilde{\F}_c := (\tilde{\sigma}_c)_! (\tilde{s}_c)^*(\bm ?)[-\langle 2\rho-2\rho_L,c \rangle], \quad \tilde{\F}_c^2 := (\tilde{\sigma}^2_c)_! (\tilde{s}^2_c)^*(\bm ?)[-\langle 2\rho-2\rho_L,c \rangle].
\]
Then on the one hand we have
\begin{equation}
\label{eqn:tildeF-1}
\tilde{\F}_c \bigl( \tau^\circ(\mathscr A_1 \star \mathscr A_2) \bigr) \cong (\tau_L)^\circ(\F_c(\mathscr A_1 \star \mathscr A_2)),
\end{equation}
and on the other hand we have
\[
\tilde{F}_c \Bigl( i^\circ j_{!*}\bigl( \pH^0 (\tau^\circ
\mathscr A_1\lboxtimes_\bk \tau^\circ\mathscr A_2)|_U\bigr) \Bigr) \cong (i^c_L)^\circ \Bigl( \tilde{F}_c^2 \circ j_{!*}\bigl( \pH^0(\tau^\circ
\mathscr A_1\lboxtimes_\bk \tau^\circ\mathscr A_2)|_U\bigr)  \Bigr)
\]
by the base change theorem. We claim that
\begin{multline}
\label{eqn:tildeF-2}
\tilde{F}_c^2 \circ j_{!*}\bigl( \pH^0(\tau^\circ
\mathscr A_1\lboxtimes_\bk \tau^\circ\mathscr A_2)|_U\bigr) \\
\cong (j^c_L)_{!*}\left( \bigoplus_{c_1+c_2=c} \pH^0 \bigl( (\tau_L)^\circ
\F_{c_1}(\mathscr A_1) \lboxtimes_\bk (\tau_L)^\circ \F_{c_2}(\mathscr A_2) \bigr)|_U \right).
\end{multline}
In fact, to check this it suffices to prove that the left-hand side satisfies the properties~\eqref{eqn:characterization-IC} which characterize the right-hand side. The isomorphism over $U$ follows from the base change theorem applied in the left-hand side of diagram~\eqref{eqn:diag-rest-Levi} and the description above of the maps $(\tilde{s}^2_c)|_U$ and $(\tilde{\sigma}^2_c)|_U$. The restriction of our complex to the inverse image of $X$ is computed in~\eqref{eqn:tildeF-1}, and satisfies the required property. Finally, the co-restriction to the inverse image of $X$ can be computed similarly, using the other description of the functors $\tilde{\F}_c$ and $\tilde{\F}^2_c$ provided by Braden's theorem.\footnote{Here we need to apply Braden's theorem on a finite-dimensional subvariety of $\Gr_{G,X^2}$. Since such a variety is not necessarily normal, the proof in~\cite{braden} does not apply in this context. The more general form of this result that we need is proved in~\cite{drg}.} Finally, comparing~\eqref{eqn:tildeF-1} and~\eqref{eqn:tildeF-2} and using the isomorphism~\eqref{eqn:convolution-fusion-k} for $L$, we obtain a canonical isomorphism
\[
(\tau_L)^\circ(\F_c(\mathscr{A}_1 \star \mathscr{A}_2)) \cong  \bigoplus_{c_1+c_2=c} (\tau_L)^\circ( \F_{c_1}(\mathscr{A}_1) \star \F_{c_2}(\mathscr{A}_2)).
\]
Restricting to a point in $x$ and then summing over $c$, we deduce the wished-for isomorphism
\[
\mathsf{R}^G_L(\mathscr{A}_1 \star \mathscr{A}_2) \cong \mathsf{R}^G_L(\mathscr{A}_1) \star \mathsf{R}^G_L(\mathscr{A}_2).
\]
The proof of compatibility with the constraints is left to the reader.
\end{proof}

\subsection{Description of the induced morphism of group schemes}

The results of Section~\ref{sec:construction} provide canonical equivalences of monoidal categories
\[
\Per_{\GO}(\Gr_G,\bk) \cong \Rep_\bk(\widetilde{G}_\bk), \qquad \Per_{L_{\mathcal{O}}}(\Gr_L,\bk) \cong \Rep_\bk(\widetilde{L}_\bk).
\]
In view of~\cite[Theorem~X.1.2]{milne}, the functor $\mathsf{R}^G_L$ defines a $\bk$-group scheme morphism
\[
\varphi_L^G : \widetilde{L}_\bk \to \widetilde{G}_\bk.
\]
The isomorphisms~\eqref{eqn:rest-levi-torus} show that the composition of $\varphi_L^G$ with the canonical embedding $T^\vee_\bk \to \widetilde{L}_\bk$ (see~\S\ref{ss:identification-first}) is the canonical morphism $T^\vee_\bk \to \widetilde{G}_\bk$.

\begin{prop}
The morphism $\varphi_L^G$ is a closed embedding, which induces an isomorphism between $\widetilde{L}_\bk$ and the Levi subgroup\footnote{See~\cite[Expos\'e~XXVI, \S 1.7]{sga3} for the notion of Levi subgroup of a reductive group over a base scheme.} of $\widetilde{G}_\bk$ containing $T^\vee_\bk$ whose roots are the coroots of $L$.
\end{prop}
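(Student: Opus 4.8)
The plan is to proceed as in the analogous arguments in Section~\ref{sec:identification}, combining the compatibility of $\mathsf{R}^G_L$ with convolution (Proposition~\ref{prop:geometric-restriction}) with the explicit identifications of $\widetilde{G}_\bk$ and $\widetilde{L}_\bk$ as split reductive group schemes (Theorem~\ref{thm:description-gp}) and a weight-space bookkeeping argument. First I would observe that by the base change isomorphism~\eqref{eqn:base-change-tG} (applied both to $G$ and to $L$), it suffices to treat the case $\bk=\Z$, and then — since closed embeddings and Levi subgroups can be detected on geometric fibers (both group schemes being flat over $\Z$, indeed split reductive) — it suffices to treat the case where $\bk$ is an algebraically closed field. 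So fix such a $\bk$.

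Next I would show $\varphi_L^G$ is a closed embedding. By~\cite[Proposition~2.21(b)]{dm}, a morphism of affine group schemes over a field is a closed embedding provided every object of $\Rep_\bk(\widetilde{L}_\bk)$ is a subquotient of an object pulled back from $\Rep_\bk(\widetilde{G}_\bk)$ along $\varphi_L^G$; under our equivalences this amounts to saying that every object of $\Per_{L_{\mathcal{O}}}(\Gr_L,\bk)$ is a subquotient of $\mathsf{R}^G_L(\mathscr{F})$ for some $\mathscr{F}$ in $\Per_{\GO}(\Gr_G,\bk)$. For this I would take $\mathscr{F}$ among the objects $\cJ_{!*}(\lambda,\bk)$: using the isomorphisms~\eqref{eqn:rest-levi-torus} together with Proposition~\ref{prop:can-basis-k} one checks that $\mathsf{R}^G_L \bigl( \cJ_{!*}(\lambda,\bk) \bigr)$ has a weight-space decomposition containing $\lambda$ with multiplicity one and all other weights $<\lambda$ (for the order on $X_*(T)$), so its degree-$c$ summand — where $c = \lambda + \Z\Delta^\vee(L,T)$ — has a composition factor isomorphic to the simple object of $\Per_{L_{\mathcal{O}}}(\Gr_L,\bk)$ with highest weight $\lambda$ relative to $B_L$. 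Running over $\lambda$ dominant for $L$ (which is a larger set than $X_*(T)^+$, but every $L$-dominant coweight is $W$-conjugate to a $G$-dominant one and the simple factors only depend on the $W_L$-orbit), we hit every simple object of $\Per_{L_{\mathcal{O}}}(\Gr_L,\bk)$, which gives the claim. An alternative, perhaps cleaner, route is to note that $\mathsf{R}^G_L$ is a tensor functor compatible with the forgetful functors to $\Vect_\bk$, and that the composite $T^\vee_\bk \to \widetilde{L}_\bk \to \widetilde{G}_\bk$ is the canonical (closed) embedding, which already shows $\varphi_L^G$ has central kernel killed by $T^\vee_\bk$, hence trivial.

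Then I would identify the image. Let $\widetilde{M}_\bk \subset \widetilde{G}_\bk$ be the Levi subgroup containing $T^\vee_\bk$ whose set of roots is $\Delta^\vee(L,T)$ (a subset of $\Delta^\vee(G,T)$, which is the set of coroots of $\widetilde{G}_\bk$ by Theorem~\ref{thm:description-gp}, and which is closed and symmetric since $\Delta(L,T)$ is); such a Levi subgroup exists over $\bk$ by the standard structure theory, e.g.~\cite[Expos\'e~XXVI, \S 1.7]{sga3}. Both $\widetilde{L}_\bk$ and $\widetilde{M}_\bk$ are split reductive with the same maximal torus $T^\vee_\bk$; by Theorem~\ref{thm:description-gp} the root datum of $\widetilde{L}_\bk$ with respect to $T^\vee_\bk$ is dual to that of $(L,T)$, i.e.~has roots $\Delta^\vee(L,T)$, coroots $\Delta(L,T)$, and the evident bijection — which is precisely the root datum of $\widetilde{M}_\bk$ with respect to $T^\vee_\bk$. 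Hence there is an isomorphism $\widetilde{L}_\bk \xrightarrow{\sim} \widetilde{M}_\bk$ restricting to the identity on $T^\vee_\bk$. It remains to check that $\varphi_L^G$ lands in $\widetilde{M}_\bk$ and induces this isomorphism; for the first point I would argue that $\varphi_L^G$ is a closed embedding of a connected reductive group containing $T^\vee_\bk$, hence its image is generated by $T^\vee_\bk$ together with the root subgroups for the roots of $\widetilde{L}_\bk$, and those roots — computed via the weight functors and~\eqref{eqn:rest-levi-torus} — are exactly $\Delta^\vee(L,T)$; so the image is contained in (and, by equality of dimensions, equal to) $\widetilde{M}_\bk$. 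The induced map $\widetilde{L}_\bk \to \widetilde{M}_\bk$ is then an isomorphism of split reductive groups that is the identity on $T^\vee_\bk$ and matches up simple root groups, hence is the canonical isomorphism.

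The main obstacle I anticipate is the bookkeeping needed to pin down precisely which roots of $\widetilde{L}_\bk$ occur and to match the canonical bijections between roots and coroots on the two sides — i.e.~making rigorous the passage ``from weights of $\mathsf{R}^G_L(\cJ_{!*}(\lambda,\bk))$'' to ``the root datum of the image of $\varphi_L^G$.'' This is morally the same difficulty overcome in Lemmas~\ref{lem:identification-pos-roots}--\ref{lem:identification-inclusion-2}, but here one works inside $\widetilde{G}_\bk$ rather than rebuilding everything from scratch, so one must be careful that the normalizations (choice of $\widetilde{B}$, of $\eta$, the half-sums $\rho$, $\rho_L$, and the shift $\langle 2\rho - 2\rho_L, c\rangle$ in the definition of $\F_c$) are mutually consistent. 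Once the root datum of the image is shown to equal that of $\widetilde{M}_\bk$, the rest is formal.
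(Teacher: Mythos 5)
Your broad strategy---reduce to fields, establish the closed embedding via~\cite[Proposition~2.21(b)]{dm}, then identify the image by root-group generation and dimension count---matches the paper's, and your image-identification step is essentially the paper's argument. But your closed-embedding step has a genuine gap over a positive-characteristic field. Your primary route only shows that every \emph{simple} object of $\Per_{L_{\mathcal{O}}}(\Gr_L,\bk)$ occurs as a composition factor of some $\mathsf{R}^G_L(\cJ_{!*}(\lambda,\bk))$, whereas \cite[Proposition~2.21(b)]{dm} requires every \emph{object} of $\Rep_\bk(\widetilde{L}_\bk)$ to be a subquotient of a pullback; when the category is not semisimple, an extension $0 \to S_1 \to E \to S_2 \to 0$ need not be a subquotient of a pullback even if both $S_i$ are, so ``we hit every simple object, which gives the claim'' does not follow. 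The paper avoids this by working with \emph{tilting} objects rather than simples: every object is a subquotient of a tilting object (by the highest weight structure of Proposition~\ref{prop:highest-weight}, as in the proof of Lemma~\ref{lem:G-algebraic-connected}), the functor $\mathsf{R}^G_L$ preserves tilting objects, and the indecomposable tilting on $\Gr_L$ labelled by an $L$-dominant $\lambda$ is a direct summand of $\mathsf{R}^G_L$ applied to the indecomposable tilting on $\Gr_G$ labelled by its $G$-dominant $W$-conjugate. Your ``alternative route'' asserts the kernel of $\varphi_L^G$ is central without justification: a priori the kernel is only a normal closed subgroup scheme, and passing from ``trivial scheme-theoretic intersection with $T^\vee_\bk$'' to ``trivial'' requires a genuine argument that you do not supply.

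A smaller point: the paper handles the case $\bk=\Z$ not by a blanket ``detect closed immersions on geometric fibers'' principle, but by showing that the cokernel $C$ of $(\varphi_L^G)^*: \Z[\widetilde{G}_\Z] \to \Z[\widetilde{L}_\Z]$ satisfies $C \otimes_\Z \mathbf{F}=0$ for all fields $\mathbf{F}$ (combining the field case with the base-change compatibility $\bk[\widetilde{L}_\bk] \cong \bk \otimes_\Z \Z[\widetilde{L}_\Z]$ of Section~\ref{sec:construction}) and then invoking a Nakayama-type lemma from~\cite{br}; the identification of the image over $\Z$ similarly uses the unicity statement from~\cite[Expos\'e~XXVI]{sga3} rather than fiberwise detection. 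Your reduction is in the right spirit, but the precise mechanism matters.
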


\begin{proof}
First, we assume that $\bk$ is a field. In this case, by~\cite[Proposition~2.21(b)]{dm}, to prove that $\varphi_L^G$ is a closed embedding it suffices to prove that any object of $\Per_{L_{\mathcal{O}}}(\Gr_L,\bk)$ is a subquotient of an object in the essential image of $\mathsf{R}^G_L$. However, as in the proof of Lemma~\ref{lem:G-algebraic-connected}, any object of $\Per_{L_{\mathcal{O}}}(\Gr_L,\bk)$ is a subquotient of a tilting object. Now the functor $\mathsf{R}^G_L$ sends tilting objects of $\Per_{\GO}(\Gr_G,\bk)$ to tilting objects of $\Per_{L_{\mathcal{O}}}(\Gr_L,\bk)$. (In the case $\mathrm{char}(\bk)$ is good for $G$, this fact follows from~\cite[Theorem~1.6]{jmw2} and the results of~\cite[\S 1.5]{mr}; the general case is treated in~\cite{bmrr}.) Moreover, it is not difficult to check that if $\lambda \in X_*(T)$ is dominant for $L$, then the indecomposable tilting object in $\Per_{L_{\mathcal{O}}}(\Gr_L,\bk)$ labelled by $\lambda$ is a direct summand of the image under $\mathsf{R}^G_L$ of the indecomposable tilting object in $\Per_{\GO}(\Gr_G,\bk)$ labelled by the unique $W$-conjugate of $\lambda$ belonging to $X_*(T)^+$. It follows that any tilting object in $\Per_{L_{\mathcal{O}}}(\Gr_L,\bk)$ is a direct summand of an object in the essential image of $\mathsf{R}^G_L$, which finishes the proof of the fact that $\varphi_L^G$ is a closed embedding.

Once this fact is established, we note that since $\varphi_L^G$ intertwines the canonical morphisms $T^\vee_\bk \to \widetilde{L}_\bk$ and $T^\vee_\bk \to \widetilde{G}_\bk$, it must induce, for any $\alpha \in \Delta_{\mathrm{s}}^\vee(L,B_L,T)$, an isomorphism between the root subgroup of $\widetilde{L}_\bk$ associated with $\alpha$ and the root subgroup of $\widetilde{G}_\bk$ associated with $\alpha$. Now the group $\widetilde{L}_\bk$, resp.~the Levi subgroup $\widetilde{L}_\bk'$ of $\widetilde{G}_\bk$ containing $T^\vee_\bk$ whose roots are the coroots of $L$, is generated by $T^\vee_\bk$ and these subgroups. We deduce that the image of $\varphi_L^G$ is $\widetilde{L}_\bk'$, or in other words that $\varphi_L^G$ induces an isomorphism between $\widetilde{L}_\bk$ and $\widetilde{L}_\bk'$.

Now we treat the case $\bk=\Z$. Consider the morphism $(\varphi_L^G)^* : \Z[\widetilde{G}_\Z] \to \Z[\widetilde{L}_\Z]$. If $C$ is the cokernel of this morphism, then $C$ is a finitely generated $\Z[\widetilde{G}_\Z]$-module which satisfies $C \otimes_\Z \mathbf{F}=0$ for any field $\mathbf{F}$. By~\cite[Claim $(*)$ in the proof of Lemma~1.4.1]{br}, it follows that $C=0$, i.e.~that $(\varphi_L^G)^*$ is surjective, and hence that $\varphi_L^G$ is a closed embedding. It is easily checked, using similar arguments, that the image of $\varphi_L^G$ satisfies condition (b) in~\cite[Expos\'e~XXVI, Proposition~1.6(ii)]{sga3} (for the parabolic subgroup containing $T^\vee_\Z$ and whose roots are $\Delta^\vee_+(L,B_L,T) \sqcup (-\Delta_+^\vee(G,B,T)$). By the unicity claim in this statement, it follows that this image is the Levi subgroup of $\widetilde{G}_\Z$ containing $T^\vee_\Z$ whose roots are the coroots of $L$.

Finally, the general case follows from the case $\bk=\Z$ by base change.
\end{proof}

\newpage

\appendix

\section{Equivariant perverse sheaves}
\label{sec:appendix}

\subsection{Equivariant perverse sheaves}
\label{ss:equiv-perv}

Let $X$ be a complex algebraic variety, let $H$ be a connected\footnote{This assumption is crucial; in case $H$ is disconnected, only the first definition of equivariant perverse sheaves has favorable properties.} algebraic group acting on $X$, and consider a commutative Noetherian ring of finite global dimension $\bk$.
Let
\[
a,p : H \times X \to X, \quad e:X \to H \times X
\]
be the maps defined by
\[
p(g,x)=x, \quad a(g,x) = g \cdot x, \quad e(x)=(1,x).
\]
Let also $p_{23} : H \times H \times X \to H \times X$ be the projection on the last two components, and $m : H \times H \to H$ be the multiplication map.

Let $\mathscr{T}$ be a 
stratification of $X$ whose strata are stable under the $H$-action. Then there are at least 3 ``reasonable'' definitions of the category of $\mathscr{T}$-constructible $H$-equivariant perverse sheaves on $X$:
\begin{enumerate}
\item
the heart $\Per^\#_{\mathscr{T},H}(X,\bk)$ of the perverse t-structure on the $\mathscr{T}$-constructible equivariant derived category $\Db_{\mathscr{T},H}(X,\bk)$ in the sense of Bernstein--Lunts, see~\cite[\S 5]{bernstein-lunts};
\item
the category $\Per^\flat_{\mathscr{T},H}(X,\bk)$ whose objects are pairs $(\mathscr{F},\vartheta)$ where $\mathscr{F} \in \Per_{\mathscr{T}}(X,\bk)$ and $\vartheta : a^* \mathscr{F} \to p^* \mathscr{F}$ is an isomorphism such that
\begin{equation}
\label{eqn:cocycle}
e^*(\vartheta) = \id_{\mathscr{F}} \quad \text{and} \quad (m \times \id_X)^*(\vartheta) = (p_{23})^*(\vartheta) \circ (\id_H \times a )^*(\vartheta),
\end{equation}
and whose morphisms from $(\mathscr{F},\vartheta)$ to $(\mathscr{F}',\vartheta')$ are morphisms $f : \mathscr{F} \to \mathscr{F}'$ in $\Per_{\mathscr{T}}(X,\bk)$ such that the following diagram commutes:
\[
\xymatrix@C=1.5cm{
a^* \mathscr{F} \ar[d]_-{a^*(f)} \ar[r]^-{\vartheta} & p^* \mathscr{F} \ar[d]^-{p^*(f)} \\
a^* \mathscr{F}' \ar[r]^-{\vartheta'} & p^* \mathscr{F}';
}
\]
\item
the full subcategory $\Per_{\mathscr{T},H}(X,\bk)$ of $\Per_{\mathscr{T}}(X,\bk)$ consisting of objects $\mathscr{F}$ such that there exists an isomorphism $p^*\mathscr{F} \cong a^* \mathscr{F}$.
\end{enumerate}

There exists an obvious forgetful functor $\Per^\flat_{\mathscr{T},H}(X,\bk) \to \Per_{\mathscr{T},H}(X,\bk)$. Next, we will define a canonical functor 
\begin{equation}
\label{eqn:forgPerH}
\Per^\#_{\mathscr{T},H}(X,\bk) \to \Per^\flat_{\mathscr{T},H}(X,\bk).
\end{equation}
For this we need the following observation. We denote by $\mathrm{For}_H : \Db_{\mathscr{T},H}(X,\bk) \to \Db_{\mathscr{T}}(X,\bk)$ the forgetful functor. The morphism $p$ is a $\phi$-morphism of varieties in the sense of~\cite[\S 0.1]{bernstein-lunts}, where $\phi$ is the unique morphism $H \to \{1\}$ and where $H$ acts on $H \times X$ via left multiplication on the first factor. Therefore, this map defines a functor
\begin{equation}
\label{eqn:def-p^*}
 p^* : \Db_{\mathscr{T}}(X,\bk) \to \Db_{\widetilde{\mathscr{T}},H}(H \times X, \bk)
\end{equation}
(where $\widetilde{\mathscr{T}}$ is the stratification of $H \times X$ whose strata are the subvarieties $H \times S$ with $S \in \mathscr{T}$),
see~\cite[\S 6.5]{bernstein-lunts}.

\begin{lem}
\label{lem:equiv-isom}
 For any $\mathscr{F}$ in $\Db_{\mathscr{T},H}(X,\bk)$, there exists a canonical isomorphism
 \[
  a^* \mathscr{F} \xrightarrow{\sim} p^* \mathrm{For}_H(\mathscr{F})
 \]
 in $\Db_{\widetilde{\mathscr{T}},H}(H \times X,\bk)$.
\end{lem}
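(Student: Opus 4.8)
\textbf{Proof plan for Lemma~\ref{lem:equiv-isom}.}
The plan is to unwind the definitions of $a^*$ and $p^*$ on the equivariant derived category $\Db_{\mathscr{T},H}(X,\bk)$ in the sense of Bernstein--Lunts, and to identify both with the same functor after recognizing that $a$ and $p$ become the ``same'' $\phi$-morphism once one keeps track of the relevant $H$-actions. Recall that in \cite{bernstein-lunts} a morphism $f : Y_1 \to Y_2$ together with a group homomorphism $\phi : H_1 \to H_2$ which is $\phi$-equivariant gives rise to a pullback functor $f^* : \Db_{H_2}(Y_2,\bk) \to \Db_{H_1}(Y_1,\bk)$. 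The action map $a : H \times X \to X$ is $H$-equivariant for the left-multiplication action of $H$ on $H \times X$ (on the first factor) and the given action on $X$, since $g \cdot (h,x) = (gh,x)$ maps under $a$ to $gh \cdot x = g \cdot (a(h,x))$. Thus $a$ is a $\phi$-morphism with $\phi = \id_H$, and it produces a functor $a^* : \Db_{\mathscr{T},H}(X,\bk) \to \Db_{\widetilde{\mathscr{T}},H}(H \times X,\bk)$. Similarly $p : H \times X \to X$ is a $\phi$-morphism for the unique $\phi : H \to \{1\}$, giving \eqref{eqn:def-p^*}.

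The key step will be to exhibit, on the level of simplicial (or algebraic-space) resolutions computing these equivariant derived categories, an isomorphism of the two resolutions under which $a^*$ and $p^* \circ \mathrm{For}_H$ correspond. Concretely: the ``change of group along $\id_H$'' does nothing, so $a^*$ differs from the ordinary (nonequivariant) pullback along $a$ only by recording equivariance; and the composite $p^* \circ \mathrm{For}_H$ is by construction the equivariant structure on the ordinary pullback along $p$ coming from the trivial action downstairs. The isomorphism $a \cong p$ in the relevant sense comes from the automorphism
\[
 \sigma : H \times X \xrightarrow{\sim} H \times X, \qquad \sigma(g,x) = (g, g^{-1} \cdot x),
\]
which satisfies $p \circ \sigma = a$ and intertwines the two $H$-actions on $H \times X$ in play (left multiplication on the first factor versus the ``left multiplication on first factor together with the action on $X$'' one). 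So one gets $a^* = \sigma^* \circ p^*$; and on the nonequivariant pullback $\mathrm{For}_H(\mathscr{F})$ one checks that $\sigma^*$ applied to $p^* \mathrm{For}_H(\mathscr{F})$ is canonically isomorphic to $p^* \mathrm{For}_H(\mathscr{F})$ itself precisely because $\mathscr{F}$ carries an $H$-equivariant structure — the datum of that structure is, up to coherence, exactly an isomorphism $a^* \mathrm{For}_H(\mathscr{F}) \cong p^* \mathrm{For}_H(\mathscr{F})$. Naturality in $\mathscr{F}$ is then immediate from functoriality of all the pullbacks involved, and canonicity follows from the fact that no choices were made beyond $\sigma$, which is itself canonical.

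The main obstacle will be bookkeeping rather than conceptual: one must be careful about which of the several $H$-actions on $H \times X$ (and on $H \times H \times X$, if one wishes to also check compatibility with the cocycle condition \eqref{eqn:cocycle}, as needed for the functor \eqref{eqn:forgPerH}) is used at each stage, and verify that $\sigma$ (and its higher analogues on the simplicial resolution) intertwine them compatibly. I expect the cleanest route is to work at the level of the semi-simplicial spaces $(H^{\times n} \times X)_{n \geq 0}$ computing $\Db_{\mathscr{T},H}(X,\bk)$ and to observe that $a$ and $p$ extend to maps of such systems which are intertwined by the levelwise isomorphisms $\sigma_n$, reducing the whole statement to the degree-zero identity $p \circ \sigma = a$ plus standard descent. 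Once \eqref{eqn:forgPerH} is set up this way it is then formal that, composing the isomorphism of this lemma through, $\Per^{\#}_{\mathscr{T},H}(X,\bk)$ maps to $\Per^{\flat}_{\mathscr{T},H}(X,\bk)$, which is the use to which the lemma will be put.
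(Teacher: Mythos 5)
Your approach is genuinely different from the paper's and, while it hits on the right object, it has a real gap at the crucial step.

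First, small errors. With $\sigma(g,x)=(g,g^{-1}\cdot x)$ one has $(p\circ\sigma)(g,x)=g^{-1}\cdot x$, which is not $a(g,x)=g\cdot x$; the identity you want is either $a\circ\sigma=p$ for your $\sigma$, or $p\circ\sigma=a$ for $\sigma(g,x)=(g,g\cdot x)$. This is minor, but it signals a more serious issue: whichever direction you take, $\sigma$ intertwines the left-multiplication action on one copy of $H\times X$ with the ``diagonal'' action $h\cdot(g,x)=(hg,hx)$ on the other. In particular $\sigma$ is \emph{not} an $H$-equivariant automorphism of $(H\times X,\text{left-mult})$, so $\sigma^*$ is not an endofunctor of $\Db_{\widetilde{\mathscr{T}},H}(H\times X,\bk)$. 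The phrase ``one checks that $\sigma^*$ applied to $p^*\mathrm{For}_H(\mathscr{F})$ is canonically isomorphic to $p^*\mathrm{For}_H(\mathscr{F})$'' therefore does not type-check in the category where the lemma's isomorphism is supposed to live. What your argument actually produces, once unwound, is the isomorphism $a^*\mathrm{For}_H(\mathscr{F})\cong p^*\mathrm{For}_H(\mathscr{F})$ in the \emph{nonequivariant} derived category of $H\times X$ — which is exactly the equivariance datum of $\mathscr{F}$, not the equivariant isomorphism the lemma asks for. You flag at the end that one ``should'' work with the semi-simplicial resolution and the levelwise $\sigma_n$'s to make this rigorous; that is indeed where the real content lies, and without carrying it out the proof is incomplete.

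The paper's route is shorter and avoids $\sigma$ entirely: it cites that $p^*:\Db_{\mathscr{T}}(X,\bk)\to\Db_{\widetilde{\mathscr{T}},H}(H\times X,\bk)$ is an \emph{equivalence} with quasi-inverse $e^*\circ\mathrm{For}_H$ (Bernstein--Lunts, since the left-mult action on $H\times X$ is free with quotient $X$), so to produce the desired isomorphism it suffices to produce one after applying $e^*\circ\mathrm{For}_H$ — and there the claim collapses to $(a\circ e)^*\mathrm{For}_H(\mathscr{F})\cong\mathrm{For}_H(\mathscr{F})$, which is immediate from $a\circ e=\id_X$. This equivalence is precisely the abstract encapsulation of your $\sigma$ (it is the trivialization of the $H$-torsor $H\times X\to X$), and invoking it is what lets one promote the nonequivariant isomorphism to an equivariant one without any simplicial bookkeeping. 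If you want to keep your more hands-on approach, the cleanest fix is to still use that $p^*$ is an equivalence and check the isomorphism after applying the quasi-inverse — at which point your $\sigma$ becomes optional.
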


\begin{proof}
In view of~\cite[\S 6.6, Item 5]{bernstein-lunts}, the functor~\eqref{eqn:def-p^*} is an equivalence of categories, whose quasi-inverse is the composition $e^* \circ \mathrm{For}_H$ (where we also denote by $\mathrm{For}_H$ the forgetful functor $\Db_{\widetilde{\mathscr{T}},H}(H \times X, \bk) \to \Db_{\widetilde{\mathscr{T}}}(H \times X, \bk)$). Therefore, to define an isomorphism as in the lemma it suffices to construct an isomorphism
\[
 e^* \circ \mathrm{For}_H (a^* \mathscr{F}) \xrightarrow{\sim} \mathrm{For}_H(\mathscr{F}).
\]
In fact, such an isomorphism is clear from the facts that $a^*$ commutes with forgetful functors in the obvious way and that $a \circ e=\id_X$.
\end{proof}

If $\mathscr{F}$ is in $\Per^\#_{\mathscr{T},H}(X,\bk)$, applying the forgetful functor to the isomorphism of Lemma~\ref{lem:equiv-isom} we obtain a canonical isomorphism $\vartheta : a^* \mathrm{For}_H(\mathscr{F}) \xrightarrow{\sim} p^* \mathrm{For}_H(\mathscr{F})$ in $\Db_{\widetilde{\mathscr{T}}}(H \times X, \bk)$. We leave it to the reader to check that this isomorphism satisfies the conditions~\eqref{eqn:cocycle}; then the pair $(\mathrm{For}_H(\mathscr{F}), \vartheta)$ defines an object of $\Per^\flat_{\mathscr{T},H}(X,\bk)$. This construction provides the whished-for functor~\eqref{eqn:forgPerH}.

The following result is well known, but not explicitly proved in the literature to the best of our knowledge (except for a very brief treatment in~\cite[Appendix~A]{mv1}).

\begin{prop}
\label{prop:equiv-perv}
The forgetful functors
\[
\Per^\#_{\mathscr{T},H}(X,\bk) \to \Per^\flat_{\mathscr{T},H}(X,\bk) \to \Per_{\mathscr{T},H}(X,\bk)
\]
are equivalences of categories.
\end{prop}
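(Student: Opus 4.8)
\textbf{Proof strategy for Proposition~\ref{prop:equiv-perv}.}
The plan is to show that each of the two forgetful functors is fully faithful and essentially surjective. I will treat the second functor $\Per^\flat_{\mathscr{T},H}(X,\bk) \to \Per_{\mathscr{T},H}(X,\bk)$ first, since it is the more subtle one. For essential surjectivity, given $\mathscr{F}$ in $\Per_{\mathscr{T},H}(X,\bk)$ one knows an isomorphism $a^*\mathscr{F} \cong p^*\mathscr{F}$ exists, and the key point is that, because $H$ is \emph{connected}, such an isomorphism can be rigidified so as to satisfy the cocycle conditions~\eqref{eqn:cocycle}. I would argue as follows: restricting along $e$ and composing with an automorphism of $\mathscr{F}$, one may assume $e^*(\vartheta)=\id_{\mathscr{F}}$; then the ``difference'' between $(m\times\id_X)^*(\vartheta)$ and $(p_{23})^*(\vartheta)\circ(\id_H\times a)^*(\vartheta)$ is an automorphism of a perverse sheaf on $H\times H\times X$ restricting to the identity on $\{1\}\times\{1\}\times X$. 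Since $\Hom$ between perverse sheaves is computed by a local system on the (connected) base $H\times H$ in the appropriate sense — more precisely, since both sides are pullbacks of $\mathscr{F}$ along smooth maps with connected fibers, the space of isomorphisms between them is a torsor under $\Aut(\mathscr{F})$ locally constant on $H\times H$ — and $H\times H$ is connected, the two maps agree. (Here one uses that a morphism of perverse sheaves that vanishes on a stalk over a connected base vanishes; this is the standard ``no monodromy'' argument.) For full faithfulness, a morphism $f:\mathscr{F}\to\mathscr{F}'$ in $\Per_{\mathscr{T}}(X,\bk)$ between objects underlying $(\mathscr{F},\vartheta)$ and $(\mathscr{F}',\vartheta')$ automatically satisfies the compatibility square: the two composites $a^*\mathscr{F}\to p^*\mathscr{F}'$ agree because they agree after $e^*$ (both give back $f$) and are morphisms of perverse sheaves on $H\times X$, so again connectedness of $H$ forces them to coincide. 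This shows the second functor is an equivalence.

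For the first functor $\Per^\#_{\mathscr{T},H}(X,\bk) \to \Per^\flat_{\mathscr{T},H}(X,\bk)$, I would invoke the foundational results of Bernstein--Lunts together with Lemma~\ref{lem:equiv-isom}. The equivariant derived category $\Db_{\mathscr{T},H}(X,\bk)$ is defined via a simplicial resolution $X\leftarrow H\times X\leftarrow H\times H\times X\leftarrow\cdots$, and an object of its heart is precisely the data of a perverse sheaf on $X$ together with descent data along the first few terms, subject to cocycle conditions — which is exactly the definition of $\Per^\flat_{\mathscr{T},H}(X,\bk)$. More concretely: using that $p^* : \Db_{\mathscr{T}}(X,\bk)\to\Db_{\widetilde{\mathscr{T}},H}(H\times X,\bk)$ is an equivalence with quasi-inverse $e^*\circ\mathrm{For}_H$ (\cite[\S 6.6]{bernstein-lunts}), one checks that the functor~\eqref{eqn:forgPerH} is fully faithful by expressing both $\Hom$-spaces as the equalizer of the two restriction maps to $H\times X$, and essentially surjective by showing that a pair $(\mathscr{F},\vartheta)$ satisfying~\eqref{eqn:cocycle} provides exactly the gluing data needed to produce an object of the equivariant heart. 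The only thing to verify carefully is that the t-structure on $\Db_{\mathscr{T},H}(X,\bk)$ is the one whose heart matches this description, which is~\cite[\S 5.1]{bernstein-lunts} (equivariant perverse t-structure is detected after forgetting equivariance).

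I expect the main obstacle to be the essential surjectivity of the second functor, i.e.~promoting a mere isomorphism $a^*\mathscr{F}\cong p^*\mathscr{F}$ to one satisfying the full cocycle identity. The delicate points are: (i) making precise the claim that the sheaf of isomorphisms $\underline{\mathrm{Isom}}(a^*\mathscr{F}, p^*\mathscr{F})$ on the base of a smooth surjection with connected fibers is locally constant, which requires knowing that $\Hom$ and automorphisms of perverse sheaves behave well under smooth pullback (one reduces to the smooth-pullback compatibility ${}^p\hspace{-1pt}\tau$ and the fact that $p^![-\dim]$ and $a^![-\dim]$ are t-exact and fully faithful on a suitable piece); and (ii) the iterative rigidification: first normalize along $e$, then kill the discrepancy on $H\times H\times X$. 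Since $H$ is connected this discrepancy, being an automorphism of a perverse sheaf on a connected base that restricts to the identity somewhere, is forced to be the identity. All other verifications — the two functors being well defined, the cocycle conditions for the image of~\eqref{eqn:forgPerH}, full faithfulness — are routine diagram chases and I would not grind through them in detail.
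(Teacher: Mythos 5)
Your treatment of the second functor is a direct and valid argument: you normalize $\vartheta$ along $e$, and then use that the smooth pullbacks involved (along $p$, and along $(g,h,x)\mapsto ghx$) are fully faithful on perverse sheaves — since $H$ is connected — to force the compatibility square and the cocycle identity by restricting along the unit section. This is essentially what the reference cited by the paper (Letellier, \S4.2.10) carries out; where the paper merely cites, you sketch the proof, which is reasonable. One terminological caution: the principle is not really about a ``stalk'' over the base, it is full faithfulness of $\psi^*[\dim]$ on perverse sheaves for $\psi$ smooth with connected fibers ({\rm \cite[Proposition~4.2.5]{bbd}}), applied together with restriction along the section $x\mapsto(1,1,x)$ to identify the descended automorphism.

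The genuine gap is in your treatment of $\Per^\#_{\mathscr{T},H}(X,\bk)\to\Per^\flat_{\mathscr{T},H}(X,\bk)$. You assert that an object of the heart of the Bernstein--Lunts category is ``by definition'' a perverse sheaf on $X$ together with cocycle data on $H\times X$ and $H\times H\times X$, so that the functor is essentially an identity. That is not what Bernstein--Lunts define: an object of $\Db_{c,H}(X,\bk)$ is (roughly) a compatible family $(\mathscr{F}_P,\mathscr{F}_X,\beta)$ over acyclic free resolutions $P\to X$, and you must actually \emph{produce} the complex $\mathscr{F}_P$ on $P/H$ from a pair $(\mathscr{F},\vartheta)$. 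The tool that makes this possible is that perverse sheaves form a stack for the smooth topology ({\rm \cite[Th\'eor\`eme~3.2.4]{bbd}}): the descent datum $\vartheta$ on $H\times X\cong P\times_{P/H}P$ glues $\pi^*\mathscr{F}$ down along $q:P\to P/H$. You never invoke this, and you dismiss the remaining verifications as ``routine diagram chases,'' but the essential surjectivity step is precisely where the nontrivial input lives. The paper's proof is built around exactly this: choose a free, $\dim(X)$-acyclic resolution $\pi:P\to X$, check faithfulness and fullness via the full faithfulness of $q^*$ and the stack property, and then, for essential surjectivity, pull $\vartheta$ back to $P\times_{P/H}P$, verify the descent condition from~\eqref{eqn:cocycle}, and descend. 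Your proposal would need to make this gluing explicit (or cite the stack property) to be complete. A secondary point: the paper actually proves the \emph{composition} $\Per^\#\to\Per$ is an equivalence rather than treating the first functor in isolation; that is an organizational convenience and either structure would work once the stack property is in play.
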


In view of this proposition, in the body of these notes we identify the three categories above, and denote them by $\Per_{\mathscr{T},H}(X,\bk)$.

In the proof of this proposition we will use the fact (see~\cite[Th\'eor\`eme~3.2.4]{bbd}) that perverse sheaves form a stack for the smooth topology. In our particular case, if $\pi : P \to X$ is a smooth resolution (in the sense of \cite{bernstein-lunts}), $\mathscr{U}$ denotes the stratification on $P$ whose strata are the subsets $\pi^{-1}(S)$ for $S \in \mathscr{T}$, $\mathscr{V}$ denotes the stratification on $P/H$ whose strata are the subsets $q(U)$ with $U \in \mathscr{U}$ (where $q : P \to P/H$ is the projection), and if
\[
r_1,r_2 : P \times_{P/H} P \to P, \quad r_{12},r_{23}, r_{13} : P \times_{P/H} P \times_{P/H} P \to P \times_{P/H} P
\] 
are the natural projections, this means that the category $\Per_{\mathscr{V}}(P/H,\bk)$ is equivalent, via the functor $q^*$, to the category whose objects are pairs $(\mathscr{F},\sigma)$ where $\mathscr{F} \in \Per_{\mathscr{U}}(P,\bk)[-\dim(H)]$ and $\sigma : (r_1)^* \mathscr{F} \xrightarrow{\sim} (r_2)^* \mathscr{F}$ is an isomorphism such that $(r_{23})^*(\sigma) \circ (r_{12})^*(\sigma) = (r_{13})^*(\sigma)$, and whose morphisms $(\mathscr{F},\sigma) \to (\mathscr{F}',\sigma')$ are morphisms 
$f \in \Hom_{\Db_{\mathscr{U}}(P,\bk)}(\mathscr{F}, \mathscr{F}')$
such that $(r_2)^* (f) \circ \sigma = \sigma' \circ (r_1)^*(f)$. 

With this result at hand we can give the proof of Proposition~\ref{prop:equiv-perv}.

\begin{proof}
The second functor is an equivalence by~\cite[\S 4.2.10]{letellier}. Hence what remains to be proved is that the composition $\Per^\#_{\mathscr{T},H}(X,\bk) \to \Per_{\mathscr{T},H}(X,\bk)$ is an equivalence.

Fix a free $H$-space $P$ and a smooth $\dim(X)$-acyclic map $\pi : P \to X$ of relative dimension $d$ (which exist thanks to the results of~\cite[\S 3.1]{bernstein-lunts}), and let $q : P \to P/H$ be the quotient morphism. Then $\Per^\#_{\mathscr{T},H}(X,\bk)$ is (by definition, see~\cite[\S 2.2.4]{bernstein-lunts}) equivalent to the category whose objects are the triples 
 $(\mathscr{F}_P, \mathscr{F}_X, \beta)$ where $\mathscr{F}_P \in \Db_c(P/H,\bk)$, $\mathscr{F}_X \in \Per_{\mathscr{T}}(X,\bk)$ and $\beta : q^*\mathscr{F}_P \xrightarrow{\sim} \pi^* \mathscr{F}_X$ is an isomorphism, and whose morphisms from $(\mathscr{F}_P, \mathscr{F}_X, \beta)$ to $(\mathscr{F}'_P, \mathscr{F}'_X, \beta')$ are the pairs $(f_P,f_X)$ with $f_P : \mathscr{F}_P \to \mathscr{F}_P'$ and $f_X : \mathscr{F}_X \to \mathscr{F}_X'$ compatible (in the natural sense) with $\beta$ and $\beta'$. 

First we show that our functor is faithful. Let $(f_P,f_X) : (\mathscr{F}_P, \mathscr{F}_X, \beta) \to (\mathscr{F}'_P, \mathscr{F}'_X, \beta')$ be a morphism in $\Per^\#_{\mathscr{T},H}(X,\bk)$ such that $f_X=0$. Then by the compatibility of $(f_P,f_X)$ with $\beta$ and $\beta'$ we deduce that $q^*(f_P)=0$. Now it is easily seen that $\mathscr{F}_P$ belongs to $\Per_{\mathscr{V}}(P/H,\bk)[\dim(H)-d]$. Since $q$ is smooth with connected fibers, the functor $q^*$ is fully faithful on perverse sheaves (see~\cite[Proposition~4.2.5]{bbd}); we deduce that $f_P=0$, finishing the proof of faithfulness.

Next we prove that our functor is full. Let $(\mathscr{F}_P, \mathscr{F}_X, \beta)$ and $(\mathscr{F}'_P, \mathscr{F}'_X, \beta')$ be in $\Per^\#_{\mathscr{T},H}(X,\bk)$, and let $f : \mathscr{F}_X \to \mathscr{F}'_X$ be a morphism. To construct a morphism $f_P : \mathscr{F}_P \to \mathscr{F}'_P$ such that $\beta' \circ q^*(f_P) = \pi^*(f) \circ \beta$, we use the stack property recalled above: we remark that the morphism $(\beta')^{-1} \circ \pi^*(f) \circ \beta$ satisfies the descent condition, hence is of the form $q^*(f_P)$ for a unique morphism $f_P : \mathscr{F}_P \to \mathscr{F}'_P$.

Finally, we prove that our functor is essentially surjective. Let $\mathscr{F}$ be in $\Per_{\mathscr{T},H}(X,\bk)$. Then there exists a (unique) isomorphism $\vartheta : a^*(\mathscr{F}) \to p^*(\mathscr{F})$ which satisfies the conditions~\eqref{eqn:cocycle}. Identifying $H \times P$ with $P \times_{P/H} P$ via the morphism $(a,p)$, $(\id_H \times \pi)^*(\vartheta)$ defines an isomorphism $\sigma : (r_1)^* (\pi^* \mathscr{F}) \to (r_2)^* (\pi^* \mathscr{F})$. Identifying $H \times H \times P$ with $P \times_{P/H} P \times_{P/H} P$ via $(g,h,x) \mapsto (ghx,hx,x)$, we see that the second condition in~\eqref{eqn:cocycle} guarantees that $\sigma$ satisfies the descent condition, so that the pair $(\pi^* \mathscr{F},\sigma)$ defines an object $\mathscr{F}_P \in \Db_c(P/H,\bk)$ such that $\pi^* \mathscr{F} \cong q^* \mathscr{F}_P$. Fixing such an isomorphism, we obtain an object of $\Per^\#_{\mathscr{T},H}(X,\bk)$ whose image in $\Per_{\mathscr{T},H}(X,\bk)$ is $\mathscr{F}$.
\end{proof}

\subsection{Induction}
\label{ss:appendix-ind}

Let $X$, $H$ and $\bk$ be as in~\S\ref{ss:equiv-perv}. We consider the constructible derived category $\Db_c(X,\bk)$ of $\bk$-sheaves on $X$, and its $H$-equivariant version $\Db_{c,H}(X,\bk)$. We also denote by
\[
 \mathrm{For}_H : \Db_{c,H}(X,\bk) \to \Db_{c}(X,\bk)
\]
the forgetful functor. Recall that if $H \times X$ is considered as an $H$-variety via left multiplication on the first factor, and if $p : H \times X \to X$ is the projection, then the functor $p^!$ induces an equivalence of categories $\Db_{c}(X,\bk) \to \Db_{c,H}(H \times X,\bk)$, see~\cite[Proposition~2.2.5]{bernstein-lunts}. We consider the functor
\[
 \mathrm{ind}_H : \Db_{c}(X,\bk) \to \Db_{c,H}(X,\bk)
\]
defined by
\[
 \mathrm{ind}_H(\mathscr{F}) = a_! p^!(\mathscr{F}).
\]

\begin{lem}
\label{lem:equiv-ind}
 The functor $\mathrm{ind}_H$ is left adjoint to $\mathrm{For}_H$.
\end{lem}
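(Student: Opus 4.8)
\textbf{Proof plan for Lemma~\ref{lem:equiv-ind}.}
The plan is to construct the adjunction by assembling standard adjunctions and base change isomorphisms, using the fact that $p^!$ realizes the equivalence $\Db_c(X,\bk) \xrightarrow{\sim} \Db_{c,H}(H\times X,\bk)$ quoted from~\cite[Proposition~2.2.5]{bernstein-lunts}. First I would fix $\mathscr{F}$ in $\Db_c(X,\bk)$ and $\mathscr{G}$ in $\Db_{c,H}(X,\bk)$, and write down the chain of identifications one expects:
\[
\Hom_{\Db_{c,H}(X,\bk)}(a_! p^! \mathscr{F}, \mathscr{G}) \cong \Hom_{\Db_{c,H}(H\times X,\bk)}(p^! \mathscr{F}, a^! \mathscr{G}).
\]
Here the first step uses the $(a_!, a^!)$ adjunction, but one must check it descends to the equivariant categories: since $a : H\times X \to X$ is $H$-equivariant (for left multiplication on the first factor of the source), both $a_!$ and $a^!$ are defined on equivariant derived categories and remain adjoint there; this is part of the formalism of~\cite{bernstein-lunts} (equivariant six functors), so I would simply cite it rather than reprove it.

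Next, I would identify the right-hand side. By Lemma~\ref{lem:equiv-isom} (applied to $\mathscr{G} \in \Db_{c,H}(X,\bk)$, with the roles of $!$ and $*$ replaced by their $!$-analogues — note the proof of Lemma~\ref{lem:equiv-isom} is formal and adapts verbatim with $a^!$, $p^!$ in place of $a^*$, $p^*$, since $a\circ e = \id_X$ still holds), there is a canonical isomorphism $a^! \mathscr{G} \cong p^! \mathrm{For}_H(\mathscr{G})$ in $\Db_{c,H}(H\times X,\bk)$. Therefore
\[
\Hom_{\Db_{c,H}(H\times X,\bk)}(p^! \mathscr{F}, a^! \mathscr{G}) \cong \Hom_{\Db_{c,H}(H\times X,\bk)}(p^! \mathscr{F}, p^! \mathrm{For}_H(\mathscr{G})).
\]
Since $p^!$ is an equivalence of categories (the cited proposition), it is in particular fully faithful, so the latter $\Hom$-space is canonically isomorphic to $\Hom_{\Db_c(X,\bk)}(\mathscr{F}, \mathrm{For}_H(\mathscr{G}))$. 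Composing all these isomorphisms gives the desired natural bijection
\[
\Hom_{\Db_{c,H}(X,\bk)}(\mathrm{ind}_H(\mathscr{F}), \mathscr{G}) \cong \Hom_{\Db_c(X,\bk)}(\mathscr{F}, \mathrm{For}_H(\mathscr{G})).
\]

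The remaining point — and the one I expect to be the only genuinely delicate step — is \emph{naturality} of this bijection in both $\mathscr{F}$ and $\mathscr{G}$, which is what promotes the pointwise bijection to an adjunction. Each of the three isomorphisms above is natural: the $(a_!,a^!)$ adjunction is natural by construction; the isomorphism of Lemma~\ref{lem:equiv-isom} is natural in $\mathscr{G}$ (this is asserted implicitly there and follows from tracing through the construction, which only uses $a\circ e=\id$ and compatibility of $a^!$, $e^!$ with forgetful functors); and the identification coming from full faithfulness of $p^!$ is natural since $p^!$ is a functor. I would therefore just note that all constructions are functorial and the composite bijection is natural, so that $\mathrm{ind}_H$ is left adjoint to $\mathrm{For}_H$. (If one wanted to be more economical, one could alternatively observe that $\mathrm{For}_H$ has the explicit description $\mathrm{For}_H \cong e^! \circ (\text{quasi-inverse of } p^!)$ and that $\mathrm{ind}_H = a_! \circ p^!$, and then the adjunction $\mathrm{ind}_H \dashv \mathrm{For}_H$ is literally the composite of the adjunction $a_! \dashv a^!$ with the adjoint equivalence $p^! \dashv (p^!)^{-1}$, after rewriting $a^!$ via Lemma~\ref{lem:equiv-isom}; this packaging makes naturality automatic.)
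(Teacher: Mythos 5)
Your proof is correct and is essentially the paper's own argument: the same chain of three isomorphisms (the $(a_!,a^!)$ adjunction, Lemma~\ref{lem:equiv-isom}, and full faithfulness of $p^!$), merely read in the opposite direction. Your aside about needing a $!$-version of Lemma~\ref{lem:equiv-isom} is a point the paper glosses over; it is indeed immediate since $a$ and $p$ are smooth of relative dimension $\dim H$, so $a^! \cong a^*[2\dim H]$ and $p^! \cong p^*[2\dim H]$.
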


\begin{proof}
 Let $\mathscr{F}$ in $\Db_{c}(X,\bk)$ and $\mathscr{G}$ in $\Db_{c,H}(X,\bk)$.
 Using first the fact that $p^!$ is an equivalence, then Lemma~\ref{lem:equiv-isom}, and finally adjunction, we obtain canonical isomorphisms
 \begin{multline*}
  \Hom_{\Db_{c}(X,\bk)}(\mathscr{F}, \mathrm{For}_H(\mathscr{G})) \cong \Hom_{\Db_{c,H}(H \times X,\bk)}(p^! \mathscr{F}, p^! \mathrm{For}_H(\mathscr{G})) \\
  \cong \Hom_{\Db_{c,H}(H \times X,\bk)}(p^! \mathscr{F}, a^! \mathscr{G}) \cong \Hom_{\Db_{c,H}(X,\bk)}(a_! p^! \mathscr{F}, \mathscr{G}).
 \end{multline*}
The claim follows.
\end{proof}

\subsection{Convolution}
\label{ss:appendix-conv}

Let $H$ be a complex algebraic group, and let $K \subset H$ be a closed subgroup. Recall that the $K$-bundle given by the quotient morphism $H \to H/K$ is locally trivial for the analytic topology, see~\cite{serre}. (In all the cases we will consider, this morphism is in fact locally trivial for the Zariski topology.) We 
consider the constructible equivariant derived category $\Db_{c,K}(H/K,\bk)$.
This category admits a natural convolution bifunctor, constructed as follows. Consider the diagram
\begin{equation}
\label{eqn:diag-conv-appendix}
H/K \times H/K \xleftarrow{\ p \ } H \times H/K \xrightarrow{\ q \ } H \times^K H/K \xrightarrow{\ m \ } H/K,
\end{equation}
where $H \times^K H/K$ is the quotient of $H \times H/K$ by the action defined by $k \cdot (g,hK) = (gk^{-1}, khK)$ for $k \in K$ and $g,h \in H$, $q$ is the quotient morphism, and the maps $p$ and $m$ are defined by
\[
p(g,hK) = (gK,hK), \quad m([g,hK]) = ghK.
\]
Since $K$ acts freely on $H \times H/K$, by~\cite[Theorem~2.6.2]{bernstein-lunts} the functor $q^*$ induces an equivalence
\[
\Db_{c,K}(H \times^K H/K, \bk) \xrightarrow{\sim} \Db_{c,K \times K}(H \times H/K, \bk)
\]
(where $K$ acts on $H \times^K H/K$ via left multiplication on $H$, and $K \times K$ acts on $H \times H/K$ via $(k_1,k_2) \cdot (g,hK) = (k_1gk_2^{-1}, k_2hK)$). Now, consider some objects $\mathscr{F}_1, \mathscr{F}_2$ in $\Db_{c,K}(H/K, \bk)$. Then $\mathscr{F}_1 \lboxtimes_\bk \mathscr{F}_2$ belongs to $\Db_{c,K \times K}(H/K \times H/K,\bk)$. Since $p$ is a $(K \times K)$-equivariant morphism, $p^*(\mathscr{F}_1 \lboxtimes_\bk \mathscr{F}_2)$ defines 
an object in $\Db_{c,K \times K}(H \times H/K, \bk)$. Hence there exists a unique object $\mathscr{F}_1 \, \widetilde{\boxtimes} \, \mathscr{F}_2$ in $\Db_{c,K}(H \times^K H/K, \bk)$ such that
\begin{equation}
\label{eqn:def-twisted-prod}
q^*(\mathscr{F}_1 \, \widetilde{\boxtimes} \, \mathscr{F}_2) \cong p^*(\mathscr{F}_1 \lboxtimes_\bk \mathscr{F}_2).
\end{equation}
We then set
\[
\mathscr{F}_1 \star \mathscr{F}_2 := m_*(\mathscr{F}_1 \, \widetilde{\boxtimes} \, \mathscr{F}_2).
\]
It is a classical fact that this construction defines a monoidal structure on the category $\Db_{c,K}(H/K, \bk)$ (which does not, in general, restrict to a monoidal structure on $\Per_K(H/K, \bk)$).

\begin{rmk}\phantomsection
\label{rmk:def-conv-!*}
\begin{enumerate}
\item
Since the maps $p$ and $q$ are smooth of relative dimension $\dim(K)$, we have canonical isomorphisms $p^! \cong p^*[\dim(K)]$ and $q^! \cong q^*[\dim(K)]$, so that the condition~\eqref{eqn:def-twisted-prod} can be replaced by $q^!(\mathscr{F}_1 \, \widetilde{\boxtimes} \, \mathscr{F}_2) \cong p^!(\mathscr{F}_1 \lboxtimes_\bk \mathscr{F}_2)$.
\item
In the special case considered for the geometric Satake equivalence, when $\bk$ is not a field one modifies this construction slightly so that it sends pairs of perverse sheaves to perverse sheaves; see~\S\ref{ss:convolution-k}.
\end{enumerate}
\end{rmk}

\subsection{The case of $\Gr_G$}
\label{ss:appendix-Gr}

The main object of study in these notes is the category $\Per_{\GO}(\Gr_G,\bk)$. This setting does not fit exactly in the framework of~\S\S\ref{ss:equiv-perv}--\ref{ss:appendix-conv} because $\GK$ and $\GO$ are not algebraic groups in the usual sense. But the category $\Db_{c,\GO}(\Gr_G,\bk)$ still makes sense, as follows.

For any $n \in \mathbf{Z}_{\geq 1}$, we denote by $H_n \subset \GO$ the kernel of the morphism
\[
G_{\mathcal{O}} \to G_{\mathcal{O} / t^n \mathcal{O}}
\]
induced by the quotient morphism $\mathcal{O} \to \mathcal{O} / t^n \mathcal{O}$. (Here the group scheme $G_{\mathcal{O} / t^n \mathcal{O}}$ is defined in a way similar to $G_\mathcal{O}$.) Note that if $m \geq n$, then $H_m$ is a normal subgroup in $H_n$, and the quotient $H_n/H_m$ is a unipotent group. If $X \subset \Gr_G$ is a closed finite union of $\GO$-orbits, there exists $n \in \mathbf{Z}_{\geq 1}$ such that $H_n$ acts trivially on $X$. Then it makes sense to consider the equivariant derived category $\Db_{c,\GO/H_n}(X, \bk)$. Since $H_n/H_m$ is unipotent for any $m \geq n$, one can check using~\cite[Theorem~3.7.3]{bernstein-lunts} that the functor
\[
\Db_{c,\GO/H_n}(X, \bk) \to \Db_{c,\GO/H_m}(X, \bk)
\]
given by inverse image under the projection $G/H_m \to G/H_n$ is an equivalence of categories. Hence one can define the category $\Db_{c,\GO}(X, \bk)$ to be $\Db_{c,\GO/H_n}(X, \bk)$ for any $n$ such that $H_n$ acts trivially on $X$.

If $X \subset Y \subset \Gr_G$ are closed finite unions of $\GO$-orbits, the direct image under the embedding $X \hookrightarrow Y$ induces a fully-faithful functor $\Db_{c,\GO}(X, \bk) \to \Db_{c,\GO}(Y, \bk)$. Hence we can finally define  $\Db_{c,\GO}(\Gr_G, \bk)$ as the union of the categories $\Db_{c,\GO}(X, \bk)$ for all closed finite unions of $\GO$-orbits $X \subset \Gr_G$.

A construction similar to that of~\S\ref{ss:appendix-conv} produces a convolution bifunctor $\star$ on the category $\Db_{c,\GO}(\Gr_G, \bk)$. More precisely, if $\mathscr{F}_1$ and $\mathscr{F}_2$ are in $\Db_{c,\GO}(\Gr_G, \bk)$, one should choose a closed finite union of $\GO$-orbits $X \subset \Gr_G$ such that $\mathscr{F}_2$ belongs to $\Db_{c,\GO}(X,\bk)$, and $n \in \mathbf{Z}_{\geq 1}$ such that $H_n$ acts trivially on $X$, and replace diagram~\eqref{eqn:diag-conv-appendix} by the similar diagram
\[
\Gr_G \times X \leftarrow \GK/H_n \times X \to (\GK/H_n) \times^{(\GO/H_n)} X \to \Gr_G,
\]
and proceed as before. In the body of the paper, as in~\cite{mv}, to lighten the notation we neglect these technical subtleties.


\newpage


\begin{thebibliography}{BGMRR}

\bibitem[AHR]{small2}
P.~Achar, A.~Henderson, S.~Riche, \emph{Geometric Satake, Springer correspondence, and small representations II}, Represent. Theory \textbf{19} (2015), 94--166.

\bibitem[ARS]{auslander-reiten-smalo}
M.~Auslander, I.~Reiten, S.~O.~Smal\o, \emph{Representation Theory of Artin Algebras}, Cambridge studies in advanced mathematics \textbf{36}, Cambridge University Press, 1995.

\bibitem[Ba]{baumann}
P.~Baumann, \emph{Propri{\'e}t{\'e}s et combinatoire des bases de type canonique}, th\`ese d'habilitation, available on \texttt{http://tel.archives-ouvertes.fr/tel-00705204}.

\bibitem[BaG]{bag}
P.~Baumann, S.~Gaussent, \emph{On Mirkovi\'c--Vilonen cycles and crystal combinatorics}, Represent. Theory \textbf{12} (2008), 83--130.

\bibitem[Bas]{bass}
H.~Bass, \emph{Algebraic K-theory}, Benjamin, 1968.

\bibitem[BL]{beauville-laszlo}
A.~Beauville, Y.~Laszlo, \emph{Un lemme de descente}, Comptes Rendus Acad. Sci. Paris Ser. I \textbf{320}
(1995), 335--340.

\bibitem[BBD]{bbd}
A.~Be{\u\i}linson, J.~Bernstein, P.~Deligne, \textit{Faisceaux pervers}, in \emph{Analyse et topologie sur les espaces singuliers, I (Luminy, 1981)}, 5--172,
Ast\'erisque \textbf{100}, Soc. Math. France, 1982.

\bibitem[BD]{beilinson-drinfeld}
A.~Be{\u\i}linson, V. Drinfeld, \emph{Quantization of Hitchin's integrable system and Hecke
eigensheaves}, unpublished preprint available at \texttt{http://www.math.uchicago.edu/$\sim$mitya/langlands.html}.

\bibitem[BGS]{bgs}
A.~Be{\u\i}linson, V.~Ginzburg, W.~Soergel, {\em Koszul duality patterns in representation theory}, J.~Amer.~Math.~Soc.~{\bf 9} (1996), 473--527.

\bibitem[BL]{bernstein-lunts}
J.~Bernstein, V.~Lunts, \emph{Equivariant sheaves and functors},
Lecture Notes in Mathematics 1578, Springer-Verlag, 
1994.

\bibitem[BGMRR]{bmrr}
R.~Bezrukavnikov, D.~Gaitsgory, I.~Mirkovi\'c, S.~Riche, L.~Rider, \emph{An Iwahori--Whittaker model for the Satake category}, in preparation.

\bibitem[BR]{br}
R.~Bezrukavnikov, S.~Riche, \emph{Affine braid group actions on Springer resolutions}, Ann. Sci. {\'E}c. Norm. Sup{\'e}r. \textbf{45} (2012), 535--599.

\bibitem[Bor]{borel}
A.~Borel et. al., \emph{Intersection cohomology},
Notes on the seminar held at the University of Bern, 1983, 
Progress in Mathematics 50, Birkh\"auser, 
1984.

\bibitem[BM]{bm}
W.~Borho, R.~MacPherson, \emph{Partial resolutions of nilpotent varieties}, in \emph{Analysis and topology on singular spaces, II, III (Luminy, 1981)}, 
Ast\'erisque \textbf{101}--\textbf{102} (1983), 23--74.

\bibitem[Bou]{bourbaki-lie}
N.~Bourbaki, \emph{Groupes et alg\`ebres de Lie, chapitres~VII et~VIII},
Hermann, 1975.

\bibitem[Br]{braden}
T.~Braden, \emph{Hyperbolic localization of intersection cohomology},
Transform. Groups \textbf{8} (2003), 
209--216.

\bibitem[BrG]{bg}
A.~Braverman, D.~Gaitsgory, \emph{Crystals via the affine Grassmannian},
Duke Math. J.~\textbf{107},
(2001), 561--575.

\bibitem[BN]{bn}
A.~Brugui\`eres, S.~Natale, \emph{Exact sequences of tensor categories},
Int. Math. Res. Not. IMRN \textbf{2011}, 
5644--5705.

\bibitem[CPS]{cps}
E.~Cline, B.~Parshall, L.~Scott, \emph{Finite-dimensional algebras and highest weight categories},
J. Reine Angew. Math. \textbf{391} (1988), 85--99.

\bibitem[De]{deligne}
P.~Deligne, \emph{Cat\'egories tannakiennes}, in \emph{The Grothendieck Festschrift, Vol. II}, 111--195,
Progress in Mathematics 87, Birkh\"auser Boston, 
1990. 

\bibitem[DM]{dm}
P.~Deligne, J.~S.~Milne, \emph{Tannakian categories},
in \emph{Hodge cycles, motives, and Shimura varieties}, 101--228,
Lecture Notes in Mathematics~900, Springer-Verlag,
1982.

\bibitem[DG]{dg}
M.~Demazure, P.~Gabriel, \emph{Groupes alg\'ebriques. Tome I : G\'eom\'etrie alg\'ebrique, g\'en\'eralit\'es, groupes commutatifs},
with an appendix \emph{Corps de classes local} by M.~Hazewinkel, Masson \& Cie, 
North-Holland Publishing Co., 
1970.

\bibitem[DrG]{drg}
V.~Drinfeld, D.~Gaitsgory, \emph{On a theorem of Braden},
Transform. Groups \textbf{19} (2014), 
313--358. 

\bibitem[Ga]{gaitsgory}
D.~Gaitsgory, \textit{Construction of central elements in the
affine Hecke algebra via nearby cycles}, Invent.\ Math.\ \textbf{144} (2001),
253--280.

\bibitem[Fa]{faltings}
G.~Faltings, \emph{Algebraic loop groups and moduli spaces of bundles},
J. Eur. Math. Soc. (JEMS) \textbf{5} (2003), 
41--68. 

\bibitem[Gar]{garland}
H.~Garland, \emph{The arithmetic theory of loop groups},
Inst.\ Hautes Etudes Sci.\ Publ.\ Math.\ \textbf{52} (1980), 5--136.
 
\bibitem[Gau]{gaussent}
S.~Gaussent, \emph{The fibre of the Bott--Samelson resolution},
Indag. Math. (N.S.) \textbf{12} (2001), 453--468.

\bibitem[GL]{gaussent-littelmann}
S.~Gaussent, P.~Littelmann, \emph{LS galleries, the path model, and MV cycles},
Duke Math. J. \textbf{127} (2005), 
35--88.

\bibitem[Gi]{ginzburg}
V.~Ginzburg, \emph{Perverse sheaves on a loop group and Langlands' duality}, preprint \texttt{arXiv:alg-geom/9511007} (1995).

\bibitem[GR]{gr}
V.~Ginzburg and S.~Riche, \emph{Differential operators on $G/U$ and the affine Grassmannian}, J. Inst. Math. Jussieu~\textbf{14} (2015), 493--575.

\bibitem[GM]{gm}
M.~Goresky, R.~MacPherson, \emph{Intersection homology. II},
Invent. Math.~\textbf{72} (1983), 
77--129. 

\bibitem[G\"o]{goertz}
U.~G\"ortz, \emph{Affine Springer fibers and affine Deligne--Lusztig varieties}, in \emph{Affine flag manifolds and principal bundles} (A.~Schmitt, Ed.), 1--50,
Trends Math., Birkh\"auser,
2010.

\bibitem[GW]{gw}
U.~G\"ortz, T.~Wedhorn, \emph{Algebraic geometry I.
Schemes. With examples and exercises}, Advanced Lectures in Mathematics, Vieweg + Teubner, 
2010.

\bibitem[Ha]{haines}
T.~Haines, \emph{A proof of the Kazhdan--Lusztig purity theorem via the decomposition theorem of BBD}, expository note available on the author's web page (\texttt{http://www.math.umd.edu/$\sim$tjh}).



\bibitem[Hu]{humphreys-gps}
J.~Humphreys, \emph{Linear algebraic groups},
Graduate Texts in Mathematics 21, Springer-Verlag, 
1975.

\bibitem[Iv]{iversen}
B.~Iversen, \emph{Cohomology of sheaves}, Springer, 1986.

\bibitem[Ja]{jantzen}
J.~C.~Jantzen, \emph{Representations of Algebraic Groups, second edition}, Mathematical Surveys and Monographs 107, Amer.~Math.~Soc., 2003.

\bibitem[JMW]{jmw}
D.~Juteau, C.~Mautner, G.~Williamson, \emph{Parity sheaves},
J. Amer. Math. Soc. \textbf{27} (2014), 
1169--1212. 

\bibitem[JMW2]{jmw2}
D.~Juteau, C.~Mautner, G.~Williamson, \emph{Parity sheaves and tilting modules},
Ann. Sci. \'Ec. Norm. Sup\'er. (4) \textbf{49} (2016), 
257--275. 

\bibitem[Kac]{kac}
V.~Kac, \emph{Infinite-dimensional Lie algebras,
third edition}, Cambridge University Press, 
1990.

\bibitem[Kam]{kamnitzer}
J.~Kamnitzer, \emph{Mirkovi\'c--Vilonen cycles and polytopes}, Ann. of Math.~\textbf{171}
(2010), 245--294.

\bibitem[KS]{ks}
M.~Kashiwara, P.~Schapira, \emph{Sheaves on manifolds},
Grundlehren der Mathematischen Wissenschaften 292, Springer-Verlag, 
1990.

\bibitem[Ka]{kassel}
C.~Kassel, \emph{Quantum groups},
Graduate Texts in Mathematics 155, Springer-Verlag, 
1995.

\bibitem[KLV]{klv}
D.~Kazhdan, M.~Larsen, Y.~Varshavsky, \emph{The Tannakian formalism and the Langlands conjectures},
Algebra Number Theory \textbf{8} (2014), 
243--256.

\bibitem[Ku]{kumar}
S.~Kumar, \emph{Kac--Moody groups, their flag varieties and representation theory},
Progress in Mathematics 204, Birkh\"auser Boston, 
2002.

\bibitem[La]{lafforgue}
V.~Lafforgue, \emph{Chtoucas pour les groupes r\'eductifs et param\'etrisation de Langlands globale}, preprint \texttt{arXiv:1209.5352}.

\bibitem[LS]{laszlo-sorger}
Y.~Laszlo, C.~Sorger, \emph{The line bundles on the moduli of parabolic $G$-bundles
over curves and their sections}, Ann. Sci. \'Ecole Norm. Sup. \textbf{30} (1997), 499--525.

\bibitem[Le]{letellier}
E.~Letellier, \emph{Fourier transforms of invariant functions on finite reductive Lie algebras},
Lecture Notes in Mathematics 1859, Springer, 2005.

\bibitem[Lu]{lusztig}
G.~Lusztig, \emph{Singularities, character formulas, and a $q$-analog of weight multiplicities}, in \emph{Analyse et topologie sur les espaces singuliers, II, III (Luminy, 1981)}, 208--229,
Ast\'erisque \textbf{101--102}
(1983). 

\bibitem[Ly]{lyubashenko}
V.~V.~Lyubashenko, \emph{External tensor product of perverse sheaves},
Ukra\"in. Mat. Zh. \textbf{53} (2001), 
311--322; translation in Ukrainian Math. J. \textbf{53} (2001), 
354--367.

\bibitem[McL]{maclane}
S.~MacLane, \emph{Categories for the working mathematician},
Graduate Texts in Mathematics 5, Springer-Verlag, 
1971.

\bibitem[MR]{mr}
C.~Mautner, S.~Riche, {\em Exotic tilting sheaves, parity sheaves on affine Grassmannians, and the Mirkovi{\'c}--Vilonen conjecture}, preprint \texttt{arXiv:1501.07369}, to appear in J.\ Eur.\ Math.\ Soc.

\bibitem[Mi]{milne}
J.~Milne, \emph{Basic theory of affine group schemes}, notes available on
\texttt{http://www.jmilne.org/math/CourseNotes/}

\bibitem[MV1]{mv1}
I.~Mirkovi{\'c}, K.~Vilonen, \emph{Characteristic varieties of character sheaves},
Invent. Math. \textbf{93} (1988), 
405--418. 

\bibitem[MV2]{mv}
I.~Mirkovi{\'c}, K.~Vilonen, {\em Geometric {L}anglands duality and
  representations of algebraic groups over commutative rings}, Ann. of Math.
{\bf 166} (2007), 95--143.

\bibitem[NP]{ngo-polo}
B.~C.~Ng{\^o}, P.~Polo, \emph{R\'esolutions de Demazure affines et formule de Casselman--Shalika g\'eom\'etrique},
J. Algebraic Geom. \textbf{10} (2001), 
515--547.

\bibitem[PR]{pr}
G.~Pappas, M.~Rapoport, \emph{Twisted loop groups and their affine flag varieties},
with an appendix by T.~Haines and M.~Rapoport,
Adv. Math. \textbf{219} (2008), 
118--198. 

\bibitem[PY]{py}
G.~Prasad, J.-K.~Yu, \emph{On quasi-reductive group schemes}, with an appendix by B.~Conrad,
J. Algebraic Geom. \textbf{15} (2006), 
507--549.

\bibitem[Rc]{richarz}
T.~Richarz, \emph{A new approach to the geometric Satake equivalence},
Doc. Math. \textbf{19} (2014), 209--246. 

\bibitem[RZ]{rz}
T.~Richarz, X.~Zhu, \emph{Construction of the full Langlands dual group via the geometric Satake correspondence}, appendix to~\cite{zhu-ram}.

\bibitem[Ri]{riche-hab}
S.~Riche, \emph{Geometric Representation Theory in positive characteristic}, th\`ese d'habilitation, available on \texttt{https://tel.archives-ouvertes.fr/tel-01431526}.

\bibitem[SR]{sr}
N.~Saavedra Rivano, \emph{Cat\'egories Tannakiennes}, Lecture Notes in Mathematics 265, Springer-Verlag, 
1972.

\bibitem[S1]{serre}
J.-P.~Serre, \emph{Espaces fibr\'es alg\'ebriques},
S\'eminaire Claude Chevalley, Volume 3 (1958), 1--37. 

\bibitem[S2]{serre2}
J.~P.~Serre,
\emph{Groupes de Grothendieck des sch\'emas en groupes r\'eductifs d\'eploy\'es},
Inst. Hautes \'Etudes Sci. Publ. Math. \textbf{34} (1968), 37--52. 

\bibitem[SGA3]{sga3}
\emph{Sch\'emas en groupes (SGA 3). Tome III. Structure des sch\'emas en groupes r\'eductifs}. S\'eminaire de G\'eom\'etrie Alg\'ebrique du Bois Marie 1962--64. A seminar directed by M. Demazure and A. Grothendieck with the collaboration of M. Artin, J.-E. Bertin, P. Gabriel, M. Raynaud and J.-P. Serre. Revised and annotated edition of the 1970 French original. Edited by Philippe Gille and Patrick Polo. Documents Math\'ematiques
 8, Soci\'et\'e Math\'ematique de France, 
2011.

\bibitem[So]{sorger}
C.~Sorger, \emph{Lectures on moduli of principal $G$-bundles over algebraic curves}, in \emph{School on Algebraic Geometry (Trieste, 1999)}, 1--57,
ICTP Lect. Notes 1, Abdus Salam Int. Cent. Theoret. Phys., Trieste, 2000. 

\bibitem[Sp]{springer}
T.~A.~Springer, \textit{Quelques applications de la cohomologie d'intersection},
S\'eminaire Bourbaki~589, Ast\'erisque \textbf{92} (1982), 249--273.

\bibitem[SP]{stacks-project}
Stacks Project, \texttt{http://stacks.math.columbia.edu}.

\bibitem[Va]{vasserot}
E.~Vasserot, \emph{On the action of the dual group on the cohomology of perverse sheaves on the affine Grassmannian},
Compositio Math. \textbf{131} (2002), 
51--60. 

\bibitem[Vi]{vilonen}
K.~Vilonen, \emph{Perverse sheaves and finite-dimensional algebras},
Trans. Amer. Math. Soc. \textbf{341} (1994), 
665--676. 

\bibitem[Wa]{waterhouse}
W.~Waterhouse, \emph{Introduction to affine group schemes},
Graduate Texts in Mathematics 66, Springer-Verlag, 
1979.

\bibitem[Xu]{xue}
C.~Xue, \emph{Cohomologie cuspidale des champs de Chtoucas}, th\`ese de doctorat, available on \texttt{https://tel.archives-ouvertes.fr/tel-01578141}.

\bibitem[YZ]{yz}
Z.~Yun, X.~Zhu, \emph{Integral homology of loop groups via Langlands dual groups},
Represent. Theory \textbf{15} (2011), 347--369. 

\bibitem[Z1]{zhu-conj}
X.~Zhu, \emph{On the coherence conjecture of Pappas and Rapoport},
Ann. of Math. \textbf{180} (2014), 
1--85. 

\bibitem[Z2]{zhu-ram}
X.~Zhu, \emph{The geometric Satake correspondence for ramified groups},
Ann. Sci. \'Ec. Norm. Sup\'er. (4) \textbf{48} (2015), 
409--451.

\bibitem[Z3]{zhu-mixed}
X.~Zhu, \emph{Affine Grassmannians and the geometric Satake in mixed characteristic},
Ann. of Math. (2) \textbf{185} (2017), 
403--492. 

\bibitem[Z4]{zhu}
X.~Zhu, \emph{An introduction to affine Grassmannians and the geometric Satake equivalence}, preprint \texttt{arXiv:1603.05593}, to appear in \emph{
Geometry of Moduli Spaces and Representation Theory} (R.~Bezrukavnikov, A.~Braverman, Z.~Yun, Eds.), IAS/Park City Mathematics Series 24, 2017.

\end{thebibliography}
\end{document}